\newlist{steps}{enumerate}{1}
\setlist[steps, 1]{label = Step \arabic*:}
\DeclareRobustCommand\widecheck[1]{{\mathpalette\@widecheck{#1}}}
\def\@widecheck#1#2{%
   \setbox\z@\hbox{\m@th$#1#2$}%
   \setbox\tw@\hbox{\m@th$#1%
      {%
         \vrule\@width\z@\@height\ht\z@
         \vrule\@height\z@\@width\wd\z@}$}%
   \dp\tw@-\ht\z@
   \@tempdima\ht\z@ \advance\@tempdima2\ht\tw@ \divide\@tempdima\thr@@
   \setbox\tw@\hbox{%
      \raise\@tempdima\hbox{\scalebox{1}[-1]{\lower\@tempdima\box\tw@}}}%
   {\ooalign{\box\tw@ \cr \box\z@}}}
   \let\@wraptoccontribs\wraptoccontribs
\theoremstyle{plain}
\newtheorem*{theorem*}{Theorem}
\newtheorem{thm}{Theorem}[section]
\crefname{thm}{Theorem}{Theorems}
\Crefname{thm}{Theorem}{Theorems}
\newtheorem{prop}[thm]{Proposition}
\crefname{prop}{Proposition}{Propositions}
\Crefname{prop}{Proposition}{Propositions}
\newtheorem{lem}[thm]{Lemma}
\crefname{lem}{Lemma}{Lemmas}
\Crefname{lem}{Lemma}{Lemmas}
\newtheorem{cor}[thm]{Corollary}
\crefname{cor}{Corollary}{Corollaries}
\Crefname{cor}{Corollary}{Corollaries}
\newtheorem{rem}[thm]{Remark}
\crefname{rem}{Remark}{Remarks}
\Crefname{rem}{Remark}{Remarks}
\crefname{claim}{Claim}{Claims}
\Crefname{claim}{Claim}{Claims}
\crefname{property}{Property}{Properties}
\Crefname{property}{Property}{Properties}
\crefname{problem}{Problem}{Problems}
\Crefname{problem}{Problem}{Problems}
\crefname{conjecture}{Conjecture}{Conjecture}
\Crefname{conjecture}{Conjecture}{Conjecture}
\theoremstyle{definition}
\newtheorem{defn}[thm]{Definition}
\crefname{defn}{Definition}{Definitions}
\Crefname{defn}{Definition}{Definitions}
\crefname{notation}{Notation}{Notations}
\Crefname{notation}{Notation}{Notations}
\crefname{convention}{Convention}{Conventions}
\Crefname{convention}{Convention}{Conventions}
\crefname{cond}{Condition}{Conditions}
\Crefname{cond}{Condition}{Conditions}
\crefname{assum}{Assumption}{Assumptions}
\Crefname{assum}{Assumption}{Assumptions}
\Crefname{ques}{Question}{Question}
\theoremstyle{remark}
\crefname{rem}{Remark}{Remarks}
\Crefname{rem}{Remark}{Remarks}
\newtheorem{ex}[thm]{Example}
\crefname{ex}{Example}{Examples}
\Crefname{ex}{Example}{Examples}
\crefname{section}{Section}{Sections}
\Crefname{section}{Section}{Sections}
\crefname{subsection}{Subsection}{Subsections}
\Crefname{subsection}{Subsection}{Subsections}
\crefname{figure}{Figure}{Figures}
\Crefname{figure}{Figure}{Figures}
\newcommand{\diracpartial}{{\partial\mkern-9.5mu/}}
\newcommand{\dirac}{{\mathcal{D}\mkern-11mu/}}
\newcommand{\Z}{\mathbb{Z}}
\newcommand{\Q}{\mathbb{Q}}
\newcommand{\sign}{\mathrm{sign}}
\newcommand{\fraks}{\mathfrak{s}}
\newcommand{\frakt}{\mathfrak{t}}
\newcommand{\C}{\mathbb{C}}
\newcommand{\s}{\mathfrak{s}}
\newcommand{\pr}{\text{pr}}
\newcommand{\R}{\mathbb R}
\def\det{\mathrm{det}}
\def\dim{\mathrm{dim}}
\def\id{\mathrm{Id}}
\def\ind{\mathrm{ind}}
\newcommand{\mbar}[1]{{\ooalign{\hfil#1\hfil\crcr\raise.167ex\hbox{--}}}}
\def\wt{\widetilde}
\def\H{\mathbb{H}}
\def\Ker{\mathrm{Ker}\,}
    \def\HMt{%
       \setbox0=\hbox{$\widehat{\mathit{HM}}$}
       \setbox1=\hbox{$\mathit{HM}$}
       \dimen0=1.1\ht0
       \advance\dimen0 by 1.17\ht1
       \smash{\mskip2mu\raise\dimen0\rlap{%
          \begin{turn}{180}
              {$\widehat{\phantom{\mathit{HM}}}$}
           \end{turn}} \mskip-2mu    
                \mathit{HM}
                    }{\vphantom{\widehat{\mathit{HM}}}}{}}
\title[Exotic diffeomorphisms on a contractible 4-manifold surviving two stabilizations]{
Exotic diffeomorphisms on a contractible 4-manifold\\ surviving two stabilizations
}
\author{Sungkyung Kang}
\address{Department of Pure Mathematics and Mathematical Statistics, University of Cambridge, United Kingdom}
\email{sungkyung3838@gmail.com}
\author{JungHwan Park}
\address{Department of Mathematical Sciences, KAIST, Republic of Korea}
\email{jungpark0817@kaist.ac.kr}
\author{Masaki Taniguchi} 
\address{Department of Mathematics, Kyoto University, Japan}
\email{taniguchi.masaki.7m@kyoto-u.ac.jp}
\begin{document}

\begin{abstract}
We develop a $\mathrm{Pin}(2)\times \mathbb{Z}_2$-equivariant refinement of the lattice homotopy type for computing equivariant Seiberg--Witten Floer homotopy types. 
As an application, we construct a relative exotic diffeomorphism on a compact contractible $4$–manifold that survives two stabilizations.
\end{abstract}

% We develop a version of the equivariant lattice homotopy type for computing equivariant Seiberg--Witten Floer homotopy types. As an application, we construct a relative exotic diffeomorphism on a compact contractible $4$-manifold that survives after two stabilizations.

\maketitle

\setcounter{tocdepth}{1}
\tableofcontents

\section{Introduction}

Exotic phenomena refer to differences that can be detected in the smooth category but remain indistinguishable in the topological category. Dimension~4 is the lowest dimension in which such phenomena occur, making it a subject of extensive study since the 1980s~\cite{Freedman:1982,Donaldson:1983}. There are three main cases of exotic phenomena in dimension~4:
\begin{itemize}
\item Exotic manifolds: smooth 4-manifolds $X_1$ and $X_2$ that are homeomorphic but not diffeomorphic.
\item Exotic diffeomorphisms: diffeomorphisms $f_1$ and $f_2$ of a 4-manifold that are topologically isotopic but not smoothly isotopic.
\item Exotic surfaces: smoothly embedded surfaces $\Sigma_1$ and $\Sigma_2$ in a 4-manifold that are topologically isotopic but not smoothly isotopic.
\end{itemize}

A foundational principle in 4-dimensional topology, discovered by Wall in the 1960s~\cite{Wa64, Wa64-2}, states that exotic phenomena vanish after finitely many stabilizations, that is, after taking the connected sum with finitely many copies of $S^2 \times S^2$. In other words, 4-dimensional exotic phenomena are unstable. In the case of diffeomorphisms, we will give a precise formulation below, and analogous statements hold for manifolds and for surfaces. For an excellent overview of these topics, see \cite[Section 1]{Lin:2023-1}.

Given a  4-manifold $X$ with possibly nonempty boundary, we say that a diffeomorphism $f \colon X \to X$ is \emph{exotic} if $f$ is topologically, but not smoothly, isotopic to the identity while fixing the boundary pointwise. Combining results from many works~\cite{kreck2006isotopy, Qu86, cochran1990homotopy, saeki2006stable, Orson-Powell:2022-1, gabai2023pseudo, gabai3} (see also~\cite[Theorem 2.5]{KMPW24}), it is known that any such exotic diffeomorphism acting as the identity on $\partial X$ is \emph{stably isotopic} to the identity rel.\ boundary whenever $X$ is simply connected, $\partial X$ is connected, and $b_1(\partial X)=0$; that is, there exists a positive integer $n$ such that the stabilized diffeomorphism
\[
f \# \mathrm{id}_{(S^2 \times S^2)^{\# n}} \colon X \# (S^2 \times S^2)^{\# n} \longrightarrow X \# (S^2 \times S^2)^{\# n}
\]
is smoothly isotopic to the identity rel.\ boundary, where $(S^2 \times S^2)^{\# n}$ denotes the connected sum of $n$ copies of $S^2 \times S^2$. Here, the \emph{stabilized diffeomorphism} is obtained by first isotoping the diffeomorphism $f$ so that it fixes a small ball pointwise, and then taking the connected sum with the identity map on $(S^2 \times S^2)^{\# n}$ along the ball. This construction is proved to be well defined in \cite[Theorem~5.3]{auckly2015stable}, that is, it depends only on the isotopy class of $f$.

Naturally, one can ask how many stabilizations are needed to eliminate a given exotic phenomenon. For a long time, there was no evidence suggesting the need for more than one stabilization; on the contrary, many results indicated that one is sufficient~\cite{mandelbaum1979decomposing,akbulut2002variations,baykur2013round,baykur2018dissolving,auckly2015stable,auckly2019isotopy}. Lin's groundbreaking work~\cite{Lin:2023-1} provided the first instances in which more than one stabilization is necessary, using the $\mathrm{Pin}(2)$-equivariant version of the families Bauer--Furuta invariant. Since then, there has been an explosion of results showing that one stabilization is insufficient to trivialize various 4-dimensional exotica~\cite{LM21,kang2022one,hayden2023one,konno2022exotic,guth2024invariant} (see also \cite{guth2022exotic, Auckly:2023-1} for internal stabilizations of exotic surfaces).

In this article, we provide the first example in which even \emph{two} stabilizations are not sufficient. Moreover, this yields the first instance of a diffeomorphism on a \emph{contractible} $4$-manifold that persists under stabilization.

\begin{thm} \label{thm: main}
There exists a smooth compact contractible $4$-manifold $X$ with nonempty boundary, and an infinite family of relative diffeomorphisms $\{f_i \colon X \to X\}_{i \in \mathbb{N}}$ satisfying the following properties:
\begin{itemize}
    \item $f_i$ is topologically isotopic to the identity rel.\ boundary;
    \item $f_i$ and $f_j$ are not smoothly isotopic rel.\ boundary for $i \ne j$;
    \item the stabilized diffeomorphism
    \[
    f_i \# \mathrm{id}_{(S^2 \times S^2)^{\# 2}} \colon X \# (S^2 \times S^2)^{\# 2} \longrightarrow X \# (S^2 \times S^2)^{\# 2}
    \]
    is not smoothly isotopic to the identity rel.\ boundary. 
   %\item for sufficiently large $n$, the further stabilized diffeomorphism
   % \[
   % f_i \# \mathrm{id}_{(S^2 \times S^2)^{\# n}} \colon X \# (S^2 \times S^2)^{\# n} \to X \# (S^2 \times S^2)^{\# n}
   % \]
   % is smoothly isotopic to the identity rel.\ boundary.
\end{itemize}
\end{thm}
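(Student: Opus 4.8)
\textit{Strategy.} The plan is to choose $X$ and the $f_i$ so that all three conclusions are read off from a single $\mathrm{Pin}(2)\times\mathbb{Z}_2$-equivariant families Bauer--Furuta-type obstruction, evaluated by means of the equivariant lattice homotopy type constructed in this paper. Concretely, I would take $X$ to be a Mazur-type contractible $4$-manifold whose boundary $Y=\partial X$ is a negative-definite plumbed homology sphere --- so that the lattice homotopy machinery applies to $Y$ --- and which carries an involution $\tau$ restricting to a fixed involution on $Y$; realizing $X$ as the double branched cover of $B^4$ along a suitable properly embedded disk (so $\tau$ is the deck transformation and $Y$ is a double branched cover of $S^3$ along a torus knot) is the natural source of such data. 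The family $\{f_i\}$ would be produced either by a cork-twist/barbell-type construction, or by lifting an infinite family of mutually exotic slice disks to these branched covers, arranged so that (i) each $f_i$ is the identity near $\partial X$ and commutes with $\tau$, so that the mapping torus of $f_i$ over $S^1$ carries a fibrewise $\mathbb{Z}_2$-action; and (ii) $f_i$ differs from $f_0$ by an explicit ``$i$-fold insertion'', so the invariants of the $f_i$ sit in an arithmetic progression.

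\textit{Topological triviality.} Since $X$ is contractible (hence simply connected with trivial intersection form, and the same after deleting a collar of $\partial X$), a relative diffeomorphism that is relatively pseudoisotopic to the identity is in fact topologically isotopic to the identity rel.\ boundary, by Freedman theory combined with the pseudoisotopy-implies-isotopy results of Gabai and Orson--Powell and Kreck's isotopy theorems cited in the introduction. I would build the $f_i$ so that relative pseudoisotopy to the identity is transparent from the construction (e.g.\ by exhibiting the undoing moves directly). Topological triviality of $f_i\,\#\,\mathrm{id}_{(S^2\times S^2)^{\#2}}$ is then automatic from the naturality of the stabilization operation (\cite{auckly2015stable}).

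\textit{The obstruction and stabilization as suspension.} Cap $Y$ off with a $4$-manifold $W$ with $b^+(W)$ large, form the mapping torus of $f_i$, and let $\mathrm{BF}(f_i)$ be the resulting one-parameter $G$-equivariant families Bauer--Furuta element in a stable $G$-equivariant homotopy group of an equivariant Thom spectrum over $\mathrm{SWF}_G(Y)$, where $G=\mathrm{Pin}(2)\times\mathbb{Z}_2$. Two formal inputs are required. First, a gluing formula identifying $\mathrm{BF}\!\big(f_i\,\#\,\mathrm{id}_{(S^2\times S^2)^{\#2}}\big)$ with $\Sigma^{V}\mathrm{BF}(f_i)$, where $V$ is the sum of the nontrivial $G$-representations contributed by the two hyperbolic summands. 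Second, a difference formula showing that $\mathrm{BF}(f_i)-\mathrm{BF}(f_j)$ is detected by a cheaper integer-valued invariant --- the one-parameter families Seiberg--Witten invariant of the mapping torus of $f_i f_j^{-1}$ --- which the construction forces to equal $\pm(i-j)$. The second input already yields the first two bullet points (and their persistence after stabilization, since that invariant is itself stable), so the real content is that $\Sigma^{V}\mathrm{BF}(f_i)\neq 0$ for every $i$.

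\textit{The computation --- the main obstacle.} Using the $\mathrm{Pin}(2)\times\mathbb{Z}_2$-equivariant lattice homotopy type, I would first identify $\mathrm{SWF}_G(Y)$ explicitly as a suspension of a small, explicit finite $G$-CW spectrum read off from the plumbing lattice; then compute enough of the relevant $G$-equivariant stable stem --- and of the suspension map $\Sigma^{V}$ on it --- to certify that $\mathrm{BF}(f_i)$ (equivalently $\mathrm{BF}(f_0)$ plus the class of the integer $i$, by the difference formula) stays nonzero under $\Sigma^{V}$. The crux, and the reason the refinement is needed, is that the image of the analogous class in the merely $\mathrm{Pin}(2)$-equivariant (or non-equivariant) stem dies after a single suspension by the corresponding representation, whereas the extra $\mathbb{Z}_2$-symmetry --- visible only through the refined lattice homotopy type --- keeps a nonzero residue alive through two suspensions, though (necessarily) not through three. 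I expect the hard part to be exactly this: verifying the gluing and difference formulas in the $\mathrm{Pin}(2)\times\mathbb{Z}_2$-equivariant setting, and then running the equivariant homotopy-theoretic computation --- in particular tracking the relevant $\eta$- and $\nu$-type multiplications under suspension by the sign representations --- precisely enough to exclude vanishing after two stabilizations.
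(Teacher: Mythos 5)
Your proposal misses the paper's central structural move, without which the obstruction evaporates at exactly two stabilizations. Concretely, after the Borel/cochain reduction the algebraic question becomes: is there a ``local map of level 2'' from $\mathfrak{R}=C^\ast(B(\mathrm{Pin}(2)\times\mathbb{Z}_2);\mathbb{Z}_2)$ to $M=\widetilde{C}^\ast_{\mathrm{Pin}(2)\times\mathbb{Z}_2}(SWF(Y,\tilde{\mathfrak{s}}))$? And the answer is \emph{yes} --- there is already a local map of level $1$ from $\mathfrak{R}$ to $M$ (Remark~\ref{rem: level 0 projection and level 1 inclusion}), and multiplying it by $Q$ produces one of level $2$. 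So the ``single-copy'' obstruction you propose to compute --- whether $\Sigma^{V}\mathrm{BF}(f_i)$ dies in the relevant $G$-equivariant stem, with one summand of $Y$ --- is vacuous after two suspensions. Your heuristic that the extra $\mathbb{Z}_2$ ``keeps a residue alive through two but not three suspensions'' is precisely backwards for a single copy of $Y$: two stabilizations are enough to kill the single-copy invariant.

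The idea you are missing is the \emph{connected sum technique} (Lemma~\ref{lem: topological obstruction by duplicating}): if the boundary Dehn twist on $X\#(S^2\times S^2)^{\#2}$ is smoothly isotopic to the identity rel boundary, then the \emph{simultaneous} boundary Dehn twist on $X^{\#2n}\#(S^2\times S^2)^{\#2}$ (one twist per boundary component) is also isotopic to the identity rel boundary, for every $n$. This is a genuinely topological statement (it follows from the fact that the product of Dehn twists along the $2n$ boundary spheres of a $2n$-punctured $S^4$ is isotopic to the identity, Corollary~\ref{cor: even punctured n-sphere}), and it is counterintuitive: the manifold becomes more complicated while the number of required stabilizations does not increase. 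On the algebraic side this converts the single-copy question into one about local maps of level $2$ from $\mathfrak{R}$ to $M^{\otimes 4}$, and \emph{that} obstruction is nonvacuous (Corollary~\ref{cor: no map of level 2 in 4 copies}, proved by an explicit $Q$- and $\theta$-coefficient bookkeeping in three tensor factors). Note Remark~\ref{rmk:algebraic}: even $M^{\otimes 2}$ is not enough, since the tensor square of the level-$1$ map gives a level-$2$ map there; one needs at least three tensor factors. Your proposed gluing formula ``stabilization $=\Sigma^V$'' and ``difference formula'' for a families invariant are plausible ingredients, but they cannot by themselves produce a bound of two: you need the connected-sum amplification (and the fact that the number of stabilizations does \emph{not} scale with $n$ under it) to turn a level-$1$ failure into a level-$2$ contradiction.

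Two smaller divergences: the paper's $f_i$ are odd powers of the boundary Dehn twist of $X$ bounded by $\Sigma(3,5,19)$ (rotation around Seifert fibers), with pairwise distinctness and topological triviality both imported directly from prior work (\cite{KPT24}, \cite{Orson-Powell:2022-1}), rather than from a cork/barbell/slice-disk construction and a families difference formula. That matters because the boundary Dehn twist is exactly what makes the ``homotopy coherent $\mathbb{Z}_2$-action'' reformulation available, which is what feeds the $\mathrm{Pin}(2)\times\mathbb{Z}_2$-equivariant homotopy coherent Bauer--Furuta machinery (Theorem~\ref{prop: homotopy coherent BF for Pin(2)xZ2} and Corollary~\ref{cor: htpy coherent action implies stably loc trivial}). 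Your proposed $f_i$ would need to be re-expressed in those terms before the equivariant lattice model could be brought to bear.
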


We now describe the 4-manifold and the diffeomorphisms appearing in the main theorem.  
In~\cite{fintushel1981exotic}, celebrated for establishing an exotic orientation-reversing free involution on $S^4$,
Fintushel and Stern showed that the Brieskorn sphere $\Sigma(3,5,19)$ bounds a \emph{Mazur manifold}, a smooth, compact, contractible 4-manifold admitting a handle decomposition with a single 1-handle and a single 2-handle (see also~\cite[Proposition~4.2]{Fickle:1984}).  For $X$ in \Cref{thm: main}, we may take any smooth compact contractible manifold bounded by $\Sigma(3,5,19)$. For instance, $X$ can be taken as the Mazur manifold of Fintushel and Stern; see \Cref{fig: mazur} for its Kirby diagram.

\begin{figure}[h!]
\centering
\includegraphics[width=.5\linewidth]{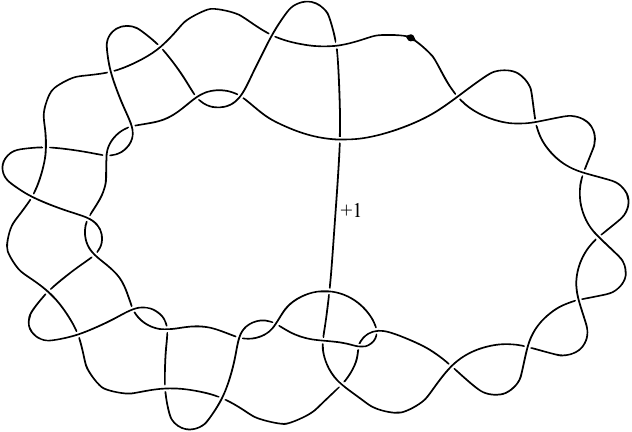}
\caption{The Mazur manifold bounded by $\Sigma(3,5,19)$.}
\label{fig: mazur}
\end{figure}

For the diffeomorphism, we consider the 4-dimensional Dehn twist.  
Let $Y$ be a closed, oriented 3-manifold, and let $\phi \in \pi_1 \mathrm{Diff}^+(Y)$ be a nontrivial element based at the identity.  
The \emph{4-dimensional Dehn twist} associated to $\phi$ is the diffeomorphism  
\[
\Phi \colon Y \times [0,1] \longrightarrow Y \times [0,1]; \qquad (s,t) \longmapsto (\phi_t(s), t).
\]
If $Y$ bounds a compact, smooth 4-manifold $X$, then the diffeomorphism $\Phi$ induces a diffeomorphism of $X$ supported in a collar neighborhood of the boundary, called the \emph{boundary Dehn twist of $X$}. More generally, if $Y$ is smoothly embedded in $X$ with an orientable normal bundle, then $\Phi$ induces a diffeomorphism of $X$ supported in a tubular neighborhood of $Y$, called the \emph{Dehn twist of $X$ along $Y$}.

For the diffeomorphisms in \Cref{thm: main}, we choose them to be odd iterates of the boundary Dehn twist of a smooth compact contractible filling $X$ of $\Sigma(3,5,19)$, where $\phi$ is given by rotation around the Seifert fibers. We remark that in \cite{Lin:2023-1}, Lin proves that the Dehn twist  of $\mathrm{K3} \# \mathrm{K3}$ along the separating $S^3$ remains exotic after a single stabilization (see also \cite{Giansiracusa:2008-1, KM20, Krannich-Kupers:2021-1, Baraglia-Konno:2022-1,Orson-Powell:2022-1,  KMT23, KPT24, KLMM24, Qiu24, Miyazawa24} for other related results on of 4-dimensional Dehn twists).

To prove \Cref{thm: main}, we use the fact that if any odd iterate of the boundary Dehn twist of a smooth filling of $Y = \Sigma(3,5,19)$ is isotopic to the identity rel.\ boundary, then the $\mathbb{Z}_2$-action induced by the Seifert $S^1$-action on $Y$ extends to a smooth homotopy coherent $\mathbb{Z}_2$-action on the filling (see \cite[Section~3]{KPT24} for a more detailed explanation). The nonexistence of such a homotopy coherent $\mathbb{Z}_2$-action on a smooth compact contractible filling $X$ of $Y$ was established in \cite{KPT24}, together with the fact that all of these iterates are distinct up to isotopy rel.\ boundary. The main part of the proof of \Cref{thm: main} is to show that the $\mathbb{Z}_2$-action on $Y$ still does not extend to a homotopy coherent $\mathbb{Z}_2$-action on $X \# (S^2 \times S^2)^{\#2}$.

A key topological step in the proof is the development of the ``connected sum technique'', which reduces the stabilization problem for the boundary Dehn twist of a $4$-manifold $X$ to the corresponding problem for $X^{\#4}$. More precisely, in \Cref{lem: topological obstruction by duplicating} we prove that for any nonnegative integer $k$, if the boundary Dehn twist on $X \# (S^2 \times S^2)^{\# k}$ is isotopic rel.\ boundary to the identity, then the relative diffeomorphism obtained by performing the boundary Dehn twist on each boundary component of $X^{\# 2n} \# (S^2 \times S^2)^{\# k}$ is also isotopic rel.\ boundary to the identity for any positive integer $n$. Such a phenomenon is unexpected and counterintuitive, as it implies that even when the manifold becomes more complicated by taking connected sums, the number of copies of $S^2 \times S^2$ needed to kill the exotic phenomenon stays the same, or may even decrease. On the obstruction side, namely the algebraic side, one does not expect such behavior. In fact, as we see in \Cref{rem: level 0 projection and level 1 inclusion} and \Cref{cor: no map of level 2 in 4 copies}, taking connected sums produces a strictly stronger obstruction. For our specific example, it turns out that taking a connected sum is \emph{necessary}, as shown in \Cref{rem: level 0 projection and level 1 inclusion}, and in fact the minimal number of connected sums required to obtain the desired obstruction is four, as noted in \Cref{rmk:algebraic}.

% Here, we observe a crucial and somewhat surprising difference between the algebraic and topological aspects of the stabilization problem for diffeomorphisms: although this reduction works topologically (see \Cref{lem: topological obstruction by duplicating}), it fails algebraically (see \Cref{rmk:algebraic}). In other words, forming connected sums provides a stronger obstruction.

% , a strategy that does not apply to the stabilization problem for exotic manifolds. , namely the recycling of $S^2 \times S^2$ summands

% a $Y\sqcup\cdots\sqcup Y = \partial X^{\#2n}$ still does not extend to a homotopy coherent $\mathbb{Z}_2$-action on $X^{\#2n} \# (S^2 \times S^2)^{\#2}$ for some integer $n>0$.

%Although we focus on Montague's equivariant spectrum \cite{montague2022seiberg} and Baraglia--Hekmati's equivariant cohomology \cite{baraglia2024brieskorn} in this paper, there are several 

For the algebraic obstruction, we use the $\mathrm{Pin}(2) \times \Z_2$-equivariant local equivalence class of the chain group of Montague’s $\mathrm{Pin}(2) \times \Z_2$-equivariant spectrum
\[
SWF_{\mathrm{Pin}(2)\times \Z_2} \left( \bigsqcup_4  \Sigma(3,5,19) \right)  
= \bigwedge_4  SWF_{\mathrm{Pin}(2)\times \Z_2} (\Sigma(3,5,19)),
\]
corresponding to the Seifert $(-1)$-action on the fiber, which is an \emph{even} action.\footnote{For background on $\Z_p$-equivariant Seiberg--Witten Floer theory for $\Z_p$-equivariant $\mathrm{Spin}^c$ 3-manifolds, see \cite{montague2022seiberg, baraglia2024brieskorn, Baraglia-Hekmati:2024-1, iida2024monopoles}. There has been various preceding studies of $G$-equivariant Seiberg--Witten theory for $G$-equivariant closed 4-manifolds, see \cite{cho2002cyclic, nakamura2002free, baldridge2004seiberg, math0602654, cho2007finite, kiyono2011nonsmoothable, baraglia2024equivariant} for example.}
Moreover, we use the homotopy coherent Bauer–Furuta invariant and \cite[Section~3]{KPT24} to obtain \Cref{cor: htpy coherent action implies stably loc trivial}. Together with \Cref{lem: k-stable local triviality is preserved} and the ``connected sum technique'', that is, \Cref{lem: topological obstruction by duplicating}, we obtain the key obstruction \Cref{lem: algebraic obstruction by duplicating}. This lemma states that if the boundary Dehn twist on $X \# (S^2 \times S^2)^{\#2}$ is smoothly isotopic to the identity rel.\ boundary, then there exists a \emph{local map of level~2} (see \Cref{def:localmapSWFtype} for the precise definition) of the form
\[
C^*_{\mathrm{Pin}(2) \times \mathbb{Z}_2}(S^0) \longrightarrow
C^*_{\mathrm{Pin}(2) \times \mathbb{Z}_2}\!\left(
  \bigwedge_{2n} SWF_{\mathrm{Pin}(2)\times \mathbb{Z}_2}(\Sigma(3,5,19))
\right),
\]
for any positive integer $n$. For our purposes, we set $n=2$.

To calculate the $\mathrm{Pin}(2) \times \Z_2$-equivariant spectrum, we develop a $\mathrm{Pin}(2) \times \Z_2$-equivariant version of the lattice homotopy type introduced in \cite{dai2023lattice}.  
To this end, we analyze how a sequence of equivariant $\mathrm{Spin}^c$ structures can be constructed using Donnelly’s equivariant Atiyah--Patodi--Singer index theorem \cite{donnelly1978eta}, combined with Némethi’s combinatorial construction \cite{nemethi2005ozsvath} of $\mathrm{Spin}^c$ structures on plumbing graphs.  
We summarize our result on $S^1 \times \Z_p$-equivariant lattice homotopy for Seifert $S^1$-actions here.

\begin{thm} [\Cref{thm: eqv lattice comparison map}]\label{main thm:computation}
For any Seifert homology 3-sphere $Y$ with a Seifert $\mathbb{Z}_p$-action, 
there is a combinatorial algorithm to find a sequence of $\mathbb{Z}_p$-equivariant $\mathrm{Spin}^c$ structures $\gamma = (\mathfrak{s}_1, \mathfrak{s}_2, \dots, \mathfrak{s}_n)$ on a negative definite plumbing $W_\Gamma$ of $Y$, 
which carries the non-equivariant lattice homology of $Y$, such that there exists an $(S^1 \times \mathbb{Z}_p)$-equivariant stable map
\[
\mathcal{T}_{S^1 \times \mathbb{Z}_p} \colon
\mathcal{H}_{S^1 \times \mathbb{Z}_p}(\Gamma, \gamma )
\longrightarrow
SWF_{S^1 \times \mathbb{Z}_p}(Y)
\]
that is an $S^1$-equivariant homotopy equivalence. Here, $\mathcal{H}_{S^1 \times \mathbb{Z}_p}(\Gamma, \gamma )$ denotes the $S^1 \times \mathbb{Z}_p$-equivariant lattice homotopy type combinatorially defined from $\mathfrak{s}_1, \mathfrak{s}_2, \dots, \mathfrak{s}_n$, and $SWF_{S^1 \times \mathbb{Z}_p}(Y)$ is the metric-independent $S^1 \times \mathbb{Z}_p$-equivariant Seiberg--Witten Floer spectrum of $Y$, defined in a manner similar to Montague's spectrum, as described in \Cref{subsec:S1xZp-SWF-homotopy-type}.
\end{thm}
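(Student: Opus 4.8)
The plan is to refine, $\mathbb{Z}_p$-equivariantly, the non-equivariant identification of the Seiberg--Witten Floer homotopy type of a negative-definite plumbed homology sphere with its lattice homotopy type. Recall from \cite{dai2023lattice} that if $\Gamma$ is a negative-definite plumbing tree with at most one bad vertex and $Y=\partial W_\Gamma$, then the combinatorial lattice homotopy type $\mathcal{H}(\Gamma)$ is $S^1$-equivariantly homotopy equivalent to $SWF(Y)$; star-shaped Seifert plumbings satisfy the bad-vertex hypothesis (the central node is the only possible bad vertex). Since the asserted map $\mathcal{T}_{S^1\times\mathbb{Z}_p}$ is only required to be an $S^1$-equivariant homotopy equivalence, the argument splits into three pieces: (i) produce a $\mathbb{Z}_p$-invariant combinatorial datum $\gamma=(\mathfrak{s}_1,\dots,\mathfrak{s}_n)$ together with a $\mathbb{Z}_p$-action on $\mathcal{H}(\Gamma)$, packaged as an $S^1\times\mathbb{Z}_p$-space $\mathcal{H}_{S^1\times\mathbb{Z}_p}(\Gamma,\gamma)$ whose underlying $S^1$-space is $\mathcal{H}(\Gamma)$; (ii) build an $S^1\times\mathbb{Z}_p$-equivariant stable map $\mathcal{T}_{S^1\times\mathbb{Z}_p}\colon \mathcal{H}_{S^1\times\mathbb{Z}_p}(\Gamma,\gamma)\to SWF_{S^1\times\mathbb{Z}_p}(Y)$, where $SWF_{S^1\times\mathbb{Z}_p}(Y)$ is defined by equivariant finite-dimensional approximation as in \Cref{subsec:S1xZp-SWF-homotopy-type}; and (iii) deduce that $\mathcal{T}_{S^1\times\mathbb{Z}_p}$ is an $S^1$-equivariant homotopy equivalence from the underlying non-equivariant statement of \cite{dai2023lattice}.

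For step (i), observe that the Seifert circle action on $Y$ extends over the associated disk orbibundle and, after resolving the orbifold points, over the plumbing $W_\Gamma$ (the exceptional divisors are fixed pointwise in the base, so the toric-type local models patch together); in particular the subgroup $\mathbb{Z}_p\subset S^1$ acts smoothly on $W_\Gamma$, rotating the fibers of the plumbing. The key combinatorial input is a $\mathbb{Z}_p$-equivariant refinement of Némethi's assignment of characteristic covectors to $\mathrm{Spin}^c$ structures on plumbing graphs \cite{nemethi2005ozsvath}: starting from the canonical characteristic covector representing the torsion $\mathrm{Spin}^c$ structure on $Y$, one lists the $\mathbb{Z}_p$-equivariant lifts relevant to the fixed-point sectors of the action on $W_\Gamma$, obtaining $\gamma=(\mathfrak{s}_1,\dots,\mathfrak{s}_n)$; each $\mathfrak{s}_i$ restricts to the torsion $\mathrm{Spin}^c$ structure on $Y$, so each individually carries the non-equivariant lattice homology of $Y$. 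The $\mathbb{Z}_p$-action on the sublevel-set model $\mathcal{H}(\Gamma)$, and the representation spheres recording the $\mathbb{Z}_p$-action on the normal directions of the finite-dimensional Conley-index model, are then read off from Donnelly's equivariant Atiyah--Patodi--Singer index theorem \cite{donnelly1978eta} applied to the $\mathbb{Z}_p$-symmetric Dirac operator on $W_\Gamma$, whose fixed-point and eta contributions are expressible in the Seifert invariants; this yields the $S^1\times\mathbb{Z}_p$-space $\mathcal{H}_{S^1\times\mathbb{Z}_p}(\Gamma,\gamma)$ together with the promised combinatorial algorithm.

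For step (ii), recall that the non-equivariant map $\mathcal{H}(\Gamma)\to SWF(Y)$ of \cite{dai2023lattice} is obtained from the cobordism $W_\Gamma\setminus B^4$ from $S^3$ to $Y$ via finite-dimensional approximation, identifying $\mathcal{H}(\Gamma)$ with the Conley index of the induced flow (the negative-definiteness of $W_\Gamma$ removes the need for $b^+$-suspensions). Running this construction with a $\mathbb{Z}_p$-invariant metric and perturbation and using naturality of equivariant finite-dimensional approximation promotes it to an $S^1\times\mathbb{Z}_p$-equivariant stable map $\mathcal{T}_{S^1\times\mathbb{Z}_p}$. For step (iii), forgetting the $\mathbb{Z}_p$-action $\mathcal{T}_{S^1\times\mathbb{Z}_p}$ becomes exactly the non-equivariant map of \cite{dai2023lattice}, which is an $S^1$-equivariant homotopy equivalence; hence so is $\mathcal{T}_{S^1\times\mathbb{Z}_p}$, which gives the assertion.

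The main obstacle is the compatibility required in step (i): one must show that the combinatorially defined $\mathbb{Z}_p$-action on $\mathcal{H}_{S^1\times\mathbb{Z}_p}(\Gamma,\gamma)$ agrees, as a genuine $\mathbb{Z}_p$-equivariant stable object, with the $\mathbb{Z}_p$-action on the finite-dimensional model of $SWF_{S^1\times\mathbb{Z}_p}(Y)$. This requires (a) an explicit description of the $\mathbb{Z}_p$-fixed-point locus of the action on $W_\Gamma$ in terms of the plumbing graph, (b) evaluation of the local fixed-point and eta contributions in Donnelly's formula and their matching with the combinatorial grading and representation shifts attached to the $\mathfrak{s}_i$, and (c) verifying that the chosen sequence $\gamma$ is at once $\mathbb{Z}_p$-invariant and rich enough that the induced equivariant complex has underlying $S^1$-space $\mathcal{H}(\Gamma)$. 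A secondary technical point is to make precise the sense in which the Seifert circle action extends over $W_\Gamma$; if a genuine smooth extension is cumbersome, it suffices to work with the disk orbibundle or with a homotopy-coherent extension, which is enough for the finite-dimensional-approximation argument. None of these steps should require analytic input beyond \cite{donnelly1978eta,dai2023lattice,nemethi2005ozsvath} and Montague's equivariant construction \cite{montague2022seiberg}, but organizing the bookkeeping that ties them together is the substance of the proof.
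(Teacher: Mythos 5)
Your overall strategy---lift N\'emethi's computation sequence to $\mathbb{Z}_p$-equivariant $\mathrm{Spin}^c$ structures, assemble an $S^1\times\mathbb{Z}_p$-equivariant lattice model, compare with $SWF_{S^1\times\mathbb{Z}_p}(Y)$ via equivariant Bauer--Furuta invariants, and inherit the $S^1$-equivariant equivalence from Dai--Stoffregen---is indeed the route taken in the paper, and your identification of Donnelly's equivariant APS theorem as the tool for reading off the $\mathbb{Z}_p$-representation shifts is on target (this is what \Cref{lem: eqv index change for c1 increment} and \Cref{appendix A} supply).

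There is, however, a real gap in your step (ii). You describe the Dai--Stoffregen lattice homotopy type $\mathcal{H}(\Gamma)$ as ``the Conley index of the induced flow'' on the single cobordism $W_\Gamma\smallsetminus B^4$, and you propose to obtain $\mathcal{T}_{S^1\times\mathbb{Z}_p}$ by running equivariant finite-dimensional approximation once and invoking ``naturality.'' That is not how the comparison map works. The lattice homotopy type is a homotopy colimit of representation spheres $V_i^+$ glued along cylinders $W_i^+\wedge[0,1]$, with one vertex for each $\mathrm{Spin}^c$ structure $\mathfrak{s}_i$ in the computation sequence. The comparison map is constructed \emph{piecewise}: on the vertex $V_i^+$ it is the relative Bauer--Furuta invariant $BF(W_\Gamma,\mathfrak{s}_i)$ for that specific $\mathrm{Spin}^c$ structure, and on the cylinder one must \emph{choose a homotopy} between $BF(W_\Gamma,\mathfrak{s}_i)$ and $BF(W_\Gamma,\mathfrak{s}_{i+1})$ after the appropriate suspension. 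Non-equivariantly, the existence of such a homotopy is the $U$-action adjunction relation (Bauer's wedge/gluing formula applied to a blow-up). In the $S^1\times\mathbb{Z}_p$-equivariant setting this is precisely what one does not get for free from ``naturality'': one must prove an equivariant adjunction relation relating $BF_{S^1\times\mathbb{Z}_p}(W_\Gamma,\tilde{\mathfrak{s}})$ and $BF_{S^1\times\mathbb{Z}_p}(W_\Gamma,\tilde{\mathfrak{s}}')$ when $\tilde{\mathfrak{s}}'=\tilde{\mathfrak{s}}+PD[S_v]$, identifying the difference with a specific inclusion $U_{[k]}$ of $S^1\times\mathbb{Z}_p$-representation spheres, where $[k]$ records the equivariance number. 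This is the content of \Cref{prop: BF gluing}, and its proof is nontrivial: it requires a $\mathbb{Z}_p$-equivariant version of Bauer's connected-sum argument along a lens space $-L(n,1)$ carrying a $\mathbb{Z}_p$-invariant PSC metric (\Cref{general gluing}, \Cref{lem:embedding}, \Cref{lem: equivariant PSC metrics on lens spaces}), plus the equivariant Borsuk--Ulam-type degree arguments of \Cref{lem: borsuk ulam} and \Cref{lem: adjunction} to pin the homotopy class of the gluing. Without this ingredient your construction of $\mathcal{T}_{S^1\times\mathbb{Z}_p}$ does not go through: you have no way to equivariantly glue the Bauer--Furuta invariants of neighboring $\mathrm{Spin}^c$ structures over the cylinders of the lattice model.

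A secondary but also substantive point concerns your step (i). You say one should ``list the $\mathbb{Z}_p$-equivariant lifts relevant to the fixed-point sectors,'' but the actual mechanism is to lift each $\mathrm{Spin}^c$ structure in N\'emethi's (already fixed) computation sequence \emph{uniquely} to a $\mathbb{Z}_p$-equivariant one, given a choice of boundary lift; this uniqueness is exactly \Cref{lem: unique relative lifting for plumbing}, and the subsequent bookkeeping uses the equivariance number $n_{\mathrm{eqv}}$ and its behavior under the central-node and arm-node moves (\Cref{lem: eqv num increase for central node}, \Cref{lem: eqv num is fixed in comp seq}). Your sketch hints at this but does not identify the relative-lifting uniqueness statement that makes the combinatorial assignment well defined.

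\end{document}
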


\begin{rem}
Since our equivariant spectrum $SWF_{S^1 \times \mathbb{Z}_p}(Y)$ recovers the Baraglia--Hekmati equivariant Seiberg--Witten Floer cohomology~\cite{baraglia2024brieskorn}, it follows from \Cref{main thm:computation} that their cohomology can be computed from 
\[
\tilde{H}^*_{S^1 \times \mathbb{Z}_p}(\mathcal{H}_{S^1 \times \mathbb{Z}_p}(\Gamma, \gamma)),
\]
and hence the equivariant Fr\o yshov invariants introduced in~\cite{baraglia2024brieskorn} can be computed combinatorially. 
In particular, we prove in \Cref{subsec: large p Froyshov} that for any Seifert fibered rational homology sphere $Y$ where $\mathbb{Z}_p$ acts as a subaction of the Seifert $S^1$-action and any self-conjugate $\Z_p$-invariant $\mathrm{Spin}^c$ structure $\mathfrak{s}$ on $Y$, the $\mathbb{Z}_p$-equivariant Fr\o yshov invariant of $(Y,\mathfrak{s})$ is given by
\[
\delta^{(p)}_0(Y,\mathfrak{s}) = \delta(Y,\mathfrak{s}) + \dim HF_{red}(Y,\mathfrak{s}),
\]
whenever $p$ is a sufficiently large prime.
\end{rem}

\begin{rem}
Note that in \Cref{main thm:computation}, we may take $G = S^1$ as the full symmetry group. 
By developing $S^1 \times G$-equivariant Seiberg--Witten Floer homotopy types, 
we expect that an $S^1 \times S^1$-equivariant lattice homotopy type should exist without any essential modification.
\end{rem}

Finally, a certain $\mathrm{Pin}(2)$ refinement of \Cref{main thm:computation} for chain models will be discussed in \Cref{lem: Pin(2) lattice cochain computes SWF}, which provides a computation of the $\mathrm{Pin}(2) \times \mathbb{Z}_2$-equivariant local equivalence class of the $\mathrm{Pin}(2) \times \mathbb{Z}_2$-equivariant chain group of 
\[
\bigwedge_4 SWF_{\mathrm{Pin}(2)\times \mathbb{Z}_2} \left(\Sigma(3,5,19)\right).
\]
This computation is used in \Cref{subsec:proofofthemain} to conclude that there is no local map
\[
C^*_{\mathrm{Pin}(2) \times \mathbb{Z}_2}(S^0) \longrightarrow 
C^*_{\mathrm{Pin}(2) \times \mathbb{Z}_2}\!\left(
  \bigwedge_4 SWF_{\mathrm{Pin}(2)\times \mathbb{Z}_2}(\Sigma(3,5,19))
\right)
\]
of level~$2$. By \Cref{lem: algebraic obstruction by duplicating}, we therefore conclude that the boundary Dehn twist on $X \# (S^2 \times S^2)^{\#2}$ is \emph{not} smoothly isotopic to the identity rel.\ boundary. On the other hand, since the contractible filling $X$ has trivial homology, the boundary Dehn twist is topologically isotopic to the identity rel.\ boundary~\cite[Corollary~C]{Orson-Powell:2022-1}, which completes the proof.

% \textcolor{red}{This implies that the $\mathbb{Z}_2$-action on $Y \sqcup Y \sqcup Y \sqcup Y = \partial X^{\#4}$, induced by the Seifert $S^1$-action on each $Y$, does not extend to a homotopy coherent $\mathbb{Z}_2$-action on $X^{\#4} \# (S^2 \times S^2)^{\#2}$. Moreover, the ``connected sum technique'' from \Cref{sec:Dehn twists and stabilizations}, together with the arguments in \cite[Section~3]{KPT24}, shows that any odd iterate of the boundary Dehn twist on $X \# (S^2 \times S^2)^{\#2}$ is not smoothly isotopic to the identity rel.\ boundary. On the other hand, since the contractible filling has trivial homology, the boundary Dehn twist is topologically isotopic to the identity rel.\ boundary~\cite[Corollary~C]{Orson-Powell:2022-1}, which completes the proof.
% }

% a $Y\sqcup\cdots\sqcup Y = \partial X^{\#2n}$ still does not extend to a homotopy coherent $\mathbb{Z}_2$-action on $X^{\#2n} \# (S^2 \times S^2)^{\#2}$ for some integer $n>0$.

\begin{rem}
As stated in \Cref{main thm:computation}, we provide a metric-independent definition of the $S^1 \times \Z_p$--equivariant Seiberg--Witten Floer homotopy type of a $\Z_p$--equivariant Spin$^c$ rational homology $3$--sphere in \Cref{subsec:S1xZp-SWF-homotopy-type}. 
Within this framework, for a knot $K \subset S^3$, a prime $p$, and an element $[k] \in \Z_p$, we define an orbifold version of the Seiberg--Witten Floer homotopy type of $K$: 
\[
SWF_{\mathrm{ofd}}^{(p), [k]}(K),
\]
which is realized as a certain fixed-point ($S^1$--equivariant pointed) spectrum obtained from the $p$th branched covering space along $K$. This is a genuine invariant of the knot $K$. 
Moreover, for a properly embedded surface $S$ equipped with suitable orbifold Spin$^c$ structures $\s$, one obtains the corresponding surface cobordism maps; see \Cref{rem:ofd} and \Cref{rem:ofd_bf} for details. This invariant may also be of independent interest.
\end{rem}

\subsection*{Acknowledgements} 

The authors would like to thank Matthew Stoffregen, Imogen Montague, Kristen Hendricks, David Baraglia, and Nobuhiro Nakamura for very helpful discussions. The authors also appreciate Jianfeng  Lin for telling us an alternative proof of a version of \Cref{thm: Froyshov strict inequality} based on a spectral sequence argument. The authors are especially grateful to Mike Miller Eismeier for presenting a proof of \Cref{thm: singular cochain of BPin(2) is R}, from which \Cref{appendix: BPin(2)} was written.

The first author is partially supported by the Royal Society University Research Fellowship URF\textbackslash R1\textbackslash 251501. The second author is partially supported by the Samsung Science and Technology Foundation (SSTF-BA2102-02) and the NRF grant RS-2025-00542968.
The third author was partially supported
by JSPS KAKENHI Grant Number 22K13921 and RIKEN iTHEMS Program.

\subsection*{Organization} 

The structure of the paper is as follows.

Section~2 collects background materials and unifies notations. 
It reviews Némethi's computation sequences and graded roots, as well as Montague's equivariant Seiberg--Witten Floer theory, which serve as the technical foundation of the paper. We also introduce $\mathrm{Pin}(2)$-equivariant homotopy coherenet Bauer--Furuta invariant here. 

In Section~3, we develop the framework of equivariant Seiberg--Witten Floer homotopy types for equivariant Spin$^c$ 3-manifolds and Bauer--Furuta theory for equivariant Spin$^c$ 4-manifolds. Some gluing formula is also provided. 

Section~4 defines the $S^1 \times \Z_p$-equivariant lattice homotopy type, and provides a combinatorial algorithm for Seifert fibered 3-manifolds, which allows for explicit computations of equivariant Fr\o yshov invariants.

In Section~5, we refine the construction to the chain level, developing the $\mathrm{Pin}(2)\times \Z_2$-equivariant lattice chain homotopy type. 
We compute the local equivalence class of Montague's spectrum, yielding algebraic obstructions that play a decisive role in our main application. 

Section~6 is devoted to proving the main theorem, stating that odd iterates of the boundary Dehn twist on the Mazur manifold bounded by $\Sigma(3,5,19)$ remain exotic after two stabilizations. 
A key step is the ``connected sum technique'', which exploits a difference between algebraic and topological aspects of the stabilization problem in order to get a stronger algebraic obstruction.

Finally, three appendices provide supporting materials: 
Appendix~A states Atiyah--Segal--Singer's equivariant index theorem for manifolds with boundary, 
Appendix~B describes the $\Z_2$-coefficient singular cochain dga of $B\mathrm{Pin}(2)$, 
and Appendix~C estimates the stable local triviality of Seifert homology spheres in a certain general setting. 

\section{Background materials}

\subsection{Notations} \label{subsec: notations}

Throughout the paper, we unify the notation as follows:
\begin{itemize}
    \item All Seifert fibered rational homology spheres $Y$ are oriented with the unique orientation satisfying the following property: the negative definite almost rational starshaped plumbing graph bounding $Y$ gives a negative-definte cobordism from $\emptyset$ to $Y$.
    \item All tensor products of dgas, dg-modules, $A_\infty$-modules, bimodules, and $E_\infty$-modules are derived tensor products unless explicitly mentioned otherwise.
    \item For a topological space $X$, we will sometimes identify its singular cochain complex $C^\ast(X)$ with the \emph{normalized} singular cochain complex, that is, the quotient of $C^\ast(X)$ by the subcomplex of degenerate singular simplices (which is acyclic).
    \item $\zeta_p = e^{\frac{2\pi i}{p}} \in \C$ denotes the primitive $p$-th root of unity.
    \item The geometric $\Z_p$-action on the 3-manifold $Y$ is denoted by $\tau \colon Y \to  Y$.
    \item For a given Spin or $\mathrm{Spin}^c$ structure $\fraks$, we write $P(\fraks)$ for the corresponding principal bundle. Denote by $S$ the spinor bundle for $\s$ and 
    by $\dirac_A : \Gamma(S^+) \to \Gamma(S^-)$ the plus part of the 4-dimensional Dirac operator for a fixed $\mathrm{Spin}^c$ connection $A$ and $S=S^+ \oplus S^-$. The notation $\diracpartial_B: \Gamma(S)\to \Gamma(S)$ denotes the 3-dimensional Dirac operator for a $\mathrm{Spin}^c$ connection $B$.
    \item $\tilde{\tau}$ denotes a lift of $\tau$ to the principal or spinor bundle. A $\mathrm{Spin}^c$ structure with such a lift is written as $\tilde{\fraks}$.
    \item $\R$ and $\C$ denote the trivial and the standard representations of $S^1$, respectively.
    \item $\widetilde{\C}_{[i]}$ denotes the representation of $\mathrm{Pin}(2) \times \Z_p$, where $\Z_p$ acts by $\frac{i}{p}$-fold rotation and $j$ acts by $-1$.
    \item ${\C}_{[i]}$ denotes the representation of $S^1 \times \Z_p$, where $\Z_p$ acts by $\frac{i}{p}$-fold rotation and $S^1$ acts in the standard way.
   % \item $\wt{\R}^2_{[i]}$ denotes the $\mathrm{Pin}(2) \times \Z_p$ representation in which the identity component of $\mathrm{Pin}(2)$ acts trivially, $j$ acts by $-1$, and $\Z_p$ acts by $\frac{i}{p}$-fold rotation. When $p=2$, the 1-dimensional versions $\wt\R_{[0]}$ and $\wt\R_{[1]}$ are well-defined; we denote them instead as $\wt\R_+$ and $\wt\R_-$, respectively.
    \item $\H_{[i]}$ denotes the $\mathrm{Pin}(2) \times \Z_p$ representation where $\mathrm{Pin}(2)$ acts via the quaternions and $\Z_p$ acts by $\frac{i}{p}$-fold rotation on each component $\C \oplus \C = \H$. When $p=2$, we denote $\H_{[0]}$ and $\H_{[1]}$ by $\H_+$ and $\H_-$, respectively.
    \item For a compact Lie group $G$, we write $R(G)$ and $RO(G)$ for the complex and real representation rings of $G$. We also consider the quaternionic representation ring of $G$, denoted $RQ(G)$.
    \item For a $G$-vector space $V$, we denote by $V^+$ the $ G$-sphere obtained as the one point compactification. 
    \item The subsets $RQ(G)_{\ge 0}$, $R(G)_{\geq 0}$, and $RO(G)_{\geq 0}$ denote the classes represented by actual quaternionic, complex, and real $G$-representations, respectively.
    \item The notation for equivariant cohomology is
    \[
    H^\ast(B(S^1 \times \Z_p); \Z_p) \cong 
    \begin{cases}
        \Z_p[U, \theta] & \text{if } p = 2, \\
        \Z_p[U, R, S]/(R^2) & \text{if } p > 2,
    \end{cases}
    \]
    where $U$ and $\theta$ are the degree two and one elements coming from the generators of $H^\ast(BS^1; \Z_p)$ and $H^\ast(B\Z_2; \Z_2)$ respectively, and $R$ and $S$ are degree one and two elements generating $H^\ast(B\Z_p; \Z_p)$.
    In the case $p = 2$, we sometimes write $\theta^2$ as $S$. We shall also use 
    \[
    H^\ast  ( B\mathrm{Pin}(2)) \cong  \Z_2[Q,V]/(Q^3),
\]
where $\deg V = 4$ and $\deg Q=1$.
    \item The CW structure on $E\Z_p$ and $B\Z_p$ is fixed as in \cite{KPT24}, so that the $\Z_p$-action on $E\Z_p$ is cellular.
    \item For a $\mathrm{Spin}^c$ $4$-manifold $(X, \mathfrak{s})$ with boundary $Y$, equipped with a Riemannian metric $g$ which is a product metric $dt^2 + g_Y$ near the boundary and with a $\mathrm{Spin}^c$ connection $A_0$ that is flat near the boundary, we write the $\mathrm{Spin}^c$ Dirac index with Atiyah--Patodi--Singer boundary conditions as 
\[
\ind^{\mathrm{APS}} \dirac_{X, \mathfrak{s}, A_0, g} \in \mathbb{Z}.
\]
The topological part of the index is defined by 
\[
\ind^{t} \dirac_{X, \mathfrak{s}} := \ind^{\mathrm{APS}} \dirac_{X, \mathfrak{s}, A_0, g} - n(Y, A_0|_Y, g, \mathfrak{s}|_Y) \in \mathbb{Q},
\]
which is independent of the choices of Riemannian metrics and connections, where $n(Y, A_0|_Y, g, \mathfrak{s}|_Y)$ denotes Manolescu's correction term introduced in \cite{Man03}. In other words,
\[
\ind^{t} \dirac_{X, \mathfrak{s}} = \frac{1}{8} \big( c_1(\mathfrak{s})^2 - \sigma(X) \big).
\]
\item Suppose $X$ has a smooth $\Z_p$-action, preserving the connected components of $\partial X$. If such $(X,\s)$ lifts to a $\Z_p$-equivariant $\mathrm{Spin}^c$ structure and $A_0$ and $g$ are taken so that $\Z_p$-invariant, we write the $\Z_p$-equivariant $\mathrm{Spin}^c$ Dirac index with Atiyah--Patodi--Singer boundary conditions as 
\[
\ind^{\mathrm{APS}}_{\Z_p} \dirac_{X, \s, A_0, g} \in R(\Z_p). 
\]
For given element $[i] \in\Z_p$, the trace index is written as 
\[
\ind^{\mathrm{APS}}_{[i]} \dirac_{X, \s, A_0, g} := \operatorname{Tr}_{[i]} \left( \ind^{\mathrm{APS}}_{\Z_p} \dirac_{X, \s, A_0, g}\right) \in \C. 
\]
In \Cref{appendix A}, we introduce their topological parts 
\[
\ind^{t}_{\Z_p} \dirac_{X, \s} \in R(\Z_p)\otimes \Q \text{ and } \ind^{t}_{[i]} \dirac_{X, \s} \in \C. 
\]
\end{itemize}

Since we will be doing stable equivariant homotopy theory throughout the paper, we will have to fix some universes that we will use, which are given as follows. Suppose $p$ is an odd prime.  
For $G = S^1 \times \Z_p$, we take our universe to be  
\[
\mathcal{U}_p = \mathbb{R}^\infty \oplus \left( \bigoplus_{i=0}^{p-1} \widetilde{\C}_{[i]}^\infty \right) \oplus \left( \bigoplus_{i=0}^{p-1} \C_{[i]}^\infty \right).
\]  
For $G = \mathrm{Pin}(2) \times \Z_2$, we take our universe to be  
\[
\mathcal{V}_2 = \wt{\mathbb{R}}_+^\infty \oplus \wt{\mathbb{R}}_-^\infty \oplus \H_{+}^\infty \oplus \H_{-}^\infty.
\]  
For any element $h \in \Z_p$, we write the corresponding trace map as  
\[
\operatorname{Tr}_h\colon R(\Z_p) \cong \Z\left[\C_{[1]}\right] \longrightarrow \Z[\zeta_p].
\]  
The augmentation maps on the $KO,K,KQ$ theories, as well as the representation rings $RO,R,RQ$, are written as $\alpha_{\R}, \alpha_\C, \alpha_\H$ respectively.  
Note that a given element $[V]\in R(\Z_p)$ is recovered from its traces by the formula  
\[
V = \sum_{l=0}^{p-1} \left( \frac{1}{p} \sum_{k \in \Z_p} \operatorname{Tr}_{\zeta_p^k}(V) \cdot \zeta_p^{-kl} \right) \cdot  \left[\C_{[l]}\right].
\]  
The augmentation map is itself a trace, namely  
\[
\alpha_{\mathbb{C}}(V) = \operatorname{Tr}_1(V) = \dim V.
\]

\subsection{Computation sequences and graded roots} \label{subsec: computations sequence}

In this subsection, we will review the materials in \cite{nemethi2005ozsvath}. For simplicity, we fix the following notations.
\begin{itemize}
    \item Let $\Gamma$ be a negative definite, almost rational plumbing graph. Let $W_\Gamma$ denote the associated plumbed 4-manifold, and assume that $Y=\partial W_\Gamma$ is a rational homology sphere.
    \item Let $D_v$ denote the disk bundle associated to a node $v \in V(\Gamma)$, and let $S_v$ be its zero-section.
    \item Let $V(\Gamma)$ be the set of nodes of $\Gamma$, and for each node $v \in V(\Gamma)$, denote its weight by $w(v)$.
    \item Fix a ``base node'' $v_0 \in V(\Gamma)$.
    \item Identify $H_2(W_\Gamma; \mathbb{Z})$ with $\mathbb{Z} V(\Gamma)$ and $H^2(W_\Gamma; \mathbb{Z})$ with $\operatorname{Hom}_{\mathbb{Z}}(\mathbb{Z} V(\Gamma), \mathbb{Z})$, and regard $\mathbb{Z} V(\Gamma)$ as a sublattice of $\operatorname{Hom}_{\mathbb{Z}}(\mathbb{Z} V(\Gamma), \mathbb{Z})$ by mapping each node $v \in V(\Gamma)$ to its dual $v^*$ with respect to the intersection form on $W_\Gamma$, i.e., $v^*(w) = v \cdot w$ for all $w \in V(\Gamma)$.
    \item Since the index of $\mathbb{Z} V(\Gamma)$ in $\operatorname{Hom}_{\mathbb{Z}}(\mathbb{Z} V(\Gamma), \mathbb{Z})$ is $|H_1(\partial W_\Gamma; \mathbb{Z})|$, and hence finite, we canonically identify $\operatorname{Hom}_{\mathbb{Z}}(\mathbb{Z} V(\Gamma), \mathbb{Z})$ with a subgroup of $\mathbb{Q} V(\Gamma)$.
    \item For $x = \sum_{v \in V(\Gamma)} \lambda_v v \in \mathbb{Q} V(\Gamma)$ and $v \in V(\Gamma)$, denote the coefficient $\lambda_v$ by $m_v(x)$.
    \item We endow $\mathbb{Q} V(\Gamma)$ with the partial order given by $x \le y$ if and only if $m_v(x) \le m_v(y)$ for all $v \in V(\Gamma)$.
\end{itemize}

We start by observing that, since $H^2(W_\Gamma;\mathbb{Z})$ is free and hence has no 2-torsion, the first Chern class map
\[
c_1 \colon \mathrm{Spin}^c(W_\Gamma) \longrightarrow H^2(W_\Gamma;\mathbb{Z}) = \mathrm{Hom}_\mathbb{Z}(\mathbb{Z} V(\Gamma), \mathbb{Z})
\]
is injective, and its image consists precisely of \emph{characteristic elements} of $\Gamma$, i.e., elements $v \in H^2(W_\Gamma; \mathbb{Z})$ satisfying $v(w) \equiv w \cdot w \pmod{2}$ for all $w \in H_2(W_\Gamma; \mathbb{Z}) = \mathbb{Z} V(\Gamma)$. For any characteristic element $x$ of $\Gamma$, we denote the corresponding $\mathrm{Spin}^c$ structure on $W_\Gamma$ by $\mathrm{sp}(x)$.

Denote the set of characteristic elements by $\mathrm{Char}(\Gamma)$, which will be canonically identified with $\mathrm{Spin}^c(W_\Gamma)$. Note that it admits a transitive action of $2 \cdot \mathrm{Hom}_\mathbb{Z}(\mathbb{Z} V(\Gamma), \mathbb{Z})$. Moreover, every $\mathrm{Spin}^c$ structure $\mathfrak{s}$ on $\partial W_\Gamma$ extends to some $\mathrm{Spin}^c$ structure $\mathfrak{s}_W$ on $W_\Gamma$, and the first Chern classes of any two such extensions differ by an element of $2 \mathbb{Z} V(\Gamma)$. Thus, the following map is a bijection:
\[
\tilde{c}_1 \colon \mathrm{Spin}^c(\partial W_\Gamma) \xrightarrow{\mathfrak{s} \mapsto [c_1(\mathfrak{s}_X)] \bmod 2} \mathrm{Char}(\Gamma) / 2 \mathbb{Z} V(\Gamma) \quad (\cong H^2(\partial W_\Gamma; \mathbb{Z})).
\]
The set $\mathrm{Char}(\Gamma)$ contains a distinguished element $K$, called the \emph{canonical class}, defined by
\[
K(v) = -w(v) - 2 \qquad \text{for all } v \in V(\Gamma).
\]
Hence, for any $\mathfrak{s} \in \mathrm{Spin}^c(\partial W_\Gamma)$, we have a corresponding equivalence class $\tilde{c}_1(\mathfrak{s}) \in \mathrm{Char}(\Gamma) / 2\mathbb{Z} V(\Gamma)$. Any representative $k$ of this class can be written as
\[
k = K + 2l, \qquad l \in \operatorname{Hom}_\mathbb{Z}(\mathbb{Z} V(\Gamma), \mathbb{Z}),
\]
where $l$ is unique modulo $2\mathbb{Z} V(\Gamma)$.

In order to make a canonical choice of $l$, we consider the set
\[
S_\mathfrak{s} = \{ x \in \tilde{c}_1(\mathfrak{s}) \mid x(v) \le 0 \text{ for all } v \in V(\Gamma) \}.
\]
Notice that $S_\mathfrak{s}$ inherits a partial order from $\mathrm{Hom}_\mathbb{Z}(\mathbb{Z} V(\Gamma), \mathbb{Z})$; with respect to that partial ordering, $S_\mathfrak{s}$ has a unique minimal element $l'_\mathfrak{s}$ \cite[Lemma 5.4]{nemethi2005ozsvath}, which depends only on the given $\mathrm{Spin}^c$ structure $\mathfrak{s}$ on $\partial W_\Gamma$. Thus $\tilde{c}_1(\mathfrak{s})$ has a canonical representative
\[
k_\mathfrak{s} = K + 2l'_\mathfrak{s}.
\]
Using this representative, we define the \emph{weight function} $\chi_\mathfrak{s} \colon \mathbb{Z} V(\Gamma) \rightarrow \mathbb{Z}$ by
\[
\chi_\mathfrak{s}(x) = -\frac{k_\mathfrak{s}(x) + x \cdot x}{2}.
\]
It is then straightforward to verify that the topological part of the index of the $\mathrm{Spin}^c$ Dirac operator $\dirac_{W_\Gamma, \mathrm{sp}(k_\mathfrak{s} + 2x)}$ for $(W_\Gamma, \mathrm{sp}(k_\mathfrak{s} + 2x))$, whose boundary is $(\partial W_\Gamma, \mathfrak{s})$, is
\[
\mathrm{ind}^t \, \dirac_{W_\Gamma, \mathrm{sp}(k_\mathfrak{s} + 2x)}
= -\frac{c_1(\mathrm{sp}(k_\mathfrak{s} + 2x))^2 - 2\tilde{\chi}(W_\Gamma) - 3\sigma(W_\Gamma)}{8}
= -\frac{k_\mathfrak{s}^2 + |V(\Gamma)|}{8} + \chi_\mathfrak{s}(x).
\]

Now, for each integer $i \ge 0$, we consider elements of $\mathbb{Z} V(\Gamma)$ whose coefficient at the base node $v_0$ is exactly $i$. Define the following subset:
\[
D_i = \left\{ x \in \mathbb{Z} V(\Gamma) \,\middle\vert\, m_{v_0}(x) = i,\; (x + l'_\mathfrak{s})(v) \le 0 \text{ for all } v \in V(\Gamma) \smallsetminus \{v_0\} \right\} \subset \mathbb{Z} V(\Gamma).
\]Since $\Gamma$ is negative definite, there exists a unique minimal element in $D_i$ with respect to the partial ordering on $\mathbb{Z} V(\Gamma)$~\cite[Lemma 7.6]{nemethi2005ozsvath}, which we denote by $x_\mathfrak{s}(i)$. Moreover, the elements $x_\mathfrak{s}(i)$ and $x_\mathfrak{s}(i+1)$ can be connected by a \emph{computation sequence}, which is a sequence
\[
x^\mathfrak{s}_{i,0}, x^\mathfrak{s}_{i,1}, \dots, x^\mathfrak{s}_{i,n_i}
\]
in $\mathbb{Z} V(\Gamma)$ defined as follows:
\begin{itemize}
    \item $x^\mathfrak{s}_{i,0} = x_\mathfrak{s}(i) + v_0$.
    \item Suppose that $x^\mathfrak{s}_{i,0}, \dots, x^\mathfrak{s}_{i,k}$ have been defined. If $x^\mathfrak{s}_{i,k} = x_\mathfrak{s}(i+1)$, then the sequence terminates, and we set $n_i = k$.
    \item Otherwise, there exists some $v \in V(\Gamma) \smallsetminus \{v_0\}$ such that $(x^\mathfrak{s}_{i,k} + l'_\mathfrak{s})(v) > 0$~\cite[Lemma 7.7]{nemethi2005ozsvath}. Choose such a vertex $v$ and define $x^\mathfrak{s}_{i,k+1} = x^\mathfrak{s}_{i,k} + v$.
\end{itemize}
By concatenating the cycles $x_\mathfrak{s}(i)$ for $i \ge 0$ with the computation sequences connecting them, we obtain the following infinite sequence in $\mathbb{Z} V(\Gamma)$:
\[
x_\mathfrak{s}(0),\, x^\mathfrak{s}_{0,0},\, x^\mathfrak{s}_{0,1},\, \dots,\, x^\mathfrak{s}_{0,n_0-1},\, x_\mathfrak{s}(1),\, x^\mathfrak{s}_{1,0},\, x^\mathfrak{s}_{1,1},\, \dots,\, x^\mathfrak{s}_{1,n_1-1},\, x_\mathfrak{s}(2),\, \dots
\]
Note that $x^\mathfrak{s}_{i,n_i} = x_\mathfrak{s}(i+1)$. Furthermore, since $\Gamma$ is almost rational, we have
\[
\chi_\mathfrak{s}(x^\mathfrak{s}_{i,0}) = \chi_\mathfrak{s}(x^\mathfrak{s}_{i,1}) = \cdots = \chi_\mathfrak{s}(x^\mathfrak{s}_{i,n_i})
\]
for each $i$~\cite[Lemma 9.1]{nemethi2005ozsvath}. Finally, there exists an integer $N > 0$ such that
\[
\chi_\mathfrak{s}(x_\mathfrak{s}(n+1)) \ge \chi_\mathfrak{s}(x_\mathfrak{s}(n))
\]
for all $n > N$~\cite[Theorem 9.3]{nemethi2005ozsvath}.

\begin{rem} \label{rem: computation sequence of spin c str}
We note that the above sequence can also be regarded as a sequence of $\mathrm{Spin}^c$ structures on $W_\Gamma$, which we describe as follows. For each $x \in \mathbb{Z} V(\Gamma)$, let us write $\mathrm{sp}(k_\mathfrak{s} + 2x)$ as $\mathrm{sp}_\mathfrak{s}(x)$. Then the corresponding sequence of $\mathrm{Spin}^c$ structures becomes
\[
\mathrm{sp}_\mathfrak{s}(x_\mathfrak{s}(0)),\, \mathrm{sp}_\mathfrak{s}(x^\mathfrak{s}_{0,0}),\, \dots,\, \mathrm{sp}_\mathfrak{s}(x^\mathfrak{s}_{0,n_0-1}),\, \mathrm{sp}_\mathfrak{s}(x_\mathfrak{s}(1)),\, \dots
\]
We refer to this as the \emph{$\mathrm{Spin}^c$ computation sequence of $(\Gamma, \mathfrak{s})$}. This sequence has the following properties:
\begin{itemize}
    \item $\mathrm{sp}_\mathfrak{s}(x^\mathfrak{s}_{i,0}) = \mathrm{sp}_\mathfrak{s}(x_\mathfrak{s}(i)) + PD[S_{v_0}]$;
    \item $\mathrm{sp}_\mathfrak{s}(x^\mathfrak{s}_{i,j+1}) = \mathrm{sp}_\mathfrak{s}(x^\mathfrak{s}_{i,j}) + PD[S_v]$ for some node $v \in V(\Gamma) \smallsetminus \{v_0\}$;
    \item Each $\mathrm{Spin}^c$ structure in the sequence restricts to $\mathfrak{s}$ on $\partial W_\Gamma$.
\end{itemize}
In fact, any two successive terms in the sequence differ only in the interior of the disk bundle associated to some node of $\Gamma$.
\end{rem}

\begin{rem} \label{rem: conjugation in terms of cycles}
Recall that, for any $\mathrm{Spin}^c$ structure $\mathfrak{s}$, its conjugate $\overline{\mathfrak{s}}$ satisfies
\[
c_1(\overline{\mathfrak{s}}) = -c_1(\mathfrak{s}).
\]
Since $\mathrm{Spin}^c$ structures on $W_\Gamma$ are uniquely determined by their first Chern class, and
\[
c_1(\mathrm{sp}_\mathfrak{s}(-k_\mathfrak{s} - x)) = k_\mathfrak{s} + 2(-k_\mathfrak{s} - x) = -(k_\mathfrak{s} + 2x) = -c_1(\mathrm{sp}_\mathfrak{s}(x)),
\]
it follows that
\[
\mathrm{sp}_\mathfrak{s}(-k_\mathfrak{s} - x) = \overline{\mathrm{sp}_\mathfrak{s}(x)}
\]
for any $\mathfrak{s} \in \mathrm{Spin}^c(\partial W_\Gamma)$ and $x \in \mathbb{Z} V(\Gamma)$.
\end{rem}

To convert this into a graded root, we first recall a (slightly modified) definition.

\begin{defn}
A \emph{planar graded root} is an infinite tree $R = (V, E)$ embedded in $\mathbb{R}^2$, where each edge is mapped to a straight line segment. For each vertex $v \in V$, we denote its $y$-coordinate (as a point in $\mathbb{R}^2$) by $\chi(v)$, called the \emph{weight} of $v$. The following conditions are required:
\begin{itemize}
    \item The weight $\chi(v)$ is an integer for all $v \in V$, and the resulting weight function $\chi \colon V \to \mathbb{Z}$ is bounded below.
    \item For every $n \in \mathbb{Z}$, the set $\chi^{-1}(n)$ is finite, and contains exactly one element for all sufficiently large $n$.
    \item For any edge connecting two vertices $v, w \in V$, we have $\chi(v) - \chi(w) = \pm 1$.
\end{itemize}
Two planar graded roots $(R, \chi)$ and $(R', \chi')$ are said to be \emph{equivalent} if one can be isotoped to the other in the horizontal direction, up to an overall vertical shift.
\end{defn}

We also define simple angles of planar graded roots.

\begin{defn}
    Given a planar graded root $(R,\chi)$ and its vertices $v,v',w$, where $v$ and $v'$ are leaves of $R$, we say that \emph{$v$ and $v'$ form an angle at $w$} if the following conditions are satisfied.
    \begin{itemize}
        \item On the unique (up to reparametrization) simple path $[v,w]$ from $v$ to $w$ through edges of $R$, the $y$-coordinate is strictly increasing, and the same statement holds when replace $v$ with $v'$.
        \item Near the vertex $w$, the path $[v,w]$ is on the left of $[v',w]$.
        \item $[v,w]\cap [v',w] = \{w\}$.
    \end{itemize}
    We then say that \emph{$v$ and $v'$ form a simple angle at $w$} if there is no leaf $v''$ of $R$ such that the path $[v'',w]$ lies in the middle of $[v,w]$ and $[v',w]$. Then we define 
    \[
    \mathrm{Angle}(w) = \left\{ (v,v') \mid v,v'\in V(\Gamma)\text{ for a simple angle at }w \right\}.
    \]
   We call elements of $\mathrm{Angle}(w)$ the \emph{simple angles} of $(R,\chi)$ at $w$; these are preserved under equivalences of planar graded roots. We also say that leaves $v, v'$ of $R$ \emph{form a simple angle of weight $n$} if they form a simple angle at some $w \in V$ (in which case $w$ is uniquely determined) and $\chi(w) = n$.
\end{defn}

The following obvious lemma describes a quick and easy way to describe planar graded roots in terms of weights of their leaves and simple angles.

\begin{lem} \label{lem: how to describe a graded root}
    Given a planar graded root $R$, its equivalence class is determined uniquely from the following data up to overall weight shift.
    \begin{itemize}
        \item Weights of leaves of $R$;
        \item Pairs of leaves of $R$ which form a simple angle, and the weights of those angles.
    \end{itemize}
\end{lem}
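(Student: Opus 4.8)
The statement to prove is \Cref{lem: how to describe a graded root}, which asserts that a planar graded root is determined up to equivalence (overall vertical shift) by the weights of its leaves together with the data of which pairs of leaves form simple angles and at what weights.

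\textbf{Proposed approach.} The plan is to reconstruct the planar graded root $R$ explicitly from the leaf/angle data, and then check this reconstruction is forced, i.e.\ that two planar graded roots with the same data are necessarily equivalent. The underlying observation is that a planar graded root is combinatorially a "merge tree": reading from the top (large $\chi$) downward, one starts with finitely many leaves at various weights and, as the weight decreases, branches merge pairwise. A merge happens at a vertex $w$ precisely when two subtrees, whose topmost leaves had not yet been joined, first become connected at weight $\chi(w)$; by the third axiom the edges have $\chi$-increments $\pm 1$, so between consecutive merge-weights each branch is just a vertical chain.

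\textbf{Key steps, in order.} First I would set up the bijection between planar graded roots (up to equivalence) and the combinatorial merge-structure: given $R$, order its leaves $v_1,\dots,v_m$ left to right, record $\chi(v_i)$, and for each $i$ let $w_i$ be the vertex at which $v_i$ and $v_{i+1}$ form a simple angle — this is exactly the lowest common ancestor of $v_i$ and $v_{i+1}$ in the tree, well defined by the definition of simple angle, and $\chi(w_i)$ is the "merge weight" of the $i$-th adjacent pair. Second, I would observe that the full partial order of merges is recovered from adjacent merges alone: the lowest common ancestor of any two leaves $v_i, v_j$ with $i<j$ has weight $\max\{\chi(w_i),\chi(w_{i+1}),\dots,\chi(w_{j-1})\}$, because the simple path between $v_i$ and $v_j$ must pass through each intermediate merge vertex and the highest of these is the branch point. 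Hence the leaf weights and the adjacent simple-angle weights already determine the isomorphism type of the abstract tree together with its weight function: the number of vertices at each weight $n$ is $1$ plus the number of distinct "active components" at that level, which is computable from the data, and the edges are forced since between merge levels each component is a chain. Third, I would note that the planar embedding is likewise forced up to horizontal isotopy: the left-to-right order of the leaves is part of the data, and planarity together with the simple-angle conditions (the path $[v,w]$ lies to the left of $[v',w]$) means the cyclic/linear order of subtrees at each vertex is dictated by the leaf order, so any two embeddings differ by a horizontal isotopy. Finally, combining these, two planar graded roots with identical data have isomorphic weighted trees realized by isotopic embeddings, hence are equivalent up to the allowed overall vertical shift (which only fixes the ambiguity of where $\chi=0$ sits).

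\textbf{Main obstacle.} The routine bookkeeping — setting up the merge structure and the $\max$ formula for lowest common ancestors — is straightforward; the one genuinely delicate point is verifying that the \emph{planar} embedding, not just the abstract weighted tree, is pinned down, i.e.\ that no planar graded root admits two embeddings realizing the same leaf order and same simple angles yet not related by a horizontal isotopy. This is where one must use that $R$ is a tree (so there are no nontrivial cycles that could be embedded in inequivalent ways) and that each simple angle constraint rigidifies the local left-right order of the two subtrees meeting at each merge vertex; propagating this up the tree gives the global rigidity. Since the lemma is labelled "obvious", I expect the intended proof to be only a sketch along exactly these lines, and I would present it as such rather than belaboring the embedding argument.
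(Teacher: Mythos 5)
Your proof is correct, and since the paper declares this lemma ``obvious'' and supplies no proof of its own, your explicit reconstruction via the merge-tree picture is exactly the argument the authors have in mind. One cosmetic slip to fix: in a planar graded root, the leaves sit at the \emph{bottom} (small $\chi$) and the unique stem extends upward, so the merging happens as the weight increases, not decreases — you should read the tree from the bottom up, not ``from the top downward.'' Also, the pairs of leaves forming simple angles are ordered pairs (by the left/right clause in the definition of angle), which is what licenses your claim that the left-to-right leaf ordering is recoverable from the data; it would be worth making that explicit, since without the ordering one would only pin down the embedding up to reflection, which is not a horizontal isotopy. With those two clarifications, the argument — recover the linear order of leaves, recover all lowest common ancestors via the $\max$ of intermediate adjacent merge-weights, reconstruct the weighted tree as vertical chains between merge levels, and observe that planarity plus the prescribed leaf order rigidifies the embedding up to horizontal isotopy — is complete.
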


We say that an infinite sequence $n_0,n_1,\dots$ of integers is \emph{eventually increasing} if there exists some integer $N>0$ such that $n_{k+1}\ge n_k$ for all $k>N$. Such sequences can be used to define planar graded roots in the following way.

\begin{defn} \label{defn: graded root from abstract sequence}
Given an eventually increasing sequence $\mathbf{n} = (n_i)_{i \ge 0}$ of integers, we extract the following data:
\begin{itemize}
    \item Define 
    \[
    I_0 = \{i \in \mathbb{Z}_{\geq 0} \mid \mathbf{n} \text{ achieves a local minimum at } n_i\}.
    \]
    Since $\mathbf{n}$ is eventually increasing, the set $I_0$ is finite.

    \item Using $I_0$, define 
    \[
    I = \{i \in I_0 \mid i + 1 \notin I_0\},
    \]
    and write
    \[
    I = \{i_0, \dots, i_m\}, \qquad 0 \le i_0 < \cdots < i_m.
    \]
    Note that for each $k = 1, \dots, m$, we have $i_k \ge i_{k-1} + 2$, and hence $\mathbb{Z} \cap (i_{k-1}, i_k) \ne \emptyset$.

    \item For each $k = 1, \dots, m$, choose an integer $j_k$ such that
    \[
    i_{k-1} < j_k < i_k \qquad \text{and} \qquad 
    n_{j_k} = \min_{i_{k-1} < j < i_k} n_j.
    \]
\end{itemize}

The \emph{planar graded root $R_\mathbf{n}$ associated to the sequence $\mathbf{n}$} is defined as the equivalence class of a planar graded root uniquely determined by \Cref{lem: how to describe a graded root}, satisfying the following conditions:
\begin{itemize}
    \item The leaves of $R_\mathbf{n}$ correspond to elements of $I$; the leaf corresponding to $i_k \in I_0$ has weight $n_{i_k}$.
    
    \item Two leaves of $R_\mathbf{n}$, corresponding to $i_k, i_s \in I$, form a simple angle $(i_k, i_s)$ if and only if $s = k + 1$.
    
    \item For each $k = 1, \dots, m$, the leaves corresponding to $i_{k-1}$ and $i_k$ form a simple angle of weight $n_{j_k}$.
\end{itemize}
\end{defn}

Recall that the infinite sequence 
\[
\chi_\mathfrak{s}(x_\mathfrak{s}(0)),\chi_\mathfrak{s}(x^\mathfrak{s}_{0,1}),\dots,\chi_\mathfrak{s}(x^\mathfrak{s}_{0,n_0-1}),\chi_\mathfrak{s}(x_\mathfrak{s}(1)),\dots
\]
is an eventually increasing sequence of integers. Hence it induces a planar graded root $R_{\Gamma,\mathfrak{s}}$, whose equivalence class depends only on $(\partial W_\Gamma,\mathfrak{s})$, since it can be read off from the \emph{lattice homology} $\mathbb{H}^+(\Gamma,\mathfrak{s})$, which is isomorphic to $HF^+(-\partial W_\Gamma,\mathfrak{s})$ \cite[Theorem 2.4.6]{nemethi2005ozsvath}. Since $\chi_\mathfrak{s}(x^\mathfrak{s}_{i,0})=\cdots=\chi_\mathfrak{s}(x^\mathfrak{s}_{i,n_i-1})=\chi_\mathfrak{s}(x_\mathfrak{s}(i+1))$ for all $i\ge 0$, it follows from \Cref{defn: graded root from abstract sequence} that $R_{\Gamma,\mathfrak{s}}$ is equivalent to $R_{\mathbf{n}(\Gamma,\mathfrak{s})}$, where $\mathbf{n}(\Gamma,\mathfrak{s})$ is the sequence $(\chi_\mathfrak{s}(x_\mathfrak{s}(i)))_{i\ge 0}$.

\subsection{Graded roots from $\Delta$-sequences} \label{subsec: graded root to delta seq}

Let $Y$ be a Seifert fibered rational homology sphere with 
\[
\chi_{\mathrm{orb}}(Y/S^1) = e < 0,
\]
and denote its singular fibers by $\{(p_l,q_l)\}_{l=0}^\nu$. Then we can construct the corresponding plumbing graph $\Gamma$ as follows: assuming that $0 < q_l < p_l$ for each $l$, $\Gamma$ is given as the $\nu$-armed starshaped plumbing graph where the central node $v_c$ has weight $e_0 = e - \sum_{l=1}^\nu \frac{q_i}{p_i}$ and the $l$th arm is given by 
\[
\begin{tikzpicture}[xscale=1.5, yscale=1, baseline={(0,-0.1)}]
    \node at (1, 0.3) {$-k^l_1$};
    \node at (2, 0.3) {$-k^l_2$};
    \node at (3, 0.3) {$-k^l_3$};
    \node at (4.5, 0.3) {$-k^l_{s_l}$};
    
    % Nodes
    \node at (1, 0) (C') {$\bullet$};
    \node at (2, 0) (C) {$\bullet$};
    \node at (3, 0) (A1) {$\bullet$};
    \node at (3, -0.2) (A1b) {};
    \node at (4.5, 0) (A5) {$\bullet$};
    \node at (4.5, -0.2) (A5b) {};
    
    % Edges
    \draw (C') -- (C) -- (A1);
    \draw[dotted] (A1) -- (A5);

\end{tikzpicture}
\]
where $k^l_1,\cdots,k^l_{s_l}$ are uniquely determined positive integers satisfying $k^l_1,\dots,k^l_{s_l} \ge 2$ and 
\[
\frac{p_l}{q_l}= [k^l_1,\dots,k^l_{s_l}] = k^l_1 - \cfrac{1}{k^l_2 - \cfrac{1}{\ddots-\cfrac{1}{k^l_{s_l}}}}.
\]
Note that   $e_0<\nu$ and  the Seifert relation is given by 
\[
e_0 + \sum_{l=1}^\nu \frac{q_l}{p_l} = -\frac{|H_1(Y;\Z)|}{p_1 \cdots p_l}.
\]
We denote by $v^l_i$ the node on the $l$th arm whose weight is $-k^l_i$, that is, the $i$th node from the central node.

The resulting plumbing graph $\Gamma$ is negative definite and almost rational; note that $Y\cong \partial W_\Gamma$. Furthermore, if we consider the set
\[
SI_{red}(Y) = \left\{ (a_0,a_1,\dots,a_\nu)\in \Z^{\nu+1} \,\middle\vert\,a_0\ge 0,\, 0 \le a_i < p_i,\,1+a_0 + ie_0 + \sum_{l=1}^\nu \left\lfloor \frac{iq_l+a_l}{p_l} \right\rfloor \le 0 \text{ for }i=1,\dots,\nu \right\},
\]
then there exists a bijective correspondence between $SI_{red}(Y)$ and $\mathrm{Spin}^c(Y)$ \cite[Corollary 11.7]{nemethi2005ozsvath}, which is constructed in the following way. Given $\mathfrak{s}\in \mathrm{Spin}^c(Y)$, the corresponding element $(a^\mathfrak{s}_0,a^\mathfrak{s}_1,\dots,a^\mathfrak{s}_\nu)\in SI_{red}(Y)$ is determined as follows \cite[Proposition 11.5]{nemethi2005ozsvath}. 
\begin{itemize}
    \item We have $a^\mathfrak{s}_0 = -l'_\mathfrak{s}(v_c)$.
    \item For each $l = 1, \dots, \nu$ and any $i = 1, \dots, s_l$, define $n^l_i$ and $d^l_i$ to be the unique coprime positive integers satisfying
    \[
    \frac{n^l_i}{d^l_i} = [k^l_i, \dots, k^l_{s_l}].
    \]
    Then, for each $l = 1, \dots, \nu$, we have
    \[
    a^\mathfrak{s}_l = -l'_\mathfrak{s}(v_{s_l}^l) - \sum_{t=1}^{s_l-1} n^l_{t+1} \, l'_\mathfrak{s}(v^l_t).
    \]
\end{itemize}

\begin{rem} \label{rem: canonical spin c str}
    The zero vector $(0, \dots, 0)$ is always contained in $SI_{\mathrm{red}}(Y)$.  
    If we denote by $\mathfrak{s}^{\mathrm{can}}_Y$ the corresponding $\mathrm{Spin}^c$ structure on $Y$, then it follows from \cite[Proposition 11.6]{nemethi2005ozsvath} that $l'_{\mathfrak{s}^{\mathrm{can}}_Y} = 0$, and thus $k_{\mathfrak{s}^{\mathrm{can}}_Y} = K$.  
    In other words, $\mathfrak{s}^{\mathrm{can}}_Y$ is the restriction to the boundary of the $\mathrm{Spin}^c$ structure on $W_\Gamma$ whose first Chern class is the canonical class $K$.  
    Hence, we call $\mathfrak{s}^{\mathrm{can}}_Y$ the \emph{canonical $\mathrm{Spin}^c$ structure} of $Y$.  
    We note that $\mathfrak{s}^{\mathrm{can}}_Y$ is self-conjugate if and only if $K \in \mathbb{Z} V(\Gamma)$, which is equivalent to $m_v(K) \in \mathbb{Z}$ for all $v \in V(\Gamma)$.
\end{rem}

Choose any $\mathrm{Spin}^c$ structure $\mathfrak{s}$ on $Y$, and write the corresponding element of $SI_{red}(Y)$ as $(a^\mathfrak{s}_0, a^\mathfrak{s}_1, \dots, a^\mathfrak{s}_\nu)$. To compute the planar graded root $R_{\Gamma,\mathfrak{s}}$ (which depends only on $Y$ and $\mathfrak{s}$), it suffices to compute the planar graded root associated to the sequence $(\chi_\mathfrak{s}(x_\mathfrak{s}(i)))_{i \ge 0}$, which additionally depends on the choice of a base node $v_0$ of $\Gamma$. For simplicity, it is natural to take the base node to be the central node, i.e., $v_0 = v_c$.

It is clear that $x_\mathfrak{s}(0)=0$, and thus $\chi_\mathfrak{s}(x_\mathfrak{s}(0))=0$. After that, we consider the \emph{$\Delta$-sequence for $(Y,\mathfrak{s})$}, defined as follows:
\[
\Delta_{Y,\mathfrak{s}}(i) = 1+a^\mathfrak{s}_0 - e_0 i + \sum_{l=1}^\nu \left\lfloor \frac{-iq_l+a^\mathfrak{s}_l}{p_l} \right\rfloor \quad \text{for each}\quad i\ge 0.
\]
Then, for any integer $i\ge 0$, the following equation holds \cite[Section 11.12]{nemethi2005ozsvath}:
\[
\chi_\mathfrak{s}(x_\mathfrak{s}(i+1)) - \chi_\mathfrak{s}(x_\mathfrak{s}(i)) = \Delta_{Y,\mathfrak{s}}(i).
\]
This gives a completely combinatorial way to compute the planar graded root $R_{\Gamma,\mathfrak{s}}$, which recovers $HF^+(-Y,\mathfrak{s})$.

\begin{rem} \label{rem: Delta seq for canonical spin c}
The $\Delta$-sequence for $(Y, \mathfrak{s}^{\mathrm{can}}_Y)$ can be computed in a much simpler way, since, as noted in \Cref{rem: canonical spin c str}, the corresponding element of $SI_{\mathrm{red}}(Y)$ is the zero vector. Indeed, for each integer $i \ge 0$, the formula for $\Delta_{Y,\mathfrak{s}}(i)$ simplifies to:
\[
\Delta_{Y, \mathfrak{s}^{\mathrm{can}}_Y}(i) = 1 - e_0 i - \sum_{l=1}^\nu \left\lceil \frac{i q_l}{p_l} \right\rceil.
\]
Using the inequality $\left\lceil \frac{q}{p} \right\rceil \le \frac{q + p - 1}{p}$ for integers $p, q$ with $p > 0$, we obtain the following lower bound:
\[
\begin{split}
\Delta_{Y, \mathfrak{s}^{\mathrm{can}}_Y}(i)
&\ge 1 - i\left(e_0 + \sum_{l=1}^\nu \frac{q_l}{p_l} \right) - \nu + \sum_{l=1}^\nu \frac{1}{p_l} \\
&= 1 - \nu + \frac{|H_1(Y; \mathbb{Z})| \cdot i + \sum_{l=1}^\nu p_1 \cdots \widehat{p}_l \cdots p_\nu}{p_1 \cdots p_\nu}.
\end{split}
\]
Hence, if we define the number
\[
N_Y = \frac{(\nu - 2)p_1 \cdots p_\nu - \sum_{l=1}^\nu p_1 \cdots \widehat{p}_l \cdots p_\nu}{|H_1(Y; \mathbb{Z})|},
\]
which is an integer if $\mathfrak{s}^{\mathrm{can}}_Y$ is self-conjugate (as will be shown in \Cref{cor: NY is an integer}), then we obtain $\Delta_{Y, \mathfrak{s}^{\mathrm{can}}_Y}(i) > -1$, and hence $\Delta_{Y, \mathfrak{s}^{\mathrm{can}}_Y}(i) \ge 0$, for all integers $i > N_Y$. Consequently, the sequence $(\chi_{\mathfrak{s}^{\mathrm{can}}_Y}(x_{\mathfrak{s}^{\mathrm{can}}_Y}(i)))_{i \ge 0}$ is increasing for $i > N_Y$.
\end{rem}

\subsection{Montague's equivariant spectrum class} \label{subsec: eqv SWF review}

We review the construction of $\mathrm{Pin}(2) \times \mathbb{Z}_p$-equivariant Seiberg--Witten Floer homotopy types and interpret them as isomorphism classes of objects in a suitable category. Our approach primarily follows Montague’s formulation~\cite{montague2022seiberg} of equivariant Seiberg--Witten Floer homotopy types.  
We begin by formulating the $\mathrm{Pin}(2) \times \mathbb{Z}_2$-equivariant Seiberg--Witten Floer homotopy type. We just briefly review her theory. For the precise arguments, see \cite{montague2022seiberg}.

Let $Y$ be a rational homology 3-sphere equipped with a smooth $\mathbb{Z}_p$-action $\tau \colon Y \to Y$ and a $\mathbb{Z}_p$-invariant Spin structure $\mathfrak{s}$.
 
\begin{defn}
We say that $(Y, \mathfrak{s}, \tau)$ is \emph{even} (respectively, \emph{odd}) if a lift $\tilde{\tau}$ of $\tau$ to the principal $\mathrm{Spin}(3)$-bundle $P(\mathfrak{s})$ has order $p$ (respectively, order $2p$). In this paper, we assume that $(Y, \mathfrak{s}, \tau)$ is even.
\end{defn}

A sufficient condition for obtaining even Spin structures is as follows:

\begin{lem}
Let $Y$ be an oriented $\mathbb{Z}_2$-homology 3-sphere. Then, for any free $\mathbb{Z}_p$-action $\tau \colon Y \to Y$, the unique Spin structure on $Y$ is even.
\end{lem}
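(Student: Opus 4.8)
The plan is to reduce to the case $p=2$ and there to exploit that the (necessarily unique) Spin structure on the total space of a double cover always descends to the base.

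First I would record the setup. Since $H^1(Y;\Z_2)=0$, the manifold $Y$ has a unique Spin structure $\mathfrak{s}$, which is automatically $\Z_p$-invariant, and a lift $\tilde{\tau}$ of $\tau$ to $P(\mathfrak{s})$ --- that is, a $\mathrm{Spin}(3)$-equivariant automorphism of $P(\mathfrak{s})$ covering the derivative $d\tau$ on the oriented frame bundle $\mathrm{Fr}(Y)$, equivalently an isomorphism $\tau^*\mathfrak{s}\cong\mathfrak{s}$ --- exists because $\mathfrak{s}$ is $\tau$-invariant. The double cover $P(\mathfrak{s})\to\mathrm{Fr}(Y)$ has deck group $\{\pm1\}\subset\mathrm{Spin}(3)$, so $\tilde{\tau}^p$, which covers $d(\tau^p)=\mathrm{id}$, lies in $\{\pm1\}$; by definition $(Y,\mathfrak{s},\tau)$ is even exactly when some lift satisfies $\tilde{\tau}^p=1$. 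If $p$ is odd and $\tilde{\tau}^p=-1$, I would replace $\tilde{\tau}$ by $-\tilde{\tau}$: since $-1$ is central in $\mathrm{Spin}(3)$ and $\tilde{\tau}$ is equivariant, $(-\tilde{\tau})^p=(-1)^p\tilde{\tau}^p=1$, so $\mathfrak{s}$ is even. If $p$ is even, I would instead pass to the free involution $\sigma=\tau^{p/2}$: as $\tilde{\tau}^{p/2}$ is a lift of $\sigma$, once I know that every lift of $\sigma$ has order $2$ it follows that $\tilde{\tau}^p=1$, and since $\tilde{\tau}$ covers $d\tau$ of order $p$ it then has order exactly $p$. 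So it suffices to handle $p=2$.

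For $p=2$, using that $\tau$ is orientation-preserving (the standing hypothesis, and in any case needed for $d\tau$ to act on $\mathrm{Fr}(Y)$ at all), the quotient $\bar{Y}:=Y/\tau$ is a closed connected oriented $3$-manifold, with double cover $\pi\colon Y\to\bar{Y}$; fixing a $\tau$-invariant Riemannian metric, $\pi$ becomes a local isometry and $d\pi$ identifies $\mathrm{Fr}(Y)$ with $\pi^*\mathrm{Fr}(\bar{Y})$. Every closed oriented $3$-manifold is parallelizable, hence Spin, so I would choose a Spin structure $\bar{\mathfrak{s}}$ on $\bar{Y}$; its pullback $\pi^*\bar{\mathfrak{s}}$ is a Spin structure on $Y$, hence equals $\mathfrak{s}$ by uniqueness. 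Thus $P(\mathfrak{s})\cong P(\bar{\mathfrak{s}})\times_{\bar{Y}}Y$ compatibly with the $\mathrm{Spin}(3)$-structures and the projections to $\mathrm{Fr}(Y)$, and the deck transformation of $\pi$ --- acting on the $Y$-factor --- gives a $\mathrm{Spin}(3)$-equivariant automorphism $\tilde{\tau}$ of $P(\mathfrak{s})$ covering $d\tau$ which is the nontrivial deck transformation of the double cover $P(\mathfrak{s})\to P(\bar{\mathfrak{s}})$, hence of order exactly $2$. So $(Y,\mathfrak{s},\tau)$ is even; and since the other lift of $\tau$ is $-\tilde{\tau}$ with $(-\tilde{\tau})^2=\tilde{\tau}^2$, every lift of $\tau$ has order $2$, which closes the reduction above.

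I expect no serious obstacle here: the argument is formal once one sees the lemma as the descent statement \emph{``a Spin structure pulled back from $\bar{Y}$ --- which exists since $3$-manifolds are Spin --- is the unique Spin structure of $Y$, so it descends''}. The only points demanding a little care are the two reductions in the first step (the sign-flip for odd $p$ and passing to $\tau^{p/2}$ for even $p$) and the orientability of $\bar{Y}$, i.e.\ that $\tau$ preserves orientation; with that in hand the rest is routine manifold topology.
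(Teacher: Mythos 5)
Your proof is correct, and the core idea — that the unique Spin structure on $Y$ must be the pullback of a Spin structure on $Y/\tau$, which exists because closed oriented $3$-manifolds are Spin, and that pulled-back Spin structures carry a deck-transformation lift of the right order — is exactly the paper's argument. The difference is procedural: the paper applies this descent argument directly for every prime $p$, simply observing that ``even'' is equivalent to being pulled back from $Y/\tau$ and invoking uniqueness, whereas you first reduce to $p=2$ and then run the descent argument there. Since $p$ is a prime in this paper, your ``even $p$'' branch is just $p=2$ itself, so that reduction is vacuous; and for odd $p$ your sign-flip $\tilde\tau\mapsto-\tilde\tau$ actually proves the lemma outright, with no use of freeness or of the quotient — a small elementary observation that the paper's proof doesn't isolate. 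So your route is longer than necessary but also exposes that the content of the lemma is concentrated at $p=2$: for odd primes, any $\Z_p$-invariant Spin structure on any closed Spin $3$-manifold is automatically even.
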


\begin{proof}
For a $\mathbb{Z}_p$-action $\tau \colon Y \to Y$, the structure is even if and only if the corresponding Spin structure arises as the pull-back of a Spin structure on $Y/\tau$. Since $Y$ admits a unique Spin structure, it must be the pull-back of any Spin structure on $Y/\tau$. This completes the proof.
\end{proof}

Note that Montague \cite{montague2022seiberg} treated both of even and odd spin structures, including both free and non-free group actions. In this paper, we focus on even free $\mathbb{Z}_p$-equivariant Spin structures on rational homology 3-spheres. 
We fix a $\mathbb{Z}_p$-invariant Riemannian metric $g$, a lift of the action to the Spin bundle, and a $\mathbb{Z}_p$-invariant Spin connection $B_0$.
This data yields an action of $\mathrm{Pin}(2) \times \mathbb{Z}_p$ on the global Coulomb slice
\[
V = (B_0 + i \ker d^*) \oplus \Gamma(S) \subset i\Omega^1(Y) \oplus \Gamma(S),
\]
along with a formally self-adjoint elliptic operator
\[
l \colon V \longrightarrow V ; \qquad (a, \phi) \longmapsto (*da, \diracpartial_{B_0} \phi),
\]
where $d^*$ is the $L^2$-formal adjoint of $d$, $S$ is the spinor bundle associated to $\mathfrak{s}$, $*$ is the Hodge star operator with respect to the metric $g$, and $\diracpartial_{B_0}$ is the Spin Dirac operator associated to the Spin connection $B_0$.
As usual, we take a finite-dimensional approximation $V^\mu_\lambda(g)$, defined as the direct sum of all eigenspaces of $l$ with eigenvalues in the range $(\lambda, \mu]$. This space also carries a natural $\mathrm{Pin}(2) \times \mathbb{Z}_p$-action.

By applying a finite-dimensional approximation of the Seiberg--Witten equations, we obtain a $G_{\mathfrak{s}}$-equivariant Conley index $I^\mu_\lambda(g)$ for sufficiently large real numbers $\mu$ and $-\lambda$, where $G_{\mathfrak{s}}$ denotes the group of unitary automorphisms $u \colon S \to S$ that preserve $A_0$ and lift the $\mathbb{Z}_p$-action on $Y$.
We then define a metric-dependent equivariant Floer homotopy type as
\[
SWF_{\mathrm{Pin}(2)\times \mathbb{Z}_p}(Y, \mathfrak{s}, \tilde{\tau}, g) = \Sigma^{-V^0_\lambda(g)} I^\mu_\lambda(g),
\]
where $V^0_\lambda(g)$ is regarded as a $\mathrm{Pin}(2) \times \mathbb{Z}_p$-representation space, and the desuspension is taken in a suitable category to be defined later. This homotopy type depends on the choice of Riemannian metric and is referred to as the \emph{metric-dependent $\mathrm{Pin}(2) \times \mathbb{Z}_p$-equivariant Seiberg--Witten Floer homotopy type of $(Y, \mathfrak{s}, \tilde{\tau}, g)$}.

To eliminate this metric dependence, Montague~\cite{montague2022seiberg} introduced \emph{equivariant correction terms}.
For each $k \in \mathbb{Z}_p$, the $k$-th \emph{equivariant correction term} 
\[
n(Y, \mathfrak{s}, \tilde{\tau}, g)_k \in \mathbb{C}
\]
is defined by
\[
n(Y, \mathfrak{s}, \tilde{\tau}, g)_k =
\begin{cases}
\overline{\eta}_{\mathfrak{s}, g} - \frac{1}{8} \eta_{\mathrm{sign}, g}, & \text{if } k = 0, \\
\overline{\eta}^k_{\mathfrak{s}, g} , & \text{if } k \neq 0,
\end{cases}
\]
where:
\begin{itemize}
    \item The equivariant eta invariant $\eta^k_{\mathfrak{s}, g}$ of $\diracpartial_{B_0}$ is defined as the value at $s = 0$ of the meromorphic continuation of
    \[
    \eta^k_{\mathfrak{s}, g}(s) = \sum_{\substack{\lambda \neq 0 \\ \lambda \text{ eigenvalue of } \diracpartial_{B_0}}} \frac{\operatorname{sign}(\lambda) \cdot \operatorname{Trace}\big( (\tilde{\tau}^k)^* \colon V_\lambda \to V_\lambda \big)}{|\lambda|^s} \in \mathbb{C}, \quad s \in \mathbb{C},
    \]
    where $V_\lambda$ is the eigenspace of $\diracpartial_{B_0}$ corresponding to $\lambda$. 
  Also, we put $\eta_{\mathfrak{s}, g}:= \eta^0_{\mathfrak{s}, g}$ which coincides with the non-equivariant eta invariant of $\diracpartial_{B_0}$.    
    \item 
    The quantity $\eta_{\mathrm{sign}, g}$ is the non-equivariant eta invariant for the signature operator 
\[
D_{\mathrm{sign}} :=  \begin{pmatrix}
    * d & -d \\
    -d^* & 0 
\end{pmatrix} : \Omega^1_Y \oplus \Omega^0_Y  \to \Omega^1_Y \oplus \Omega^0_Y.  
\]
    \item The reduced equivariant eta invariant $\overline{\eta}^k_{\mathfrak{s}, g}$ is
    \[
\overline{\eta}^k_{\mathfrak{s}, g} = \frac{1}{2} \left( \eta^k_{\mathfrak{s}, g} - c^k_{\mathfrak{s}, g} \right),
    \]
    where $c^k_{\mathfrak{s}, g} = \operatorname{Trace} \big( (\tilde{\tau}^k)^* \colon \ker \diracpartial_{B_0} \to \ker \diracpartial_{B_0} \big) \in \mathbb{C}$. Similarly, we put 
    \[
\overline{\eta}_{\mathfrak{s}, g} = \frac{1}{2} \left( \eta^0_{\mathfrak{s}, g} - c^0_{\mathfrak{s}, g} \right). 
    \]
\end{itemize}

\begin{defn}
The \emph{equivariant correction term} is defined by
\[
\mathbf{n}(Y, \mathfrak{s}, \tilde{\tau}, g) := 
\sum_{l=0}^{p-1} \left( \frac{1}{p} \sum_{k \in \mathbb{Z}_p} n(Y, \mathfrak{s}, \tilde{\tau}, g)_k \cdot \zeta_p^{-kl} \right) \otimes [\mathbb{H}_{[l]}] 
\in RQ(\mathbb{Z}_p) \otimes \mathbb{C},
\]
where $\zeta_p$ is a fixed primitive $p$-th root of unity.\footnote{We use the identification $RQ(\mathbb{Z}_p) \cong \mathbb{Z}[\mathbb{Z}_p]$; note that the scalar extension map $-\otimes_{\mathbb{C}} \mathbb{H} \colon R(\mathbb{Z}_p) \to RQ(\mathbb{Z}_p)$ is an isomorphism. We denote by $\mathbb{H}_{[l]}$ the $1$-dimensional quaternionic representation of $\mathbb{Z}_p$ corresponding to the element $[l] \in \mathbb{Z}_p$.}
\end{defn}

Montague observed that $\mathbf{n}(Y, \mathfrak{s}, \tilde{\tau}, g)$ actually lies in $RQ(\mathbb{Z}_p) \otimes \mathbb{Q}$ and established its relation to equivariant spectral flows. 

Next, we introduce the stable homotopy category defined by Montague.

\begin{defn}
A pointed $\mathrm{Pin}(2) \times \mathbb{Z}_p$--equivariant CW complex $X$ is called a \emph{space of type $\mathrm{Pin}(2) \times \mathbb{Z}_p$--SWF} if $X^{S^1}$ is $\mathrm{Pin}(2) \times \mathbb{Z}_p$-equivariantly homotopy equivalent to $V^+$ for some $V \in RO(\mathbb{Z}_p)_{\geq 0}$. Here, $\mathrm{Pin}(2)$ acts on $V$ via the composition
\[
    \mathrm{Pin}(2) \longrightarrow \mathrm{Pin}(2)/S^1 = \mathbb{Z}_2 \cong \{\pm 1\} \;\xhookrightarrow{\;\;\;}\; GL(V).
\]
\end{defn}

We now define the category of spaces of type SWF.

\begin{defn}
We define the category $\mathcal{C}^{\mathrm{sp}}_{\mathrm{Pin}(2)\times \mathbb{Z}_p}$ as follows:
\begin{itemize}
    \item The objects are triples $(X, a, b)$, where $X$ is a space of type $\mathrm{Pin}(2) \times \mathbb{Z}_p$--SWF, $a \in RO(\mathbb{Z}_p)$, and $b \in RQ(\mathbb{Z}_p) \otimes \mathbb{Q}$.
    
    \item Given two objects $(X, a, b)$ and $(X', a', b')$, the morphism set between them is defined by
    \[
    \mathrm{Mor}((X, a, b), (X', a', b')) =
    \left(
        \bigoplus_{\substack{\alpha - \alpha' = a - a' \\ \beta - \beta' = b - b'}}
        [X \wedge \alpha^+ \wedge \beta^+, \; X' \wedge (\alpha')^+ \wedge (\beta')^+]^{\mathrm{Pin}(2) \times \mathbb{Z}_p}
    \right) \big/ \sim,
    \]
    where:
    \begin{itemize}
        \item $\alpha, \alpha'$ are finite-rank real $\mathbb{Z}_p$-representations in which $S^1 \subset \mathrm{Pin}(2)$ acts trivially and $j \in \mathrm{Pin}(2)$ acts by $-1$;
        \item $\beta, \beta' \in RQ(\mathbb{Z}_p)_{\ge 0}$;
        \item The equivalence relation $\sim$ is defined as follows: two morphisms $[f]$ and $[g]$ with representatives
        \[
        \begin{split}
            f \colon & X \wedge \alpha_1^+ \wedge \beta_1^+ \longrightarrow X' \wedge (\alpha_1')^+ \wedge (\beta_1')^+, \\
            g \colon &X \wedge \alpha_2^+ \wedge \beta_2^+ \longrightarrow X' \wedge (\alpha_2')^+ \wedge (\beta_2')^+
        \end{split}
        \]
        are identified, i.e.\ $f \sim g$, if there exist finite-rank real representations $a, a'$ and complex representations $b, b'$ such that
        \[
        \alpha_1 \oplus a \cong \alpha_2 \oplus a', \quad
        \alpha_1' \oplus a \cong \alpha_2' \oplus a', \quad
        \beta_1 \oplus b \cong \beta_2 \oplus b', \quad
        \beta_1' \oplus b \cong \beta_2' \oplus b',
        \]
        and the maps
        \[
        f \wedge \mathrm{id}_{a^+ \wedge b^+}
        \qquad \text{and} \qquad
        g \wedge \mathrm{id}_{(a')^+ \wedge (b')^+}
        \]
        are $\mathrm{Pin}(2) \times \mathbb{Z}_p$-equivariantly homotopic.
    \end{itemize}
\end{itemize}
\end{defn}

The smash product $- \wedge -$, defined by
\[
(X, a, b) \wedge (X', a', b') := (X \wedge X',\, a \oplus a',\, b \oplus b'),
\]
endows the category $\mathcal{C}^{\mathrm{sp}}_{\mathrm{Pin}(2) \times \mathbb{Z}_p}$ with the structure of a symmetric monoidal category. Moreover, there is a suspension operation on $\mathcal{C}^{\mathrm{sp}}_{\mathrm{Pin}(2) \times \mathbb{Z}_p}$. Given an object $(X, a, b)$ and elements $a' \in RO(\mathbb{Z}_p)$ and $b' \in RQ(\mathbb{Z}_p) \otimes \mathbb{Q}$, we define the suspension by
\[
\Sigma^{a' \oplus b'} (X, a, b) := (X,\, a \oplus a',\, b \oplus b').
\]

%{\color{red} Care about well-definedness of the suspensions }

We now define local maps and local equivalence for objects in $\mathcal{C}^{\mathrm{sp}}_{\mathrm{Pin}(2) \times \mathbb{Z}_p}$.

\begin{defn}
Let $(X, a, b)$ and $(X', a', b')$ be objects in $\mathcal{C}^{\mathrm{sp}}_{\mathrm{Pin}(2) \times \mathbb{Z}_p}$.  
A morphism $f$ between them, represented by a $\mathrm{Pin}(2) \times \mathbb{Z}_p$-equivariant map
\[
f \colon X \wedge \alpha^+ \wedge \beta^+ \longrightarrow X' \wedge (\alpha')^+ \wedge (\beta')^+,
\]
is called a \emph{local map of order~$0$} if its fixed-point map
\[
f^{S^1} \colon X^{S^1} \wedge \alpha^+ \wedge \beta^+ \longrightarrow (X')^{S^1} \wedge (\alpha')^+ \wedge (\beta')^+
\]
is a non-equivariant homotopy equivalence.  
We say that $(X, a, b)$ and $(X', a', b')$ are \emph{locally equivalent} if there exist local maps of order~$0$ between them in both directions.
\end{defn}

\begin{rem}
In our definition of local maps, we do not require the fixed-point maps to be $\mathrm{Pin}(2) \times \mathbb{Z}_p$-equivariant homotopy equivalences. This differs from Montague’s original formulation~\cite{montague2022seiberg}, in which $\mathrm{Pin}(2) \times \mathbb{Z}_p$-equivariance is imposed on the homotopy equivalence.
\end{rem}

We can now define the space-level local equivalence group.

\begin{defn}
We set
\[
\mathfrak{C}^{\mathrm{sp}}_{\mathrm{Pin}(2)\times \mathbb{Z}_p}
:= \frac{\left\{\text{isomorphism classes of objects in } \mathcal{C}^{\mathrm{sp}}_{\mathrm{Pin}(2)\times \mathbb{Z}_p}\right\}}{\text{local equivalence}},
\]
where the group operation is induced by the smash product $- \wedge -$.  
Since the smash product endows $\mathcal{C}^{\mathrm{sp}}_{\mathrm{Pin}(2)\times \mathbb{Z}_p}$ with a symmetric monoidal structure, the set $\mathfrak{C}^{\mathrm{sp}}_{\mathrm{Pin}(2)\times \mathbb{Z}_p}$ is a well-defined abelian group.
\end{defn}

Note that we can make sense of the functor
\[
C^\ast_{\mathrm{Pin}(2)\times \mathbb{Z}_p}(-;\mathbb{Z}_p) \colon 
\mathcal{C}^{\mathrm{sp}}_{\mathrm{Pin}(2)\times \mathbb{Z}_p} \longrightarrow 
\mathrm{Mod}^{\mathrm{op}}_{C^\ast(B(\mathrm{Pin}(2)\times \mathbb{Z}_p);\mathbb{Z}_p)}
\]
as follows:
\[
C^\ast_{\mathrm{Pin}(2)\times \mathbb{Z}_p}((X, a, b); \mathbb{Z}_p) 
:= \widetilde{C}^\ast_{\mathrm{Pin}(2)}(X; \mathbb{Z}_p)[\alpha_\R(a) + 4\alpha_\H(b)].
\]
Here, $\alpha$ denotes the augmentation map (extended $\mathbb{Q}$-linearly if necessary) defined on 
$RO(\mathbb{Z}_p)$ and $RQ(\mathbb{Z}_p) \otimes \mathbb{Q}$.

We are now ready to define the $\mathrm{Pin}(2) \times \mathbb{Z}_p$-equivariant spectrum class.

\begin{defn}
We define 
\[
SWF_{\mathrm{Pin}(2)\times \mathbb{Z}_p}(Y, \mathfrak{s}, \tilde{\tau})
:= \big[(SWF(Y, \mathfrak{s}, \tilde{\tau}, g),\, 0,\, \mathbf{n}(Y, \mathfrak{s}, \tilde{\tau}, g))\big]
\]
as an isomorphism class in the category $\mathcal{C}^{\mathrm{sp}}_{\mathrm{Pin}(2)\times \mathbb{Z}_p}$.
If $Y$ is a disjoint union of $\mathbb{Z}_p$-equivariant even Spin rational homology $3$-spheres
\[
Y = \bigsqcup_{1 \le i \le n} (Y_i, \mathfrak{s}_i, \tilde{\tau}_i),
\]
we set
\[
SWF_{\mathrm{Pin}(2)\times \mathbb{Z}_p}(Y, \mathfrak{s}, \tilde{\tau})
:= \bigwedge_{1 \le i \le n} SWF_{\mathrm{Pin}(2)\times \mathbb{Z}_p}(Y_i, \mathfrak{s}_i, \tilde{\tau}_i).
\]
\end{defn}

Since the invariants $\mathbf{n}(Y, \mathfrak{s}, \tilde{\tau}, g)$ capture equivariant versions of spectral flows, Montague used this to prove the following invariance:

\begin{thm}[\cite{montague2022seiberg}]
The spectrum class $SWF_{\mathrm{Pin}(2)\times \mathbb{Z}_p}(Y, \mathfrak{s}, \tilde{\tau})$ is independent of the choice of a $\mathbb{Z}_p$-invariant Riemannian metric and a $\mathbb{Z}_p$-invariant finite-dimensional approximation.
\end{thm}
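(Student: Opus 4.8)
The plan is to carry out the $\mathrm{Pin}(2)\times\mathbb{Z}_p$-equivariant version of Manolescu's metric-independence argument \cite{Man03}, split into two statements: (I) for a fixed $\mathbb{Z}_p$-invariant metric $g$, the isomorphism class of $\big(SWF(Y,\mathfrak{s},\tilde\tau,g),\,0,\,\mathbf{n}(Y,\mathfrak{s},\tilde\tau,g)\big)$ in $\mathcal{C}^{\mathrm{sp}}_{\mathrm{Pin}(2)\times\mathbb{Z}_p}$ does not depend on the finite-dimensional approximation, i.e.\ on the spectral cutoffs $\lambda\ll 0\ll\mu$ used to form $V^\mu_\lambda(g)$ and $I^\mu_\lambda(g)$; and (II) it does not depend on $g$. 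In both cases the essential point is that each eigenspace of $l$ is a genuine $\mathrm{Pin}(2)\times\mathbb{Z}_p$-representation and the finite-dimensional approximate Seiberg--Witten flow is $\mathrm{Pin}(2)\times\mathbb{Z}_p$-equivariant, so the whole Conley-index apparatus --- isolating neighborhoods, continuation, attractor--repeller decompositions --- applies equivariantly; the homotopy equivalences it produces are $\mathrm{Pin}(2)\times\mathbb{Z}_p$-equivariant, hence in particular local maps of order $0$, and so yield isomorphisms in $\mathcal{C}^{\mathrm{sp}}_{\mathrm{Pin}(2)\times\mathbb{Z}_p}$.

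For (I), I would note that since any two approximations are dominated by a common larger one, it suffices to compare $(\lambda,\mu)$ with $(\lambda',\mu')$ for $\lambda'\le\lambda<0<\mu\le\mu'$ (chosen off the spectrum of $l$). Enlarging $\mu$ to $\mu'$ adds the eigenspaces $V^{\mu'}_\mu(g)$, on which $-l$ is strictly negative definite, hence which are attracting directions for the approximate flow for $\mu$ large; the equivariant Conley index of the enlarged isolated invariant set is $\mathrm{Pin}(2)\times\mathbb{Z}_p$-equivariantly homotopy equivalent to $I^\mu_\lambda(g)$, and since $V^0_\lambda(g)$ is unaffected the object of $\mathcal{C}^{\mathrm{sp}}_{\mathrm{Pin}(2)\times\mathbb{Z}_p}$ is unchanged. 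Lowering $\lambda$ to $\lambda'$ adds $V^\lambda_{\lambda'}(g)$, on which $-l$ is strictly positive definite, hence repelling for $-\lambda$ large, so $I^\mu_{\lambda'}(g)\simeq\Sigma^{V^\lambda_{\lambda'}(g)}I^\mu_\lambda(g)$ equivariantly; since $V^0_{\lambda'}(g)=V^\lambda_{\lambda'}(g)\oplus V^0_\lambda(g)$ as $\mathrm{Pin}(2)\times\mathbb{Z}_p$-representations, the extra formal desuspension by $V^\lambda_{\lambda'}(g)$ in $\Sigma^{-V^0_{\lambda'}(g)}$ cancels this suspension. Here one decomposes $V^0_\lambda(g)$ into its coclosed $1$-form part --- on which $j$ acts by $-1$, contributing to the $RO(\mathbb{Z}_p)$-slot --- and its spinor part --- on which $\mathrm{Pin}(2)$ acts quaternionically, contributing an element of $RQ(\mathbb{Z}_p)_{\ge0}$ to the $RQ(\mathbb{Z}_p)\otimes\mathbb{Q}$-slot --- and checks that the cancellation is compatible with the smash and suspension structures of $\mathcal{C}^{\mathrm{sp}}_{\mathrm{Pin}(2)\times\mathbb{Z}_p}$.

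For (II), I would join two $\mathbb{Z}_p$-invariant metrics $g_0,g_1$ by a smooth path $g_t$ in the convex space of $\mathbb{Z}_p$-invariant metrics, subdivide $[0,1]$ so finely that on each subinterval one can choose cutoffs $\lambda,\mu$ that are never eigenvalues of $l_{g_t}$, and use equivariant continuation of Conley indices over each subinterval, together with the moves of (I) to pass between the cutoffs chosen on adjacent subintervals. The only essential change along the path occurs at parameters where an eigenvalue of $l_{g_t}$ crosses $0$, i.e.\ where the reducible solution degenerates; the coclosed $1$-form part of $l$ has spectrum symmetric about $0$ with kernel governed by $H^1(Y;\mathbb{R})=0$, so it contributes no spectral flow, and the only contribution comes from the $3$-dimensional Dirac operators, whose family has a well-defined $\mathrm{Pin}(2)\times\mathbb{Z}_p$-equivariant spectral flow $\mathrm{SF}\in RQ(\mathbb{Z}_p)$ along the path. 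Accounting both for the resulting change of the formal desuspension $\Sigma^{-V^0_\lambda(g_t)}$ and for the change of the Conley index $I^\mu_\lambda(g_t)$, the net change of $SWF(Y,\mathfrak{s},\tilde\tau,g_t)$ as an object of $\mathcal{C}^{\mathrm{sp}}_{\mathrm{Pin}(2)\times\mathbb{Z}_p}$ is a formal suspension by a class determined by $\mathrm{SF}$; by Donnelly's equivariant Atiyah--Patodi--Singer index theorem \cite{donnelly1978eta} applied to the equivariant eta invariants $\eta^k_{\mathfrak{s},g}$, together with the variation of $\eta_{\mathrm{sign},g}$ and the kernel corrections $c^k_{\mathfrak{s},g}$, the equivariant correction term $\mathbf{n}(Y,\mathfrak{s},\tilde\tau,g_t)\in RQ(\mathbb{Z}_p)\otimes\mathbb{Q}$ varies along the path by exactly the opposite class --- this is Montague's identification of $\mathbf{n}$ with equivariant spectral flow \cite{montague2022seiberg}. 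Hence the variation of the $RQ(\mathbb{Z}_p)\otimes\mathbb{Q}$-slot $\mathbf{n}$ cancels the variation carried by $SWF(Y,\mathfrak{s},\tilde\tau,g_t)$, so the object is constant in $t$, proving (II).

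The step I expect to be the main obstacle is the matching in (II): one must show that the representation-valued class governing the change of $SWF(Y,\mathfrak{s},\tilde\tau,g_t)$ --- built from the equivariant spectral flow of the Dirac family --- agrees, on the nose and with the correct sign, with the variation of the a priori only $\mathbb{Q}$-valued correction term $\mathbf{n}$, which is itself assembled by a $\zeta_p$-Fourier transform from the complex traces $n(Y,\mathfrak{s},\tilde\tau,g)_k$; this forces one to keep precise track of the sign conventions in the reduced equivariant eta invariants $\overline\eta^k_{\mathfrak{s},g}$, the term $\frac{1}{8}\eta_{\mathrm{sign},g}$, and the kernel contributions $c^k_{\mathfrak{s},g}$, and it is exactly the use of $\mathbb{Q}$-coefficients in the $RQ(\mathbb{Z}_p)$-slot that makes the resulting object well defined. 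The secondary technical point --- making equivariant Conley-index continuation rigorous across the parameters where the reducible degenerates --- is the equivariant analogue of the corresponding step in \cite{Man03}; all of these details are carried out in \cite{montague2022seiberg}, which we follow.
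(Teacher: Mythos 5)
Your proposal is correct and elaborates precisely the strategy the paper attributes to Montague \cite{montague2022seiberg}: the paper itself does not re-prove this theorem but records it with the one-line remark that $\mathbf{n}$ captures equivariant spectral flows, which is exactly the cancellation mechanism you spell out in step (II). Your step (I) is the standard equivariant Conley-index argument (attracting/repelling stabilization absorbed by the formal desuspension slots in $RO(\mathbb{Z}_p)$ and $RQ(\mathbb{Z}_p)\otimes\mathbb{Q}$), matching Manolescu's non-equivariant version, which Montague also follows.
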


\begin{rem}
Montague also treated the non-free case, which requires certain modifications to the correction terms. Since we will consider a $\mathrm{Spin}^c$ version in the non-free setting, we will revisit this construction later.
\end{rem}

%\begin{defn}
 %   Two objects $\mathbb{X}$ and  $\mathbb{Y}$ are {\it locally equivalent} if there are some elements $f\in \operatorname{Mor}(\mathbb{X}, \mathbb{Y} )$, $g\in \operatorname{Mor}(\mathbb{Y}, \mathbb{Z} )$ such that $f^{S^1}$ and $g^{S^1}$ are $\mathrm{Pin}(2)\times \Z_2$-homotopy equivalence. 
%\end{defn}

We have a chain complex
\[
\tilde{C}^*_{\mathrm{Pin}(2) \times \mathbb{Z}_p} \big( SWF_{\mathrm{Pin}(2) \times \mathbb{Z}_p}(Y, \mathfrak{s}, \tilde{\tau}) \big) := \tilde{C}^{* + \alpha_\C( \mathbf{n}(Y, \mathfrak{s}, g))}_{\mathrm{Pin}(2) \times \mathbb{Z}_p} \big( SWF(Y, \mathfrak{s}, \tilde{\tau}, g); \mathbb{Z}_p \big),
\]
whose chain homotopy type over the differential graded algebra $C^*(\mathrm{Pin}(2) \times \mathbb{Z}_p)$ is independent of the choice of $\mathbb{Z}_p$-invariant Riemannian metric. 
Note, however, that the chain homotopy type of the module $C^*_{\mathrm{Pin}(2) \times \mathbb{Z}_p} (SWF(Y, \mathfrak{s}))$ does depend on the choice of Spin lift.

\subsection{Homotopy coherent Bauer--Furuta invariants}
We shall also need a certain cobordism map in the context of homotopy coherent group actions.  
To state homotopy coherent bordism maps, we factor through the Borel functor.  
In this subsection, we construct the monoidal functor
\[
\mathcal{B} \colon \mathcal{C}^{\mathrm{sp}}_{\mathrm{Pin}(2)\times \mathbb{Z}_2} \longrightarrow \mathcal{F}^{\mathrm{sp}}_{\mathrm{Pin}(2)\times \mathbb{Z}_2},
\]
which can be regarded as a stable version of the Borel construction.  
This functor provides a comparison between equivariant Seiberg--Witten theory and families Seiberg--Witten theory via the Borel construction.  
For closed $4$-manifolds, a similar perspective was developed by Baraglia~\cite{baraglia2024equivariant}; see also~\cite{KPT24} for a construction of the homotopy coherent Bauer--Furuta invariants.

\subsubsection{Families categories} \label{subsec: families categories}
\begin{defn}
Let $X$ and $B$ be Hausdorff topological spaces with $B$ compact, and let 
$p \colon X \to B$ be a fibration with a section $s \colon B \to X$.  
Suppose $X$ is equipped with a continuous $\mathrm{Pin}(2)$-action such that both $p$ and $s$ are $\mathrm{Pin}(2)$-equivariant (where $B$ carries the trivial action).  
We say that $(X, p, s)$ is a \emph{space of family $\mathrm{Pin}(2)$--SWF over the base $B$} if the following conditions are satisfied:
\begin{itemize}
    \item The fibers of $p$ are homotopy equivalent to finite CW complexes;
    \item The map $X^{\mathrm{Pin}(2)} \to B$, obtained by restricting $p$ to $X^{\mathrm{Pin}(2)}$, is a fibration whose fibers are homotopy equivalent to $S^0$;
    \item The map $X^{S^1} \to B$, obtained by restricting $p$ to $X^{S^1}$, is a fibration that is (parametrically over $B$) homotopy equivalent to the fiberwise one-point compactification of some finite-rank $\mathrm{Pin}(2)$-vector bundle over $B$;
    \item The action of $\mathrm{Pin}(2)$ on $X \smallsetminus X^{S^1}$ is free.
\end{itemize}
\end{defn}

Given two spaces $\mathcal{X} = (X, p_X, s_X)$ and $\mathcal{Y} = (Y, p_Y, s_Y)$ of family $\mathrm{Pin}(2)$--SWF over a common base $B$, we define their product
\[
\mathcal{X} \wedge_B \mathcal{Y}
\]
to be the fiberwise smash product $X \wedge_B Y$, equipped with the natural maps $p_{X \wedge_B Y}$ and $s_{X \wedge_B Y}$ induced by $p_X$, $p_Y$, and $s_X$, $s_Y$, respectively.  
It is straightforward to verify that $\mathcal{X} \wedge_B \mathcal{Y}$ is again a space of family $\mathrm{Pin}(2)$--SWF over $B$.
Furthermore, for any finite-rank $\mathrm{Pin}(2)$-vector bundle $E$ over $B$, its fiberwise one-point compactification $E^+$ also defines a space family $\mathrm{Pin}(2)$--SWF.

We also introduce the following terminology: a real $\mathrm{Pin}(2)$-vector bundle over a compact base $B$ is called \emph{admissible} if its fibers, regarded as real $\mathrm{Pin}(2)$-representations, are contained in the universe $\widetilde{\mathbb{R}}^\infty \oplus \mathbb{H}^\infty$.
All real $\mathrm{Pin}(2)$-vector bundles in this section are assumed to be admissible.

\begin{defn}
Let $B$ be a compact Hausdorff space.  
We define the category $\mathcal{F}^B_{\mathrm{Pin}(2)}$ as follows:
\begin{itemize}
    \item Objects are pairs $(X, r)$, where $X = (X, p_X, s_X)$ is a space of family $\mathrm{Pin}(2)$--SWF over the base $B$, and $r \in \mathbb{Q}$.
    
    \item The morphism set between $(X, r)$ and $(Y, s)$ is
    \[
    \mathrm{Mor}((X, r), (Y, s))
    := \left( \bigoplus_{\substack{E, F \ \mathrm{admissible} \\ \mathrm{rank}(E) - \mathrm{rank}(F) = r - s}} [X \wedge_B E^+, \, Y \wedge_B F^+]^{\mathrm{Pin}(2)}_B \right) \big/ \sim,
    \]
    where:
    \begin{itemize}
        \item For spaces $S, T$ of family $\mathrm{Pin}(2)$--SWF over $B$, the set $[S, T]^{\mathrm{Pin}(2)}_B$ consists of $\mathrm{Pin}(2)$-equivariant maps $f \colon S \to T$ satisfying $p_T \circ f = p_S$ and $s_T = f \circ s_S$;

        \item Given two elements 
        \[
        f \in [X \wedge_B E_1^+, \, Y \wedge_B E_2^+]^{\mathrm{Pin}(2)}_B, 
        \quad 
        g \in [X \wedge_B F_1^+, \, Y \wedge_B F_2^+]^{\mathrm{Pin}(2)}_B,
        \]
        we declare $f \sim g$ if there exist admissible $\mathrm{Pin}(2)$-vector bundles $E, F$ over $B$ such that
        \[
        E_1 \oplus E \cong F_1 \oplus F, 
        \qquad 
        E_2 \oplus E \cong F_2 \oplus F,
        \]
        and, under these identifications, the maps
        \[
        f \wedge \mathrm{id}_E
        \qquad \text{ and } \qquad
        g \wedge \mathrm{id}_F
        \]
        are homotopic through maps in
        \[
        [X \wedge_B (E_1 \oplus E)^+, \, Y \wedge_B (F_1 \oplus F)^+]^{\mathrm{Pin}(2)}_B.
        \]
    \end{itemize}
\end{itemize}
\end{defn}

Then the following properties are immediate:
\begin{itemize}
    \item For any finite-rank $\mathbb{H}$-vector bundle $E$ over $B$ and any $r \in \mathbb{Q}$, the pair $(E^+, r)$ is an object of $\mathcal{F}^B_{\mathrm{Pin}(2)}$.

    \item The fiberwise smash product $- \wedge_B -$ endows $\mathcal{F}^B_{\mathrm{Pin}(2)}$ with a symmetric monoidal structure.
    
    \item For any compact Hausdorff space $B$ and closed subspace $B_0 \subset B$, there is a restriction functor
    \[
    \mathrm{res}_{B, B_0} \colon \mathcal{F}^B_{\mathrm{Pin}(2)} \longrightarrow \mathcal{F}^{B_0}_{\mathrm{Pin}(2)},
    \]
    which is monoidal with respect to the fiberwise smash product.
\end{itemize}

Now we define the families categories that will be used throughout the paper.

\begin{defn}
Fix a CW complex structure on $B\mathbb{Z}_2$ as in \cite{KPT24}.  
This yields a sequence of restriction functors
\[
\cdots \;\xrightarrow{\mathrm{res}_{(B\mathbb{Z}_2)_2,(B\mathbb{Z}_2)_1}}\;
\mathcal{F}^{(B\mathbb{Z}_2)_1}_{\mathrm{Pin}(2)}
\;\xrightarrow{\mathrm{res}_{(B\mathbb{Z}_2)_1,(B\mathbb{Z}_2)_0}}\;
\mathcal{F}^{(B\mathbb{Z}_2)_0}_{\mathrm{Pin}(2)}.
\]
We define the category $\mathcal{F}^{\mathrm{sp}}_{\mathrm{Pin}(2)\times \mathbb{Z}_2}$ as the inverse homotopy limit of this diagram:
\[
\mathcal{F}^{\mathrm{sp}}_{\mathrm{Pin}(2)\times \mathbb{Z}_2}
:= \mathrm{holim} \left[ \cdots \longrightarrow \mathcal{F}^{(B\mathbb{Z}_2)_1}_{\mathrm{Pin}(2)}
\longrightarrow \mathcal{F}^{(B\mathbb{Z}_2)_0}_{\mathrm{Pin}(2)} \right]
\]
It is easy to check that this homotopy limit exists. %{\color{red} MT: Existence of inverse limit in a certain category is non-trivial statement. We actually need to mention which (complete) category we are thinking of taking the limit. }
Since all the restriction functors involved are monoidal, the fiberwise smash product $- \wedge_B -$ induces a symmetric monoidal structure on $\mathcal{F}^{\mathrm{sp}}_{\mathrm{Pin}(2)\times \mathbb{Z}_2}$.
\end{defn}

\begin{rem}
    In this paper, it is sufficient to use the category $\mathcal{F}^{(B\mathbb{Z}_2)_n}_{\mathrm{Pin}(2)}$ for a sufficiently large $n$ as it is done in \cite{KPT24}. However, just for the simplicity of notations, we consider the limit.   
\end{rem}

Observe that, for each integer $n \ge 0$, we have a functor 
\[
\mathcal{B}_n \colon \mathcal{C}^{\mathrm{sp}}_{\mathrm{Pin}(2) \times \mathbb{Z}_2}
\longrightarrow \mathcal{F}^{(B\mathbb{Z}_2)_n}_{\mathrm{Pin}(2)}; \qquad
(X, a, b) \longmapsto \left( [X \times_{\mathbb{Z}_2} (E\mathbb{Z}_2)_n \to (B\mathbb{Z}_2)_n],\ \mathrm{rank}(a) + 4\,\mathrm{rank}(b) \right).
\]
Since we have commutative diagrams
\[
\xymatrix{
\mathcal{C}^{\mathrm{sp}}_{\mathrm{Pin}(2) \times \mathbb{Z}_2} \ar[d]_{\mathcal{B}_n} \ar[rd]^{\mathcal{B}_{n-1}} \\
\mathcal{F}^{(B\mathbb{Z}_2)_n}_{\mathrm{Pin}(2)} \ar[r]^{\mathrm{res}_{n-1}} & \mathcal{F}^{(B\mathbb{Z}_2)_{n-1}}_{\mathrm{Pin}(2)}
}
\]
for all integers $n > 0$, we can take their inverse limit.

\begin{defn}
We define the limit functor $\mathcal{B}$, referred to as the \emph{Borel construction functor}, by
\[
\mathcal{B} := \mathrm{holim}\, \mathcal{B}_n \colon \mathcal{C}^{\mathrm{sp}}_{\mathrm{Pin}(2) \times \mathbb{Z}_2} \longrightarrow \mathcal{F}^{\mathrm{sp}}_{\mathrm{Pin}(2) \times \mathbb{Z}_2}.
\]
\end{defn}

We can also make sense of ``taking the cochain complex'' for objects in $\mathcal{F}^{\mathrm{sp}}_{\mathrm{Pin}(2)\times \mathbb{Z}_2}$ as follows.  
Given an object $X$, observe that it is specified by a sequence $\{(X_n, r)\}_{n \ge 0}$, where each $(X_n, r)$ is an object of $\mathcal{F}^{(B\mathbb{Z}_2)_n}_{\mathrm{Pin}(2)}$ and satisfies the compatibility condition
\[
\mathrm{res}_{(B\mathbb{Z}_2)_n, (B\mathbb{Z}_2)_{n-1}}\big((X_n, r)\big) = (X_{n-1}, r).
\]
Then we have the following commutative diagram of $E_\infty$-algebras over $\mathbb{Z}_2$\footnote{For any space $X$, its normalized singular cochain complex $C^\ast(X;k)$ admits a natural structure of an $E_\infty$-algebra over $k$ for any commutative coefficient ring $k$; see \cite{mcclure2003multivariable} for a detailed explanation.}:
\[
\xymatrix{
\cdots \ar[r] & C^\ast(B\mathrm{Pin}(2) \times (B\mathbb{Z}_2)_n;\mathbb{Z}_2) \ar[r] \ar[d] & C^\ast(B\mathrm{Pin}(2) \times (B\mathbb{Z}_2)_{n-1};\mathbb{Z}_2) \ar[r] \ar[d] & \cdots \\
\cdots \ar[r] & C^\ast_{\mathrm{Pin}(2)}(X_n;\mathbb{Z}_2) \ar[r] & C^\ast_{\mathrm{Pin}(2)}(X_{n-1};\mathbb{Z}_2) \ar[r] & \cdots
}
\]
Hence, we obtain a well-defined morphism
\[
C^\ast(B(\mathrm{Pin}(2) \times \mathbb{Z}_2);\mathbb{Z}_2) 
= \mathrm{holim}_n\, C^\ast(B\mathrm{Pin}(2) \times (B\mathbb{Z}_2)_n;\mathbb{Z}_2) 
\longrightarrow \mathrm{holim}_n\, C^\ast_{\mathrm{Pin}(2)}(X_n;\mathbb{Z}_2),
\]
so that $\mathrm{holim}_n\, C^\ast_{\mathrm{Pin}(2)}(X_n;\mathbb{Z}_2)$ naturally acquires the structure of a module over $C^\ast(B(\mathrm{Pin}(2) \times \mathbb{Z}_2);\mathbb{Z}_2)$.

Thus, we define the functor
\[
C^\ast_{\mathrm{Pin}(2)}(-;\mathbb{Z}_2) \colon \mathcal{F}^{\mathrm{sp}}_{\mathrm{Pin}(2)\times \mathbb{Z}_2}
\longrightarrow \mathrm{Mod}^{\mathrm{op}}_{C^\ast(B(\mathrm{Pin}(2) \times \mathbb{Z}_2);\mathbb{Z}_2)},
\]
where on objects, we set
\[
C^\ast_{\mathrm{Pin}(2)}(X;\mathbb{Z}_2) := \mathrm{holim}_n\, C^\ast_{\mathrm{Pin}(2)}(X_n;\mathbb{Z}_2)[r].
\]

The definition on morphisms can be carried out similarly using Thom quasi-isomorphisms; we omit the details. The following properties are then immediate:
\begin{itemize}
    \item The functors $\mathcal{B}$ and $C^\ast_{\mathrm{Pin}(2)}(-;\mathbb{Z}_2)$ are monoidal.
    
    \item For any $n \in \mathbb{Z}_{\ge 0}$ and $s \in \mathbb{Q}$, the suspension operation $\Sigma^s$ on $\mathcal{F}^{(B\mathbb{Z}_2)_n}_{\mathrm{Pin}(2)}$ defined by
    \[
    \Sigma^s (X, r) = (X, r + s)
    \]
    induces a well-defined endofunctor $\Sigma^s$ on $\mathcal{F}^{\mathrm{sp}}_{\mathrm{Pin}(2)\times \mathbb{Z}_2}$.

    \item The following diagram of categories and functors is commutative:
    \[
    \xymatrix{
    \mathcal{C}^{\mathrm{sp}}_{\mathrm{Pin}(2)\times \mathbb{Z}_2} \ar[r]^{\mathcal{B}} \ar[rd]_{\hspace{-1.6cm}\,C^\ast_{\mathrm{Pin}(2)\times \mathbb{Z}_2}(-;\mathbb{Z}_2)} &
    \mathcal{F}^{\mathrm{sp}}_{\mathrm{Pin}(2)\times \mathbb{Z}_2} \ar[d]^{C^\ast_{\mathrm{Pin}(2)}(-;\mathbb{Z}_2)} \\
    & \mathrm{Mod}^{\mathrm{op}}_{C^\ast(B(\mathrm{Pin}(2)\times \mathbb{Z}_2);\mathbb{Z}_2)}
    }
    \]
\end{itemize}

We now define the notion of families local maps.
\begin{defn}
Given a compact Hausdorff space $B$ and two objects $(X,r)$ and $(Y,s)$ of $\mathcal{F}^B_{\mathrm{Pin}(2)}$, we say that a morphism $[f] \in \mathrm{Mor}((X,r),(Y,s))$ is \emph{local} if, for some (or equivalently, any) representative
\[
f \colon X \wedge_B E^+ \longrightarrow Y \wedge_B F^+
\]
of $[f]$, the following hold:
\begin{itemize}
    \item There exists a $\mathbb{Z}_2$-vector bundle $F_0$ and its $\mathbb{Z}_2$-vector sub-bundle $E_0$ such that the fibers of $F_0 / E_0$ are given by $\widetilde{\mathbb{R}}^{\mathrm{rank}(F_0) - \mathrm{rank}(E_0)}$, where $\mathrm{Pin}(2)$ acts through $\mathrm{Pin}(2)/S^1 \cong \mathbb{Z}_2$.
    \item There exist maps
    \[
    g_E \in [E_0^+, (X \wedge E^+)^{S^1}]^{\mathrm{Pin}(2)}_B, 
    \qquad
    g_F \in [F_0^+, (Y \wedge F^+)^{S^1}]^{\mathrm{Pin}(2)}_B,
    \]
    such that $g_E$ and $g_F$ are fiberwise homotopy equivalences.
    \item The following diagram is commutative:
    \[
    \xymatrix{
    E_0^+ \ar[r]^{\text{inclusion}} \ar[d]_{g_E} & F_0^+ \ar[d]^{g_F} \\
    (X \wedge E^+)^{S^1} \ar[r]^{f^{S^1}} & (Y \wedge F^+)^{S^1}
    }
    \]
\end{itemize}
If we denote $\mathrm{rank}(F_0) - \mathrm{rank}(E_0)$ by $k$, we say that $f$ is \emph{local of level $k$}.
\end{defn}

\begin{defn}
Let $f \colon (X,r) \to (Y,s)$ be a morphism in $\mathcal{F}^{sp}_{\mathrm{Pin}(2)\times \mathbb{Z}_2}$; note that it corresponds to a sequence of morphisms $\{f_n\}_{n\ge 0}$, where $f_n \colon (X_n,r) \to (Y_n,s)$ is a morphism in $\mathcal{F}^{(B\mathbb{Z}_2)_n}_{\mathrm{Pin}(2)}$. We say that $f$ is a \emph{local map of level $k$} if each $f_n$ is a local map of level~$k$. Moreover, two objects are \emph{locally equivalent} if there exist local maps of level~0 between them in both directions.
\end{defn}

\begin{rem}
It is immediate that every isomorphism in $\mathcal{F}^{sp}_{\mathrm{Pin}(2)\times \mathbb{Z}_2}$ is a local map of level~$0$.
\end{rem}

Finally, we define the families local equivalence group.

\begin{defn}
The \emph{families local equivalence group} $\mathfrak{F}^{sp}_{\mathrm{Pin}(2)\times \mathbb{Z}_2}$ is defined by
\[
\mathfrak{F}^{sp}_{\mathrm{Pin}(2)\times \mathbb{Z}_2} =
\frac{\left\{\text{isomorphism classes of objects in }\mathcal{F}^{sp}_{\mathrm{Pin}(2)\times \mathbb{Z}_2}\right\}}
{\text{local equivalence}}.
\]
The group operation is given by
\[
[(X,r)] + [(Y,s)] := [(X \wedge_B Y,\, r+s)].
\]
Since $-\wedge_B-$ endows $\mathcal{F}^{sp}_{\mathrm{Pin}(2)\times \mathbb{Z}_2}$ with a symmetric monoidal structure, this operation makes $\mathfrak{F}^{sp}_{\mathrm{Pin}(2)\times \mathbb{Z}_2}$ into a well-defined abelian group.
\end{defn}

We also define the notion of stable local triviality.

\begin{defn}
    Given an integer $k \ge 0$, we say that an element $[X] \in \mathfrak{F}^{sp}_{\mathrm{Pin}(2)\times \mathbb{Z}_2}$ is \emph{$k$-stably locally trivial} if, for some (or equivalently, any) object $X$ of $\mathcal{F}^{sp}_{\mathrm{Pin}(2)\times \mathbb{Z}_2}$ representing the given local equivalence class $[X]$, there exist local maps of level~$k$ between $(X,-k)$ and $\mathfrak{B}(S^0,0,0)$ in both directions.
\end{defn}

Since the Borel functor $\mathfrak{B}$ clearly sends local maps of level~0 in $\mathcal{C}^{sp}_{\mathrm{Pin}(2)\times \mathbb{Z}_2}$ to local maps of level~0 in $\mathcal{F}^{sp}_{\mathrm{Pin}(2)\times \mathbb{Z}_2}$, it induces a group homomorphism
\[
\mathfrak{B} \colon \mathfrak{C}^{sp}_{\mathrm{Pin}(2)\times \mathbb{Z}_2} \longrightarrow \mathfrak{F}^{sp}_{\mathrm{Pin}(2)\times \mathbb{Z}_2}.
\]

\begin{defn}
    We define the image of $\mathfrak{B}$ to be the \emph{strict families local equivalence group}, denoted
    $\mathfrak{F}^{sp,str}_{\mathrm{Pin}(2)\times \mathbb{Z}_2}.$
\end{defn}

\subsubsection{Homotopy coherent bordism maps}

Fix a compact spin $4$-manifold $X$ with possibly disconnected boundary denoted by $Y$.  
Let $\mathrm{Diff}^+(X)$ be the group of orientation-preserving diffeomorphisms of $X$.  
Suppose we have a \emph{homotopy coherent $\mathbb{Z}_2$-action} on $X$, that is, a continuous map
\begin{align*}
B\mathbb{Z}_2 \longrightarrow B\mathrm{Diff}^+(X).
\end{align*}
Assume that each connected component of $Y = \partial X$ has $b_1 = 0$.

\begin{defn}
Given a homotopy coherent $\mathbb{Z}_2$-action,  
a \emph{spin homotopy coherent $\mathbb{Z}_2$-action} is a lift to a family of spin structures
\[
\xymatrix{
 & B\mathrm{Aut}(X;\mathfrak{s}) \ar[d] \\
 B\mathbb{Z}_2 \ar[ur] \ar[r] & B\mathrm{Diff}^+(X,[\mathfrak{s}])
}
\]
where $\mathrm{Aut}(X;\mathfrak{s})$ denotes the group of automorphisms of the spin structure $\mathfrak{s}$.
\end{defn}

Note that a spin homotopy coherent $\mathbb{Z}_2$-action induces a family of spin $4$-manifolds
\[
(X,\mathfrak{s})\longrightarrow E \longrightarrow B\mathbb{Z}_2
\]
parametrized over $B\mathbb{Z}_2$ as the pullback of the universal bundle.

\begin{defn}
We say that a spin homotopy coherent $\mathbb{Z}_2$-action is \emph{strict on the boundary} if the induced boundary family
\[
(\partial X = Y,\mathfrak{t} := \mathfrak{s}|_Y) \longrightarrow E^\partial \longrightarrow B\mathbb{Z}_2
\]
is obtained as the Borel construction of an even $\mathbb{Z}_2$-equivariant spin structure on $(Y,\mathfrak{t})$; that is, there exists a smooth involution $\tau \colon Y \to Y$ of order $2$ together with a lift
\[
\tilde{\tau} \colon P(\mathfrak{t}) \longrightarrow P(\mathfrak{t})
\]
covering $\tau$ and satisfying $\tilde{\tau}^2 = \mathrm{id}$, such that
\[
E^\partial \cong P(\mathfrak{t}) \times_{\mathbb{Z}_2} E\mathbb{Z}_2,
\]
where $P(\mathfrak{t})$ denotes the principal spin bundle of $\mathfrak{t}$.
\end{defn}

Thus, by truncating the family obtained from the homotopy coherent action, we obtain a family over the $n$-skeleton $(B\mathbb{Z}_2)_n$:
\[
X \longrightarrow E_n \longrightarrow (B\mathbb{Z}_2)_n.
\]
We apply the families Bauer--Furuta invariants to this truncation. In order to describe these invariants, we introduce the following two virtual bundles:
\begin{align*}
& H^+_{E_n} \in KO\left((B\mathbb{Z}_2)_n\right), \\
& \mathrm{ind}_f^t \left( \dirac_{E_n, \{g_b\}} \right) \in KQ\left((B\mathbb{Z}_2)_n\right),
\end{align*}
satisfying the compatibilities
\[
r_n\left(H^+_{E_n}\right) = H^+_{E_{n-1}}, \qquad
r_n\left( \mathrm{ind}_f^t \left( \dirac_{E_n, \{g_b\}} \right) \right) = \mathrm{ind}_f^t \left( \dirac_{E_{n-1}, \{g_b\}} \right),
\]
where $r_n$ denotes the restriction map in $KO$- or $KQ$-theory.  
Here, for a topological space $X$, $KQ(X)$ denotes the Grothendieck group of the semigroup of isomorphism classes of quaternionic vector bundles over $X$ under direct sum, called quaternionic $K$-cohomology.

The above two invariants are defined as follows.
\begin{defn}
Given a fiber bundle $$X \longrightarrow E_n \longrightarrow (B\mathbb{Z}_2)_n,$$ consider its principal $\mathrm{Diff}^+(X)$-bundle
\[
\mathrm{Diff}^+(X) \longrightarrow P_n \longrightarrow (B\mathbb{Z}_2)_n.
\]
The group $\mathrm{Diff}^+(X)$ acts on the space $\operatorname{Gr}\left(H^2(X; \mathbb{R})\right)$ of maximally positive-definite subspaces of $H^2(X; \mathbb{R})$, which is known to be contractible. Therefore, one can choose a section
\begin{align}\label{Grbundle}
s \colon (B\mathbb{Z}_2)_n \longrightarrow P_n \times_{\mathrm{Diff}^+(X)} \operatorname{Gr}\left(H^2(X; \mathbb{R})\right),
\end{align}
unique up to homotopy.  
This section $s$ determines a real vector bundle
\[
H^+(X; \mathbb{R}) \longrightarrow H^+_{E_n} \longrightarrow (B\mathbb{Z}_2)_n,
\]
which defines $H^+_{E_n} \in KO\left((B\mathbb{Z}_2)_n\right)$.
\end{defn}
Next, we introduce the class 
\[
\mathrm{ind}_f^t\left( \dirac_{E_n, \{g_b\}} \right) \in KQ\left( (B\mathbb{Z}_2)_n \right).
\]

\begin{defn}
For the family of spin $4$-manifolds 
\[
(X, \mathfrak{s}) \longrightarrow E \longrightarrow B\mathbb{Z}_2
\]
obtained from a spin homotopy coherent $\mathbb{Z}_2$-action whose boundary family is strict, we say that a fiberwise Riemannian metric $g_b$ parametrized by $b \in B\mathbb{Z}_2$ is \emph{admissible} if the following two conditions are satisfied:
\begin{itemize}
\item For each $b \in B\mathbb{Z}_2$, near $\partial E_b$, $g_b$ is the product metric 
\begin{align*}
g_Y + dt^2,
\end{align*}
where $t$ denotes the normal coordinate to the boundary $Y$.
\item The family metric $g_b$ on $E^\partial$, obtained as $g_Y$ appearing as above, coincides with the family of Riemannian metrics on $E^\partial$ coming from the Borel construction of a $\mathbb{Z}_2$-equivariant Riemannian metric $g_Y$ on $Y$.
\end{itemize}
\end{defn}

\noindent We note that the space of fiberwise admissible metrics on a fixed bundle $E$ is non-empty and contractible.

Let us fix an admissible metric $\{g_b\}$ for the family $E$ and an integer $k \geq 3$.
We consider the family of spin Dirac operators with respect to $\{g_b\}$ and with the fiberwise APS boundary condition:
\begin{align*}\label{familyspinDirac}
\mathbf{D}(g_b) \colon L^2_k\left( S^+_E \right) \longrightarrow  L^2_{k-1}\left( S^-_E \right) 
\times \left( E\mathbb{Z}_2 \times_{\mathbb{Z}_2} L^2_{k-\frac{1}{2}}\left( S \right)^{0}_{-\infty} \right),
\end{align*}
which is a family of $\mathbb{H}$-linear Fredholm operators parametrized over $B\mathbb{Z}_2$, where:
\begin{itemize}
    \item $S^+_E$ and $S^-_E$ are the fiberwise positive and negative spinor bundles for the parametrized spin structure.
    \item $S$ denotes the spinor bundle of the unique spin structure on $Y$.
    \item $L^2_k\left( S^{\pm}_E \right)$ denotes the fiberwise $L^2_k$-sections of $S^{\pm}_E$.
    \item $L^2_{k-\frac{1}{2}}\left( S \right)$ is the space of spinors on $Y$ with $L^2_{k-\frac{1}{2}}$ completion, induced from the $\mathbb{Z}_2$-equivariant metric $g_Y$ and a $\mathbb{Z}_2$-invariant connection.
    \item $L^2_{k-\frac{1}{2}}\left( S \right)^{0}_{-\infty}$ is the subspace spanned by eigenvectors with non-positive eigenvalues of $\diracpartial_{B_0}$.
\end{itemize}

Truncating $\mathbf{D}(g_b)$, we define 
\[
\mathrm{ind}_f^{\mathrm{APS}}\left( \dirac_{E_n, \{g_b\}} \right) \in KQ\left( (B\mathbb{Z}_2)_n \right)
\]
as follows.

\begin{defn}
For any $n \geq 0$, we define $\mathrm{ind}^{\mathrm{APS}}_f \left( \dirac_{E_n, \{g_b\}} \right)$ as the family index with respect to 
\[
\mathrm{ind}^{\mathrm{APS}}_f \left( \mathbf{D}(g_b) \big|_{(B\mathbb{Z}_2)_n} \right),
\]
which is regarded as a family of Fredholm maps between the Hilbert bundles
\[
\mathbf{D}(g_b) \big|_{(B\mathbb{Z}_2)_n} \colon (B\mathbb{Z}_2)_n \times l^2_{\mathbb{H}} \cong \Gamma(S^+_E)\big|_{(B\mathbb{Z}_2)_n} \longrightarrow \left( \Gamma(S^-_E) \times \left( E\mathbb{Z}_2 \times_{\mathbb{Z}_2} \Gamma(S)^{0}_{-\infty} \right) \right) \big|_{(B\mathbb{Z}_2)_n} \cong (B\mathbb{Z}_2)_n \times l^2_{\mathbb{H}},
\]
where $l^2_{\mathbb{H}}$ is the space of square summable sequences of $\mathbb{H}$ with the inner product 
\[
\langle \{ a_i\}_{i=1}^\infty, \{ b_i\}_{i=1}^\infty \rangle := \sum_{i=1}^\infty a_i \cdot \overline{b}_i.
\]
Applying Kuiper's theorem \cite{Kuiper1965}\footnote{This is a priori about complex vector bundles, but it also works in the real and quaternionic settings, as observed in \cite[Section~5]{matumoto1971kuiper}.} shows that these Hilbert bundles are trivial.
\end{defn}

This construction a priori depends on the choices of trivializations, but trivializations are also unique up to homotopy since the infinite-dimensional $\mathbb{H}$-unitary group is contractible. Therefore, $\mathrm{ind}_f \left( \dirac_{E_n, \{g_b\}} \right)$ is independent of the trivializations. However, $\mathrm{ind}_f \left( \dirac_{E_n, \{g_b\}} \right)$ still depends on the choice of $\mathbb{Z}_2$-invariant Riemannian metric on $Y$. In order to eliminate this dependency, we add a shifting term.

\begin{defn}
For any $n \geq 0$, we define
\[
\mathrm{ind}^t_f \left( \dirac_{E_n, \{g_b\}} \right) := \mathrm{ind}_f^{\mathrm{APS}} \left( \dirac_{E_n, \{g_b\}} \right) - \mathfrak{B}_n \left( {\bf n}(Y, \mathfrak{s}, \tilde{\tau}, g_Y) \right) \in KQ\left( (B\mathbb{Z}_2)_n \right) \otimes \mathbb{Q}.
\]
\end{defn}

The following are the fundamental properties of the invariants: 

\begin{prop}
Let us fix a spin homotopy coherent $\mathbb{Z}_2$-action $E$ whose boundary family is strict. 
The two invariants $H^+_{E_n}$ and $\mathrm{ind}^t_f\left( \dirac_{E_n} \right)$ satisfy the following conditions:
\begin{itemize}
    \item[(i)] We have the compatibilities
    \[
    r_n\left( H^+_{E_n} \right) = H^+_{E_{n-1}} 
    \qquad\text{ and }\qquad
    r_n\left( \mathrm{ind}^t_f\left( \dirac_{E_n, \{g_b\}} \right) \right) = \mathrm{ind}^t_f\left( \dirac_{E_{n-1}, \{g_b\}} \right).
    \]
    \item[(ii)] $H^+_{E_n}$ depends only on the isomorphism class of $E$. The invariant $\mathrm{ind}^t_f\left( \dirac_{E_n} \right)$ depends on the induced boundary metric of an admissible metric.
    \item[(iii)] If we restrict to a point $b \in (B\mathbb{Z}_2)_n$, we obtain
    \[
    (H^+_{E_n})_b \cong H^+(X; \mathbb{R})
    \qquad\text{ and }\qquad
    \left( \mathrm{ind}^t_f\left( \dirac_{E_n} \right) \right)_b  = -\frac{1}{16}\sigma(X)  \in KQ(b) \otimes \mathbb{Q} \cong \mathbb{Q}.
    \]
\end{itemize}
\end{prop}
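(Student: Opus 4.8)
The plan is to establish (i), (ii), (iii) in turn, in each case reducing to homotopy invariance of family indices, contractibility of the relevant spaces of choices, and the compatibilities already built into the functors $\mathcal{B}_n$. \emph{For part (i)}, the statement for $H^+_{E_n}$ is formal: restricting the principal $\mathrm{Diff}^+(X)$-bundle $P_n \to (B\mathbb{Z}_2)_n$ to $(B\mathbb{Z}_2)_{n-1}$ gives $P_{n-1}$, and a section $s$ of the associated bundle with fiber $\operatorname{Gr}(H^2(X;\mathbb{R}))$ restricts to such a section over $(B\mathbb{Z}_2)_{n-1}$; since $\operatorname{Gr}(H^2(X;\mathbb{R}))$ is contractible, any two sections over $(B\mathbb{Z}_2)_{n-1}$ are homotopic, so $H^+_{E_n}|_{(B\mathbb{Z}_2)_{n-1}} \cong H^+_{E_{n-1}}$, i.e. $r_n(H^+_{E_n}) = H^+_{E_{n-1}}$. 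For the Dirac index, the Fredholm family $\mathbf{D}(g_b)$ is defined over all of $B\mathbb{Z}_2$, so its truncation to $(B\mathbb{Z}_2)_n$ restricts to its truncation to $(B\mathbb{Z}_2)_{n-1}$, which gives $r_n(\mathrm{ind}^{\mathrm{APS}}_f(\dirac_{E_n,\{g_b\}})) = \mathrm{ind}^{\mathrm{APS}}_f(\dirac_{E_{n-1},\{g_b\}})$; and the correction class $\mathcal{B}_n(\mathbf{n}(Y,\mathfrak{s},\tilde{\tau},g_Y))$ is restriction-compatible because the functors $\mathcal{B}_n$ are, so subtracting it preserves the compatibility, yielding the second identity in (i).

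\emph{For part (ii)}, well-definedness of $H^+_{E_n}$ up to isomorphism follows once more from contractibility of $\operatorname{Gr}(H^2(X;\mathbb{R}))$: any two sections $s$ are homotopic through sections, hence the associated real vector bundle over $(B\mathbb{Z}_2)_n$ is determined up to isomorphism by $E$ alone. For the index, I would first observe that, for a fixed family $E$, the space of admissible fiberwise metrics inducing a prescribed $\mathbb{Z}_2$-invariant boundary metric $g_Y$ is convex, hence contractible; homotopy invariance of the family APS index then shows that $\mathrm{ind}^{\mathrm{APS}}_f(\dirac_{E_n,\{g_b\}})$ depends only on $g_Y$, and since $\mathcal{B}_n(\mathbf{n}(Y,\mathfrak{s},\tilde{\tau},g_Y))$ likewise depends only on $g_Y$, so does $\mathrm{ind}^t_f(\dirac_{E_n})$, which is exactly the assertion in (ii).

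\emph{For part (iii)}, restricting $E_n$ to a point $b \in (B\mathbb{Z}_2)_n$ collapses the Borel construction to the non-parametrized picture: the fiber over $b$ of the associated $\operatorname{Gr}$-bundle is a maximal positive-definite subspace of $H^2(X;\mathbb{R})$, so $(H^+_{E_n})_b \cong H^+(X;\mathbb{R})$. Similarly $(\mathrm{ind}^{\mathrm{APS}}_f(\dirac_{E_n,\{g_b\}}))_b$ is the APS index of the spin Dirac operator on $(X,\mathfrak{s})$ with the boundary condition coming from $g_Y$, while $(\mathcal{B}_n(\mathbf{n}(Y,\mathfrak{s},\tilde{\tau},g_Y)))_b$ recovers the ordinary Manolescu correction term of $(Y,\mathfrak{s}|_Y,g_Y)$; hence $(\mathrm{ind}^t_f(\dirac_{E_n}))_b = \mathrm{ind}^t\dirac_{X,\mathfrak{s}} = \frac{1}{8}(c_1(\mathfrak{s})^2 - \sigma(X))$. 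Since $\mathfrak{s}$ is a spin structure, $c_1(\mathfrak{s}) = 0$, so the complex index equals $-\sigma(X)/8$; passing to the quaternionic index in $KQ(b)\otimes\mathbb{Q}\cong\mathbb{Q}$ halves this (a quaternionic line being a complex plane), giving $-\sigma(X)/16$.

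The main obstacle I expect is the second half of (ii): making precise that the family APS index is insensitive to the interior freedom in the admissible metric, and — for a completely self-contained argument — that changing the boundary metric $g_Y$ alters the APS index by exactly the term absorbed into $\mathbf{n}$. This is the parametrized analogue of Montague's metric-independence argument, and the delicate point is the continuity of the eigenspace projections appearing in the fiberwise APS boundary condition under homotopies of admissible metrics fixing the boundary metric; once that continuity is in place, the remaining verifications are routine.
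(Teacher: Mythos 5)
Your proposal is correct and follows essentially the same route as the paper: (i) via uniqueness up to homotopy of the Grassmannian section and restriction-compatibility of the constructions, (ii) via a (linear) homotopy of admissible metrics with fixed boundary restriction together with homotopy invariance of the family index after trivializing the Hilbert bundles by Kuiper's theorem, and (iii) by restricting to a point. The concern you raise at the end is moot for the statement as given: since the boundary metric is held fixed along the interpolation, the boundary Dirac operator and its spectral projection do not move, so only the interior of the Fredholm family varies and the standard homotopy invariance applies directly.
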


\begin{proof}
    {\bf \underline{Proof of (i)}}     The equality $r_n\left( H^+_{E_n} \right) = H^+_{E_{n-1}}$ follows from the fact that the choice of sections in \eqref{Grbundle} is unique up to homotopy.  
    For the second equality, it is sufficient to observe that  
    \[
    r_n\left( \mathrm{ind}^{\mathrm{APS}}_f\left( \dirac_{E_n, \{g_b\}} \right) \right) = \mathrm{ind}_f^{\mathrm{APS}}\left( \dirac_{E_{n-1}, \{g_b\}} \right),
    \qquad
    r_n\left( \mathfrak{B}_n\left( {\bf n}(Y, \s, \tilde{\tau}, g_Y) \right) \right) = \mathfrak{B}_{n-1}\left( {\bf n}(Y, \s, \tilde{\tau}, g_Y) \right).
    \]
    These equalities follow directly from the constructions.

    \noindent{\bf \underline{Proof of (ii)}} 
    For the bundle $H^+_{E_n}$, it is clear that its isomorphism class is independent of the choice of sections in \eqref{Grbundle}.  
    Let $\{g_b\}$ and $\{g'_b\}$ be two admissible metrics whose restrictions to the boundary family agree.  
    The linear homotopy $\{ h_{t,b} := t g_b + (1-t) g'_b \}$ gives a 1-parameter family of admissible metrics.  
    Restricting to $(B\mathbb{Z}_2)_n$ yields a family of $\mathbb{Z}_p$-equivariant Fredholm operators
    \[
    {\bf D}(h_{t,b})|_{(B\mathbb{Z}_2)_n} \colon (B\mathbb{Z}_2)_n \times l^2_{\mathbb{H}} \cong \Gamma(S^+_E)|_{(B\mathbb{Z}_2)_n} \longrightarrow \left( \Gamma(S^-_E) \times \left( E\mathbb{Z}_2 \times_{\mathbb{Z}_2} \Gamma(S)^{0}_{-\infty} \right) \right)|_{(B\mathbb{Z}_2)_n} \cong (B\mathbb{Z}_2)_n \times l^2_{\mathbb{H}},
    \]
    where we again used Kuiper's theorem.  
    By the homotopy invariance of the family index, we have
    \[
    \mathrm{ind}^{\mathrm{APS}}_f\left( \dirac_{E_n, \{g_b\}} \right) = \mathrm{ind}^{\mathrm{APS}}_f\left( \dirac_{E_n, \{g'_b\}} \right)
    \]
    as $\mathbb{Z}_p$-equivariant virtual $\mathbb{H}$-bundles over $(B\mathbb{Z}_2)_n$.  
    Since the boundary metrics are the same, it follows that
    \[
    \mathrm{ind}^t_f\left( \dirac_{E_n, \{g_b\}} \right) = \mathrm{ind}^t_f\left( \dirac_{E_n, \{g'_b\}} \right).
    \]
    This completes the proof of (ii).  

    The statement (iii) follows immediately from the definitions.
\end{proof}

Fix a compact spin 4-manifold $X$ with possibly disconnected boundary $Y$.  
Each connected component of $Y = \partial X$ is assumed to satisfy $b_1 = 0$.  
Under these assumptions, we claim the following:  

\begin{thm} \label{prop: homotopy coherent BF for Pin(2)xZ2}
Let 
\[
E \colon B\mathbb{Z}_2 \longrightarrow B\mathrm{Aut}(X, \s) 
\]
be a spin homotopy coherent $\mathbb{Z}_2$-action on $X$ which is strict on the boundary.  
Associated to it, for any $n \geq 0$, there exists a $\mathrm{Pin}(2)$-equivariant fiberwise continuous map, stably written as  
\begin{align*}
    {\bf BF}_{E_n} \colon  \mathrm{ind}^t_f\left( \dirac_{E_n} \right)^+ \wedge_{(B\mathbb{Z}_2)_n} \mathfrak{B}_n\left( SWF(-Y, \frakt, \tilde{\tau}) \right)
 \longrightarrow (H^+_{E_n})^+ ,
\end{align*}
such that ${\bf BF}_{E_n}$ is a local map of level $b^+(X)$.  
Here $SWF(-Y, \frakt, \tilde{\tau})$ denotes Montague's $\mathrm{Pin}(2)\times \mathbb{Z}_2$-equivariant Seiberg--Witten Floer homotopy type for the restricted equivariant spin structure.  
The notations $\left( \dirac_{E_n} \right)^+$ and $(H^+_{E_n})^+$ refer to the fiberwise one-point compactifications.

Moreover, the diagram  
\begin{align}\label{compatibility_of_HCBF}
  \begin{CD}
     \mathrm{ind}^t_f\left( \dirac_{E_m} \right)^+ \wedge_{(B\mathbb{Z}_2)_m} \mathfrak{B}_m\left( SWF(-Y, \frakt, \tilde{\tau}) \right) @>{{\bf BF}_{E_m}}>> H^+_{E_m}  \\
  @A{i_m}AA    @A{i_m}AA \\
     \mathrm{ind}^t_f\left( \dirac_{E_{m-1}} \right)^+ \wedge_{(B\mathbb{Z}_2)_{m-1}} \mathfrak{B}_{m-1}\left( SWF(-Y, \frakt, \tilde{\tau}) \right) @>{{\bf BF}_{E_{m-1}}}>> H^+_{E_{m-1}}
  \end{CD}
\end{align}
commutes up to $\mathrm{Pin}(2)$-equivariant stable homotopy for every $m \geq 0$, where $i_m$ denotes the natural inclusions.
\end{thm}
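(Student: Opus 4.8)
The plan is to obtain ${\bf BF}_{E_n}$ as a fiberwise, $\mathrm{Pin}(2)$-equivariant finite-dimensional approximation of the Seiberg--Witten equations on the family $X \to E_n \to (B\mathbb{Z}_2)_n$ with a cylindrical end modeled on $(-Y,\frakt)$. This is the $\mathrm{Pin}(2)\times\mathbb{Z}_2$-equivariant analogue of Manolescu's relative Bauer--Furuta construction \cite{Man03}, carried out in the families/homotopy-coherent formalism of \cite{baraglia2024equivariant, KPT24}, with Montague's $\mathrm{Pin}(2)\times\mathbb{Z}_2$-equivariant boundary data \cite{montague2022seiberg} attached along $Y$. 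Because $(B\mathbb{Z}_2)_n$ is a finite CW complex, every Hilbert-bundle and eigenspace construction needed for the approximation goes through over the base exactly as in the non-parametrized setting; the only extra input is continuity in $b$ of the spectral projections of the linearized operators, which is automatic since the admissible metric $\{g_b\}$ and the connections vary continuously.

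Concretely, I would fix an admissible fiberwise metric $\{g_b\}$ (the space of such is non-empty and contractible) together with a fiberwise generic perturbation compatible with the $\mathbb{Z}_2$-equivariant metric and connection on $Y$, and put the Seiberg--Witten map on the cylindrical-end configuration space of each fiber $X_b$. Its linearization is the fiberwise spin Dirac operator together with $d^+\oplus d^*$, and the associated virtual index bundle over $(B\mathbb{Z}_2)_n$ is $\mathrm{ind}_f^{\mathrm{APS}}(\dirac_{E_n})\ominus H^+_{E_n}$. Choosing $\mu$ and $-\lambda$ large and taking, fiberwise, the span $V^\mu_\lambda(g_b)$ of eigenvectors of the boundary operator with eigenvalue in $(\lambda,\mu]$, the corresponding fiberwise Conley index is a model for the Borel construction of $SWF(-Y,\frakt,\tilde\tau)$; after the usual desuspension and the rational shift that absorbs $\mathbf{n}(Y,\frakt,\tilde\tau,g_Y)$ --- which is exactly what turns $\mathrm{ind}_f^{\mathrm{APS}}$ into $\mathrm{ind}_f^t$ --- this is a representative of $\mathfrak{B}_n(SWF(-Y,\frakt,\tilde\tau))$. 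The approximated Seiberg--Witten map then gives a fiberwise $\mathrm{Pin}(2)$-equivariant map of fiberwise Thom spaces, and trivializing the ambient quaternionic Hilbert bundles via Kuiper's theorem \cite{Kuiper1965, matumoto1971kuiper} (unique up to homotopy, so the construction is well defined) produces the stable map ${\bf BF}_{E_n}$ with the asserted domain and codomain.

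The level-$b^+(X)$ locality is checked on $S^1$-fixed points, i.e.\ on the reducible locus, where the Seiberg--Witten map is linear. There the spinor component is an $\mathbb{H}$-linear isomorphism, hence contributes the identity on the $S^1$-fixed sphere $S^0$ of the Dirac index bundle; the curvature component is the fiberwise linear map $d^+$, whose fiberwise cokernel is precisely the maximal positive-definite bundle $H^+(X;\mathbb{R})\to H^+_{E_n}$; and the boundary factor contributes the trivial sphere already built into $\mathfrak{B}_n(SWF(-Y,\frakt,\tilde\tau))^{S^1}$. Combining these, ${\bf BF}_{E_n}^{S^1}$ is stably the inclusion $E_0^+\hookrightarrow F_0^+$ of fiberwise $S^1$-fixed spheres with $F_0/E_0\cong H^+_{E_n}$, a real bundle of rank $b^+(X)$ on which $j\in\mathrm{Pin}(2)$ acts by $-1$ (the charge-conjugation symmetry, as in Furuta's theorem), so $\mathrm{Pin}(2)$ acts through $\mathrm{Pin}(2)/S^1\cong\mathbb{Z}_2$; this is exactly the definition of a families local map of level $b^+(X)$. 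The commutativity of \eqref{compatibility_of_HCBF} up to $\mathrm{Pin}(2)$-equivariant stable homotopy is then naturality: $i_m$ is induced by $(B\mathbb{Z}_2)_{m-1}\hookrightarrow(B\mathbb{Z}_2)_m$, and the family, the admissible metric, the eigenspace bundles, the index bundles and the Kuiper trivializations all restrict compatibly along this inclusion, so ${\bf BF}_{E_m}|_{(B\mathbb{Z}_2)_{m-1}}$ and ${\bf BF}_{E_{m-1}}$ arise from the same data; the only point to record is that two choices of finite-dimensional approximation yield stably homotopic maps, which is the standard invariance of finite-dimensional approximation.

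I expect the main obstacle to be the boundary analysis: making Manolescu's relative Bauer--Furuta construction --- finite-dimensional approximation on a manifold with a cylindrical end, with its attendant gluing, exponential-decay and compactness estimates --- hold uniformly in families over the compact base $(B\mathbb{Z}_2)_n$, and keeping careful track that $\mathbf{n}(Y,\frakt,\tilde\tau,g_Y)$ enters precisely as the rational shift converting $\mathrm{ind}_f^{\mathrm{APS}}$ into $\mathrm{ind}_f^t$, so that the desuspension produces exactly the stated domain. By contrast, identifying ${\bf BF}_{E_n}^{S^1}$ with the inclusion and verifying the $j$-action on $H^+$, while the conceptual heart of the locality claim, is entirely parallel to the closed non-parametrized case and should be routine once the parametrized construction is in place.
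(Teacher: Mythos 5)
Your proposal is correct and follows essentially the same route as the paper: fiberwise finite-dimensional approximation of the Seiberg--Witten map over $(B\mathbb{Z}_2)_n$, identification of the family index as $\mathrm{ind}^{\mathrm{APS}}_f(\dirac)\ominus H^+_{E_n}$ with the correction term $\mathbf{n}(Y,\mathfrak{t},\tilde\tau,g_Y)$ converting $\mathrm{ind}^{\mathrm{APS}}_f$ to $\mathrm{ind}^t_f$, Kuiper trivializations, the $S^1$-fixed-point computation exhibiting the map as a codimension-$b^+(X)$ linear inclusion, and naturality of all choices under restriction of skeleta for the compatibility square. The only cosmetic difference is that you phrase the boundary analysis in Manolescu's cylindrical-end language, whereas the paper works with the double Coulomb slice and spectral projection onto $V^\mu_{-\infty}$ following \cite{Kha15, KT22}; these are interchangeable here.
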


\begin{rem}
We call the sequence of maps $\{{\bf BF}_{E_n}\}$ the {\it $\mathrm{Pin}(2)$-equivariant homotopy coherent Bauer--Furuta invariant} of the family $E$.  
We expect that $\{{\bf BF}_{E_n}\}$ is an invariant of the fiber bundle isomorphism class of $E$ in a certain category, and that it does not depend on the choice of admissible metrics $\{g_b\}$.  
However, since our focus in this paper is solely on its existence, we do not address this level of invariance here.  
\end{rem}

\begin{proof}
First, we have applied the families Bauer--Furuta invariants to this family $E$ with up-side-down. An $S^1$-equivariant version of this claim has been proven in \cite[Section~2.3]{KT22} under the assumption that $b^+(X)=0$ and $Y$ is connected without assuming $X$ is spin. In this proof, we follow the notations given in \cite{KT22}. However, in the proof, these assumptions are not essentially used, as the existence of a fiberwise map is ensured with {\it metric-dependent Floer homotopy type}. We see how to replace it with Montague's spectrum and how the invariants $\ind^t(D)$ and $H^+_{E_n}$ appear. Also, the compatibility \eqref{compatibility_of_HCBF} was not discussed in \cite{KT22}, so we also point out how to see it.

Let us denote by 
\[
(X, \mathfrak{s}) \longrightarrow E \longrightarrow B\mathbb{Z}_p
\]
the family of spin $4$-manifolds obtained from a spin homotopy coherent $\mathbb{Z}_2$-action on $X$.  
The induced boundary family is written as 
\[
(Y, \mathfrak{s}|_Y) \longrightarrow E_\partial \longrightarrow B\mathbb{Z}_p,
\]
which is isomorphic to the Borel construction of an even $\mathbb{Z}_2$-equivariant spin structure on $Y$.  
Take a fiberwise admissible Riemannian metric $g_b$ on $E$.  
Then we have an associated families Seiberg--Witten map with projections: for a real number $\mu$, we have the fiberwise Seiberg--Witten map over a slice
\begin{align}\label{homotopycohSWmap}
\mathcal{F}^\mu \colon L^2_{k}(i\Lambda^1_E)_{CC} \oplus L^2_k(S^+_E) \longrightarrow L^2_{k-1}(i\Lambda^+_E) \oplus L^2_{k-1}(S^-_E) \oplus {\bf V}^\mu_{-\infty}(E_\partial),
\end{align}
where 
\begin{itemize}
\item ${\bf V}^\mu_{-\infty}(E_\partial) = V^\mu_{-\infty}(E_\partial) \times_{\mathbb{Z}_2} E\mathbb{Z}_2$, 
\item $\mathcal{F}^\mu$ is the fiberwise Seiberg--Witten equation with the fiberwise projection to ${\bf V}^\mu_{-\infty}(E_\partial)$, 
\item $L^2_{k}(i\Lambda^1_E)_{CC}$ denotes the space of fiberwise $L^2_{k}$-valued imaginary $1$-forms on $E$ with the fiberwise double Coulomb gauge condition, 
\item $L^2_{k-1}(i\Lambda^+_E)$ denotes the space of fiberwise $L^2_{k-1}$-valued self-dual $2$-forms on $E$ with respect to the fiberwise Riemannian metric $\{g_b\}$.
\end{itemize}

We decompose $\mathcal{F}^\mu$ as the sum of a fiberwise linear operator $L^\mu$ and a fiberwise quadratic part $c^\mu$.  
Moreover, the linear part $L^\mu_b$ is described as the sum of the real operator
\[
L^\mu_{b, \mathbb{R}} = \left(d^+, 0, (p^\mu_{-\infty})_{\mathbb{R}} \circ r_b \right) 
\colon L^2_{k}(i\Lambda^1_{E_b})_{CC} \longrightarrow L^2_{k-1}(i\Lambda^+_{E_b}) \oplus V^\mu_{-\infty}(\mathbb{R})_b
\]
and the quaternionic operator
\[
L^\mu_{b, \mathbb{H}} = \left(0, \dirac_{g_b}, (p^\mu_{-\infty})_{\mathbb{H}} \circ r_b \right) 
\colon L^2_k(S^+_{E_b}) \longrightarrow L^2_{k-1}(S^-_{E_b}) \oplus V^\mu_{-\infty}(\mathbb{H})_b,
\]
where 
\begin{itemize}
\item $r_b \colon L^2_{k}(i\Lambda^1_{E_b})_{CC} \oplus L^2_k(S^+_{E_b}) \to V(E_\partial)$ is the restriction map on each fiber, 
\item $(p^\mu_{-\infty})_{\mathbb{R}}$ and $(p^\mu_{-\infty})_{\mathbb{H}}$ are projections to the real part $V^\mu_{-\infty}(\mathbb{R})_b$ and quaternionic part $V^\mu_{-\infty}(\mathbb{H})_b$ of $V^\mu_{-\infty}(E_\partial)_b$.
\end{itemize}

We first observe the behavior of the families of operators $L^\mu_{b, \mathbb{R}}$: under the assumption that $b_1(X) = 0$, the operator
\[
L^0_{b, \mathbb{R}} \colon L^2_{k}(i\Lambda^1_{E_b})_{CC} \longrightarrow L^2_{k-1}(i\Lambda^+_{E_b}) \oplus V^0_{-\infty}(\mathbb{R})
\]
is injective for any $b \in (B\mathbb{Z}_p)_n$, and hence the fiberwise cokernel gives a bundle over $B\mathbb{Z}_2$. From \cite[Lemma~2.9(ii)]{KT22}, this bundle is actually isomorphic to $H^+_E$. For $L^\mu_{b, \mathbb{C}}$ with $\mu = 0$, the family index of the operator $\{L^\mu_{b, \mathbb{R}}\}$ is written as the virtual bundle $\ind^{\mathrm{APS}}(\dirac_{E_n})$ from its definition. Therefore, we see that the family index $\ind_f L^0|_{(B\mathbb{Z}_2)_n}$ of $L^0$ parametrized by $(B\mathbb{Z}_2)_n$ is
\begin{align}\label{computation_of_find}
\ind_f L^0|_{(B\mathbb{Z}_2)_n} \cong \left(-H^+_{E_n}, \ind^{\mathrm{APS}}_f \dirac_{g_b} \right) \in KO((B\mathbb{Z}_2)_n) \times KQ((B\mathbb{Z}_2)_n).
\end{align}

Now, regarding the compactness and some properties of linear operators, we have the same properties written in \cite[Lemmas~4.4--4.7]{KPT24}, which enable us to take the induced map from finite-dimensional approximations with one-point compactifications of \eqref{homotopycohSWmap} as follows: take a sufficiently large subbundle $W_1 \subset L^2_{k-1}(i\Lambda^+_E) \oplus L^2_{k-1}(S^-_E)$ such that
\begin{align*}\label{trans}
\mathrm{Im} \left( \mathrm{pr}_{L^2_{k-1}(i\Lambda^+_{E_b}) \oplus L^2_{k-1}(S^-_{E_b})} \circ L^0_b \right) + (W_1)_b = L^2_{k-1}(i\Lambda^+_{E_b}) \oplus L^2_{k-1}(S^-_{E_b})
\end{align*}
holds for any $b \in (B\mathbb{Z}_2)_n$. We define
\[
W_0 := (L^{\mu})^{-1}\left(W_1 \oplus {\bf V}^\mu_{\lambda} \right) \longrightarrow (B\mathbb{Z}_p)_n.
\]
As proven in \cite{KPT24},
\begin{align*}\label{decomp}
W_1 + {\bf V}^\mu_{\lambda} + \Ker L^0 - \mathrm{Coker}\, L^0 \cong W_0 + {\bf V}^\mu_0
\end{align*}
which is equivalent to
\[
W_1 + {\bf V}^\mu_{\lambda} - W_0 \cong \ind L^0 + {\bf V}^\mu_0
\]
as virtual vector bundles over $(B\mathbb{Z}_p)_n$, where $\ind L^0$ denotes the family index of $\{L^0_b\}_{b \in (B\mathbb{Z}_p)_n}$.

Applying the projection, we obtain a family of maps
\[
\pr_{W_1 \times {\bf V}^\mu_{\lambda}} \circ \mathcal{F}^\mu|_{W_0} \colon W_0 \longrightarrow W_1 \times {\bf V}^\mu_{\lambda}.
\]
If we denote by ${\bf I}^\mu_{\lambda}$ the $\mathrm{Pin}(2) \times \mathbb{Z}_2$-equivariant Conley index of ${\bf V}^\mu_{\lambda}$ equipped with the $\mathbb{R}$-action from the restricted gradient of CSD, the compactness theorems show that we obtain a $\mathrm{Pin}(2) \times \mathbb{Z}_2$-equivariant map
\[
BF_{E_n} \colon W_0^+ \longrightarrow W_1^+ \wedge {\bf I}^\mu_{\lambda},
\]
which can stably be rewritten as
\[
BF_{E_n} \colon \ind L^0 \longrightarrow \Sigma^{-{\bf V}^0_{\lambda}} {\bf I}^\mu_{\lambda}.
\]
From \eqref{computation_of_find}, we can again regard it as
\[
BF_{E_n} \colon \ind^{\mathrm{APS}}_f(\dirac_{g_b}) \longrightarrow H^+_{E_n} \wedge \Sigma^{-{\bf V}^0_{\lambda}} {\bf I}^\mu_{\lambda}.
\]
From the definition of the topological part of the family Dirac indices, we see
\[
BF_{E_n} \colon \left(\ind^t_f(\dirac_{E_n})\right)^+ \longrightarrow (H^+_{E_n})^+ \wedge \Sigma^{-{\bf n}(Y, \mathfrak{s}, \tilde{\tau}, g_Y) \cdot \mathbb{H} - {\bf V}^0_{\lambda}} {\bf I}^\mu_{\lambda}.
\]
Now we note that the Conley index ${\bf I}^\mu_{\lambda}$ can be taken as the Borel construction $I^\mu_{\lambda}$ with respect to the lift of the $\mathbb{Z}_2$-action, which ensures
\[
\Sigma^{-{\bf n}(Y, \mathfrak{s}, \tilde{\tau}, g_Y) \cdot \mathbb{H} - {\bf V}^0_{\lambda}} {\bf I}^\mu_{\lambda} = \mathfrak{B}_n\left(SWF_{\mathrm{Pin}(2) \times \mathbb{Z}_2}(Y, \mathfrak{s}, \tilde{\tau})\right).
\]
This gives the desired $\mathrm{Pin}(2) \times \mathbb{Z}_2$-equivariant map
\[
BF_{E_n} \colon \left(\ind^t_f(\dirac_{E_n})\right)^+ \longrightarrow (H^+_{E_n})^+ \wedge \mathfrak{B}_n\left(SWF_{\mathrm{Pin}(2) \times \mathbb{Z}_2}(Y, \mathfrak{s}, \tilde{\tau})\right).
\]
We also compute $$BF^{S^1}_{E_n} \colon S^0 \times (B\mathbb{Z}_2)_n \longrightarrow (H^+_{E_n})^+ \wedge \mathfrak{B}_n(S^0),$$ which is homotopic to the induced map from the fiberwise linear injection of codimension $\mathrm{rank}\, H^+_{E_n} = b^+(X)$. Therefore, this map $BF_{E_n}$ is a local map of level $b^+(X)$.

Finally, we consider the compatibility \eqref{compatibility_of_HCBF}. Fix an admissible metric $\{g_b\}$ parameterized by $b \in B\mathbb{Z}_2$. Then for any $n$-skeleton $(B\mathbb{Z}_2)_n \subset B\mathbb{Z}_2$, from the above construction, one can construct the Bauer--Furuta invariants
\[
BF_{E_n} \colon \left(\ind^t_f(\dirac_{E_n})\right)^+ \longrightarrow (H^+_{E_n})^+ \wedge \mathfrak{B}_n\left(SWF_{\mathrm{Pin}(2) \times \mathbb{Z}_2}(Y, \mathfrak{s}, \tilde{\tau})\right).
\]
If we consider the corresponding data for $(B\mathbb{Z}_2)_{n+1} \subset B\mathbb{Z}_2$, although we need to choose suitable quantitative constants for $(B\mathbb{Z}_2)_{n+1}$ to see \cite[Lemmas~4.4--4.7]{KPT24}, from the construction we see $BF_{E_{n+1}}|_{(B\mathbb{Z}_2)_n}$ is $\mathrm{Pin}(2) \times \mathbb{Z}_2$-equivariant stably homotopic to $BF_{E_n}$. This completes the proof.
\end{proof}

The following corollary is now straightforward.
   
\begin{cor} \label{cor: htpy coherent action implies stably loc trivial}
   We suppose the assumptions of \Cref{prop: homotopy coherent BF for Pin(2)xZ2}. Furthermore, the homotopy coherent group action
   \[
   E^\dagger \colon B\mathbb{Z}_2 \longrightarrow B\operatorname{Diff^+}(-X) 
   \]
   for $-X$ induced from $E$ admits a spin lift. 
   Then the element 
    \[
        \mathfrak{B}(Y) \in \mathfrak{F}^{sp,str}_{\mathrm{Pin}(2)\times \Z_2}
    \]
    is $\max\{b^+(X), b^-(X)\}$-stably locally trivial.
\end{cor}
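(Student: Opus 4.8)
The plan is to apply \Cref{prop: homotopy coherent BF for Pin(2)xZ2} to two spin homotopy coherent $\Z_2$-actions at once---the given action $E$ on $X$ and the action $E^\dagger$ on $-X$ whose spin lift is part of the hypothesis---and then to combine the two Bauer--Furuta maps by means of the Spanier--Whitehead duality between the equivariant Seiberg--Witten Floer spectra of $Y$ and $-Y$. Throughout, $\mathfrak{B}(Y)$ denotes the class of $\mathfrak{B}\bigl(SWF_{\mathrm{Pin}(2)\times\Z_2}(Y,\frakt,\tilde\tau)\bigr)$ in $\mathfrak{F}^{sp,str}_{\mathrm{Pin}(2)\times\Z_2}$, where $(Y,\frakt,\tilde\tau)$ is the equivariant spin $3$-manifold on which $E$ is strict.

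First, by \Cref{prop: homotopy coherent BF for Pin(2)xZ2} applied to $E$, the family $\{\mathbf{BF}_{E_n}\}_{n\ge0}$, compatible by \eqref{compatibility_of_HCBF}, assembles into a single level-$b^+(X)$ local map in $\mathcal{F}^{sp}_{\mathrm{Pin}(2)\times\Z_2}$,
\[
\mathbf{BF}_E\colon \Sigma^{\ind^t_f\dirac_E}\,\mathfrak{B}\bigl(SWF_{\mathrm{Pin}(2)\times\Z_2}(-Y,\frakt,\tilde\tau)\bigr)\longrightarrow \Sigma^{H^+_E}\,\mathfrak{B}(S^0).
\]
Since $E^\dagger$ is induced from $E$, it is again strict on the boundary $-Y$ (the lift $\tilde\tau$ is unchanged and remains even), so the proposition applies to it as well; using $b^+(-X)=b^-(X)$, $H^+_{-X}=H^-_X$, and $\partial(-X)=-Y$ it yields a level-$b^-(X)$ local map
\[
\mathbf{BF}_{E^\dagger}\colon \Sigma^{\ind^t_f\dirac_{E^\dagger}}\,\mathfrak{B}\bigl(SWF_{\mathrm{Pin}(2)\times\Z_2}(Y,\frakt,\tilde\tau)\bigr)\longrightarrow \Sigma^{H^-_E}\,\mathfrak{B}(S^0),
\]
which already provides a level-$b^-(X)$ local map out of a suspension of $\mathfrak{B}(Y)$.

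Next, to produce a local map \emph{into} $\mathfrak{B}(Y)$, I would invoke the duality statement that $\mathfrak{B}\bigl(SWF_{\mathrm{Pin}(2)\times\Z_2}(Y,\frakt,\tilde\tau)\bigr)\wedge\mathfrak{B}\bigl(SWF_{\mathrm{Pin}(2)\times\Z_2}(-Y,\frakt,\tilde\tau)\bigr)$ is level-$0$ locally equivalent to $\mathfrak{B}(S^0)$---the image under $\mathcal{B}$ of the $SWF$-gluing formula of Section~3 applied to the product $\Z_2$-equivariant cobordism $Y\times[0,1]$. Smashing $\mathbf{BF}_E$ with $\mathfrak{B}\bigl(SWF_{\mathrm{Pin}(2)\times\Z_2}(Y,\frakt,\tilde\tau)\bigr)$---smashing with a fixed object preserves the level---and post-composing with this equivalence converts $\mathbf{BF}_E$ into a level-$b^+(X)$ local map
\[
\Sigma^{\ind^t_f\dirac_E}\,\mathfrak{B}(S^0)\longrightarrow \Sigma^{H^+_E}\,\mathfrak{B}\bigl(SWF_{\mathrm{Pin}(2)\times\Z_2}(Y,\frakt,\tilde\tau)\bigr).
\]
At this point there are local maps in both directions between a suspension of $\mathfrak{B}(Y)$ and one of $\mathfrak{B}(S^0)$, of levels $b^-(X)$ and $b^+(X)$. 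Post-composing each with the standard level-$(k-b^\pm(X))$ local map $Z\to\Sigma^{\widetilde{\R}^{\,k-b^\pm(X)}}Z$ raises both to level $k:=\max\{b^+(X),b^-(X)\}$, and desuspending by the bundles $H^\pm_E$ and $\ind^t_f\dirac_{E^{(\dagger)}}$---which are Borel constructions of (virtual) $\Z_2$-representations over $B\Z_2$, hence differ from trivial bundles only by classes killed by $\widetilde{KO}(B\Z_2)\otimes\Q=\widetilde{KQ}(B\Z_2)\otimes\Q=0$, so can be absorbed up to a rational shift---identifies the two maps with level-$k$ local maps between $(\mathfrak{B}(Y),-k)$ and $\mathfrak{B}(S^0,0,0)$ in both directions. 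This is precisely the assertion that $\mathfrak{B}(Y)$ is $k$-stably locally trivial.

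The main obstacle is the last stretch: one must verify that the equivariant duality $\mathfrak{B}(Y)\wedge\mathfrak{B}(-Y)\simeq\mathfrak{B}(S^0)$ is a genuine level-$0$ local map (not merely a local equivalence after further stabilization), that smashing and composition do not inflate the level past $b^\pm(X)$, and carry out the bookkeeping of suspension bundles and rational shifts that brings everything to $\mathfrak{B}(S^0,0,0)$. If the duality route turns out to be awkward, an alternative is to prove a relative form of \Cref{prop: homotopy coherent BF for Pin(2)xZ2} for the cobordism $Y\times[0,1]$ with its product strict equivariant spin structure and read off the two-sided level-$k$ local maps directly from that single construction; I would attempt the duality argument first.
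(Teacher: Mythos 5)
Your proposal is correct and takes essentially the same approach as the paper, whose own proof is a single sentence invoking \Cref{prop: homotopy coherent BF for Pin(2)xZ2} for $X$ together with "the corresponding statement for $-X$"; your Spanier--Whitehead duality argument is the natural way to convert the level-$b^+(X)$ map out of $\mathfrak{B}(SWF(-Y))$ into a map into $\mathfrak{B}(SWF(Y))$, and the suspension/level bookkeeping you worry about at the end is already absorbed by the stabilization built into the morphism sets of $\mathcal{F}^{sp}_{\mathrm{Pin}(2)\times \Z_2}$ and the definition of $k$-stable local triviality.
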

\begin{proof}
    This follows directly from \Cref{prop: homotopy coherent BF for Pin(2)xZ2} together with the corresponding statement for $-X$.
\end{proof}

\section{Equivariant Bauer--Furuta theory for equivariant $\mathrm{Spin}^c$ 4-manifolds}

Although Montague~\cite{montague2022seiberg} developed a general framework for the $\mathrm{Pin}(2)\times \mathbb{Z}_p$-equivariant Floer homotopy type for spin $\mathbb{Z}_p$-actions, it is also necessary to construct the $S^1 \times \mathbb{Z}_p$-equivariant Floer homotopy type in the general case, along with the corresponding $S^1 \times \mathbb{Z}_p$-equivariant Bauer--Furuta invariants, which recover the theory of Baraglia and Hekmati~\cite{Baraglia-Hekmati:2024-1}. We also establish a gluing theorem that will be used in our construction.

\subsection{$S^1\times \Z_p$-equivariant Seiberg--Witten Floer homotopy type} \label{subsec:S1xZp-SWF-homotopy-type}
%We now turn to the construction of the $S^1 \times \Z_p$-equivariant Seiberg--Witten Floer homotopy type.

Let $Y$ be a rational homology 3-sphere equipped with a $\Z_p$-action and a $\Z_p$-invariant $\mathrm{Spin}^c$ structure $\mathfrak{s}$. Here we do {\bf not} assume the $\Z_p$-action is free.
We fix a $\Z_p$-invariant Riemannian metric $g$. 
First, choose a reference $\mathrm{Spin}^c$ connection $A_0$ such that the associated connection on the determinant line bundle is flat. As shown in~\cite[Section 3.2]{Baraglia-Hekmati:2024-1}, for each $\tau \in \Z_p$, we can choose a lift $\tilde{\tau}$ of $\tau$ to the spinor bundle that preserves $B_0$. Here we use the assumptions that the $\Z_p$-action preserves the isomorphism class of $\mathfrak{s}$ and that $b_1(Y) = 0$.
Let $G_{\mathfrak{s}}$ denote the set of unitary automorphisms $u \colon S \to S$ of the spinor bundle $S$ that preserve $B_0$ and lift the $\Z_p$-action on $Y$. Then we have a short exact sequence:
\[
1 \longrightarrow S^1 \longrightarrow G_{\mathfrak{s}} \longrightarrow \Z_p \longrightarrow 1.
\]
This extension is always trivial in our setting, as shown in~\cite[Section 5]{Baraglia-Hekmati:2024-1}. Therefore, we may choose a section and identify
\begin{align*}\label{splitting}
    G_{\mathfrak{s}} \cong S^1 \times \Z_p.
\end{align*}
Note that the set of splittings of this extension, i.e., the set of sections
\[
\mathrm{Split}(Y, \mathfrak{s}) := \left\{ \text{left inverses } \Z_p \to G_{\mathfrak{s}} \text{ of } G_{\mathfrak{s}} \to \Z_p \right\},
\]
admits a naturally defined map
\[
\mathcal{S}_{Y, \mathfrak{s}} \colon \mathrm{Split}(Y, \mathfrak{s}) \longrightarrow \mathrm{Spin}^c_{\Z_p}(Y, \mathfrak{s}),
\]
where $\mathrm{Spin}^c_{\Z_p}(Y, \mathfrak{s})$ denotes the set of \emph{$\Z_p$-equivariant} $\mathrm{Spin}^c$ structures lifting $\mathfrak{s}$. This map is defined as follows: a section $\Z_p \to G_{\mathfrak{s}}$ defines a lift of the $\Z_p$-action on the frame bundle of $Y$ to the $\mathrm{Spin}^c$ bundle corresponding to $\mathfrak{s}$, and hence defines a $\Z_p$-equivariant lift of $\mathfrak{s}$.

In \Cref{lem: splittings are eqv spin c lifts}, we will show that $\mathcal{S}_{Y, \mathfrak{s}}$ is bijective whenever $p$ does not divide $|H_1(Y; \mathbb{Z})|$. From now on (in this subsection), we assume that $|H_1(Y; \mathbb{Z})|$ is not divisible by $p$. Given a $\mathbb{Z}_p$-equivariant $\mathrm{Spin}^c$ structure $\mathfrak{s}$ on $Y$, we denote by $G_{\mathfrak{s}}$ the group defined above, \emph{together with the identification $G_{\mathfrak{s}} \cong S^1 \times \mathbb{Z}_p$ induced by $\mathfrak{s}$}; conversely, if we have chosen a $\Z_p$-equivariant $\mathrm{Spin}^c$ structure $\mathfrak{s}$, whenever the group $S^1 \times \Z_p$ appears, it should be understood as $G_{\mathfrak{s}}$.

We fix a flat connection $B_0$ on $\s$ which is $\Z_p$-invariant. Now we have an action of $G_\s$ on the global Coulomb slice
\[
V=\left( B_0 + i\ker d^* \right) \oplus \Gamma(S) \subset i\Omega^1(Y) \oplus \Gamma(S). 
\]
Again, by finite-dimensional approximation of the Seiberg--Witten equation, we obtain a $G_\s$-equivariant Conley index $I^\mu_\lambda(g)$ for sufficiently large real numbers $\mu, -\lambda$.
For the details of the construction, see \cite{Man03, baraglia2024brieskorn, montague2022seiberg}.
Now a metric-dependent equivariant Floer homotopy type is defined as 
\[
SWF_{S^1\times \Z_p}(Y, \s, \tilde{\tau}, g)=\Sigma^{-V^0_\lambda(g)}I^\mu_\lambda(g).
\]

Let $L$ be the determinant line bundle of $S$, and fix a $\Z_p$-invariant flat connection $B_0$ on $L$. We denote by 
\[
\diracpartial_{B_0} \colon \Gamma(S) \longrightarrow \Gamma(S)
\]
the $\mathrm{Spin}^c$-Dirac operator with respect to $B_0$.
Notice that $I^\mu_\lambda(g)$ can be taken as a finite $S^1 \times \Z_p$-CW complex.
\begin{defn}
Now, for $k \in \Z_p$, we introduce the equivariant correction term
\[
n(Y, \mathfrak{s}, \wt{\tau}, g)_{k} \in \C
\]
defined by
\[
n(Y, \mathfrak{s}, \wt{\tau}, g)_{k} := \begin{cases} 
\displaystyle \overline{\eta}^k_{\diracpartial_{B_0}}( g)
+ \sum_{i=1}^{s_k} (-1)^{k_i+1} \zeta_p^{m_i} \zeta_{2p} \csc \frac{k_i\pi}{p}\cot\frac{k_i\pi}{p}\cdot t\left(Y,K_{k,i},g \right), \quad 0<k<p, \\[1ex]
\overline{\eta}_{\diracpartial_{B_0}}^0( g) - \frac{1}{8} \eta_{\mathrm{sign}}(g) = n(Y, \s, g), \quad k=0.
\end{cases}
\]
where we use the following notations.
\begin{itemize}
    \item For the flat connection $A_0$ and the $\Z_p$-invariant Riemannian metric on $Y$, we associate a $\Z_p$-equivariant Dirac operator
    \[
    \diracpartial_{B_0} \colon \Gamma(S) \longrightarrow \Gamma(S).
    \]
    The equivariant eta-invariant 
    \[
{\eta}^k_{\diracpartial_{B_0}}(g) \in \C
\]
associated to $\diracpartial_{B_0}$ is the value at $s = 0$ of the meromorphic continuation of the equivariant eta function
    \[
    {\eta}^k_{\diracpartial_{B_0}}(g,s) = \sum_{0\neq \lambda \text{ eigenvalue of } \diracpartial_{B_0}}  
    \frac{\sign(\lambda) \cdot \operatorname{Trace}\big( (\tilde{\tau}^k)^* \colon V_\lambda \longrightarrow V_\lambda \big)}{|\lambda|^s} \in \C, \quad s \in \C,
    \]
    where $V_\lambda$ denotes the eigenspace of $\diracpartial_{B_0}$ for $\lambda$. Note that the finiteness of ${\eta}^k_{\diracpartial_{B_0}}(g,0)$ is verified in   Donnelly~\cite{donnelly1978eta} using an equivariant version of the heat kernel representation of it, together with the small-time asymptotic expansion of the heat kernel, which shows that all potentially divergent terms cancel, leaving a regular value at. 
Similarly, for the operator 
\[
D_{\mathrm{sign}} :=  \begin{pmatrix}
    * d & -d \\
    -d^* & 0 
\end{pmatrix} : \Omega^1_Y \oplus \Omega^0_Y  \longrightarrow \Omega^1_Y \oplus \Omega^0_Y, 
\]
we have the non-equivariant eta invariant 
\[
\eta_{\mathrm{sign}}(g) \in \C. 
\]

    \item The reduced equivariant eta-invariant 
    \[
\overline{\eta}^k_{\diracpartial_{B_0}}(g) \in \C 
    \]
    is defined as 
    \[
\overline{\eta}^k_{\diracpartial_{B_0}}(g) = \tfrac{1}{2} \left( {\eta}^k_{\diracpartial_{B_0}}(g) - c^k_{\diracpartial_{B_0}}(g) \right),
    \]
    where $c^k_{\diracpartial_{B_0}}(g)$ denotes
    \[
    \operatorname{Trace}\big( (\tilde{\tau}^k)^* \colon \ker \diracpartial_{B_0} \longrightarrow \ker \diracpartial_{B_0} \big) \in \C.
    \]

    \item Let $L$ be a connected component of the fixed point set, which is a knot in $Y$. We fix orientations on these components.  For the rational Seifert framing of $L$ with the fixed $\Z_p$-invariant Riemannian metric $g$, we obtain a number
    \[
    t(Y, L, g) \in \R
    \]
    called \emph{torsion}, defined as follows:  
    Let $\nabla^{\mathrm{fr}}$ be the $\mathrm{SO}(3)$-connection on the $\mathrm{SO}(3)$-frame bundle $\mathrm{Fr}(Y) \to Y$ induced by the Levi-Civita connection on $(Y, g)$, and let $\theta = (\theta_{ij}) \in \Omega^1(Y; \mathfrak{so}(3))$ be the connection one-form associated to $\nabla^{\mathrm{fr}}$.  
    Given a framed, oriented link $L \subset Y$ equipped with a framing $\alpha$ of $L$, we can trivialize $TY|_L$ by setting, at each point $x \in L$:
    \begin{itemize}
        \item $e_1(x)$ is the unit tangent vector to $L$, oriented consistently.
        \item $e_2(x)$ is the unit vector pointing in the direction of the framing.
        \item $e_3(x) = e_1(x) \times e_2(x)$.
    \end{itemize}
    This trivialization provides a section $\varphi \colon L \to \mathrm{Fr}(Y)$, and we define the torsion of $L$ with respect to $(g, \alpha)$ by
    \[
      t(L, g, \alpha) := - \int_L \varphi^* \theta_{23}
    \]
    Note that for any two framings $\alpha_0, \alpha_1$, we have
    \[
      t(L, g, \alpha_1) - t(L, g, \alpha_0) \in 2\pi \mathbb{Z}.
    \]
    If $Y$ is a rational homology sphere, then any link $L \subset Y$ is rationally null-homologous. We use such a rational canonical framing as $\alpha$. See \cite{yoshida1985eta, montague2022seiberg} for the details. 

    Also, if we change the orientation of $L$, we have 
    \[
    t(-L, g, \alpha)  = -t(L, g, \alpha). 
    \]

    \item We decompose the fixed point set $Y^{\tau^k}$ of $\tau^k$ as a union of its connected components $K_{k,1},\cdots,K_{k,s_k}$ equipped with orientations fixed in the previous item.  For each $i=1,\cdots,s_k$, we write the action of $\tau^k$ on the normal bundle of $K_{k,s_i}$ as $z\mapsto \zeta_p^{k_i}z$ for $k_i \in \{1,\cdots,p-1\}$. We also choose $m_i \in \Z_p$ such that the the induced action of $\tilde\tau^k$ on $\s$, considered as a $\Z_p$-equivariant $\mathrm{Spin}^c$ structure on $Y\times I$, is locally described near any point of $K_{k,s_i}\times I$ as follows:
    \[
    [(x,y,z)]\longmapsto \left[\left( (-1)^{k_i+1}\zeta_{2p}^{k_i} x,(-1)^{k_i+1}\zeta_{2p}^{-k_i}y,\zeta_p^{m_i}\zeta_{2p}z \right)\right], 
    \]
    where these coordinates describe a principal $U(1)\times U(1)\times U(1) / \{ \pm (1,1,1)\}$-bundle, obtained as a reduction of the principal Spin$^c$-bundle , covering a principal $SO(2)\times SO(2)$-bundle obtained as the reduction of the framed bundle of $T_x (Y\times I) = \C \oplus \C $ for $x \in K_{i, s_i}$. Here, we choose an orientation of each component $K_{i, s_i}$ to have the reduction to a principal $SO(2)\times SO(2)$-bundle. See \Cref{appendix A} for this description of $\Z_p$-equivariant Spin$^c$ structures.  Note that this description depends on the choices of orientations of the components.
    One can check the term 
    \[
    (-1)^{k_i+1} \zeta_p^{m_i} \zeta_{2p} \csc \frac{k_i\pi}{p}\cot\frac{k_i\pi}{p}\cdot t\left(Y,K_{k,i},g \right)
    \]
    does not depend on the choices of orientations. 
\end{itemize}
\end{defn}

For a disjoint union $(Y, \s, \tilde{\tau}, g)$ of $\Z_p$-equivariant $\mathrm{Spin}^c$ rational homology 3-spheres $\bigsqcup (Y_i, \s_i, \tilde{\tau}_i, g_i)$ with $\Z_p$-invariant Riemannian metrics, we define
\[
n(Y, \s, \tilde{\tau}, g)_{k} := \sum_{i=1}^n n(Y_i, \s_i, \tilde{\tau}_i, g_i)_{k} \in \C .
\]

\begin{rem}
If the $\Z_p$-equivariant $\mathrm{Spin}^c$ structure comes from an even $\Z_p$-equivariant Spin structure, our correction term $n(Y, \mathfrak{s}, \wt{\tau}, g)_{k}$ coincides with Montague's $n^{\nu^k}$-invariant, which can be seen by removing the term $\zeta_p^{m_i} \zeta_{2p}$.
\end{rem}

\begin{defn}
We define
\[
\mathbf{n}(Y, \mathfrak{s}, \wt{\tau}, g) := \frac{1}{p}\sum_{l=0}^{p-1} \left( \sum_{k=0}^{p-1} n(Y, \mathfrak{s}, \wt{\tau}, g)_k \cdot \zeta_p^{-kl} \right) \otimes [\C_{[l]}] \in R(\Z_p) \otimes \C
\]
and call it the \emph{equivariant correction term}. %{\color{red} Mixing real and complex rep's, check it is fine}
\end{defn}

In order to see several properties of the equivariant correction term, we use the equivariant index of the Dirac operator. Let $(X, \mathfrak{s})$ be a compact connected $\mathrm{Spin}^c$ 4-manifold bounded by a disjoint union of rational homology 3-spheres equipped with the restricted $\mathrm{Spin}^c$ structure $\mathfrak{t} = \mathfrak{s}|_Y$. Suppose $X$ is equipped with a smooth $\Z_p$-action such that the action preserves the isomorphism class of the $\mathrm{Spin}^c$ structure.
If we fix an equivariant $\mathrm{Spin}^c$ structure on $\mathfrak{t}$, we have a unique extension of the equivariant $\mathrm{Spin}^c$ structure to $X$. 
We take a $\Z_p$-invariant Riemannian metric on $X$ which is product near the boundary and a reference $\Z_p$-invariant $\mathrm{Spin}^c$-connection $A_0$.
For the action of $\tau^k$, suppose the fixed point set is described as the union of 0-dimensional components and 2-dimensional components:
\[
X_0^{\tau^k} = \{ p_{k,1}, \ldots, p_{k,m} \}, \qquad 
X_2^{\tau^k} = \Sigma_{k,1} \sqcup \cdots \sqcup \Sigma_{k,n}. 
\]
Note that we are not assuming the fixed surfaces $\Sigma_{k,i}$ to be closed. Hence the trace index of $\dirac_{A_0}$ for $\gamma$ involves the terms $\int_{\Sigma_{k,i}} \tilde{F}_{A^t_0}$ and $\int_{\Sigma_{k,i}} \tilde{F}_N$, which depend on the choice of Riemannian metrics, as discussed in \Cref{rem: index thm for nonclosed fixed points}. For simplicity, we rewrite the index formula as
\[
\ind_{\gamma}^{\mathrm{APS}} (\dirac_{A_0}) = \sum_{i=1}^{m} R_i + \sum_{k=1}^n \left(S_k \left\langle c_1(\s),[\Sigma_k] \right\rangle + T_k \int_{\Sigma_k} \tilde{F}_N\right) + \eta_\gamma(\dirac_Y), 
\]
where $R_i$, $S_k$, and $T_k$ are the constants only depending on the $\Z_p$-equivariant Spin$^c$ structure restricted to the fixed point locus,  $\eta_\gamma(\dirac_Y)$ is the equivariant eta invariant, and $\tilde{F}_N$ is the curvature of the normal bundle. 
Here we are fixing orientations of the components of the fixed point set, but the terms $S_k$ and $T_k$ also depend on the choices of orientations so that $\ind_{\gamma}^{\mathrm{APS}} (\dirac_{A_0})$ is independent of the choices of them. 
We will need the following topological lemma.

\begin{prop}\label{spinc}
There exists a sufficiently large integer $N > 0$ such that the disjoint union $\bigsqcup_N (Y, \tilde{\mathfrak{s}})$ bounds a $\Z_p$-equivariant $\mathrm{Spin}^c$ filling.
\end{prop}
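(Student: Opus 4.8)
The plan is to reduce the statement to a purely topological bordism computation: a $\Z_p$-equivariant $\mathrm{Spin}^c$ 3-manifold $(Y,\tilde{\mathfrak{s}})$ bounds a $\Z_p$-equivariant $\mathrm{Spin}^c$ filling (after taking $N$ disjoint copies) if and only if its class in the appropriate equivariant $\mathrm{Spin}^c$ bordism group $\Omega^{\mathrm{Spin}^c}_{3,\Z_p}$ is torsion. Since $Y$ is a rational homology 3-sphere with $|H_1(Y;\Z)|$ coprime to $p$, I expect this class to be torsion essentially for dimension reasons, so the result should follow.

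First I would fix the $\Z_p$-equivariant $\mathrm{Spin}^c$ structure $\tilde{\mathfrak{s}}$ on $Y$. The relevant receptacle is the bordism group of closed oriented 3-manifolds equipped with a smooth (not necessarily free) $\Z_p$-action and a $\Z_p$-equivariant $\mathrm{Spin}^c$ structure; by equivariant Pontryagin--Thom this is the stable homotopy group $\pi^{\Z_p}_3$ of the equivariant $\mathrm{Spin}^c$ Thom spectrum, or equivalently, after decomposing by fixed-point data (the tom Dieck splitting), a direct sum of bordism groups of the strata. Each summand is a low-degree (degree $\le 3$) $\mathrm{Spin}^c$ bordism group of a classifying space $B\Z_p$ or $BS^1$ or a point, twisted by the normal representation data at the fixed loci. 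All of these are finitely generated abelian groups, and their free parts are detected by ordinary characteristic numbers. The plan is to show that for $(Y,\tilde{\mathfrak{s}})$ all such characteristic numbers vanish: the non-equivariant $\mathrm{Spin}^c$ bordism class of a rational homology sphere is torsion (indeed $\Omega^{\mathrm{Spin}^c}_3 = 0$), and the equivariant refinements land in bordism of $B\Z_p$ (3-dimensional, hence torsion since $\widetilde{\Omega}^{\mathrm{Spin}^c}_*(B\Z_p)$ is all torsion in positive degrees) and of $BS^1 \simeq \CP^\infty$ with the fixed surfaces, where the characteristic numbers are controlled by $c_1$ evaluated on the fixed surfaces — and here the coprimality of $|H_1(Y;\Z)|$ with $p$ forces the relevant $\Z_p$-weights and self-intersection data to contribute only torsion classes. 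Concretely I would invoke Donnelly's equivariant APS index theorem, already referenced in the excerpt, to package all the fixed-point contributions ($R_i$, $S_k$, $T_k$, and the equivariant eta term) and observe that after multiplying by $|H_1(Y;\Z)|$ or by the order of the torsion subgroup, the class dies.

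The key steps, in order: (1) identify the correct equivariant $\mathrm{Spin}^c$ bordism group $\Omega$ receiving $[(Y,\tilde{\mathfrak{s}})]$; (2) apply the tom Dieck / fixed-point splitting to write $\Omega$ as a finite direct sum of classical bordism groups of the fixed-point strata, each tensored with representation-theoretic data; (3) observe each summand is finitely generated with free part detected by characteristic numbers, and show these numbers vanish for $(Y,\tilde{\mathfrak{s}})$ using $b_1(Y)=0$, $H_1(Y;\Z)$ finite of order coprime to $p$, and the structure of the Seifert/plumbing description from Section~2.3; (4) conclude $[(Y,\tilde{\mathfrak{s}})]$ is torsion, say of order $N_0$, so $\bigsqcup_{N_0}(Y,\tilde{\mathfrak{s}})$ is equivariantly null-bordant, and the null-bordism is the desired filling (connectedness of $X$ in the statement is arranged by tubing components together away from the boundary).

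The main obstacle I anticipate is Step~(3): controlling the free part of the fixed-point strata contributions, specifically the summand coming from the 2-dimensional fixed loci, which live in $\Omega^{\mathrm{Spin}^c}_2$ of a point tensored with a line-bundle (normal bundle) degree and a $\Z_p$-weight. One must check that the relevant characteristic number — essentially the self-intersection of the fixed surface in its filling, weighted by roots of unity — is forced into the torsion ideal by the arithmetic of $Y$; this is exactly where the hypothesis $p \nmid |H_1(Y;\Z)|$ is used, via the fact (cf. \Cref{lem: splittings are eqv spin c lifts}, referenced in the excerpt) that $\mathcal{S}_{Y,\mathfrak{s}}$ is then a bijection, pinning down the equivariant structure rigidly enough that no free invariant can be nonzero. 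An alternative, possibly cleaner route for Step~(3) is to avoid bordism theory entirely and argue directly: take \emph{any} $\mathrm{Spin}^c$ filling $W$ of $\bigsqcup_N Y$ (which exists non-equivariantly for $N=1$ since $\Omega^{\mathrm{Spin}^c}_3=0$), average the $\Z_p$-action over $\bigsqcup_N$ by permuting copies so that $\Z_p$ acts freely on the set of boundary components in a way compatible with $\tau$ — i.e. take $N$ divisible by $p$ and let $\Z_p$ cyclically permute blocks — then the quotient gives a manifold whose boundary is $(\bigsqcup_{N/p} Y)/(\text{nothing})$ and one can graft on a product collar carrying the equivariant structure; the obstruction to extending the equivariant $\mathrm{Spin}^c$ structure over $W$ then lies in $H^3(W,\partial W; \pi_2(\text{fiber}))$-type groups which vanish after further stabilization of $N$. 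I would pursue whichever of these is shorter in the write-up, but the permutation-of-copies trick is likely the most robust and is the one I would present.
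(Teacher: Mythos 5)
Your overall strategy---show that $[(Y,\tilde{\mathfrak{s}})]$ is torsion in the $3$-dimensional $\Z_p$-equivariant $\mathrm{Spin}^c$ bordism group---is the same as the paper's, but the step you yourself flag as the main obstacle is a genuine gap, and neither of your two proposed resolutions closes it. The tom Dieck splitting is a statement about \emph{homotopical} equivariant bordism; for the geometric bordism group relevant here you need a Conner--Floyd-type reduction, and that reduction is exactly the missing ingredient. Concretely, the fixed-point strata contribute summands built from $\Omega_0^{\mathrm{Spin}^c}\cong\Z$ and $\Omega_2^{\mathrm{Spin}^c}\cong\Z$ twisted by normal-representation data over $BS^1$-type classifying spaces, and these are \emph{not} torsion; your proposed mechanism for killing their free parts (the hypothesis $p\nmid|H_1(Y;\Z)|$ ``pinning down the equivariant structure'') is not an argument, and in fact the paper's proof never uses that hypothesis. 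What the paper does instead is import a surgery argument (Montague's proof of her Proposition~2.10, adapted from spin to $\mathrm{Spin}^c$): every $\Z_p$-equivariant $\mathrm{Spin}^c$ $3$-manifold is equivariantly $\mathrm{Spin}^c$-cobordant to one on which the action is \emph{free}. This removes the fixed-point strata from the bordism group entirely, identifies the group with $\Omega_3^{\mathrm{Spin}^c}(B\Z_p)$, and finiteness then follows from the Atiyah--Hirzebruch spectral sequence because the entries $H_s(B\Z_p;\Omega_t^{\mathrm{Spin}^c})$ with $s+t=3$ are all finite (the nonzero coefficient groups $\Omega_0^{\mathrm{Spin}^c}$ and $\Omega_2^{\mathrm{Spin}^c}$ only enter against $H_3$ and $H_1$ of $B\Z_p$, which are $\Z_p$). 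Without the surgery step, your route stalls precisely where you predicted.

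Your ``alternative, possibly cleaner route'' is not salvageable as stated: if $\Z_p$ acts on $\bigsqcup_N Y$ by cyclically permuting blocks of copies, then the restriction of the action to the boundary is a permutation action on components, not the diagonal action by $\tau$ on each copy that the proposition requires (throughout the paper the $\Z_p$-action on a disjoint union $\bigsqcup_N(Y,\tilde{\mathfrak{s}})$ is understood to preserve each component and restrict to $\tilde{\tau}$ there). Grafting on a collar cannot repair this, since the boundary action is determined by the filling. The correct statement is simply that a null-bordism of $N\cdot[(Y,\tilde{\mathfrak{s}})]$ in the equivariant bordism group \emph{is} the desired filling, so once finiteness (or torsion-ness) of the class is established nothing further is needed; the work is entirely in the surgery reduction to free actions plus the AHSS computation.
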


\begin{proof}
We follow the proof of \cite[Proposition~2.9]{montague2022seiberg}, which relies on the argument used to prove \cite[Proposition~2.10]{montague2022seiberg}. To adapt the proof to $\mathrm{Spin}^c$ structures instead of spin structures, it suffices to ensure that the 3-dimensional $\Z_p$-equivariant $\mathrm{Spin}^c$ cobordism group
\[
\Omega_3^{\mathrm{Spin}^c, \Z_p}
\]
is an abelian finite group. 
Note that there are two components in the equivariant $\mathrm{Spin}$ cobordism groups, coming from distinctions of even and odd spin structures, but in our case, there is no such distinction. By the surgery argument given in \cite[Proposition~2.10]{montague2022seiberg}, we can see any $\Z_p$-equivariant Spin$^c$ 3-manifold is $\Z_p$-equivariant Spin$^c$ cobordant to a $\Z_p$-equivariant Spin$^c$ 3-manifold whose $\Z_p$-action is free.  This reduces to showing the finiteness of the non-equivariant Spin$^c$ bordism group evaluated by  $B\Z_p$: 
\[
\Omega_3^{\mathrm{Spin}^c} (B\Z_p). 
\]
Then, using the Atiyah-Hirzebruch spectral sequence, since the Spin$^c$ bordism groups $\Omega_n^{\mathrm{Spin}^c}$  are finite for $n = 0, 1, 2, 3$ \cite[Section~8]{peterson1968lectures}, we see $\Omega_3^{\mathrm{Spin}^c} (B\Z_p)$ is a finite group. 
This completes the proof. 
\end{proof}

Now we record some fundamental properties of $\mathbf{n}(Y, \mathfrak{s}, \wt{\tau}, g)$.

\begin{prop}\label{prop:basic_of_n}
The equivariant correction term satisfies the following properties.
\begin{itemize}
    \item[(i)] $\mathbf{n}(Y, \mathfrak{s}, \wt{\tau}, g) \in R(\Z_p) \otimes \Q$;
    \item[(ii)] Under the augmentation map 
    \[
    \alpha_\C \colon R(\Z_p) \otimes \Q \longrightarrow \Q,
    \]
    $\mathbf{n}(Y, \mathfrak{s}, \wt{\tau}, g)$ is sent to Manolescu's original correction term $n(Y, \mathfrak{s}, g)$;
    \item[(iii)] For a 1-parameter family of $\Z_p$-invariant Riemannian metrics $g_s$ from $g_0$ to $g_1$, we have 
    \[
    \mathbf{n}(Y, \mathfrak{s}, \wt{\tau}, g_0) - \mathbf{n}(Y, \mathfrak{s}, \wt{\tau}, g_1) 
    = \operatorname{Sf}( \diracpartial_{B_0}(g_s)) \in R(\Z_p),
    \]
    where $\operatorname{Sf}( \diracpartial_{B_0}(g_s))$ denotes the equivariant spectral flow introduced in \cite{lim2024equivariant}, see \Cref{appendix A} for our convention.  
\end{itemize}
\end{prop}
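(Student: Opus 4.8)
The plan is to dispatch the three parts in increasing order of difficulty. Part (ii) is a formal computation: the augmentation $\alpha_\C \colon R(\Z_p)\otimes\C \to \C$ equals $\operatorname{Tr}_1$ (see \Cref{subsec: notations}), hence sends each $[\C_{[l]}]$ to $1$; applying it to the defining formula for $\mathbf{n}(Y,\mathfrak{s},\wt\tau,g)$ and summing over $l$ first leaves $\sum_{k=0}^{p-1} n(Y,\mathfrak{s},\wt\tau,g)_k\cdot\big(\tfrac1p\sum_{l=0}^{p-1}\zeta_p^{-kl}\big)$. By orthogonality of characters the inner sum is $1$ when $k=0$ and $0$ otherwise, so $\alpha_\C(\mathbf{n}(Y,\mathfrak{s},\wt\tau,g)) = n(Y,\mathfrak{s},\wt\tau,g)_0$, which by definition is Manolescu's correction term $n(Y,\mathfrak{s},g)$.

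For part (i), I would invoke \Cref{spinc} to obtain an integer $N>0$ and a compact $\Z_p$-equivariant $\mathrm{Spin}^c$ filling $(X,\tilde{\mathfrak{s}}_X)$ of $\bigsqcup_N(Y,\tilde{\mathfrak{s}})$, equipped with a $\Z_p$-invariant metric that is a product near the boundary and a $\Z_p$-invariant reference connection extending $A_0$. The equivariant Atiyah--Patodi--Singer index theorem of \Cref{appendix A}, together with the definition there of the topological part, then gives $\ind^{\mathrm{APS}}_{\Z_p}\dirac_{X} = \ind^t_{\Z_p}\dirac_{X} + N\cdot\mathbf{n}(Y,\mathfrak{s},\wt\tau,g)$, using that $n(\cdot)_k$, and hence $\mathbf{n}(\cdot)$, is additive over disjoint unions. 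Since the left-hand side lies in $R(\Z_p)$ and $\ind^t_{\Z_p}\dirac_{X}\in R(\Z_p)\otimes\Q$, solving for $\mathbf{n}$ exhibits it in $\tfrac1N\big(R(\Z_p)\otimes\Q\big) = R(\Z_p)\otimes\Q$. This is the same strategy Montague uses in the spin case \cite{montague2022seiberg}.

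For part (iii), the plan is to apply the equivariant APS index theorem to the cylinder $W = Y\times[0,1]$ with the $\Z_p$-invariant metric $\hat g_{(y,t)} = g_{s(t)}(y) + dt^2$, where $s\colon[0,1]\to[0,1]$ is a smooth cutoff that is $0$ near $0$ and $1$ near $1$; thus $\hat g$ is a product metric near each end, equal to $g_0+dt^2$ and $g_1+dt^2$ respectively, and the $\mathrm{Spin}^c$ data is pulled back from $Y$. On one hand, $\ind^{\mathrm{APS}}_{\Z_p}\dirac_W$ is, trace by trace, the equivariant spectral flow $\operatorname{Sf}(\diracpartial_{B_0}(g_s))$ of \cite{lim2024equivariant} via the standard excision/suspension description of spectral flow (see \Cref{appendix A} for the sign convention). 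On the other hand, the index theorem expresses it as a bulk integral plus boundary $\eta$-terms plus local contributions along the fixed surfaces $Y^{\tau^k}\times[0,1]$, which are non-closed, so \Cref{rem: index thm for nonclosed fixed points} applies: with flat determinant connection the bulk density is $-\tfrac1{24}p_1$, which by the Atiyah--Patodi--Singer signature theorem on $W$ accounts exactly for the $-\tfrac18\eta_{\mathrm{sign}}$ piece of $n(\cdot)_0$, while the metric-dependent fixed-surface contributions reproduce precisely the torsion terms $t(Y,K_{k,i},g)$ entering $n(\cdot)_k$ for $k\ne0$. Tracking these identifications together with the $\eta$-boundary terms yields $\ind^{\mathrm{APS}}_{\Z_p}\dirac_W = \mathbf{n}(Y,\mathfrak{s},\wt\tau,g_0) - \mathbf{n}(Y,\mathfrak{s},\wt\tau,g_1)$, which is the assertion; integrality on the right is automatic since spectral flow is an honest element of $R(\Z_p)$.

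The hard part will be the last step of (iii): checking that the continuous (non-jump) part of the variation of the equivariant $\eta$-invariant is cancelled by the variation of the $\eta_{\mathrm{sign}}$ term in the $k=0$ trace and by the variation of the torsion terms in the $k\ne0$ traces — equivalently, that $\ind^t_{\Z_p}\dirac_W$ vanishes for the interpolating-metric cylinder. This is precisely where Montague's (and Manolescu's) specific normalization of the correction term does its work, and it relies on the explicit local model of the equivariant $\mathrm{Spin}^c$ structure near the fixed locus recorded in \Cref{appendix A}. By contrast, parts (i) and (ii), and the ``formal'' half of (iii), are immediate consequences of the index theorem and character orthogonality.
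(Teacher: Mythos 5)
Your proposal is correct and follows essentially the same route as the paper: part (ii) by character orthogonality/Fourier inversion, part (i) by filling $N$ copies of $(Y,\tilde{\mathfrak{s}})$ via \Cref{spinc} and deferring the rationality of the local fixed-point contributions to Montague's capping-off argument \cite{montague2022seiberg}, and part (iii) by applying the equivariant APS index theorem to the interpolating cylinder, identifying the index with the equivariant spectral flow, and matching the geometric terms against the variations of the torsion and $\eta_{\mathrm{sign}}$ terms. The only cosmetic difference is that you spell out the $k=0$ cancellation via the APS signature theorem, which the paper leaves implicit.
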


\begin{proof}
The proof is essentially similar to that of the fundamental properties of Montague's correction term given in \cite{montague2022seiberg}.

\noindent{\bf \underline{Proof of (i)}}  
It is enough to show  
\[
\sum_{k=0}^{p-1} n^k(Y, \mathfrak{s}, \wt{\tau}, g) \cdot \zeta_p^{-kl} \in \Q.
\]
We use \Cref{spinc} to take an equivariant $\mathrm{Spin}^c$ 4-manifold $(X, \tilde{\mathfrak{s}}_X)$ bounded by $(Y, \tilde{\mathfrak{s}})$. Then, from the equivariant $\mathrm{Spin}^c$ index theorem \eqref{even_equivariant_index}, we have 
\[
\ind_{\gamma}^{\mathrm{APS}}(\dirac_{A_0}) 
= \sum_{i=1}^{m} R_i + \sum_{k=1}^n S_k \int_{\Sigma_k} \tilde{F}_{A^t_0} 
+ T_k \int_{\Sigma_k} \tilde{F}_N 
+ \eta_\gamma(\dirac_Y),
\]
for the $\mathrm{Spin}^c$ Dirac operator $\dirac_{A_0}$ with respect to a $\mathrm{Spin}^c$ connection $A_0$ (flat on a neighborhood of the boundary) and a $\Z_p$-invariant Riemannian metric $g$ on $X$ (product near the boundary), and any $0 \neq \gamma \in \Z_p$, where $R$, $S$, and $T$ are the functions defined in \Cref{appendix A}.  

Now, setting $\gamma = \tau^k$ with $0 < k < p$, we obtain
\begin{align*}
\ind_{\tau^k}^{\mathrm{APS}}(\dirac_{A_0}) 
&= \sum_{i=1}^{m} R_{k,i} - \sum_{l=1}^n S_{k,l} \int_{\Sigma_k} \tilde{F}_{A^t_0} 
+ T_{k,l} \int_{\Sigma_{k,l}} \tilde{F}_N + \eta_\gamma(\dirac_Y) \\
&= \sum_{i=1}^{m} R_{k,i} - \sum_{l=1}^n S_{k,l} \left\langle c_1(\mathfrak{s}), [\Sigma_{k,l}] \right\rangle 
+ T_{k,l} [\Sigma_{k,l}]^2 + n^k(Y, \mathfrak{s}, \tilde{\tau}, g),
\end{align*}
where $\Sigma_{k,i}$ is a 2-dimensional connected component of the fixed point set of $\tau^k$. Note that we have implicitly used the equality
\[
\int_{\Sigma_{k,i}} \tilde{F}_N 
= [\Sigma_{k,i}]^2 - \sum_{K_{k,l} \subset \partial \Sigma_{k,i}} \frac{1}{2\pi} \, t(K_{k,l}, g),
\]
proved in \cite[Proposition~6.10]{montague2022seiberg} Since the equivariant Dirac index with the APS boundary condition $\ind^{\mathrm{APS}}(\dirac_{A_0})$ lies in $R(\Z_p)$, we know that
\[
\sum_{l=0}^{p-1} \left( \sum_{k \in \Z_p} \ind_{\tau^k}^{\mathrm{APS}}(\dirac_{A_0}) \cdot \zeta_p^{-kl} \right) \zeta_p^l \in R(\Z_p) = \Z[\zeta_p]. 
\]
Therefore, it is enough to show
\[
\sum_{i=1}^{m} R_{k,i} + \sum_{l=1}^n S_{k,l} \left\langle c_1(\mathfrak{s}), [\Sigma_{k,l}] \right\rangle + T_{k,l} [\Sigma_{k,l}]^2 \in R(\Z_p) \otimes \Q. 
\]
The rationality follows from adapting the argument of \cite[Proposition~6.12]{montague2022seiberg}, together with \Cref{spinc} in our setting.  

\noindent{\bf \underline{Proof of (ii)}}  
From the definition, we have 
\[
n^0(Y, \mathfrak{s}, \tilde{\tau}, g) = n(Y, \mathfrak{s}, g).
\]
The desired equality follows from the Fourier inversion formula
\[
f(n) = \frac{1}{p} \sum_{m=0}^{p-1} \left( \sum_{k=0}^{p-1} f(k) \zeta_p^{-mk} \right) \zeta_p^{mn},
\]
which holds for any function $f \colon \Z_p \to \C$.

\noindent{\bf \underline{Proof of (iii)}}  
It is sufficient to show
\begin{align*}
\begin{cases}
t(L, g_1, \alpha) - t(L, g_0, \alpha) = -2\pi \int_{L \times [0,1]} \tilde{F}_N(g_s), \\ 
n^k(Y, \mathfrak{s}, \tilde{\tau}, g_0) - n^k(Y, \mathfrak{s}, \tilde{\tau}, g_1) = \operatorname{SF}^k(\diracpartial_{B_0}(g_s)),
\end{cases}
\end{align*}
where $\tilde{F}_N$ denotes the $SO(2)=U(1)$-curvature of the normal directions of $L \times [0,1]$ with the restricted Riemann metric, representing the Euler class, and $\operatorname{SF}^k(\diracpartial_{g_s})$ is the $\Z_p$-equivariant trace spectral flow with respect to a family of $\Z_p$-invariant Riemannian metrics $\{g_s\}$.  The first equality is proven in \cite[Lemma 6.6]{montague2022seiberg}. 

To see this relation, we apply \eqref{even_equivariant_index} to $[0,1] \times Y$ equipped with the product $\Z_p$-action and with a family of Riemannian metrics $g_s$ such that $g|_{\{0\} \times Y} = g_0$ and $g|_{\{1\} \times Y} = g_1$ and obtain 
\[
\overline{\eta}^k(g_1) - \overline{\eta}^k(g_0) +  \sum_{i=1}^n T_{k,i} \int_{[0,1]\times K_{k,i}} \tilde{F}_N 
=  \ind^{\mathrm{APS}}_{[k]} \dirac_{[0,1]\times Y, \pi^*\s, \pi^* B_0}, 
\]
where 
$\pi \colon [0,1]\times Y \to Y$ denotes the projection since 
\[
[[0,1]\times K_{k,i}] \cdot  [[0,1]\times K_{k,i}]=0 \text{ and }\langle c_1(\s), [[0,1]\times K_{k,i}] \rangle =0
\]
with respect to the boundary framings. 

On the other hand, we have
\[
\operatorname{SF}^k(\diracpartial_{B_0}(g_s)) = \ind^{\mathrm{APS}}_{[k]} \dirac_{[0,1]\times Y, \pi^*\s, \pi^* B_0}, 
\]
where $\operatorname{SF}^k(\diracpartial_{B_0}, \{g_s\})$ denotes the equivariant spectral flow in the sense of \cite{lim2024equivariant}, see \Cref{appendix A} for our convention.

This gives
\[
\overline{\eta}^k(g_1) - \overline{\eta}^k(g_0) 
= \operatorname{SF}^k(\diracpartial_{B_0}(g_s)) - \sum_{i=1}^n T_{k,i} \int_{[0,1]\times K_{k,i}} \tilde{F}_N.
\]
This completes the proof of (iii).
\end{proof}

Next, we prepare to define an equivariant stable homotopy category that contains our equivariant Seiberg--Witten Floer homotopy types.

\begin{defn}
A pointed $S^1 \times \Z_p$-equivariant CW complex $X$ is called a \emph{space of type SWF} if $X^{S^1}$ is $S^1 \times \Z_p$-equivariantly homotopy equivalent to $\left( \R^{m_0} \right)^+$. 
\end{defn}

Now, we define the category $\mathcal{C}^{sp}_{S^1 \times \Z_p}$ as follows.

\begin{defn}
The objects of $\mathcal{C}^{sp}_{S^1 \times \Z_p}$ are triples $(X, a, b)$, where
\begin{enumerate}
  \item $X$ is a space of type $S^1 \times \Z_p$-SWF,
  \item $a \in RO(\Z_p)$,
  \item $b \in R(\Z_p) \otimes \Q$. 
\end{enumerate}
Given two objects $(X,a,b)$ and $(X',a',b')$, we define the morphism set between them as 
\[
\mathrm{Mor}((X,a,b),(X',a',b')) 
= \left( \bigoplus_{\substack{\alpha - \alpha' = a - a' \\ \beta - \beta' = b - b'}} 
\left[ X \wedge \alpha^+ \wedge \beta^+, \, X' \wedge (\alpha')^+ \wedge (\beta')^+ \right]^{S^1 \times \Z_p} \right) \big/ \sim
\]
where $\alpha, \alpha'$ run over finite-rank real $\Z_p$-representations, $\beta, \beta'$ run over finite-rank complex $\Z_p$-representations, and for two maps
\[
f \colon X \wedge \alpha_1^+ \wedge \beta_1^+ \longrightarrow X' \wedge (\alpha'_1)^+ \wedge (\beta'_1)^+, 
\qquad 
g \colon X \wedge \alpha_2^+ \wedge \beta_2^+ \longrightarrow X' \wedge (\alpha'_2)^+ \wedge (\beta'_2)^+,
\]
we define $f \sim g$ if and only if there exist finite-rank real $\Z_p$-representations $\alpha''_1, \alpha''_2$, finite-rank complex $\Z_p$-representations $\beta''_1, \beta''_2$, and identifications 
\[
\begin{split}
    \alpha_1 \oplus \alpha''_1 &\cong \alpha_2 \oplus \alpha''_2, \\
    \alpha'_1 \oplus \alpha''_1 &\cong \alpha'_2 \oplus \alpha''_2, \\
    \beta_1 \oplus \beta''_1 &\cong \beta_2 \oplus \beta''_2, \\
    \beta'_1 \oplus \beta''_1 &\cong \beta'_2 \oplus \beta''_2,
\end{split}
\]
such that the maps 
\[
f \wedge \mathrm{id}_{(\alpha''_1)^+} \wedge \mathrm{id}_{(\beta''_1)^+}
\quad \text{and} \quad 
g \wedge \mathrm{id}_{(\alpha''_2)^+} \wedge \mathrm{id}_{(\beta''_2)^+}
\]
are $S^1 \times \Z_p$-equivariantly homotopic.
\end{defn}

Similar to the $\mathrm{Pin}(2)$-equivariant case, we have the smash product operation
\[
\mathcal{C}^{sp}_{S^1 \times \Z_p} \times \mathcal{C}^{sp}_{S^1 \times \Z_p} \longrightarrow \mathcal{C}^{sp}_{S^1 \times \Z_p},
\]
which allows us to set
\[
SWF_{S^1 \times \Z_p}(Y, \s) = \bigwedge_i SWF_{S^1 \times \Z_p}(Y_i, \s_i)
\]
for a disjoint union of $\Z_p$-equivariant rational homology 3-spheres. As in the $\mathrm{Pin}(2) \times \Z_p$-equivariant case, the operation $-\wedge-$ makes $\mathcal{C}^{sp}_{S^1 \times \Z_p}$ into a symmetric monoidal category. Furthermore, the process of taking $S^1 \times \Z_p$-equivariant singular cochains also makes sense for objects of $\mathcal{C}^{sp}_{S^1 \times \Z_p}$ as follows.

\begin{defn}
The functor 
\[
C^\ast_{S^1 \times \Z_p}(-;\Z_p) \colon \mathcal{C}^{sp}_{S^1 \times \Z_p} \longrightarrow \mathrm{Mod}^{op}_{C^\ast(B(S^1 \times \Z_p);\Z_p)}
\]
is defined by
\[
C^\ast_{S^1 \times \Z_p}((X,a,b);\Z_p) 
= \widetilde{C}^\ast_{S^1 \times \Z_p}(X;\Z_p)[\alpha(a) + 2\alpha(b)],
\]
where $\alpha$ denotes the augmentation maps on $RO(\Z_p)$ and $R(\Z_p) \otimes \Q$.
\end{defn}

Note that there is a forgetting functor 
\[
\mathcal{C}^{sp}_{\mathrm{Pin}(2) \times \Z_p} \longrightarrow \mathcal{C}^{sp}_{S^1 \times \Z_p}
\]
defined by forgetting the action through $S^1 \to \mathrm{Pin}(2)$, which is clearly monoidal.

\begin{defn}\label{def:eqSWF}
We define the $S^1 \times \Z_p$-equivariant spectrum class as
\[
SWF_{S^1 \times \Z_p}(Y, \mathfrak{s}, \tilde{\tau}) 
:= \big[(SWF_{S^1 \times \Z_p}(Y, \s, \tilde{\tau}, g), 0, \mathbf{n}(Y, \s, \tilde{\tau}, g))\big]
\]
as an isomorphism class of objects in the category $\mathcal{C}^{sp}_{S^1 \times \Z_p}$.
We also define 
\[
\wt{H}^*_{S^1\times \Z_p} (SWF_{S^1\times \Z_p}(Y, \s, \tilde{\tau})) := \wt{H}^{*+2{\bf n}(Y, \fraks, \tilde{\tau}, g)}_{S^1\times \Z_p}
\left( SWF_{S^1\times \Z_p}(Y, \s, \tilde{\tau}, g) \right). 
\]
\end{defn}

\begin{rem}
The pair $(\s,\tilde\tau)$, where $\s$ is a $\mathrm{Spin}^c$ structure on $Y$ and $\tilde\tau$ is a lift of the $\Z_p$--action on $Y$ to $S$, determines a $\Z_p$--equivariant $\mathrm{Spin}^c$ structure on $Y$. Accordingly, we will often write $SWF_{S^1 \times \Z_p}(Y,\s)$ in place of $SWF_{S^1 \times \Z_p}(Y,\s,\tilde\tau)$, with the understanding that $\s$ denotes a $\Z_p$--equivariant $\mathrm{Spin}^c$ structure on $Y$.
\end{rem}

Since the above definition of equivariant correction term $\mathbf{n}(Y, \s, \tilde{\tau}, g)$ is similar to original Montague's equivariant correction term \cite{montague2022seiberg}, without any essential change, we see the invariance of choices of Riemann metrics and $\Z_p$-equivariant finite dimensional approximations. 

\begin{prop}
   The spectrum class $SWF_{S^1\times \Z_p}(Y, \mathfrak{s}, \tilde{\tau})$ is independent of the choices of $\Z_p$-invariant Riemannian metrics. 
\end{prop}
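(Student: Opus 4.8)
The plan is to follow Manolescu's original proof of metric independence for Seiberg--Witten Floer homotopy types, in the $\mathrm{Pin}(2)\times\Z_p$-equivariant form carried out by Montague~\cite{montague2022seiberg}; the only ingredient specific to our setting is \Cref{prop:basic_of_n}(iii). Fix two $\Z_p$-invariant metrics $g_0,g_1$ and join them by a path $\{g_s\}_{s\in[0,1]}$ of $\Z_p$-invariant metrics (the space of such metrics is convex). After a $\Z_p$-invariant perturbation of the path we may assume that the spectrum of the linearized Coulomb-slice operator $l_{g_s}=(*_{g_s}d,\diracpartial_{B_0,g_s})$ crosses each of the levels $\lambda$, $\mu$, $0$ only at finitely many parameters $s$, transversally.

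A key preliminary point is that, since $Y$ is a rational homology sphere, the $1$-form part $*_{g_s}d$ acting on coclosed imaginary $1$-forms has trivial kernel for every $s$ (its kernel is $H^1(Y;\R)=0$), so the reducible part of $l_{g_s}$ never attains the eigenvalue $0$. Hence the real part of $V^0_\lambda(g_s)$ is locally constant and changes only through crossings of $\lambda$, which are accounted for by the standard independence of the window $(\lambda,\mu]$. This is precisely what allows the $RO(\Z_p)$-component of the triple defining $SWF_{S^1\times\Z_p}(Y,\s,\tilde\tau)$ to be taken equal to $0$.

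Carrying out the usual eigenvalue-crossing analysis $G_\s$-equivariantly now gives the statement. Enlarging $\lambda$ or $\mu$ suspends the Conley index $I^\mu_\lambda(g)$ by the newly included eigenspaces of $l_g$, which are $G_\s\cong S^1\times\Z_p$-representations, compensating the corresponding change of the desuspension space, so $SWF_{S^1\times\Z_p}(Y,\s,\tilde\tau,g)$ is independent of the window. A crossing of $\lambda$ or $\mu$ changes $I^\mu_\lambda(g_s)$ and the desuspension space by the same equivariant (de)suspension, leaving $SWF_{S^1\times\Z_p}(Y,\s,\tilde\tau,g_s)$ unchanged; this uses the equivariant Conley-index continuation property together with the compactness estimates for finite-energy trajectories, which are identical to those in \cite{Man03, montague2022seiberg, Baraglia-Hekmati:2024-1} (the finite group action only plays a passive role). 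A crossing of $0$ by the spinorial part changes $V^0_\lambda(g_s)$ by $\pm$ the crossing eigenspace $W\in R(\Z_p)$ and changes $I^\mu_\lambda(g_s)$ by an equivariant continuation map composed with (de)suspension by $W$; accumulating over all crossings yields a $G_\s$-equivariant isomorphism
\[
SWF_{S^1\times\Z_p}(Y,\s,\tilde\tau,g_1)\ \cong\ \Sigma^{\operatorname{Sf}(\diracpartial_{B_0}(g_s))}\,SWF_{S^1\times\Z_p}(Y,\s,\tilde\tau,g_0)
\]
in $\mathcal{C}^{sp}_{S^1\times\Z_p}$, where $\operatorname{Sf}(\diracpartial_{B_0}(g_s))\in R(\Z_p)$ is the equivariant spectral flow of the spinorial part. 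Combining this with \Cref{prop:basic_of_n}(iii), which gives $\mathbf{n}(Y,\s,\tilde\tau,g_1)=\mathbf{n}(Y,\s,\tilde\tau,g_0)-\operatorname{Sf}(\diracpartial_{B_0}(g_s))$, and using that suspension by $\operatorname{Sf}(\diracpartial_{B_0}(g_s))$ is inverse to twisting the $R(\Z_p)\otimes\Q$-grading by the same element, we conclude that $[(SWF_{S^1\times\Z_p}(Y,\s,\tilde\tau,g),0,\mathbf{n}(Y,\s,\tilde\tau,g))]$ is independent of $g$. The independence of the choice of $\Z_p$-equivariant finite-dimensional approximation follows in the same way, exactly as in \cite{montague2022seiberg}.

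The step that carries the real (though routine) work is the equivariant crossing analysis: verifying that the compactness theorems, the index-pair continuation, and the attractor--repeller decompositions underlying Manolescu's and Montague's arguments go through verbatim for the $S^1\times\Z_p$-action on the global Coulomb slice of \Cref{subsec:S1xZp-SWF-homotopy-type}, and keeping the sign conventions aligned so that the spinorial spectral flow appearing above is exactly the one in \Cref{prop:basic_of_n}(iii). Since none of these analytic ingredients interacts nontrivially with the finite group, we only indicate the structure of the argument.
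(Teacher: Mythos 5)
Your proposal is correct and follows exactly the route the paper intends: the paper gives no written proof, only the remark that the argument of Manolescu/Montague carries over "without any essential change" once the equivariant correction term is known to satisfy the spectral-flow property of \Cref{prop:basic_of_n}(iii), which is precisely the crossing analysis you spell out. Your additional observations (convexity of the space of invariant metrics, vanishing of the reducible kernel since $b_1(Y)=0$, and the bookkeeping showing that suspension by $\operatorname{Sf}(\diracpartial_{B_0}(g_s))$ cancels against the shift of the $R(\Z_p)\otimes\Q$-component in $\mathcal{C}^{sp}_{S^1\times\Z_p}$) are all consistent with the paper's conventions.
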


We also note that there is a duality formula for the equivariant Floer homotopy types. Similar to \cite[Proposition~6.13]{montague2022seiberg}, we have the following duality:

\begin{prop}
Let $(Y, \mathfrak{s}, \tilde{\tau})$ be a $\Z_p$-equivariant $\mathrm{Spin}^c$ rational homology sphere with a $\Z_p$-invariant Riemannian metric $g$, and let $(-Y, \mathfrak{s}, \tilde{\tau})$ denote its orientation reverse. Then
\begin{equation*} \label{eq:prop613}
\mathbf{n}(Y, \mathfrak{s}, \tilde{\tau}, g) + \mathbf{n}(-Y, \mathfrak{s}, \tilde{\tau}, g) = -\ker \diracpartial_{B_0} \in R(\Z_p),
\end{equation*}
where $\diracpartial_{B_0}$ is the 3-dimensional $\mathrm{Spin}^c$ Dirac operator with respect to a $\Z_p$-invariant flat connection $B_0$.
\end{prop}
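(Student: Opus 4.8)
Since a virtual $\Z_p$-representation is determined by its characters and the defining Fourier formula for $\mathbf n$ gives $\operatorname{Tr}_{\zeta_p^k}\big(\mathbf{n}(Y,\s,\tilde\tau,g)\big)=n(Y,\s,\tilde\tau,g)_k$, the plan is to first reduce the claimed identity in $R(\Z_p)$ to the family of scalar identities
\[
n(Y,\s,\tilde\tau,g)_k+n(-Y,\s,\tilde\tau,g)_k=-\operatorname{Trace}\big((\tilde\tau^k)^*\colon\ker\diracpartial_{B_0}\to\ker\diracpartial_{B_0}\big),\qquad k=0,1,\dots,p-1 .
\]
Here one uses that the $\Z_p$-representation $\ker\diracpartial_{B_0}$ for $-Y$ with the metric $g$ is canonically identified with that for $Y$: reversing orientation in dimension three conjugates the Dirac operator to $-\diracpartial_{B_0}$, which has the same kernel and the same $\tilde\tau$-action, so the right-hand side is the same for $Y$ and $-Y$ and indeed equals $\operatorname{Tr}_{\zeta_p^k}(-\ker\diracpartial_{B_0})$.

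Next I would dispose of the ``bulk'' contributions. Under the same identification the three-dimensional equivariant eta invariant negates, $\eta^k_{\diracpartial_{B_0}}(-Y,g)=-\eta^k_{\diracpartial_{B_0}}(Y,g)$ (the eigenvalues negate, the $(\tilde\tau^k)^*$-traces on eigenspaces are unchanged, and $\operatorname{sign}$ flips), and likewise $\eta_{\mathrm{sign}}(-Y,g)=-\eta_{\mathrm{sign}}(Y,g)$ for the odd signature operator, while $c^k_{\diracpartial_{B_0}}(g)=\operatorname{Trace}\big((\tilde\tau^k)^*|_{\ker\diracpartial_{B_0}}\big)$ is invariant. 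Writing uniformly $n(Y,\s,\tilde\tau,g)_k=\tfrac12\big(\eta^k_{\diracpartial_{B_0}}(g)-c^k_{\diracpartial_{B_0}}(g)\big)-\tfrac18\delta_{k,0}\,\eta_{\mathrm{sign}}(g)+T^k_Y$, where $T^k_Y$ is the sum of the torsion terms appearing in the definition (and $T^0_Y=0$), adding the $Y$- and $(-Y)$-contributions gives
\[
n(Y,\s,\tilde\tau,g)_k+n(-Y,\s,\tilde\tau,g)_k=-c^k_{\diracpartial_{B_0}}(g)+\big(T^k_Y+T^k_{-Y}\big),
\]
so the whole statement reduces to the cancellation $T^k_Y+T^k_{-Y}=0$. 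For $k=0$ there is nothing left, this being exactly the non-equivariant duality $n(Y,\s,g)+n(-Y,\s,g)=-\dim_{\C}\ker\diracpartial_{B_0}$ of \cite{Man03}.

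The hard part will be the cancellation $T^k_Y+T^k_{-Y}=0$, i.e.\ understanding how the local equivariant $\mathrm{Spin}^c$ data attached to each component $K_{k,i}$ of the fixed set $Y^{\tau^k}$ behave under orientation reversal. Keeping the chosen orientations on the knots $K_{k,i}$ fixed, I would verify: (i) the rational-Seifert torsion satisfies $t(-Y,K_{k,i},g)=-t(Y,K_{k,i},g)$, because the Levi--Civita connection form depends only on $g$ while the adapted frame $(e_1,e_2,e_1\times e_2)$ has its third vector negated when the orientation of $Y$ is reversed, flipping $\varphi^*\theta_{23}$; (ii) the normal rotation number becomes $k_i\mapsto p-k_i$, since reversing the orientation of $Y$ conjugates the complex structure on the rank-two normal bundle $N_{K_{k,i}}$; and (iii) the integer $m_i$ describing the action of $\tilde\tau^k$ on the $\mathrm{Spin}^c$ line is unchanged, as reversing the orientation of $Y$ only exchanges the two ``tangent'' coordinates in the local $U(1)^3/\{\pm(1,1,1)\}$-model of the equivariant $\mathrm{Spin}^c$ structure and leaves the third coordinate alone. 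Granting (i)--(iii), the identities $\csc\frac{(p-k_i)\pi}{p}=\csc\frac{k_i\pi}{p}$, $\cot\frac{(p-k_i)\pi}{p}=-\cot\frac{k_i\pi}{p}$, and $(-1)^{(p-k_i)+1}=-(-1)^{k_i+1}$ (valid for $p$ odd) show that the $i$-th summand of $T^k_{-Y}$ is precisely the negative of the $i$-th summand of $T^k_Y$, so $T^k_Y+T^k_{-Y}=0$ and we are done.

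A more robust alternative, bypassing this hand bookkeeping, is to run the argument through the equivariant Atiyah--Patodi--Singer index theorem of \Cref{appendix A}. Take $N$ large enough that $\bigsqcup_N(Y,\s,\tilde\tau)$ bounds a $\Z_p$-equivariant $\mathrm{Spin}^c$ filling $(X,\tilde\s_X)$ by \Cref{spinc}, equipped with a $\Z_p$-invariant metric which is a product $g+dt^2$ near the boundary, and form the double $Z=X\cup_{\sqcup_N Y}(-X)$. On the closed $\Z_p$-equivariant $\mathrm{Spin}^c$ manifold $Z$ one checks $\ind_{[k]}\dirac_Z=0$: the isolated-fixed-point terms of $X$ and of $-X$ occur in cancelling pairs (an orientation reversal conjugates one tangent weight, which negates the local Atiyah--Bott term), and the two-dimensional fixed-surface terms vanish because, with the rational canonical framings used throughout, the doubled fixed surfaces have trivial self-intersection and trivial pairing with $c_1(\tilde\s_Z)$, just as in the computation in \Cref{prop:basic_of_n}. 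The APS gluing formula then reads $0=\ind^{\mathrm{APS}}_{[k]}\dirac_X+\ind^{\mathrm{APS}}_{[k]}\dirac_{-X}+N\,c^k_{\diracpartial_{B_0}}(g)$, and subtracting the (again pairwise-cancelling) fixed-surface terms from the two APS indices leaves exactly $N\,n(Y,\s,\tilde\tau,g)_k+N\,n(-Y,\s,\tilde\tau,g)_k=-N\,c^k_{\diracpartial_{B_0}}(g)$, which yields the claim after dividing by $N$. Either way the proof runs parallel to Montague's \cite[Proposition~6.13]{montague2022seiberg}, the only new feature being the harmless extra factor $\zeta_p^{m_i}\zeta_{2p}$ in the $\mathrm{Spin}^c$ torsion terms.
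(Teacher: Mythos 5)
Your proposal is correct and follows the same route as the paper, which gives no independent argument but simply defers to Montague's \cite[Proposition~6.13]{montague2022seiberg}: reduce to the trace identities $n_k(Y)+n_k(-Y)=-c^k_{\diracpartial_{B_0}}$ via Fourier inversion, use that orientation reversal negates the equivariant eta invariants while fixing $\ker\diracpartial_{B_0}$ and its $\tilde\tau$-action, and check that the new $\mathrm{Spin}^c$ torsion terms cancel via $t\mapsto -t$, $k_i\mapsto p-k_i$, $m_i\mapsto m_i$. The only loose end is that your sign identity $(-1)^{(p-k_i)+1}=-(-1)^{k_i+1}$ is stated only for $p$ odd; for $p=2$ one has $k_i=1$ and $\cot\tfrac{\pi}{2}=0$, so the torsion terms vanish outright and the cancellation is vacuous there.
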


Again, by the same argument given in \cite[Proposition~6.23]{montague2022seiberg}, we see the following.

\begin{prop}\label{Vduality}
The two spectra $SWF_{S^1 \times \Z_p}(Y, \mathfrak{s}, \tilde{\tau})$ and $SWF_{S^1 \times \Z_p}(-Y, \mathfrak{s}, \tilde{\tau})$ are $S^1 \times \Z_p$-equivariant $[S^0, 0, 0]$-duals. We denote an $S^1 \times \Z_p$-equivariant duality map by
\[
\eta \colon SWF_{S^1 \times \Z_p}(Y, \mathfrak{s}, \tilde{\tau}) \wedge SWF_{S^1 \times \Z_p}(-Y, \mathfrak{s}, \tilde{\tau}) \longrightarrow S^0.
\]
\end{prop}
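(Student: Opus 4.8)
The plan is to follow Montague's proof of the $\mathrm{Pin}(2)\times\Z_p$--equivariant duality \cite[Proposition~6.23]{montague2022seiberg}, which itself adapts Manolescu's Spanier--Whitehead duality for Seiberg--Witten Floer spectra, keeping everything $G_{\mathfrak{s}}\cong S^1\times\Z_p$--equivariant. The only substantive change is that the correction terms now live in $R(\Z_p)\otimes\Q$ rather than $RQ(\Z_p)\otimes\Q$, which does not affect the argument.

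First I would record the effect of orientation reversal. Fixing the same $\Z_p$--invariant metric $g$, $\Z_p$--invariant flat connection $B_0$, and lift $\tilde\tau$, one has $*_{-Y}=-*_{Y}$ on $\Omega^1(Y)$, and the $3$--dimensional $\mathrm{Spin}^c$ Dirac operator for $-Y$ is conjugate to $-\diracpartial_{B_0}$. Hence the self-adjoint operator $l$ on the global Coulomb slice $V$ changes sign, the eigenspace decompositions match via $V^\mu_\lambda(-Y,g)\cong V^{-\lambda}_{-\mu}(Y,g)$ as $S^1\times\Z_p$--representations, and (after a gauge transformation identifying the two Coulomb slices) the finite--dimensional Seiberg--Witten flow for $-Y$ is, up to homotopy, the time--reversal of that for $Y$. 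All of these identifications are $S^1\times\Z_p$--equivariant, since the metric, connection, and the chosen lift of the $\Z_p$--action are, and since the $G_{\mathfrak{s}}$--action commutes with $l$ and with the Seiberg--Witten map.

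Next I would apply the equivariant Conley--index duality used by Manolescu and Montague: for a $G$--equivariant flow on a finite--dimensional $G$--representation $W$ with isolated invariant set, the Conley index of the reverse flow is the $G$--equivariant $W^+$--dual of the original Conley index. Taking $G=S^1\times\Z_p$ and $W=V^\mu_\lambda(Y,g)$ yields a $G$--equivariant pairing $I^\mu_\lambda(-Y,g)\wedge I^\mu_\lambda(Y,g)\longrightarrow \bigl(V^\mu_\lambda(Y,g)\bigr)^{+}$. Desuspending by $V^0_\lambda(Y,g)$ on the $Y$--side and by $V^0_\lambda(-Y,g)$ on the $-Y$--side, and using $V^\mu_\lambda(Y,g)/V^0_\lambda(Y,g)\cong V^\mu_0(Y,g)$ together with the eigenspace identification above, this produces a duality map between the metric--dependent Floer homotopy types whose $S^1$--fixed--point map is the usual duality of spheres.

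Finally I would upgrade this to a morphism in $\mathcal{C}^{sp}_{S^1\times\Z_p}$ between $SWF_{S^1\times\Z_p}(Y,\mathfrak{s},\tilde\tau)$ and $SWF_{S^1\times\Z_p}(-Y,\mathfrak{s},\tilde\tau)$: the mismatch of the $R(\Z_p)\otimes\Q$--shifts is precisely absorbed by the preceding proposition, $\mathbf{n}(Y,\mathfrak{s},\tilde\tau,g)+\mathbf{n}(-Y,\mathfrak{s},\tilde\tau,g)=-\ker\diracpartial_{B_0}$ (together with the analogous, purely numerical, matching of the real summands of $V^0_\lambda$), so the net shift in the smash product is trivial and the target is $(S^0,0,0)$. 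One then checks that $\eta$, together with the corresponding coevaluation map, satisfies the triangle identities up to $S^1\times\Z_p$--equivariant stable homotopy, which is formal once the Conley--index duality is in place. The main obstacle is the equivariant bookkeeping: verifying that the Conley--index duality remains $S^1\times\Z_p$--equivariant even when the $\Z_p$--action is non-free, and tracking the class $\mathbf{n}(Y,\mathfrak{s},\tilde\tau,g)\in R(\Z_p)\otimes\Q$ rather than merely its augmentation; both are handled exactly as in \cite{montague2022seiberg}, since the passage from $\mathrm{Pin}(2)$ to $S^1$ changes only which representation ring the $b$--coordinate of an object lives in.
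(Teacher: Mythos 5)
Your proposal matches the paper's approach exactly: the paper offers no independent argument for this proposition and simply asserts that it follows "by the same argument given in \cite[Proposition~6.23]{montague2022seiberg}," which is precisely the adaptation of Montague's (and ultimately Manolescu's) Conley-index duality that you carry out, with the correction-term matching supplied by the preceding duality formula for $\mathbf{n}$. Your write-up is in fact more detailed than the paper's, and the details are consistent with the framework set up in Section~3.
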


\begin{lem}\label{PSC-vanish}
Let $(Y, \s, \tilde{\tau})$ be a $\Z_p$-equivariant $\mathrm{Spin}^c$ rational homology 3-sphere that admits a $\Z_p$-invariant positive scalar curvature metric $g$. Then
\[
SWF_{S^1 \times \Z_p}(Y, \s, \tilde{\tau}) = [(S^0, 0, \mathbf{n}(Y, \s, \tilde{\tau}, g))].
\]
\end{lem}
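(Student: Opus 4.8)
The plan is to reduce this to the standard positive scalar curvature computation of the Seiberg--Witten Floer homotopy type, carried out $S^1 \times \Z_p$-equivariantly. First I would take the reference $\mathrm{Spin}^c$ connection $A_0$ to be $\Z_p$-invariant with flat induced connection $B_0$, as in the construction of $SWF_{S^1 \times \Z_p}$. Since $g$ has positive scalar curvature $s > 0$ and the induced connection on the determinant line bundle is flat, the Lichnerowicz--Weitzenböck formula gives $\diracpartial_{B_0}^2 = \nabla_{B_0}^* \nabla_{B_0} + s/4 \ge s/4 > 0$, so $\ker \diracpartial_{B_0} = 0$; in particular $c^k_{\diracpartial_{B_0}}(g) = 0$ for every $k$. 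Applying the Weitzenböck formula to a hypothetical irreducible three-dimensional Seiberg--Witten solution $(B,\phi)$ and using the curvature equation to rewrite the Clifford term in terms of $|\phi|^4$ forces $\phi \equiv 0$. Since $b_1(Y) = 0$, there is a unique flat $\mathrm{Spin}^c$ connection up to gauge, so the only critical point of the Chern--Simons--Dirac functional on the global Coulomb slice $V$, modulo gauge, is the reducible point $\theta$ corresponding to $B_0$; the vanishing of $\ker \diracpartial_{B_0}$ together with $H^1(Y;\R) = 0$ shows that the Hessian of the functional at $\theta$, which is the operator $l$, has trivial kernel, so $\theta$ is hyperbolic.

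Next, for $\mu$ and $-\lambda$ sufficiently large, the finite-dimensional approximation on $V^\mu_\lambda(g)$ has, inside a large $\Z_p$-invariant ball, maximal isolated invariant set equal to $\{\theta\}$, with a $\Z_p$-invariant isolating neighbourhood; this is precisely the positive scalar curvature case of Manolescu's construction \cite{Man03, baraglia2024brieskorn}, and every step is compatible with the $S^1 \times \Z_p$-action because $g$, $B_0$ and the lift $\tilde{\tau}$ are chosen invariantly, so the Conley index is defined as an $S^1 \times \Z_p$-space. Since $\theta$ is hyperbolic and both $\diracpartial_{B_0}$ and $*d|_{\ker d^*}$ have trivial kernel, the unstable subspace of the linearized flow inside $V^\mu_\lambda(g)$ is exactly the span of the negative eigenspaces of $l$, namely $V^0_\lambda(g)$. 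Hence $I^\mu_\lambda(g)$ is $S^1 \times \Z_p$-equivariantly homotopy equivalent to $(V^0_\lambda(g))^+$, so
\[
SWF_{S^1 \times \Z_p}(Y, \s, \tilde{\tau}, g) = \Sigma^{-V^0_\lambda(g)} I^\mu_\lambda(g) \simeq \Sigma^{-V^0_\lambda(g)} (V^0_\lambda(g))^+ = S^0
\]
as objects of $\mathcal{C}^{sp}_{S^1 \times \Z_p}$, i.e.\ it is the object $(S^0, 0, 0)$. Substituting this into \Cref{def:eqSWF} yields
\[
SWF_{S^1 \times \Z_p}(Y, \s, \tilde{\tau}) = [(SWF_{S^1 \times \Z_p}(Y, \s, \tilde{\tau}, g), 0, \mathbf{n}(Y, \s, \tilde{\tau}, g))] = [(S^0, 0, \mathbf{n}(Y, \s, \tilde{\tau}, g))],
\]
which is the claim.

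The point requiring care is not any single analytic estimate but the equivariance bookkeeping: one must check that the isolating neighbourhood and the index pair appearing in Manolescu's positive scalar curvature argument can be taken $\Z_p$-invariant, so that the resulting Conley index is a genuine $S^1 \times \Z_p$-space (not merely an $S^1$-space carrying an abstractly defined $\Z_p$-action), and that the identification with $(V^0_\lambda(g))^+$ respects the $\Z_p$-representation structure on $V^0_\lambda(g)$. Given the equivariant Conley index machinery already invoked in the construction of $SWF_{S^1 \times \Z_p}$, I expect this to be routine and to require no new ideas.
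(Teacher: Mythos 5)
Your proof is correct and follows the standard positive-scalar-curvature argument, carried out $S^1 \times \Z_p$-equivariantly. The paper states this lemma without proof, treating it as a direct equivariant upgrade of the argument in \cite{Man03} (and its $\Z_p$-equivariant analogue in \cite{baraglia2024brieskorn}); your write-up supplies exactly the expected details, with the only substantive point being the equivariance bookkeeping, which you correctly observe is automatic once $g$, $B_0$ and $\tilde{\tau}$ are chosen invariantly so that $l$ and the flow are $\Z_p$-equivariant.
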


If we have an even equivariant spin $\Z_p$-action $(Y, \s, \tilde{\tau})$, then the $\Z_p$-lift on the principal spin bundle can be regarded as a $\Z_p$-equivariant lift on the principal $\mathrm{Spin}^c$ bundle. In this case, one can compare the $\mathrm{Pin}(2)$-equivariant and $S^1$-equivariant Floer homotopy types.

\begin{lem}
When a $\Z_p$-equivariant $\mathrm{Spin}^c$ structure comes from an even equivariant spin free $\Z_p$-action $(Y, \s, \tilde{\tau})$, our Floer homotopy type $SWF(Y, \s, \tilde{\tau})$ can be recovered from Montague's homotopy type through the forgetting map
\[
\mathcal{C}^{sp}_{\mathrm{Pin}(2)\times \Z_p} \longrightarrow \mathcal{C}^{sp}_{S^1 \times \Z_p}.
\]
\end{lem}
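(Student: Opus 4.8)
The plan is to unwind both constructions and observe that, once the even spin structure is fixed, they are built from literally the same analytic input, the only difference being which symmetry group is remembered; the forgetting functor $\mathcal{C}^{sp}_{\mathrm{Pin}(2)\times\Z_p}\to\mathcal{C}^{sp}_{S^1\times\Z_p}$ then carries one to the other. First I would pin down the input: fix a $\Z_p$-invariant Riemannian metric $g$, the $\Z_p$-invariant flat connection $B_0$ associated to the equivariant spin structure, and the lift $\tilde\tau$ to the spinor bundle, which by evenness has order $p$. Since both spectrum classes are independent of these choices (for the $S^1\times\Z_p$-side this is recorded above; for the $\mathrm{Pin}(2)\times\Z_p$-side it is Montague's theorem), it suffices to produce the identification for a single such $g$.

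Next I would check that the $S^1\times\Z_p$-action used in the construction of $SWF_{S^1\times\Z_p}$ in \Cref{subsec:S1xZp-SWF-homotopy-type} is the restriction along $S^1\hookrightarrow\mathrm{Pin}(2)$ of Montague's $\mathrm{Pin}(2)\times\Z_p$-action. The order-$p$ lift $\tilde\tau$ is precisely a section $\Z_p\to G_{\s}$, hence determines the splitting $G_{\s}\cong S^1\times\Z_p$ that the $S^1\times\Z_p$-construction uses, and this copy of $S^1\times\Z_p$ agrees with the one sitting inside $\mathrm{Pin}(2)\times\Z_p$ (with $S^1\subset\mathrm{Pin}(2)$ acting by complex scalars on spinors and the charge-conjugation symmetry supplying the remaining $j$). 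Consequently the global Coulomb slice $V$, the self-adjoint operator $l$, the finite-dimensional approximations $V^\mu_\lambda(g)$, the finite-dimensional Seiberg--Witten flow, and the Conley index $I^\mu_\lambda(g)$ are the same objects, and restricting Montague's $\mathrm{Pin}(2)\times\Z_p$-action to $S^1\times\Z_p$ yields exactly the $S^1\times\Z_p$-data. For the desuspension $\Sigma^{-V^0_\lambda(g)}$: the $1$-form part of $V^0_\lambda(g)$ is the same real $\Z_p$-representation on both sides, and the spinor part is recorded in $RQ(\Z_p)$ by Montague and in $R(\Z_p)$ here, related by the scalar-restriction built into the forgetting functor.

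The main point is to match the correction terms. Because the $\Z_p$-action on $Y$ is free, for each $0<k<p$ the map $\tau^k$ has empty fixed-point set, so all of the torsion / fixed-point contributions in the definition of $n(Y,\s,\tilde\tau,g)_k$ are vacuous and it reduces to $\overline{\eta}^k_{\diracpartial_{B_0}}(g)$; together with the $k=0$ term this is precisely Montague's family of invariants (this is the remark above that our $n(Y,\s,\tilde\tau,g)_k$ coincides with Montague's $n^{\nu^k}$-invariant in the even spin case, obtained by dropping the $\zeta_p^{m_i}\zeta_{2p}$ terms). Hence the Fourier coefficients $c_l=\tfrac1p\sum_k n(Y,\s,\tilde\tau,g)_k\zeta_p^{-kl}$ agree, and Montague's correction term $\sum_l c_l[\H_{[l]}]\in RQ(\Z_p)\otimes\Q$ is sent by the forgetting functor to $\sum_l c_l[\C_{[l]}]\in R(\Z_p)\otimes\Q=\mathbf{n}(Y,\s,\tilde\tau,g)$. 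Combining the three steps, the forgetting functor sends the triple representing $SWF_{\mathrm{Pin}(2)\times\Z_p}(Y,\s,\tilde\tau)$ to the triple representing $SWF_{S^1\times\Z_p}(Y,\s,\tilde\tau)$, which is the assertion; monoidality of the forgetting functor then also handles disjoint unions.

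The step I expect to require the most care is the last one, specifically the bookkeeping of the quaternionic-versus-complex normalizations: one must verify that the conversion $RQ(\Z_p)\otimes\Q\to R(\Z_p)\otimes\Q$ applied by the forgetting functor to the correction-term coordinate is consistent with the way the same functor reinterprets the quaternionic spinor desuspension $-[V^0_\lambda(g)_{\mathrm{spinor}}]$, so that the two contributions to the third coordinate are translated by the same rule and no spurious factor of $2$ or overall grading shift is introduced. This in turn rests on checking that the Dirac eta invariants entering $n(Y,\s,\tilde\tau,g)_k$ and those entering Montague's $n^{\nu^k}$ are normalized compatibly with these conversions; once that is settled, the remainder of the argument is a routine term-by-term comparison of the two definitions.
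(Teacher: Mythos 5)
The paper states this lemma with no proof at all, so there is nothing to compare your argument against; what you have written is the natural (and presumably intended) argument, and its analytic core --- the termwise equality of the correction terms $n(Y,\s,\tilde\tau,g)_k$ with Montague's $n^{\nu^k}$ for even equivariant spin structures --- is exactly what the Remark immediately preceding the lemma in the paper already asserts. Your first two steps are fine: metric-independence reduces to a single $g$, the splitting of $G_{\s}$ determined by the order-$p$ lift $\tilde\tau$ is precisely the copy of $S^1\times\Z_p$ inside $\mathrm{Pin}(2)\times\Z_p$, and the Coulomb slice, the operator $l$, the finite-dimensional approximations and the Conley index are literally the same objects with the group action restricted. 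Your observation that freeness makes $Y^{\tau^k}=\emptyset$ for $0<k<p$, so that the torsion/fixed-point sum in $n(Y,\s,\tilde\tau,g)_k$ is vacuous and the term collapses to $\overline{\eta}^k_{\diracpartial_{B_0}}(g)$, is also correct and is the clean way to see the coincidence in the free case.

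The point you flag at the end is the one genuine loose end, and it is worth making it more concrete than ``bookkeeping.'' For the forgetting functor to be well defined on morphisms (which are maps between suspensions by actual representation spheres $\beta^+$ with $\beta\in RQ(\Z_p)_{\ge 0}$), it must act on the third coordinate by restriction of scalars, i.e.\ $[\H_{[l]}]\mapsto[\C_{[l]}^{\oplus 2}]=2[\C_{[l]}]$, \emph{not} by the abstract isomorphism $RQ(\Z_p)\cong R(\Z_p)$ sending $\H_{[l]}\mapsto\C_{[l]}$; this is forced by comparing the degree shifts $4\alpha_\H(b)$ and $2\alpha_\C(b)$ in the two cochain functors. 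With that convention, and with the paper's displayed normalization $\overline{\eta}^k=\tfrac12(\eta^k-c^k)$ appearing in \emph{both} definitions, the image of Montague's $\sum_l c_l\otimes[\H_{[l]}]$ is $\sum_l 2c_l\otimes[\C_{[l]}]$, which is twice the $S^1\times\Z_p$ correction term $\sum_l c_l\otimes[\C_{[l]}]$. So the identity you want is not a tautology: it requires checking that Montague's $n^{\nu^k}$ is normalized to count quaternionic rather than complex dimensions (equivalently, that it carries an extra factor of $\tfrac12$ relative to the formula as transcribed here, as in Manolescu's $\mathrm{Pin}(2)$ convention where the desuspension is by $n(Y,\s,g)/2$ copies of $\H$). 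Until that single normalization is pinned down against the source, the argument is complete in structure but not closed; everything else in your proposal is a routine and correct unwinding of the two definitions.
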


We observe that, by following the construction of $\mathcal{C}^{sp}_{S^1 \times \Z_p}$, we can define the space-level $S^1 \times \Z_p$-equivariant local equivalence group as follows.

\begin{defn}
Given two objects $(X,a,b)$ and $(X',a',b')$ of $\mathcal{C}^{sp}_{\mathrm{Pin}(2)\times \Z_p}$, a morphism $f$ between them, represented as an $S^1 \times \Z_p$-equivariant map
\[
f \colon X \wedge \alpha^+ \wedge \beta^+ \longrightarrow X' \wedge (\alpha')^+ \wedge (\beta')^+,
\]
is a \emph{local map} if $f^{S^1}$ is (non-equivariantly) a homotopy equivalence. We say that $(X,a,b)$ and $(X',a',b')$ are \emph{locally equivalent} if there exist local maps between them in both directions.
\end{defn}

\begin{defn}
We define
\[
\mathfrak{C}^{sp}_{S^1 \times \Z_p} 
= \frac{\left\{ \text{isomorphism classes of objects of } \mathcal{C}^{sp}_{S^1 \times \Z_p} \right\}}
       {\text{local equivalence}},
\]
where the group operation is given by $-\wedge-$. As in the $\mathrm{Pin}(2)\times \Z_p$-equivariant case, $\mathfrak{C}^{sp}_{S^1 \times \Z_p}$ is a well-defined abelian group.
\end{defn}

\subsubsection{Recovering Baraglia--Hekmati's theory}\label{section:relation_to_BH}
In this subsection, we will prove that the spectrum $SWF_{S^1\times \Z_p}(Y, \mathfrak{s}, \tilde{\tau})$ recovers the invariants of Baraglia--Hekmati~\cite{Baraglia-Hekmati:2024-1}.

Let $Y$ be a rational homology 3-sphere equipped with a free $\Z_p$-action $\tau \colon Y \to Y$ and a $\Z_p$-equivariant $\mathrm{Spin}^c$ structure $\tilde{\mathfrak{s}}$. Then we have the spectrum $SWF_{S^1\times \Z_p}(Y, \mathfrak{s}, \tilde{\tau}) \in \mathfrak{C}^{sp}_{S^1 \times \Z_p}$.

\begin{lem}\label{lem:recoveringBH}
If we take cohomology, we recover Baraglia--Hekmati's $S^1 \times \Z_p$-equivariant Floer cohomology:
\[
\wt{H}^{*+2{n}_\C(Y, \fraks, \tilde{\tau}, g)}_{S^1\times \Z_p}
\left( SWF_{S^1\times \Z_p}(Y, \s, \tilde{\tau}, g) \right)
\cong
\wt{H}_*\left( C^*_{S^1\times \Z_p}(SWF_{S^1\times \Z_p}(Y, \s)) \right)
\]
as modules over the ring $H^*_{S^1\times \Z_p} := H^*_{S^1\times \Z_p}(\ast; \Z_p)$, where $SWF_{S^1\times \Z_p}(Y, \s, \tilde{\tau}, g)$
is the metric-dependent Seiberg--Witten Floer homotopy type introduced in \cite{Baraglia-Hekmati:2024-1}.  

Note that
\[
H^*_{S^1\times \Z_p} =
\begin{cases}
  \Z_2[U, \theta] & \text{if $p=2$,} \vspace{.2cm} \\
  \Z_p[U, R, S] / (R^2) & \text{if $p>2$,}
\end{cases}
\]
where $\deg(U) = \deg(S) = 2$ and $\deg(\theta) = \deg(R) = 1$ and $SWF_{S^1\times \Z_p}({Y}, \s, \tilde{\tau}, g)$ denotes the metric-dependent Floer homotopy type.
\end{lem}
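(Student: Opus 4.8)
The plan is to reduce both sides to the reduced $S^1\times\Z_p$-Borel cohomology of a single finite-dimensional Conley index, so that the whole lemma becomes a matter of matching the two desuspension-and-grading-shift conventions. First I would observe that the metric-dependent homotopy type $SWF_{S^1\times\Z_p}(Y,\s,\tilde\tau,g)$ of Baraglia--Hekmati agrees, up to canonical $S^1\times\Z_p$-equivariant stable equivalence, with $\Sigma^{-V^0_\lambda(g)}I^\mu_\lambda(g)$ from \Cref{def:eqSWF}: both are built by finite-dimensional approximation of the Seiberg--Witten flow on the global Coulomb slice $V=(B_0+i\ker d^*)\oplus\Gamma(S)$, using the spans $V^\mu_\lambda(g)$ of eigenvectors of the linearized operator $l$ with eigenvalue in $(\lambda,\mu]$, and both carry the $S^1\times\Z_p$-action coming from the identification $G_\s\cong S^1\times\Z_p$ fixed in \Cref{subsec:S1xZp-SWF-homotopy-type} (where $|H_1(Y;\Z)|$ is coprime to $p$). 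Enlarging $\mu$ and $-\lambda$ only changes $I^\mu_\lambda(g)$ by equivariant suspension along the intermediate eigenspaces, which is absorbed identically in both setups, so the two metric-dependent objects have canonically isomorphic reduced $S^1\times\Z_p$-Borel cohomology with $\Z_p$-coefficients.

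Next I would unwind the right-hand side. By \Cref{def:eqSWF} together with the definition of $C^*_{S^1\times\Z_p}(-;\Z_p)$ on the object $(SWF_{S^1\times\Z_p}(Y,\s,\tilde\tau,g),0,\mathbf{n}(Y,\s,\tilde\tau,g))$,
\[
C^*_{S^1\times\Z_p}\bigl(SWF_{S^1\times\Z_p}(Y,\s)\bigr)=\widetilde{C}^{\,*+2\alpha_\C(\mathbf{n}(Y,\s,\tilde\tau,g))}_{S^1\times\Z_p}\bigl(\Sigma^{-V^0_\lambda(g)}I^\mu_\lambda(g);\Z_p\bigr),
\]
so the homology of this cochain complex is the reduced $S^1\times\Z_p$-Borel cohomology of $\Sigma^{-V^0_\lambda(g)}I^\mu_\lambda(g)$ with grading shifted by $2\alpha_\C(\mathbf{n}(Y,\s,\tilde\tau,g))$. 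By \Cref{prop:basic_of_n}(ii) we have $\alpha_\C(\mathbf{n}(Y,\s,\tilde\tau,g))=n(Y,\s,g)=n_\C(Y,\s,\tilde\tau,g)$, which is exactly the shift appearing on the left-hand side of the lemma; combined with the first step, this identifies the two graded groups.

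Finally I would check that this identification respects the $H^*_{S^1\times\Z_p}$-module structures, where $H^*_{S^1\times\Z_p}=H^*(B(S^1\times\Z_p);\Z_p)$. On our side this structure is the one carried by the functor $C^*_{S^1\times\Z_p}(-;\Z_p)$ into $\mathrm{Mod}^{\mathrm{op}}_{C^*(B(S^1\times\Z_p);\Z_p)}$, i.e.\ the standard module structure of Borel cohomology over the cohomology of the classifying space; Baraglia--Hekmati's module structure over $\Z_p[U,\theta]$ (for $p=2$) or $\Z_p[U,R,S]/(R^2)$ (for $p>2$) is the same one. Since the desuspension by $V^0_\lambda(g)$ is realized on Borel cohomology by the (equivariant, possibly twisted) Thom isomorphism and the degree shift by $2n_\C$ is a multiple of the unit, both operations are $H^*_{S^1\times\Z_p}$-linear isomorphisms, so the comparison is an isomorphism of $H^*_{S^1\times\Z_p}$-modules. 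The step I expect to demand the most care is the first one: matching Baraglia--Hekmati's precise choices (the $\Z_p$-invariant metric, the flat reference connection $B_0$, the lift of the $\Z_p$-action to $S$, and the finite-dimensional approximation scheme) with those of \Cref{subsec:S1xZp-SWF-homotopy-type}, including the convention for the factor of $2$ relating complex-representation shifts to cohomological degree and the sign of the correction term; this is bookkeeping of the same kind already invoked in the metric-independence statement following \Cref{def:eqSWF}, rather than a conceptual difficulty.
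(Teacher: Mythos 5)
Your proposal is correct and follows essentially the same route as the paper's (one-line) proof, which simply invokes \Cref{prop:basic_of_n}(ii) to identify $\alpha_\C(\mathbf{n}(Y,\s,\tilde\tau,g))$ with $n(Y,\s,g)$ and the $\Z_p$-coefficient equivariant Thom isomorphism to handle the desuspension; your write-up just makes the bookkeeping explicit.
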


\begin{proof}
We use the fact that 
\[
\alpha\left({\bf n}(Y, \fraks, \tilde{\tau}, g)\right) = n(Y, \fraks, g)
\]
from \Cref{prop:basic_of_n}(ii), together with the $\Z_p$-equivariant Thom isomorphism theorem with $\Z_p$-coefficients.
\end{proof}

\begin{rem}
The isomorphism class of the module 
\[
\wt{H}^*_{S^1\times \Z_p}\left( SWF(Y, \s, \tilde{\tau}) \right)
\]
depends on the choice of equivariant $\mathrm{Spin}^c$ structure, as pointed out in \cite{baraglia2024brieskorn}.
\end{rem}

Let us also review the equivariant Fr\o yshov invariants, developed by Baraglia and Hekmati in \cite{baraglia2024brieskorn}.  
Let $(X,a,b)$ be an element of $\mathcal{C}^{sp}_{S^1 \times \Z_p}$.  
The inclusion of the fixed points $\iota \colon X^{S^1} \to X$ induces a map
\[
\iota^* \colon U^{-1} H^*_{S^1\times \Z_p}\left(X;\Z_p\right) \longrightarrow U^{-1} \wt{H}^*_{S^1\times \Z_p}\left(X^{S^1}\right) 
   \cong U^{-1} H^*_{S^1 \times \Z_p}.
\]
We now recall the sequence of invariants $\{\delta_{G,j}(X,a,b)\}$:

\begin{itemize}
    \item[(i)] If $p=2$, we define
    \[
    \delta_{G,j}(X,a,b) := \tfrac{1}{2} \left( 
      \min \left\{ i \;\middle\vert\; \exists\, x \in \wt{H}^{\,i-a-2b}_{S^1\times \Z_p}\left(X;\Z_p\right),\ 
      \iota^*x \equiv U^k \theta^j \bmod{\theta^{j+1}} \text{ for some } k \geq 0 \right\} - j 
    \right).
    \]

    \item[(ii)] If $p>2$, we define
    \[
    \delta_{G,j}(X,a,b) := 
      \min \left\{ i \;\middle\vert\; \exists\, x \in \wt{H}^{\,i-a-2b}_G\left(X;\Z_p\right),\ 
      \iota^*x \equiv S^j U^k \bmod{\left(S^{j+1},\; RS^{j+1}\right)} \text{ for some } k \geq 0 \right\} - 2j.
    \]
\end{itemize}
We then set
\[
\delta_{G,j}\left(Y,\mathfrak{s}\right) := \delta_{G,j}\left(SWF(Y, \s, \tilde{\tau})\right) \in \Q.
\]
By \Cref{lem:recoveringBH}, this agrees with the equivariant Fr\o yshov invariants originally introduced in \cite{Baraglia-Hekmati:2024-1}.

\begin{rem}\label{rem:ofd}
Let $K$ be a knot in $S^3$. For each prime $p$, by \Cref{def:eqSWF} we obtain a metric-independent 
$S^1 \times \Z_p$--equivariant Seiberg--Witten Floer homotopy type of the $p$--fold branched cover $\Sigma_p(K)$ with the unique spin structure $\s_0$, equipped with a $\Z_p$--lift:
\[
    SWF_{S^1\times \Z_p}(\Sigma_p(K),\s_0).
\]
For any $[k]\in \Z_p$, we may then consider the $[k]$--fixed point spectrum
\begin{align}\label{fix_pt_invariant}
    SWF_{S^1\times \Z_p}(\Sigma_p(K),\s_0)^{[k]},
\end{align}
which defines a knot invariant. At first sight, \eqref{fix_pt_invariant} may appear to depend on the choice of equivariant $\mathrm{Spin}^c$ structures. However, it can be canonically regarded as a knot invariant as follows.

\begin{itemize}
    \item When $p=2$, there are precisely two equivariant $\mathrm{Spin}^c$ structures ${\tau}_1$ and ${\tau}_2$ on $\s_0$ covering the deck transformation $\tau:\Sigma_2(K)\to \Sigma_2(K)$. These are given by
    \[
        \tau_1 = i \,\wt{\tau}, \qquad \tau_2 = -i \,\wt{\tau},
    \]
    where $\wt{\tau}$ is an order-four lift of $\tau$ to $\s_0$ commuting with the principal $\mathrm{Spin}(4)$--action, as observed in \cite{iida2024monopoles}. One then has $j \tau_1 = \tau_2 j$ on the configuration space of $\Sigma_2(K)$. Consequently,
    \[
        SWF_{S^1\times \Z_2}(\Sigma_2(K),\s_0)^{\tau_1}
        \qquad\text{ and }\qquad 
        SWF_{S^1\times \Z_2}(\Sigma_2(K),\s_0)^{\tau_2}
    \]
    are homeomorphic as $S^1$--spaces, where the $S^1$--action  is given by complex conjugation $z \mapsto \overline{z}$.
    
    \item When $p$ is odd, it was shown in \cite[Proposition~2.2]{montague2022seiberg} that the $\Z_p$--lift $\wt{\tau}$ of the deck transformation to the principal $\mathrm{Spin}(4)$--bundle is uniquely determined (whereas $-\wt{\tau}$ gives an odd lift). Passing through the natural map $\mathrm{Spin}(4)\to \mathrm{Spin}^c(4)$ then yields a canonical $\Z_p$--equivariant $\mathrm{Spin}^c$ structure on $\s_0$.
\end{itemize}

Since this construction is compatible with orbifold gauge theory, we define the \emph{orbifold Seiberg--Witten Floer homotopy type} of $K$ by
\[
    SWF^{(p), [k]}_{\mathrm{ofd}}(K) := SWF_{S^1\times \Z_p}(\Sigma_p(K),\s_0)^{[k]}, 
    \qquad [k]\in \Z_p,
\]
as an $S^1$--equivariant stable pointed homotopy type.  
When $p=2$, this invariant is expected to be related to Jiakai Li's monopole Floer homology of webs \cite{li2023monopole} in the case with no real locus, which may be viewed as a version of monopole Floer homology for $3$--orbifolds with cone angle~$\pi$ obtained from knots.
From \cite[Theorem 1.16]{iida2024monopoles}, we see that $SWF^{(2), [1]}_{\mathrm{ofd}}(K)$ is a $\Z_2$--homology sphere for any $K \subset S^3$. 
\end{rem}

\subsection{Equivariant Bauer--Furuta invariants}

In this section, we shall discuss several properties of $S^1 \times \Z_p$-equivariant Bauer--Furuta invariants, which will be used to construct an equivariant map from the lattice homotopy type to the equivariant Seiberg--Witten Floer homotopy type.

Let $(X, \mathfrak{s})$ be a compact connected $\mathrm{Spin}^c$ 4-manifold bounded by a rational homology 3-sphere equipped with the restricted $\mathrm{Spin}^c$ structure $\mathfrak{t} = \mathfrak{s}|_Y$. Suppose $X$ is equipped with a smooth $\Z_p$-action such that the action preserves the isomorphism class of the $\mathrm{Spin}^c$ structure, the $\Z_p$-action is free on $Y$, and $b_1(X)=0$.  
If we fix an equivariant $\mathrm{Spin}^c$ structure on $\mathfrak{t}$, we obtain a unique extension of the equivariant $\mathrm{Spin}^c$ structure on $X$.  
We take a $\Z_p$-invariant Riemannian metric on $X$ which is product near the boundary and a reference $\Z_p$-invariant $\mathrm{Spin}^c$ connection $A_0$.  
For the action of $\tau^k$, suppose the fixed point set is described as the union of 0-dimensional components and 2-dimensional components:  
\[
X_0^{\tau^k} = \{ p_{k,1}, \ldots, p_{k,m} \}, \qquad 
X^{\tau^k}_{2} = \Sigma_{k,1} \sqcup \cdots \sqcup \Sigma_{k,n}. 
\]
In order to state the results, we introduce two topological invariants: 
\begin{itemize}
    \item The first invariant is 
    \[
    H^+_{\Z_p}(X) := ( \Z_p \longrightarrow O(H^+(X; \R)) ) \in RO(\Z_p). 
    \]
    \item The second invariant is 
    \begin{align*}
    \ind^t_{\Z_p}\dirac &= \frac{1}{p} \sum_{l=0}^{p-1} \left( \sum_{k=0}^{p-1} \ind^t_k(D, \tau) \cdot \zeta_p^{-kl} \right) \zeta_p^l \in R(\Z_p) \otimes \C , 
    \end{align*}
    where 
    \[
    \ind^t_{\tau^k} \dirac := 
    \begin{cases}
    \displaystyle  
    \sum_{i=1}^{m} R_{k,i} 
    + \sum_{i=1}^n \left( S_{k,i} \langle c_1(L), [\Sigma_{k,i}] \rangle 
    + T_{k,i} [\Sigma_{k,i}]^2 \right) & \text{if } k \neq 0, \vspace{.2cm} \\[1.5ex]
    \displaystyle \frac{1}{8} \left( c_1(\mathfrak{s})^2 - \sigma(X) \right) & \text{if } k = 0,
    \end{cases}
    \]
    where the data $R, S, T$ are determined by the $\Z_p$-equivariant $\mathrm{Spin}^c$ structure on the fixed point locus (see \Cref{appendix A} for details).  
\end{itemize}

These invariants $H^+_{\Z_p}(X) \in RO(\Z_p)$ and $\ind^t_{\Z_p} \dirac \in R(\Z_p) \otimes \C$ depend only on the $\Z_p$-equivariant structure and the $\Z_p$-equivariant $\mathrm{Spin}^c$ structure. 
Moreover, we have: 

\begin{lem} \label{lem: rationality}
The quantity $\ind^t_{\Z_p} \dirac$ satisfies the following properties: 
\begin{itemize}
    \item[(i)] $\ind^t_{\Z_p} \dirac \in R(\Z_p) \otimes \Q$, 
    \item[(ii)] $\alpha_\C\left(\ind^t_{\Z_p} \dirac\right) = \tfrac{1}{8}\left( c_1(\mathfrak{s})^2 - \sigma(X) \right)$, where $\alpha_\C$ denotes the complex augmentation map,
    \item[(iii)] When $X$ is a closed 4-manifold, $\ind^t_{\Z_p} \dirac \in R(\Z_p)$ and coincides with the $\Z_p$-equivariant Dirac index. 
\end{itemize}
\end{lem}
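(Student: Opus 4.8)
The plan is to realize $\ind^t_{\Z_p}\dirac$ as the difference between a genuine $\Z_p$-equivariant APS index and the equivariant correction term $\mathbf{n}$, and then to harvest (i)--(iii) from the properties of $\mathbf{n}$ recorded in \Cref{prop:basic_of_n} together with the equivariant index theorem of \Cref{appendix A}.

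First I would fix a $\Z_p$-invariant Riemannian metric $g$ on $X$ that is a product near $\partial X = Y$ and a $\Z_p$-invariant $\mathrm{Spin}^c$ connection $A_0$ that is flat near the boundary, so that the APS boundary value problem for $\dirac_{A_0}$ is $\Z_p$-equivariantly Fredholm and $\ind^{\mathrm{APS}}_{\Z_p}\dirac_{X,\mathfrak{s},A_0,g}\in R(\Z_p)$. The equivariant APS index theorem (Donnelly~\cite{donnelly1978eta}, in the form stated in \Cref{appendix A}) expresses each trace $\ind^{\mathrm{APS}}_{\tau^k}\dirac_{A_0}$, for $0<k<p$, as a sum of local contributions at the isolated fixed points and along the fixed surfaces $\Sigma_{k,i}$ -- the latter involving the curvature integrals $\int_{\Sigma_{k,i}}\tilde{F}_{A_0^t}$ and $\int_{\Sigma_{k,i}}\tilde{F}_N$ -- plus the equivariant eta term $\eta_{\tau^k}(\diracpartial_{B_0})$. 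Using the identity $\int_{\Sigma_{k,i}}\tilde{F}_N = [\Sigma_{k,i}]^2 - \sum_{K\subset\partial\Sigma_{k,i}}\tfrac{1}{2\pi}t(K,g)$ of \cite[Proposition~6.10]{montague2022seiberg}, the corresponding identification of $\int_{\Sigma_{k,i}}\tilde{F}_{A_0^t}$ with $\langle c_1(\mathfrak{s}),[\Sigma_{k,i}]\rangle$ relative to the rational canonical framings, and the definitions of $n(Y,\mathfrak{s},\tilde\tau,g)_k$ and $\overline\eta^k$, one checks that replacing these curvature integrals by their topological values and deleting the eta term amounts exactly to subtracting $n(Y,\mathfrak{s},\tilde\tau,g)_k$ -- for $k=0$ this is literally the relation $\ind^t\dirac_{X,\mathfrak{s}} = \ind^{\mathrm{APS}}\dirac_{X,\mathfrak{s},A_0,g} - n(Y,\mathfrak{s},g)$ of \Cref{subsec: notations}. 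Since $\ind^t_{\Z_p}\dirac$, $\ind^{\mathrm{APS}}_{\Z_p}\dirac_{A_0}$ and $\mathbf{n}(Y,\mathfrak{s},\tilde\tau,g)$ are each reconstructed from their families of $\tau^k$-traces by the same Fourier inversion formula of \Cref{subsec: notations}, this yields
\[
\ind^t_{\Z_p}\dirac \;=\; \ind^{\mathrm{APS}}_{\Z_p}\dirac_{X,\mathfrak{s},A_0,g} \;-\; \mathbf{n}(Y,\mathfrak{s},\tilde\tau,g)\ \in\ R(\Z_p)\otimes\C .
\]

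Granting this, the three claims follow quickly. For (i), $\ind^{\mathrm{APS}}_{\Z_p}\dirac_{A_0}\in R(\Z_p)$ while $\mathbf{n}(Y,\mathfrak{s},\tilde\tau,g)\in R(\Z_p)\otimes\Q$ by \Cref{prop:basic_of_n}(i), so the difference lies in $R(\Z_p)\otimes\Q$. For (ii), applying $\alpha_\C=\Tr_1$ and \Cref{prop:basic_of_n}(ii) gives $\alpha_\C(\ind^t_{\Z_p}\dirac) = \ind^{\mathrm{APS}}\dirac_{X,\mathfrak{s},A_0,g} - n(Y,\mathfrak{s},g) = \ind^t\dirac_{X,\mathfrak{s}} = \tfrac18(c_1(\mathfrak{s})^2-\sigma(X))$ by \Cref{subsec: notations}; alternatively this is the $k=0$ clause of the definition of $\ind^t_{\tau^k}\dirac$ read off through Fourier inversion. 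For (iii), when $X$ is closed there is no boundary, so no eta or torsion terms occur and $\mathbf{n}(Y,\mathfrak{s},\tilde\tau,g)=0$; the identity becomes $\ind^t_{\Z_p}\dirac = \ind^{\mathrm{APS}}_{\Z_p}\dirac_{A_0} = \ind_{\Z_p}\dirac\in R(\Z_p)$, the genuine $\Z_p$-equivariant Dirac index, and one checks that in this case the defining formula for $\ind^t_{\tau^k}\dirac$ is exactly the Atiyah--Segal--Singer fixed-point formula for $\Tr_{\tau^k}(\ind_{\Z_p}\dirac)$ (with the $k=0$ clause being the usual Atiyah--Singer value $\tfrac18(c_1(\mathfrak{s})^2-\sigma(X))$), so the two sides have the same character and hence coincide in $R(\Z_p)$.

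The one genuinely substantive step is the bookkeeping hidden in the second paragraph: matching the boundary defect produced by the equivariant APS index theorem of \Cref{appendix A} with the combinatorially defined $n(Y,\mathfrak{s},\tilde\tau,g)_k$, in particular tracking the $\zeta_p^{m_i}\zeta_{2p}$ twisting factors coming from the $\Z_p$-equivariant $\mathrm{Spin}^c$ structure along the fixed locus and the half-integer shift relating $\eta^k$ to $\overline\eta^k$. Once this is settled, (i)--(iii) are formal consequences of \Cref{prop:basic_of_n} and the classical (closed) equivariant index theorem.
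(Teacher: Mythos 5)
Your argument is correct, and parts (ii) and (iii) coincide with the paper's proof: (ii) is exactly the Fourier-inversion observation (the $k=0$ trace of $\ind^t_{\Z_p}\dirac$ is by definition $\tfrac18(c_1(\mathfrak{s})^2-\sigma(X))$, as in the proof of \Cref{prop:basic_of_n}(ii)), and (iii) is \Cref{even_equivariant_index} applied to a closed manifold, where the defining local formula for $\ind^t_{\tau^k}\dirac$ is literally the Atiyah--Segal--Singer character of the honest equivariant index. For (i) you take a slightly different route: the paper invokes the capping-off argument of \cite[Proposition~6.12]{montague2022seiberg} together with \Cref{spinc} directly, whereas you first establish the identity $\ind^t_{\Z_p}\dirac = \ind^{\mathrm{APS}}_{\Z_p}\dirac_{X,\mathfrak{s},A_0,g} - \mathbf{n}(Y,\mathfrak{s},\tilde\tau,g)$ and then quote \Cref{prop:basic_of_n}(i) for the rationality of $\mathbf{n}$. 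This is legitimate and not circular, since \Cref{prop:basic_of_n} is proved earlier; but be aware that it buys nothing new, because the proof of \Cref{prop:basic_of_n}(i) itself reduces to the rationality of exactly the local fixed-point contributions $\sum_i R_{k,i} + \sum_l \bigl(S_{k,l}\langle c_1(\mathfrak{s}),[\Sigma_{k,l}]\rangle + T_{k,l}[\Sigma_{k,l}]^2\bigr)$ that define $\ind^t_{\tau^k}\dirac$, and that rationality is established there by the same capping-off argument. One small simplification you could note: in the setting of this lemma the $\Z_p$-action is assumed free on $Y$, so the fixed surfaces are closed, the curvature integrals $\int_{\Sigma_{k,i}}\tilde F_{A_0^t}$ and $\int_{\Sigma_{k,i}}\tilde F_N$ are already topological, and the torsion corrections in $n(Y,\mathfrak{s},\tilde\tau,g)_k$ vanish, so the "bookkeeping" you flag as the substantive step degenerates to the clean statement $\ind^{\mathrm{APS}}_{\tau^k}\dirac = \ind^t_{\tau^k}\dirac + \overline{\eta}^k$.
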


\begin{proof}
(i) follows from the capping-off argument \cite[Proposition~6.12]{montague2022seiberg}, combined with \Cref{spinc}. 
(ii) follows from the Fourier inversion formula, as in the proof of part (ii) of \Cref{prop:basic_of_n}.  
(iii) follows from \Cref{even_equivariant_index} applied to a closed $\Z_p$-equivariant $\mathrm{Spin}^c$ 4-manifold.  
\end{proof}

Now we state our result on the equivariant Bauer--Furuta invariants for equivariant $\mathrm{Spin}^c$ structures: 

\begin{prop}\label{prop:eqBF}
With these data, we associate an $S^1\times \Z_p$-equivariant map 
\[
BF_{S^1\times \Z_p}(X, \mathfrak{s}) \colon \left(\ind^t_{\Z_p} \dirac\right)^+ \longrightarrow H^+_{\Z_p}(X)^+ \wedge SWF(Y, \mathfrak{s}, \tilde{\tau})
\]
such that $BF_{S^1\times \Z_p}^{S^1}$ is a $\Z_p$-homotopy equivalence, giving a well-defined morphism in the category $\mathcal{C}^{sp}_{S^1 \times \Z_p}$. 
Moreover, if we forget the $\Z_p$-action, $BF_{S^1\times \Z_p}(X, \mathfrak{s})$ recovers the ordinary $S^1$-equivariant Bauer--Furuta invariant $BF_{S^1}(X, \mathfrak{s})$ defined in \cite{Man03, Kha15}. 
\end{prop}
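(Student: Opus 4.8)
The plan is to carry out Manolescu's construction of the relative Bauer--Furuta invariant \cite{Man03} in Khandhawit's double Coulomb gauge formulation \cite{Kha15}, dragging the extra finite group action along at every step; this is the ``unparametrised'' analogue of the homotopy coherent construction already performed in the proof of \Cref{prop: homotopy coherent BF for Pin(2)xZ2}, with $B\Z_2$ replaced by a point and $\mathrm{Pin}(2)$ replaced by $S^1$. \textbf{Step 1 (the equivariant map).} Since the $\Z_p$-invariant metric, the reference $\mathrm{Spin}^c$ connection $A_0$ and the chosen equivariant $\mathrm{Spin}^c$ structure on $X$ are all $\Z_p$-equivariant, the double-Coulomb-gauge-fixed configuration space $L^2_k(i\Lambda^1_X)_{CC}\oplus L^2_k(S^+)$ carries a linear $S^1\times\Z_p$-action, and the Seiberg--Witten map together with the fiberwise projection onto the non-positive spectral subspace $V^\mu_{-\infty}(Y)$ of $\diracpartial_{B_0}$ is $S^1\times\Z_p$-equivariant. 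I would split its linearization $\Z_p$-equivariantly into a real part $L_\R=(d^+,\ \mathrm{proj})$, which is injective when $b_1(X)=0$ with cokernel the representation $H^+_{\Z_p}(X)\in RO(\Z_p)$ (as in \cite[Lemma~2.9]{KT22}), and a complex part, namely the $\Z_p$-equivariant $\mathrm{Spin}^c$ Dirac operator with APS boundary condition, of index $\ind^{\mathrm{APS}}_{\Z_p}\dirac\in R(\Z_p)$.

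\textbf{Step 2 (finite-dimensional approximation and desuspension).} The compactness statements for Seiberg--Witten trajectories on $X$ and for their finite-dimensional approximations --- the $\Z_p$-equivariant analogues of \cite[Lemmas~4.4--4.7]{KPT24}, none of which needs $b^+(X)=0$ --- are unaffected by the finite group, so for $\mu,-\lambda$ large and a $\Z_p$-invariant $W_1$ transverse to the image of the linear part one obtains an $S^1\times\Z_p$-equivariant map $W_0^+\to W_1^+\wedge I^\mu_\lambda$, with $I^\mu_\lambda$ the $S^1\times\Z_p$-equivariant Conley index of the approximated flow on $V^\mu_\lambda(Y)$. Desuspending by $\Sigma^{-V^0_\lambda}$ and using $\ind L^0\cong(-H^+_{\Z_p}(X),\ \ind^{\mathrm{APS}}_{\Z_p}\dirac)$ rewrites this stably as a map $(\ind^{\mathrm{APS}}_{\Z_p}\dirac)^+\to H^+_{\Z_p}(X)^+\wedge SWF_{S^1\times\Z_p}(Y,\s,\tilde\tau,g)$; subtracting $\mathbf{n}(Y,\s,\tilde\tau,g)$ to pass to the topological index and to the metric-independent $SWF_{S^1\times\Z_p}(Y,\s,\tilde\tau)$ of \Cref{def:eqSWF} produces the asserted map $BF_{S^1\times\Z_p}(X,\s)$.

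\textbf{Steps 3--4 (well-definedness and recovery).} On $S^1$-fixed points the reducible map is the linear $L^0_\R$, so $BF_{S^1\times\Z_p}^{S^1}$ is $\Z_p$-equivariantly homotopic to the inclusion of a $\Z_p$-linear injection with cokernel $H^+_{\Z_p}(X)$, hence a $\Z_p$-homotopy equivalence after the correct desuspension; this gives a well-defined morphism in $\mathcal{C}^{sp}_{S^1\times\Z_p}$. Independence of the $\Z_p$-invariant metric follows because a path of metrics shifts $V^0_\lambda(Y)$ and the Conley index by the $\Z_p$-equivariant spectral flow, exactly cancelled by the change in $\mathbf{n}$ recorded in \Cref{prop:basic_of_n}(iii), and independence of the approximation is the standard suspension argument, all carried out $\Z_p$-equivariantly as in \cite{Man03, montague2022seiberg}. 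Forgetting the $\Z_p$-action returns precisely Manolescu's configuration space, linearization, approximation and Conley index, and since $\alpha_\C(\ind^t_{\Z_p}\dirac)=\tfrac{1}{8}(c_1(\s)^2-\sigma(X))$ and $\alpha_\C(\mathbf{n}(Y,\s,\tilde\tau,g))=n(Y,\s,g)$ by \Cref{lem: rationality}(ii) and \Cref{prop:basic_of_n}(ii), the underlying $S^1$-map is $BF_{S^1}(X,\s)$.

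The step I expect to be the real obstacle is the correction-term bookkeeping underlying Steps~2--3: one must know that $\ind^t_{\Z_p}\dirac$ is well-defined and rational (so that the desuspension lands in $\mathcal{C}^{sp}_{S^1\times\Z_p}$ and is metric-independent), which rests on the $\Z_p$-equivariant APS index formula of \Cref{appendix A} for the possibly non-closed $2$-dimensional fixed loci of the $\tau^k$ on $X$, on the capping-off argument of \Cref{lem: rationality} together with \Cref{spinc}, and on the spectral-flow identity \Cref{prop:basic_of_n}(iii). Granting these inputs, the analytic content is identical to the non-equivariant case, with $\Z_p$ simply riding along.
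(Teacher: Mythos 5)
Your proposal is correct and follows essentially the same route as the paper's own proof: an $S^1\times\Z_p$-equivariant finite-dimensional approximation of the double-Coulomb-gauge Seiberg--Witten map with spectral projection, identification of the linearization's index with $(-H^+_{\Z_p}(X),\ \ind^{\mathrm{APS}}_{\Z_p}\dirac)$, and conversion to the topological index via the equivariant APS formula and the correction term $\mathbf{n}$. The paper likewise treats properness/compactness and well-definedness as routine and concentrates on exactly the index bookkeeping you flag as the essential content.
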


A $\mathrm{Spin}$ version of this map is constructed in \cite[Section~7.2]{montague2022seiberg}. 

\begin{proof}
When defining the Bauer--Furuta invariants, a technical point is the properness of the monopole map with boundary conditions. Since this is not the main issue here, we omit the details. The main part concerns how the representations $H^+_{\Z_p}(X)$ and $\ind^t_{\Z_p}\dirac$ appear in the setting of relative Bauer--Furuta invariants. Let us briefly describe their appearance.

Let $S = S^+ \oplus S^-$ be the spinor bundle of $\s$ equipped with a lift $\tilde{\tau}$ of the $\Z_p$-action. Take a $\Z_p$-invariant Riemannian metric $g$ and a $\Z_p$-invariant $\mathrm{Spin}^c$ connection $A_0$ which is flat near the boundary. Consider the Seiberg--Witten equation combined with the projection
\[
\mathcal{F} + \pr^{\nu}_{-\infty } \colon 
\bigl( A_0 + (i\Omega_X^1)_{CC} \bigr) \times \Gamma(S^+) 
\longrightarrow i\Omega_X^+ \times \Gamma(S^-) \times V(Y)^{\nu}_{-\infty},
\]
which is $S^1 \times \Z_p$-equivariant. Here, $\mathcal{F}$ is the Seiberg--Witten equation on $X$, $\pr^{\nu}_{-\infty}$ is the projection to $V(Y)^{\nu}_{-\infty}$, $(i\Omega_X^1)_{CC}$ is the space of $i$-valued 1-forms with double Coulomb gauge condition as in \cite{Kha15}, and $i\Omega_X^+$ is the space of $i$-valued self-dual forms. All functional spaces are completed with suitable Sobolev norms. We decompose $\mathcal{F}$ as the sum $L+C$, where $L = (\dirac_{A_0}^+, d^+)$. Pick a $\Z_p$-invariant finite-dimensional subspace $U' \subset i\Omega_X^+ \times \Gamma(S^-)$ and an eigenvalue $\lambda \ll 0$ such that
\[
U' \oplus V^\nu_\lambda \subset i\Omega_X^+ \times \Gamma(S^-) \times V^\nu_{-\infty}
\]
contains $\operatorname{Coker}(L \oplus \pr^{\nu}_{-\infty})$.

Next, let
\[
U := (L \oplus (\Pi^\nu \circ r))^{-1}(U' \oplus V^\nu_\lambda) \subset U_W,
\]
and consider the projected map
\[
\pi_{U' \oplus V^\nu_\lambda} \circ \mathcal{F} |_U \colon U \longrightarrow U' \oplus V^\nu_\lambda
\]
between finite-dimensional subspaces. If $U'$ and $-\lambda$ are chosen large enough, this induces a based map
\begin{equation*}\label{eq:conley-suspension}
\psi_{U', \nu, \lambda} \colon U^+ \longrightarrow (U')^+ \wedge I^\nu_\lambda
\end{equation*}
from the one-point compactification of $U$ to a suspension of the $S^1 \times \Z_p$-equivariant Conley index $I^\nu_\lambda$.

We define the map
\[
\psi_{U', \nu, \lambda} \colon (r\mathbb{R} + h\mathbb{C})^+ \longrightarrow (r'\mathbb{R} + h'\mathbb{C})^+ \wedge I^\nu_\lambda,
\]
where
\begin{equation*}\label{eq:ROZm}
\begin{cases}
\begin{aligned}
r - r' &= V^0_\lambda(\mathbb{R}) - H^+(W, \tau) \in RO(\Z_p), \\
h - h' &= V^0_\lambda(\mathbb{C}) + \ind^{\mathrm{APS}}_{\Z_p}\dirac_{X, \mathfrak{s}, A_0, g} \in R(\Z_p),
\end{aligned}
\end{cases}
\end{equation*}
and $\ind^{\mathrm{APS}}_{\Z_p}\dirac_{X, \mathfrak{s}, A_0, g}$ denotes the $\Z_p$-equivariant APS index of the $\Z_p$-equivariant Dirac operator on $W$.

Using \Cref{even_equivariant_index}, we obtain
\begin{equation*}\label{eq:RZ2m}
h - h' = V^0_\lambda(\mathbb{C}) + \mathbf{n}(Y, \mathfrak{s}, \sigma, g) - \ind^t_{\Z_p}\dirac \in R(\Z_p).
\end{equation*}
This ensures the existence of the map. The well-definedness is routine, so we omit it. Moreover, since $\alpha(\mathbf{n}(Y, \s, g, \tau)) = n(Y, \s, g)$ and $\alpha(\ind^t_{\Z_p}\dirac) = \ind^t \dirac$, this construction obviously recovers the usual $S^1$-equivariant Bauer--Furuta invariants when we forget the $\Z_p$-action. This completes the proof.
\end{proof}

Similarly, by combining the duality maps stated in \Cref{Vduality}, one can also treat a $4$-manifold $X$ with several boundary components. We state the result without proof:  

\begin{prop}\label{prop:equ_BF_mu}
Let $X$ be a $\Z_p$-equivariant $\mathrm{Spin}^c$ cobordism from $\bigsqcup_{1 \leq i \leq n} Y_i$ to $\bigsqcup_{1 \leq i \leq m} Y'_i$ satisfying the following conditions: 
\begin{itemize}
    \item $b_1(X) = 0$,
    \item $b_1(Y_i) = b_1(Y_i') = 0$,
    \item the $\Z_p$-action preserves each component $Y_i$ and $Y_i'$.
\end{itemize}

Associated with this, one has an $S^1 \times \Z_p$-equivariant map
\[
BF_{S^1 \times \Z_p}(X, \mathfrak{s}) \colon \ind^t_{\Z_p}\dirac^+ \wedge \bigwedge_{1 \leq i \leq n} SWF(Y_i, \mathfrak{s}, \tilde{\tau}) 
\longrightarrow H^+_{\Z_p}(X)^+ \wedge \bigwedge_{1 \leq i \leq m} SWF(Y'_i, \mathfrak{s}, \tilde{\tau})
\]
such that $BF_{S^1 \times \Z_p}^{S^1}$ is a $\Z_p$-homotopy equivalence, regarded as a well-defined morphism in the category $\mathcal{C}^{sp}_{S^1 \times \Z_p}$. 
\end{prop}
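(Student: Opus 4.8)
The plan is to deduce this proposition from the single--manifold case, \Cref{prop:eqBF}, by gluing in the duality morphisms of \Cref{Vduality} along the incoming ends. First I would reduce to the case in which $X$ is connected: if $X$ decomposes into connected components, each component is itself a $\Z_p$--equivariant $\mathrm{Spin}^c$ cobordism between the subsets of $\{Y_i\}$ and $\{Y'_j\}$ that it touches, and (since the $\Z_p$--action preserves each boundary component and $b_1$ of each component still vanishes) smashing together the maps obtained for the components yields the map for $X$. So I assume $X$ connected from now on.

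Next I would regard $X$ as a compact connected $\mathrm{Spin}^c$ $4$--manifold whose boundary is the disjoint union of rational homology spheres
\[
\partial X \;=\; \Big(\bigsqcup_{1\le i\le n}(-Y_i)\Big)\ \sqcup\ \Big(\bigsqcup_{1\le j\le m} Y'_j\Big),
\]
each carrying the restricted $\Z_p$--equivariant $\mathrm{Spin}^c$ structure, and run the construction from the proof of \Cref{prop:eqBF} with disconnected boundary. This goes through verbatim: one works on the global Coulomb slice of $\partial X$, which is the direct sum of the slices of the individual components, so that the relevant finite--dimensional Conley index factors as the smash product $\bigwedge_i I(-Y_i)\wedge\bigwedge_j I(Y'_j)$; properness of the monopole map with the fiberwise APS boundary conditions on several ends follows from the same Sobolev and compactness estimates applied componentwise; and the index/grading data produced are exactly $H^+_{\Z_p}(X)\in RO(\Z_p)$ and $\ind^t_{\Z_p}\dirac\in R(\Z_p)\otimes\Q$, the latter via \Cref{spinc} and \Cref{lem: rationality} together with the additivity of the eta-- and correction--term contributions over the boundary components. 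The upshot is an $S^1\times\Z_p$--equivariant morphism in $\mathcal{C}^{sp}_{S^1\times\Z_p}$
\[
\big(\ind^t_{\Z_p}\dirac\big)^+ \longrightarrow H^+_{\Z_p}(X)^+ \wedge \bigwedge_{1\le i\le n} SWF(-Y_i,\s,\tilde{\tau})\wedge\bigwedge_{1\le j\le m} SWF(Y'_j,\s,\tilde{\tau}),
\]
whose $S^1$--fixed--point map is a $\Z_p$--homotopy equivalence.

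I would then smash this map with $\bigwedge_{1\le i\le n} SWF(Y_i,\s,\tilde{\tau})$ and, using the symmetric monoidal structure of $\mathcal{C}^{sp}_{S^1\times\Z_p}$ to reorder factors, apply the duality morphisms $\eta_i\colon SWF(Y_i,\s,\tilde{\tau})\wedge SWF(-Y_i,\s,\tilde{\tau})\to S^0$ from \Cref{Vduality} to each pair. Contracting against a duality datum being a legitimate categorical operation, this produces exactly the claimed map
\[
BF_{S^1\times\Z_p}(X,\s)\colon \big(\ind^t_{\Z_p}\dirac\big)^+ \wedge \bigwedge_{1\le i\le n} SWF(Y_i,\s,\tilde{\tau})\longrightarrow H^+_{\Z_p}(X)^+ \wedge \bigwedge_{1\le j\le m} SWF(Y'_j,\s,\tilde{\tau}).
\]
The fixed--point statement then follows because the $S^1$--fixed--point functor is monoidal, sends each $SWF(\pm Y_i)$ to a $\Z_p$--representation sphere and each $\eta_i$ to the counit of the corresponding $\Z_p$--equivariant Spanier--Whitehead duality (an equivalence in the relevant stable sense), so $BF^{S^1}_{S^1\times\Z_p}$ is the composite of the $\Z_p$--homotopy equivalence of the previous step with these duality contractions. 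Well--definedness as a morphism in $\mathcal{C}^{sp}_{S^1\times\Z_p}$ follows from that of the disconnected--boundary $BF$ together with the fact that the $\eta_i$ are canonical up to $S^1\times\Z_p$--equivariant homotopy, and forgetting the $\Z_p$--action turns every ingredient into its $S^1$--counterpart, recovering $BF_{S^1}(X,\s)$.

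The main technical point, and the reason the authors only state the result, is the disconnected--boundary version of \Cref{prop:eqBF}. Conceptually nothing new enters — the analysis (properness of the APS monopole map, finite--dimensional approximation, identification of the linearization's cokernel with $H^+$) is carried out on each boundary component exactly as in the connected case. What must be handled with care is the grading arithmetic: the $\ker\diracpartial_{B_0}$ terms appearing in the correction--term contributions at the reversed ends $-Y_i$ have to cancel precisely against those coming from the factors $SWF(Y_i)$ when one contracts via $\eta_i$, which is exactly what the duality formula $\mathbf{n}(Y_i,\s,\tilde{\tau},g)+\mathbf{n}(-Y_i,\s,\tilde{\tau},g)=-\ker\diracpartial_{B_0}$ and \Cref{Vduality} guarantee. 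This bookkeeping, rather than any new phenomenon, is where the real work lies.
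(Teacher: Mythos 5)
The paper states this proposition ``without proof,'' indicating only that one should ``combine the duality maps stated in \Cref{Vduality},'' which is precisely what you do: first extend \Cref{prop:eqBF} to the (implicit, but routine) case of $X$ with several boundary components regarded as all outgoing, then contract the incoming $SWF(Y_i)\wedge SWF(-Y_i)$ pairs against the duality morphisms $\eta_i$. Your argument matches the authors' sketch, and the observation that the fixed-point claim is preserved because taking $S^1$-fixed points is monoidal and sends each $\eta_i$ to the counit of a $\Z_p$-equivariant Spanier--Whitehead duality is exactly the right bookkeeping.
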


\begin{rem}\label{rem:ofd_bf}
We record a remark related to orbifold Seiberg--Witten theory. As in \Cref{rem:ofd}, for a knot $K \subset S^3$, a prime $p$, and $[k]\in \Z_p$, we obtain an orbifold Seiberg--Witten Floer homotopy type 
\[
SWF^{(p), [k]}_{\mathrm{ofd}}(K)
\]
realized as a fixed-point spectrum.  

Suppose we have a properly embedded surface $S$ in $[0,1]\times S^3 \# X$ from $K$ to $K'$, where $X$ is a fixed oriented closed $4$--manifold, such that the homology class 
\[
[S] \in H_2([0,1]\times S^3 \# X, \partial; \mathbb{Z})
\]
is divisible by $p$, and that there is an invariant Spin$^c$ structure $\s$ on the $p$--fold cover $\Sigma_p(S)$. Then, from \Cref{prop:equ_BF_mu}, we obtain an $S^1 \times \Z_p$--equivariant map  
\[
BF_{S^1 \times \Z_p}(\Sigma_p(S), \mathfrak{s}) \colon \ind^t_{\Z_p}\dirac^+ \wedge  SWF_{S^1\times \Z_p}(\Sigma_p(K),\s_0) 
\longrightarrow H^+_{\Z_p}(\Sigma_p(S))^+ \wedge SWF_{S^1\times \Z_p}(\Sigma_p(K'),\s_0'). 
\]
For any $[k]\in \Z_p$, we may take the fixed-point part: 
\[
BF_{S^1 \times \Z_p}(\Sigma_p(S), \mathfrak{s})^{[k]} \colon (\ind^t_{\Z_p}\dirac^+)^{[k]} \wedge  SWF^{(p), [k]}_{\mathrm{ofd}}(K)
\longrightarrow H^+(X)^+ \wedge SWF^{(p), [k]}_{\mathrm{ofd}}(K'),
\]
which we call the \emph{orbifold Bauer--Furuta invariant} for $S$, denoted $BF^{(p), [k]}_{\mathrm{ofd}}(S, \s)$.  

If we restrict attention only to the fixed-point part of the theory, the divisibility condition on $[S]$ is not required. Indeed, for a surface $S \subset [0,1]\times S^3 \# X$, one obtains a corresponding $4$--orbifold with boundary and cone angle $2\pi/p$ for any prime $p$. For any orbifold Spin$^c$ structure $\s$ on this orbifold, we then obtain the corresponding $S^1$--equivariant Bauer--Furuta invariant $BF^{(p), [k]}_{\mathrm{ofd}}(S, \s)$.   For further discussion of orbifold Spin$^c$ structures and orbifold Seiberg--Witten theory with codimension-two singularities, see \cite{baldridge2001seiberg, chen2004pseudoholomorphic, chen2006smooth, chen2012seiberg, lebrun2015edges, chen2020class}. 
\end{rem}

Our goal for the rest of this section is to establish the $\Z_p$-equivariant adjunction relation stated in \Cref{prop: BF gluing}. 
We first state a general theorem.  

Suppose $\Z_p$-equivariant $4$-manifolds $X_1, X_2$, and $X_3$, possibly with several boundary components, admit a $\Z_p$-invariant $\mathrm{Spin}^c$ decomposition
\[
X_i = X_{i,1} \cup_{Y'} X_{i,2},
\]
cut along a rational homology $3$-sphere $Y'$ equipped with a $\Z_p$-invariant positive scalar curvature metric. 
We assume that
\[
\partial X_{i,j} \smallsetminus Y'
\]
is a disjoint union of $\Z_p$-equivariant $\mathrm{Spin}^c$ rational homology $3$-spheres. 
Define
\[
W_1 := X_{1,1} \cup_{Y'} X_{2,2}, \qquad 
W_2 := X_{2,1} \cup_{Y'} X_{3,2}, \qquad
W_3 := X_{3,1} \cup_{Y'} X_{1,2}.
\]
Suppose $b_1(X_{i,j}) = b_1(W_{i,j}) = 0$.  

In this situation, by following the strategy of \cite{bauer2004stable}, we obtain the following result:

\begin{prop} \label{general gluing} 
We have the equality 
\[
    BF_{S^1\times \Z_p}(X_1) \wedge BF_{S^1\times \Z_p}(X_2) \wedge BF_{S^1\times \Z_p}(X_3) =  
    BF_{S^1\times \Z_p}(W_1) \wedge BF_{S^1\times \Z_p}(W_2) \wedge BF_{S^1\times \Z_p}(W_3)   
\]
up to $S^1\times \Z_p$-equivariant stable homotopy, where $BF_{S^1\times \Z_p}(X_i)$ and $BF_{S^1\times \Z_p}(W_i)$ denote the $S^1\times \Z_p$-equivariant Bauer--Furuta invariants of the forms
\begin{align*}
BF_{S^1\times \Z_p}(X_i) &\colon (\ind^t \dirac_{X_i})^+ \longrightarrow (H^+(X_i))^+ \wedge SWF_{S^1\times \Z_p}(\partial X_i), \\
BF_{S^1\times \Z_p}(W_i) &\colon (\ind^t \dirac_{W_i})^+ \longrightarrow (H^+(W_i))^+ \wedge SWF_{S^1\times \Z_p}(\partial W_i).
\end{align*}
\end{prop}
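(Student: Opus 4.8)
The plan is to adapt Bauer's neck-stretching proof of the multiplicativity of Bauer--Furuta invariants \cite{bauer2004stable} to the bounded $\Z_p$-equivariant setting, the essential simplification being that $Y'$ carries a $\Z_p$-invariant positive scalar curvature metric. By the Lichnerowicz formula this forces $\ker \diracpartial_{B_0}=0$ on $Y'$, so by \Cref{PSC-vanish} the equivariant Floer homotopy types $SWF_{S^1\times\Z_p}(\pm Y',\s,\tilde\tau)$ are both the monoidal unit $(S^0,0,0)$ of $\mathcal{C}^{sp}_{S^1\times\Z_p}$, and the duality map $\eta$ of \Cref{Vduality} is the canonical identification $S^0\wedge S^0\cong S^0$. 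I will also use, as a topological input, the Mayer--Vietoris additivity $H^+(Z)\cong H^+(Z_A)\oplus H^+(Z_B)$ as $\Z_p$-representations for a $\Z_p$-invariant decomposition $Z=Z_A\cup_{Y'}Z_B$ (valid since $H^*(Y';\R)=0$), together with the corresponding additivity $\ind^t_{\Z_p}\dirac_Z=\ind^t_{\Z_p}\dirac_{Z_A}+\ind^t_{\Z_p}\dirac_{Z_B}$ of the topological Dirac index (where $\ker\diracpartial_{B_0}=0$ enters through the correction-term duality formula).

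First I would record, for each of the six pieces $X_{i,j}$, the relative invariant $BF_{S^1\times\Z_p}(X_{i,j})$ supplied by \Cref{prop:equ_BF_mu}, viewing $X_{i,j}$ as a $\Z_p$-equivariant $\mathrm{Spin}^c$ cobordism whose boundary is a disjoint union of $\Z_p$-equivariant $\mathrm{Spin}^c$ rational homology spheres, one of which is a copy of $\pm Y'$. The crux is then a relative gluing lemma: for a $\Z_p$-invariant $\mathrm{Spin}^c$ decomposition $Z=Z_A\cup_{Y'}Z_B$ with $b_1(Z_A)=b_1(Z_B)=0$, the invariant $BF_{S^1\times\Z_p}(Z)$ is $S^1\times\Z_p$-equivariantly stably homotopic to the composite of $BF_{S^1\times\Z_p}(Z_A)\wedge BF_{S^1\times\Z_p}(Z_B)$ with the map $\eta$ contracting the $SWF_{S^1\times\Z_p}(Y')\wedge SWF_{S^1\times\Z_p}(-Y')$ factor. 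This is proved by inserting a long $\Z_p$-invariant neck $[-T,T]\times Y'$: for $T\gg 0$, after choosing $\Z_p$-invariant spectral cutoffs and finite-dimensional approximating subspaces of $i\Omega^+\oplus\Gamma(S^-)$ simultaneously and compatibly for $Z$, $Z_A$ and $Z_B$, the approximate monopole map on $Z$ is equivariantly homotopic to the fiber product of the approximate monopole maps of $Z_A$ and $Z_B$ over the global Coulomb slice of $Y'$; the required compactness and exponential-decay estimates are those of \cite{bauer2004stable}, carried out $\Z_p$-equivariantly exactly as in \cite{KT22, KPT24}, and the decay on the neck is where the PSC hypothesis is genuinely used.

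With the gluing lemma in hand, the PSC assumption collapses $\eta$ to the identity of the monoidal unit, so gluing two pieces along $Y'$ merely smashes their relative invariants (and adds the $\ind^t_{\Z_p}\dirac$'s and the $H^+$'s). Applying this to the three decompositions $X_i=X_{i,1}\cup_{Y'}X_{i,2}$ gives
\[
BF_{S^1\times\Z_p}(X_1)\wedge BF_{S^1\times\Z_p}(X_2)\wedge BF_{S^1\times\Z_p}(X_3)\;\simeq\;\bigwedge_{i=1}^{3}\bigwedge_{j=1}^{2}BF_{S^1\times\Z_p}(X_{i,j}),
\]
and applying it to $W_1=X_{1,1}\cup_{Y'}X_{2,2}$, $W_2=X_{2,1}\cup_{Y'}X_{3,2}$, $W_3=X_{3,1}\cup_{Y'}X_{1,2}$ gives the smash product of exactly the same six factors in a permuted order; the two domains both equal $\big(\sum_{i,j}\ind^t_{\Z_p}\dirac_{X_{i,j}}\big)^+$ and the two codomains both equal $\big(\bigoplus_{i,j}H^+(X_{i,j})\big)^+\wedge SWF_{S^1\times\Z_p}\!\big(\bigsqcup_{i,j}(\partial X_{i,j}\smallsetminus Y')\big)$. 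Since $\mathcal{C}^{sp}_{S^1\times\Z_p}$ is symmetric monoidal the reordering isomorphism is canonical, so the two composites coincide up to $S^1\times\Z_p$-equivariant stable homotopy, which is the assertion.

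The hard part will be the relative gluing lemma of the second paragraph: choosing the eigenvalue cutoffs and approximating subspaces $\Z_p$-equivariantly and compatibly across the three manifolds, and proving that the long-neck approximate monopole map splices to the product map. This is where the positive scalar curvature of $Y'$ is indispensable — it forces the boundary restrictions to concentrate near the unique reducible and makes the linearized Seiberg--Witten flow on $[-T,T]\times Y'$ exponentially contracting — so that Bauer's splicing homotopy goes through, now $\Z_p$-equivariantly because all metrics, connections, and cutoffs are chosen invariant. As the companion \Cref{prop:equ_BF_mu} is itself stated without proof, it should suffice to isolate these three points (equivariant compatible approximation, neck splicing via PSC decay, and equivariance of each step) and refer to \cite{bauer2004stable, KT22, KPT24} for the analytic details.
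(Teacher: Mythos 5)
Your proposal takes a genuinely different route from the paper. The paper never proves a relative gluing lemma: it forms the two disjoint unions $X(L)=X_1\sqcup X_2\sqcup X_3$ and $W(L)=W_1\sqcup W_2\sqcup W_3$ (each piece carrying a long neck at $Y'$), writes a single monopole map for each, and compares them directly via a chain of three $\Z_p$-equivariant homotopies, the third of which cyclically permutes the necks using a path $\psi\colon[0,1]\to SO(3)$ from the identity to an even permutation matrix together with a cutoff function $\gamma$. Both sides of the comparison then have literally the same domain and codomain, so Bauer's compactness and neck-decay estimates apply without needing to identify the glued invariant with a product of pieces. That is Bauer's ``wedge sum'' trick, and the paper's remark after the statement of \Cref{general gluing} is explicit that this is a deliberately \emph{shorter} route than the one you propose.

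You instead factor the argument through a relative gluing lemma, asserting that for a $\Z_p$-invariant decomposition $Z=Z_A\cup_{Y'}Z_B$ one has $BF_{S^1\times\Z_p}(Z)\simeq\eta\circ\big(BF_{S^1\times\Z_p}(Z_A)\wedge BF_{S^1\times\Z_p}(Z_B)\big)$, and then derive the statement by applying this six times and invoking symmetric monoidality. The derivation in your final paragraph is fine; the difficulty is the lemma itself. It is a strictly stronger statement than \Cref{general gluing}, of the type the paper explicitly says it \emph{expects} but chooses not to prove, precisely because doing so would require the machinery of Manolescu's or Khandhawit--Lin--Sasahira's gluing theorems rather than Bauer's argument. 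Your claim that ``the required compactness and exponential-decay estimates are those of \cite{bauer2004stable}'' is the gap: Bauer's estimates support the permutation homotopy between two six-fold monopole maps with identical source and target, but they do not by themselves show that a \emph{single} approximate monopole map on $Z$ factors (through the duality pairing $\eta$) as the smash product of the approximate monopole maps on $Z_A$ and $Z_B$. That identification requires tracking how the finite-dimensional approximating subspaces, the index pairs, and the Conley indices decouple across the neck, which is the analytic heart of \cite{manolescu2007gluing, khandhawit2023unfolded} and not something Bauer's estimates give for free, even under PSC. The PSC hypothesis does collapse $SWF(\pm Y')$ and $\eta$ to the monoidal unit (your use of \Cref{PSC-vanish} and \Cref{Vduality} here is correct), but that only tells you what the answer should look like; it does not supply the gluing theorem. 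To close your argument you would either have to carry out the $\Z_p$-equivariant gluing analysis in full, or abandon the gluing lemma and adopt the permutation homotopy, which is what the paper does.
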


\begin{rem}
We expect that a general gluing formula should hold in a general situation without assuming $Y'$ admits a $\Z_p$-invariant positive scalar curvature metric, following the techniques of Manolescu~\cite{manolescu2007gluing} and Khandhawit--Lin--Sasahira~\cite{khandhawit2023unfolded}. For our application, however, \Cref{prop: BF gluing} is sufficient. We will prove this proposition by using a $\Z_p$-equivariant version of Bauer's gluing technique~\cite{bauer2004stable}, which yields a shorter proof than that in \cite{manolescu2007gluing, khandhawit2023unfolded}. Note that this wedge sum formula is proven in \cite{bauer2004stable} in the case $Y'=S^3$ without $\Z_p$-action. The key ingredient of Bauer's argument is the existence of a positive scalar curvature metric on $S^3$. In our setting, we replace it with a $\Z_p$-invariant positive scalar curvature metric on $Y'$. 
\end{rem}

\begin{proof}[Proof of \Cref{general gluing}]
We give a sketch of the proof following \cite{bauer2004stable}. The presence of the double Coulomb gauge conditions, which are not considered in \cite{bauer2004stable}, requires one additional homotopy, as in \cite{khandhawit2023unfolded}.

\noindent {\bf \underline{Step 1}: }
We consider families of Riemannian 4-manifolds $g_L$ and $g'_L$ on
\begin{align*}
X(L) &= X_{1,1} \cup [-L, L]\times Y' \cup X_{1,2} 
      \sqcup X_{2,1} \cup [-L, L]\times Y' \cup X_{2,2} 
      \sqcup X_{3,1} \cup [-L, L]\times Y' \cup X_{3,2}, \\ 
W(L) &= X_{1,1} \cup [-L, L]\times Y' \cup X_{2,2} 
      \sqcup X_{2,1} \cup [-L, L]\times Y' \cup X_{3,2}  
      \sqcup X_{3,1} \cup [-L, L]\times Y' \cup X_{1,2},
\end{align*}
satisfying the following conditions:
\begin{itemize}
    \item $g_L$ and $g'_L$ restrict to $dt^2 + g_{Y'}$ on the components $[-L, L]\times Y'$, 
    \item $g_L$ and $g'_L$ are product metrics near the boundaries of $X(L)$ and $W(L)$, 
    \item $g_L|_{X_{i,j}}$ and $g'_L|_{X_{i,j}}$ are independent of $L$. 
\end{itemize}
Take $\Z_p$-equivariant $\mathrm{Spin}^c$ connections $A_X$ and $A_W$ on $X(L)$ and $W(L)$ that are flat near the boundaries and on each $[-L, L]\times Y'$.  
We further require $A_X|_{[-L, L]\times Y'} = A_W|_{[-L, L]\times Y'}$ as connections.

\noindent {\bf \underline{Step 2}: }  
We first move the global slice condition to the Seiberg--Witten map. 
Consider the Seiberg--Witten equations on $X(L)$ and $W(L)$:  
\begin{equation}\label{SWmapZ_p-equ}
\begin{aligned}
  \mathcal{F}_{X(L)} + \pr &\colon (A_X + L^2_k(i \Lambda_{X(L)}^1)_{CC}) \times L^2_k(S^+) 
     \longrightarrow i L^2_{k-1}(\Lambda_{X(L)}^+) \times L^2_{k-1}(S^-) \times V(\partial X(L))^\mu_{-\infty}, \\
  \mathcal{F}_{W(L)} + \pr &\colon (A_W + L^2_k(i \Lambda_{W(L)}^1)_{CC}) \times L^2_k(S^+) 
     \longrightarrow i L^2_{k-1}(\Lambda_{W(L)}^+) \times L^2_{k-1}(S^-) \times V(\partial W(L))^\mu_{-\infty}.
\end{aligned}
\end{equation}
where $\mathcal{F}+ \pr$ denotes the Seiberg--Witten map defined by 
\[
(A, \Phi) \longmapsto \left( \rho(F^+(A)) - (\Phi, \Phi)_0,\ \dirac_{A}(\Phi),\ p^{\mu}_{-\infty} \circ r(A, \Phi) \right).  
\]
Here $\rho$ is the Clifford multiplication, $F^+(A)$ is the curvature of the $\mathrm{Spin}^c$ connection, $(\Phi, \Phi)_0$ is the traceless part of $\Phi \otimes \Phi^*$, $\dirac_A$ is the $\mathrm{Spin}^c$ Dirac operator, and $$r \colon (A_X + L^2_k(i \Lambda_{X(L)}^1)_{CC}) \times L^2_k(S^+) \longrightarrow V(\partial X(L))$$ is the restriction map to the global slice $V(\partial X(L))$ of the configuration space of $\partial X(L)$. 
The space $L^2_k(i \Lambda_{X(L)}^1)_{CC}$ denotes the double Coulomb sliced 1-forms, and we use the same notation for $W(L)$. 
Note that $\partial X(L)$ and $\partial W(L)$ are disjoint unions of $\Z_p$-equivariant $\mathrm{Spin}^c$ rational homology 3-spheres, independent of $L$. 
If we write $\partial X(L) = \sqcup_i Y_i$, we set 
\[
V(\partial X(L)) := V(Y_1) \times \cdots \times V(Y_n),
\]
where each $V(Y_i)$ is the usual global slice $i \ker d^* \times \Gamma(S_{Y_i})$. 
We use the $L^2_{k-\frac{1}{2}}$-completion for $V(\partial X(L))^\mu_{-\infty}$. 
These maps are $S^1 \times \Z_p$-equivariant.  

By the argument in \cite{khandhawit2023unfolded}, in this step we claim that these maps are {\it $\Z_p$-equivariantly c-stably homotopic} to  \begin{equation}\label{d*map}
\begin{aligned}
\mathcal{F}_{X(L)} + d^* + \pr \;&\colon (A_X + L^2_k(i \Lambda_{X(L)}^1)_{C}) \times L^2_k(S^+) \\
&\longrightarrow i L^2_{k-1}(\Lambda_{X(L)}^0 \oplus \Lambda_{X(L)}^+) 
\times L^2_{k-1}(S^-) \times V(\partial X(L))^\mu_{-\infty}, \\[6pt]
\mathcal{F}_{W(L)} + d^* + \pr \;&\colon (A_W + L^2_k(i \Omega_{W(L)}^1)_{C}) \times L^2_k(S^+) \\
&\longrightarrow i L^2_{k-1}(\Lambda_{W(L)}^0 \oplus \Lambda_{W(L)}^+) 
\times L^2_{k-1}(S^-) \times V(\partial W(L))^\mu_{-\infty},
\end{aligned}
\end{equation}
which are defined by  
\[
(A, \Phi) \longmapsto \bigl( d^*(A-A_0),\ \rho(F^+(A)) - (\Phi, \Phi)_0,\ \dirac_{A}(\Phi),\ p^\mu_{-\infty} \circ r(A, \Phi) \bigr),
\]
where $A_0$ is either $A_X$ or $A_W$, and $(\Omega_{X(L)}^1)_{C}$, $(\Omega_{W(L)}^1)_{C}$ denote the spaces of $1$-forms satisfying $d^*(\omega|_{\partial X(L)})=0$ and $d^*(\omega|_{\partial W(L)})=0$, respectively. 
Again, these maps are $S^1 \times \Z_p$-equivariant.  

We review the definitions of $\Z_p$-equivariant c-stably homotopic maps below. 

Let $E_i$ \((i = 1, 2)\) be Hilbert spaces with $\Z_p$-actions.  
We denote by $\|\cdot\|_i$ the norm of $E_i$.  
Let $\bar{E}_1$ be the completion of $E_1$ with respect to a weaker norm,  
which we denote by $|\cdot|_1$.  
We also assume that for any bounded sequence $\{x_n\}$ in $E_1$,  
there exists $x_\infty \in E_1$ such that, after passing to a subsequence, we have:
\begin{itemize}
  \item $\{x_n\}$ converges to $x_\infty$ weakly in $E_1$,
  \item $\{x_n\}$ converges to $x_\infty$ strongly in $\bar{E}_1$.
\end{itemize}
A pair $L, C \colon E_1 \to E_2$ of bounded $\Z_p$-equivariant continuous maps  
is called an \emph{admissible pair} if $C$ extends to a continuous map  
$\bar{C} \colon \bar{E}_1 \to E_2$.

\begin{defn}
Let $(L, C)$ be an admissible pair from $E_1$ to $E_2$, and let
$r \colon E_1 \to V$ be a bounded $\Z_p$-equivariant linear map,  
where $V$ is one of $V(\partial X(L))$ or $V(\partial W(L))$,  
equipped with the vector fields appearing as the formal gradient $l+c$ of the Chern--Simons--Dirac functionals.  
We call $(L, C, r)$ a \emph{$\Z_p$-equivariant SWC-triple} if the following conditions are satisfied:
\begin{enumerate}
  \item The map 
  \[
  L \oplus (p^{0}_{-\infty} \circ r) \colon E_1 \longrightarrow E_2 \oplus V^{0}_{-\infty}
  \]
  is Fredholm.
  \item There exists $M' > 0$ such that for any $x \in E_1$ satisfying $(L + C)(x) = 0$ and
    a half-trajectory of finite type $\gamma \colon (-\infty, 0] \to V$ with respect to $l+c$,  
    with $r(x) = \gamma(0)$, we have 
    \[
    \|x\|_1 < M' 
    \qquad \text{and} \qquad
    \|\gamma(t)\| < M'
    \]
    for any $t \geq 0$.
\end{enumerate}

Two $\Z_p$-equivariant SWC-triples $(L_i, C_i, r_i)$ for $i=0,1$ (with the same domain and target)
are called \emph{$\Z_p$-equivariantly c-homotopic} if there is a homotopy between them through
a continuous family of $\Z_p$-equivariant SWC-triples with a uniform constant $M'$.

Two $\Z_p$-equivariant SWC-triples $(L_i, C_i, r_i)$ for $i=0,1$ (with possibly different domains and
targets) are called \emph{$\Z_p$-equivariantly stably c-homotopic} if there exist $\Z_p$-equivariant Hilbert spaces $E_3, E_4$
such that
$
\bigl((L_1 \oplus \mathrm{id}_{E_3},\ C_1 \oplus 0_{E_3}),\ r_1 \oplus 0_{E_3}\bigr)
$
is c-homotopic to
$
\bigl((L_2 \oplus \mathrm{id}_{E_4},\ C_2 \oplus 0_{E_4}),\ r_2 \oplus 0_{E_4}\bigr).
$
\end{defn}

With these definitions, one can see the following lemma, which is a direct consequence of \cite[Lemma~6.13]{khandhawit2023unfolded}: 

\begin{lem}\label{key_cloumb}
Let $(L, C)$ be a $\Z_p$-equivariant admissible pair from $E_1$ to $E_2$, and let $r \colon E_1 \to V$
be a $\Z_p$-equivariant linear map. Suppose that we have
a surjective $\Z_p$-equivariant linear map $g \colon E_1 \to E_3$. 

Then the triple
\[
(L \oplus g,\ C \oplus 0_{E_3},\ r)
\]
is a $\Z_p$-equivariant SWC-triple if and only if the triple
\[
\bigl(L|_{\ker g},\ C|_{\ker g},\ r|_{\ker g}\bigr)
\]
is a $\Z_p$-equivariant SWC-triple. In this case, the two triples are $\Z_p$-equivariantly stably c-homotopic to each other.
\end{lem}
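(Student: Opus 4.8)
The plan is to reduce the statement to its non-equivariant counterpart \cite[Lemma~6.13]{khandhawit2023unfolded} by checking that every construction used there can be carried out $\Z_p$-equivariantly; the only auxiliary choice that enters is a complement of $\ker g$ inside $E_1$, so the key preliminary observation is the following. Since $\Z_p$ is a finite group acting on $E_1$ by bounded isomorphisms, we may average the Hilbert inner product over $\Z_p$ so that $\Z_p$ acts by isometries. Then $\ker g$ is a $\Z_p$-invariant closed subspace, its orthogonal complement $(\ker g)^\perp$ is again $\Z_p$-invariant, and the restriction $g|_{(\ker g)^\perp}\colon (\ker g)^\perp \to E_3$ is a $\Z_p$-equivariant bounded linear isomorphism with bounded inverse by the open mapping theorem. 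Write $P\colon E_1 \to \ker g$ and $Q = \mathrm{id}-P$ for the associated $\Z_p$-equivariant orthogonal projections; all the operators obtained below by restricting or projecting $L$, $C$, $r$ are $\Z_p$-equivariant because $L$, $C$, $r$, $g$ and this splitting are.

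First I would establish the ``if and only if''. Using the $\Z_p$-invariant splitting $E_1 = \ker g \oplus (\ker g)^\perp$, decompose $L$ into the blocks $LP$ and $LQ$, and similarly $r$. For condition~(1), the operator
\[
(L\oplus g)\oplus \bigl(p^{0}_{-\infty}\circ r\bigr)\colon E_1 \longrightarrow E_2 \oplus E_3 \oplus V^{0}_{-\infty}
\]
is transformed, by elementary $\Z_p$-equivariant block operations using the invertibility of $g|_{(\ker g)^\perp}$ (pre- and post-composition with $\Z_p$-equivariant bounded isomorphisms built from $g|_{(\ker g)^\perp}$, $LQ$ and $p^{0}_{-\infty}\circ r\circ Q$), into $\bigl(L|_{\ker g}\bigr)\oplus \bigl(p^{0}_{-\infty}\circ r|_{\ker g}\bigr)\oplus \mathrm{id}_{(\ker g)^\perp}$; hence it is Fredholm if and only if $L|_{\ker g}\oplus (p^{0}_{-\infty}\circ r|_{\ker g})$ is, with equal index. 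For condition~(2), note that $\bigl((L\oplus g)+(C\oplus 0_{E_3})\bigr)(x)=0$ forces $g(x)=0$, so $x\in\ker g$ with $(L|_{\ker g}+C|_{\ker g})(x)=0$ and $r(x)=r|_{\ker g}(x)$; thus the two triples have literally the same set of pairs $(x,\gamma)$ subject to the bound, and a uniform constant $M'$ for one is a uniform constant for the other. This gives the equivalence of being a $\Z_p$-equivariant SWC-triple.

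Next I would construct the $\Z_p$-equivariant stable c-homotopy. Take $F_1=0$ as stabilizing space for $(L\oplus g,C\oplus 0_{E_3},r)$ and $F_2=(\ker g)^\perp$ for $(L|_{\ker g},C|_{\ker g},r|_{\ker g})$; after the $\Z_p$-equivariant identifications $E_1\cong \ker g\oplus(\ker g)^\perp$ and $E_3\cong(\ker g)^\perp$ (via $g|_{(\ker g)^\perp}$) the two stabilized triples have the same domain $\ker g\oplus(\ker g)^\perp$ and target $E_2\oplus(\ker g)^\perp$. Following \cite{khandhawit2023unfolded}, connect them by the linear homotopy $L_t := LP + t\,LQ$ ($t$ from $1$ to $0$) together with the induced homotopies of the nonlinear part $C_t$ and of the restriction map $r_t$, obtained by damping the dependence on the $(\ker g)^\perp$-component. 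Each $(L_t,C_t,r_t)$ is $\Z_p$-equivariant since $P$, $Q$, $g|_{(\ker g)^\perp}$ and the identifications are, and Fredholmness holds for every $t$ by the same block reduction as above because $g|_{(\ker g)^\perp}$ remains invertible. The existence of a uniform constant $M'$ along this family and the verification of admissibility (the weak-to-strong convergence hypothesis) are exactly the content of \cite[Lemma~6.13]{khandhawit2023unfolded}: forgetting the $\Z_p$-action sends our homotopy precisely to theirs, and $\Z_p$-equivariance is an extra property of the maps, not a constraint affecting the estimates.

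The main obstacle is therefore purely organizational: one must confirm that no auxiliary choice in the argument of \cite{khandhawit2023unfolded} (the complement of $\ker g$, together with any finite-dimensional truncations used there) obstructs $\Z_p$-equivariance. This is automatic because $\Z_p$ is finite, so invariant complements and invariant finite-dimensional approximations always exist by averaging; once these choices are fixed equivariantly, every subsequent map is manifestly $\Z_p$-equivariant and all analytic estimates are inherited verbatim from the non-equivariant case, which completes the proof.
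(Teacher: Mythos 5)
Your argument is correct and follows the same route the paper takes: the paper offers no written proof, simply asserting the lemma is a direct consequence of \cite[Lemma~6.13]{khandhawit2023unfolded}, and your proposal supplies exactly the missing details (averaging the inner product so that the $\Z_p$-invariant complement $(\ker g)^{\perp}$ exists, the block reduction for Fredholmness, the observation that $g(x)=0$ forces solutions into $\ker g$ so the bound $M'$ is shared, and the damping homotopy realizing the stable c-homotopy after stabilizing by $(\ker g)^{\perp}\cong E_3$).
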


We now put 
\[
E_1 = (A_X + L^2_k(i \Lambda_{X(L)}^1)_{CC}) \times L^2_k (S^+ ), \quad E_2 = i L^2_{k-1} ( \Lambda_{X(L)}^+)  \times L^2_{k-1} (S^-), \quad V= V(\partial X(L))
\]
and 
\[
E_3 = L^2_{k-1} ( \Lambda_{X(L)}^0) , \quad g= d^* :  (A_X + L^2_k(i \Lambda_{X(L)}^1)_{CC}) \times L^2_k (S^+ ) \longrightarrow L^2_{k-1} ( \Lambda_{X(L)}^0). 
\]
The maps $L$ and $C$ are 
\[
L =( \rho (d^+) , \dirac_{A_0}), C= \mathcal{F} - L. 
\]
Then, one can see that all assumptions of \Cref{key_cloumb} are satisfied. 
By applying \Cref{key_cloumb}, we see \eqref{SWmapZ_p-equ} and \eqref{d*map} are $\Z_p$-equivariant stably c-homotopic to each other. 
Moreover, if two such maps are $\Z_p$-equivariant stably c-homotopic, one can see that the corresponding  $\Z_p$-equvariant Bauer--Furuta invariants are also stably $S^1\times \Z_p$-equivariantly homotopic. This completes Step 2. 

Before proceeding to Step~3, we list the notations that will be used: 
\begin{align*}
Z(L) &= X(L) \sqcup W(L), \\[4pt]
\mathcal{U}_{k}(X(L)) &= (A_X + L^2_k(i\Lambda_{X(L)}^1)_{C}) \times L^2_k(S^+_{X(L)}), \\ 
\mathcal{V}_{k}(X(L)) &= i L^2_{k-1}(\Lambda_{X(L)}^0 \oplus \Lambda_{X(L)}^+) 
\times L^2_{k-1}(S^-) \times V(\partial X(L))^\mu_{-\infty}, \\[6pt]
\mathcal{U}_{k}(W(L)) &= (A_W + L^2_k(i\Lambda_{W(L)}^1)_{C}) \times L^2_k(S^+_{W(L)}), \\ 
\mathcal{V}_{k}(W(L)) &= i L^2_{k-1}(\Lambda_{W(L)}^0 \oplus \Lambda_{W(L)}^+) 
\times L^2_{k-1}(S^-) \times V(\partial W(L))^\mu_{-\infty}, \\[6pt]
\mathcal{U}_{k}(Z(L)) &= \mathcal{U}_{k}(X(L)) \times \mathcal{U}_{k}(W(L)), \\
\mathcal{V}_{k}(Z(L)) &= \mathcal{V}_{k}(X(L)) \times \mathcal{V}_{k}(W(L)).
\end{align*}

\noindent{\bf \underline{Step 3}:} We identify the domain and codomain of the Seiberg--Witten maps for the permuted 4-manifolds. To compare the Seiberg--Witten maps 
\begin{align*}\label{d*map1}
\mathcal{F}_{X(L)} + d^* + \pr &\colon \mathcal{U}_{k}(X(L)) \longrightarrow \mathcal{V}_{k}(X(L)), \\[4pt]
\mathcal{F}_{W(L)} + d^* + \pr &\colon \mathcal{U}_{k}(W(L)) \longrightarrow \mathcal{V}_{k}(W(L)), 
\end{align*} 
we introduce gluing maps
\begin{align*}
V^D &\colon \mathcal{U}_{k}(X(L)) \longrightarrow \mathcal{U}_{k}(W(L)), \\ 
V^C &\colon \mathcal{V}_{k}(X(L)) \longrightarrow \mathcal{V}_{k}(W(L)),
\end{align*}
which are isomorphisms of Hilbert spaces. To define these maps, we first choose a smooth path
\[
\psi \colon [0,1] \longrightarrow SO(3),
\]
starting at the identity, i.e.\ \(\psi(0) = \mathrm{id}\), and ending at the even permutation, represented by the permutation matrix
\[
\begin{pmatrix}
0 & 1 & 0 \\
0 & 0 & 1 \\
1 & 0 & 0
\end{pmatrix}.
\]

A second ingredient in the construction is a smooth function
\[
\gamma \colon [-L, L] \times Y' \longrightarrow [0,1],
\]
depending only on the first variable. This function \(\gamma\) is chosen so that it vanishes on the \([-L, -1]\)-part of the neck and is identically \(1\) on the \([1, L]\)-part.
Since the restricted equivariant $\mathrm{Spin}^c$ structures on the necks are isomorphic, this homotopy applied to trivializations of bundles gives identifications
\[
\Lambda^*_{X(L)} \xrightarrow{\ \sim\ } \Lambda^*_{W(L)}, 
\qquad
S_{X(L)} \xrightarrow{\ \sim\ } S_{W(L)}.
\]

This gluing construction, applied to forms \(A\) and spinors \(\Phi\) on \(Z(L)\), defines a linear map, for which we use the shorthand notation
\[
V_s \colon (A, \Phi) \longmapsto (\psi(s) \circ \gamma) \cdot (A, \Phi).
\]
All of these isomorphisms will be denoted collectively by \(V\).
They give families of linear isomorphisms
\[
L^2_k(\Lambda^*_{Z(L)}) \xrightarrow{V_s} L^2_k(\Lambda^*_{Z(L)}), 
\qquad
L^2_k(S_{Z(L)}) \xrightarrow{V_s} L^2_k(S_{Z(L)}),
\]
such that \(V_0 = \mathrm{id}\) and \(V_1\) gives the identifications
\[
L^2_k(\Lambda^*_{X(L)}) \cong L^2_k(\Lambda^*_{W(L)}),
\qquad
L^2_k(S_{X(L)}) \cong L^2_k(S_{W(L)}).
\]
Therefore, applying these to our configuration spaces, we obtain families of automorphisms
\begin{align*}
V^D_s &\colon \mathcal{U}_{k}(Z(L)) \longrightarrow \mathcal{U}_{k}(Z(L)), \\ 
V^C_s &\colon \mathcal{V}_{k}(Z(L)) \longrightarrow \mathcal{V}_{k}(Z(L)),
\end{align*}
such that \(V^D_0\) and \(V^C_0\) are the identities, while 
\(V^D_1 = V^D\) and \(V^C_1 = V^C\) give the identifications
\begin{align*}
V^D &\colon \mathcal{U}_{k}(X(L)) \longrightarrow \mathcal{U}_{k}(W(L)), \\ 
V^C &\colon \mathcal{V}_{k}(X(L)) \longrightarrow \mathcal{V}_{k}(W(L)).
\end{align*}

\noindent{\bf \underline{Step 4}:} In this step, we give three homotopies corresponding to \cite{bauer2004stable}. 
We consider 
\[
\mathcal{F}_{X(L)} + d^* + \pr \ \sqcup\ \mathcal{F}_{W(L)} + d^* + \pr
\]
as a map
\begin{align*}
\mathcal{F}_{Z(L)} + d^* + \pr \;&\colon \mathcal{U}_{k}(Z(L)) \longrightarrow \mathcal{V}_{k}(Z(L)).
\end{align*}
Denote by \(Y\) the boundary \(\partial Z(L)\), which is a disjoint union of $\Z_p$-equivariant $\mathrm{Spin}^c$ rational homology 3-spheres, and let \(A_0 = A_X \sqcup A_W\).

For \(1 \leq R \leq L\), let \(\beta_R\) be a cut-off function
\[
\beta_R \colon Z(L) \longrightarrow [0,1]
\]
such that
\begin{itemize}
    \item \(\beta_R \equiv 0\) on \(Z(L) \smallsetminus ([-R+1, R-1] \times Y')\),
    \item \(\beta_R \equiv 1\) on \([-R, R] \times Y'\),
    \item \(\beta_R\) depends only on the \([-L, L]\)-coordinate.
\end{itemize}Set
\[
\beta_{s,R} = (1 - s) + s \beta_R, \qquad s \in [0,1].
\]
We shall use the decomposition of $\mathcal{F}$ into the sum $L + C$, where 
\[
L = (\dirac_{A_0}^+,\, d^+).
\]
Consider the following three types of deformations:

\begin{enumerate}
    \item The first homotopy is defined by
    \[
    \mathcal{F}_s^{(1)} \colon \mathcal{U}_k(Z(L)) \longrightarrow \mathcal{V}_k(Z(L)), \qquad s \in [0,1],
    \]
    where
    \[
    C_s^{(1)} 
    \begin{pmatrix} 
        a = A - A_0 \\[4pt] 
        \Phi  
    \end{pmatrix}
    =
    \begin{bmatrix}
        -\beta_{L,s} \cdot \rho^{-1}\!\left((\Phi \Phi^*)_0\right) \\[6pt]
        \rho(a)\Phi
    \end{bmatrix}.
    \]

    \item The second homotopy is defined by
    \[
    \mathcal{F}_s^{(2)} \colon \mathcal{U}_k(Z(L)) \longrightarrow \mathcal{V}_k(Z(L)), \qquad s \in [0,1],
    \]
    where
    \[
    L_s^{(2)} 
    \begin{pmatrix} 
        a = A - A_0 \\[4pt] 
        \Phi  
    \end{pmatrix}
    =
    \begin{bmatrix}
        d^+ a \\[6pt]
        D^+_{A_0}\Phi + \rho(\beta_{L,s} a)\Phi
    \end{bmatrix}.
    \]

    \item The third homotopy is defined by
    \[
    \mathcal{F}_s^{(3)} 
    = (V_s^C)^{-1} \circ \bigl(\mathcal{F}_1^{(2)} + d^* + \pr \bigr) \circ V_s^D 
    \colon \mathcal{U}_k(Z(L)) \longrightarrow \mathcal{V}_k(Z(L)), \qquad s \in [0,1],
    \]
    on the necks, and is extended in the obvious way over \(Z(T)\).
\end{enumerate}

\noindent
Note that we do not touch the projection $\pr$ to the 3-dimensional slice, nor the $d^*$-component, while performing these homotopies. From the construction, it is clear that each deformation $\mathcal{F}_s^{(i)}$ is $\Z_p$-equivariant.
 
For taking finite-dimensional approximations of the above homotopies, it is convenient to introduce the following terminology.  
For $s \in [0,1]$ and $i \in \{1,2,3\}$, we call a pair 
\[
(x, y) \in \mathcal{U}_{k}(Z(L)) \times L^2_k\!\left(i\Lambda^1_{\R_{\geq 0} \times Y} \oplus S^+_{\R_{\geq 0} \times Y}\right)
\]
an \emph{$\mathcal{F}_s^{(i)}$--$Z(L)$-trajectory} if the following conditions are satisfied:
\begin{itemize}
    \item[(i)] The element $x$ is a solution of the deformed Seiberg--Witten equation
    \[
    \mathcal{F}_s^{(i)}(x) = 0
    \]
    on $Z(L)$.
    
    \item[(ii)] The element $y$ is a solution of the Seiberg--Witten equation on $\R_{\geq 0} \times Y$.
    
    \item[(iii)] The element $y$ is in temporal gauge, that is, for each $t$,
    \[
    d^* b(t) = 0,
    \]
    where $y(t) = (b(t), \psi(t))$, and $y$ is of finite type.
    
    \item[(iv)] The boundary values match:
    \[
    x|_{\partial X(L)} = y(0).
    \]
\end{itemize}

\noindent{\bf \underline{Step 5}:} The following estimates will be applied in order to obtain the required homotopies.

\begin{enumerate}
    \item There exist constants $L_1$ and $R^{(1)}$ such that for any $s \in [0,1]$, $L \geq L_1$, and any $\mathcal{F}_s^{(1)}$--$Z(L)$-trajectory, we have 
    \[
    \|x\|_{L^2_{k}} < R^{(1)}, 
    \qquad 
    \|y(t)\|_{L^2_{k-\tfrac{1}{2}}} < R^{(1)} 
    \quad (\forall t \leq 0). 
    \]

    \item There exist constants $L_2$ and $R^{(2)}$ such that if $L \geq L_2$, the following holds on $Z(L)$:  
    for any $s \in [0,1]$ and any $\mathcal{F}_s^{(2)}$--$Z(L)$-trajectory with 
    \[
    \|x\|_{L^2_{k}} < 2R^{(2)}, 
    \qquad 
    \|y(t)\|_{L^2_{k-\tfrac{1}{2}}} < 2R^{(2)} 
    \quad (\forall t \leq 0,\; \forall s \in [0,1]),
    \]
    we actually have the sharper bounds
    \[
    \|x\|_{L^2_{k}} < R^{(2)}, 
    \qquad 
    \|y(t)\|_{L^2_{k-\tfrac{1}{2}}} < R^{(2)} 
    \quad (\forall t \leq 0,\; \forall s \in [0,1]). 
    \]

    \item There exist constants $L_3$ and $R^{(3)}$ such that if $L \geq L_3$, the following holds on $Z(L)$:  
    for any $\mathcal{F}_s^{(3)}$--$Z(L)$-trajectory with 
    \[
    \|x\|_{L^2_{k}} < 2R^{(3)}, 
    \qquad 
    \|y(t)\|_{L^2_{k-\tfrac{1}{2}}} < 2R^{(3)} 
    \quad (\forall t \leq 0,\; \forall s \in [0,1]),
    \]
    we obtain the improved bounds
    \[
    \|x\|_{L^2_{k}} < R^{(3)}, 
    \qquad 
    \|y(t)\|_{L^2_{k-\tfrac{1}{2}}} < R^{(3)} 
    \quad (\forall t \leq 0,\; \forall s \in [0,1]). 
    \]
\end{enumerate}

\noindent These estimates are the ``with boundary'' versions of those given in~\cite{bauer2004stable}. 
Note that Bauer’s original estimates are formulated near the neck and hence do not depend on the presence of additional boundary components. 
Thus, by repeating Bauer’s arguments in the neck region, one obtains the desired boundedness properties.

\noindent{\bf \underline{Step 6}:} We obtain a homotopy as the finite-dimensional approximation of the concatenation 
\[
\mathcal{F}_s^{(3)} * \mathcal{F}_s^{(2)} * \mathcal{F}_s^{(1)}.
\]
For this purpose, we consider the following criterion. For a subset \( A \subset V_\lambda^\mu (\partial Z(L)) \), set
\[
A^+ := \{ x \in A \mid \forall t > 0,\; t \cdot x \in A \}.
\]
Define
\[
R := \max \{ R^{(1)}, 2 R^{(2)}, 2 R^{(3)}\}.
\]
For a small $\epsilon >0$, put
\[
\wt{K}_1(i) := \bigcup_{ s \in [0,1]} \Big( B(R, W_0) \cap \left( \left( \operatorname{pr}_{W_1} \circ \mathcal{F}^{(i)}_s \right)^{-1} B(\epsilon, W_1) \right) \Big),
\]
\[
\wt{K}_2(i) := \bigcup_{ s \in [0,1]} \Big( S(R, W_0) \cap \left( \left( \operatorname{pr}_{W_1} \circ \mathcal{F}^{(i)}_s \right)^{-1} B(\epsilon, W_1) \right) \Big).
\]
Then,
\[
K_1(i) := p_{V^{\mu_n}_{\lambda_n}} \circ \mathcal{F}^{(i)}_s(\wt{K}_1(i)), 
\qquad
K_2(i) := p_{V^{\mu_n}_{\lambda_n}} \circ \mathcal{F}^{(i)}_s(\wt{K}_2(i))
\]
satisfy the assumptions of \cite[Theorem~4]{Man03} and \cite[Lemma~A.4]{Kha15} for 
\[
A := B(R; V^{\mu}_{\lambda}) \subset B(2R; V^{\mu}_{\lambda}).
\]
That is, for any \( i \in \{1,2,3\} \), the following conditions hold:
\begin{itemize}
    \item[(i)] If \( x \in K_1(i) \cap A^+ \), then \( ([0,\infty) \cdot x) \cap \partial A = \emptyset \).
    \item[(ii)] \( K_2(i) \cap A^+ = \emptyset \).
\end{itemize}

These conditions ensure that we can take an \( S^1\times \Z_p \)-equivariant index pair \((N_i, L_i)\) of \(V(Y)^\mu_\lambda\), so that there is an induced map
\[
h_s(i) \colon \wt{K}_1(i) / \wt{K}_2(i) \longrightarrow K_1(i)/K_2(i) \wedge N_i/L_i
\]
induced from \(\mathcal{F}^{(i)}_s\). Moreover, one sees that
\(h_0(1)\) and \(h_1(3)\) coincide with
\[
BF_{S^1\times \Z_p}(X_1) \wedge BF_{S^1\times \Z_p}(X_2) \wedge BF_{S^1\times \Z_p}(X_3)
\qquad \text{and} \qquad
BF_{S^1\times \Z_p}(W_1) \wedge BF_{S^1\times \Z_p}(W_2) \wedge BF_{S^1\times \Z_p}(W_3),
\]
respectively. This completes the proof.
\end{proof}

We further suppose that $Y'$ is orientation-preserving diffeomorphic to $L(n,1)$ for some $0 \neq n \in \Z$, and that the $\Z_p$-action on $Y'$ is induced by a linear $S^1$-action on the total space of the disk bundle $O(n) \to S^2$. These actions preserve the positive scalar curvature metric on $L(n,1)$.

\begin{defn} \label{defn: augmentation map in group ring}
For any element $\mathbf{n} = \sum_{i=0}^{p-1} n_i \cdot [i] \in \Z[\Z_p]$, we associate the $S^1 \times \Z_p$-representation
\[
\bigoplus_{i=0}^{p-1} \C_{[i]}^{\,n_i}.
\]
For simplicity, we sometimes abbreviate this as $\C^{\mathbf{n}}$. 
Given $\mathbf{m}, \mathbf{n} \in \Z[\Z_p]$ (or $\Q[\Z_p]$), we write $\mathbf{m} \geq \mathbf{n}$ if $\mathbf{m} - \mathbf{n}$ has nonnegative coefficients. These definitions extend naturally to elements of $\Q[\Z_p]$ as well.
\end{defn}

Before moving on, we recall some facts about $S^1 \times \Z_p$-representations and certain subgroups of $S^1 \times \Z_p$.  
For each $k = 0, \dots, p-1$, consider the order $p$ subgroup
\[
G_k = \left\{ \left( e^{\frac{2\pi i \ell k}{p}}, [\ell] \right) \;\middle\vert\; \ell \in \Z \right\} 
\subset S^1 \times \Z_p.
\]
It is straightforward to see that 
\[
(\C^\mathbf{n})^{G_k} = \C^{n_k}.
\]

\begin{lem} \label{lem: stabilizers in S1xZp}
Suppose $S^1 \times \Z_p$ acts continuously on a topological space $X$. Assume that the induced $S^1$-action has no finite stabilizers on $X$, i.e., the only stabilizers are $1$ or $S^1$. Then for any $x \in X$, the stabilizer of $x$ under the $S^1 \times \Z_p$-action is one of the following:
\[
1, \qquad S^1, \qquad S^1 \times \Z_p, \qquad G_k \ (k=0,\dots,p-1).
\]
\end{lem}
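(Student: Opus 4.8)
The plan is to analyze a point $x \in X$ through its orbit under $S^1 \times \Z_p$ and reduce to understanding how the cyclic group interacts with the circle stabilizer. Let $H = \mathrm{Stab}_{S^1 \times \Z_p}(x)$. First I would consider the projection $\pi \colon S^1 \times \Z_p \to \Z_p$ and the restricted $S^1$-action, setting $H_0 = H \cap (S^1 \times \{1\})$, which is precisely the $S^1$-stabilizer of $x$ inside the circle; by hypothesis $H_0 \in \{1, S^1\}$. The group $H$ then fits into a short exact sequence $1 \to H_0 \to H \to \pi(H) \to 1$ where $\pi(H) \leq \Z_p$ is either trivial or all of $\Z_p$ (since $p$ is prime).

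Next I would break into cases. If $\pi(H) = 1$, then $H = H_0 \in \{1, S^1\}$, giving the first two entries. If $\pi(H) = \Z_p$ and $H_0 = S^1$, then $H$ contains $S^1 \times \{1\}$ and surjects onto $\Z_p$, forcing $H = S^1 \times \Z_p$, the third entry. The remaining case is $\pi(H) = \Z_p$ and $H_0 = 1$: here $H$ is a subgroup of $S^1 \times \Z_p$ of order $p$ projecting isomorphically onto $\Z_p$, so $H$ is the graph of a homomorphism $\phi \colon \Z_p \to S^1$. Such a homomorphism sends the generator $[1]$ to a $p$-th root of unity $e^{2\pi i k/p}$ for some $k \in \{0,\dots,p-1\}$, and the graph of $\phi$ is exactly $G_k$ as defined in the excerpt. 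This exhausts all possibilities.

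The one point requiring slight care—and the main (though minor) obstacle—is justifying that when $H_0 = 1$ and $\pi(H) = \Z_p$, the subgroup $H$ really does split as a graph: a priori $H$ is just \emph{some} order-$p$ subgroup of $S^1 \times \Z_p$. Since $|H| = p$ is prime and $H \cap (S^1 \times \{1\})$ is trivial, the projection $H \to \Z_p$ is injective between groups of the same finite order, hence an isomorphism; its inverse composed with the inclusion $H \hookrightarrow S^1 \times \Z_p$ and the first projection gives the desired $\phi$, and $H = \{(\phi([\ell]), [\ell]) : \ell \in \Z\} = G_k$. I would also remark that the hypothesis "no finite stabilizers for the $S^1$-action" is exactly what rules out stabilizers whose intersection with $S^1 \times \{1\}$ is a proper nontrivial finite cyclic subgroup of $S^1$; without it one would get additional possibilities of the form $(\text{finite cyclic}) \rtimes$ or $\times$ pieces. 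No deep input is needed—this is a routine orbit-stabilizer analysis—so I would keep the write-up short.
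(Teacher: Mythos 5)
Your proof is correct and takes essentially the same route as the paper's: both reduce to classifying subgroups $H \le S^1 \times \Z_p$ with $H \cap (S^1 \times \{1\}) \in \{1, S^1\}$ and identify the remaining order-$p$ subgroups as graphs of homomorphisms $\Z_p \to S^1$, i.e.\ the $G_k$. The only cosmetic difference is that the paper organizes the last case via the projection onto the $S^1$ factor rather than onto $\Z_p$; your exact-sequence phrasing is, if anything, slightly cleaner.
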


\begin{proof}
Let $H \subset S^1 \times \Z_p$ be the stabilizer of some point $x \in X$.  
If the identity component $S^1$ is contained in $H$, then since $p$ is prime, $H$ is either $S^1$ or $S^1 \times \Z_p$.  

Now suppose $S^1 \cap H = 1$. Consider the projection
\[
\varphi \colon H \longrightarrow S^1 \times \Z_p \xrightarrow{(x,\alpha) \mapsto x} S^1.
\]
If $\ker \varphi \neq 1$, then there exists $\alpha \in \Z_p \smallsetminus \{0\}$ with $(0,\alpha) \in H$, which implies $G_0 \subset H$.  
If, in addition, $(x,\alpha) \in H$ for some $(x,\alpha) \notin G_0$, then
\[
(x,0) = (x,\alpha) - (0,\alpha) \in H,
\]
contradicting $S^1 \cap H = 1$. Hence in this case $H = G_0$.  

Thus we may assume $\ker \varphi = 1$, i.e., $\varphi$ is injective. Then $\varphi(H)$ is a subgroup of order $p$ in $S^1$.  
Hence there exists some $k \in \{0,\dots,p-1\}$ such that
\[
\left(e^{\frac{2\pi i k}{p}}, [1]\right) \in H.
\]
If another $k' \neq k$ also satisfies this condition, then
\[
\left(e^{\frac{2\pi i (k'-k)}{p}}, 0\right) \in H,
\]
which contradicts $S^1 \cap H = 1$.  
Therefore $k$ is unique and $H = G_k$, as claimed.
\end{proof}

The following gives the $\Z_p$-equivariant adjunction relation. 
\begin{thm} \label{prop: BF gluing}
Let $X$ be a $\Z_p$-equivariant $\mathrm{Spin}^c$ $4$-manifold $(X,\s)$ with boundary, which is a disjoint union of $\Z_p$-equivariant $\mathrm{Spin}^c$ rational homology $3$-spheres. 
Suppose a negatively embedded $2$-sphere $S \subset X$ is setwise preserved by the $\Z_p$-action. Let $\nu(S)$ denote a $\Z_p$-invariant closed neighborhood of $S$.  
If we take another $\Z_p$-equivariant $\mathrm{Spin}^c$ structure $\s'$ which satisfies 
\[
\s' \big|_{X \smallsetminus \overset{\circ}{\nu}(S)} 
= \s \big|_{X \smallsetminus \overset{\circ}{\nu}(S)},
\]
then, as $S^1 \times \Z_p$-equivariant stable homotopy classes, we have 
\[
BF_{S^1 \times \Z_p}(X, \mathfrak{s})  
= U^{\ind^t_{\Z_p} \dirac_{X, \s}- \ind^t_{\Z_p} \dirac_{X, \s'}} 
\circ BF_{S^1 \times \Z_p}(X, \mathfrak{s}') ,  
\]
where the maps
\[
\begin{split}
BF_{S^1 \times \Z_p}(X, \mathfrak{s}) &\colon (\ind^t_{\Z_p} \dirac_{X, \s})^+ 
\longrightarrow (H^+_{\Z_p}(X))^+ \wedge SWF_{S^1 \times \Z_p}(\partial X), \\
BF_{S^1 \times \Z_p}(X, \mathfrak{s}') &\colon (\ind^t_{\Z_p} \dirac_{X, \s'})^+ 
\longrightarrow (H^+_{\Z_p}(X))^+ \wedge SWF_{S^1 \times \Z_p}(\partial X)
\end{split}
\]
are the $S^1 \times \Z_p$-equivariant Bauer--Furuta invariants of $(X,\mathfrak{s})$ and $(X,\mathfrak{s}')$, respectively.  
For any $\mathbf{m} \in \Z[\Z_p]$, the symbol $U^\mathbf{m}$ denotes the (stable) inclusion $S^0 \hookrightarrow (\C^\mathbf{m})^+$.
\end{thm}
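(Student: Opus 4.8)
The plan is to deduce this $\mathbb{Z}_p$-equivariant adjunction relation from the gluing formula \Cref{general gluing}, imitating Bauer--Furuta's original blow-up argument but carrying the $\mathbb{Z}_p$-symmetry through every step. First I would isolate the local model: since $S\subset X$ is a negatively embedded $2$-sphere preserved setwise by $\mathbb{Z}_p$, a $\mathbb{Z}_p$-invariant neighborhood $\nu(S)$ is the disk bundle $O(-1)\to S^2$ — that is, $\overline{\mathbb{CP}}^2\smallsetminus \mathrm{int}(D^4)$ — with boundary $Y'=S^3$ carrying a $\mathbb{Z}_p$-action induced by a linear $S^1$-action on the total space. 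Since $S$ has square $-1$, this action preserves a positive scalar curvature metric on $S^3$, so the hypotheses of \Cref{general gluing} are met with $Y'=S^3=L(1,1)$. The two $\mathbb{Z}_p$-equivariant $\mathrm{Spin}^c$ structures $\mathfrak{s},\mathfrak{s}'$ agree on $X\smallsetminus\mathrm{int}(\nu(S))$ and differ only on $\nu(S)\cong\overline{\mathbb{CP}}^2\smallsetminus\mathrm{int}(D^4)$; there, $\mathfrak{s}|_{\nu(S)}$ and $\mathfrak{s}'|_{\nu(S)}$ are the two $\mathbb{Z}_p$-equivariant $\mathrm{Spin}^c$ structures on the disk bundle whose first Chern classes restrict to $\pm[S]^*$ (or differ by $2[S]^*$), both restricting to the same equivariant $\mathrm{Spin}^c$ structure on $S^3$.

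Next I would set up the gluing. Take $X_1 = X$ with $\mathfrak{s}'$, decomposed as $X_1 = (X\smallsetminus\mathrm{int}\,\nu(S))\cup_{S^3}\nu(S)$ with the structure $\mathfrak{s}'$; take $X_2 = S^4$ with its unique $\mathbb{Z}_p$-equivariant $\mathrm{Spin}^c$ structure, decomposed as $D^4\cup_{S^3}D^4$ in a way compatible with the $\mathbb{Z}_p$-action on $S^3$; and take $X_3=\overline{\mathbb{CP}}^2$ decomposed as $\nu(S)\cup_{S^3}D^4$ carrying the equivariant $\mathrm{Spin}^c$ structure extending $\mathfrak{s}|_{\nu(S)}$. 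Reshuffling the pieces gives $W_1 = (X\smallsetminus\mathrm{int}\,\nu(S))\cup_{S^3}\nu(S) = X$ now with the structure $\mathfrak{s}$ (because the $\nu(S)$ piece is glued in with $\mathfrak{s}|_{\nu(S)}$), $W_2 = D^4\cup_{S^3}D^4 = S^4$, and $W_3 = \nu(S)\cup_{S^3}D^4 = \overline{\mathbb{CP}}^2$ with $\mathfrak{s}'|_{\nu(S)}$-extension. Then \Cref{general gluing} yields
\[
BF_{S^1\times\Z_p}(X,\mathfrak{s}')\wedge BF_{S^1\times\Z_p}(S^4)\wedge BF_{S^1\times\Z_p}(\overline{\mathbb{CP}}^2,\mathfrak{s}) = BF_{S^1\times\Z_p}(X,\mathfrak{s})\wedge BF_{S^1\times\Z_p}(S^4)\wedge BF_{S^1\times\Z_p}(\overline{\mathbb{CP}}^2,\mathfrak{s}')
\]
up to $S^1\times\Z_p$-equivariant stable homotopy. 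The Bauer--Furuta invariant of $S^4$ (with trivial $H^+$ and trivial index) is a $\mathbb{Z}_p$-equivariant equivalence $S^0\to S^0$, so it cancels.

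The crux is then the computation of $BF_{S^1\times\Z_p}(\overline{\mathbb{CP}}^2,\mathfrak{t})$ for the two equivariant $\mathrm{Spin}^c$ structures. Since $\overline{\mathbb{CP}}^2$ is closed with $b^+=0$ and the relevant spinor bundle has the form $\H_{[j]}$-type fibers over the fixed points (so the positive Dirac index is a genuine equivariant complex representation), the Bauer--Furuta map is, up to equivariant homotopy, the inclusion $S^0\hookrightarrow(\C^{\mathbf{m}})^+$ where $\mathbf{m}=\ind^t_{\Z_p}\dirac_{\overline{\mathbb{CP}}^2,\mathfrak{t}}$; this is the equivariant refinement of the standard fact that $BF(\overline{\mathbb{CP}}^2,\mathfrak{s})$ is a power of $U$, and it follows from the linear-model analysis of the monopole map on a PSC-bounded piece together with \Cref{lem: stabilizers in S1xZp}, which pins down the stabilizer types so that the finite-dimensional approximation is forced to be the standard inclusion. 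Subtracting the common contribution of the $X\smallsetminus\mathrm{int}\,\nu(S)$ and the shared $D^4$ indices, the difference of the two $\overline{\mathbb{CP}}^2$-indices is exactly $\ind^t_{\Z_p}\dirac_{X,\mathfrak{s}}-\ind^t_{\Z_p}\dirac_{X,\mathfrak{s}'}\in R(\Z_p)$ (an integral class, since both are $\mathrm{Spin}^c$ indices on the same underlying manifold differing by a complex line bundle twist on $\nu(S)$). Rearranging the wedge equation and cancelling the equivalence $BF_{S^1\times\Z_p}(\overline{\mathbb{CP}}^2,\mathfrak{s}')$-side against the $U^{\mathbf{m}}$-inclusion gives the claimed identity $BF_{S^1\times\Z_p}(X,\mathfrak{s}) = U^{\ind^t_{\Z_p}\dirac_{X,\s}-\ind^t_{\Z_p}\dirac_{X,\s'}}\circ BF_{S^1\times\Z_p}(X,\mathfrak{s}')$.

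I expect the main obstacle to be the identification of $BF_{S^1\times\Z_p}(\overline{\mathbb{CP}}^2,\mathfrak{t})$ with the pure $U$-power inclusion in the \emph{equivariant} setting: non-equivariantly this is classical, but equivariantly one must verify that no exotic $\mathbb{Z}_p$-equivariant self-maps of the relevant representation spheres intervene, which is where \Cref{lem: stabilizers in S1xZp} and a careful bookkeeping of the $\mathbb{Z}_p$-isotypic decomposition of the Dirac index over the fixed $\mathbb{CP}^1\subset\overline{\mathbb{CP}}^2$ (via the Atiyah--Segal--Singer data $R,S,T$ of \Cref{appendix A}) are essential. A secondary subtlety is checking that the gluing reshuffle is compatible with the chosen $\mathbb{Z}_p$-action on the neck $S^3$ — i.e.\ that the path $\psi$ in $SO(3)$ used to build the gluing map in \Cref{general gluing} can be chosen $\mathbb{Z}_p$-equivariantly, which holds here because the $\mathbb{Z}_p$-action on $S^3$ is the restriction of a linear $S^1$-action and hence commutes with a standard circle of rotations containing the required permutation.
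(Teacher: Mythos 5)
Your overall strategy---cut along the boundary of a tubular neighborhood of $S$, apply the $\mathbb{Z}_p$-equivariant gluing formula of \Cref{general gluing} with a third ``transition'' piece, cancel the middle factor, and pin down the Bauer--Furuta invariant of the transition piece via equivariant degree theory using \Cref{lem: stabilizers in S1xZp}---is the right shape of argument, and you have correctly identified the crux (showing $BF_{S^1\times\Z_p}$ of the transition piece is a standard $U$-inclusion up to equivariant homotopy, which the paper does via tom Dieck's equivariant Hopf theorem in \Cref{lem:equivBFdetection}). However, there is a genuine gap: you assume $[S]^2 = -1$, so that $\nu(S)\cong O(-1)$, $\partial\nu(S)\cong S^3$, and the transition piece is a single $\overline{\mathbb{CP}}^2$ cut along $S^3=L(1,1)$. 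But ``negatively embedded'' in the statement means only that $[S]^2 = -n$ for some $n>0$, in which case $\partial\nu(S)$ is the lens space $-L(n,1)$, not $S^3$. Your decomposition literally does not parse for $n\ge 2$: you cannot cap $\nu(S)$ with a $4$-ball, and the gluing formula requires a neck $Y'$ admitting a $\mathbb{Z}_p$-invariant PSC metric which for $L(n,1)$ is not the round $S^3$.

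The paper's proof handles general $n$ by first establishing (\Cref{lem:embedding}) that $O(-n)$, with any linear $S^1$-action, embeds $S^1$-equivariantly into a closed manifold $W\cong\#^n\overline{\mathbb{CP}}^2$, built by threading through a chain of negative-definite cobordisms $W_{n-1},\dots,W_1$ between successive lens spaces and capping with a $4$-ball. It then cuts $W$ along $Y' = -L(n,1)$ into $\nu(S)$ and the complementary piece $C$, invokes \Cref{lem: equivariant PSC metrics on lens spaces} to get the invariant PSC metric on the neck, and sets up the $X_i$, $W_i$ pieces using $(\nu(S),\s)$, $(\nu(S),\s')$, a reference $(\nu(S),\s_0)$, $(C,\s_n)$, and $(X\smallsetminus\mathring{\nu}(S),\s)$. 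The transition pieces are then copies of $\#^n\overline{\mathbb{CP}}^2$ with various equivariant $\mathrm{Spin}^c$ structures, and the identification of their Bauer--Furuta maps with canonical inclusions is \Cref{lem:equivBFdetection} (using \Cref{Lem:diracindexofcp} and tom Dieck). Your proof sketch only covers the simplest case of this construction; to close the gap you would need either to carry out the $O(-n)\hookrightarrow\#^n\overline{\mathbb{CP}}^2$ embedding and the lens-space PSC step, or to give a separate argument (e.g.\ an inductive blow-up trick) reducing the general case to $n=1$, neither of which is sketched in the proposal.
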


Before proving \Cref{prop: BF gluing}, we state a topological construction that will be used in its proof.

\begin{lem}\label{lem:embedding}
    Let $n>0$ be an integer and consider the lens space $-L(n,1)=L(-n,1)$, which is the boundary of the disk bundle $O(-n)\rightarrow S^2$ of Euler number $-n$. Endow $O(-n)$ with any linear $S^1$-action.\footnote{See \Cref{subsec: eqv plumbing action} for the definition of linear actions on disk bundles.} Then there exists a closed smooth 4-manifold $W$, together with a smooth $S^1$-action, that satisfies the following conditions.
    \begin{itemize}
        \item $W$ is diffeomorphic to $\#^{n}\overline{\mathbb{CP}}^{2}$;
        \item $O(-n)$ embeds $S^1$-equivariantly into $W$;
        \item Any $\mathrm{Spin}^c$ structure on $\partial O(-n)=-L(n,1)$ extends to a $\mathrm{Spin}^c$ structure on $W$ whose $c_1$ is of the form $(\pm 1,\dots,\pm 1)\in H^2(\#^{n}\overline{\mathbb{CP}}^{2};\Z)$.
    \end{itemize}
\end{lem}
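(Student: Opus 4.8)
The plan is to realize $-L(n,1)$ as a boundary inside a standard negative-definite plumbing-type 4-manifold and then verify that the 4-manifold is diffeomorphic to a connected sum of $\overline{\mathbb{CP}}^2$'s while keeping track of an $S^1$-action and of the first Chern classes of extended $\mathrm{Spin}^c$ structures. Concretely, first I would take the linear plumbing of $n$ disk bundles $O(-1)\to S^2$ along a chain, call it $W_0$: its boundary is $-L(n,1)$ (the continued fraction $[2,2,\dots,2]$ with $n-1$ twos gives $L(n,1)$, but taking all weights $-1$ along a chain of length $n$ gives the lens space $L(-n,1)=-L(n,1)$ after blowing down, and more directly $W_0=\#^n\overline{\mathbb{CP}}^2$ minus a ball once one checks the intersection form is $-\mathrm{diag}(1,\dots,1)$ after a basis change — I will do this via the standard handle-calculus fact that a chain of $-1$-framed unknots plumbed together is obtained from $n$ disjoint $-1$-framed unknots, i.e.\ $n$ copies of $\overline{\mathbb{CP}}^2$, by handle slides). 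Closing up $W_0$ by capping the boundary $S^3$ of one end with a $4$-ball is not quite what we want; instead I want $W$ closed with $-L(n,1)$ sitting inside it, so I would instead build $W$ by gluing $O(-n)$ to a complementary piece.

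The cleaner route: start from $W_1 = \#^n\overline{\mathbb{CP}}^2$ presented as the plumbing along the graph that is a single central node of weight $-n$ with $n$ leaves of weight $-1$ — wait, that plumbing has boundary $-L(n,1)\#(\text{something})$; instead take the linear chain with one node of weight $-n$ and then blow it up $n$ times. Let me instead use the well-known identity $O(-n)\cup_{\partial} (\text{mapping cylinder}) $; the key geometric input is that $O(-n)\to S^2$, the disk bundle of Euler number $-n$, admits an embedding into $\#^n\overline{\mathbb{CP}}^2$ with complement a $\mathrm{Spin}^c$-fillable neighborhood: concretely, inside $\#^n\overline{\mathbb{CP}}^2=\overline{\mathbb{CP}}^2\#\cdots\#\overline{\mathbb{CP}}^2$ with exceptional spheres $E_1,\dots,E_n$ of self-intersection $-1$, the sphere $S=E_1+\cdots+E_n$ in the connected-sum region (realized as a smooth sphere by tubing the $E_i$ together, possible since they can be made disjoint and the sum has a connected representative) has self-intersection $-n$, hence a neighborhood is diffeomorphic to $O(-n)$. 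I would then observe that the linear $S^1$-action on $O(-n)$ extends over $W=\#^n\overline{\mathbb{CP}}^2$: give each $\overline{\mathbb{CP}}^2$ summand a linear $S^1$-action fixing an exceptional sphere setwise and matching weights, then perform the equivariant connected sum along fixed points so that the tubed sphere $S$ is preserved and the $S^1$-action near $S$ is the prescribed linear one (any linear action on $O(-n)$ arises this way — this is what \Cref{subsec: eqv plumbing action} should give).

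For the $\mathrm{Spin}^c$ extension statement, recall $H^2(\#^n\overline{\mathbb{CP}}^2;\Z)=\Z^n$ with generators the Poincaré duals of the $E_i$, and a $\mathrm{Spin}^c$ structure is characteristic, so $c_1$ lies in $(2\Z+1)^n$; the ones with $c_1=(\pm1,\dots,\pm1)$ are exactly the "minimal" ones. The restriction map $H^2(\#^n\overline{\mathbb{CP}}^2;\Z)\to H^2(-L(n,1);\Z)\cong\Z/n$ is surjective (the complement of $\nu(S)$ deformation retracts appropriately, or one computes from the Mayer–Vietoris sequence for $W=\nu(S)\cup(W\smallsetminus\mathring\nu(S))$ that the cokernel vanishes), and already the $2^n$ classes $(\pm1,\dots,\pm1)$ surject onto $\Z/n$ because their images are $\{\pm1\pm1\cdots\pm1 \bmod n\}$ which contains all of $1-n, 3-n,\dots, n-1$, i.e.\ a full set of residues of the correct parity, and $-L(n,1)$ has $|H^2|=n$; so every $\mathrm{Spin}^c$ structure on $-L(n,1)$ is hit. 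The main obstacle I anticipate is the handle-calculus/Kirby-diagram verification that the total space $W$ really is diffeomorphic to $\#^n\overline{\mathbb{CP}}^2$ while simultaneously supporting the $S^1$-action and containing $O(-n)$ with the prescribed boundary orientation — getting all the signs and orientations ($-L(n,1)$ versus $L(n,1)$, Euler number $-n$ versus $+n$) consistent is the delicate bookkeeping; the $\mathrm{Spin}^c$ and $S^1$-equivariance parts are then routine linear algebra and equivariant-connected-sum arguments respectively.
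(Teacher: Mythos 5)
Your route is genuinely different from the paper's, and parts of it are sound. Realizing $O(-n)$ as a tubular neighborhood of a smooth sphere representing $E_1+\cdots+E_n$ in $\#^{n}\overline{\mathbb{CP}}^{2}$ correctly produces the embedding and the diffeomorphism type without any handle calculus, and your cohomological argument for the $\mathrm{Spin}^c$ extension is essentially right: the classes $(\epsilon_1,\dots,\epsilon_n)$ restrict to $-\sum\epsilon_i$ on $\nu(S)\cong O(-n)$, and these $n+1$ values exhaust all residues mod $2n$ of the correct parity, hence all $\mathrm{Spin}^c$ structures on $-L(n,1)$. One small correction there: for $n$ even, $H^2(-L(n,1))\cong\Z/n$ has $2$-torsion, so $c_1$ does not separate $\mathrm{Spin}^c$ structures and your count of ``residues of the correct parity mod $n$'' only yields $n/2$ classes; the fix is to identify $\mathrm{Spin}^c(-L(n,1))$ with residues mod $2n$ of parity $n$ (via extensions over $O(-n)$, where two extensions restrict equally iff their $c_1$'s differ by $2n\Z$), after which the surjectivity is genuine for all $n$. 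The paper instead quotes this from an external construction (a chain of $2$-handle cobordisms between lens spaces derived from surgery on the Hopf link), so your argument is arguably more self-contained on this point.

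The genuine gap is the $S^1$-equivariance. You assert that for an \emph{arbitrary} prescribed linear action on $O(-n)$ one can choose linear actions on the $n$ summands $\overline{\mathbb{CP}}^{2}$ and perform equivariant connected sums so that the tubed sphere is invariant and the induced action on its neighborhood is the prescribed one, and you defer this to \Cref{subsec: eqv plumbing action}; but that subsection only defines linear actions on disk bundles and cites Orlik's result for star-shaped plumbings — it says nothing about connected sums of $\overline{\mathbb{CP}}^{2}$'s. What actually has to be checked is nontrivial: (i) the $n-1$ connect-sum points must be fixed points lying on the exceptional spheres whose tangential $S^1$-representations match up to orientation reversal; (ii) the tube annuli in the connect-sum necks must be chosen invariant; and (iii) the resulting rotation speed on $S$ and the normal weights at its two poles (which shift by a multiple of the base speed across each $E_i$) must reproduce the given pair $(p_\pm,q_\pm)$. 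This is a solvable weight-bookkeeping problem, but it is the entire content of the equivariance claim and cannot be waved through. The paper avoids it by building the complement of $O(-n)$ inductively as cobordisms $W_{n-1},\dots,W_1$ carrying the residual $S^1$-action from the torus symmetry of the Hopf link, so that the action is extended one blow-down at a time, with an explicit two-case analysis for the final extension over the $4$-ball; if you want to keep your connected-sum picture, you need to supply the analogous explicit weight assignments on each $\overline{\mathbb{CP}}^{2}$ summand.
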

\begin{proof}
Consider the Hopf link $H$ with components $H_1$ and $H_2$.  
Performing a $(-n)$-surgery on $H_1$ yields a knot $H_2 \subset -L(n,1)$.  
Then the smooth $4$-manifold
\[
W_{n-1} = \big(-L(n,1) \times [0,1]\big) \cup (\text{$2$-handle}),
\]
where the $2$-handle is attached along $H_2$ with $(-1)$-framing relative to the Seifert framing of $H_2$ in $H$, is a simply connected negative definite cobordism from $-L(n,1)$ to $-L(n-1,1)$.  
Hence we may form the glued $4$-manifold
\[
W = O(-n) \cup W_{n-1} \cup \cdots \cup W_1 \cup B,
\]
where $B$ is the $4$-ball attached to the $-L(1,1)=S^3$ boundary component of $W_1$.  
It is straightforward to see that $W \cong \#^{n}\overline{\mathbb{CP}}^{2}$; thus $W$ is simply connected and negative definite, and $O(-n)$ is smoothly  embedded in $W$.  
Furthermore, this construction coincides with that used in the proof of \cite[Lemma~4.12]{kang2024cables}.  
Consequently, every $\mathrm{Spin}^c$ structure on $\partial O(-n)$ extends to $W$, and the $c_1$ of the extended $\mathrm{Spin}^c$ structure is of the form $(\pm 1,\dots,\pm 1)$.

It remains to prove that the $S^1$-action on $O(-n)$ extends smoothly to $W$.  
To show this, we first prove the following claim:  
\emph{Given any linear action on $O(-n)$, there exists a smooth $S^1$-action on $W_{n-1}$ such that the induced actions on $-L(n,1)$ coincide, and there exists some linear action on $O(-n+1)$ such that the induced actions on $-L(n-1,1)$ by $O(-n+1)$ and $W_{n-1}$ also coincide.}  

To prove the claim, consider the Hopf link $H$ used to construct $W_{n-1}$.  
Rotations along each component commute, inducing a smooth $S^1 \times S^1$-action on $S^3$ that preserves $H$ componentwise.  
For any $S^1$-subaction, we can perform an equivariant surgery along $H_1$ to obtain an $S^1$-action on $-L(n,1)$ that fixes $H_2$ setwise.  
It is straightforward to see that these actions are precisely the $S^1$-actions on $-L(n,1)$ induced by linear actions on $O(-n)$.  
Thus, by attaching an equivariant $2$-handle along $H_2$, we obtain the desired $S^1$-action on $W_{n-1}$, and the induced action on $-L(n-1,1)$ arises from some linear action on $O(-n+1)$.  
This proves the claim.

Using the claim, we obtain a smooth $S^1$-action on $W \smallsetminus B$ that extends the given $S^1$-action on $O(-n) \subset W \smallsetminus B$.  
Furthermore, the restriction of this action to $\partial (W \smallsetminus B)=S^3$ is induced by some linear action on $O(-1)$.  
It is straightforward to see that the list of all possible $S^1$-actions on $S^3$ arising from linear actions on $O(-1)$ is as follows:
\begin{itemize}
    \item The rotation with respect to an unknotted axis $U \subset S^3$.
    \item The free action induced by the fiber rotation of $O(-1)$.
\end{itemize}

In the first case, the action extends to a rotation with respect to a disk-axis $D^2 \subset B^4$, which is evidently smooth.  
In the second case, by parametrizing $S^3$ as the boundary of the unit $4$-ball
\[
B^4 = \{(z,w)\in \C^2 \mid |z|^2 + |w|^2 = 1\},
\]
we see that the $S^1$-action on $B^4$, defined by
\[
e^{i\theta}\cdot (z,w) = (e^{i\theta}z,\, e^{i\theta}w),
\]
restricts to the given action on $S^3$.  
Hence, in either case, we obtain a smooth $S^1$-action on $B$ whose restriction to $\partial B = \partial (W \smallsetminus B)$ coincides with the one induced by the $S^1$-action on $W \smallsetminus B$.  
By gluing them, we obtain a smooth $S^1$-action on $W$ that extends the given action on $O(-n)$.  
The lemma follows.
\end{proof}

We also need another lemma concerning the $\Z_p$-equivariant index of connected sums of copies of $\overline{\mathbb{CP}}^{2}$.

\begin{lem}\label{Lem:diracindexofcp}
Let $n>0$ be an integer.  
Suppose a smooth $\Z_p$-action on $\#^{n}\overline{\mathbb{CP}}^{2}$ fixes a $\mathrm{Spin}^c$ structure $\mathfrak{s}$ with 
$
c_1(\mathfrak{s}) = (\pm 1,\dots,\pm 1).
$  
Choose any $\Z_p$-equivariant lift $\tilde{\mathfrak{s}}$ of $\mathfrak{s}$.  
Then the $S^1 \times \Z_p$-equivariant index of the $\mathrm{Spin}^c$ Dirac operator
\[
\ind^t_{\Z_p}\dirac_{\#^{n}\overline{\mathbb{CP}}^{2},\,\tilde{\mathfrak{s}}}  \in R(\Z_p)
\]
is zero.
\end{lem}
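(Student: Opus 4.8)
The plan is to compute $\ind^t_{\Z_p}\dirac_{\#^{n}\overline{\mathbb{CP}}^{2},\,\tilde{\mathfrak{s}}}$ via its traces, using \Cref{Lem: diracindexofcp}'s invariance under the choice of equivariant lift together with the explicit localization data already assembled in \Cref{appendix A}. First I would recall from \Cref{lem: rationality}(iii) that for a \emph{closed} $\Z_p$-equivariant $\mathrm{Spin}^c$ $4$-manifold the topological index $\ind^t_{\Z_p}\dirac$ lies in $R(\Z_p)$ and coincides with the genuine $\Z_p$-equivariant Dirac index, so it suffices to show that $\operatorname{Tr}_{\zeta_p^k}\bigl(\ind^t_{\Z_p}\dirac_{\#^{n}\overline{\mathbb{CP}}^{2},\,\tilde{\mathfrak s}}\bigr)=0$ for every $k=0,\dots,p-1$. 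For $k=0$ the trace is the complex augmentation $\alpha_\C$, which by \Cref{lem: rationality}(ii) equals $\frac18(c_1(\mathfrak{s})^2-\sigma(\#^n\overline{\mathbb{CP}}^2))$. Since $c_1(\mathfrak s)=(\pm1,\dots,\pm1)$ we have $c_1(\mathfrak s)^2=-n$ (the form is $-I_n$), and $\sigma=-n$, so this vanishes. Thus only the nontrivial traces $k\neq 0$ require work.

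For $k\neq 0$ I would apply the Atiyah--Singer $G$-index theorem for $\tau^k$ acting on $\#^{n}\overline{\mathbb{CP}}^{2}$, in the form recorded in \Cref{appendix A}: the trace index is a sum $\sum_i R_{k,i}$ over isolated fixed points plus $\sum_i\bigl(S_{k,i}\langle c_1(\mathfrak s),[\Sigma_{k,i}]\rangle + T_{k,i}[\Sigma_{k,i}]^2\bigr)$ over $2$-dimensional fixed components. The strategy is not to evaluate these terms by hand for an arbitrary action, but to exploit the handle/plumbing description of $W=\#^n\overline{\mathbb{CP}}^2$ built in \Cref{lem:embedding}: that manifold is assembled $S^1$-equivariantly from the disk bundle $O(-n)\to S^2$, the cobordisms $W_{n-1},\dots,W_1$, and a $4$-ball $B$, where each piece carries an explicit linear $S^1$-action. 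I would compute the $\Z_p$-equivariant index piece-by-piece using the gluing/excision additivity of the localized index contributions, reducing everything to the fixed-point data on disk bundles $O(-j)\to S^2$ (two isolated fixed points, or a fixed $S^2$, coming from rotation of the fiber and/or base) and on the $4$-ball (one fixed point, or a fixed $2$-disk). Alternatively — and this is probably the cleaner route — I would observe that any $\Z_p$-action fixing a $\mathrm{Spin}^c$ structure with $c_1=(\pm1,\dots,\pm1)$ on $\#^n\overline{\mathbb{CP}}^2$ can, after the equivariant connected-sum decomposition, be reduced to the case $n=1$, i.e.\ to $\overline{\mathbb{CP}}^2$ with the $\mathrm{Spin}^c$ structure of $c_1=\pm1$; there the index is classical. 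Indeed $\overline{\mathbb{CP}}^2$ with $c_1(\mathfrak s)=\pm\mathrm{PD}[\mathbb{CP}^1]$ has $\ind^t\dirac=\frac18(1-(-1))=0$ non-equivariantly, and the equivariant refinement is a single $\Z_p$-representation whose dimension is $0$; since $\overline{\mathbb{CP}}^2$ minus a point deformation retracts onto $\mathbb{CP}^1$ and the index is carried by the kernel/cokernel of $\dirac$, which are $0$ for this anti-self-dual $\mathrm{Spin}^c$ structure (the scalar curvature being positive after a suitable metric, so $\ker\dirac_{A}=0$ by the Weitzenböck formula for the positive-curvature Fubini--Study metric), the equivariant index is the zero virtual representation.

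The main obstacle I anticipate is the reduction from $\#^n\overline{\mathbb{CP}}^2$ to the $n=1$ case: a priori the $\Z_p$-action need not respect a connected-sum splitting, and the $2$-dimensional fixed components $\Sigma_{k,i}$ with their self-intersection numbers and $c_1$-pairings could be genuinely spread across the connected sum. I would handle this by invoking the vanishing theorem argument directly on $\#^n\overline{\mathbb{CP}}^2$: equip it with a $\Z_p$-invariant metric of positive scalar curvature (connected sums of $\overline{\mathbb{CP}}^2$'s admit one, and by averaging one can make it invariant) together with a $\Z_p$-invariant $\mathrm{Spin}^c$ connection $A_0$ whose curvature is small, so that the Weitzenböck term $\tfrac{s}{4}+\tfrac12\rho(F_{A_0}^+)$ is positive; then $\ker\dirac_{A_0}=\Cok\dirac_{A_0}=0$ as $\Z_p$-modules, hence the equivariant index vanishes in $R(\Z_p)$, and therefore so does its topological refinement $\ind^t_{\Z_p}\dirac$ since the two agree in the closed case by \Cref{lem: rationality}(iii). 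This bypasses the fixed-point bookkeeping entirely; the only point requiring care is that $c_1(\mathfrak s)=(\pm1,\dots,\pm1)$ is precisely the condition under which such a small-curvature invariant connection exists with $\rho(F_{A_0}^+)$ controlled, which follows because $c_1(\mathfrak s)$ is anti-self-dual for a suitable invariant metric (each $\pm1$ class being representable by an anti-self-dual form on the corresponding $\overline{\mathbb{CP}}^2$ summand).
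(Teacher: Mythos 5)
Your main route (Weitzenböck vanishing on a $\Z_p$-invariant PSC metric) has two real gaps, and the paper avoids them by a different argument.

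First, the claim that one can get a $\Z_p$-invariant PSC metric on $\#^n\overline{\mathbb{CP}}^2$ ``by averaging'' is false: the space of PSC metrics is not convex, so $\frac{1}{p}\sum_k (\tau^k)^* g$ need not have positive scalar curvature even when $g$ does. Equivariant PSC metrics do exist in some settings (the paper cites such a result for spherical space forms in \Cref{lem: equivariant PSC metrics on lens spaces}), but producing one on $\#^n\overline{\mathbb{CP}}^2$ compatible with an arbitrary $\Z_p$-action requires a genuine argument, not averaging. Second, the Weitzenböck vanishing needs more than a PSC metric: you need a $\Z_p$-invariant $\mathrm{Spin}^c$ connection $A_0$ with $\rho(F_{A_0}^+)$ dominated pointwise by $s/4$. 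Choosing $A_0$ so that $F_{A_0}^+=0$ requires $c_1(\mathfrak s)$ to be anti-self-dual \emph{for the chosen invariant metric}, which is yet another constraint to impose simultaneously with PSC and $\Z_p$-invariance. (You are right that $c_1^2=\sigma$ means the harmonic representative is anti-self-dual for \emph{some} metric, and you are right that once $\ker\dirac_{A_0}^+=0$ the non-equivariant index computation $\tfrac{1}{8}(c_1^2-\sigma)=0$ forces $\mathrm{Coker}=0$ as well; but the simultaneous existence of a single $\Z_p$-invariant metric/connection pair satisfying all the hypotheses is left unproved.) Your alternative reductions — trace-by-trace via the Lefschetz formula on the plumbing pieces, and equivariant connected-sum splitting — you already flag as problematic, and I agree.

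The paper's proof is quite different and bypasses all of these issues. By \Cref{lem: rationality}(iii), $\ind^t_{\Z_p}\dirac$ lies in $R(\Z_p)$, so we may write it as $\C^{\mathbf n}$ for some $\mathbf n\in\Z[\Z_p]$. Since $\#^n\overline{\mathbb{CP}}^2$ is closed and negative definite ($b^+=0$), \Cref{prop:eqBF} produces an $(S^1\times\Z_p)$-equivariant Bauer--Furuta map $(\C^{\mathbf n})^+\to(\C^0)^+$ whose $S^1$-fixed point map is a homotopy equivalence. Applying the equivariant Borsuk--Ulam-type inequality \Cref{lem: borsuk ulam} gives $\mathbf n\le 0$ (nonpositive coefficients). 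Your correct computation that the non-equivariant index vanishes, $\alpha_\C(\mathbf n)=\tfrac{1}{8}(c_1(\mathfrak s)^2-\sigma)=0$, then forces $|\mathbf n|=0$, hence $\mathbf n=0$. This argument is metric-independent and requires no geometric vanishing theorem. The Borsuk--Ulam step is the key idea you are missing: it converts the existence of the (always-defined) Bauer--Furuta map into the needed one-sided inequality on $\mathbf n$, after which the non-equivariant index computation does the rest.
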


\begin{proof}
Since $\#^{n}\overline{\mathbb{CP}}^{2}$ is closed, it follows from \Cref{lem: rationality} that the $\Z_p$-equivariant index of $\dirac_{\#^{n}\overline{\mathbb{CP}}^{2},\tilde{\mathfrak{s}}}$ lies in $R(\Z_p)$.  
In other words, there exists some $\mathbf{n} \in \Z[\Z_p]$ such that 
\[
\ind_{\Z_p}\dirac_{\#^{n}\overline{\mathbb{CP}}^{2},\tilde{\mathfrak{s}}} \cong \C^{\mathbf{n}}.
\]
By \Cref{prop:eqBF}, we obtain the following $S^1 \times \Z_p$-equivariant map:
\[
BF_{S^1 \times \Z_p}\bigl(\#^{n}\overline{\mathbb{CP}}^{2},\tilde{\mathfrak{s}}\bigr) 
\colon (\C^{\mathbf{n}})^+ \longrightarrow (\C^0)^+.
\]
It then follows from \Cref{lem: borsuk ulam}\footnote{Although this is a forward reference, its proof is elementary, so there is no circular reasoning.} that $\mathbf{n} \le 0$.  

On the other hand, it is straightforward to observe that the $S^1$-equivariant index of $\dirac_{\#^{n}\overline{\mathbb{CP}}^{2},\mathfrak{s}}$ is $0$.  
Hence $|\mathbf{n}| = 0$, which implies $\mathbf{n} = 0$.  
The lemma follows.
\end{proof}

Next, we determine all $S^1 \times \Z_p$-equivariant Bauer--Furuta invariants for any null-homotopic smooth $\Z_p$-action on $\#^{n}\overline{\mathbb{CP}}^{2}$.

\begin{lem}\label{lem:equivBFdetection}
Let $\s$ be an equivariant $\mathrm{Spin}^c$ structure on $\#^{n}\overline{\mathbb{CP}}^{2}$.  
\begin{itemize}
    \item If $c_1(\s) = (\pm 1,\dots,\pm 1)$, then 
    \[
    BF_{S^1 \times \Z_p} \sim_{S^1 \times \Z_p} \operatorname{id}.
    \]
    \item If $c_1(\s) \neq (\pm 1,\dots,\pm 1)$, then 
    \[
    BF_{S^1 \times \Z_p} \sim_{S^1 \times \Z_p} 
    \iota \colon \C^{\mathbf{n}} \longrightarrow \C^{\mathbf{n}'},
    \]
    where $\mathbf{n}' - \mathbf{n} \ge 0$, $\iota$ denotes the inclusion, and 
    \[
    \mathbf{n}' - \mathbf{n} = \ind^t \left(\dirac_{\#^{n}\overline{\mathbb{CP}}^{2}, \, \s}\right).
    \]
\end{itemize}
\end{lem}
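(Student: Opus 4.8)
The plan is to handle the two bullet points in order: the first is immediate once \Cref{Lem:diracindexofcp} is in hand, and the second is deduced from it by an induction using the blow-up formula \Cref{prop: BF gluing}.

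For $c_1(\s)=(\pm 1,\dots,\pm 1)$: since $\#^{n}\overline{\mathbb{CP}}^{2}$ is a closed negative definite $4$–manifold we have $b^+=0$, so $H^+_{\Z_p}(\#^{n}\overline{\mathbb{CP}}^{2})=0$ and the boundary is empty, and $\ind^t_{\Z_p}\dirac=0$ by \Cref{Lem:diracindexofcp}; thus the invariant of \Cref{prop:eqBF} is a stable $S^1\times\Z_p$–equivariant self-map of $S^0$ with no residual suspension. I would then argue, exactly as for negative definite manifolds in the non-equivariant and $S^1$–equivariant settings, that $BF_{S^1\times\Z_p}$ is the equivariant identity: in the finite-dimensional approximation the quadratic part of the monopole map is a compactly supported deformation, and the truncated linear part is an $S^1\times\Z_p$–equivariant isomorphism of one-point compactifications precisely because $b^+=0$ and $\ind^t_{\Z_p}\dirac=0$. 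This is also consistent with \Cref{prop:eqBF}, which already forces $BF^{S^1}_{S^1\times\Z_p}$ to be a $\Z_p$–homotopy equivalence $S^0\to S^0$, hence the identity.

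For $c_1(\s)\neq(\pm 1,\dots,\pm 1)$, I would reduce to the previous case by iterating \Cref{prop: BF gluing}. Since the $\Z_p$–action is null-homotopic it acts trivially on $H_2(\#^{n}\overline{\mathbb{CP}}^{2};\Z)$, so every class of square $-1$ is preserved; working with $\Z_p$–invariant negatively embedded $2$–spheres representing these classes, I change $\s$ only inside invariant tubular neighborhoods, one step at a time, until the first Chern class is $(\pm 1,\dots,\pm 1)$. At each step \Cref{prop: BF gluing} rewrites the Bauer--Furuta invariant of the more blown up $\mathrm{Spin}^c$ structure as $U^{\mathbf m_i}$ precomposed with that of the less blown up one, where $\mathbf m_i\ge 0$ is the corresponding $\Z_p$–equivariant index difference. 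Telescoping and inserting the base case ($BF_{S^1\times\Z_p}(\#^{n}\overline{\mathbb{CP}}^{2},\s_0)=\operatorname{id}$ and $\ind^t_{\Z_p}\dirac_{\s_0}=0$) gives $BF_{S^1\times\Z_p}(\#^{n}\overline{\mathbb{CP}}^{2},\s)=U^{\mathbf m}$ with $\mathbf m=\sum_i\mathbf m_i$; a composite of stable inclusions is again the stable inclusion $\iota\colon\C^{\mathbf n}\hookrightarrow\C^{\mathbf n'}$, and additivity of the trace index under gluing identifies $\mathbf n'-\mathbf n=\mathbf m$ with $\ind^t(\dirac_{\#^{n}\overline{\mathbb{CP}}^{2},\s})$. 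Its nonnegativity follows from \Cref{lem: borsuk ulam} applied to $BF_{S^1\times\Z_p}(\#^{n}\overline{\mathbb{CP}}^{2},\s_i)$, together with the known underlying $S^1$–equivariant invariant of a negative definite manifold.

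The step I expect to be the real obstacle is the geometric input of the second case: producing, at every stage, a $\Z_p$–invariant negatively embedded $2$–sphere supporting the prescribed change of $\mathrm{Spin}^c$ structure. For the $\Z_p$–actions relevant to this paper—subactions of the linear $S^1$–actions constructed in \Cref{lem:embedding}—this is free, since in the decomposition $W=O(-n)\cup W_{n-1}\cup\cdots\cup W_1\cup B$ used there the zero section of $O(-n)$ and the exceptional spheres of the $2$–handles are $S^1$–invariant and span $H_2$, so the iterated blow-up formula applies verbatim; for a genuinely arbitrary null-homotopic smooth $\Z_p$–action one also needs a realization of the square $-1$ classes by $\Z_p$–invariant embedded spheres. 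A secondary bookkeeping point is to pin down the orientation conventions so that each exponent $\mathbf m_i$ is genuinely $\ge 0$ (so that $U^{\mathbf m_i}$ is an inclusion rather than a collapse), which is where \Cref{lem: borsuk ulam} enters.
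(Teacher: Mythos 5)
Your proposal has a fatal circularity in the second case: you propose to deduce the statement by iterating the adjunction relation \Cref{prop: BF gluing}, but in this paper the logical order is the reverse --- the proof of \Cref{prop: BF gluing} itself invokes \Cref{lem:equivBFdetection} twice (once to replace $BF_{S^1\times\Z_p}(\#^n\overline{\mathbb{CP}}^2,\s_0\#\s_n)$ by the identity, and once to identify the remaining Bauer--Furuta maps of $\#^n\overline{\mathbb{CP}}^2$ with canonical inclusions). So the blow-up formula is not available to you here, and the induction cannot get off the ground without an independent argument.

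The missing idea in both cases is tom Dieck's equivariant Hopf theorem (\cite[p.~126, Theorem~4.11]{dieck1987transformation}): a stable $S^1\times\Z_p$-equivariant map between representation spheres is determined up to equivariant homotopy by the degrees of its restrictions to the fixed-point sets of the subgroups occurring as stabilizers with finite Weyl group. The paper classifies these stabilizers in \Cref{lem: stabilizers in S1xZp} as $1$, $S^1$, $S^1\times\Z_p$, and $G_k$ for $k=0,\dots,p-1$. For every such $G$ with $G\cap(S^1\times\{0\})\neq\{1\}$, the $G$-fixed spinors vanish, so $BF^G$ is the one-point compactification of a $\C$-linear isomorphism and has degree one; the subgroup $G_0\cong\Z_p$ is excluded either because its Weyl group is infinite (first case) or because the fixed-point dimensions of source and target differ (second case). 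Your appeal to ``the quadratic part is a compactly supported deformation'' does not substitute for this: knowing only that $BF^{S^1}$ is a homotopy equivalence (which is all \Cref{prop:eqBF} gives you) does not determine the equivariant homotopy class, and the deformation of the nonlinear term is not justified. In the second case the nonnegativity $\mathbf{n}'-\mathbf{n}\ge 0$ does come from \Cref{lem: borsuk ulam} as you say, but the identification of $BF$ with the inclusion $\iota$ again goes through the fixed-point degree comparison, not through any gluing.
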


\begin{proof}
In the first case, from \Cref{Lem:diracindexofcp}, we may regard 
\[
BF_{S^1 \times \Z_p} \colon \C^{\mathbf{n}} \longrightarrow \C^{\mathbf{n}}
\]
as a stable $S^1 \times \Z_p$-equivariant map.  
For the first claim, namely $BF_{S^1 \times \Z_p} \sim_{S^1 \times \Z_p} \id$, it is sufficient by \cite[page~126, Theorem~4.11]{dieck1987transformation} to check that 
\[
\deg\!\left(BF_{S^1 \times \Z_p}^G\right) = 1
\]
for any subgroup $G \subset S^1 \times \Z_p$ appearing as a stabilizer.  
Such stabilizers are listed in \Cref{lem: stabilizers in S1xZp}:
\[
1, \quad S^1, \quad S^1 \times \Z_p, \quad 
G_k = \left\{ \left( e^{\tfrac{2\pi i k \ell}{p}}, [\ell] \right) \,\middle\vert\, \ell \in \Z \right\},\ (k=0,\dots,p-1).
\]

If $G \cap \left(S^1 \times \{0\}\right) \neq \{1\}$, then $BF_{S^1 \times \Z_p}^G$ is the compactification of a $\C$-linear isomorphism, hence has degree one.  
This covers $S^1$, $S^1 \times \Z_p$, and $G_k$ for $k \neq 0$.  
The remaining case is $G_0 \cong \Z_p$.  
Here, by the assumptions of \cite[page~126, Theorem~4.11]{dieck1987transformation}, we only need to consider subgroups $G$ appearing as stabilizers whose Weyl group\footnote{In our context, for a subgroup $H \subset G$, the Weyl group is defined as $W_G H = N_G H/H$.} is finite.  
Since $S^1 \times \Z_p$ is abelian, the normalizer of $G_0$ is all of $S^1 \times \Z_p$, and therefore 
\[
N_{S^1 \times \Z_p}(G_0)/G_0
\]
is infinite.  
This completes the proof in the first case.  

For the second claim, by \Cref{prop:eqBF}, we may regard 
\[
BF_{S^1 \times \Z_p} \colon \C^{\mathbf{n}} \longrightarrow \C^{\mathbf{n}'}
\]
as a stable $S^1 \times \Z_p$-equivariant map with $\mathbf{n}' - \mathbf{n} \ge 0$.  
To see that $BF_{S^1 \times \Z_p}$ and $\iota$ are $S^1 \times \Z_p$-equivariantly homotopic, it suffices to check that 
\[
\deg\!\left(BF_{S^1 \times \Z_p}^G\right) = \deg(\iota) = 1
\]
for any subgroup $G \subset S^1 \times \Z_p$ appearing as a stabilizer such that 
\[
\dim\!\left(\C^{\mathbf{n}}\right)^G = \dim\!\left(\C^{\mathbf{n}'}\right)^G.
\]
If $G \cap \left(S^1 \times \{0\}\right) \neq \{1\}$, then by the same reasoning as before we have $\deg\!\left(BF_{S^1 \times \Z_p}^G\right) = 1$.  
For $G_0$, we have 
\[
\dim\!\left(\C^{\mathbf{n}}\right)^{G_0} < \dim\!\left(\C^{\mathbf{n}'}\right)^{G_0},
\]
so this case need not be considered.  
This completes the proof.
\end{proof}

We also need a lemma regarding equivariant metrics of positive scalar curvature on lens spaces.

\begin{lem} \label{lem: equivariant PSC metrics on lens spaces}
For any integer $n \neq 0$ and any finite-order diffeomorphism $\tau$ of a lens space $Y= \partial O(-n)$, there exists a $\tau$-equivariant metric of positive scalar curvature on $Y$.
\end{lem}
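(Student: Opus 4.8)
The plan is to obtain the metric by pulling back a metric of constant positive curvature along a suitable diffeomorphism, so that the entire statement reduces to linearizing the action generated by $\tau$.

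First I would record the following elementary reduction: it suffices to produce a metric $g_0$ of constant positive sectional curvature on $Y$ together with a diffeomorphism $\varphi\colon Y\to Y$ such that $\varphi\circ\tau\circ\varphi^{-1}$ is an isometry of $(Y,g_0)$. Granting this, the metric $g=\varphi^{*}g_0$ is the required one: its scalar curvature is $(\mathrm{scal}_{g_0})\circ\varphi$, a positive constant, and
\[
\tau^{*}g=(\varphi\circ\tau)^{*}g_0=\varphi^{*}\bigl((\varphi\circ\tau\circ\varphi^{-1})^{*}g_0\bigr)=\varphi^{*}g_0=g,
\]
so $g$ is a $\tau$-invariant metric of positive scalar curvature.

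The main step is to produce such a pair $(g_0,\varphi)$, and here I would invoke the equivariant geometrization of spherical $3$–manifolds. The manifold $Y=\partial O(-n)$ is a lens space, hence a spherical space form $S^{3}/\Gamma_0$ with $\Gamma_0\cong\Z/|n|$ acting freely and orthogonally; in particular $\pi_1(Y)$ is finite. The finite-order diffeomorphism $\tau$ generates a finite cyclic subgroup $G=\langle\tau\rangle\subset\mathrm{Diff}(Y)$, so the quotient $Y/G$ is a closed $3$–orbifold which is covered by the manifold $Y$ and therefore has finite orbifold fundamental group. By geometrization — namely Perelman's proof of the Elliptization Conjecture together with the Orbifold Theorem (Boileau--Leeb--Porti, Cooper--Hodgson--Kerckhoff), in the form worked out by Dinkelbach and Leeb for finite group actions on geometric $3$–manifolds — every smooth finite group action on a closed $3$–manifold admitting a spherical structure is smoothly conjugate to an action by isometries of a metric of constant positive sectional curvature. (Equivalently: $Y/G$ carries a spherical orbifold structure, and pulling back the round orbifold metric along the global quotient map $Y\to Y/G$ yields a smooth $G$–invariant metric on $Y$ of constant positive scalar curvature; when $G$ acts freely this uses only elliptization, and the non-free case is where the Orbifold Theorem enters.) Applying this to $G$ gives a metric $g_0$ of constant positive curvature on $Y$ and a conjugating diffeomorphism $\varphi$, exactly the data required above.

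The hard part is entirely this second step: there is no elementary way to linearize an arbitrary finite-order diffeomorphism of a lens space, so one genuinely relies on the cited geometrization theorems. In carrying this out I would check that the hypotheses of the version invoked are met — closedness of $Y$, smoothness of the $G$–action, and the spherical type of $Y$, together with orientability, which is automatic for $|n|\ge 3$ and is otherwise accommodated by allowing orthogonal rather than special-orthogonal transformations — and, in the orbifold formulation, that pulling back an orbifold Riemannian metric along $Y\to Y/G$ really yields a smooth metric on $Y$, which holds because $Y$ is a manifold and the covering is a global quotient.
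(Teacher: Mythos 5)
Your proof is correct and structurally the same as the paper's: both reduce to the observations that $\langle\tau\rangle$ is a finite subgroup of $\mathrm{Diff}(Y)$ and that $Y$ is a spherical space form, and then invoke a deep external theorem about finite group actions on such manifolds. The paper simply cites a result (their reference [Theorem~1.1, chow2024equivariant]) producing the invariant PSC metric directly, whereas you route through Dinkelbach--Leeb equivariant geometrization to linearize the action and pull back the round metric; this yields the stronger (but not needed) conclusion that the invariant metric can be taken to have constant positive sectional curvature.
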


\begin{proof}
The subgroup $\langle \tau \rangle \subset \mathrm{Diff}(Y)$ is finite, and $Y$ is a spherical space form.  
Thus the lemma follows from \cite[Theorem~1.1]{chow2024equivariant}.
\end{proof}

Now we prove \Cref{prop: BF gluing}.  
For the proof, we fix the $\Z_p$-equivariant decomposition of $W$:
\[
W \cong \#^{n}\overline{\mathbb{CP}}^{2} = O(-n) \cup_{Y'} \left( W_{n-1} \cup \cdots \cup W_1 \cup B \right) 
= O(-n) \cup_{Y'} C,
\]
as obtained from \Cref{lem:embedding}.

\begin{proof}[Proof of \Cref{prop: BF gluing}]
Let $S \subset O(-n)$ be a smoothly embedded sphere generating the homology, so that 
\[
O(-n) \cup_{Y'} C = \nu(S) \cup_{Y'} C
\]
gives a decomposition of $W$ along $Y' = -L(n,1)$.  
We put
\begin{align*}
X_{1,1} &= \left(\nu(S), \s\right), &
X_{2,1} &= \left(\nu(S), \s'\right), &
X_{3,1} &= \left(\nu(S), \s_0\right), \\
X_{1,2} &= \left(X \smallsetminus \overset{\circ}{\nu(S)}, \s\right), &
X_{2,2} &= \left(C, \s_{n}\right), &
X_{3,2} &= \left(C, \s_{n}\right).
\end{align*}
where the restrictions of $\s$ and $\s'$ are denoted by the same symbols, and $\s_0$ and $\s_{n}$ are $\Z_p$-equivariant $\mathrm{Spin}^c$ structures characterized by the property that the Frøyshov inequality is sharp and the restrictions coincide with $\s|_Y$ and $\s|_{Y'}$.  

Then we have
\begin{align*}
X_1 &= \left(X,\s\right), & 
X_2 &= \left(\#^{n}\overline{\mathbb{CP}}^{2}, \s' \# \s_{n}\right), & 
X_3 &= \left(\#^{n}\overline{\mathbb{CP}}^{2}, \s_0 \# \s_{n}\right), \\
W_1 &= \left(\#^{n}\overline{\mathbb{CP}}^{2}, \s \# \s_{n}\right), & 
W_2 &= \left(\#^{n}\overline{\mathbb{CP}}^{2}, \s_0 \# \s_{n}\right), & 
W_3 &= \left(X, \s'\right).
\end{align*}
From \Cref{general gluing}, we obtain
\begin{align*}
& BF_{S^1 \times \Z_p}\left(X,\s\right) \wedge 
  BF_{S^1 \times \Z_p}\left(\#^{n}\overline{\mathbb{CP}}^{2}, \s' \# \s_{n}\right) \wedge 
  BF_{S^1 \times \Z_p}\left(\#^{n}\overline{\mathbb{CP}}^{2}, \s_0 \# \s_{n}\right) \\
&\qquad= 
  BF_{S^1 \times \Z_p}\left(\#^{n}\overline{\mathbb{CP}}^{2}, \s \# \s_{n}\right) \wedge 
  BF_{S^1 \times \Z_p}\left(\#^{n}\overline{\mathbb{CP}}^{2}, \s_0 \# \s_{n}\right) \wedge 
  BF_{S^1 \times \Z_p}\left(X, \s'\right).
\end{align*}
By \Cref{lem:equivBFdetection}, the Bauer--Furuta invariant for $\left(\#^{n}\overline{\mathbb{CP}}^{2}, \s_0 \# \s_{n}\right)$ is stably $S^1 \times \Z_p$-equivariantly homotopic to the identity:
\[
BF_{S^1 \times \Z_p}\left(\#^{n}\overline{\mathbb{CP}}^{2}, \s_0 \# \s_{n}\right) \sim_{S^1 \times \Z_p} \operatorname{id}.
\]
Therefore, 
\begin{align*}
BF_{S^1 \times \Z_p}\left(X, \s\right) \wedge 
BF_{S^1 \times \Z_p}\left(\overline{\mathbb{CP}}^{2}, \s' \# \s_{n}\right) 
= 
BF_{S^1 \times \Z_p}\left(\overline{\mathbb{CP}}^{2}, \s \# \s_{n}\right) \wedge 
BF_{S^1 \times \Z_p}\left(X, \s'\right).
\end{align*}
Thus it is sufficient to determine 
\[
BF_{S^1 \times \Z_p}\left(\#^{n}\overline{\mathbb{CP}}^{2}, \s \# \s_{n}\right) \colon V^+ \longrightarrow W^+,
\]
for $V, W \in R(\Z_p)$ and a $\Z_p$-equivariant $\mathrm{Spin}^c$ structure.  
By \Cref{lem:equivBFdetection}, these maps are canonical inclusions up to stable $S^1 \times \Z_p$-equivariant homotopy.  
This completes the proof of the theorem.
\end{proof}

\section{Equivariant lattice homotopy type}

\subsection{$S^1$-action on plumbed 4-manifolds} \label{subsec: eqv plumbing action}

Given an integer $n$, consider the disk bundle $p \colon O(n) \to S^2$ of Euler number $n$.  
Choose closed disks $D_+, D_- \subset S^2$ such that $D_+ \cup D_- = S^2$ and $D_+ \cap D_-$ is a circle.  
We also choose trivializations
\[
p|_{p^{-1}(D_\pm)} \colon p^{-1}(D_\pm) \cong D^2 \times D_{\pm} \xrightarrow{(x,y) \mapsto y} D_{\pm}.
\]
Then $p^{-1}(D_+)$ and $p^{-1}(D_-)$ are glued along their boundaries as follows:
\[
p^{-1}(\partial D_+) \cong D^2 \times S^1 \xrightarrow{(z,e^{i\theta}) \mapsto (e^{in\theta}z, e^{i\theta})} 
D^2 \times S^1 \cong p^{-1}(\partial D_-).
\]
Thus, to construct an $S^1$-action on $O(n)$, it suffices to define $S^1$-actions on $p^{-1}(D_\pm)$ and verify that, when restricted to $\partial D_\pm$, the action commutes with the gluing map.  
In this subsection, we describe $S^1$-actions on $O(n)$ arising in this way and explain their relation to Seifert fibered spaces.

First consider the $S^1$-action on $p^{-1}(D_\pm)$ defined by
\[
e^{i\theta} \cdot (z,w) = \left(e^{i\theta}z, w\right), \quad (z,w) \in D^2 \times D_\pm.
\]
This action clearly commutes with the gluing map and therefore defines a smooth $S^1$-action on the total space $O(n)$.  
We call this the \emph{fiber rotation} (or \emph{fiberwise rotation}).

In general, for any pair of parameters $(p_+,q_+), (p_-,q_-) \in \Z^2 \smallsetminus \{(0,0)\}$, we may consider the action
\[
e^{i\theta} \cdot (z,w) = \left(e^{ip_\pm \theta}z,\, e^{iq_\pm \theta} w\right), 
\qquad (z,w) \in D^2 \times D_\pm.
\]
This action commutes with the gluing map if and only if the following linear relation holds:
\[
\begin{pmatrix}
    p_- \\ q_-
\end{pmatrix}
=
\begin{pmatrix}
    1 & n \\ 0 & 1
\end{pmatrix}
\begin{pmatrix}
    p_+ \\ q_+
\end{pmatrix}.
\]
We call the $S^1$-actions on $O(n)$ arising in this way \emph{linear actions}.  
More generally, we call any self-diffeomorphisms of $O(n)$ arising in this way \emph{linear diffeomorphisms}.  
Note that all linear actions on $O(n)$ preserve the zero-section of $p$ setwise.  
Clearly, the fiberwise rotation is a linear action.

Now let $Y$ be a Seifert fibered rational homology sphere.  
Then there exists a unique star-shaped negative definite almost rational plumbing graph $\Gamma$ such that $Y \cong W_\Gamma$.  
Note that $W_\Gamma$ is obtained by gluing disk bundles, i.e.,
\[
W_\Gamma = \bigcup_{v \in V(\Gamma)} D_v.
\]
We endow the disk bundle $D_{v_c}$ associated to the central node $v_c$ of $\Gamma$ with the fiberwise rotation.  
Then Orlik \cite[Section~2, Corollary~5]{Orlik:1972-1} showed that there exist unique linear actions on $D_v$ for each $v \in V(\Gamma) \smallsetminus \{v_c\}$ such that they glue together to a well-defined smooth $S^1$-action on the entire $4$-manifold $W_\Gamma$.  
Furthermore, the induced $S^1$-action on $Y = \partial W_\Gamma$ is fixed-point-free and coincides with the Seifert action of $Y$.

\subsection{$\mathbb{Z}_p$-equivariant $\mathrm{Spin}^c$ structures} \label{subsec: eqv spin c structure classification}

Choose any prime $p$.  We first confirm the definition of $\Z_p$-equivariant Spin$^c$ structures.
\begin{defn}
Given a smooth oriented $n$-manifold $X$ together with a smooth left $\Z_p$-action $\tau$ preserving the orientation, a $\Z_p$-equivariant $\mathrm{Spin}^c$ structure on $X$ consists of a $\mathrm{Spin}^c$ structure $\mathfrak{s}$ on $X$, together with a smooth lift of the $\Z_p$-action on the frame bundle of $X$ to the principal $\mathrm{Spin}^c$ bundle $P(\s)$ of $\mathfrak{s}$, i.e., a commutative diagram
\[
  \begin{CD}
     P(\s) @>{\tilde{\tau}}>> P(\s)\\
  @V{\pi}VV    @V{\pi}VV \\
     \mathrm{Fr}(X)   @>{\tau_*}>>  \mathrm{Fr}(X)
  \end{CD}
  \qquad \text{with} \quad \tilde{\tau}^p = \id,
\]
where $\mathrm{Fr}(X)$ denotes the frame bundle with respect to a $\Z_p$-invariant Riemannian metric $g$ that is a product near the boundary, and $\tau_*$ denotes the induced action on $\mathrm{Fr}(X)$ so that $\tilde{\tau}$ commutes with the right $\mathrm{Spin}^c(n)$-action. 
\end{defn}
Isomorphisms of $\Z_p$-equivariant $\mathrm{Spin}^c$ structures are defined in the obvious way.  We describe how to classify $\Z_p$-equivariant Spin$^c$ structures, using equivariant classifying spaces. 
The following remark reviews the properties of equivariant classifying spaces: 
\begin{rem}
We explain the general theory of equivariant classifying spaces; see \cite{husemoller1966fibre, lashof1982equivariant, lashof1986generalized, may1990some} for more details.
Let $X$ be a left $G$-CW complex for a compact Lie group  $G$. We note that the definition of $G$-equivariant principal $H$-bundles depends on an extension of $G$ by $H$ as compact Lie groups:
\[
\{e\} \to H \to \Gamma \to G \to \{e\}, 
\]
where $H$ is a normal closed subgroup of $\Gamma$. Then, for a fixed extension, the general notion of $G$-equivariant principal $H$-bundles is defined as follows: for a given left $G$-space $X$, a principal $(H; \Gamma)$-bundle $(P, \pi)$ is a principal $H$-bundle $\pi : P \to X $ with the left $\Gamma$-action such that

\begin{itemize}
    \item the left action of $\Gamma$ and the right action of $H$ has the relation: 
    \[
\gamma \cdot (p \cdot h) \;=\; (\gamma \cdot p)\cdot (\gamma h \gamma^{-1}),
\qquad (\gamma \in \Gamma,\; h \in H,\; p \in P) \text{ and }
    \] 
    \item the diagram
\[
\begin{CD}
    \Gamma \times P @>>>  P \\ 
    @VVV @VVV \\
     G \times X @>>>  X 
\end{CD}
\]
commutes.
\end{itemize}
Note that when $\Gamma = H \times G$, the first equation becomes 
\[
\gamma \cdot (p \cdot h) \;=\; (\gamma \cdot p)\cdot h 
\]
which is equivalent to say that the left $G$-action and the right $H$-action commute. 
 The isomorphisms of principal $(H; \Gamma)$-bundles are defined naturally. We denote by $\operatorname{Pr}^{(H; \Gamma)} (X)$ the set of isomorphisms classes of principal $(H; \Gamma)$-bundles for the extension.
In \cite{lashof1982equivariant, may1990some}, Lashof and May constructed a a $G$-space $B_G^{\Gamma} H$ togather with the universal principal $(H; \Gamma)$
\[
E_G^{\Gamma} H \to B_G^{\Gamma} H
\]
and with a natural bijection: 
\[
[X, B_G^{\Gamma} H]^G \cong \operatorname{Pr}^{(H; \Gamma)} (X), 
\]
where the left hand side is the set of $G$-equivariant homotopy classes of $G$-maps between $X$ and $B_G^{\Gamma} H$. This $G$-space $B_G^{\Gamma} H$ is called {\it equivariant classifying space for $(H; \Gamma)$}. When we take the extension as the product $\Gamma = H \times G$, we simply denote $B_G^{\Gamma} H$ by $B_G H$. In this paper, we only use the product case. Here we summarize its construction in the case of $\Gamma = H \times G$: 
Choose a representative subgroup $S$ from each conjugacy class of closed subgroups of $G$. 
For each such $S$, fix a representative homomorphism 
\[
p \colon S \longrightarrow H
\] 
from each $H$-equivalence class of homomorphisms 
(two homomorphisms being equivalent if they are conjugate in $H$). 
Let $\{p_\alpha\}_{\alpha \in \Lambda}$ denote the resulting collection of representatives.  

For each pair $(S,p_\alpha)$, define 
\[
E_\alpha \;=\; G \times_{S} H,
\] 
where $S$ acts on $H$ via $p_\alpha$, i.e.\ $s \cdot h := p_\alpha(s)h$.  
Set 
\[
E = \bigsqcup_{\alpha \in \Lambda} E_\alpha.
\] 
Following \cite[Section 11]{husemoller1966fibre}, one now forms the infinite join 
\[
E_GH \;=\; E^{*\infty} = E * E * E * \cdots,
\] 
which inherits a natural structure of a $G$-equivariant principal $H$-bundle. 
The equivariant classifying space is defined to be the base 
\[
B_G H := E_GH /H
\] 
equipped with a natural $G$-action.

\end{rem}
 
Using it, a $\Z_p$-equivariant Spin$^c$ structure corresponds to a homotopy class of $\Z_p$-equivariant lifts of a given $\Z_p$-equivariant map $X \to B_{\Z_p}SO(n)$ to a map $X \to B_{\Z_p}\mathrm{Spin}^c(n)$. Since there is a fiber sequence
\[
B_{\Z_p}U(1) \longrightarrow B_{\Z_p}\mathrm{Spin}(n) \longrightarrow B_{\Z_p}SO(n),
\]
such lifts are classified by elements of $[X, B_{\Z_p}U(1)]^{\Z_p}$, the set of $\Z_p$-equivariant homotopy classes of $\Z_p$-equivariant maps.

We denote by $\mathrm{Spin}^c(X)$ the set of $\mathrm{Spin}^c$ structures on $X$, and by $\mathrm{Spin}^c_{\mathbb{Z}_p}(X)$ the set of $\mathbb{Z}_p$-equivariant $\mathrm{Spin}^c$ structures on $X$.  
From the discussion above, we obtain natural bijections
\[
\mathrm{Spin}^c(X) \cong [X, BU(1)]
\qquad \text{and} \qquad
\mathrm{Spin}^c_{\mathbb{Z}_p}(X) \cong [X, B_{\mathbb{Z}_p}U(1)]^{\Z_p},
\]
where $[X, BU(1)]$ denotes the set of homotopy classes of maps $X \to BU(1)$, the classifying space of $U(1)$. 
Equivalently, we claim that the $\Z_p$-equivariant Spin$^c$ structures are classified by $\Z_p$-equivariant principal $U(1)$-bundles.

% , and $[X, B_{\mathbb{Z}_p}U(1)]^{\Z_p}$ denotes the set of $\mathbb{Z}_p$-equivariant homotopy classes of $\mathbb{Z}_p$-equivariant maps from $X$ (with a given $\mathbb{Z}_p$-action) to the Borel construction $B_{\mathbb{Z}_p}U(1)$.

Given a $\Z_p$-equivariant $\mathrm{Spin}^c$ structure $\mathfrak{s}$ on $X$, we denote its underlying $\mathrm{Spin}^c$ structure on $X$ by $\mathcal{N}(\mathfrak{s})$.  
We first consider the case where $X$ is a disk bundle over $S^2$ endowed with a linear $S^1$-action, and we take the $\mathbb{Z}_p$-action on $X$ to be the corresponding subaction.  
Let $S_X$ denote the zero-section of $X$; note that $S_X$ is setwise $\Z_p$-invariant, and the induced $\Z_p$-action on $S_X$ is either trivial or a rotation.  
Since $X$ admits a $\Z_p$-equivariant deformation retraction onto $S_X$, we obtain
\[
\mathrm{Spin}^c(X)\cong [S_X, BU(1)]
\qquad \text{and} \qquad
\mathrm{Spin}^c_{\Z_p}(X)\cong [S_X, B_{\Z_p}U(1)]^{\Z_p}.
\]
Because $U(1)$ is $1$-dimensional, it follows from \cite[Corollary~1.6]{rezk2018classifying} that
\[
[S_X, B_{\Z_p}U(1)]^{\Z_p} \cong [S_X \times_{\Z_p} E\Z_p, BU(1)].
\]
Hence we deduce the (uncanonical) identifications\footnote{Using the same argument, one can show that $\mathrm{Spin}^c_{\Z_p}(X) \cong H^2_{\Z_p}(X;\Z)$ even when $X$ is not a disk bundle.}
\[
\mathrm{Spin}^c(X) \cong H^2(S_X;\Z) 
\qquad \text{and} \qquad  
\mathrm{Spin}^c_{\Z_p}(X)\cong H^2_{\Z_p}(S_X;\Z),
\]
which fit into the following commutative square, where the bottom map is the canonical map from equivariant to ordinary cohomology:
\[
\xymatrix{
\mathrm{Spin}^c_{\Z_p}(X) \ar[r]\ar[d]^\cong & \mathrm{Spin}^c(X) \ar[d]^\cong \\
H^2_{\Z_p}(S_X;\Z) \ar[r] & H^2(S_X;\Z)
.}
\]
To compute $H^2_{\Z_p}(S_X;\Z)$, we use the Serre spectral sequence
\[
E_2^{i,j} = H^i(B\Z_p; H^j(S_X;\Z)) \;\;\Rightarrow\;\; H^{i+j}_{\Z_p}(X;\Z).
\]
Since the $\Z_p$-action on $S_X$ is orientation-preserving, the local system $H^j(S_X;\Z)$ is trivial over $B\Z_p$, so the spectral sequence reduces to
\[
E_2^{i,j} = H^i(B\Z_p;\Z) \otimes H^j(S_X;\Z) \;\;\Rightarrow\;\; H^{i+j}_{\Z_p}(X;\Z).
\]
Because $E_2^{i,j}=0$ whenever either $i$ or $j$ is odd, there can be no nontrivial differential $d_n$ for $n \geq 2$, so the spectral sequence degenerates at $E_2$.  
Thus we obtain a short exact sequence
\[
0 \longrightarrow H^2(B\Z_p;\Z) \longrightarrow H^2_{\Z_p}(S_X;\Z) \longrightarrow H^2(S_X;\Z)\;(=\Z)\longrightarrow 0.
\]
Since $H^2(S_X;\Z)$ is free, the sequence splits, yielding the following lemma.

\begin{lem} \label{lem: eqv spin c over disk bundles}
Let $X$ be a disk bundle over $S^2$ equipped with a linear $S^1$-action, and let $X$ carry the restricted $\mathbb{Z}_p$-subaction.  
Then there is a natural bijection
\[
\mathrm{Spin}^c_{\mathbb{Z}_p}(X) \xrightarrow{\;\;\cong\;\;} \mathrm{Spin}^c(X) \times \mathbb{Z}_p,
\]
where the first coordinate is the underlying nonequivariant $\mathrm{Spin}^c$ structure on $X$.
\end{lem}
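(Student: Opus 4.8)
The plan is to prove \Cref{lem: eqv spin c over disk bundles} by making the short exact sequence
\[
0 \longrightarrow H^2(B\Z_p;\Z) \longrightarrow H^2_{\Z_p}(S_X;\Z) \longrightarrow H^2(S_X;\Z) \longrightarrow 0
\]
functorial in the obvious data, and then observing that a splitting of this sequence is naturally provided. First I would recall that the derivation of the above short exact sequence is already complete in the excerpt: the $\Z_p$-equivariant deformation retraction of $X$ onto its zero section $S_X$ identifies $\mathrm{Spin}^c_{\Z_p}(X)$ with $H^2_{\Z_p}(S_X;\Z)$ and $\mathrm{Spin}^c(X)$ with $H^2(S_X;\Z)$, compatibly with the forgetful map on the one side and the canonical map $H^2_{\Z_p}\to H^2$ on the other. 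Since $\Z_p$ acts on $S_X\cong S^2$ orientation-preservingly, the Serre spectral sequence for $S_X\to S_X\times_{\Z_p}E\Z_p\to B\Z_p$ degenerates at $E_2$ because $E_2^{i,j}$ vanishes whenever $i$ or $j$ is odd, and this yields exactly the displayed sequence, with the submodule $H^2(B\Z_p;\Z)\cong\Z_p$.

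The key remaining point is that the sequence \emph{splits naturally}, and more precisely that the set $\mathrm{Spin}^c_{\Z_p}(X)$ becomes a torsor over $H^2(S_X;\Z)\oplus H^2(B\Z_p;\Z)$ with the first summand matching the torsor structure on $\mathrm{Spin}^c(X)$. I would argue the splitting as follows: the pullback map $H^2(B\Z_p;\Z)\to H^2_{\Z_p}(S_X;\Z)$ along the projection $S_X\times_{\Z_p}E\Z_p\to B\Z_p$ is the inclusion in the sequence; a retraction is obtained from any $\Z_p$-equivariant point $\{pt\}\hookrightarrow S_X$. Such a point exists because a smooth finite cyclic action on $S^2$ is conjugate to a rotation (standard, e.g.\ by averaging a metric and invoking the classification of finite subgroups of $SO(3)$, or simply because the $S^1$-action restricted from the linear action on the disk bundle already fixes the two poles of the zero section), so choosing a fixed point $x_0\in S_X$ gives a section $B\Z_p=\{x_0\}\times_{\Z_p}E\Z_p\hookrightarrow S_X\times_{\Z_p}E\Z_p$ which induces the desired retraction $H^2_{\Z_p}(S_X;\Z)\to H^2(B\Z_p;\Z)$. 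This splits the sequence and produces the bijection
\[
\mathrm{Spin}^c_{\Z_p}(X)\;\xrightarrow{\ \cong\ }\;\mathrm{Spin}^c(X)\times \Z_p,
\]
in which the first coordinate is precisely the underlying nonequivariant $\mathrm{Spin}^c$ structure $\mathcal N(\fraks)$, since the composite $H^2_{\Z_p}(S_X;\Z)\to H^2(S_X;\Z)$ agrees under the identifications with the forgetful map on $\mathrm{Spin}^c$ structures. Finally I would note that the identification $H^2(B\Z_p;\Z)\cong\Z_p$ uses only that $p$ is prime, so no further hypothesis is needed, and that the word ``natural'' in the statement refers to naturality in $X$ among disk bundles with linear $S^1$-actions and equivariant bundle maps between them, which follows since every construction used (the retraction onto $S_X$, the spectral sequence, the chosen equivariant point up to equivariant homotopy) is functorial.

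The main obstacle I anticipate is purely bookkeeping rather than conceptual: one must be careful that the map ``first coordinate $=$ underlying $\mathrm{Spin}^c$ structure'' is genuinely canonical while the second coordinate (the $\Z_p$ factor) is only defined up to the choice of equivariant fixed point $x_0$, and to check that different choices of $x_0$ differ by an $H^2(B\Z_p;\Z)$-translation that does not affect the statement as written (which only asserts a bijection whose first coordinate is the forgetful map, not a canonical isomorphism of torsors). A secondary subtlety is justifying the existence of the $\Z_p$-fixed point on $S_X$ cleanly; the quickest route is to observe that the linear $S^1$-action on the disk bundle $O(n)$ fixes the zero-section setwise and acts there through a rotation of $S^2$ (as set up in \Cref{subsec: eqv plumbing action}), hence has fixed poles, and the restricted $\Z_p$-action inherits these fixed points — so no appeal to abstract smoothing theorems is even required. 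With these two points handled, the lemma follows immediately from the already-established short exact sequence together with the splitting.
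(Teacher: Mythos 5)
Your proposal is correct and follows the paper's route almost exactly: the same equivariant deformation retraction onto $S_X$, the same Serre spectral sequence degeneration, and the same short exact sequence
\[
0 \longrightarrow H^2(B\Z_p;\Z) \longrightarrow H^2_{\Z_p}(S_X;\Z) \longrightarrow H^2(S_X;\Z) \longrightarrow 0.
\]
The only divergence is in the final step. The paper splits the sequence with a one-line algebraic observation — the quotient $H^2(S_X;\Z)\cong\Z$ is free, hence the sequence splits — whereas you construct an explicit retraction $H^2_{\Z_p}(S_X;\Z)\to H^2(B\Z_p;\Z)$ by restricting to a $\Z_p$-fixed point $x_0\in S_X$ (which indeed always exists, since the linear action fixes the poles of the zero section, or is trivial on it). Both arguments are valid and give the same conclusion; yours costs a little more work but buys an explicit second coordinate, which is essentially what the paper does anyway a few lines later when it introduces the equivariance number $n^x_{\mathrm{eqv}}$ in \Cref{lem: equivariance number} to upgrade the non-canonical splitting to a concrete one. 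Your closing remarks about the dependence of the $\Z_p$-coordinate on the choice of $x_0$ are consistent with the paper's own caveat that $n^{x}_{\mathrm{eqv}}$ is well defined only up to an overall cyclic shift.
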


As an immediate consequence we obtain:

\begin{cor} \label{cor: eqv lift of spin c given by inv point}
Under the assumptions of \Cref{lem: eqv spin c over disk bundles}, let $U \subset X$ be a contractible open subset that is setwise $\mathbb{Z}_p$-invariant.  
Suppose $\mathfrak{s}_X \in \mathrm{Spin}^c(X)$ and $\tilde{\mathfrak{s}}_U \in \mathrm{Spin}^c_{\mathbb{Z}_p}(U)$ satisfy
\[
\mathcal{N}(\tilde{\mathfrak{s}}_U) = \mathfrak{s}_X|_U.
\]
Then there exists a unique $\tilde{\mathfrak{s}}_X \in \mathrm{Spin}^c_{\mathbb{Z}_p}(X)$ such that
\[
\mathcal{N}(\tilde{\mathfrak{s}}_X) = \mathfrak{s}_X
\qquad \text{ and } \qquad
\tilde{\mathfrak{s}}_X|_U = \tilde{\mathfrak{s}}_U.
\]
\end{cor}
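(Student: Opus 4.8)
The plan is to deduce the statement formally from the commutative ladder of Serre spectral sequences set up just before \Cref{lem: eqv spin c over disk bundles}, together with the observation that, for a fixed underlying non-equivariant $\mathrm{Spin}^c$ structure, the set of equivariant lifts is a torsor over $H^2(B\mathbb{Z}_p;\mathbb{Z})$ in a way that is natural under restriction along $\mathbb{Z}_p$-maps.

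First I would record the torsor structure on the fibers of $\mathcal{N}$. Writing $S_X$ for the zero-section and using the $\mathbb{Z}_p$-equivariant deformation retraction of $X$ onto $S_X$, the split short exact sequence
\[
0 \longrightarrow H^2(B\mathbb{Z}_p;\mathbb{Z}) \longrightarrow H^2_{\mathbb{Z}_p}(S_X;\mathbb{Z}) \longrightarrow H^2(S_X;\mathbb{Z}) \longrightarrow 0
\]
established above, combined with the bijections $\mathrm{Spin}^c_{\mathbb{Z}_p}(X)\cong H^2_{\mathbb{Z}_p}(S_X;\mathbb{Z})$ and $\mathrm{Spin}^c(X)\cong H^2(S_X;\mathbb{Z})$, shows that $\mathcal{N}$ is surjective and that each fiber $\mathcal{N}^{-1}(\mathfrak{s}_X)$ is a torsor over $H^2(B\mathbb{Z}_p;\mathbb{Z})$, where a class acts by tensoring the underlying $\mathbb{Z}_p$-equivariant $U(1)$-bundle with its pullback along the Borel map $X\to B\mathbb{Z}_p$. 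This description of the action is manifestly natural and independent of the chosen splitting. Next I would treat $U$: since $U$ is contractible, $\mathrm{Spin}^c(U)$ is a single point (as $H^2(U;\mathbb{Z})=0$), and the argument in the proof of \Cref{lem: eqv spin c over disk bundles} (which, as noted in the accompanying footnote, does not use the disk-bundle hypothesis, together with \cite[Corollary~1.6]{rezk2018classifying}) gives $\mathrm{Spin}^c_{\mathbb{Z}_p}(U)\cong H^2_{\mathbb{Z}_p}(U;\mathbb{Z})$. Moreover the projection $U\times E\mathbb{Z}_p\to E\mathbb{Z}_p$ is a $\mathbb{Z}_p$-equivariant map between free $\mathbb{Z}_p$-CW complexes which is a non-equivariant homotopy equivalence, hence a $\mathbb{Z}_p$-equivariant homotopy equivalence, so $U\times_{\mathbb{Z}_p}E\mathbb{Z}_p\simeq B\mathbb{Z}_p$ and $H^2_{\mathbb{Z}_p}(U;\mathbb{Z})\cong H^2(B\mathbb{Z}_p;\mathbb{Z})$. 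Thus $\mathrm{Spin}^c_{\mathbb{Z}_p}(U)$ is a single $H^2(B\mathbb{Z}_p;\mathbb{Z})$-torsor lying entirely over the unique point $\mathfrak{s}_X|_U$ of $\mathrm{Spin}^c(U)$.

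The key point is then that the restriction map $r\colon \mathrm{Spin}^c_{\mathbb{Z}_p}(X)\to \mathrm{Spin}^c_{\mathbb{Z}_p}(U)$ commutes with $\mathcal{N}$ and with the $H^2(B\mathbb{Z}_p;\mathbb{Z})$-actions: the inclusion $U\hookrightarrow X$ is a map over $B\mathbb{Z}_p$, and both actions are given by tensoring with bundles pulled back from $B\mathbb{Z}_p$, so the compatibility is just naturality of pullback (equivalently, the inclusion induces a morphism of the two short exact sequences above which is the identity on the $H^2(B\mathbb{Z}_p;\mathbb{Z})$ term). Restricting $r$ to the fiber $\mathcal{N}^{-1}(\mathfrak{s}_X)$ therefore yields an $H^2(B\mathbb{Z}_p;\mathbb{Z})$-equivariant map between two torsors over $H^2(B\mathbb{Z}_p;\mathbb{Z})$, namely $\mathcal{N}^{-1}(\mathfrak{s}_X)$ and $\mathrm{Spin}^c_{\mathbb{Z}_p}(U)$, and any equivariant map of torsors over a group is a bijection. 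The unique element $\tilde{\mathfrak{s}}_X\in\mathcal{N}^{-1}(\mathfrak{s}_X)$ with $r(\tilde{\mathfrak{s}}_X)=\tilde{\mathfrak{s}}_U$ is the desired lift, and uniqueness is immediate from injectivity of $r|_{\mathcal{N}^{-1}(\mathfrak{s}_X)}$.

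I expect the only real obstacle to be the bookkeeping in the last step — verifying that the $H^2(B\mathbb{Z}_p;\mathbb{Z})$-torsor structures on $\mathcal{N}^{-1}(\mathfrak{s}_X)$ and on $\mathrm{Spin}^c_{\mathbb{Z}_p}(U)$ really are matched up by $r$, rather than merely matched up after an ad hoc choice of splittings. This is why I would consistently phrase the action as ``twist by a bundle pulled back from $B\mathbb{Z}_p$'' throughout, since in that form naturality under $\mathbb{Z}_p$-maps is tautological and the whole argument becomes purely formal.
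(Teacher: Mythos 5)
Your argument is correct, and it fills in carefully what the paper leaves as an unproved ``immediate consequence'' of \Cref{lem: eqv spin c over disk bundles}: the paper's intended derivation is exactly the commutative ladder of short exact sequences in equivariant cohomology induced by the inclusion $U\hookrightarrow X$, with the identity map on the $H^2(B\mathbb{Z}_p;\mathbb{Z})$-term, which is what your torsor formulation encodes. One very small simplification you could make: the isomorphism $H^2_{\mathbb{Z}_p}(U;\mathbb{Z})\cong H^2(B\mathbb{Z}_p;\mathbb{Z})$ follows directly from the Serre spectral sequence of $U\to U_{h\mathbb{Z}_p}\to B\mathbb{Z}_p$ with contractible fiber, without needing to invoke the equivariant Whitehead theorem for free $\mathbb{Z}_p$-CW complexes.
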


On the fixed point set $X^{\Z_p}$ with a fixed orientation, the framed bundle $\mathrm{Fr}(X)$ admits a reduction to a principal 
\[
T = U(1) \times U(1) \;\subset\; SO(4)
\] 
bundle $P(T)$, arising from the identification $T_x X \cong \C^2$ with the induced $\Z_p$-representation.  
Such a reduction is unique up to homotopy.  
Correspondingly, the principal $\mathrm{Spin}^c$ bundle $P(\s)$ admits a reduction to a principal $\tilde{T}$-bundle $P(\tilde{T})$, where
\[
\tilde{T} = U(1) \times U(1) \times U(1)\, /\, \{\pm (1,1,1)\} 
\;\subset\; \mathrm{Sp}(1) \times \mathrm{Sp}(1) \times U(1)\,/\,\{\pm (1,1,1)\}.
\]
The projection $P(\tilde{T})_x \to P(T)_x$ is given by
\[
[(x,y,z)] \longmapsto (xy,\, xy^{-1}) .
\]
Since the $\Z_p$-action is orientation-preserving, every component of $X^{\Z_p}$ has even codimension.  
In particular, the connected component $S$ of $X^{\Z_p}$ containing $x$ has codimension $2$ or $4$.

Suppose first that $S$ has codimension $2$.  
Then, near $x$, the action of $\Z_p$ is locally modeled by the fiber rotation of the normal bundle of $S$.  
We may write the fiber rotation angle of the action of $[1] \in \Z_p$ as $\tfrac{2k\pi}{p}$, where $0 < k < p$.  
The induced action on $P(\tilde{T})_x$ is then
\[
(x,y) \longmapsto (x,\, \zeta_p^{k} y), 
\qquad x,y \in \C .
\]
All possible $\Z_p$-lifts can be listed as
\begin{equation} \label{eqn: eqv number in codim 2}
[(x,y,z)] \longmapsto
\left[
\big((-1)^{k+1}\zeta_{2p}^{\,k} x,\;
(-1)^{k+1}\zeta_{2p}^{\,-k} y,\;
\zeta_p^{\,m}\,\zeta_{2p} z\big)
\right],
\qquad m \in \Z_p .
\end{equation}

Now suppose that $S$ has codimension $4$, i.e.\ $x \in X^{\Z_p}$ is an isolated fixed point.  
Then, near $x$, the action of $[1]\in \Z_p$ can locally be written as 
\[
(x,y) \longmapsto \big(\zeta_p^{k_1} x,\; \zeta_p^{k_2} y\big),
\qquad x,y \in \C .
\]
All possible $\Z_p$-lifts can then be listed as
\begin{equation} \label{eqn: eqv number in codim 4}
[(x,y,z)] \longmapsto
\left[
\big((-1)^{k_1+k_2+1}\zeta_{2p}^{\,k_1+k_2} x,\;
(-1)^{k_1+k_2+1}\zeta_{2p}^{\,k_1-k_2} y,\;
\zeta_p^{\,m}\,\zeta_{2p} z\big)
\right],
\qquad m \in \Z_p .
\end{equation}

\begin{defn} \label{def: eqv number and det line bundle}
We define the number $m \in \Z_p$ in 
\Cref{eqn: eqv number in codim 2,eqn: eqv number in codim 4} 
as the \emph{equivariance number} $n^x_{\mathrm{eqv}}(\s)$ of $\s$ at $x$.
\end{defn}

It is straightforward to check that the value of $n^x_{\mathrm{eqv}}$ depends only on the connected component of $X^{\Z_p}$ containing $x$ and its orientation. Hence we fix orientations on each component of $X^{\Z_p}$ from now on. Also, if $X^{\Z_p}$ is connected, we will often drop $x$ from the notation and simply write $n_{\mathrm{eqv}}$.

\begin{rem} \label{rem: defining eqv number using det line bundle}
When $p$ is odd, the equivariance number can alternatively be defined as follows: the generator $[1]\in \Z_p$ acts on the fiber of the determinant line bundle of $\s$ by a $\tfrac{4\pi}{p}$-rotation. This description, however, does not apply when $p=2$, since it requires $2$ to be invertible modulo $p$. For this reason we used the local model definition, which works uniformly for all primes $p$.
\end{rem}

To show that $n^x_{\mathrm{eqv}}$ is indeed a projection 
\[
\mathrm{Spin}^c_{\Z_p}(X) \longrightarrow \Z_p,
\]
it suffices to prove that the restriction
\[
n^x_{\mathrm{eqv},\,\mathcal{N}(\tilde{\s})}\colon 
\mathrm{Spin}^c_{\Z_p}(X,\mathcal{N}(\tilde{\s})) \xrightarrow{\;\cong\;} \Z_p,
\]
is a bijection, where $\mathrm{Spin}^c_{\Z_p}(X,\mathcal{N}(\tilde{\s}))$ denotes the subset of $\mathrm{Spin}^c_{\Z_p}(X)$ whose nonequivariant truncation is $\mathcal{N}(\tilde{\s})$.

For this, consider the \emph{twisting operation} on $\Z_p$-equivariant $\mathrm{Spin}^c$ structures. Recall that such a structure is given by a principal 
\[
\mathrm{Spin}^c(4) = \mathrm{Spin}(4)\times_{\{\pm 1\}} U(1)
\]
bundle $E \to X$ inducing the tangent bundle of $X$, together with a $\Z_p$-action on $E$ that lifts the given action on $X$. For any $[k]\in \Z_p$, we may modify this $\Z_p$-action by multiplying the action of $[1]\in \Z_p$ with $e^{2\pi i k/p}\in U(1)$. We call this operation the \emph{$k$-twisting}. Note that $k$-twisting is well-defined for any smooth manifold equipped with a smooth $\Z_p$-action.
By definition of $n_{\mathrm{eqv}}$, if $\tilde{\s}_k$ denotes the $k$-twist of $\tilde{\s}$, then
\[
n^x_{\mathrm{eqv}}(\tilde{\s}_k) 
= n^x_{\mathrm{eqv}}(\tilde{\s}) + [k] \quad (\in \Z_p).
\]
Since $k$-twisting does not change the nonequivariant truncation, i.e., $\mathcal{N}(\tilde{\s}_k) = \mathcal{N}(\tilde{\s})$, it follows that $n^x_{\mathrm{eqv},\,\mathcal{N}(\tilde{\s})}$ is surjective. On the other hand, \Cref{lem: eqv spin c over disk bundles} shows that
\[
\bigl|\mathrm{Spin}^c_{\Z_p}(X,\mathcal{N}(\tilde{\s}))\bigr| = |\Z_p| = p,
\]
so $n^x_{\mathrm{eqv},\,\mathcal{N}(\tilde{\s})}$ must also be injective. Therefore it is a bijection, and $n^x_{\mathrm{eqv}}$ has the desired properties. We may summarize this as follows.

\begin{lem} \label{lem: equivariance number}
Under the assumptions of \Cref{lem: eqv spin c over disk bundles}, for every $\Z_p$-fixed point $x\in X$, the assignment
\[
\mathrm{Spin}^c_{\Z_p}(X)\;\xrightarrow{\;\;\cong\;\;}\;\mathrm{Spin}^c(X)\times \Z_p; \qquad \tilde{\s}\longmapsto\bigl(\,\mathcal{N}(\tilde{\s}),\,n^x_{\mathrm{eqv}}(\tilde{\s})\,\bigr)
\]
is a bijection.
\end{lem}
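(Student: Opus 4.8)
The plan is to factor the asserted map through the nonequivariant truncation $\mathcal{N}\colon \mathrm{Spin}^c_{\Z_p}(X)\to \mathrm{Spin}^c(X)$ and to analyze it fiber by fiber. By \Cref{lem: eqv spin c over disk bundles}, $\mathcal{N}$ is surjective and each fiber $\mathrm{Spin}^c_{\Z_p}(X,\mathfrak{s}_X):=\mathcal{N}^{-1}(\mathfrak{s}_X)$ has exactly $p$ elements. Since the equivariance number does not change the underlying $\mathrm{Spin}^c$ structure, $n^x_{\mathrm{eqv}}$ restricts to a map $n^x_{\mathrm{eqv},\mathfrak{s}_X}\colon \mathrm{Spin}^c_{\Z_p}(X,\mathfrak{s}_X)\to \Z_p$ on each such fiber. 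Thus it suffices to prove that every $n^x_{\mathrm{eqv},\mathfrak{s}_X}$ is a bijection; and because source and target both have cardinality $p$, this is equivalent to surjectivity.

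For surjectivity I would exploit the $k$-twisting operation on $\Z_p$-equivariant $\mathrm{Spin}^c$ structures: starting from a lift $\tilde{\s}$, whose $\Z_p$-action on the principal $\mathrm{Spin}^c(4)$-bundle $P(\s)$ is determined by the action of $[1]\in\Z_p$, one produces a new lift $\tilde{\s}_k$ by postcomposing the action of $[1]$ with scalar multiplication by $e^{2\pi i k/p}\in U(1)\subset \mathrm{Spin}^c(4)$. This operation visibly fixes $\mathcal{N}$, hence permutes each fiber of $\mathcal{N}$, and reading off the local model \eqref{eqn: eqv number in codim 2} (respectively \eqref{eqn: eqv number in codim 4} at an isolated fixed point) one sees that $k$-twisting multiplies the $z$-coordinate of the reduced $\tilde{T}$-action by $\zeta_p^{\,k}$, so that $n^x_{\mathrm{eqv}}(\tilde{\s}_k)=n^x_{\mathrm{eqv}}(\tilde{\s})+[k]$ in $\Z_p$. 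Therefore, fixing any single lift in the fiber, the twists $\{\tilde{\s}_k\}_{k\in\Z_p}$ realize all values of $n^x_{\mathrm{eqv}}$, which gives the desired surjectivity of $n^x_{\mathrm{eqv},\mathfrak{s}_X}$, hence its bijectivity. Finally, the full map $\tilde{\s}\mapsto(\mathcal{N}(\tilde{\s}),n^x_{\mathrm{eqv}}(\tilde{\s}))$ is a bijection because it restricts to a bijection on each fiber of its first coordinate and $\mathcal{N}$ is itself surjective.

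The main obstacle is not the counting, which is immediate from \Cref{lem: eqv spin c over disk bundles}, but the bookkeeping needed to make $n^x_{\mathrm{eqv}}$ well defined and to verify the shift formula $n^x_{\mathrm{eqv}}(\tilde{\s}_k)=n^x_{\mathrm{eqv}}(\tilde{\s})+[k]$. Concretely, one must check that $P(\s)$ admits, near $x$, a reduction to a principal $\tilde{T}$-bundle $P(\tilde{T})$ compatible with the chosen orientation of the component of $X^{\Z_p}$ through $x$, that such a reduction is unique up to $\Z_p$-equivariant homotopy, and that the integer $m$ extracted from \eqref{eqn: eqv number in codim 2}/\eqref{eqn: eqv number in codim 4} is independent of the reduction, of the point $x$ within its component, and of the orientation in the combinations in which it actually enters — that is, one needs precisely the content of \Cref{def: eqv number and det line bundle} and the analysis of the local models preceding it, handled uniformly in the prime $p$ (including $p=2$, where the determinant-line description of \Cref{rem: defining eqv number using det line bundle} is unavailable). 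Once these local verifications are in place, the reduction to fibers of $\mathcal{N}$ and the twisting argument close the proof.
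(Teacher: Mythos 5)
Your proposal is correct and follows essentially the same route as the paper: restrict to the fibers of $\mathcal{N}$ (each of cardinality $p$ by \Cref{lem: eqv spin c over disk bundles}), use the $k$-twisting operation together with the shift formula $n^x_{\mathrm{eqv}}(\tilde{\s}_k)=n^x_{\mathrm{eqv}}(\tilde{\s})+[k]$ to get surjectivity on each fiber, and conclude bijectivity by counting. The well-definedness bookkeeping you flag is likewise exactly the content of the local-model discussion preceding the lemma in the paper.
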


Then the following corollary is immediate.

\begin{cor} \label{lem: twisting is universal on disk bundles}
    Under the assumptions of \Cref{lem: eqv spin c over disk bundles}, for any $\tilde{\s},\tilde{\s}'\in \mathrm{Spin}^c_{\Z_p}(X)$ satisfying $\mathcal{N}(\tilde{\s}) = \mathcal{N}(\tilde{\s}')$, there exists a unique element $[k]\in \Z_p$ such that $\tilde{\s}'$ is obtained by $k$-twisting $\tilde{\s}$.
\end{cor}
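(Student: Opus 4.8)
The plan is to read the statement off directly from \Cref{lem: equivariance number}, using that $k$-twisting leaves the nonequivariant truncation unchanged while shifting the equivariance number by $[k]$. First I would fix, once and for all, a $\Z_p$-fixed point $x\in X$; such a point exists because the zero-section $S_X\subset X$ is setwise $\Z_p$-invariant and the induced action on $S_X\cong S^2$ is either trivial or a rotation, so that $X^{\Z_p}\supseteq S_X^{\Z_p}\neq\emptyset$. With such an $x$ chosen, \Cref{lem: equivariance number} provides a bijection
\[
\mathrm{Spin}^c_{\Z_p}(X)\;\xrightarrow{\;\cong\;}\;\mathrm{Spin}^c(X)\times\Z_p,\qquad \tilde{\s}\longmapsto\bigl(\mathcal{N}(\tilde{\s}),\,n^x_{\mathrm{eqv}}(\tilde{\s})\bigr).
\]

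Next, given $\tilde{\s},\tilde{\s}'\in\mathrm{Spin}^c_{\Z_p}(X)$ with $\mathcal{N}(\tilde{\s})=\mathcal{N}(\tilde{\s}')$, I would set $[k]:=n^x_{\mathrm{eqv}}(\tilde{\s}')-n^x_{\mathrm{eqv}}(\tilde{\s})\in\Z_p$ and let $\tilde{\s}_k$ denote the $k$-twist of $\tilde{\s}$. Since $k$-twisting does not alter the nonequivariant truncation, we have $\mathcal{N}(\tilde{\s}_k)=\mathcal{N}(\tilde{\s})=\mathcal{N}(\tilde{\s}')$, and by the additivity relation $n^x_{\mathrm{eqv}}(\tilde{\s}_k)=n^x_{\mathrm{eqv}}(\tilde{\s})+[k]$ established just before \Cref{lem: equivariance number}, we get $n^x_{\mathrm{eqv}}(\tilde{\s}_k)=n^x_{\mathrm{eqv}}(\tilde{\s}')$. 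Applying the injectivity half of the bijection above then forces $\tilde{\s}_k=\tilde{\s}'$, which gives existence. For uniqueness, if $\tilde{\s}_k=\tilde{\s}_{k'}=\tilde{\s}'$, then applying $n^x_{\mathrm{eqv}}$ and using additivity once more yields $[k]=[k']$.

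I do not expect any genuine obstacle here: the only two points that require a word are the nonemptiness of $X^{\Z_p}$, handled above, and the fact that $n^x_{\mathrm{eqv}}$ does not depend on the choice of fixed point $x$ or on the orientation of its component of $X^{\Z_p}$ — but this is precisely the remark recorded immediately after \Cref{def: eqv number and det line bundle}, so the argument is choice-free and the corollary is indeed immediate.
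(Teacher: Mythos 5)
Your proof is correct and follows essentially the same route as the paper's: define $[k]$ as the difference of equivariance numbers, note that $k$-twisting preserves $\mathcal{N}$ and shifts $n_{\mathrm{eqv}}$ by $[k]$, and conclude via the injectivity of the bijection in \Cref{lem: equivariance number}. Your extra remarks on the nonemptiness of $X^{\Z_p}$ and the independence of the choice of fixed point are correct and merely make explicit what the paper absorbs into the phrase ``well-defined up to an overall cyclic permutation.''
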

\begin{proof}
    Since $n_{\mathrm{eqv}}$ is well-defined up to an overall cyclic permutation of $\Z_p$, the difference $n_{\mathrm{eqv}}(\tilde{\s}')-n_{\mathrm{eqv}}(\tilde{\s})$ determines a well-defined element of $\Z_p$, which we denote by $[k]$. Denote the $k$-twisting of $\tilde{\s}$ by $\tilde{\s}_k$. Then we have
    \[
    \mathcal{N}(\tilde{\s}_k) = \mathcal{N}(\tilde{\s})=\mathcal{N}(\tilde{\s}'),\qquad n_{\mathrm{eqv}}(\tilde{\s}_k) = n_{\mathrm{eqv}}(\tilde{\s}) + [k] = n_{\mathrm{eqv}}(\tilde{\s}'),
    \]
    so the corollary follows from \Cref{lem: equivariance number}.
\end{proof}

We call the number $n^x_{\mathrm{eqv}}(\tilde{\s})\in \Z_p$ the \emph{equivariance number} of $\tilde{\s}$ (at $x$). This value depends on $x$, but one easily checks that, for any other $\Z_p$-fixed point $x'$ of $X$, there exists a constant $\alpha\in \Z_p$ such that
\[
n^{x'}_{\mathrm{eqv}}(\tilde{\s}) = n^x_{\mathrm{eqv}}(\tilde{\s})+\alpha \qquad \text{for all}\quad \tilde{\s}\in \mathrm{Spin}^c_{\Z_p}(X).
\]
Hence, in many cases, we will simply drop $x$ from the notation and treat $n_{\mathrm{eqv}}$ as a function well-defined up to an overall cyclic permutation in $\Z_p$.

\begin{rem}
    When the $\Z_p$-action on $X$ is induced by the fiberwise rotation, the fixed point set is the zero section which is connected. Hence the value of $n^x_{\mathrm{eqv}}(\tilde{\s})$ does not depend on the choice of a $\Z_p$-fixed point $x\in X$. In this case, we also drop $x$ from the notation and say that $n_{\mathrm{eqv}}(\tilde{\s})$ is a well-defined element of $\Z_p$.
\end{rem}

Observe that, instead of using $\Z_p$-fixed points, we may also use setwise $\Z_p$-invariant open contractible subsets of $X$ to detect the equivariance number. The caveat is that we can detect it only up to an overall cyclic permutation of elements of $\Z_p$, since in the general case $n_{\mathrm{eqv}}$ is well-defined only modulo such an ambiguity. Nevertheless, this is still sufficient to prove the following lemma.

\begin{lem} \label{lem: eqv number depends on inv point}
Under the assumptions of \Cref{lem: eqv spin c over disk bundles}, let $U \subset X$ be a contractible open subset that is setwise $\mathbb{Z}_p$-invariant. Suppose that two $\mathbb{Z}_p$-equivariant $\mathrm{Spin}^c$ structures $\tilde{\s}_1, \tilde{\s}_2 \in \mathrm{Spin}^c_{\mathbb{Z}_p}(X)$ agree on $U$. Then
\[
n_{\mathrm{eqv}}(\tilde{\s}_1) = n_{\mathrm{eqv}}(\tilde{\s}_2).
\]
\end{lem}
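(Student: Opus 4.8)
The statement to prove is \Cref{lem: eqv number depends on inv point}: under the hypotheses of \Cref{lem: eqv spin c over disk bundles}, if two $\Z_p$-equivariant $\mathrm{Spin}^c$ structures $\tilde{\s}_1,\tilde{\s}_2$ on the disk bundle $X$ agree on a setwise $\Z_p$-invariant contractible open subset $U\subset X$, then $n_{\mathrm{eqv}}(\tilde{\s}_1)=n_{\mathrm{eqv}}(\tilde{\s}_2)$. The plan is to reduce everything to the case $\mathcal N(\tilde{\s}_1)=\mathcal N(\tilde{\s}_2)$ and then apply \Cref{cor: lem: twisting is universal on disk bundles} (i.e.\ \Cref{lem: twisting is universal on disk bundles}) to identify the two structures as twists of one another, and track what twisting does to $n_{\mathrm{eqv}}$.

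First I would observe that since $\tilde{\s}_1|_U=\tilde{\s}_2|_U$, their underlying nonequivariant $\mathrm{Spin}^c$ structures agree on $U$; but $U$ is contractible, hence connected, and $X$ deformation retracts $\Z_p$-equivariantly onto its zero-section, so the restriction map $\mathrm{Spin}^c(X)\to\mathrm{Spin}^c(U)$ is not injective in general — however, the key point is that we do not need the full structures to agree, only that $n_{\mathrm{eqv}}(\tilde{\s}_1)=n_{\mathrm{eqv}}(\tilde{\s}_2)$, and the equivariance number is detected \emph{locally}. The cleanest route: pick a $\Z_p$-fixed point $x\in X$ (the zero-section contains one, since the induced $\Z_p$-action on $S_X$ is a rotation or trivial, hence has a fixed point) and, using the fact that $n^x_{\mathrm{eqv}}$ can be computed from the germ of the equivariant $\mathrm{Spin}^c$ structure near $x$, I would replace $U$ by a setwise-invariant contractible neighborhood $V$ of $x$ inside $U$ (shrink $U$ around an orbit if necessary, or use the ambiguity-up-to-cyclic-permutation remark). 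Then $\tilde{\s}_1|_V=\tilde{\s}_2|_V$, so by \Cref{cor: eqv lift of spin c given by inv point} applied with the common data $(\mathcal N(\tilde{\s}_1)|_{?},\ \tilde{\s}_i|_V)$ one sees the two structures have identical germ at $x$; by \Cref{def: eqv number and det line bundle} the equivariance number is read off precisely from that germ, via the local model formulas \Cref{eqn: eqv number in codim 2} or \Cref{eqn: eqv number in codim 4}. Hence $n^x_{\mathrm{eqv}}(\tilde{\s}_1)=n^x_{\mathrm{eqv}}(\tilde{\s}_2)$, and since the two are compared at the \emph{same} point $x$ there is no cyclic-permutation ambiguity, giving $n_{\mathrm{eqv}}(\tilde{\s}_1)=n_{\mathrm{eqv}}(\tilde{\s}_2)$.

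Alternatively — and this may be cleaner to write — I would argue purely via twisting. If $\mathcal N(\tilde{\s}_1)=\mathcal N(\tilde{\s}_2)$, then by \Cref{lem: twisting is universal on disk bundles} there is a unique $[k]\in\Z_p$ with $\tilde{\s}_2=(\tilde{\s}_1)_k$ the $k$-twist; but $k$-twisting multiplies the action of $[1]\in\Z_p$ on the whole $\mathrm{Spin}^c$ bundle by $e^{2\pi i k/p}$, so if $\tilde{\s}_1$ and $\tilde{\s}_2$ already agree over the nonempty open set $U$, then $[k]=[0]$, so $\tilde{\s}_1=\tilde{\s}_2$ outright and the conclusion is trivial. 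This handles the case $\mathcal N(\tilde{\s}_1)=\mathcal N(\tilde{\s}_2)$. For the general case one must first dispose of the possibility $\mathcal N(\tilde{\s}_1)\ne\mathcal N(\tilde{\s}_2)$: but this is impossible, because $\mathcal N(\tilde{\s}_1)|_U=\mathcal N(\tilde{\s}_2)|_U$ forces $\mathcal N(\tilde{\s}_1)=\mathcal N(\tilde{\s}_2)$ once we note that $H^2(X;\Z)\to H^2(U;\Z)$ need not be injective — so this reduction genuinely fails and one \emph{is} forced into the local germ computation of the previous paragraph. The honest statement is therefore: the conclusion of the lemma does not require $\mathcal N(\tilde{\s}_1)=\mathcal N(\tilde{\s}_2)$, only that the \emph{germs near a fixed point} coincide, which follows from $\tilde{\s}_1|_U=\tilde{\s}_2|_U$ since $U$ may be taken to contain a fixed point.

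\textbf{Main obstacle.} The delicate point is making precise that ``$n_{\mathrm{eqv}}$ depends only on the germ of $\tilde{\s}$ at a fixed point $x$, and the comparison at a \emph{single} fixed point has no cyclic-permutation ambiguity.'' The cyclic-permutation business — that $n^x_{\mathrm{eqv}}$ is only well-defined up to a shift depending on which component of $X^{\Z_p}$ one uses — is exactly what could break the argument if $U$ failed to meet the relevant fixed component, so I expect the bookkeeping of fixed-point components (and the need to shrink $U$ to an invariant contractible neighborhood of an actual fixed point, using that $S_X$ always contains one) to be the step requiring the most care; everything else follows formally from \Cref{lem: twisting is universal on disk bundles}, \Cref{cor: eqv lift of spin c given by inv point}, and the explicit local models \Cref{eqn: eqv number in codim 2}--\Cref{eqn: eqv number in codim 4}.
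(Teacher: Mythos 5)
Your overall strategy (read off $n_{\mathrm{eqv}}$ from the germ of the structure at a fixed point lying in $U$) is different from the paper's, and it has a gap exactly at the point you flag as the ``main obstacle'': you never establish that the given $U$ contains a $\Z_p$-fixed point. You cannot ``take $U$ to contain a fixed point'' or ``shrink $U$ around an orbit'' --- $U$ is part of the hypothesis, not something you get to choose --- and the fact that the zero-section $S_X$ contains a fixed point is irrelevant, since $U$ need not meet the fixed locus a priori. (The claim is in fact true: a $\Z_p$-action on a contractible, finite-dimensional ANR has nonempty fixed-point set by Smith theory, so $U^{\Z_p}\neq\emptyset$; but that input is nowhere in your argument, and without it the germ comparison has no point to be performed at.) Once that is supplied, the rest of your first route does go through: the two structures have isomorphic germs at a common fixed point $x\in U$, the local models in \Cref{def: eqv number and det line bundle} are germ invariants, and since the discrepancy $n^{x'}_{\mathrm{eqv}}-n^{x}_{\mathrm{eqv}}$ between two fixed points is a constant independent of the structure, equality at one fixed point propagates to all of them --- so your residual worry about the cyclic-permutation ambiguity is actually a non-issue. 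Your second (``twisting'') route is correctly abandoned: as you observe, $H^2(U;\Z)=0$ means agreement on $U$ says nothing about the underlying nonequivariant structures, so the reduction to \Cref{lem: twisting is universal on disk bundles} is unavailable.

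The paper avoids all of this by not using fixed points at all: it interprets $n_{\mathrm{eqv}}$ as the equivariant restriction
\[
H^2_{\Z_p}(X;\Z)\longrightarrow H^2_{\Z_p}(U;\Z)\cong \Z_p ,
\]
where the target is computed from the Borel construction ($U\times_{\Z_p}E\Z_p\simeq B\Z_p$ because $U$ is contractible), so it is $\Z_p$ whether or not $U$ meets the fixed locus. Agreement of $\tilde{\s}_1$ and $\tilde{\s}_2$ on $U$ then immediately forces their images, i.e.\ their equivariance numbers (up to the overall cyclic shift), to coincide. I would recommend either adopting that cohomological argument or, if you keep the germ argument, explicitly citing Smith theory to produce the fixed point in $U$.
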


\begin{proof}
The map $n_{\mathrm{eqv}}$ can be interpreted as the equivariant pullback
\[
H^2_{\mathbb{Z}_p}(X;\mathbb{Z}) \longrightarrow H^2_{\mathbb{Z}_p}(U;\mathbb{Z}) \cong \mathbb{Z}_p
\]
induced by the inclusion $U \hookrightarrow X$. Since equivariant $\mathrm{Spin}^c$ structures on $U$ are classified by elements of $H^2_{\mathbb{Z}_p}(U;\mathbb{Z})$, the agreement of $\tilde{\s}_1$ and $\tilde{\s}_2$ on $U$ forces their images under this pullback to coincide, giving the desired equality.
\end{proof}

We now consider the following relative lifting problem. Let $X$ be a disk bundle over $S^2$ equipped with a linear $S^1$-action that acts trivially on the zero-section $S_X \subset X$. Endow $X$ with the induced $\Z_p$-subaction. Choose $\tilde{\mathfrak{s}}_\partial \in \mathrm{Spin}^c_{\Z_p}(\partial X)$ and $\mathfrak{s}_X \in \mathrm{Spin}^c(X)$ such that 
\[
\mathcal{N}(\tilde{\mathfrak{s}}_\partial) = \mathfrak{s}_X|_{\partial X}.
\]
The problem is to determine how many elements $\tilde{\mathfrak{s}}_X \in \mathrm{Spin}^c_{\Z_p}(X)$ satisfy both 
\[
\mathcal{N}(\tilde{\mathfrak{s}}_X) = \mathfrak{s}_X \qquad \text{ and } \qquad \tilde{\mathfrak{s}}_X|_{\partial X} = \tilde{\mathfrak{s}}_\partial \; .
\]

To tackle this problem, we first analyze the $\Z_p$-equivariant boundary restriction map
\[
\mathrm{res}^\partial_{\Z_p}\colon \mathrm{Spin}^c_{\Z_p}(X)\longrightarrow \mathrm{Spin}^c_{\Z_p}(\partial X).
\]
From the preceding discussion, we obtain natural identifications
\[
\mathrm{Spin}^c_{\Z_p}(X)\cong H^2_{\Z_p}(X;\Z) \qquad \text{ and }\qquad
\mathrm{Spin}^c_{\Z_p}(\partial X)\cong H^2_{\Z_p}(\partial X;\Z),
\]
which fit into the following commutative square, where $i_{\partial X}$ denotes the inclusion $\partial X\hookrightarrow X$: 
\[
\xymatrix{
\mathrm{Spin}^c_{\Z_p}(X) \ar[r]^{\mathrm{res}^\partial_{\Z_p}} \ar[d]^\cong 
& \mathrm{Spin}^c_{\Z_p}(\partial X) \ar[d]^\cong \\
H^2_{\Z_p}(X;\Z) \ar[r]^{i_{\partial X}^\ast} 
& H^2_{\Z_p}(\partial X;\Z).
}
\]
Since $X$ is a disk bundle over $S^2$, its boundary $\partial X$ is a lens space. If the Euler number of $X$ is $n$, then $\partial X \cong L(n,1)$. Moreover, the $\Z_p$-action on $\partial X$ is free, with quotient $L(np,1)$, while $X/\Z_p$ is a disk bundle of Euler number $np$ over $S^2$. 
To compute $i_{\partial X}^\ast$, we consider the following commutative diagram. Here the vertical maps are induced by the natural collapsing maps $Y\times_G EG \to Y/G$, and $i_{\partial X/\Z_p}^\ast$ denotes the inclusion $\partial X/\Z_p \hookrightarrow X/\Z_p$. Note also that, since the $\Z_p$-action is trivial on the zero-section $S_X$, we have $S_X/\Z_p = S_X$.
\[
\xymatrix{
& & \Z \ar[d]^\cong \ar[r]^{1\mapsto [1]} & \Z_{np} \ar[d]^\cong \\
H^2(S_X;\Z) \ar[d]^{\mathrm{pr}^\ast} 
& H^2(S_X/\Z_p;\Z) \ar[d] \ar[l]_{\cong} 
& H^2(X/\Z_p;\Z) \ar[d] \ar[l]_\cong \ar[r]^{i_{\partial X/\Z_p}^\ast} 
& H^2(\partial X/\Z_p;\Z) \ar[d]^\cong \\
H^2(S_X \times B\Z_p;\Z) 
& H^2_{\Z_p}(S_X;\Z) \ar[l]_\cong 
& H^2_{\Z_p}(X;\Z) \ar[l]_\cong \ar[r]^{i_{\partial X}^\ast} 
& H^2_{\Z_p}(\partial X;\Z)
}
\]
Since the projection pullback $\mathrm{pr}^\ast$ is given by
\[
\Z \xrightarrow{(\mathrm{id},0)} \Z\oplus \Z_p,
\]
it follows that, if we write $i_{\partial X}^\ast$ as
\[
i^\ast_{\partial X}:\Z\oplus \Z_p \longrightarrow \Z_{np},
\]
then it satisfies $i^\ast_{\partial X}(1,[0]) = 1$.

It is clear that, for any $[k]\in \Z_p$, the $k$-twisting map 
$
\mathrm{tw}^X_k \colon \mathrm{Spin}^c_{\Z_p}(X)\longrightarrow \mathrm{Spin}^c(X)_{\Z_p}
$
is given by
\[
\Z \oplus \Z_p \xrightarrow{(i,[j])\mapsto (i,[j+k])} \Z\oplus \Z_p.
\]
On the boundary $\partial X = L(n,1)$, the $k$-twisting map
$
\mathrm{tw}^{\partial X}_k \colon \mathrm{Spin}^c_{\Z_p}(\partial X)\longrightarrow \mathrm{Spin}^c_{\Z_p}(\partial X)
$
is described by
\[
\Z_{np} \xrightarrow{[i]\mapsto [i+p]} \Z_{np}.
\]

Since the $k$-twisting operation clearly commutes with restrictions to setwise $\Z_p$-invariant submanifolds, we compute (with slight abuse of notation):
\[
i^\ast_{\partial X}(0,[k]) 
= \mathrm{res}^\partial_{\Z_p}(\mathrm{tw}_k(0,[0])) 
= \mathrm{tw}_k(\mathrm{res}^\partial_{\Z_p}(0,[0])) 
= i^\ast_{\partial X}(0,[0]) + [kn].
\]
We therefore deduce the following lemma.

\begin{lem} \label{lem: eqv restriction map}
Let $X$ be a disk bundle of Euler number $n$ over $S^2$, equipped with a smooth $\Z_p$-action by fiberwise rotation. Then, under the identifications
\[
\mathrm{Spin}^c_{\Z_p}(X) \cong \Z \oplus \Z_p \qquad  \text{ and } \qquad \mathrm{Spin}^c_{\Z_p}(\partial X) \cong \Z_{np},
\]
the equivariant restriction map
$
\mathrm{res}^\partial_{\Z_p} \colon \mathrm{Spin}^c_{\Z_p}(X) \longrightarrow \mathrm{Spin}^c_{\Z_p}(\partial X)
$
is given by\[ \Z \oplus \Z_p \xrightarrow{(i,[j])\mapsto [i+nj]} \Z_{np}. \]
\end{lem}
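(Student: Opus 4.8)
The plan is to assemble the formula for $\mathrm{res}^\partial_{\Z_p}$ from the two pieces of data already extracted in the paragraph preceding the statement: the value of $i^\ast_{\partial X}$ on the ``geometric'' generator $(1,[0])$ of $\Z\oplus\Z_p$, and the twisting computation $i^\ast_{\partial X}(0,[k]) = i^\ast_{\partial X}(0,[0]) + [kn]$. First I would fix once and for all the identifications $\mathrm{Spin}^c_{\Z_p}(X)\cong \Z\oplus\Z_p$ and $\mathrm{Spin}^c_{\Z_p}(\partial X)\cong\Z_{np}$ coming from \Cref{lem: eqv spin c over disk bundles}, \Cref{lem: equivariance number}, and the commutative diagram of Serre spectral sequences above, being careful that the $\Z$-summand on $X$ corresponds to the pullback $\mathrm{pr}^\ast H^2(S_X;\Z)$ and the $\Z_p$-summand is detected by the equivariance number $n_{\mathrm{eqv}}$; under these choices the $k$-twisting maps $\mathrm{tw}^X_k$ and $\mathrm{tw}^{\partial X}_k$ are exactly the shifts $(i,[j])\mapsto(i,[j+k])$ and $[i]\mapsto[i+p]$ recorded in the excerpt.

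Next I would observe that, by additivity of the equivariant boundary restriction (it is just the pullback $i^\ast_{\partial X}$ along an inclusion, hence a group homomorphism $\Z\oplus\Z_p\to\Z_{np}$), the map is determined by its values on $(1,[0])$ and on $(0,[1])$. From the projection-pullback computation in the diagram, $i^\ast_{\partial X}(1,[0]) = 1\in\Z_{np}$. For the second generator, I would normalize so that the base $\mathrm{Spin}^c_{\Z_p}$ structure $(0,[0])$ restricts to $[0]\in\Z_{np}$ — this is a legitimate choice of origin for the $\Z_p$-torsor of equivariant lifts of a fixed nonequivariant $\mathrm{Spin}^c$ structure, compatible on $X$ and on $\partial X$ because the non-equivariant restriction is surjective onto a single class — and then the twisting identity $i^\ast_{\partial X}(0,[k]) = i^\ast_{\partial X}(0,[0]) + [kn] = [kn]$ gives $i^\ast_{\partial X}(0,[1]) = [n]$. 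Combining, $i^\ast_{\partial X}(i,[j]) = i\cdot 1 + j\cdot[n] = [i+nj]\in\Z_{np}$, which is precisely the asserted formula $(i,[j])\mapsto[i+nj]$.

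Finally I would check internal consistency as a sanity step: the formula $[i+nj]$ is clearly well-defined (if $[j]=[j']$ in $\Z_p$ then $i+nj\equiv i+nj'\pmod{np}$), it reduces mod $n$ to the non-equivariant restriction $\Z\cong\mathrm{Spin}^c(X)\to\mathrm{Spin}^c(\partial X)\cong\Z_n$ sending $i\mapsto[i]$, and it is equivariant for the two twisting actions since $[i + n(j+k)] = [i+nj] + [nk]$ matches $\mathrm{tw}^{\partial X}_k\circ\mathrm{res}^\partial_{\Z_p} = \mathrm{res}^\partial_{\Z_p}\circ\mathrm{tw}^X_k$. The main obstacle here is not any deep computation but bookkeeping: making sure the chosen splittings of the short exact sequences on $X$ and on $\partial X$ are compatible under $i^\ast_{\partial X}$ so that the origin $(0,[0])$ really does go to $[0]$, and tracking the identification $H^2(\partial X/\Z_p;\Z)\cong\Z_{np}$ through the collapsing map $\partial X\times_{\Z_p}E\Z_p\to\partial X/\Z_p = L(np,1)$ with the correct sign/orientation convention; once those conventions are pinned down the rest is immediate from the two computed values on generators.
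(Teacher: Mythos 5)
Your proposal is correct and follows essentially the same route as the paper: read off $i^\ast_{\partial X}(1,[0])=1$ from the commutative diagram of quotient/Borel spaces, use compatibility of restriction with $k$-twisting to get $i^\ast_{\partial X}(0,[k]) = i^\ast_{\partial X}(0,[0]) + [kn]$, and conclude by additivity. Your explicit normalization of the splittings so that $(0,[0])\mapsto[0]$ is a point the paper leaves implicit (the identifications are only defined up to such a choice), but it is a legitimate choice and the rest is as in the paper.
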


Using \Cref{lem: eqv spin c over disk bundles} and \Cref{lem: eqv restriction map}, we now state a lemma that completely resolves the equivariant relative lifting problem discussed earlier.

\begin{lem} \label{lem: unique relative eqv lifting}
Let $X$ be a disk bundle over $S^2$, equipped with a smooth $\Z_p$-action by fiberwise rotation.  
For any $\tilde{\mathfrak{s}}_\partial \in \mathrm{Spin}^c_{\Z_p}(\partial X)$ and $\mathfrak{s}_X \in \mathrm{Spin}^c(X)$ with 
\[
\mathcal{N}(\tilde{\mathfrak{s}}_\partial) = \mathfrak{s}_X \vert_{\partial X},
\] 
there exists a unique $\tilde{\mathfrak{s}}_X \in \mathrm{Spin}^c_{\Z_p}(X)$ such that 
\[
\mathcal{N}(\tilde{\mathfrak{s}}_X) = \mathfrak{s}_X 
\qquad \text{ and } \qquad 
\tilde{\mathfrak{s}}_X \vert_{\partial X} = \tilde{\mathfrak{s}}_\partial.
\]
\end{lem}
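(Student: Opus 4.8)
The plan is to deduce the statement directly from \Cref{lem: eqv spin c over disk bundles} and \Cref{lem: eqv restriction map}, reducing it to an elementary linear congruence. Let $n$ denote the (nonzero) Euler number of $X$, and fix the identifications
\[
\mathrm{Spin}^c_{\Z_p}(X)\cong \Z\oplus\Z_p,\quad \mathrm{Spin}^c(X)\cong\Z,\quad \mathrm{Spin}^c_{\Z_p}(\partial X)\cong\Z_{np},\quad \mathrm{Spin}^c(\partial X)\cong\Z_n
\]
coming from \Cref{lem: eqv spin c over disk bundles} and the discussion preceding \Cref{lem: eqv restriction map}, arranged so that: $\mathcal{N}\colon\mathrm{Spin}^c_{\Z_p}(X)\to\mathrm{Spin}^c(X)$ is the first projection $(i,[j])\mapsto i$; the nonequivariant restriction $\mathrm{Spin}^c(X)\to\mathrm{Spin}^c(\partial X)$ is reduction modulo $n$; and, by \Cref{lem: eqv restriction map}, $\mathrm{res}^\partial_{\Z_p}\colon(i,[j])\mapsto[i+nj]$.

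First I would observe that the square formed by $\mathrm{res}^\partial_{\Z_p}$, the nonequivariant restriction, and the two forgetful maps $\mathcal{N}$ commutes; this is automatic, since restricting a $\mathrm{Spin}^c$ structure to $\partial X$ and forgetting a $\Z_p$-lift are independent operations. Together with the normalizations above, commutativity forces $\mathcal{N}\colon\mathrm{Spin}^c_{\Z_p}(\partial X)=\Z_{np}\to\mathrm{Spin}^c(\partial X)=\Z_n$ to be reduction modulo $n$ as well. Next, write $\mathfrak{s}_X=a\in\Z$ and $\tilde{\mathfrak{s}}_\partial=[b]\in\Z_{np}$; the hypothesis $\mathcal{N}(\tilde{\mathfrak{s}}_\partial)=\mathfrak{s}_X|_{\partial X}$ then says $b\equiv a\pmod n$, so $b=a+nc$ for some $c\in\Z$. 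A candidate lift $\tilde{\mathfrak{s}}_X=(i,[j])$ must satisfy $i=a$ (forced by $\mathcal{N}(\tilde{\mathfrak{s}}_X)=\mathfrak{s}_X$) and $[a+nj]=[b]$ in $\Z_{np}$ (forced by $\mathrm{res}^\partial_{\Z_p}(\tilde{\mathfrak{s}}_X)=\tilde{\mathfrak{s}}_\partial$); the latter is $n(j-c)\equiv0\pmod{np}$, i.e.\ $[j]=[c]$ in $\Z_p$. Hence $\tilde{\mathfrak{s}}_X=(a,[c])$ exists and is unique, which is the assertion.

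I do not expect a real obstacle: the lemma is essentially a corollary of \Cref{lem: eqv spin c over disk bundles} and \Cref{lem: eqv restriction map}. The only point requiring care is the bookkeeping — one must pick the torsor-to-group identifications of the $\mathrm{Spin}^c$ sets with the relevant $H^2$ groups compatibly on $X$ and $\partial X$, so that the square above commutes exactly — but this follows from naturality of restriction. An alternative, coordinate-free route would instead take any $\Z_p$-equivariant lift $\tilde{\mathfrak{s}}_X^0$ of $\mathfrak{s}_X$ (which exists by \Cref{lem: eqv spin c over disk bundles}), note that $\mathrm{res}^\partial_{\Z_p}(\tilde{\mathfrak{s}}_X^0)$ and $\tilde{\mathfrak{s}}_\partial$ have the same underlying nonequivariant $\mathrm{Spin}^c$ structure, and use that $k$-twisting acts simply transitively on this fibre over $\partial X$ (a consequence of \Cref{lem: eqv restriction map}) to produce the unique $[k]\in\Z_p$ with $\mathrm{tw}_k(\tilde{\mathfrak{s}}_X^0)|_{\partial X}=\tilde{\mathfrak{s}}_\partial$; since $k$-twisting commutes with restriction and does not change $\mathcal{N}$, the structure $\mathrm{tw}_k(\tilde{\mathfrak{s}}_X^0)$ is the desired lift, uniqueness being immediate from \Cref{lem: twisting is universal on disk bundles}. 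I would write the congruence argument as the main proof and relegate the twisting version to a remark.
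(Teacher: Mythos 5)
Your proposal is correct and is essentially the same argument as the paper's: both reduce the lemma to the explicit descriptions of the restriction and forgetful maps from \Cref{lem: eqv spin c over disk bundles} and \Cref{lem: eqv restriction map}, and solve the resulting congruence $[i+nj]=[b]$ in $\Z_{np}$ with $i$ forced to equal $a$, yielding the unique $[j]=[(b-a)/n]$. The alternative twisting argument you sketch is a nice reformulation but not needed.
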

\begin{proof}
By \Cref{lem: eqv spin c over disk bundles} and \Cref{lem: eqv restriction map}, under the identifications
\[
\mathrm{Spin}^c_{\Z_p}(X) \cong \Z \oplus \Z_p, \qquad 
\mathrm{Spin}^c(X) \cong \Z, \qquad 
\mathrm{Spin}^c_{\Z_p}(\partial X) \cong \Z_{np},
\]
the $\Z_p$-equivariant boundary restriction map $\mathrm{res}^\partial_{\Z_p}$, the non-equivariant boundary restriction map $\mathrm{res}^\partial$, and the forgetful maps $\mathcal{N}$ are given by the following diagram:
\[
\xymatrix{
\mathrm{Spin}^c_{\Z_p}(X) \ar[rrrr]^{\mathrm{res}^\partial_{\Z_p}} \ar[ddd]_{\mathcal{N}} \ar[rd]^\cong &  && & \mathrm{Spin}^c_{\Z_p}(\partial X) \ar[ld]_\cong \ar[ddd]^{\mathcal{N}} \\
& \Z \oplus \Z_p \ar[rr]^{(i,[j])\mapsto [i+nj]} \ar[d]_{(i,[j])\mapsto i} && \Z_{np} \ar[d]^{[i]\mapsto [i]} \\
& \Z \ar[rr]^{i\mapsto [i]} && \Z_n \\
\mathrm{Spin}^c(X) \ar[rrrr]^{\mathrm{res}^\partial} \ar[ru]^\cong & && & \mathrm{Spin}^c(\partial X) \ar[lu]_\cong
}
\]

Now choose $\tilde{\mathfrak{s}}_\partial \in \mathrm{Spin}^c_{\Z_p}(\partial X)$ and $\mathfrak{s}_X \in \mathrm{Spin}^c(X)$ such that $\mathcal{N}(\tilde{\mathfrak{s}}_\partial) = \mathfrak{s}_X|_{\partial X}$.  
Suppose $\tilde{\mathfrak{s}}_\partial$ corresponds to $[k]\in \Z_{np}$ and $\mathfrak{s}_X$ corresponds to $\ell \in \Z$.  
The compatibility condition becomes
\[
[k] = [\ell] \in \Z_n,
\qquad \text{i.e.,} \quad k \equiv \ell \pmod{n}.
\]
Thus $k-\ell$ is a multiple of $n$, and $\left[\tfrac{k-\ell}{n}\right]$ defines an element of $\Z_p$.  
Therefore, there exists a unique $(i,[j]) \in \Z \oplus \Z_p$ such that
$[i+nj] = [k] \in \Z_{np}$ and $i = \ell \in \Z$.
The unique solution is
\[
i = \ell \qquad \text{ and }\qquad [j] = \left[\frac{k-\ell}{n}\right].
\]
This proves the claim.
\end{proof}

We also need a similar lemma for disk bundles over $S^2$ with arbitrary linear actions.

\begin{lem} \label{lem: general relative unique extension}
    Let $X$ be a disk bundle over $S^2$, where $\Z_p$ acts as a subaction of some linear $S^1$-action on $X$. 
    For any $\tilde{\mathfrak{s}}_\partial \in \mathrm{Spin}^c_{\Z_p}(\partial X)$ and $\mathfrak{s}_X \in \mathrm{Spin}^c(X)$ with 
\[
\mathcal{N}(\tilde{\mathfrak{s}}_\partial) = \mathfrak{s}_X \vert_{\partial X},
\] 
there exists a unique $\tilde{\mathfrak{s}}_X \in \mathrm{Spin}^c_{\Z_p}(X)$ such that 
\[
\mathcal{N}(\tilde{\mathfrak{s}}_X) = \mathfrak{s}_X 
\qquad \text{ and } \qquad 
\tilde{\mathfrak{s}}_X \vert_{\partial X} = \tilde{\mathfrak{s}}_\partial.
\]
\end{lem}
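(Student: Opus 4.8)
The plan is to reduce the statement to the identification of $\mathbb{Z}_p$-equivariant $\mathrm{Spin}^c$ structures with equivariant second cohomology classes from \Cref{subsec: eqv spin c structure classification}, together with the fact that $k$-twisting — which is defined for any smooth $\mathbb{Z}_p$-manifold and commutes with restriction to setwise $\mathbb{Z}_p$-invariant submanifolds — realizes the translation action on those cohomology torsors. Concretely, it is enough to prove that
\[
\bigl(\mathrm{res}^\partial_{\mathbb{Z}_p},\,\mathcal{N}\bigr)\colon \mathrm{Spin}^c_{\mathbb{Z}_p}(X)\longrightarrow \mathrm{Spin}^c_{\mathbb{Z}_p}(\partial X)\times_{\mathrm{Spin}^c(\partial X)}\mathrm{Spin}^c(X)
\]
is a bijection, which is exactly the claim that every compatible pair $(\tilde{\mathfrak{s}}_\partial,\mathfrak{s}_X)$ has a unique preimage. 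This map lies over $\mathrm{Spin}^c(X)$, so I would fix $\mathfrak{s}_X\in\mathrm{Spin}^c(X)$ and reduce to showing that the restriction map from $\mathcal{N}^{-1}(\mathfrak{s}_X)$ to the set of $\mathbb{Z}_p$-equivariant lifts of $\mathfrak{s}_X|_{\partial X}$ is a bijection. By \Cref{lem: eqv spin c over disk bundles} and \Cref{lem: twisting is universal on disk bundles}, the source is a torsor under $\mathbb{Z}_p$ via $k$-twisting; since $k$-twisting commutes with $\mathrm{res}^\partial_{\mathbb{Z}_p}$, this restriction map is $\mathbb{Z}_p$-equivariant, and the target is nonempty (it contains the image of any element of $\mathcal{N}^{-1}(\mathfrak{s}_X)$). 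Hence, once the target is also shown to be a $\mathbb{Z}_p$-torsor under $k$-twisting, the map is automatically bijective, because any equivariant map between nonempty torsors over a fixed group is an isomorphism.

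The heart of the argument is therefore the claim that, for the lens space $\partial X$ with its induced $\mathbb{Z}_p$-action, the set of $\mathbb{Z}_p$-equivariant lifts of any fixed $\mathrm{Spin}^c$ structure is a $\mathbb{Z}_p$-torsor under $k$-twisting. Here I would use $\mathrm{Spin}^c_{\mathbb{Z}_p}(\partial X)\cong H^2_{\mathbb{Z}_p}(\partial X;\mathbb{Z})=H^2(\partial X\times_{\mathbb{Z}_p}E\mathbb{Z}_p;\mathbb{Z})$: the forgetful map $\mathcal{N}$ is restriction to the fibre of the Borel fibration $\partial X\hookrightarrow \partial X\times_{\mathbb{Z}_p}E\mathbb{Z}_p\to B\mathbb{Z}_p$, so it is affine over $H^2_{\mathbb{Z}_p}(\partial X;\mathbb{Z})\to H^2(\partial X;\mathbb{Z})$ with fibres the cosets of the kernel, and $k$-twisting is translation by the image of $H^2(B\mathbb{Z}_p;\mathbb{Z})\cong\mathbb{Z}_p$ under pullback along the projection to $B\mathbb{Z}_p$. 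Running the Serre spectral sequence of this fibration — where $\mathbb{Z}_p$ acts trivially on $H^\ast(\partial X;\mathbb{Z})$ since $\mathbb{Z}_p\subset S^1$ lies in the identity component, and where $H^1(\partial X;\mathbb{Z})=0$ because $\partial X$ is a rational homology sphere — one sees that this pullback is injective with image exactly $\ker\mathcal{N}$, and that $\ker\mathcal{N}$ has order $p$. So $k$-twisting acts freely on $\mathrm{Spin}^c_{\mathbb{Z}_p}(\partial X)$ with orbits equal to the fibres of $\mathcal{N}$; each such fibre is thus a $\mathbb{Z}_p$-torsor, which is what we need. Assembling, the restriction map is an equivariant map of nonempty $\mathbb{Z}_p$-torsors, hence bijective, giving both existence and uniqueness of $\tilde{\mathfrak{s}}_X$.

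I expect the main obstacle to be the analysis on $\partial X$: correctly identifying $k$-twisting with the translation action on $H^2_{\mathbb{Z}_p}(\partial X;\mathbb{Z})$ and carrying out the spectral sequence computation uniformly over \emph{all} linear $S^1$-actions, including those for which the induced $\mathbb{Z}_p$-action on $\partial X$ is not free and $\partial X\times_{\mathbb{Z}_p}E\mathbb{Z}_p$ is a genuine Borel construction rather than a manifold quotient. If one prefers to avoid the spectral sequence, one can instead split into cases: when $\mathbb{Z}_p$ acts freely on $\partial X$, the Borel construction is homotopy equivalent to the lens space $\partial X/\mathbb{Z}_p$, and the needed cohomology is computed directly; when $\mathbb{Z}_p$ does not act freely on $\partial X$ there is a $\mathbb{Z}_p$-fixed circle in $\partial X$, around which one can apply \Cref{cor: eqv lift of spin c given by inv point} to a $\mathbb{Z}_p$-invariant contractible neighbourhood meeting $\partial X$ in an invariant contractible disc; and when the $\mathbb{Z}_p$-action on the base $S^2$ is trivial, the statement reduces directly to \Cref{lem: unique relative eqv lifting}. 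The spectral sequence route has the advantage of handling all of these at once.
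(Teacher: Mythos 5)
Your argument is correct, but it takes a genuinely different route from the one in the paper. The paper works with the quotient space $X' = X/\partial X$, which is the Thom space of the disk bundle, and observes that equivariant (resp.\ non-equivariant) lifts of $\mathfrak{s}_X$ restricting to $\tilde{\mathfrak{s}}_\partial$ (resp.\ to $\mathcal{N}(\tilde{\mathfrak{s}}_\partial)$) form torsors over $H^2_{\Z_p}(X';\Z)$ (resp.\ $H^2(X';\Z)$); the (equivariant) Thom isomorphism for the rank-$2$ bundle then reduces the comparison of these two groups to the comparison of $H^0_{\Z_p}(S^2;\Z)$ with $H^0(S^2;\Z)$, which is trivially an isomorphism. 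You instead fix $\mathfrak{s}_X$, identify both $\mathcal{N}^{-1}(\mathfrak{s}_X)\subset\mathrm{Spin}^c_{\Z_p}(X)$ and $\mathcal{N}^{-1}(\mathfrak{s}_X|_{\partial X})\subset\mathrm{Spin}^c_{\Z_p}(\partial X)$ as $\Z_p$-torsors under $k$-twisting, and conclude from the formal fact that an equivariant map of nonempty torsors is a bijection; the torsor structure on the boundary side is established by running the Serre spectral sequence for the Borel fibration of the lens space. The paper's Thom-space argument is shorter, avoids explicit spectral-sequence bookkeeping, and treats all Euler numbers uniformly (including the degenerate $\partial X \cong S^1\times S^2$ case, which your spectral-sequence step implicitly excludes because it uses $H^1(\partial X;\Z)=0$ — you flag this correctly and it does not arise in the paper's applications, where the weights are negative). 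What your route buys is a direct identification of the relevant cohomology class with the twisting action, and the torsor-mechanism is structurally the same as the one the paper later uses for the plumbing $W_\Gamma$ in \Cref{lem: unique relative lifting for plumbing}; so your argument can be seen as anticipating that proof, whereas the paper's proof of this particular lemma is a one-line Thom-isomorphism shortcut that does not generalize as directly. Your fallback case split for the spectral-sequence gap is sketchy (the "invariant contractible disc meeting $\partial X$" idea in the non-free case is not fleshed out), but since the spectral-sequence argument itself covers every case actually used, this is not a real defect.
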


\begin{proof}
    Consider the space 
    \[
    X' = \mathrm{Cone}(\partial X \hookrightarrow X).
    \]    
    Since $\tilde{\mathfrak{s}}_\partial$ admits an extension to $X$, we see that $\Z_p$-equivariant $\mathrm{Spin}^c$ structures on $X$ that restrict to $\tilde{\mathfrak{s}}_\partial$ on $\partial X$ are classified by elements of $H^2_{\Z_p}(X';\Z)$. Similarly, nonequivariant $\mathrm{Spin}^c$ structures on $X$ that restrict to $\mathfrak{s}_X \vert_{\partial X} = \mathcal{N}(\tilde{\mathfrak{s}}_\partial)$ are classified by elements of $H^2(X';\Z)$. Hence it suffices to show that the natural map
    \[
    H^2_{\Z_p}(X';\Z)\longrightarrow H^2(X';\Z)
    \]
    is an isomorphism. To see this, observe that $X'$ is the Thom space of $X$. Hence we have the following commutative diagram, where the vertical maps are Thom isomorphisms:
    \[
    \xymatrix{
    H^2_{\Z_p}(X';\Z) \ar[d]_\cong \ar[r] & H^2(X';\Z) \ar[d]^\cong \\
    H^0_{\Z_p}(X;\Z) \ar[r] & H^0(X;\Z)
    }
    \]
    The bottom horizontal map is clearly an isomorphism. The lemma follows.
\end{proof}

We now consider the same question for $W_\Gamma$ for a very special class of plumbing graphs~$\Gamma$.

\begin{lem}\label{lem: unique relative lifting for plumbing}
    Let $\Gamma$ be a star-shaped negative definite almost rational plumbing graph, so that $S^1$ acts on $W_\Gamma$ as discussed in \Cref{subsec: eqv plumbing action}, restricting to the Seifert action on the rational homology sphere $Y=\partial W_\Gamma$. Let $p$ be a prime that does not divide $|H_1(Y;\Z)|$. Then for any $\mathfrak{s}\in \mathrm{Spin}^c(W_\Gamma)$ and $\tilde{\mathfrak{s}}_\partial \in \mathrm{Spin}^c_{\Z_p}(Y)$ satisfying
    \[
    \mathcal{N}(\tilde{\mathfrak{s}}_\partial) = \mathfrak{s}\vert_Y,
    \]
    there exists a unique $\tilde{\mathfrak{s}}\in \mathrm{Spin}^c_{\Z_p}(W_\Gamma)$ such that 
    \[
    \mathcal{N}(\tilde{\mathfrak{s}}) = \mathfrak{s} \qquad \text{and} \qquad 
    \tilde{\mathfrak{s}}\vert_Y = \tilde{\mathfrak{s}}_\partial.
    \]
\end{lem}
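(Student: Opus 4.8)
The plan is to prove this by induction on the number of nodes of $\Gamma$, peeling off leaves one at a time, using the disk-bundle lemmas (\Cref{lem: unique relative eqv lifting}, \Cref{lem: general relative unique extension}) as the base case and inductive step. Recall that $W_\Gamma = \bigcup_{v \in V(\Gamma)} D_v$, where the disk bundles $D_v$ are glued along lens-space boundaries of the circle bundles over the $S^2$'s, and that the $S^1$-action on $W_\Gamma$ restricts to a linear action on each $D_v$. Order the nodes $v_1, \dots, v_N$ so that $v_N$ is a leaf, $v_{N-1}$ is a leaf of $\Gamma \smallsetminus \{v_N\}$, and so on, with $v_1 = v_c$ the central node; write $W_k = \bigcup_{i \le k} D_{v_i}$ for the partial plumbing, so $W_1 = D_{v_c}$ and $W_N = W_\Gamma$. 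Each $W_k$ is a smooth $4$-manifold with boundary, setwise $\Z_p$-invariant, and $W_{k} = W_{k-1} \cup_{L_k} D_{v_k}$, where $L_k$ is the lens-space boundary of the circle bundle over the $S^2$ in $D_{v_k}$ along which the plumbing identification is made; the $\Z_p$-action on $L_k$ is free (since $L_k$ is a lens space on which $S^1$ acts, and $p \nmid |H_1(Y;\Z)|$ guarantees the relevant Euler numbers along the arms are prime to $p$, making all intermediate gluing tori/lens spaces carry free $\Z_p$-actions).

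First I would establish the uniqueness statement, which is purely cohomological: $\Z_p$-equivariant $\mathrm{Spin}^c$ structures on $W_\Gamma$ restricting to a fixed $\tilde{\mathfrak{s}}_\partial$ on $Y$ are a torsor over $H^2_{\Z_p}(W_\Gamma, Y; \Z)$, and likewise the non-equivariant ones over $H^2(W_\Gamma, Y; \Z)$; thus it suffices to check that the map $H^2_{\Z_p}(W_\Gamma, Y; \Z) \to H^2(W_\Gamma, Y; \Z)$ is an isomorphism. Using the Serre spectral sequence for $W_\Gamma \times_{\Z_p} E\Z_p \to B\Z_p$ relative to $Y \times_{\Z_p} E\Z_p$: since $W_\Gamma$ is a plumbing, $H^1(W_\Gamma;\Z) = 0$ and $H^2(W_\Gamma, Y;\Z) \cong H_2(W_\Gamma;\Z) = \Z V(\Gamma)$ is free, while $H^1(W_\Gamma, Y;\Z) \cong H^3(W_\Gamma;\Z) = 0$; the $E_2$-page is concentrated so that the edge map is an isomorphism onto the free quotient, and there are no contributions from $H^{>0}(B\Z_p;-)$ in the relevant total degree because $p \nmid |H_1(Y;\Z)|$ forces $H^*(B\Z_p; H^*(W_\Gamma, Y))$ to vanish in the pertinent range (the torsion is prime to $p$). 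This is essentially the relative version of the spectral sequence computation already carried out in \Cref{subsec: eqv spin c structure classification} and the argument of \Cref{lem: general relative unique extension}.

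For existence, I would argue inductively: suppose a $\Z_p$-equivariant $\mathrm{Spin}^c$ structure $\tilde{\mathfrak{s}}^{(k-1)}$ on $W_{k-1}$ has been constructed with $\mathcal{N}(\tilde{\mathfrak{s}}^{(k-1)}) = \mathfrak{s}|_{W_{k-1}}$ and $\tilde{\mathfrak{s}}^{(k-1)}|_Y$ equal to the prescribed boundary structure on the components of $\partial W_{k-1}$ that lie in $Y$. Restricting to $L_k \subset \partial W_{k-1}$ gives a $\Z_p$-equivariant $\mathrm{Spin}^c$ structure $\tilde{\mathfrak{s}}^{(k-1)}|_{L_k}$ whose non-equivariant underlying structure is $\mathfrak{s}|_{L_k}$. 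Now $D_{v_k}$ is a disk bundle over $S^2$ with a linear $\Z_p$-action, and $\partial D_{v_k}$ consists of $L_k$ together with (if $v_k$ is not a leaf) lens spaces forming the boundary components that will eventually be glued to higher-index nodes — but in the leaf-peeling order $v_k$ is a leaf of $\Gamma \smallsetminus \{v_1,\dots,v_{k-1}\}$ already attached only to $W_{k-1}$, so in fact $\partial D_{v_k} \cap W_{k-1} = L_k$ and $\partial D_{v_k}$ has a single relevant gluing lens space. By \Cref{lem: general relative unique extension} applied to $D_{v_k}$ with boundary data $\tilde{\mathfrak{s}}^{(k-1)}|_{L_k}$ and non-equivariant structure $\mathfrak{s}|_{D_{v_k}}$ (these are compatible since $\mathcal{N}(\tilde{\mathfrak{s}}^{(k-1)}|_{L_k}) = \mathfrak{s}|_{L_k} = (\mathfrak{s}|_{D_{v_k}})|_{L_k}$), there is a unique $\Z_p$-equivariant extension $\tilde{\mathfrak{t}}_k$ to $D_{v_k}$. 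Since $\tilde{\mathfrak{s}}^{(k-1)}$ and $\tilde{\mathfrak{t}}_k$ agree as $\Z_p$-equivariant $\mathrm{Spin}^c$ structures on the common boundary $L_k$ and both have underlying non-equivariant structure the restriction of $\mathfrak{s}$, they glue to a $\Z_p$-equivariant $\mathrm{Spin}^c$ structure $\tilde{\mathfrak{s}}^{(k)}$ on $W_k$ with the required properties; the gluing is legitimate because a $\Z_p$-equivariant $\mathrm{Spin}^c$ structure is a $\Z_p$-equivariant principal $U(1)$-bundle-with-lift and these can be glued over a codimension-zero submanifold-with-boundary along a matching collar. Taking $k = N$ produces $\tilde{\mathfrak{s}} = \tilde{\mathfrak{s}}^{(N)}$ on $W_\Gamma = W_N$.

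The main obstacle I anticipate is not any single deep step but rather the bookkeeping of how the equivariant $\mathrm{Spin}^c$ structures on the various disk bundles interact at the plumbing lens spaces — in particular verifying that the free-ness of the $\Z_p$-action on every intermediate gluing lens space $L_k$ really does follow from $p \nmid |H_1(Y;\Z)|$, and that the "agree on the overlap implies glue" principle holds with the correct notion of isomorphism of equivariant structures (one must be careful that $\mathcal{N}(\tilde{\mathfrak{s}}^{(k-1)}|_{L_k}) = \mathcal{N}(\tilde{\mathfrak{t}}_k|_{L_k})$ together with equality of equivariant classes — which is what \Cref{lem: general relative unique extension} delivers via its uniqueness clause — pins them down uniquely). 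A secondary subtlety is making sure the induction hypothesis is stated strongly enough: one should carry along the equivariant structure on \emph{all} of $\partial W_{k-1}$, not just the part in $Y$, so that the compatibility at $L_k$ is available; but since $L_k$ is the only non-$Y$ boundary piece at each stage in the leaf-peeling order, this is automatic. Alternatively, one could bypass the induction entirely and mimic \Cref{lem: general relative unique extension} directly on $W_\Gamma$: form $W_\Gamma' = \mathrm{Cone}(Y \hookrightarrow W_\Gamma)$, which is the Thom space of the normal data assembled from the plumbing, reduce to showing $H^2_{\Z_p}(W_\Gamma';\Z) \to H^2(W_\Gamma';\Z)$ is an isomorphism, and conclude via the same spectral sequence argument as for uniqueness — this is cleaner but requires identifying $W_\Gamma'$ sufficiently explicitly, which is where the induction pays its way instead.
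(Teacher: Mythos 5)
Your proposal contains the right idea, but it leans on the wrong primary mechanism and has a topological confusion in it. Two points:

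\textbf{The plumbing is not a gluing along lens spaces.} You write $W_k = W_{k-1}\cup_{L_k} D_{v_k}$ ``where $L_k$ is the lens-space boundary\ldots along which the plumbing identification is made.'' In the standard plumbing construction, consecutive disk bundles are identified along a $D^2\times D^2$ region (with the two $D^2$ factors swapped), not along a lens space; the boundary lens space $\partial D_{v_k}$ does not appear as an internal gluing locus of $W_\Gamma$. Consequently the overlap $W_{k-1}\cap D_{v_k}$ is a contractible $\Z_p$-invariant piece, and the set of $\Z_p$-equivariant $\mathrm{Spin}^c$ structures on it (with fixed underlying structure) is a $\Z_p$-torsor, not a point. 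Your inductive step then faces a matching problem over that contractible overlap \emph{and} a boundary constraint on the $Y$-portion of $\partial D_{v_k}$ simultaneously, and you cannot simply cite \Cref{lem: general relative unique extension}, which concerns the full boundary of a disk bundle, to close both at once. It is plausible the leaf-peeling can be made to work, but as written the step from ``agree on the overlap'' to ``glue'' is not supplied, and the overlap is not what you think it is.

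\textbf{The alternative route you sketch at the end is the paper's actual proof, and you underestimate it.} The paper does not need to ``identify $W_\Gamma'$ explicitly.'' It first shows, using the evenness of $H^\ast(W_\Gamma;\Z)$, that $\mathrm{Spin}^c_{\Z_p}(W_\Gamma)\cong \mathrm{Spin}^c(W_\Gamma)\times\Z_p$ (exactly as in \Cref{lem: eqv spin c over disk bundles}); then runs the Serre spectral sequence for $Y\times_{\Z_p} E\Z_p$, observing $H^2(Y;\Z)\otimes_\Z\Z_p=0$ so that after the $d_4$ cancellation the short exact sequence $0\to\Z_p\to H^2_{\Z_p}(Y;\Z)\to H^2(Y;\Z)\to 0$ splits, giving $\mathrm{Spin}^c_{\Z_p}(Y)\cong\mathrm{Spin}^c(Y)\times\Z_p$; and finally, since every $\mathrm{Spin}^c$ structure on $Y$ extends to $W_\Gamma$, the cardinality/torsor argument of \Cref{lem: general relative unique extension} (phrased in relative terms, without Thom isomorphism) closes the lemma. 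Your uniqueness spectral sequence is morally the same computation, just done at the relative level instead of separately on $W_\Gamma$ and $Y$. The efficient route, then, is not the induction at all: it is the computation on $Y$ that requires an argument (the $d_4$ differential and the $p\nmid |H_1(Y;\Z)|$ hypothesis), and the $W_\Gamma$ side is free from even-degree cohomology.
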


\begin{proof}
    As in the proof of \Cref{lem: eqv spin c over disk bundles}, since $H^\ast(W_\Gamma;\Z)$ is supported only in even degrees, we have a canonical bijection
    \[
    \mathrm{Spin}^c_{\Z_p}(W_\Gamma)\xrightarrow{\ \cong\ } \mathrm{Spin}^c(W_\Gamma)\times \Z_p.
    \]
    To compute $\mathrm{Spin}^c_{\Z_p}(Y) \cong H^2_{\Z_p}(Y;\Z)$, consider the Serre spectral sequence
    \[
    E_2^{i,j} = H^i(B\Z_p; H^j(Y;\Z)) \;\;\Rightarrow\;\; H^{i+j}_{\Z_p}(Y;\Z).
    \]
    Since $H^2(Y;\Z)\otimes_\Z \Z_p = 0$, the $E_2$ page takes the following form:
    \begin{center}
    \begin{tikzpicture}
      \matrix (m) [matrix of math nodes,
        nodes in empty cells,nodes={minimum width=5ex,
        minimum height=5ex,outer sep=-5pt},
        column sep=1ex,row sep=1ex]{
                    &      &     &     &  & \\
              3     & \Z & 0 & \Z_p & 0 & \Z_p \\
              2     &  H^2(Y;\Z) & 0 & 0 & 0 & 0 \\
              1     &  0 &  0  & 0 & 0 & 0 \\
              0     &  \Z  & 0 &  \Z_p  & 0 & \Z_p \\
        \quad\strut &   0  &  1  &  2  & 3 & 4 & \strut \\};
      \draw[-stealth,red] (m-2-2.south east) -- (m-5-6.north west);
      \draw[thick] (m-1-1.east) -- (m-6-1.east) ;
      \draw[thick] (m-6-1.north) -- (m-6-7.north) ;
    \end{tikzpicture}
    \end{center}
    The bottom and top rows must cancel against each other, which occurs via a nontrivial $d_4$ differential (indicated in red). Thus the sequence degenerates at the $E_5$ page, yielding a short exact sequence
    \[
    0 \longrightarrow \Z_p \longrightarrow H^2_{\Z_p}(Y;\Z) \longrightarrow H^2(Y;\Z) \longrightarrow 0.
    \]
    Since $p$ does not divide $|H^2(Y;\Z)|$, this sequence splits. Therefore,
    \[
    H^2_{\Z_p}(Y;\Z) \cong H^2(Y;\Z)\oplus \Z_p,
    \]
    giving a canonical (up to cyclic shift of $\Z_p$) bijection
    \[
    \mathrm{Spin}^c_{\Z_p}(Y) \;\xrightarrow{\ \cong\ }\; \mathrm{Spin}^c(Y)\times \Z_p.
    \]
    As every $\mathrm{Spin}^c$ structure on $Y$ extends to $W_\Gamma$\footnote{This follows from the discussion of $\mathrm{Spin}^c$ structures on $W_\Gamma$ and $Y$ in \Cref{subsec: computations sequence}.}, the same argument as in \Cref{lem: general relative unique extension} proves the claim.
\end{proof}

Observe that under the assumptions of \Cref{lem: unique relative lifting for plumbing}, for any $\tilde{\s}\in \mathrm{Spin}^c_{\Z_p}(W_\Gamma)$ the element
\[
n_{\mathrm{eqv}}(\tilde{\s}\vert_{D_{v_c}}) \in \Z_p
\]
is well-defined, where $D_{v_c}$ denotes the disk bundle corresponding to the central node of $\Gamma$. Indeed, the $\Z_p$-action on $W_\Gamma$ restricts to fiber rotation on $D_{v_c}$. We will abuse notation and write this value as $n_{\mathrm{eqv}}(\tilde{\s})$, referring to it as the \emph{equivariance number} of $\tilde{\s}$.

\begin{lem} \label{lem: n eqv and twisting on plumbing}
    Under the assumptions of \Cref{lem: unique relative lifting for plumbing}, the map
    \[
    \mathrm{Spin}^c_{\Z_p}(W_\Gamma) \longrightarrow \mathrm{Spin}^c(W_\Gamma)\times \Z_p; 
    \qquad 
    \tilde{\s}\longmapsto \bigl(\mathcal{N}(\tilde{\s}),\, n_{\mathrm{eqv}}(\tilde{\s})\bigr)
    \]
    is a bijection. Moreover, if $\tilde{\s}, \tilde{\s}'\in \mathrm{Spin}^c_{\Z_p}(W_\Gamma)$ satisfy 
    $\mathcal{N}(\tilde{\s})=\mathcal{N}(\tilde{\s}')$, then there exists a unique $[k]\in \Z_p$ such that 
    $\tilde{\s}'$ is the $k$-twist of $\tilde{\s}$.
\end{lem}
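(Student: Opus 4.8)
The plan is to reduce everything to \Cref{lem: unique relative lifting for plumbing} and the now-standard ``equivariance number'' bookkeeping. First I would argue that the assignment
\[
\mathrm{Spin}^c_{\Z_p}(W_\Gamma) \longrightarrow \mathrm{Spin}^c(W_\Gamma)\times \Z_p;
\qquad
\tilde{\s}\longmapsto \bigl(\mathcal{N}(\tilde{\s}),\, n_{\mathrm{eqv}}(\tilde{\s})\bigr)
\]
is well-defined: $\mathcal{N}$ is obviously well-defined, and $n_{\mathrm{eqv}}(\tilde{\s})$ is well-defined because the $\Z_p$-action on $W_\Gamma$ restricts to the fiber rotation on the central disk bundle $D_{v_c}$ (by the discussion in \Cref{subsec: eqv plumbing action}), so \Cref{def: eqv number and det line bundle} applies to $\tilde{\s}|_{D_{v_c}}$ and \Cref{lem: equivariance number} makes $n_{\mathrm{eqv}}$ a genuine invariant on $D_{v_c}$ (the central fixed locus is connected, so there is no cyclic-shift ambiguity once a base fixed point in $D_{v_c}$ is chosen).

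\textbf{Surjectivity.} Given $(\mathfrak{s},[m])\in \mathrm{Spin}^c(W_\Gamma)\times \Z_p$, I would first pick any $\tilde{\mathfrak{s}}_\partial\in \mathrm{Spin}^c_{\Z_p}(Y)$ lifting $\mathfrak{s}|_Y$ — this exists because, as shown inside the proof of \Cref{lem: unique relative lifting for plumbing}, the restriction map $\mathrm{Spin}^c_{\Z_p}(Y)\to \mathrm{Spin}^c(Y)$ is surjective (it fits into the split short exact sequence coming from the degenerate Serre spectral sequence). Then \Cref{lem: unique relative lifting for plumbing} produces a unique $\tilde{\mathfrak{s}}\in \mathrm{Spin}^c_{\Z_p}(W_\Gamma)$ with $\mathcal{N}(\tilde{\mathfrak{s}})=\mathfrak{s}$ and $\tilde{\mathfrak{s}}|_Y=\tilde{\mathfrak{s}}_\partial$. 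Applying $k$-twisting (which fixes $\mathcal{N}$ and shifts $n_{\mathrm{eqv}}$ by $[k]$, exactly as in the disk-bundle case recorded before \Cref{lem: equivariance number}) for an appropriate $[k]$, I adjust $n_{\mathrm{eqv}}(\tilde{\mathfrak{s}})$ to equal $[m]$ while keeping $\mathcal{N}=\mathfrak{s}$. This gives a preimage.

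\textbf{Injectivity.} Suppose $\tilde{\s},\tilde{\s}'$ have $\mathcal{N}(\tilde{\s})=\mathcal{N}(\tilde{\s}')=\mathfrak{s}$ and $n_{\mathrm{eqv}}(\tilde{\s})=n_{\mathrm{eqv}}(\tilde{\s}')$. By the bijection \Cref{lem: unique relative lifting for plumbing} established for the identification $\mathrm{Spin}^c_{\Z_p}(W_\Gamma)\cong \mathrm{Spin}^c(W_\Gamma)\times\Z_p$, it suffices to show $\tilde{\s}|_Y=\tilde{\s}'|_Y$. Both restrictions lie in the fiber of $\mathrm{Spin}^c_{\Z_p}(Y)\to \mathrm{Spin}^c(Y)$ over $\mathfrak{s}|_Y$, which is a $\Z_p$-torsor; so $\tilde{\s}'|_Y$ is the $k$-twist of $\tilde{\s}|_Y$ for a unique $[k]\in\Z_p$. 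Comparing $\tilde{\s}'$ with the $k$-twist $\tilde{\s}_k$ of $\tilde{\s}$: both restrict to $\tilde{\s}'|_Y$ on $Y$ and both have non-equivariant truncation $\mathfrak{s}$, so by the uniqueness clause of \Cref{lem: unique relative lifting for plumbing} we get $\tilde{\s}'=\tilde{\s}_k$. Then $n_{\mathrm{eqv}}(\tilde{\s}')=n_{\mathrm{eqv}}(\tilde{\s})+[k]$, and the hypothesis $n_{\mathrm{eqv}}(\tilde{\s})=n_{\mathrm{eqv}}(\tilde{\s}')$ forces $[k]=0$, hence $\tilde{\s}'=\tilde{\s}$. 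This simultaneously proves the ``moreover'' clause: for general $\tilde{\s},\tilde{\s}'$ with $\mathcal{N}(\tilde{\s})=\mathcal{N}(\tilde{\s}')$, the unique $[k]$ with $n_{\mathrm{eqv}}(\tilde{\s}')=n_{\mathrm{eqv}}(\tilde{\s})+[k]$ is the twisting parameter, by exactly the same argument with the final degenerate step omitted.

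\textbf{Main obstacle.} The only genuinely delicate point is verifying that $k$-twisting on $W_\Gamma$ interacts with the boundary restriction in the way claimed — i.e., that the twisting parameter detected by $n_{\mathrm{eqv}}$ on $D_{v_c}$ matches the one detected on $Y$ up to the bijections — and that it commutes with restriction to $Y$. This is where one must be careful that twisting is a global operation on the $\Z_p$-action on the principal bundle over all of $W_\Gamma$, and that its effect is simultaneously visible on $D_{v_c}$ (shifting $n_{\mathrm{eqv}}$ by $[k]$) and on $Y$ (shifting the torsor coordinate by $[k]$); both follow from the naturality of twisting under restriction to setwise-invariant submanifolds, already used in the lemmas preceding \Cref{lem: eqv restriction map}. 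Everything else is a formal consequence of \Cref{lem: unique relative lifting for plumbing} and the torsor structure.
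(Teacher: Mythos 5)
Your overall strategy is the same as the paper's (twisting, the equivariance number on $D_{v_c}$, and \Cref{lem: unique relative lifting for plumbing}), and your surjectivity argument is fine. The problem is the injectivity step. You assert that the fiber of $\mathrm{Spin}^c_{\Z_p}(Y)\to\mathrm{Spin}^c(Y)$ over $\mathfrak{s}|_Y$ ``is a $\Z_p$-torsor'' under twisting, and deduce that $\tilde{\s}'|_Y$ is the $k$-twist of $\tilde{\s}|_Y$ for a unique $[k]$. At this point in the paper all that is known about that fiber is that it has $p$ elements (from the split short exact sequence in the proof of \Cref{lem: unique relative lifting for plumbing}); that twisting acts freely and transitively on it is exactly \Cref{cor: twisting on seifert QHS}, which is proved \emph{after} this lemma, and whose proof is itself nontrivial precisely because the $\Z_p$-action on $Y$ is free, so the twisting parameter cannot be read off on $Y$ and one must pass to $W_\Gamma$ and use $n_{\mathrm{eqv}}$ there. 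So as written you are assuming on the boundary the very statement this lemma is meant to supply on the filling, and the same forward reference infects your proof of the ``moreover'' clause.

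The repair is short and is what the paper does: work entirely on $W_\Gamma$. The proof of \Cref{lem: unique relative lifting for plumbing} shows the fiber $\mathrm{Spin}^c_{\Z_p}(W_\Gamma,\mathfrak{s})$ over any $\mathfrak{s}\in\mathrm{Spin}^c(W_\Gamma)$ has exactly $p$ elements, and $k$-twisting preserves this fiber while shifting $n_{\mathrm{eqv}}$ by $[k]$, so $n_{\mathrm{eqv}}$ restricted to the fiber is a surjection onto $\Z_p$ between sets of cardinality $p$, hence a bijection. This gives injectivity of your map without ever restricting to $Y$, and it simultaneously shows the twist orbit of $\tilde{\s}$ exhausts the fiber with distinct $n_{\mathrm{eqv}}$-values, which is the ``moreover'' clause. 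Your argument buys nothing over this counting argument and costs you the unproved boundary torsor claim, so I would replace the injectivity step accordingly.
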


\begin{proof}
    We follow the arguments in the proof of \Cref{lem: equivariance number}. 
    For a given $\tilde{\mathfrak{s}}$, let
    \[
    \mathrm{Spin}^c_{\Z_p}(W_\Gamma,\mathcal{N}(\tilde{\mathfrak{s}})) 
    \;\subset\; \mathrm{Spin}^c_{\Z_p}(W_\Gamma)
    \]
    denote the subset of elements whose nonequivariant truncation equals $\mathcal{N}(\tilde{\mathfrak{s}})$. 
    By \Cref{lem: n eqv and twisting on plumbing}, the restriction
    \[
    n_{\mathrm{eqv},\mathcal{N}(\tilde{\mathfrak{s}})}:
    \mathrm{Spin}^c_{\Z_p}(W_\Gamma,\mathcal{N}(\tilde{\mathfrak{s}})) 
    \longrightarrow \Z_p
    \]
    is surjective, since $k$-twisting changes its value by $[k]$. 
    Furthermore, the proof of \Cref{lem: unique relative lifting for plumbing} shows that
    \[
    \bigl|\mathrm{Spin}^c_{\Z_p}(W_\Gamma,\mathcal{N}(\tilde{\mathfrak{s}}))\bigr|
    = |\Z_p| = p.
    \]
    Surjectivity together with this cardinality count implies bijectivity, completing the proof.
\end{proof}

As a corollary, we obtain a similar statement for $Y=\partial W_\Gamma$.

\begin{cor} \label{cor: twisting on seifert QHS}
    Suppose that $|H_1(Y;\Z)|$ is not divisible by $p$. 
    Then, for any $\tilde{\s}, \tilde{\s}' \in \mathrm{Spin}^c_{\Z_p}(Y)$ with 
    $\mathcal{N}(\tilde{\s}) = \mathcal{N}(\tilde{\s}')$, 
    there exists a unique $[k]\in \Z_p$ such that $\tilde{\s}'$ is the $k$-twisting of $\tilde{\s}$. 
\end{cor}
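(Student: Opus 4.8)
The plan is to reduce this boundary statement to the interior statement \Cref{lem: n eqv and twisting on plumbing} via the restriction map, exactly parallel to how \Cref{cor: eqv lift of spin c given by inv point} and similar corollaries were obtained in the preceding discussion. First I would fix a star-shaped negative definite almost rational plumbing graph $\Gamma$ with $Y \cong \partial W_\Gamma$; such $\Gamma$ exists and is essentially unique since $Y$ is a Seifert fibered rational homology sphere, and the hypothesis that $p$ does not divide $|H_1(Y;\Z)|$ carries over verbatim. By \Cref{lem: unique relative lifting for plumbing}, given $\tilde{\s}$ and $\tilde{\s}'$ on $Y$ with $\mathcal{N}(\tilde{\s}) = \mathcal{N}(\tilde{\s}')$, I can extend each to $\Z_p$-equivariant $\mathrm{Spin}^c$ structures on $W_\Gamma$; more precisely, choose a nonequivariant extension $\mathfrak{s}$ of the common underlying structure $\mathcal{N}(\tilde{\s})=\mathcal{N}(\tilde{\s}')$ to $W_\Gamma$ (one exists by the discussion in \Cref{subsec: computations sequence}), and apply \Cref{lem: unique relative lifting for plumbing} to get unique $\widetilde{\mathfrak{s}}, \widetilde{\mathfrak{s}}' \in \mathrm{Spin}^c_{\Z_p}(W_\Gamma)$ with $\widetilde{\mathfrak{s}}|_Y = \tilde{\s}$, $\widetilde{\mathfrak{s}}'|_Y = \tilde{\s}'$, and $\mathcal{N}(\widetilde{\mathfrak{s}}) = \mathcal{N}(\widetilde{\mathfrak{s}}') = \mathfrak{s}$.

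Next, since $\mathcal{N}(\widetilde{\mathfrak{s}}) = \mathcal{N}(\widetilde{\mathfrak{s}}')$, \Cref{lem: n eqv and twisting on plumbing} produces a unique $[k] \in \Z_p$ such that $\widetilde{\mathfrak{s}}'$ is the $k$-twist of $\widetilde{\mathfrak{s}}$. The $k$-twisting operation is defined for any smooth manifold with a smooth $\Z_p$-action and commutes with restriction to setwise $\Z_p$-invariant submanifolds, so restricting to $Y = \partial W_\Gamma$ gives that $\tilde{\s}' = \widetilde{\mathfrak{s}}'|_Y$ is the $k$-twist of $\tilde{\s} = \widetilde{\mathfrak{s}}|_Y$. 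This establishes existence of the desired $[k]$. For uniqueness, I would note that $k$-twisting changes the value of the equivariance number $n_{\mathrm{eqv}}$ (well-defined up to an overall cyclic shift of $\Z_p$), which is detected on $Y$ through the restriction to any setwise $\Z_p$-invariant contractible open subset, e.g. a small ball. Concretely, two twists $k \ne k'$ applied to $\tilde{\s}$ give structures differing by the $(k-k')$-twist, which is nontrivial on such a ball by the local model computations \Cref{eqn: eqv number in codim 2}, hence cannot both equal $\tilde{\s}'$; this forces $[k]$ to be unique.

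Alternatively, and perhaps more cleanly, uniqueness also follows directly from the counting argument used throughout: by the Serre spectral sequence computation in the proof of \Cref{lem: unique relative lifting for plumbing}, $H^2_{\Z_p}(Y;\Z) \cong H^2(Y;\Z) \oplus \Z_p$, so the fiber $\mathrm{Spin}^c_{\Z_p}(Y, \mathcal{N}(\tilde{\s}))$ of the forgetful map $\mathcal{N}$ over a fixed nonequivariant structure has exactly $p$ elements, and the $\Z_p$-torsor of $k$-twists acts freely (since a free transitive action follows once the twisting map is injective on this fiber, which the splitting guarantees) and transitively on this set of size $p$. Hence the $[k]$ with $\tilde{\s}' = k \cdot \tilde{\s}$ is unique.

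I do not expect a serious obstacle here; the only subtlety is bookkeeping about which invariant detects the twist on the boundary. The main point requiring care is that the equivariance number $n_{\mathrm{eqv}}$ on $Y$ is only defined up to a global cyclic shift (there being no canonical base point), so one must phrase uniqueness in terms of differences — which is exactly what the statement asks for, so this is not actually an obstruction. Everything else is a direct application of \Cref{lem: unique relative lifting for plumbing} and \Cref{lem: n eqv and twisting on plumbing} together with the elementary naturality of $k$-twisting under restriction.
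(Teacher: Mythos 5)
Your overall route is the paper's: extend to $W_\Gamma$ via \Cref{lem: unique relative lifting for plumbing}, invoke \Cref{lem: n eqv and twisting on plumbing} there, and restrict back using the compatibility of twisting with restriction. The existence half of your argument is correct and complete. The paper packages the whole thing slightly differently — it defines a map $F\colon \mathrm{Spin}^c_{\Z_p}(Y,\mathcal{N}(\tilde{\s}))\to \Z_p$ by extending to $W_\Gamma$ and taking the difference of equivariance numbers, checks $F\circ \mathrm{Tw}=\mathrm{id}$ to get injectivity of the twisting map, and then concludes bijectivity from the count $|\mathrm{Spin}^c_{\Z_p}(Y,\mathcal{N}(\tilde{\s}))|=p$ — but this is the same underlying mechanism.

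The uniqueness half as you wrote it, however, has a genuine problem. Your primary argument detects the $(k-k')$-twist ``on a setwise $\Z_p$-invariant contractible open subset of $Y$, e.g.\ a small ball.'' When the $\Z_p$-action on $Y$ is free (the case the paper actually cares about), no such subset exists: by Smith theory a $\Z_p$-action on a contractible finite-dimensional space has a fixed point. Moreover $n_{\mathrm{eqv}}$ and the local models \eqref{eqn: eqv number in codim 2}, \eqref{eqn: eqv number in codim 4} are defined only on $4$-manifolds near fixed loci, not on the $3$-manifold $Y$. Your fallback counting argument then asserts that the twisting action on the $p$-element fiber is free ``which the splitting guarantees,'' but the splitting of $0\to\Z_p\to H^2_{\Z_p}(Y;\Z)\to H^2(Y;\Z)\to 0$ only gives the cardinality; it does not by itself identify twisting with translation in the $\Z_p$ summand. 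The gap is easily closed with the tools you already used: if $\mathrm{tw}_k(\tilde{\s})=\mathrm{tw}_{k'}(\tilde{\s})$ on $Y$, then since $\mathrm{tw}_k(\widetilde{\mathfrak{s}})$ and $\mathrm{tw}_{k'}(\widetilde{\mathfrak{s}})$ both have nonequivariant truncation $\mathfrak{s}$ and equal boundary restrictions, the uniqueness in \Cref{lem: unique relative lifting for plumbing} forces $\mathrm{tw}_k(\widetilde{\mathfrak{s}})=\mathrm{tw}_{k'}(\widetilde{\mathfrak{s}})$ on $W_\Gamma$, and then the uniqueness clause of \Cref{lem: n eqv and twisting on plumbing} gives $[k]=[k']$. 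With that substitution your proof agrees with the paper's.
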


\begin{proof}
    Since every $\mathrm{Spin}^c$ structure on $Y$ extends to $W_\Gamma$, it follows from \Cref{lem: unique relative lifting for plumbing} that there exists some $\tilde{\s}_\Gamma\in \mathrm{Spin}^c_{\Z_p}(W_\Gamma)$ with $\tilde{\s}_\Gamma\vert_Y = \tilde{\s}$. Using $\tilde{\s}_\Gamma$, we define a map
    \[
    F\colon \mathrm{Spin}^c_{\Z_p}(Y,\mathcal{N}(\tilde{\s}))\longrightarrow \Z_p,
    \]
    where $\mathrm{Spin}^c_{\Z_p}(Y,\mathcal{N}(\tilde{\s})) \subset \mathrm{Spin}^c_{\Z_p}(Y)$ denotes the subset of elements whose nonequivariant truncation is $\mathcal{N}(\tilde{\s})$. 
    Given any $\tilde{\s}'\in \mathrm{Spin}^c_{\Z_p}(Y,\mathcal{N}(\tilde{\s}))$, by \Cref{lem: unique relative lifting for plumbing}, there exists a unique $\tilde{\s}'_\Gamma \in \mathrm{Spin}^c_{\Z_p}(W_\Gamma)$ such that $\mathcal{N}(\tilde{\s}'_\Gamma) = \mathcal{N}(\tilde{\s}_\Gamma)$ and $\tilde{\s}'_\Gamma \vert_Y = \tilde{\s}'$. We then set 
    \[
    F(\tilde{\s}') = \bigl(n_{\mathrm{eqv}}(\tilde{\s}'_\Gamma) - n_{\mathrm{eqv}}(\tilde{\s}_\Gamma)\bigr) \in \Z_p.
    \]

    Next, define a map
    \[
    \mathrm{Tw}:\Z_p \longrightarrow \mathrm{Spin}^c_{\Z_p}(Y,\mathcal{N}(\tilde{\s}))
    \]
    by declaring $\mathrm{Tw}([k])$ to be the $k$-twisting of $\tilde{\s}$. To prove the claim, it suffices to show that $\mathrm{Tw}$ is bijective.
    Consider the composition $F\circ \mathrm{Tw}$. For any $[k]\in \Z_p$, let $\tilde{\s}^k_\Gamma$ denote the $k$-twisting of $\tilde{\s}_\Gamma$. By definition of $k$-twisting, we have $\tilde{\s}^k_\Gamma \vert_Y$ equal to the $k$-twisting of $\tilde{\s}$. Thus,
    \[
    F(\mathrm{Tw}([k])) = n_{\mathrm{eqv}}(\tilde{\s}^k_\Gamma) - n_{\mathrm{eqv}}(\tilde{\s}_\Gamma) = [k],
    \]
    so that $F\circ \mathrm{Tw} = \mathrm{id}$. Hence $\mathrm{Tw}$ is injective. 
    Finally, by the proof of \Cref{lem: unique relative lifting for plumbing}, we have
    \[
    \bigl|\mathrm{Spin}^c_{\Z_p}(Y,\mathcal{N}(\tilde{\s}))\bigr| = |\Z_p| = p.
    \]
    Injectivity together with this cardinality argument shows that $\mathrm{Tw}$ is bijective, completing the proof.
\end{proof}

This corollary has striking consequences.
\begin{lem} \label{lem: eqv SWF of different eqv lifts}
    Suppose that $|H_1(Y;\Z)|$ is not divisible by $p$. 
    Then, for any $\tilde{\s}, \tilde{\s}' \in \mathrm{Spin}^c_{\Z_p}(Y)$ with 
    $\mathcal{N}(\tilde{\s}) = \mathcal{N}(\tilde{\s}')$, 
    there exists a unique $[k] \in \Z_p$ such that 
    $SWF_{S^1 \times \Z_p}(Y,\tilde{\s})$
    and 
    $SWF_{S^1 \times \Z_p}(Y,\tilde{\s}')$ are 
    $(S^1 \times \Z_p)$-equivariantly homotopy equivalent, 
    after reparametrizing the $S^1 \times \Z_p$-action on 
    $SWF_{S^1 \times \Z_p}(Y,\tilde{\s})$ by the automorphism
    \[
    S^1 \times \Z_p \longrightarrow S^1 \times \Z_p; 
    \qquad (x,[n]) \longmapsto \left(e^{\tfrac{2\pi i k n}{p}}x, [n]\right).
    \]
\end{lem}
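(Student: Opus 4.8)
The plan is to reduce the statement to the twisting relation already established in \Cref{cor: twisting on seifert QHS} and then track how $k$-twisting of the equivariant $\mathrm{Spin}^c$ structure manifests at the level of the finite-dimensional approximation and the Conley index. By \Cref{cor: twisting on seifert QHS}, since $p \nmid |H_1(Y;\Z)|$, there is a unique $[k]\in\Z_p$ such that $\tilde{\s}'$ is the $k$-twist of $\tilde{\s}$; this is the $[k]$ appearing in the statement. So it suffices to prove: \emph{if $\tilde{\s}'$ is the $k$-twist of $\tilde{\s}$, then $SWF_{S^1\times\Z_p}(Y,\tilde{\s}')$ is obtained from $SWF_{S^1\times\Z_p}(Y,\tilde{\s})$ by reparametrizing the $\Z_p$-action via $(x,[n])\mapsto(e^{2\pi ikn/p}x,[n])$.}

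\textbf{Key steps.} First I would unwind the definition of $k$-twisting on the level of the lift $\tilde{\tau}$: twisting replaces the chosen lift $\tilde{\tau}$ of $\tau$ to the spinor bundle $S$ by $e^{2\pi ik/p}\cdot\tilde{\tau}$ (multiplication by the central scalar $e^{2\pi ik/p}\in S^1\subset G_\s$). Since the global Coulomb slice $V = (B_0 + i\ker d^*)\oplus\Gamma(S)$, the linear operator $l$, the finite-dimensional approximations $V^\mu_\lambda(g)$, and the Seiberg--Witten vector field are all \emph{unchanged} by this modification --- the only thing that changes is the $\Z_p$-part of the $G_\s$-action, which is now postcomposed with multiplication by $e^{2\pi ik/p}$ on the $\Gamma(S)$-factor (equivalently on $\C$, the standard $S^1$-representation). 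Concretely, the new $\Z_p$-action of $[n]$ equals the old $\Z_p$-action of $[n]$ followed by the old $S^1$-action of $e^{2\pi ikn/p}$; this is precisely the reparametrization automorphism $(x,[n])\mapsto(e^{2\pi ikn/p}x,[n])$ of $S^1\times\Z_p$ applied to the original action. Second, I would observe that the Conley index $I^\mu_\lambda(g)$ and the ambient representation $V^0_\lambda(g)$, being constructed purely from the flow and the $S^1$-equivariant structure (both of which are untouched), are literally the same spaces; only the $S^1\times\Z_p$-action on them has been reparametrized as above. Third, I would check that the equivariant correction term is compatible: the reduced equivariant eta invariants $\overline\eta^k_{\diracpartial_{B_0}}$ and the torsion contributions transform under twisting exactly so that the suspension coordinate $\mathbf{n}(Y,\s,\tilde\tau,g)\in R(\Z_p)\otimes\C$ (resp.\ $RQ(\Z_p)\otimes\Q$) gets permuted by the same reparametrization --- this is essentially the computation recorded after \Cref{def: eqv number and det line bundle} that $n_{\mathrm{eqv}}$ shifts by $[k]$, together with the relation between $n_{\mathrm{eqv}}$ on the fixed locus and the local model $[(x,y,z)]\mapsto[\dots,\zeta_p^m\zeta_{2p}z]$ with $m$ shifted by $k$. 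Hence the two objects $(SWF(Y,\s,\tilde\tau,g),0,\mathbf{n})$ and $(SWF(Y,\s,\tilde\tau',g),0,\mathbf{n}')$ of $\mathcal{C}^{sp}_{S^1\times\Z_p}$ are identified by the reparametrization functor induced by $(x,[n])\mapsto(e^{2\pi ikn/p}x,[n])$, which is exactly the claimed equivalence. Uniqueness of $[k]$ is inherited from \Cref{cor: twisting on seifert QHS}.

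\textbf{Main obstacle.} The delicate point is Step three: verifying that the \emph{metric-independent} equivariant correction term $\mathbf{n}$ transforms under twisting in a way strictly compatible with reparametrizing the $S^1\times\Z_p$-action, rather than merely up to some stable equivalence. One must carefully trace through the definition of $n(Y,\s,\tilde\tau,g)_k$ --- in particular the term $(-1)^{k_i+1}\zeta_p^{m_i}\zeta_{2p}\csc\frac{k_i\pi}{p}\cot\frac{k_i\pi}{p}\cdot t(Y,K_{k,i},g)$ --- and confirm that replacing $\tilde\tau$ by $e^{2\pi ik/p}\tilde\tau$ replaces each $m_i$ by $m_i+kk_i$ (the normal rotation weight times the twist), which after the Fourier inversion defining $\mathbf{n}$ produces exactly the $\C_{[l]}\mapsto\C_{[l+k?]}$-type relabeling matching the reparametrization. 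Once this bookkeeping is pinned down, everything else --- the invariance of the flow, the identification of Conley indices, and the passage to the category $\mathcal{C}^{sp}_{S^1\times\Z_p}$ --- is formal. The reduction to \Cref{cor: twisting on seifert QHS} at the outset is what makes the existence and uniqueness of $[k]$ automatic, so the entire content is this compatibility of $\mathbf{n}$ with twisting.
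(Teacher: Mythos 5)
Your approach is essentially the same as the paper's, which simply invokes \Cref{cor: twisting on seifert QHS}, observes that the underlying spinor spaces and $S^1$-actions coincide, and notes that the $\Z_p$-actions differ by exactly the stated reparametrization. Your proposal elaborates on this one-paragraph argument in a reasonable way.

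Two remarks. First, your ``main obstacle'' is over-cautious: once you realize (as you do in the first two steps) that $k$-twisting is literally a change of the splitting $\Z_p \to G_{\s}$ --- i.e., a change of the identification $G_{\s}\cong S^1\times\Z_p$ by $\varphi_{[k]}$ --- then \emph{every} invariant built functorially from the $G_{\s}$-action (the Conley index, the spectrum $V^0_\lambda(g)$, and the correction term $\mathbf{n}$) transforms under $\varphi_{[k]}^\ast$ automatically. The Fourier-inversion bookkeeping you describe does confirm this, but the naturality argument bypasses the computation entirely and is closer to what the paper actually intends by its brief proof. Second, there is a small bookkeeping error in your parenthetical: when you replace $\tilde\tau$ by $\zeta_p^{k}\tilde\tau$, the operator $\tilde\tau^j$ appearing in the definition of $n(Y,\s,\tilde\tau,g)_j$ becomes $\zeta_p^{kj}\tilde\tau^j$, so each $m_i$ is shifted by $k\cdot j$ (twist times the power of $\tau$), not by $k\cdot k_i$ (twist times the normal rotation weight). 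This does not affect your overall conclusion.
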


\begin{proof}
    By \Cref{cor: twisting on seifert QHS}, there exists a unique $[k]\in \Z_p$ such that 
    $\tilde{\s}'$ is the $k$-twisting of $\tilde{\s}$. 
    Since $\mathcal{N}(\tilde{\s}) = \mathcal{N}(\tilde{\s}')$, 
    the underlying spinor spaces, together with their $S^1$-actions, coincide. 
    The difference lies in the $\Z_p$-actions, which are related exactly by the reparametrization described in the lemma. 
\end{proof}

\begin{lem} \label{lem: splittings are eqv spin c lifts}
    Suppose that $|H_1(Y;\Z)|$ is not divisible by $p$. 
    Then the map 
    \[
    \mathcal{S}_{Y,\s} \colon \mathrm{Split}(Y,\s) \longrightarrow \mathrm{Spin}^c_{\Z_p}(Y,\s)
    \]
    is bijective for any $\s \in \mathrm{Spin}^c(Y)$.
\end{lem}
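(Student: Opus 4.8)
The plan is to recognize both $\mathrm{Split}(Y,\s)$ and $\mathrm{Spin}^c_{\Z_p}(Y,\s)$ as torsors over the cyclic group $\mathrm{Hom}(\Z_p,S^1)\cong\Z_p$, and then to observe that $\mathcal{S}_{Y,\s}$ is equivariant with respect to these torsor structures; since any equivariant map between nonempty torsors over the same group is a bijection, this finishes the proof. So the whole argument is formal once two identifications are pinned down.

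For the left-hand side, I would first recall from \cite[Section~5]{Baraglia-Hekmati:2024-1} that the central extension $1\to S^1\to G_\s\to \Z_p\to 1$ splits, so $\mathrm{Split}(Y,\s)\neq\emptyset$. Fixing one splitting $s_0$, any other splitting has the form $[n]\mapsto \phi([n])\,s_0([n])$ for a unique map $\phi\colon\Z_p\to S^1$, and, because $S^1$ is central in $G_\s$, such a map is again a homomorphism precisely when $\phi\in\mathrm{Hom}(\Z_p,S^1)$. Thus $\mathrm{Split}(Y,\s)$ is a torsor over $\mathrm{Hom}(\Z_p,S^1)$, which is the group of $p$-th roots of unity, hence cyclic of order $p$. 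For the right-hand side, I would use that $\mathrm{Spin}^c_{\Z_p}(Y,\s)$ carries the twisting action of $\Z_p$ described in this subsection, and that \Cref{cor: twisting on seifert QHS}, which applies since $p\nmid|H_1(Y;\Z)|$, says exactly that this action is simply transitive; the set is nonempty, for instance because it receives $\mathcal{S}_{Y,\s}$ from the nonempty set $\mathrm{Split}(Y,\s)$, or directly from \Cref{lem: unique relative lifting for plumbing}. Hence it too is a $\Z_p$-torsor.

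The crux, and the step I expect to require the most care, is checking that $\mathcal{S}_{Y,\s}$ intertwines the $\mathrm{Hom}(\Z_p,S^1)$-action on splittings with the twisting $\Z_p$-action on equivariant $\mathrm{Spin}^c$ structures, under the identification $\mathrm{Hom}(\Z_p,S^1)\xrightarrow{\ \cong\ }\Z_p$ sending $\phi$ to the unique $[k]$ with $\phi([1])=\zeta_p^k$. By construction $\mathcal{S}_{Y,\s}(s)$ is the $\Z_p$-equivariant $\mathrm{Spin}^c$ structure whose generator $[1]$ acts on $P(\s)$ via $s([1])$. Under the standard identification of the kernel $S^1\subset G_\s$, consisting of the constant gauge transformations, with the central $U(1)\subset\mathrm{Spin}^c(3)$ that enters the definition of twisting — a constant gauge transformation acts on the spinor bundle as scalar multiplication, which corresponds to right multiplication on $P(\s)$ by the central $U(1)$ — replacing $s$ by $\phi\cdot s$ with $\phi([1])=\zeta_p^k$ replaces the generator's action by $\zeta_p^k\cdot s([1])$, which is precisely the $k$-twist.

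Once this compatibility is established, $\mathcal{S}_{Y,\s}$ is an equivariant map of nonempty torsors over $\mathrm{Hom}(\Z_p,S^1)\cong\Z_p$ and is therefore bijective. Everything outside the torsor-compatibility verification is bookkeeping, so I expect the actual write-up to be short; the only genuine content is unwinding the definitions of $G_\s$ and of the twisting operation to see that the $S^1\subset G_\s$ reparametrization of splittings matches the $U(1)$-twisting of equivariant $\mathrm{Spin}^c$ structures.
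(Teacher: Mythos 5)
Your proposal is correct and follows essentially the same route as the paper: the paper also fixes one splitting to identify all others as $f_k([m])=(e^{2\pi i km/p},[m])$, observes that $\mathcal{S}_{Y,\s}(f_k)$ is the $k$-twist of $\mathcal{S}_{Y,\s}(f_0)$, and invokes \Cref{cor: twisting on seifert QHS} to get surjectivity, concluding with the cardinality count $|\mathrm{Split}(Y,\s)|=p=|\mathrm{Spin}^c_{\Z_p}(Y,\s)|$ rather than your (equivalent) "equivariant map of torsors" phrasing.
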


\begin{proof}
    Choose any section $f \colon \Z_p \to G_\s$ of the central extension 
    \[
    1 \longrightarrow S^1 \longrightarrow G_\s \xrightarrow{\;\varphi\;} \Z_p \longrightarrow 1,
    \]
    and denote the corresponding $\Z_p$-equivariant $\mathrm{Spin}^c$ structure 
    $\mathcal{S}_{Y,\s}(f)$ by $\tilde{\s}$. 
    Using $f$, we identify $G_\s \cong S^1 \times \Z_p$ so that 
    $f([m]) = (0,[m])$ and $\varphi(x,[m]) = [m]$. 
    For each $[k] \in \Z_p$, define
    \[
    f_k([m]) = \left(e^{\tfrac{2\pi i k m}{p}}, [m]\right) \in S^1 \times \Z_p \cong G_\s.
    \]
    Then $f_k$ is also a section of $\varphi$, and in fact every section arises uniquely in this way. 
    Since $\mathcal{S}_{Y,\s}(f_k)$ is the $k$-twist of $\tilde{\s}$, the image of $\mathcal{S}_{Y,\s}$ is precisely the set of $\Z_p$-equivariant $\mathrm{Spin}^c$ structures obtained from $\tilde{\s}$ via $k$-twisting, $[k] \in \Z_p$. 
    By \Cref{cor: twisting on seifert QHS}, this set equals the entire $\mathrm{Spin}^c_{\Z_p}(Y,\s)$. 
    Hence $\mathcal{S}_{Y,\s}$ is surjective. 
    Finally, since $|\mathrm{Split}(Y,\s)| = p = |\mathrm{Spin}^c_{\Z_p}(Y,\s)|$, surjectivity implies bijectivity. 
    This proves the claim.
\end{proof}

We also consider the case $p=2$, where we deal with self-conjugate 
$\Z_2$-equivariant lifts of self-conjugate $\mathrm{Spin}^c$ structures on $Y$. 
Note that Spin structures on $Y$ are in natural bijection with self-conjugate 
$\mathrm{Spin}^c$ structures on $Y$. We recall and define:
\begin{itemize}
    \item the set $\mathrm{Spin}(Y)$ of Spin structures on $Y$;
    \item the set $\mathrm{Spin}^c(Y)$ of $\mathrm{Spin}^c$ structures on $Y$;
    \item the set $\mathrm{Spin}^c(Y)_{0}$ of self-conjugate $\mathrm{Spin}^c$ structures on $Y$;
    \item the set $\mathrm{Spin}_{\Z_2}(Y)$ of $\Z_2$-equivariant Spin structures on $Y$;
    \item the set $\mathrm{Spin}^c_{\Z_2}(Y)$ of $\Z_2$-equivariant $\mathrm{Spin}^c$ structures on $Y$;
    \item the set $\mathrm{Spin}^c_{\Z_2}(Y)_{0}$ of self-conjugate $\Z_2$-equivariant $\mathrm{Spin}^c$ structures on $Y$.
\end{itemize}
Here, a self-conjugate $\Z_2$-equivariant $\mathrm{Spin}^c$ structure 
is defined as follows.

\begin{defn}
Let $\tilde{\mathfrak{s}} = (P, \tau)$ be a $\Z_2$-equivariant $\mathrm{Spin}^c$ structure. 
We define its \emph{conjugate} $\Z_2$-equivariant $\mathrm{Spin}^c$ structure by
\[
\overline{\tilde{\mathfrak{s}}} := \left(\overline{P}, \overline{\tau}\right),
\]
where $\overline{P}$ is the principal $\mathrm{Spin}^c(n)$-bundle obtained from $P$ by 
extension of structure group along the conjugation map
\[
(a,b) \longmapsto (a,\overline{b}) \colon 
\mathrm{Spin}^c(n) = \mathrm{Spin}(n)\times_{\{\pm 1\}} U(1) 
\longrightarrow \mathrm{Spin}(n)\times_{\{\pm 1\}} U(1) = \mathrm{Spin}^c(n).
\]
The induced $\Z_2$-lift $\overline{\tau}$ arises from the natural identification 
$P \cong \overline{P}$.
\end{defn}

Observe that, since Spin structures can be naturally regarded as 
self-conjugate $\mathrm{Spin}^c$ structures via the inclusion 
$\mathrm{Spin}(n) \hookrightarrow \mathrm{Spin}^c(n)$, we obtain canonical maps
\[
\mathcal{F}_Y \colon \mathrm{Spin}(Y) \xrightarrow{\;\cong\;} \mathrm{Spin}^c(Y)_{0}
\qquad\text{ and }\qquad 
\mathcal{F}_Y^{\Z_2} \colon \mathrm{Spin}_{\Z_2}(Y) \longrightarrow \mathrm{Spin}^c_{\Z_2}(Y)_0.
\]

\begin{lem} \label{lem: eqv spin str equals self-conj eqv spin c}
    Suppose that $|H_1(Y;\Z)|$ is odd. 
    Then the map 
    \[
        \mathcal{F}_Y^{\Z_2} \colon \mathrm{Spin}_{\Z_2}(Y) 
        \xrightarrow{\;\cong\;} \mathrm{Spin}^c_{\Z_2}(Y)_0
    \]
    is a bijection. In particular, we may identify 
    $\Z_2$-equivariant Spin structures on $Y$ with 
    self-conjugate $\Z_2$-equivariant $\mathrm{Spin}^c$ structures on $Y$. 
    Moreover, there are exactly two such self-conjugate 
    $\Z_2$-equivariant $\mathrm{Spin}^c$ structures on $Y$, 
    and they differ by $1$-twisting.
\end{lem}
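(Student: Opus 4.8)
The plan is to reduce the statement for $Y$ to the already-established non-equivariant facts about $\mathrm{Spin}$ versus self-conjugate $\mathrm{Spin}^c$ structures, combined with the twisting calculus developed for $W_\Gamma$ and $Y$ in the preceding lemmas. First I would recall the well-known non-equivariant statement: when $|H_1(Y;\Z)|$ is odd, the map $\mathcal{F}_Y\colon \mathrm{Spin}(Y)\to \mathrm{Spin}^c(Y)_0$ is a bijection, and in fact $\mathrm{Spin}(Y)$ is a torsor over $H^1(Y;\Z_2)=0$ (since $|H_1(Y;\Z)|$ odd forces $H^1(Y;\Z_2)=0$), so there is a unique $\mathrm{Spin}$ structure, hence a unique self-conjugate $\mathrm{Spin}^c$ structure $\mathfrak{s}_0$ on $Y$. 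This is the input on the ``base'' $\mathrm{Spin}^c$ structure; everything equivariant then takes place over the single structure $\mathfrak{s}_0$.

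Next I would count the relevant equivariant lifts. Applying \Cref{cor: twisting on seifert QHS} (with $p=2$, which is allowed since $|H_1(Y;\Z)|$ is odd hence not divisible by $2$), any two elements of $\mathrm{Spin}^c_{\Z_2}(Y,\mathfrak{s}_0)$ differ by a unique $k$-twisting, $[k]\in\Z_2$, so $|\mathrm{Spin}^c_{\Z_2}(Y,\mathfrak{s}_0)|=2$, and the two elements are exchanged by $1$-twisting. The key observation is that the $1$-twisting operation on $\mathrm{Spin}^c$ structures (multiplying the $\Z_2$-action by $-1\in U(1)$ on the $\mathrm{Spin}^c(n)$-bundle) anticommutes with conjugation: if $\tilde{\mathfrak{s}}$ is a $\Z_2$-equivariant $\mathrm{Spin}^c$ structure, then $\overline{\tilde{\mathfrak{s}}_1}=(\overline{\tilde{\mathfrak{s}}})_{-1}=(\overline{\tilde{\mathfrak{s}}})_1$ (the last equality since $-1\equiv 1$ in $\Z_2$). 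So conjugation acts on the two-element set $\mathrm{Spin}^c_{\Z_2}(Y,\mathfrak{s}_0)$ as an involution commuting with the free $\Z_2$-twisting action; an involution on a $2$-element set is either the identity or the swap, and since conjugation fixes the underlying $\mathfrak{s}_0$ it must preserve $\mathrm{Spin}^c_{\Z_2}(Y,\mathfrak{s}_0)$. I would then argue that conjugation acts \emph{trivially} on this set: the cleanest route is to exhibit a self-conjugate equivariant lift. Such a lift comes from the unique $\Z_2$-equivariant $\mathrm{Spin}$ structure, whose existence and uniqueness I would establish by running the same Serre-spectral-sequence / $k$-twisting argument as in \Cref{lem: unique relative lifting for plumbing} but with $\mathrm{Spin}$ in place of $\mathrm{Spin}^c$, using that the $\Z_2$-equivariant $\mathrm{Spin}$ structures lifting $\mathfrak{s}_0$ form a torsor over $H^1_{\Z_2}(Y;\Z_2)$, which sits in a short exact sequence with $H^1(Y;\Z_2)=0$ and $H^1(B\Z_2;\Z_2)=\Z_2$, giving exactly two $\Z_2$-equivariant $\mathrm{Spin}$ structures differing by $1$-twisting. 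Then $\mathcal{F}_Y^{\Z_2}$ is a map from this $2$-element set to the $2$-element set $\mathrm{Spin}^c_{\Z_2}(Y)_0$; it is injective because $\mathcal{F}_Y$ is injective on underlying structures and $\mathcal{F}_Y^{\Z_2}$ respects the $\Z_2$-twisting actions on both sides (twisting is defined identically for $\mathrm{Spin}$ and $\mathrm{Spin}^c$), so a cardinality count forces bijectivity, and the ``differ by $1$-twisting'' clause is inherited.

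The remaining point to nail down — and the step I expect to be the main obstacle — is showing that $\mathrm{Spin}^c_{\Z_2}(Y)_0$ genuinely has exactly two elements, equivalently that the image of $\mathcal{F}_Y^{\Z_2}$ is \emph{all} self-conjugate equivariant lifts, i.e. that both elements of $\mathrm{Spin}^c_{\Z_2}(Y,\mathfrak{s}_0)$ are self-conjugate rather than swapped by conjugation. The subtlety is that a priori conjugation could swap the two equivariant lifts, leaving $\mathrm{Spin}^c_{\Z_2}(Y)_0$ empty — but we have just produced two distinct self-conjugate lifts coming from the two $\Z_2$-equivariant $\mathrm{Spin}$ structures, and since $|\mathrm{Spin}^c_{\Z_2}(Y,\mathfrak{s}_0)|=2$ these exhaust the set, so conjugation is in fact trivial there. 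To make this airtight I would verify directly, using the local model description of $k$-twisting and conjugation near the Seifert fibers (as in \Cref{def: eqv number and det line bundle} and the surrounding discussion), that the equivariance number $n_{\mathrm{eqv}}$ is unchanged under conjugation when the underlying $\mathrm{Spin}^c$ structure is self-conjugate — this is where one must be a little careful with the $p=2$ case since, as noted in \Cref{rem: defining eqv number using det line bundle}, the determinant-line description fails and one must work with the explicit local formulas. Once that compatibility is checked, the conjugation action on $\mathrm{Spin}^c_{\Z_2}(Y,\mathfrak{s}_0)$ is seen to fix $n_{\mathrm{eqv}}$, hence is the identity, and the proof concludes.
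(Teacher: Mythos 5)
Your proposal is correct and follows essentially the same route as the paper: compute $H^1_{\Z_2}(Y;\Z_2)\cong\Z_2$ to get exactly two equivariant Spin structures, use \Cref{cor: twisting on seifert QHS} to get exactly two equivariant lifts of the unique self-conjugate $\mathrm{Spin}^c$ structure related by $1$-twisting, observe that $1$-twisting commutes with conjugation when $p=2$ so self-conjugacy propagates from the nonempty image of $\mathcal{F}_Y^{\Z_2}$ to the whole two-element set, and finish by a cardinality count. The closing suggestion to verify invariance of $n_{\mathrm{eqv}}$ under conjugation via local models is superfluous, as you yourself note — the nonemptiness of $\mathrm{Spin}^c_{\Z_2}(Y)_0$ together with its invariance under the transitive $1$-twisting action already settles the point.
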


\begin{proof}
    The manifold $Y$ has a unique Spin structure, which we denote by $\mathfrak{s}$, and thus also a unique self-conjugate $\mathrm{Spin}^c$ structure $\mathcal{F}_Y(\mathfrak{s})$. 
    As in the case of $\Z_2$-equivariant $\mathrm{Spin}^c$ structures, elements of $\mathrm{Spin}_{\Z_2}(Y)$ are classified by $H^1_{\Z_2}(Y;\Z_2)$. 
    By mimicking the proof of \Cref{lem: unique relative lifting for plumbing}, we see that
    \[
        H^1_{\Z_2}(Y;\Z_2) \cong \Z_2,
    \]
    and hence $|\mathrm{Spin}_{\Z_2}(Y)| = 2$. 
    Choose one of its elements and denote it by $\tilde{\s}$. 
    Then $\mathcal{F}_Y^{\Z_2}(\tilde{\s})$ is an element of $\mathrm{Spin}^c_{\Z_2}(Y)_0$. 
    In particular, $\mathrm{Spin}^c_{\Z_2}(Y)_0$ is nonempty.
    
    Denote by $\mathrm{Spin}^c_{\Z_2}(Y,\mathcal{F}_Y(\mathfrak{s}))$ the set of $\Z_2$-equivariant $\mathrm{Spin}^c$ structures on $Y$ whose nonequivariant truncation is $\mathcal{F}_Y(\mathfrak{s})$. 
    It follows from \Cref{cor: twisting on seifert QHS} that $\mathrm{Spin}^c_{\Z_2}(Y,\mathcal{F}_Y(\mathfrak{s}))$ has exactly two elements, related to each other by $1$-twisting. 
    Since $1$-twisting clearly preserves self-conjugateness and $\mathrm{Spin}^c_{\Z_2}(Y)_0$ is nonempty, we conclude that
    \[
        \mathrm{Spin}^c_{\Z_2}(Y)_0 = \mathrm{Spin}^c_{\Z_2}(Y,\mathcal{F}_Y(\mathfrak{s})).
    \]
    In particular, $|\mathrm{Spin}^c_{\Z_2}(Y)_0| = 2$, and its two elements are related by $1$-twisting.

    Now observe that $1$-twisting makes sense even for $\Z_2$-equivariant Spin structures. 
    Hence $\mathrm{Im}(\mathcal{F}_Y^{\Z_2})$ is invariant under $1$-twisting. 
    Since it is nonempty, we must have
    \[
        \mathrm{Im}(\mathcal{F}_Y^{\Z_2}) = \mathrm{Spin}^c_{\Z_2}(Y)_0,
    \]
    i.e., $\mathcal{F}_Y^{\Z_2}$ is surjective. 
    Because
    \[
        |\mathrm{Spin}_{\Z_2}(Y)| = |\mathrm{Spin}^c_{\Z_2}(Y)_0| = 2,
    \]
    it follows that $\mathcal{F}_Y^{\Z_2}$ is bijective, proving the first part of the lemma. 
    The second part of the lemma is then immediate from the above arguments.
\end{proof}

\begin{cor} \label{cor: eqv pin(2) SWF of different eqv spin}
    Suppose that $|H_1(Y;\Z)|$ is odd. 
    Using \Cref{lem: eqv spin str equals self-conj eqv spin c}, write 
    $\mathrm{Spin}^c_{\Z_2}(Y)_{0} = \{\tilde{\s},\tilde{\s}'\}$. 
    Then 
    $SWF_{\mathrm{Pin}(2) \times \Z_2}(Y,\tilde{\s})$ and 
    $SWF_{\mathrm{Pin}(2) \times \Z_2}(Y,\tilde{\s}')$ are 
    $(\mathrm{Pin}(2) \times \Z_2)$-equivariantly homotopy equivalent, 
    after reparametrizing the 
    $\mathrm{Pin}(2) \times \Z_2$-action on 
    $SWF_{\mathrm{Pin}(2) \times \Z_2}(Y,\tilde{\s})$ by the automorphism
    \[
        \mathrm{Pin}(2) \times \Z_2 
        \longrightarrow \mathrm{Pin}(2) \times \Z_2; 
        \qquad (x,[n]) \longmapsto \left((-1)^n x, [n]\right).
    \]
\end{cor}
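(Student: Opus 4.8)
The plan is to argue exactly as in the proof of \Cref{lem: eqv SWF of different eqv lifts}, now carrying along the extra charge-conjugation symmetry $j$, with \Cref{lem: eqv spin str equals self-conj eqv spin c} playing the role that \Cref{cor: twisting on seifert QHS} played there. First I would use \Cref{lem: eqv spin str equals self-conj eqv spin c} to arrange that $\tilde{\s}'$ is the $1$-twist of $\tilde{\s}$, so that the $\Z_2$-lift $\tilde{\tau}$ underlying $\tilde{\s}$ is replaced by $\tilde{\tau}' := (-1)\cdot\tilde{\tau}$, where $-1$ lies in the central $U(1)\subset\mathrm{Spin}^c$. On the global Coulomb slice this twist acts trivially on the imaginary $1$-form summand and by $-1$ on the spinorial summand; in other words, it is the composition of the geometric involution with the constant gauge transformation $-1\in S^1\subset\mathrm{Pin}(2)$. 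Denote by
\[
\psi \colon \mathrm{Pin}(2)\times\Z_2 \longrightarrow \mathrm{Pin}(2)\times\Z_2, \qquad (x,[n])\longmapsto \big((-1)^n x,[n]\big),
\]
the automorphism from the statement; then $\psi^\ast$ is a monoidal autoequivalence of $\mathcal{C}^{\mathrm{sp}}_{\mathrm{Pin}(2)\times\Z_2}$, trivial on the $S^1$-factor, which swaps $\H_+\leftrightarrow\H_-$ and fixes $\wt{\R}_+$ and $\wt{\R}_-$.

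The second step is to compare the metric-dependent data. Since $\mathcal{N}(\tilde{\s})=\mathcal{N}(\tilde{\s}')$, the Seiberg--Witten flow on the global Coulomb slice, its finite-dimensional approximations, the associated Conley indices, and the $S^1$- and $j$-actions on them are literally the same for $\tilde{\s}$ and $\tilde{\s}'$; the sole difference is that $[1]\in\Z_2$ acts by $\tilde{\tau}'$ instead of $\tilde{\tau}$. By the first step this means that, as $\mathrm{Pin}(2)\times\Z_2$-objects, both the Conley index $I^\mu_\lambda(g)$ and the representation $V^0_\lambda(g)$ attached to $\tilde{\s}'$ are obtained from those attached to $\tilde{\s}$ by applying $\psi^\ast$; hence
\[
SWF(Y,\s,\tilde{\s}',g) = \psi^\ast\, SWF(Y,\s,\tilde{\s},g)
\]
as metric-dependent homotopy types, the desuspension being compatible with $\psi^\ast$ because $\psi^\ast$ commutes with smash products and with suspensions.

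The third step is to check that the correction terms transform compatibly, i.e.\ $\mathbf{n}(Y,\s,\tilde{\s}',g)=\psi^\ast\,\mathbf{n}(Y,\s,\tilde{\s},g)$ in $RQ(\Z_2)\otimes\Q$. The summand $n(Y,\s,\tilde{\tau},g)_0$ does not involve the $\Z_2$-lift, while $n(Y,\s,\tilde{\tau},g)_1=\overline{\eta}^{\,1}_{\s,g}$ is built from traces of $(\tilde{\tau})^\ast$ on the spinorial eigenspaces of $\diracpartial_{B_0}$; replacing $\tilde{\tau}$ by $(-1)\cdot\tilde{\tau}$ multiplies each such trace by $-1$, so $n_0\mapsto n_0$ and $n_1\mapsto -n_1$. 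Feeding this into $\mathbf{n}=\tfrac12(n_0+n_1)\otimes\H_+ + \tfrac12(n_0-n_1)\otimes\H_-$ and using that $\psi^\ast$ interchanges $\H_+$ and $\H_-$ gives the asserted equality. Combining the three steps,
\[
SWF_{\mathrm{Pin}(2)\times\Z_2}(Y,\tilde{\s}') = \big[\big(\psi^\ast SWF(Y,\s,\tilde{\s},g),\,0,\,\psi^\ast\mathbf{n}(Y,\s,\tilde{\s},g)\big)\big] = \psi^\ast\, SWF_{\mathrm{Pin}(2)\times\Z_2}(Y,\tilde{\s}),
\]
which is precisely the claim.

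I expect the only subtle point to be the first step: verifying that $1$-twisting of an equivariant $\mathrm{Spin}^c$ structure acts on the configuration space exactly as composition with $-1\in S^1\subset\mathrm{Pin}(2)$, and matching this uniformly with the sign flip $n_1\mapsto -n_1$ and the action of $\psi^\ast$ on $RQ(\Z_2)$; once the signs are pinned down the remainder is bookkeeping. A robust alternative is to invoke \Cref{lem: eqv SWF of different eqv lifts} with $p=2$ for the $S^1\times\Z_2$-equivalence and then observe that the charge-conjugation symmetry $j$ is unaffected by the twist and is natural with respect to the reparametrization, thereby promoting the $S^1\times\Z_2$-equivalence to a $\mathrm{Pin}(2)\times\Z_2$-equivalence.
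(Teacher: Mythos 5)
Your proposal is correct and follows essentially the same approach as the paper: the paper's proof is a one-line reference to \Cref{lem: eqv spin str equals self-conj eqv spin c} and the proof of \Cref{lem: eqv SWF of different eqv lifts}, which you have simply unpacked in detail (and your final "robust alternative" paragraph is in fact exactly the paper's argument).
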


\begin{proof}
    This follows immediately from 
    \Cref{lem: eqv spin str equals self-conj eqv spin c} 
    and the proof of 
    \Cref{lem: eqv SWF of different eqv lifts}. Note that, while \Cref{lem: eqv SWF of different eqv lifts} is about the $S^1 \times \Z_p$-equivariant setting, it is easy to see that the same argument also works in the $\mathrm{Pin}(2)\times \Z_2$-equivariant setting. 
\end{proof}

\subsection{Equivariant $\mathrm{Spin}^c$ computation sequences and $\Z_p$-labelled planar graded roots} \label{subsec: eqv spin c comp seq}

Let $\Gamma$ be a star-shaped negative definite almost rational plumbing graph such that $Y = \partial W_\Gamma$ is a rational homology sphere. 
As reviewed in \Cref{subsec: eqv plumbing action}, $W_\Gamma$ admits a smooth $S^1$-action that acts linearly on each disk bundle $D_v$ ($v \in V(\Gamma)$), and in particular, acts on the disk bundle $D_{v_c}$ corresponding to the central node $v_c$ via fiberwise rotation. 
The induced $S^1$-action on $Y$ is the Seifert action, and, as always, we will only consider its $\Z_p$-subaction. 
Throughout the paper, we will also assume that $p$ does not divide $|H_1(Y;\Z)| = |H^2(Y;\Z)|$.

Choose a $\Z_p$-equivariant $\mathrm{Spin}^c$ structure $\tilde{\s}$ on $Y$, and let $\s = \mathcal{N}(\tilde{\s})$ be its underlying $\mathrm{Spin}^c$ structure. 
As discussed in \Cref{rem: computation sequence of spin c str}, by taking $v_c$ as the base node we obtain the $\mathrm{Spin}^c$ computation sequence
\[
\mathrm{sp}_{\s}\bigl(x_{\s}(0)\bigr),\,
\mathrm{sp}_{\s}\bigl(x^{\s}_{0,0}\bigr),\dots,\,
\mathrm{sp}_{\s}\bigl(x^{\s}_{0,n_0-1}\bigr),\,
\mathrm{sp}_{\s}\bigl(x_{\s}(1)\bigr),\dots
\]
of $(\Gamma,\s)$. 
This sequence has the following properties:
\begin{itemize}
    \item $\mathrm{sp}_{\s}(x^{\s}_{i,0}) 
    = \mathrm{sp}_{\s}(x_{\s}(i)) + PD[S_{v_c}]$;
    \item $\mathrm{sp}_{\s}(x^{\s}_{i,j+1}) 
    = \mathrm{sp}_{\s}(x^{\s}_{i,j}) + PD[S_v] 
    \quad \text{for some } v\in V(\Gamma)\smallsetminus\{v_c\}$;
    \item Each $\mathrm{Spin}^c$ structure in the sequence restricts to $\s$ on $Y$.
\end{itemize}
By \Cref{lem: unique relative lifting for plumbing}, each $\mathrm{Spin}^c$ structure in this sequence admits a unique $\Z_p$-equivariant lift restricting to $\tilde{\s}$ on $Y$. 
We denote the resulting sequence of $\Z_p$-equivariant $\mathrm{Spin}^c$ structures on $W_\Gamma$ by
\[
\widetilde{\mathrm{sp}}_{\tilde{\s}}\bigl(x_{\s}(0)\bigr),\,
\widetilde{\mathrm{sp}}_{\tilde{\s}}\bigl(x^{\s}_{0,0}\bigr),\dots,\,
\widetilde{\mathrm{sp}}_{\tilde{\s}}\bigl(x^{\s}_{0,n_0-1}\bigr),\,
\widetilde{\mathrm{sp}}_{\tilde{\s}}\bigl(x_{\s}(1)\bigr),\dots
\]
Each $\Z_p$-equivariant $\mathrm{Spin}^c$ structure in this sequence now restricts to $\tilde{\s}$ on $Y$. 
Moreover, by \Cref{lem: unique relative eqv lifting,lem: general relative unique extension}, we have:
\begin{itemize}
    \item $\widetilde{\mathrm{sp}}_{\tilde{\s}}(x^{\s}_{i,0})$ and $\widetilde{\mathrm{sp}}_{\tilde{\s}}(x_{\s}(i))$ differ only in the interior of the central disk bundle $D_{v_c}$;
    \item $\widetilde{\mathrm{sp}}_{\tilde{\s}}(x^{\s}_{i,j+1})$ and $\widetilde{\mathrm{sp}}_{\tilde{\s}}(x^{\s}_{i,j})$ differ only in the disk bundle $D_v$ for some $v\in V(\Gamma)\smallsetminus\{v_c\}$.
\end{itemize}

\begin{rem}
    For careful readers, we provide a detailed explanation of why this construction works. 
    Rewrite the nonequivariant $\mathrm{Spin}^c$ computation sequence as 
    $\mathfrak{s}_1,\mathfrak{s}_2,\dots$. 
    By \Cref{lem: unique relative lifting for plumbing}, let 
    $\tilde{\s}_1,\tilde{\s}_2,\dots$ denote their unique $\Z_p$-equivariant lifts 
    that restrict to $\tilde{\s}$ on $Y$. 
    Suppose that $\mathfrak{s}_k$ and $\mathfrak{s}_{k+1}$ differ only on $D_v$ 
    for some node $v\in V(\Gamma)$. 
    Then, by \Cref{lem: unique relative eqv lifting,lem: general relative unique extension}, 
    there exists a $\Z_p$-equivariant $\mathrm{Spin}^c$ structure $\tilde{\s}'_{k+1}$ on $W_\Gamma$, 
    which agrees with $\tilde{\s}_k$ outside the interior of $D_v$ and satisfies 
    $\mathcal{N}(\tilde{\s}'_{k+1}) = \mathfrak{s}_{k+1}$. 
    Hence we have
    \[
    \tilde{\s}_{k+1}\vert_Y = \tilde{\s}'_{k+1}\vert_Y = \tilde{\s},
    \qquad 
    \mathcal{N}(\tilde{\s}_{k+1}) = \mathcal{N}(\tilde{\s}'_{k+1}) = \mathfrak{s}_{k+1}.
    \]
    By uniqueness (\Cref{lem: unique relative lifting for plumbing}), 
    it follows that $\tilde{\s}_{k+1} = \tilde{\s}'_{k+1}$. 
    Therefore, $\tilde{\s}_k$ and $\tilde{\s}_{k+1}$ differ only in the interior of $D_v$.
\end{rem}

\begin{lem} \label{lem: eqv num increase for central node}
    For each integer $i \geq 0$, we have
    \[
    n_{\mathrm{eqv}}\!\left(\widetilde{\mathrm{sp}}_{\tilde{\s}}\!\left(x^{\s}_{i,0}\right)\right) 
    = n_{\mathrm{eqv}}\!\left(\widetilde{\mathrm{sp}}_{\tilde{\s}}\!\left(x_{\s}(i)\right)\right) + 1.
    \]
\end{lem}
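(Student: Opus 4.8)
The plan is to track what happens to the central disk bundle $D_{v_c}$ as we pass from $x_{\s}(i)$ to $x^{\s}_{i,0} = x_{\s}(i) + v_c$ in the $\mathrm{Spin}^c$ computation sequence, and to show that the corresponding change in equivariant $\mathrm{Spin}^c$ structure is exactly a $1$-twisting on $D_{v_c}$. The starting observation is that, by \Cref{rem: computation sequence of spin c str}, $\mathrm{sp}_{\s}(x^{\s}_{i,0}) = \mathrm{sp}_{\s}(x_{\s}(i)) + PD[S_{v_c}]$, and the two $\mathbb{Z}_p$-equivariant lifts $\widetilde{\mathrm{sp}}_{\tilde{\s}}(x^{\s}_{i,0})$ and $\widetilde{\mathrm{sp}}_{\tilde{\s}}(x_{\s}(i))$ differ only in the interior of $D_{v_c}$, as recorded in the discussion preceding the lemma. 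So both restrict to the same $\mathbb{Z}_p$-equivariant $\mathrm{Spin}^c$ structure on $D_{v_c}$'s boundary $\partial D_{v_c}$ (a lens space $L(w(v_c),1)$, on which the $\mathbb{Z}_p$-action is free), but their restrictions to $D_{v_c}$ itself are genuinely different equivariant lifts of different underlying $\mathrm{Spin}^c$ structures. By \Cref{lem: twisting is universal on disk bundles} — or rather its relative refinement via \Cref{cor: eqv lift of spin c given by inv point} and \Cref{lem: eqv number depends on inv point} — the change in $n_{\mathrm{eqv}}$ is therefore computable purely on $D_{v_c}$, where the $\mathbb{Z}_p$-action is fiberwise rotation.

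The key computation is then local. On $D_{v_c}$ with fiberwise rotation, $\mathrm{Spin}^c_{\mathbb{Z}_p}(D_{v_c}) \cong \mathrm{Spin}^c(D_{v_c}) \times \mathbb{Z}_p$ via \Cref{lem: eqv spin c over disk bundles} (first coordinate the underlying structure, second the equivariance number at the unique fixed point, i.e.\ a point of the zero-section $S_{v_c}$), and adding $PD[S_{v_c}]$ shifts the first Chern class by the Poincaré dual of the zero-section. The plan is to pin down the second coordinate by working in the local model \Cref{eqn: eqv number in codim 2}: near a fixed point on $S_{v_c}$, the underlying $\mathrm{Spin}^c$ structure is described by a principal $\tilde T$-bundle, and adding $PD[S_{v_c}]$ tensors the determinant line by the line bundle whose restriction to the normal disk is $\mathcal{O}(1)$; in the equivariant local coordinates $[(x,y,z)]$ this multiplies the $z$-coordinate action by exactly one extra fiber-rotation factor $\zeta_p^{1}$ (because the $\mathbb{Z}_p$-action on the normal fiber is rotation by $\tfrac{2\pi}{p}$, the generator of the local normal weight), which by \Cref{def: eqv number and det line bundle} increments $m = n^x_{\mathrm{eqv}}$ by $1$. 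This is the content one needs; it is essentially the same bookkeeping that shows $k$-twisting shifts $n_{\mathrm{eqv}}$ by $[k]$, applied with the geometric twisting $PD[S_{v_c}]$ in place of an abstract $U(1)$-twist.

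Concretely, the steps I would carry out, in order: (1) restrict both equivariant $\mathrm{Spin}^c$ structures to $D_{v_c}$ and invoke that they differ only there, so $n_{\mathrm{eqv}}(\widetilde{\mathrm{sp}}_{\tilde\s}(x^\s_{i,0})) - n_{\mathrm{eqv}}(\widetilde{\mathrm{sp}}_{\tilde\s}(x_\s(i)))$ depends only on the pair of restrictions to $D_{v_c}$; (2) identify the difference of the two \emph{underlying} $\mathrm{Spin}^c$ structures on $D_{v_c}$ as tensoring by the line bundle $L_{v_c}$ with $c_1(L_{v_c}) = PD[S_{v_c}]$, using that $c_1(\mathrm{sp}_\s(x + v_c)) = c_1(\mathrm{sp}_\s(x)) + 2v_c^*$ and $v_c^*$ is the class dual to the zero-section; (3) in the local model of \Cref{subsec: eqv spin c structure classification}, show that tensoring by this line bundle, equipped with its natural $\mathbb{Z}_p$-linearization coming from the fiberwise rotation, shifts the equivariance number by exactly $+1$ — this is the one genuine calculation, comparing \Cref{eqn: eqv number in codim 2} before and after; (4) conclude via \Cref{lem: equivariance number} (which says $n_{\mathrm{eqv}}$ together with $\mathcal{N}$ is a bijection to $\mathrm{Spin}^c(D_{v_c})\times\mathbb{Z}_p$, so the local shift determines the global one). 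The main obstacle I anticipate is Step (3): one must be careful that the natural equivariant structure on $L_{v_c}$ picked out by the geometry of $W_\Gamma$ (i.e.\ the one making $\widetilde{\mathrm{sp}}_{\tilde\s}(x^\s_{i,0})$ the \emph{unique} lift restricting to $\tilde\s$ on $Y$) is precisely the one contributing a single $\zeta_p$ factor, and not some other power — this requires matching the orientation/sign conventions of \Cref{eqn: eqv number in codim 2} with the convention that the $\mathbb{Z}_p$-action on the normal bundle of $S_{v_c}$ is rotation by $\tfrac{2k\pi}{p}$ with $k=1$ for the central node under fiberwise rotation. Once that normalization is fixed, the increment is forced to be $+1$.
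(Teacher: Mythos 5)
Your overall skeleton is sound and your route is genuinely different from the paper's: you propose a local fixed-point-model computation (tensoring by $\mathcal{O}(S_{v_c})$ with a $\Z_p$-linearization and reading off the shift of $m$ in \Cref{eqn: eqv number in codim 2}), whereas the paper never touches the local model here. Instead it uses the explicit formula for the equivariant boundary restriction map from \Cref{lem: eqv restriction map}: under $\mathrm{Spin}^c_{\Z_p}(D_{v_c})\cong \Z\oplus\Z_p$ and $\mathrm{Spin}^c_{\Z_p}(\partial D_{v_c})\cong \Z_{w(v_c)p}$, restriction is $(i,[j])\mapsto[i+w(v_c)j]$; since the underlying structure on $D_{v_c}$ shifts by $-w(v_c)$ in the $\Z$-coordinate and the two equivariant lifts must agree on $\partial D_{v_c}$ (they differ only in the interior of $D_{v_c}$), the $\Z_p$-coordinate, i.e.\ $n_{\mathrm{eqv}}$, is forced to shift by $1$. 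That argument is pure arithmetic and requires no choice of linearization.

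The problem with your proposal is that Step (3), as written, is a gap rather than a deferred calculation. You correctly identify that the content of the lemma is to determine \emph{which} of the $p$ equivariant linearizations of $L_{v_c}=\mathcal{O}(S_{v_c})$ is singled out by the requirement that $\widetilde{\mathrm{sp}}_{\tilde\s}(x^\s_{i,0})$ restrict to $\tilde\s$ on $Y$ — but then you conclude with ``once that normalization is fixed, the increment is forced to be $+1$,'' which assumes the answer. There is no intrinsically ``natural'' linearization to appeal to: the $p$ linearizations all come from fiberwise rotation, differing by twisting, and each would give a different increment. To complete your route you would have to (a) show that the correct linearization is the divisor one, characterized by the canonical section vanishing along $S_{v_c}$ being equivariant — equivalently, by the induced equivariant structure on $L_{v_c}|_{\partial D_{v_c}}$ being trivial, which is what encodes agreement on the boundary — and then (b) compute that this linearization acts on the fiber of $L_{v_c}$ at a fixed point of $S_{v_c}$ by $\zeta_p$ (the weight of the fiberwise rotation on the normal bundle), so that the $z$-coordinate in \Cref{eqn: eqv number in codim 2} picks up exactly one factor of $\zeta_p$. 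Step (a) is where the boundary-agreement condition enters, and it is precisely the input the paper packages into \Cref{lem: eqv restriction map}; without it your argument does not determine the increment at all. Also, a minor point in Step (2): $c_1$ of $\mathrm{sp}_\s(x+v_c)$ differs from $c_1(\mathrm{sp}_\s(x))$ by $2v_c^*$, so the spinor bundle is twisted by a single copy of $L_{v_c}$ while the determinant line changes by $L_{v_c}^{\otimes 2}$; you need to be explicit that it is the action on $L_{v_c}$ (not on its square) that shifts the $z$-coordinate, since $n_{\mathrm{eqv}}$ is defined through the local $\mathrm{Spin}^c$ model rather than through the determinant line (cf.\ \Cref{rem: defining eqv number using det line bundle}, which fails for $p=2$).
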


\begin{proof}
    Recall from the proof of \Cref{lem: unique relative eqv lifting} that we have identifications
    \[
    \mathrm{Spin}^c_{\Z_p}(D_{v_c}) \cong \Z \oplus \Z_p,
    \qquad 
    \mathrm{Spin}^c_{\Z_p}(\partial D_{v_c}) \cong \Z_{w(v_c)p},
    \]
    such that the boundary restriction map 
    $\mathrm{res}^\partial_{\Z_p}$ is given by
    \[
    (i,[j]) \longmapsto [\,i + w(v_c)j\,].
    \]
    Since
    \[
    \mathrm{sp}_{\s}\!\left(x^{\s}_{i,0}\right)
    = \mathrm{sp}_{\s}\!\left(x_{\s}(i)\right) + PD[S_{v_c}]
    \]
    and $w(v_c) < 0$, we see that
    \[
    \mathrm{sp}_{\s}\!\left(x^{\s}_{i,0}\right)\!\big|_{D_{v_c}}
    = \mathrm{sp}_{\s}\!\left(x_{\s}(i)\right)\!\big|_{D_{v_c}} - w(v_c),
    \]
    as elements of $\mathrm{Spin}^c(D_{v_c}) \cong \Z$, the first summand of 
    $\mathrm{Spin}^c_{\Z_p}(D_{v_c}) \cong \Z \oplus \Z_p$.
    Therefore, in order for the equivariant $\mathrm{Spin}^c$ structures 
    $\widetilde{\mathrm{sp}}_{\tilde{\s}}\!\left(x^{\s}_{i,0}\right)$ and 
    $\widetilde{\mathrm{sp}}_{\tilde{\s}}\!\left(x_{\s}(i)\right)$
    to agree on $\partial D_{v_c}$ (since they differ only in the interior of $D_{v_c}$),
    we must have
    \[
    n_{\mathrm{eqv}}\!\left(\widetilde{\mathrm{sp}}_{\tilde{\s}}\!\left(x^{\s}_{i,0}\right)\right) 
    = n_{\mathrm{eqv}}\!\left(\widetilde{\mathrm{sp}}_{\tilde{\s}}\!\left(x_{\s}(i)\right)\right) + 1,
    \]
    as claimed.
\end{proof}

\begin{lem} \label{lem: eqv num is fixed in comp seq}
    For any integers $i,j$ with $i \geq 0$ and $0 \leq j < n_i - 1$, we have
    \[
    n_{\mathrm{eqv}}\!\left(\widetilde{\mathrm{sp}}_{\tilde{\s}}\!\left(x^{\s}_{i,j}\right)\right) 
    = n_{\mathrm{eqv}}\!\left(\widetilde{\mathrm{sp}}_{\tilde{\s}}\!\left(x^{\s}_{i,j+1}\right)\right).
    \]
\end{lem}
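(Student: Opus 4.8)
The plan is to adapt the argument used in the proof of \Cref{lem: eqv num increase for central node}, but this time working with the disk bundles $D_v$ for the non-central nodes $v\in V(\Gamma)\smallsetminus\{v_c\}$. Recall that in the computation sequence, the two successive terms $\mathrm{sp}_\s(x^\s_{i,j})$ and $\mathrm{sp}_\s(x^\s_{i,j+1})$ differ by $PD[S_v]$ for some $v\in V(\Gamma)\smallsetminus\{v_c\}$, and, as noted in the discussion preceding the lemma, their unique $\Z_p$-equivariant lifts $\widetilde{\mathrm{sp}}_{\tilde\s}(x^\s_{i,j})$ and $\widetilde{\mathrm{sp}}_{\tilde\s}(x^\s_{i,j+1})$ differ only in the interior of $D_v$. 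The key point is that the equivariance number $n_{\mathrm{eqv}}$ is defined in terms of the restriction to the central disk bundle $D_{v_c}$, which is \emph{disjoint} from the interior of $D_v$ (since $v\neq v_c$); indeed $D_v\cap D_{v_c}$ is either empty or, if $v$ is adjacent to $v_c$, contained in the plumbing region, and in any case the modification in the interior of $D_v$ does not touch $D_{v_c}$.

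First I would make precise that "differ only in the interior of $D_v$" means the two equivariant $\mathrm{Spin}^c$ structures restrict to canonically isomorphic $\Z_p$-equivariant $\mathrm{Spin}^c$ structures on $W_\Gamma\smallsetminus \overset{\circ}{\nu}(S_v)$, hence in particular on $D_{v_c}$ (possibly after shrinking to avoid the plumbing locus, using that $D_{v_c}$ meets $D_v$ only in a collar region disjoint from $S_v$, or more simply observing $D_{v_c}\subset W_\Gamma\smallsetminus\overset{\circ}{\nu}(S_v)$ when the plumbing neighborhoods are chosen appropriately). Then $n_{\mathrm{eqv}}$, being computed entirely from $\widetilde{\mathrm{sp}}_{\tilde\s}(-)|_{D_{v_c}}$ via \Cref{def: eqv number and det line bundle} (or equivalently via \Cref{lem: eqv number depends on inv point} applied to a setwise $\Z_p$-invariant contractible subset of $D_{v_c}$), takes the same value on both. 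This is the whole content: the invariance of $n_{\mathrm{eqv}}$ under modifications supported away from $D_{v_c}$ is exactly \Cref{lem: eqv number depends on inv point}, applied with $X=D_{v_c}$ and $U$ a $\Z_p$-invariant contractible open subset of $D_{v_c}$ on which the two equivariant structures agree.

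Concretely, the steps are: (1) observe that $\widetilde{\mathrm{sp}}_{\tilde\s}(x^\s_{i,j})$ and $\widetilde{\mathrm{sp}}_{\tilde\s}(x^\s_{i,j+1})$ agree on $W_\Gamma\smallsetminus\overset{\circ}{\nu}(S_v)\supseteq D_{v_c}$, as recorded in the bulleted properties preceding the lemma (second bullet) together with the uniqueness in \Cref{lem: unique relative lifting for plumbing}; (2) since $n_{\mathrm{eqv}}$ of an equivariant $\mathrm{Spin}^c$ structure on $W_\Gamma$ is by definition $n_{\mathrm{eqv}}$ of its restriction to $D_{v_c}$, and the two restrictions to $D_{v_c}$ coincide, conclude the two equivariance numbers are equal. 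One could alternatively phrase step (2) using \Cref{lem: eqv number depends on inv point} directly by choosing a $\Z_p$-invariant contractible neighborhood of a fixed point in $D_{v_c}$; both formulations are immediate.

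The main (and only) obstacle I anticipate is a bookkeeping subtlety: one must be slightly careful that the disk bundle $D_v$ and the central disk bundle $D_{v_c}$ genuinely do not interfere, i.e. that the modification claimed to be "in the interior of $D_v$" really does leave a neighborhood of a $\Z_p$-fixed point in $D_{v_c}$ untouched. When $v$ is adjacent to $v_c$ in $\Gamma$, the plumbing identifies a $D^2\times D^2$ inside $D_v$ with one inside $D_{v_c}$, so one should make sure the relevant fixed point (the center of the fiber over a suitable point of $S_{v_c}$, chosen away from the plumbing region) lies outside this overlap. Since $S_{v_c}$ is a $2$-sphere and the plumbing regions attach over finitely many small disks, such a fixed point exists, and the argument goes through verbatim. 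Everything else is a direct invocation of \Cref{lem: eqv number depends on inv point} and the uniqueness statements already established, so the proof is short.
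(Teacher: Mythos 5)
Your proposal is correct and follows essentially the same route as the paper: the paper's proof also picks a $\Z_p$-fixed point $p\in S_{v_c}$ lying outside every $D_v$ with $v\neq v_c$, takes a setwise $\Z_p$-invariant contractible ball neighborhood $U_p\subset D_{v_c}$ disjoint from all non-central disk bundles, and then applies \Cref{lem: eqv number depends on inv point}, using that the two equivariant $\mathrm{Spin}^c$ structures differ only in the interior of some $D_v$ with $v\neq v_c$. The "bookkeeping subtlety" you flag about the plumbing overlap is exactly the point the paper addresses by that choice of $p$, so your argument matches the paper's.
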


\begin{proof}
    Choose any point $p \in S_{v_c}$ that is not contained in $D_v$ for any 
    $v \in V(\Gamma)\smallsetminus \{v_c\}$. 
    Since $\Z_p$ acts on $D_{v_c}$ by fiberwise rotation, $p$ is a fixed point. 
    Hence we may take an open ball neighborhood $U_p \subset D_{v_c}$ satisfying:
    \begin{itemize}
        \item $U_p \cap D_v = \emptyset$ for all $v \in V(\Gamma)\smallsetminus \{v_c\}$;
        \item $U_p$ is setwise $\Z_p$-invariant.
    \end{itemize}
    Now, since there exists some $v \in V(\Gamma)\smallsetminus \{v_c\}$ such that 
    $\widetilde{\mathrm{sp}}_{\tilde{\s}}\!\left(x^{\s}_{i,j}\right)$ and 
    $\widetilde{\mathrm{sp}}_{\tilde{\s}}\!\left(x^{\s}_{i,j+1}\right)$ 
    differ only in the interior of $D_v$, the claim follows from 
    \Cref{lem: eqv number depends on inv point}.
\end{proof}

We now explain how to turn this data into an enhanced version of planar graded roots, which we call \emph{$\Z_p$-labelled planar graded roots}. 
Given a group $G$, we say that an element 
\[
x = \sum_{g \in G} x_g \cdot g \in \Z[G]
\]
\emph{has nonnegative coefficients} if $x_g \geq 0$ for all $g \in G$, and we denote 
\[
|x| = \sum_{g \in G} x_g.
\]

\begin{defn}
    A $\Z_p$-labelled planar graded root is a tuple 
    \[
    \mathcal{R} = (R, \lambda_V, \{\lambda_{A,w}\}_{w \in V}),
    \]
    with the weight function of $V(R)$ denoted by $\chi$, where the following conditions are satisfied.\footnote{We will sometimes drop $w$ from the notation if it is clear from context.}
    \begin{itemize}
        \item $R$ is a planar graded root with node set $V$;
        \item $\lambda_V$ is a function from the set of leaves of $\mathcal{R}$ to $\Z[\Z_p]$;
        \item For each $w \in V(R)$, $\lambda_{A,w}$ is a function from $\mathrm{Angle}(w)$ to $\Z[\Z_p]$;
        \item For each simple angle $(v,v') \in \mathrm{Angle}(w)$, the elements
        \[
        \lambda_{A,w}(v,v') \qquad \text{ and } \qquad \lambda_{A,w}(v,v')+\lambda_V(v)-\lambda_V(v')
        \]
        in $\Z[\Z_p]$ have nonnegative coefficients;
        \item For each simple angle $(v,v') \in \mathrm{Angle}(w)$, we have
        \[
        |\lambda_{A,w}(v,v')| = \chi(w)-\chi(v),\
        \qquad 
        |\lambda_V(v)-\lambda_V(v')| = \chi(v)-\chi(v').
        \]
    \end{itemize}
    We call $R$ the \emph{underlying planar graded root} of $\mathcal{R}$, 
    $\lambda_V$ the \emph{leaf labelling} of $\mathcal{R}$, and 
    $\lambda_{A,w}$ the \emph{angle labelling} of $\mathcal{R}$ at $w$. 
    Two $\Z_p$-labelled planar graded roots are said to be \emph{equivalent} 
    if their underlying planar graded roots are equivalent and the leaf and angle labels agree up to an overall cyclic shift of $\Z[\Z_p]$ (via multiplication by a fixed element of $\Z_p$) and an overall addition of leaf labels by a fixed element of $\Z[\Z_p]$.
\end{defn}

Recall from \Cref{defn: graded root from abstract sequence} that eventually increasing sequences of integers give rise to planar graded roots. 
In a completely analogous way, we can upgrade this procedure to construct $\Z_p$-labelled planar graded roots for any prime $p$. 
This construction is modelled on \Cref{lem: eqv num increase for central node} and \Cref{lem: eqv num is fixed in comp seq}.

\begin{defn} 
    Given an eventually increasing sequence $\mathbf{n}=(n_i)_{i\ge 0}$ of integers, let $R_\mathbf{n}$ denote the associated planar graded root. 
    For clarity, we write the element $[i]\in \Z_p$ as $\tau_p^i$ when describing labels. 
    Using the notations of \Cref{defn: graded root from abstract sequence}, we endow the leaves and simple angles of $R_\mathbf{n}$ with $\Z_p$-labels as follows:
    \begin{itemize}
        \item For each $i\in I_0$, set 
        \[
        \lambda_V(i)=\sum_{s=0}^{i-1} (n_{s+1}-n_s)\tau_p^s.
        \]
        \item For each simple angle $\alpha_k = (i_k,i_{k+1})$ of $R_\mathbf{n}$, set 
        \[
        \lambda_A(\alpha_k)=\sum_{s=i_k}^{j_k-1} (n_{s+1}-n_s)\tau_p^s.
        \]
    \end{itemize}
    These leaf and angle labellings make $R_\mathbf{n}$ into a $\Z_p$-labelled planar graded root, denoted $\mathcal{R}_\mathbf{n}$.
\end{defn}

Since the sequence
\[
\mathbf{x}(\Gamma,\tilde{\s}) 
= \bigl\{\,\chi_{\s}(x_{\s}(0)),\,
\chi_{\s}(x_{\s}(1)),\,
\chi_{\s}(x_{\s}(2)),\dots\bigr\}
\]
is eventually increasing, and
\[
n_{\mathrm{eqv}}\bigl(\widetilde{\mathrm{sp}}_{\tilde{\s}}(x_{\s}(i))\bigr) 
= n_{\mathrm{eqv}}\bigl(\widetilde{\mathrm{sp}}_{\tilde{\s}}(x_{\s}(0))\bigr) + i,
\]
it is natural to make the following definition.

\begin{defn}\label{def: labelled gr root for Y and s}
    We define the $\Z_p$-labelled planar graded root 
    \[
    \mathcal{R}_{\Gamma,\tilde{\s}} := \mathcal{R}_{\mathbf{x}(\Gamma,\tilde{\s})}
    \]
    associated to the eventually increasing integer sequence $\mathbf{x}(\Gamma,\tilde{\s})$ 
    as the \emph{$\Z_p$-labelled planar graded root of $(Y,\tilde{\s})$}. 
    As in the non-equivariant (i.e., unlabelled) case,  
    the equivalence class of $\mathcal{R}_{\Gamma,\tilde{\s}}$ depends only on 
    the boundary $Y$ of $W_\Gamma$ and the $\Z_p$-equivariant $\mathrm{Spin}^c$ structure $\tilde{\s}$ on $Y$. 
    However, it does depend on the chosen smooth $\Z_p$-action on $W_\Gamma$.
\end{defn}

\begin{rem}
    In fact, the equivalence class of $\mathcal{R}_{\Gamma,\tilde{\s}}$ depends only on the non-equivariant $\mathrm{Spin}^c$ structure $\mathcal{N}(\tilde{\s})$ on $Y$, since its definition uses only $\mathcal{N}(\tilde{\s})$. This is expected, because replacing $\tilde{\s}$ with its $k$-twist $\mathrm{tw}_k(\tilde{\s})$ has the effect of adding $k$ to the equivariance numbers of the restrictions to $D_{v_c}$ of all equivariant $\mathrm{Spin}^c$ structures on $W_\Gamma$ appearing in the equivariant $\mathrm{Spin}^c$ computation sequence.
\end{rem}

\subsection{Equivariant Dirac indices, adjunction relations, and $S^1 \times \Z_p$-lattice model} \label{subsec: Zp lattice homotopy}
From now on, we further assume that the $\Z_p$-action on $Y$ is free; 
if $Y = \Sigma(a_1,\dots,a_n)$, this is equivalent to requiring that $p$ does not divide any of $a_1,\dots,a_n$. 
Consider the computation sequence
\[
\widetilde{\mathrm{sp}}_{\tilde{\s}}(x_{\s}(0)),\,
\widetilde{\mathrm{sp}}_{\tilde{\s}}(x^{\s}_{0,0}),\,
\dots,\,
\widetilde{\mathrm{sp}}_{\tilde{\s}}(x^{\s}_{0,n_0-1}),\,
\widetilde{\mathrm{sp}}_{\tilde{\s}}(x_{\s}(1)),\,
\dots
\]
constructed in the previous subsection. 
To convert this sequence into a lattice homotopy type, 
for each $\Z_p$-equivariant $\mathrm{Spin}^c$ structure $\tilde{\s}$ appearing in the sequence, 
we consider the $(S^1 \times \Z_p)$-equivariant Bauer–Furuta invariant
\[
BF_{S^1 \times \Z_p}(W_\Gamma,\tilde{\s}) 
\colon \bigl(\mathrm{ind}^t_{\Z_p}\dirac_{W_\Gamma,\tilde{\s}}\bigr)^+ \longrightarrow SWF(Y,\tilde{\s}),
\]
as defined in \Cref{prop:eqBF}, 
where $\dirac_{W_\Gamma,\tilde{\s}}$ denotes the equivariant Dirac operator on $W_\Gamma$ associated to $\tilde{\s}$, 
and $\mathrm{ind}^t_{\Z_p}$ denotes the topological part of its $(S^1 \times \Z_p)$-equivariant index, which lies in $R(\Z_p)$. 
We then glue these invariants together via adjunction relations, 
following the arguments of \cite{dai2023lattice}.

We will use the following two lemmas.

\begin{lem}\label{lem: borsuk ulam}
    Let $\mathbf{m},\mathbf{n}\in \Z[\Z_p]$. 
    Suppose there exists a based $(S^1 \times \Z_p)$-equivariant (stable) map
    \[
    f \colon (\C^\mathbf{m})^+ \longrightarrow (\C^\mathbf{n})^+
    \]
    such that the induced map on $S^1$-fixed points,
    \[
    f^{S^1} \colon (\C^0)^+ \longrightarrow (\C^0)^+,
    \]
    is a homotopy equivalence. 
    Then $\mathbf{n}-\mathbf{m}$ has nonnegative coefficients.
\end{lem}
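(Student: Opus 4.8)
The plan is to reduce, one cyclic subgroup at a time, to a classical Euler-class argument over $BS^1$. First I would fix $k\in\{0,\dots,p-1\}$ and pass to $G_k$-fixed points of $f$, where $G_k$ is the order-$p$ subgroup introduced just before \Cref{lem: stabilizers in S1xZp}. Since $S^1\times\Z_p$ is abelian, $G_k$ is normal, so $f^{G_k}$ is a based (stable) map equivariant for $Q_k:=(S^1\times\Z_p)/G_k$, and a direct computation with the characters defining $\C_{[i]}$ and $G_k$ shows: $Q_k\cong S^1$; the identifications $(\C^{\mathbf m})^{G_k}=\C^{m_k}$ and $(\C^{\mathbf n})^{G_k}=\C^{n_k}$ recorded in the text hold; and under $Q_k\cong S^1$ the residual action on these fixed-point spaces is the standard scalar $S^1$-action. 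Moreover $S^1\cap G_k=1$ and $\langle S^1,G_k\rangle=S^1\times\Z_p$, while $(\C^{\mathbf m})^{S^1}=(\C^{\mathbf m})^{S^1\times\Z_p}=0$; hence the $S^1$-fixed-point map of $f^{G_k}$ is identified with $f^{S^1}$, so it is a homotopy equivalence $S^0\to S^0$. Thus it suffices to prove the following $S^1$-statement: if $g\colon(\C^a)^+\to(\C^b)^+$ is a based (stable) $S^1$-map for the standard scalar actions and $g^{S^1}$ is a homotopy equivalence, then $a\le b$; applying it with $a=m_k$, $b=n_k$ for every $k$ gives $m_k\le n_k$, i.e.\ $\mathbf n-\mathbf m$ has nonnegative coefficients.

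For the $S^1$-statement I would apply the Borel construction $(-)\wedge_{S^1}ES^1_+$ to $g$, obtaining a map of Thom spaces $Bg\colon \Th(L^{\oplus a})\to \Th(L^{\oplus b})$ over $BS^1=\CP^\infty$, where $L$ is the line bundle of the standard character. The fixed-point inclusion $S^0\hookrightarrow(\C^a)^+$ Borel-constructs to the zero section $z_a\colon BS^1_+\to \Th(L^{\oplus a})$, and $z_a^*(\tau_a)=e(L^{\oplus a})=u^a$, where $\tau_a$ is the Thom class and $u\in H^2(BS^1)$ the generator. Functoriality of fixed points and of the Borel construction yields a commutative square with vertical zero sections $z_a,z_b$ and horizontal maps $B(g^{S^1})$ (top) and $Bg$ (bottom); since $g^{S^1}$ is a homotopy equivalence of $S^0$ we have $B(g^{S^1})\simeq\mathrm{id}_{BS^1_+}$, so chasing $\tau_b$ around the square gives $z_a^*\!\big(Bg^*(\tau_b)\big)=u^b\neq 0$. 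On the other hand the Thom isomorphism identifies $\widetilde H^{2b}(\Th(L^{\oplus a}))\cong H^{2b-2a}(BS^1)$, which vanishes as soon as $b<a$; in that case $Bg^*(\tau_b)=0$, forcing $z_a^*(Bg^*(\tau_b))=0$, a contradiction. Hence $a\le b$.

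I would then note that nothing changes for genuinely stable maps: a stable $S^1$-map is represented by an honest map after smashing with $A^+$ for some complex $S^1$-representation $A$, passing to $G_k$-fixed points only introduces an extra suspension by the complex $Q_k$-representation $A^{G_k}$, and in the Borel picture this merely multiplies all Thom classes by the Euler class of a vector bundle, which is not a zero divisor in $H^*(BS^1)=\Z[u]$; this leaves the comparison ``$u^b\neq 0$ versus vanishing by degree'' intact.

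The only part that requires care is the bookkeeping in the first paragraph---verifying that the residual $Q_k$-action is the standard scalar action and that the $S^1$-fixed-point map of $f^{G_k}$ is exactly the given homotopy equivalence $f^{S^1}$---but this is a short character computation, essentially already contained in the discussion around \Cref{lem: stabilizers in S1xZp}. The topological core, the Euler-class comparison, is entirely standard, so I do not anticipate a genuine obstacle.
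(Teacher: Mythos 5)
Your proof is correct and follows essentially the same route as the paper: restrict to $G_k$-fixed points to get an $S^1$-equivariant map $(\C^{m_k})^+\to(\C^{n_k})^+$ compatible with the $S^1$-fixed-point inclusion, then conclude $m_k\le n_k$ from the fact that the inclusion $(\C^0)^+\hookrightarrow(\C^a)^+$ acts on Borel cohomology as multiplication by $U^a$. Your Thom-space/Euler-class paragraph is just a more explicit unpacking of the paper's one-line application of $\widetilde H^\ast_{S^1}(-;\Q)$ to the same commutative square.
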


\begin{proof}
    Write 
    \[
    \mathbf{m} = \sum_{k=0}^{p-1} m_k \cdot [k], 
    \qquad 
    \mathbf{n} = \sum_{k=0}^{p-1} n_k \cdot [k].
    \]
    Taking the $G_k$-fixed point locus of $f$, we obtain an $S^1$-equivariant map
    \[
    f^{G_k} \colon (\C^{m_k})^+ \longrightarrow (\C^{n_k})^+,
    \]
    which fits into the commutative diagram
    \[
    \xymatrix{
    (\C^{m_k})^+ \ar[r]^{f^{G_k}} & (\C^{n_k})^+ \\
    (\C^0)^+ \ar[r]^{f^{S^1}}_{\sim} \ar[u]^\subset & (\C^0)^+ \ar[u]_\subset
    }
    \]
    where the vertical arrows are the natural inclusions.  

    Now apply the functor $\tilde{H}^\ast_{S^1}(-;\Q)$ to this diagram. 
    Since there are canonical identifications
    \[
\tilde{H}^\ast_{S^1}\!\bigl((\C^n)^+;\Q\bigr) \cong \Q[U]
    \qquad \text{for all } n\in \Z,
    \]
    and the $S^1$-equivariant inclusion $(\C^n)^+ \hookrightarrow (\C^{n+n'})^+$ induces multiplication by $U^{n'}$, we obtain the commutative diagram
    \[
    \xymatrix{
    \Q[U] \ar[r]^{(f^{S^1})^\ast}_{\cong} & \Q[U] \\
    \Q[U] \ar[r]^{(f^{G_k})^\ast} \ar[u]^{\times U^{n_k}} & \Q[U] \ar[u]_{\times U^{m_k}}
    }
    \]
    Since $f^{S^1}$ is a homotopy equivalence, $(f^{S^1})^\ast$ is multiplication by some nonzero $r \in \Q^\times$. 
    Therefore,
    \[
    U^{m_k}\,(f^{G_k})^\ast(1) 
    = (f^{S^1})^\ast(U^{m_k}) 
    = r\, U^{n_k}.
    \]
    This implies that $U^{m_k}$ divides $U^{n_k}$, hence $m_k \le n_k$. 
    Since this holds for all $k$, the claim follows.
\end{proof}

\begin{lem}\label{lem: adjunction}
    Let $S^0$ denote the one-point compactification of the trivial 
    $0$-dimensional $S^1 \times \Z_p$-representation. 
    For some $\mathbf{m}, \mathbf{n} \in \Z[\Z_p]$, suppose we are given 
    based $S^1 \times \Z_p$-equivariant (stable) maps
    \[
    f,g \colon (\C^\mathbf{m})^+ \longrightarrow (\C^\mathbf{n})^+.
    \]
    Assume further that the fixed-point maps 
    $f^{S^1}\colon S^0 \to S^0$ and $g^{S^1}\colon S^0 \to S^0$ are homotopy equivalences, and that they are homotopic. 
    Then $f$ and $g$ are $S^1 \times \Z_p$-equivariantly homotopic.
\end{lem}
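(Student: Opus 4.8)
The plan is to use obstruction theory for $(S^1\times\Z_p)$-equivariant maps between representation spheres, comparing $f$ and $g$ cell-by-cell over the orbit types. As in the proof of \Cref{lem:equivBFdetection}, the key structural input is that the spaces $(\C^{\mathbf m})^+$ and $(\C^{\mathbf n})^+$ have very restricted isotropy: by \Cref{lem: stabilizers in S1xZp}, every stabilizer occurring in $(\C^{\mathbf m})^+$ is one of $1$, $S^1$, $S^1\times\Z_p$, or $G_k$ for $k=0,\dots,p-1$, and the only one with finite Weyl group in the abelian group $S^1\times\Z_p$ is $S^1\times\Z_p$ itself (all the others, including each $G_k$ and in particular $G_0\cong\Z_p$, have infinite normalizer quotient). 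Hence, by \cite[page~126, Theorem~4.11]{dieck1987transformation}, two $(S^1\times\Z_p)$-equivariant self-maps of a representation sphere are equivariantly homotopic as soon as they agree on $H$-fixed points (up to degree) for each $H$ appearing as a stabilizer \emph{with finite Weyl group}, i.e.\ here only $H=S^1\times\Z_p$ needs to be checked.

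First I would reduce to the following: it suffices to show that $\deg\!\left(f^{G}\right)=\deg\!\left(g^{G}\right)$ for every subgroup $G\subset S^1\times\Z_p$ that occurs as a stabilizer and has finite Weyl group, whenever $\dim(\C^{\mathbf m})^G=\dim(\C^{\mathbf n})^G$. By the isotropy analysis, the only such $G$ is the full group $S^1\times\Z_p$, whose fixed points are $(\C^{\mathbf m})^{S^1\times\Z_p}=(\C^{0})^+=S^0=(\C^{\mathbf n})^{S^1\times\Z_p}$, and there the restricted maps are $f^{S^1\times\Z_p}=(f^{S^1})^{\Z_p}$ and $g^{S^1\times\Z_p}=(g^{S^1})^{\Z_p}$. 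Since $f^{S^1}$ and $g^{S^1}$ are homotopic \emph{as maps $S^0\to S^0$} by hypothesis, and they are homotopy equivalences, they have the same degree ($\pm 1$, in fact $+1$ for a based self-map of $S^0$ that is a homotopy equivalence), so $f^{S^1\times\Z_p}$ and $g^{S^1\times\Z_p}$ agree. For all other orbit types ($1$, $S^1$, $G_k$ with $k\neq 0$, and $G_0$) the hypothesis of \cite[Theorem~4.11]{dieck1987transformation} does not require a condition, precisely because their Weyl groups are infinite; this is exactly the mechanism exploited in \Cref{lem:equivBFdetection}. Applying the cited theorem with $f$ and $g$ in place of the two maps being compared then yields that $f$ and $g$ are $(S^1\times\Z_p)$-equivariantly homotopic.

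There is one bookkeeping point to dispatch: \cite[Theorem~4.11]{dieck1987transformation} is phrased for equivariant maps of $G$-spheres $S^V\to S^W$, and one must confirm that $(\C^{\mathbf m})^+$ and $(\C^{\mathbf n})^+$ are indeed representation spheres (which is immediate, as $\C^{\mathbf m}=\bigoplus_i \C_{[i]}^{m_i}$ is a genuine $(S^1\times\Z_p)$-representation when $\mathbf m$ has nonnegative coefficients, and for general $\mathbf m\in\Z[\Z_p]$ one works stably, adding a common representation to both sides, which does not affect equivariant homotopy classes). I would also note that the statement is only asserted for maps whose $S^1$-fixed-point restrictions are homotopy equivalences; this is what guarantees $\dim(\C^{\mathbf m})^{S^1}=\dim(\C^{\mathbf n})^{S^1}=0$ matches up, so that the degree comparison at the top orbit type is the \emph{only} constraint and it is automatically satisfied. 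The main (and essentially only) obstacle is verifying the Weyl-group finiteness claim, i.e.\ that $N_{S^1\times\Z_p}(G_0)/G_0$ is infinite so that $G_0\cong\Z_p$ does \emph{not} impose a degree condition --- but since $S^1\times\Z_p$ is abelian this is transparent: $N_{S^1\times\Z_p}(G_0)=S^1\times\Z_p$, and the quotient by the finite group $G_0$ is infinite. With that observed, the proof is a direct citation, exactly parallel to (and slightly simpler than) the argument already given for \Cref{lem:equivBFdetection}.
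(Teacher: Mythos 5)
There is a genuine gap, on two counts.

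First, a concrete error in your isotropy analysis: the subgroup $S^1 \subset S^1\times\Z_p$ has Weyl group $(S^1\times\Z_p)/S^1 \cong \Z_p$, which is \emph{finite}, so your assertion that $S^1\times\Z_p$ is the only stabilizer with finite Weyl group (and your listing of $S^1$ among the orbit types that ``do not require a condition'') is false. This particular slip happens to be harmless here, since the degree condition at $S^1$ is exactly the hypothesis of the lemma, but it signals that the Weyl-group bookkeeping on which your whole argument rests has not been done correctly.

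Second, and more seriously, you dismiss the degree conditions at the trivial subgroup and at the finite subgroups $G_k$ on the grounds that their Weyl groups are infinite, and thereby skip what is in fact the entire substance of the proof. The paper's argument reduces, via \cite[Theorem~4.11, p.~126]{dieck1987transformation} and \Cref{lem: stabilizers in S1xZp}, to verifying $\deg f = \deg g$ and $\deg f^{G_k} = \deg g^{G_k}$ for $k=1,\dots,p-1$, and then proves these by a nontrivial two-step argument: first \Cref{lem: borsuk ulam} is used to show $|\mathbf m|\le|\mathbf n|$ (resp.\ $m_k\le n_k$) --- note that \emph{a priori} there is no relation at all between $\mathbf m$ and $\mathbf n$, and without such a relation the dimension hypotheses needed to invoke the classification theorem are not available; and second, when the dimensions agree, a localization argument in $\widetilde H^\ast_{S^1}(-;\Z)$ forces both degrees to equal $1$. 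Your proposal supplies neither step. Moreover, the ``only finite Weyl groups matter'' principle cannot be taken at face value in the stable setting you invoke: stably, equivariant maps between representation spheres are not detected by fixed-point degrees at finite-Weyl-group subgroups alone (the tom Dieck splitting contributes additional summands from the free strata), and the theorem being cited is an unstable obstruction-theoretic statement whose applicability requires exactly the dimension inequalities that the Borsuk--Ulam step establishes. Your remark about ``adding a common representation to both sides'' does not address this, since after any common suspension the fixed-point dimensions at $1$ and at $G_k$ still differ by $\mathbf n-\mathbf m$. To repair the proof you would need to restore the verification of the conditions at $H=1$ and $H=G_k$ essentially as the paper does.
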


\begin{proof}
    Without loss of generality, we may assume that $\deg f^{S^1} = \deg g^{S^1} = 1$. 
    By \Cref{lem: stabilizers in S1xZp} and \cite[Theorem~4.11, p.~126]{dieck1987transformation}, 
    it suffices to show the following:
    \begin{itemize}
        \item $\deg f = \deg g$;
        \item $\deg f^{G_k} = \deg g^{G_k}$ for each $k = 1, \dots, p-1$.   
    \end{itemize}
    We will only show that $\deg f = \deg g$, since the argument for 
    $\deg f^{G_k} = \deg g^{G_k}$ is analogous. 

    Forgetting the $\Z_p$-part of the action, we obtain $S^1$-equivariant stable maps
    \[
    f,g \colon (\C^{|\mathbf{m}|})^+ \longrightarrow (\C^{|\mathbf{n}|})^+.
    \]
    By the proof of \Cref{lem: borsuk ulam}, this forces $|\mathbf{m}| \leq |\mathbf{n}|$. 
    If $|\mathbf{m}| < |\mathbf{n}|$, then every map 
    $(\C^{|\mathbf{m}|})^+ \to (\C^{|\mathbf{n}|})^+$ is non-equivariantly null-homotopic, 
    and hence $\deg f = \deg g = 0$. 
    Thus we may assume $|\mathbf{m}| = |\mathbf{n}|$.
    Choose identifications
    \[
    \tilde{H}^\ast_{S^1}\bigl((\C^{|\mathbf{m}|})^+;\Z\bigr)\cong \Z[U],
    \qquad 
    \tilde{H}^\ast_{S^1}\bigl((\C^0)^+;\Z\bigr)\cong \Z[U].
    \]
    Consider the commutative square in which the vertical maps are inclusions:
    \[
    \xymatrix{
    (\C^{|\mathbf{m}|})^+ \ar[r]^{f} & (\C^{|\mathbf{n}|})^+ \\
    (\C^0)^+ \ar[r]^{f^{S^1}}_{\sim}\ar[u]^\subset & (\C^0)^+ \ar[u]_\subset
    }
    \]
    Applying the functor $\tilde{H}^\ast_{S^1}(-;\Z)$ yields
    \[
    \xymatrix{
    \Z[U] \ar[r]^{(f^{S^1})^\ast}_\cong & \Z[U] \\
    \Z[U] \ar[r]^{f^\ast} \ar[u]^{U^{|\mathbf{n}|}} & \Z[U] \ar[u]_{\times U^{|\mathbf{m}|}}
    }
    \]
    Since $\deg f^{S^1} = 1$, we may take $(f^{S^1})^\ast = \mathrm{id}$. 
    Thus
    \[
    U^{|\mathbf{m}|} \cdot f^\ast(1) = (f^{S^1})^\ast(U^{|\mathbf{n}|}) = U^{|\mathbf{n}|} = U^{|\mathbf{m}|},
    \]
    which implies $f^\ast(1) = 1$, i.e.\ $\deg f = 1$. 
    The same argument applies to $g$, showing $\deg g = 1$. 
    Therefore $\deg f = \deg g$, as required.
\end{proof}

Using \Cref{lem: adjunction,lem: borsuk ulam}, we prove the following lemma.

\begin{lem} \label{lem: BF is homotopic when weight is same}
    Let $\tilde{\s}, \tilde{\s}'$ be $\Z_p$-equivariant $\mathrm{Spin}^c$ structures on $W_\Gamma$ that agree outside the interior of $D_v$ for some $v \in V(\Gamma)$. 
    Suppose that 
    \[
    \mathcal{N}(\tilde{\s}') = \mathcal{N}(\tilde{\s}) + PD[S_v],
    \]
    and that the nonequivariant topological indices of the $\mathrm{Spin}^c$ Dirac operators for 
    $(W_\Gamma,\mathcal{N}(\tilde{\s}))$ and $(W_\Gamma,\mathcal{N}(\tilde{\s}'))$ coincide, i.e., 
    \[
    \ind^t \dirac_{W_\Gamma,\mathcal{N}(\tilde{\s})}
    \simeq
    \ind^t \dirac_{W_\Gamma,\mathcal{N}(\tilde{\s}')}.
    \]
    Then the $\Z_p$-equivariant indices of 
    $\dirac_{W_\Gamma,\tilde{\s}}$ and $\dirac_{W_\Gamma,\tilde{\s}'}$ 
    are stably isomorphic as $S^1 \times \Z_p$-representations, and we have
    \[
    BF_{S^1 \times \Z_p}(W_\Gamma,\tilde{\s})
    \sim
    BF_{S^1 \times \Z_p}(W_\Gamma,\tilde{\s}').
    \]
\end{lem}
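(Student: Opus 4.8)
The strategy is to use the adjunction relation \Cref{prop: BF gluing} to express both Bauer--Furuta invariants as composites involving a common invariant, and then apply \Cref{lem: adjunction} to conclude they are homotopic. First I would apply \Cref{prop: BF gluing} with the negatively embedded sphere $S_v$ (which is setwise $\Z_p$-invariant since the $S^1$-action on $W_\Gamma$ is linear on each disk bundle, and in particular preserves each zero-section): taking $\s = \tilde{\s}$ and $\s' = \tilde{\s}'$ with $\mathcal{N}(\tilde{\s}') = \mathcal{N}(\tilde{\s}) + PD[S_v]$, the hypothesis that $\tilde{\s}$ and $\tilde{\s}'$ agree outside the interior of $D_v$ places us exactly in the setting of that proposition, giving
\[
BF_{S^1 \times \Z_p}(W_\Gamma, \tilde{\s})
= U^{\ind^t_{\Z_p}\dirac_{W_\Gamma,\tilde{\s}} - \ind^t_{\Z_p}\dirac_{W_\Gamma,\tilde{\s}'}} \circ BF_{S^1 \times \Z_p}(W_\Gamma, \tilde{\s}')
\]
up to $S^1\times\Z_p$-equivariant stable homotopy, where either $\tilde{\s}$ or $\tilde{\s}'$ plays the role of $\s'$ depending on the sign of the index difference. (Note $PD[S_v]$ and $-PD[S_v]$ are interchanged by relabeling, so one should apply the proposition in whichever direction makes the exponent a genuine nonnegative-coefficient element of $\Z[\Z_p]$, using \Cref{lem: borsuk ulam} to see the sign is consistent.)

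Next I would compute the index difference. By \Cref{lem: rationality}(ii), $\alpha_\C(\ind^t_{\Z_p}\dirac_{W_\Gamma,\tilde{\s}}) = \ind^t\dirac_{W_\Gamma,\mathcal{N}(\tilde{\s})}$ and similarly for $\tilde{\s}'$, so the hypothesis $\ind^t\dirac_{W_\Gamma,\mathcal{N}(\tilde{\s})} \simeq \ind^t\dirac_{W_\Gamma,\mathcal{N}(\tilde{\s}')}$ forces $\alpha_\C(\ind^t_{\Z_p}\dirac_{W_\Gamma,\tilde{\s}} - \ind^t_{\Z_p}\dirac_{W_\Gamma,\tilde{\s}'}) = 0$. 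Now I need that this difference, which a priori lies in $R(\Z_p)\otimes\Q$ but is in fact an honest element of $R(\Z_p)$ (it represents a difference of APS indices that, via \Cref{even_equivariant_index} and the correction terms cancelling since the boundary equivariant $\mathrm{Spin}^c$ structures $\tilde{\s}|_Y = \tilde{\s}'|_Y$ agree, is a difference of finite-dimensional $\Z_p$-representations), is actually zero. A virtual representation in $R(\Z_p)$ with vanishing complex augmentation need not vanish in general; the key extra input is \Cref{lem: borsuk ulam} applied to $BF_{S^1\times\Z_p}(W_\Gamma,\tilde{\s})$ itself (and to $BF_{S^1\times\Z_p}(W_\Gamma,\tilde{\s}')$), whose $S^1$-fixed-point maps are homotopy equivalences by \Cref{prop:eqBF}: this shows each index difference term has all nonnegative coefficients when written against the other, and combined with vanishing total dimension this forces $\ind^t_{\Z_p}\dirac_{W_\Gamma,\tilde{\s}} = \ind^t_{\Z_p}\dirac_{W_\Gamma,\tilde{\s}'}$ in $R(\Z_p)$, i.e. the two equivariant indices are stably isomorphic as $S^1\times\Z_p$-representations. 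This is the first assertion of the lemma.

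With the exponent $\ind^t_{\Z_p}\dirac_{W_\Gamma,\tilde{\s}} - \ind^t_{\Z_p}\dirac_{W_\Gamma,\tilde{\s}'} = 0$ established, the map $U^0$ is the identity, so \Cref{prop: BF gluing} gives directly
\[
BF_{S^1 \times \Z_p}(W_\Gamma, \tilde{\s}) = BF_{S^1 \times \Z_p}(W_\Gamma, \tilde{\s}')
\]
up to $S^1\times\Z_p$-equivariant stable homotopy, as desired. (Alternatively, once both invariants are stable maps $(\C^{\mathbf{m}})^+ \to (\C^{\mathbf{n}})^+$ between the same source and target with the same $\Z_p$-equivariant index, and their $S^1$-fixed-point maps are both homotopy equivalences of $S^0$ — necessarily of degree one after normalization — one can invoke \Cref{lem: adjunction} to conclude they are equivariantly homotopic without even tracking the identification coming from the gluing formula.) \textbf{The main obstacle} I anticipate is the bookkeeping in the second step: verifying cleanly that $\ind^t_{\Z_p}\dirac_{W_\Gamma,\tilde{\s}} - \ind^t_{\Z_p}\dirac_{W_\Gamma,\tilde{\s}'}$ lies in $R(\Z_p)$ (not merely $R(\Z_p)\otimes\Q$) before one can speak of nonnegative coefficients — this requires noting that $\tilde{\s}$ and $\tilde{\s}'$ restrict to the \emph{same} equivariant $\mathrm{Spin}^c$ structure on $Y$ (since they differ only in the interior of $D_v$), so that the correction-term contributions to the APS indices coincide and the difference is genuinely integral — and then combining the vanishing augmentation with the two-sided nonnegativity from \Cref{lem: borsuk ulam} to kill it entirely. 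Everything else is a direct appeal to \Cref{prop: BF gluing}, \Cref{lem: rationality}, and \Cref{lem: adjunction}.
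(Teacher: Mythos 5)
Your overall architecture (use the gluing relation from \Cref{prop: BF gluing}, kill the exponent by showing the equivariant indices agree, then conclude the Bauer--Furuta invariants coincide) is close in spirit to the paper's argument, but there is a real gap in the middle step, and it is precisely the step the paper spends most of its effort on.

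The difficulty is that you try to apply \Cref{lem: borsuk ulam} (and, in your parenthetical alternative, \Cref{lem: adjunction}) to the maps $BF_{S^1 \times \Z_p}(W_\Gamma,\tilde{\s})$ and $BF_{S^1 \times \Z_p}(W_\Gamma,\tilde{\s}')$ themselves. Both of those lemmas require the source \emph{and target} to be complex representation spheres $(\C^{\mathbf m})^+$ and $(\C^{\mathbf n})^+$. The codomain of $BF_{S^1 \times \Z_p}(W_\Gamma,\cdot)$ is $SWF_{S^1\times\Z_p}(\partial W_\Gamma,\tilde{\s}|_Y)$, and for a general Seifert rational homology sphere $Y=\partial W_\Gamma$ this is not a representation sphere. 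So neither lemma applies to $W_\Gamma$ directly, and the inequality $\ind^t_{\Z_p}\dirac_{\tilde{\s}} \le \ind^t_{\Z_p}\dirac_{\tilde{\s}'}$ (and its reverse) does not follow from what you have written. Similarly, one cannot deduce two-sided nonnegativity of the exponent by ``applying \Cref{prop: BF gluing} in both directions'': the $U$-exponent in that proposition is, by construction, the stable inclusion $S^0\hookrightarrow(\C^{\mathbf m})^+$, which only makes sense when the exponent is already known to be nonnegative, so reading the proposition in the reversed order presupposes what you want to prove.

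What the paper does to close this gap is reduce \emph{first} to the case that $W_\Gamma = D_v$ is a single disk bundle over $S^2$ (this WLOG is where the hypotheses that $\tilde{\s},\tilde{\s}'$ differ only in $\mathrm{int}(D_v)$ and the index hypothesis are really being used). Over $D_v$, the boundary $\partial D_v = -L(n,1)$ admits a $\Z_p$-invariant PSC metric (\Cref{lem: equivariant PSC metrics on lens spaces}), so by \Cref{PSC-vanish} the Floer spectrum $SWF_{S^1\times\Z_p}(\partial D_v)$ \emph{is} a representation sphere $(\C^{\mathbf r})^+$. Now \Cref{lem: borsuk ulam} does apply and gives $\mathbf m\le\mathbf r$ and $\mathbf m'\le\mathbf r$. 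The paper then uses the numerical hypothesis $\ind^t\dirac_{\mathcal N(\tilde{\s})}=\ind^t\dirac_{\mathcal N(\tilde{\s}')}$ to force $c_1(\mathcal N(\tilde{\s}))=-n$ (equivalently, $\mathcal N(\tilde{\s}')=\overline{\mathcal N(\tilde{\s})}$), computes $|\mathbf m|=|\mathbf m'|$ explicitly, and compares with $|\mathbf r|$ via the $d$-invariant formula for lens spaces to conclude $|\mathbf m|=|\mathbf m'|=|\mathbf r|$, hence $\mathbf m=\mathbf m'=\mathbf r$. This careful computation is what replaces your appeal to \Cref{lem: borsuk ulam} on the uncontrolled $W_\Gamma$; your argument is missing both the reduction to $D_v$ and the observation that the index hypothesis pins down $c_1$ to make the nonequivariant index computation close. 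Once the indices are identified, the paper finishes with \Cref{lem: adjunction} applied over $D_v$ (where again the sphere-codomain requirement is satisfied), which matches your final step in spirit.
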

\begin{proof}
    By restricting to the disk bundle $D_v$, we may assume without loss of generality that $\Gamma$ has only one node $v$, so $W_\Gamma = D_v$. In this restricted setting, since $\Gamma$ is negative definite, the weight $w(v)$ of $v$ is negative, which implies that $W_\Gamma = D_v$ is a negative definite cobordism from $\emptyset$ to $\partial D_v = -L(n,1)$, where $n=-w(v)$. We write
    \[
    \mathrm{ind}^t_{\Z_p}\dirac_{D_v,\tilde{\s}} = \mathbf{m} 
    \qquad \text{and} \qquad  
    \mathrm{ind}^t_{\Z_p}\dirac_{D_v,\tilde{\s}'} = \mathbf{m}'
    \]
    for some $\mathbf{m}, \mathbf{m}' \in \Q[\Z_p]$.
    
    By \Cref{lem: equivariant PSC metrics on lens spaces}, $L(n,1)$ admits a complete $\Z_p$-equivariant metric of positive scalar curvature. Thus, by \Cref{PSC-vanish}, 
    \[
    SWF_{S^1 \times \Z_p}(-L(n,1)) \simeq (\C^\mathbf{r})^+
    \]
    for some $\mathbf{r}\in \Q[\Z_p]$. Therefore the equivariant Bauer--Furuta invariant for $(D_v,\tilde{\s})$ has the form
    \[
    BF_{S^1 \times \Z_p}(D_v,\tilde{\s})\colon 
    (\mathrm{ind}^t_{\Z_p}\dirac_{D_v,\tilde{\s}})^+ \longrightarrow (\C^{\mathbf{r}})^+.
    \]
    Forgetting the $\Z_p$-action gives the $S^1$-equivariant Bauer--Furuta invariant
    \[
    BF_{S^1}(D_v,\tilde{\s})\colon 
    (\mathrm{ind}^t \dirac_{D_v,\mathcal{N}(\tilde{\s})})^+ 
    \longrightarrow (\C^{|\mathbf{r}|})^+.
    \]
    Since $D_v$ is negative definite, $(BF_{S^1}(D_v,\mathcal{N}(\tilde{\s})))^{S^1}$ and $(BF_{S^1}(D_v,\mathcal{N}(\tilde{\s}')))^{S^1}$ are homotopy equivalences. Hence, by \Cref{lem: borsuk ulam}, $\mathbf{m} \le \mathbf{r}$ and $\mathbf{m}' \le \mathbf{r}$.

    Since $H^2(D_v;\Z)\cong \Z$, write $c_1(\mathcal{N}(\tilde{\s})) = k$. Because $\mathcal{N}(\tilde{\s}') = \mathcal{N}(\tilde{\s}) + PD[S_v]$, we have $c_1(\mathcal{N}(\tilde{\s}'))=k+2n$. Thus
   \[
   \begin{aligned}
   |\mathbf{m}| 
   &= \alpha_{\mathbb{C}} \bigl( \mathrm{ind}^t_{\Z_p} \dirac_{D_v,\mathcal{N}(\tilde{\s})} \bigr) 
   = -\frac{k^2 - n}{8n}, \\
   |\mathbf{m}'| 
   &= \alpha_{\mathbb{C}} \bigl( \mathrm{ind}^t_{\Z_p} \dirac_{D_v,\mathcal{N}(\tilde{\s}')} \bigr) 
   = -\frac{(k + 2n)^2 - n}{8n}.
   \end{aligned}
   \]
   Hence $k^2 = (k+2n)^2$, i.e., $4n(n+k)=0$. Since $n\neq 0$, we deduce $k=-n$, which implies
   \[
       |\mathbf{m}| = -\frac{n^2-n}{8n} = |\mathbf{m}'|.
   \]

   On the other hand, by the $d$-invariant formula for lens spaces \cite[Proposition~4.8]{ozsvath2003absolutely}\footnote{Here we actually need to calculate the monopole Fr\o yshov invariant $\delta$ of lens spaces. However, as noted in \cite[Remark~1.1]{LRS18}, the isomorphisms between monopole and Heegaard Floer homologies preserve $\Q$-gradings. Thus, for any $\mathrm{Spin}^c$ rational homology 3-sphere $(Y,\s)$,
   \[
   \delta(Y, \s) = \tfrac{1}{2} d(Y, \s).
   \]} 
   we have
   \[
   |\mathbf{r}| = \frac{1}{2}d(-L(n,1),\mathcal{N}(\tilde{\s})\vert_{\partial D_v}) 
   = -\frac{n^2-n}{8n},
   \]
   so $|\mathbf{m}|=|\mathbf{m}'|=|\mathbf{r}|$. Since $\mathbf{m}\le \mathbf{r}$ and $\mathbf{m}'\le \mathbf{r}$, we conclude $\mathbf{m}=\mathbf{m}'=\mathbf{r}$. Thus
   \[
   \mathrm{ind}^t_{\Z_p} \dirac_{D_v,\tilde{\s}} \simeq 
   \mathrm{ind}^t_{\Z_p}\dirac_{D_v,\tilde{\s}'},
   \]
   proving the first part of the lemma.

   Finally, observe that $\mathcal{N}(\tilde{\s}') = \overline{\mathcal{N}(\tilde{\s})}$, so
   \[
   \deg (BF_{S^1}(D_v,\mathcal{N}(\tilde{\s})))^{S^1} 
   = \deg (BF_{S^1}(D_v,\mathcal{N}(\tilde{\s}')))^{S^1}.
   \]
   Hence $BF_{S^1}(D_v,\mathcal{N}(\tilde{\s})) \sim BF_{S^1}(D_v,\mathcal{N}(\tilde{\s}'))$. Since $D_v$ is negative definite, these are homotopy equivalences. The second part of the lemma therefore follows from \Cref{lem: adjunction}.
\end{proof}

Next, we consider the disk bundle $X:= D_{v_c}$ for the central node $v_c$ of $\Gamma$.
Let $Y$ be the boundary of $X$. In this case, as discussed in \Cref{subsec: eqv plumbing action}, the $\Z_p$-action on $D_{v_c}$ is given by the fiberwise rotation. Recall from \Cref{lem: equivariance number} that we have an identification
\[
\mathrm{Spin}^c_{\Z_p}(X)\;\xrightarrow{\;\;\cong\;\;}\;\mathrm{Spin}^c(X)\times \Z_p; \qquad \tilde{\s}\longmapsto\left(\,\mathcal{N}(\tilde{\s}),\,n_{\mathrm{eqv}}(\tilde{\s})\,\right).
\] Thus, for any $n \in w(v_c) + 2\Z$ and $\alpha\in \Z_p$, we will denote by $\tilde{\s}_{n,\alpha}$ the unique $\Z_p$-equivariant $\mathrm{Spin}^c$ structure on $D_{v_c}$ such that $c_1(\mathcal{N}(\tilde{\s}_{n,\alpha})) = n$ (where we identify $H^2(D_{v_c};\Z)\cong \Z$) and $n_{\mathrm{eqv}}(\tilde{\s}_{n,\alpha}) = \alpha$.

Consider the generator $\gamma = [1] \in \Z_p$. We compute the Lefschetz number difference
\[
\begin{split}
\mathcal{I}_{k,s} 
&= \mathrm{Tr}_\gamma \!\left(\mathrm{ind}^{\mathrm{APS}}_{\Z_p}\dirac_{\nu(S_{v_c}),\tilde{\mathfrak{s}}_{-n+2k,[s]}}\right) 
 - \mathrm{Tr}_\gamma \!\left(\mathrm{ind}^{\mathrm{APS}}_{\Z_p}\dirac_{\nu(S_{v_c}),\tilde{\mathfrak{s}}_{n+2k,[s-1]}}\right) \\
&= \mathrm{Tr}_\gamma \!\left(\mathrm{ind}^t_{\Z_p}\dirac_{\nu(S_{v_c}),\tilde{\mathfrak{s}}_{-n+2k,[s]}}\right) 
 - \mathrm{Tr}_\gamma \!\left(\mathrm{ind}^t_{\Z_p}\dirac_{\nu(S_{v_c}),\tilde{\mathfrak{s}}_{n+2k,[s-1]}}\right),
\end{split}
\]
for any $k \in \Z$ and $s \in \Z_p$. 
Here the second equality holds because, by \Cref{lem: eqv restriction map}, the $\Z_p$-equivariant $\mathrm{Spin}^c$ structures $\tilde{\mathfrak{s}}_{-n+2k,[s]}$ and $\tilde{\mathfrak{s}}_{n+2k,[s-1]}$ on $D_{v_c} = \nu(S_{v_c})$ restrict to the same $\Z_p$-equivariant $\mathrm{Spin}^c$ structure on $\partial D_{v_c}$. 
Note also that the $\Z_p$-fixed point set of $D_{v_c}$ does not meet $\partial D_{v_c}$. 
Therefore, by \Cref{even_equivariant_index}, the $\gamma$-trace of the $\Z_{p}$-equivariant index can be computed as
\[
\begin{split}
    \zeta_{2p}^{-1}\mathrm{ind}_\gamma^t(\dirac_{\nu(S_{v_c}),\tilde{\mathfrak{s}}_{-n+2k,[s]}}) 
    &= -\tfrac{1}{4}\,\zeta_p^s\!\left(i(-n+2k)\csc\tfrac{\pi}{p} + n\csc\tfrac{\pi}{p}\cot\tfrac{\pi}{p}\right) \\
    &= -\tfrac{1}{4}\,\zeta_p^s \csc^2\tfrac{\pi}{p}\!\left(n\cos\tfrac{\pi}{p} + i(-n+2k)\sin\tfrac{\pi}{p}\right), \\
    \zeta_{2p}^{-1}\mathrm{ind}_\gamma^t(\dirac_{\nu(S_{v_c}),\tilde{\mathfrak{s}}_{n+2k,[s-1]}}) 
    &= -\tfrac{1}{4}\,\zeta_p^{\,s-1}\csc^2\tfrac{\pi}{p}\!\left(n\cos\tfrac{\pi}{p} + i(n+2k)\sin\tfrac{\pi}{p}\right).
\end{split}
\]

%\textcolor{blue}{It is $\zeta_p^s$ instead of $\zeta_{2p}^s$ because the rotation angle in the determinant bundle is twice the ``topological rotation angle''.} {\color{red} But in the proof of $\Z_{2p}$-equivariant trace index, a half rotation number of determinant line naturally appears } \textcolor{blue}{I think that half rotation number is exactly $\frac{2\pi}{p}$. $\Z_{2p}$ acts by surjection $\Z_{2p}\rightarrow \Z_p$ so the generator of $\Z_{2p}$ simply acts by $\Z_p$ topologically. Now, on the spin c bundle, we have to take the half angle so it becomes $\frac{\pi}{p}$ rotation, but then we have to take determinant line bundle which doubles that angle, giving $\frac{2\pi}{p}$.}
\noindent Hence we obtain
\[
\begin{split}
    \zeta_{2p}^{-1}\mathcal{I}_{k,s} 
    &= \mathrm{ind}_\gamma^{\mathrm{APS}}\!\left(\dirac_{\nu(S_{v_c}),\tilde{\mathfrak{s}}_{-n+2k,[s]}}\right) 
     - \mathrm{ind}_\gamma^{\mathrm{APS}}\!\left(\dirac_{\nu(S_{v_c}),\tilde{\mathfrak{s}}_{n+2k,[s-1]}}\right) \\
    &= \tfrac{\zeta_p^{s-1}\csc^2\!\tfrac{\pi}{p}}{4}
       \left( -\zeta_p\!\left(n\cos\tfrac{\pi}{p} + i(-n+2k)\sin\tfrac{\pi}{p}\right) 
              + \left(n\cos\tfrac{\pi}{p} + i(n+2k)\sin\tfrac{\pi}{p}\right)\right) \\
    &= \tfrac{\zeta_p^{s-1}\csc^2\!\tfrac{\pi}{p}}{4}
       \left(-\zeta_p\!\left(n\zeta_{2p}^{-1} + 2ik\sin\tfrac{\pi}{p}\right) 
              + \left(n\zeta_{2p} + 2ik\sin\tfrac{\pi}{p}\right)\right) \\
    &= \tfrac{ik\,\zeta_p^{s-1}\csc\tfrac{\pi}{p}}{2}(1-\zeta_p).
\end{split}
\]
Since
\[
\zeta_p - 1 
   = \bigl(\cos\tfrac{2\pi}{p}-1\bigr) + i\sin\tfrac{2\pi}{p} 
   = 2\sin\tfrac{\pi}{p}\bigl(-\sin\tfrac{\pi}{p} + i\cos\tfrac{\pi}{p}\bigr) 
   = 2i\zeta_{2p}\sin\tfrac{\pi}{p},
\]
we deduce
\[
\mathcal{I}_{k,s} 
   = \tfrac{ik\,\zeta_p^{s-1}\csc\tfrac{\pi}{p}}{2}(1-\zeta_p) 
   = \zeta_{2p}\cdot \tfrac{ik\,\zeta_p^{s-1}\csc\tfrac{\pi}{p}}{2}\cdot\bigl(-2i\zeta_{2p}\sin\tfrac{\pi}{p}\bigr) 
   = k\zeta_p^{s-1}\zeta_{2p}^2 
   = k\zeta_p^s.
\]
Using this computation, we obtain the following lemma.

\begin{lem} \label{lem: eqv index change for c1 increment}
    For any $k \in \Z$ and $s \in \Z_p$, we have
    \[
        \C_{[s]}^{-k} \oplus \mathrm{ind}^t_{\Z_p}\!\left(\dirac_{\nu(S_{v_c}),\tilde{\s}_{-n+2k,[s]}}\right) 
        \simeq \mathrm{ind}^t_{\Z_p}\!\left(\dirac_{\nu(S_{v_c}),\tilde{\s}_{n+2k,[s-1]}}\right).
    \]
\end{lem}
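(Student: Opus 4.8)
The plan is to deduce the lemma directly from the trace computation carried out immediately above, by combining it with the reconstruction formula for elements of $R(\Z_p)$ in terms of their traces. Concretely, both $\ind^t_{\Z_p}\dirac_{\nu(S_{v_c}),\tilde{\s}_{-n+2k,[s]}}$ and $\ind^t_{\Z_p}\dirac_{\nu(S_{v_c}),\tilde{\s}_{n+2k,[s-1]}}$ lie in $R(\Z_p)\otimes\Q$ by \Cref{lem: rationality}(i), and a class in $R(\Z_p)\otimes\Q$ is determined by its traces $\Tr_{\zeta_p^l}$ for $l=0,\dots,p-1$. So it suffices to check the claimed identity after applying every trace $\Tr_\gamma$, $\gamma=[l]\in\Z_p$.

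First I would dispose of the trivial trace $\gamma=[0]$, i.e.\ the complex augmentation $\alpha_\C$. By \Cref{lem: rationality}(ii), $\alpha_\C(\ind^t_{\Z_p}\dirac_{X,\tilde{\s}})=\tfrac18(c_1(\s)^2-\sigma(X))$, which depends only on the underlying nonequivariant $\mathrm{Spin}^c$ structure. For $X=\nu(S_{v_c})$ a disk bundle of Euler number $w(v_c)=-n$, the classes $\mathcal{N}(\tilde{\s}_{-n+2k,[s]})$ and $\mathcal{N}(\tilde{\s}_{n+2k,[s-1]})$ have $c_1$ equal to $-n+2k$ and $n+2k$ respectively, and these are conjugate $\mathrm{Spin}^c$ structures (their first Chern classes are negatives of each other modulo the reflection $c_1\mapsto -c_1$ on the disk bundle — more precisely, $(-n+2k)^2/(-n) = (n+2k)^2/(-n)$ would require $k=0$, so this needs care: in fact $\alpha_\C$ of the difference equals $-\bigl((-n+2k)^2-(n+2k)^2\bigr)/(8(-n)) = -(-8nk)/(8(-n)) = -k$). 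This matches $\alpha_\C(\C_{[s]}^{-k}) = -k$, so the $[0]$-trace of both sides agrees.

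Next, for $\gamma=[l]$ with $l\neq 0$, I would run the same index-theoretic computation as the one displayed above but with $\gamma=\tau^l$ in place of $\tau$; the only change is that $\zeta_p$ is replaced by $\zeta_p^l$ (and $\zeta_{2p}$ by $\zeta_{2p}^l$, $\csc\tfrac{\pi}{p}$ by $\csc\tfrac{l\pi}{p}$, etc.) throughout, since the fixed locus of $\tau^l$ on $\nu(S_{v_c})$ is still the zero section $S_{v_c}$ with normal rotation angle $\tfrac{2l\pi}{p}$. Repeating the algebra gives $\mathcal{I}^{(l)}_{k,s} := \Tr_{[l]}(\ind^t_{\Z_p}\dirac_{\nu(S_{v_c}),\tilde{\s}_{-n+2k,[s]}}) - \Tr_{[l]}(\ind^t_{\Z_p}\dirac_{\nu(S_{v_c}),\tilde{\s}_{n+2k,[s-1]}}) = k\,\zeta_p^{ls}$. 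On the other hand $\Tr_{[l]}(\C_{[s]}^{-k}) = -k\,\zeta_p^{ls}$, so $\Tr_{[l]}$ of the left-hand side $\C_{[s]}^{-k}\oplus \ind^t_{\Z_p}\dirac_{\nu(S_{v_c}),\tilde{\s}_{-n+2k,[s]}}$ equals $\Tr_{[l]}$ of the right-hand side $\ind^t_{\Z_p}\dirac_{\nu(S_{v_c}),\tilde{\s}_{n+2k,[s-1]}}$, as desired.

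Having matched all traces $\Tr_{[l]}$, $l=0,\dots,p-1$, the reconstruction formula $V = \sum_l \bigl(\tfrac1p\sum_k \Tr_{\zeta_p^k}(V)\zeta_p^{-kl}\bigr)[\C_{[l]}]$ recorded in \Cref{subsec: notations} forces the two sides to be equal in $R(\Z_p)\otimes\Q$. Since all the classes involved are honest (stable) $S^1\times\Z_p$-representations — the indices by construction lie in $R(\Z_p)$ once we pass to a $\Z_p$-equivariant $\mathrm{Spin}^c$ filling via \Cref{spinc}, and $\C_{[s]}^{-k}$ is a virtual representation — the equality in $R(\Z_p)\otimes\Q$ upgrades to a stable isomorphism of $S^1\times\Z_p$-representations, which is exactly the statement $\simeq$ in the lemma. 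The main (and essentially only) obstacle is bookkeeping: making sure the generalization of the displayed trace computation from $\gamma=[1]$ to $\gamma=[l]$ goes through verbatim with $\zeta_p\rightsquigarrow\zeta_p^l$ and that the trigonometric simplifications (the identity $\zeta_p^l-1 = 2i\zeta_{2p}^l\sin\tfrac{l\pi}{p}$ and its consequences) still collapse the answer to the clean monomial $k\zeta_p^{ls}$; this is routine but must be done carefully since the cancellations are delicate. I would also double-check the sign conventions in \Cref{even_equivariant_index} and in the identification $H^2(D_{v_c};\Z)\cong\Z$ so that the $-k$ (rather than $+k$) in $\C_{[s]}^{-k}$ comes out with the correct sign consistent with the $\alpha_\C$ check above.
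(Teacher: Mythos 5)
Your argument is correct in outline, but it takes a genuinely different route from the paper's. You propose to compute the trace $\mathrm{Tr}_{[l]}$ of both sides for \emph{every} $l\in\Z_p$ by rerunning the Donnelly computation with $\gamma=\tau^l$, and then invoke Fourier inversion. The paper instead uses only two pieces of data: the augmentation (which pins down $|\mathbf{n}|=-k$) and the single trace at the generator $[1]$ (the quantity $\mathcal{I}_{k,s}=k\zeta_p^s$ already computed). Writing $\mathbf{n}=\sum n_t[t]$, the relation $\sum n_t\zeta_p^t=-k\zeta_p^s$ says $\zeta_p$ is a root of $kx^s+\sum n_tx^t\in\Q[x]$, so this polynomial is a rational multiple of $1+x+\cdots+x^{p-1}$; evaluating at $x=1$ and using $|\mathbf{n}|=-k$ forces that multiple to vanish, whence $\mathbf{n}=-k[s]$. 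This arithmetic shortcut is what your approach buys itself out of at the cost of extra analysis: your claim that the index computation for $\tau^l$ goes through ``verbatim with $\zeta_p\rightsquigarrow\zeta_p^l$'' is true in the end (the answer is indeed $k\zeta_p^{ls}$), but it is not quite verbatim --- the local model of $\tilde\tau^l$ near the fixed surface is the $l$-th power of \eqref{eqn: eqv number in codim 2}, whose $U(1)$-component is $\zeta_p^{sl}\zeta_{2p}^{l}$ rather than the standard form $\zeta_p^{m'}\zeta_{2p}$, so one must re-normalize (absorbing a factor of $(-1)^{l+1}$ via the $\{\pm(1,1,1)\}$-quotient and solving $2m'+1\equiv 2sl+l$) before \Cref{even_equivariant_index} applies with $k_s=l$. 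This bookkeeping is exactly the ``delicate cancellation'' you flag but do not carry out, and it is the one substantive piece of your proof that would still need to be written down; the paper's trace-plus-minimal-polynomial argument avoids it entirely. Your augmentation check in the first paragraph, despite the muddled aside about conjugate $\mathrm{Spin}^c$ structures, does land on the correct value $-k$ for the second-minus-first difference, consistent with $\alpha_\C(\C_{[s]}^{-k})=-k$.
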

\begin{proof}
    We begin with the difference of complex dimensions:
    \[
        \alpha_\C \!\left( \mathrm{ind}^t \dirac_{\nu(S_{v_c}),\mathcal{N}(\tilde{\s}_{-n+2k,[s]})}\right) 
        - \alpha_\C \!\left( \mathrm{ind}^t \dirac_{\nu(S_{v_c}),\mathcal{N}(\tilde{\s}_{n+2k,[s]})}\right) 
        = \frac{-(-n+2k)^2+n}{8n} - \frac{-(n+2k)^2+n}{8n} = k.
    \]
    Hence
    \[
        \C^{-k} \oplus \mathrm{ind}^t_{\Z_p}\!\left(\dirac_{{\nu(S_{v_c}),\tilde{\s}}_{-n+2k,[s]}}\right) 
        \simeq \mathrm{ind}^t_{\Z_p}\!\left(\dirac_{{\nu(S_{v_c}),\tilde{\s}}_{n+2k,[s-1]}}\right).
    \]
    Thus there exists some $\mathbf{n}\in \Q[\Z_p]$ with $|\mathbf{n}|=-k$ such that
    \[
    \C^{\mathbf{n}} \,\oplus\, 
    \mathrm{ind}^t_{\Z_p}\!\left(\dirac_{\nu(S_{v_c}), \tilde{\s}_{-n+2k,[s]}}\right) 
    \simeq 
    \mathrm{ind}^t_{\Z_p}\!\left(\dirac_{\nu(S_{v_c}), \tilde{\s}_{n+2k,[s-1]}}\right).
\]
    Writing $\mathbf{n}=\sum_{t=0}^{p-1} n_t \cdot [t]$, the trace relation from the computation of $\mathcal{I}_{k,s}$ gives
    \[
        \sum_{t=0}^{p-1} n_t \zeta_p^t = -\mathcal{I}_{k,s} = -k \zeta_p^s.
    \]

    \noindent{\bf \underline{Case $p=2$}:}
    The relations reduce to
    \[
        n_{[0]} + n_{[1]} = -k, 
        \qquad 
        n_{[0]} - n_{[1]} = -(-1)^s k.
    \]
    Solving, we find
    \[
        \mathbf{n} 
        = -k \left( \tfrac{1+(-1)^s}{2}\cdot [0] + \tfrac{1-(-1)^s}{2}\cdot [1] \right) 
        = -k \cdot [s],
    \]
    as desired.

    \noindent{\bf \underline{Case $p>2$}:}
    The relation implies that $\zeta_p$ is a root of the polynomial
    \[
        kx^s + (n_0 + n_1x + \cdots + n_{p-1}x^{p-1}) \in \Q[x].
    \]
    Since the minimal polynomial of $\zeta_p$ over $\Q$ is $1+x+\cdots+x^{p-1}$, there exists some $m\in \Z$ such that
    \[
        m(1+x+\cdots+x^{p-1}) = kx^s + (n_0 + n_1x + \cdots + n_{p-1}x^{p-1}).
    \]
    Evaluating at $x=1$, we obtain
    \[
        pm = k + (n_0+n_1+\cdots+n_{p-1}) = k + |\mathbf{n}| = 0,
    \]
    so $m=0$. Thus 
    \[
        n_0 + n_1x + \cdots + n_{p-1}x^{p-1} = -kx^s,
    \]
    which means $\mathbf{n} = -k\cdot [s]$, completing the proof.
\end{proof}

\begin{lem} \label{lem: weight change at central node}
Let $\tilde{\s}$ be a $\Z_p$-equivariant $\mathrm{Spin}^c$ structure on $W_\Gamma$, and let 
$\tilde{\s}'$ be another such structure which agrees with $\tilde{\s}$ outside the interior of $D_{v_c}$ and satisfies
\[
\mathcal{N}(\tilde{\s}') = \mathcal{N}(\tilde{\s}) + PD[S_{v_c}].
\]
Consider the index difference
\[
\Delta_{\tilde{\s}} = 
\alpha_{\C}\!\left( \mathrm{ind}^t_{\Z_p}\dirac_{W_\Gamma, \mathcal{N}(\tilde{\s}')} \right)
- \alpha_{\C}\!\left( \mathrm{ind}^t_{\Z_p}\dirac_{W_\Gamma, \mathcal{N}(\tilde{\s})} \right).
\]
Then there is a stable equivalence
\[
\mathrm{ind}^t_{\Z_p}\dirac_{W_\Gamma, \tilde{\s}'} \simeq
\mathrm{ind}^t_{\Z_p}\dirac_{W_\Gamma, \tilde{\s}} 
\oplus \C^{\Delta_{\tilde{\s}}}_{\,n_{\mathrm{eqv}}(\tilde{\s})}.
\]
Furthermore, if we denote by $U_\alpha$ the stable $(S^1 \times \Z_p)$-equivariant homotopy class of the inclusion
\[
(\C^0)^+ \hookrightarrow (\C^{1\cdot \alpha})^+
\]
for each $\alpha \in \Z_p$, then we have $(S^1 \times \Z_p)$-equivariant homotopies
\[
\begin{aligned}
BF_{S^1 \times \Z_p}(W_\Gamma, \tilde{\s}') \circ U^{\Delta_{\tilde{\s}}}_{\,n_{\mathrm{eqv}}(\tilde{\s})}
&\sim BF_{S^1 \times \Z_p}(W_\Gamma, \tilde{\s})
&& \text{if } \Delta_{\tilde{\s}} > 0, \\
BF_{S^1 \times \Z_p}(W_\Gamma, \tilde{\s}')
&\sim BF_{S^1 \times \Z_p}(W_\Gamma, \tilde{\s})
&& \text{if } \Delta_{\tilde{\s}} = 0, \\
BF_{S^1 \times \Z_p}(W_\Gamma, \tilde{\s}')
&\sim BF_{S^1 \times \Z_p}(W_\Gamma, \tilde{\s}) \circ U^{-\Delta_{\tilde{\s}}}_{\,n_{\mathrm{eqv}}(\tilde{\s})}
&& \text{if } \Delta_{\tilde{\s}} < 0.
\end{aligned}
\]
\end{lem}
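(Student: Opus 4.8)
The plan is to reduce everything to the disk bundle $D_{v_c}$ exactly as in the proof of \Cref{lem: BF is homotopic when weight is same}. Since $\tilde{\s}$ and $\tilde{\s}'$ agree outside the interior of $D_{v_c}$, and the $\Z_p$-action on $D_{v_c}$ is fiberwise rotation, we may by a gluing/excision argument assume $\Gamma$ consists only of the central node, so $W_\Gamma = D_{v_c} = \nu(S_{v_c})$ and $Y = \partial D_{v_c} = -L(n,1)$ with $n = -w(v_c) > 0$. Under the identification of \Cref{lem: equivariance number}, $\tilde{\s} = \tilde{\s}_{c_1(\mathcal N(\tilde\s)),\,n_{\mathrm{eqv}}(\tilde\s)}$, and $\tilde{\s}'$ has first Chern class increased by $2n$ while, by \Cref{lem: eqv restriction map}, its restriction to $\partial D_{v_c}$ changes by the $(-1)$-twist of the boundary data (equivalently $n_{\mathrm{eqv}}$ shifts by $-1$, as recorded in \Cref{lem: eqv num increase for central node}). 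Thus $\tilde{\s}' = \tilde{\s}_{c_1(\mathcal N(\tilde\s))+2n,\,n_{\mathrm{eqv}}(\tilde\s)-1}$, and the two structures restrict to $\Z_p$-equivariant $\mathrm{Spin}^c$ structures on $-L(n,1)$ that differ by a $1$-twist — precisely the situation covered by \Cref{lem: eqv index change for central node} after matching indices.

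The first substantive step is the stable index identity. Write $c_1(\mathcal N(\tilde\s)) = -n + 2k$ for the appropriate $k \in \Z$ (possible since $c_1 \in w(v_c) + 2\Z$), so that $c_1(\mathcal N(\tilde\s')) = n + 2k$. Then \Cref{lem: eqv index change for central node} gives
\[
\C^{-k}_{\,n_{\mathrm{eqv}}(\tilde\s)} \oplus \mathrm{ind}^t_{\Z_p}\dirac_{\nu(S_{v_c}),\tilde{\s}_{-n+2k,[n_{\mathrm{eqv}}(\tilde\s)]}}
\simeq \mathrm{ind}^t_{\Z_p}\dirac_{\nu(S_{v_c}),\tilde{\s}_{n+2k,[n_{\mathrm{eqv}}(\tilde\s)-1]}},
\]
and one checks $-k = \Delta_{\tilde\s}$ by the complex-dimension computation already performed inside the proof of \Cref{lem: eqv index change for central node} (there the difference $\alpha_\C$ of the two $\mathrm{ind}^t$ was exactly $k$, with the opposite sign convention from $\Delta_{\tilde\s}$, which is the difference $\tilde\s' - \tilde\s$). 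This yields $\mathrm{ind}^t_{\Z_p}\dirac_{W_\Gamma,\tilde\s'} \simeq \mathrm{ind}^t_{\Z_p}\dirac_{W_\Gamma,\tilde\s} \oplus \C^{\Delta_{\tilde\s}}_{\,n_{\mathrm{eqv}}(\tilde\s)}$ after gluing the (unchanged) contribution from the rest of $W_\Gamma$ back in; the $\Z_p$-invariant PSC metric on $-L(n,1)$ from \Cref{lem: equivariant PSC metrics on lens spaces} together with \Cref{PSC-vanish} guarantees that the target spectrum $SWF_{S^1\times\Z_p}(-L(n,1))$ is a representation sphere, so the gluing is transparent.

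For the homotopy statements, apply the $\Z_p$-equivariant adjunction relation \Cref{prop: BF gluing}: taking $X = W_\Gamma$, $S = S_{v_c}$, and the two equivariant $\mathrm{Spin}^c$ structures $\tilde\s, \tilde\s'$ agreeing outside $\overset{\circ}{\nu}(S_{v_c})$, one gets
\[
BF_{S^1\times\Z_p}(W_\Gamma,\tilde\s) = U^{\,\mathrm{ind}^t_{\Z_p}\dirac_{W_\Gamma,\tilde\s} - \mathrm{ind}^t_{\Z_p}\dirac_{W_\Gamma,\tilde\s'}} \circ BF_{S^1\times\Z_p}(W_\Gamma,\tilde\s')
\]
as stable $S^1\times\Z_p$-equivariant homotopy classes, where by the index identity the exponent is $-\Delta_{\tilde\s}\cdot[n_{\mathrm{eqv}}(\tilde\s)]$. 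Interpreting $U^{\mathbf m}$ for $\mathbf m$ with a single sign — positive gives a genuine inclusion $U^{\Delta_{\tilde\s}}_{\,n_{\mathrm{eqv}}(\tilde\s)}$, negative means the inclusion sits on the other side, and zero makes it the identity — the three displayed cases ($\Delta_{\tilde\s}>0$, $=0$, $<0$) follow by rearranging this single equation, using that an inclusion $U_\alpha$ composed appropriately can be cancelled or moved. The main obstacle is bookkeeping the sign conventions: one must be careful that the $\Z_p$-label attached to the $U$-class is $n_{\mathrm{eqv}}(\tilde\s)$ and not $n_{\mathrm{eqv}}(\tilde\s')$, and that the inequality directions in \Cref{prop: BF gluing} (which has $\mathbf m = \mathrm{ind}^t\dirac_{\s} - \mathrm{ind}^t\dirac_{\s'}$) match the sign of $\Delta_{\tilde\s}$ defined as the $\s'$-minus-$\s$ difference; once the conventions in \Cref{lem: eqv index change for central node} and \Cref{lem: eqv restriction map} are aligned (i.e.\ confirming which of $\tilde\s,\tilde\s'$ plays the role of $\tilde\s_{-n+2k,[s]}$), the rest is formal. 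A secondary, minor point is justifying the excision/gluing reduction to $W_\Gamma = D_{v_c}$ rigorously via \Cref{general gluing}, as was done in \Cref{lem: BF is homotopic when weight is same}.
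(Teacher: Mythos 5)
Your proposal is correct in its overall structure and arrives at the right conclusion, but the second half takes a genuinely different route from the paper. For the index identity you do exactly what the paper does: reduce to the single disk bundle $D_{v_c}$ by excision (as in \Cref{lem: BF is homotopic when weight is same}) and apply the index-change lemma (you write ``\Cref{lem: eqv index change for central node},'' but the actual label is \Cref{lem: eqv index change for c1 increment}). For the homotopy statements, however, the paper does \emph{not} invoke \Cref{prop: BF gluing}. Instead, after reducing to $D_{v_c}$, it observes that both $BF_{S^1\times\Z_p}(D_{v_c},\tilde{\s}')\circ U^{\Delta_{\tilde\s}}_{n_{\mathrm{eqv}}(\tilde\s)}$ and $BF_{S^1\times\Z_p}(D_{v_c},\tilde{\s})$ are maps between representation spheres (using negativity of $D_{v_c}$ and the PSC metric on the lens-space boundary), that their $S^1$-fixed-point maps are homotopic equivalences, and then concludes they agree by the degree-comparison lemma \Cref{lem: adjunction}. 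Your route through the blow-up relation \Cref{prop: BF gluing} is valid --- the hypotheses (negatively embedded invariant $2$-sphere, structures agreeing outside its tubular neighborhood) are satisfied with $X=W_\Gamma$, $S=S_{v_c}$ --- and it has the small advantage of handling all three sign cases of $\Delta_{\tilde\s}$ in one shot, but it calls on much heavier machinery (the full $\Z_p$-equivariant gluing theorem plus the $\overline{\C P}^2$ blow-up formula), whereas the paper's argument is elementary once the index identity is in hand.

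Two minor points on the bookkeeping. First, you cite \Cref{lem: eqv num increase for central node} as saying $n_{\mathrm{eqv}}$ shifts by $-1$ under adding $PD[S_{v_c}]$; that lemma as stated gives $+1$. Your identification $\tilde\s'=\tilde\s_{n+2k,\,n_{\mathrm{eqv}}(\tilde\s)-1}$ is what is needed to match the pairing in \Cref{lem: eqv index change for c1 increment}, so the net computation comes out right, but the citation is misstated. Second, the phrase ``its restriction to $\partial D_{v_c}$ changes by the $(-1)$-twist'' is not quite accurate: the two equivariant structures have \emph{identical} restriction to $\partial D_{v_c}$ (they agree outside the interior of $D_{v_c}$); what changes is the $\Z$-component in $\mathrm{Spin}^c_{\Z_p}(D_{v_c})\cong\Z\oplus\Z_p$, and $n_{\mathrm{eqv}}$ adjusts to keep the boundary restriction fixed. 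The PSC/\Cref{PSC-vanish} remark in your excision step is also not doing any work for the index identity --- the index difference is purely additive and localizes to $D_{v_c}$ without that input; PSC only enters (in both your approach and the paper's) when arguing at the level of $BF$ maps, not indices.
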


\begin{proof}
    As in the proof of \Cref{lem: BF is homotopic when weight is same}, 
    we may assume that $\Gamma$ has only one node, so that $W_\Gamma = D_{v_c}$. 
    Then the first part of the lemma follows directly from 
    \Cref{lem: eqv index change for c1 increment}. 

    For the second part, we may assume without loss of generality that $\Delta_{\tilde{\s}} \ge 0$. 
    Define
    \[
    f = BF_{S^1 \times \Z_p}(D_{v_c}, \tilde{\s}') \circ U^{\Delta_{\tilde{\s}}}_{\,n_{\mathrm{eqv}}(\tilde{\s})}
    \qquad  \text{ and } \qquad 
    g = BF_{S^1 \times \Z_p}(D_{v_c}, \tilde{\s}).
    \]
    Then, by the first part of the lemma, 
    we may regard $f$ and $g$ as $(S^1 \times \Z_p)$-equivariant stable maps of the form
    \[
    f,g \colon \mathrm{ind}^t_{\Z_p}\dirac_{D_{v_c}, \tilde{\s}} 
    \longrightarrow \left(\C^\mathbf{r}\right)^+
    \]
    for some $\mathbf{r} \in \Q[\Z_p]$. 
    Since $D_{v_c}$ is negative definite, we know that 
    $f^{S^1}$ and $g^{S^1}$ are homotopy equivalences that are homotopic. 
    Therefore $f$ and $g$ are $(S^1 \times \Z_p)$-equivariantly homotopic 
    by \Cref{lem: adjunction}.
\end{proof}

Recall that, in \Cref{subsec: eqv spin c comp seq}, given a $\Z_p$-equivariant $\mathrm{Spin}^c$ structure $\tilde{\s}$ on $Y=\partial W_\Gamma$, we have constructed the $\Z_p$-equivariant $\mathrm{Spin}^c$ computation sequence
\[
\widetilde{\mathrm{sp}}_{\tilde{\s}}(x_{\s}(0)),
\widetilde{\mathrm{sp}}_{\tilde{\s}}(x^{\s}_{0,0}),\dots,
\widetilde{\mathrm{sp}}_{\tilde{\s}}(x^{\s}_{0,n_0-1}),
\ \widetilde{\mathrm{sp}}_{\tilde{\s}}(x_{\s}(1)),\dots
\]
For simplicity, we assume that 
\[
n_{\mathrm{eqv}}\left(\widetilde{\mathrm{sp}}_{\tilde{\s}}(x_{\s}(0))\right) = [0] \in \Z_p;
\]
otherwise, we simply cyclically permute the elements of $\Z_p$ during our construction. We will also rewrite the above sequence as follows:
\[
\mathfrak{s}_1,\mathfrak{s}_2,\mathfrak{s}_3,\dots
\]
We already know that the integer sequence $\left(\alpha_\C\left(\mathrm{ind}^t \dirac_{W_\Gamma,\mathcal{N}(\mathfrak{s}_i)}\right)\right)_{i\ge 0}$ is eventually increasing, so we may choose an integer $N>0$ such that the sequence is increasing after its $N$th term. For each integer $i\ge 0$, we define the non-equivariant index difference sequence:
\[
\Delta_i = \alpha_\C\left(\mathrm{ind}^t \dirac_{W_\Gamma,\mathfrak{s}_{i+1}}\right) - \alpha_\C\left(\mathrm{ind}^t \dirac_{W_\Gamma,\mathfrak{s}_{i}}\right).
\]
Then, by \Cref{prop: BF gluing,lem: eqv num increase for central node,lem: eqv num is fixed in comp seq,lem: BF is homotopic when weight is same,lem: weight change at central node}, we see that the sequence $(\mathfrak{s}_i)_{i\ge 0}$ has the following properties.
\begin{itemize}
    \item For each $i\ge 0$, there exists a node $v_i \in V(\Gamma)$ such that 
    \[
    \mathcal{N}(\mathfrak{s}_{i+1}) = \mathcal{N}(\mathfrak{s}_i) + PD[S_{v_i}].
    \]
    \item If $\Delta_i = 0$, then by \Cref{lem: equivariant PSC metrics on lens spaces,PSC-vanish,lem: borsuk ulam}, there exists a stable $(S^1 \times \Z_p)$-equivariant homotopy
    \[
    BF_{S^1 \times \Z_p}(W_\Gamma,\mathfrak{s}_{i+1}) \sim BF_{S^1 \times \Z_p}(W_\Gamma,\mathfrak{s}_i).
    \]
    \item If $\Delta_i > 0$, then $\mathfrak{s}_{i+1} = \widetilde{\mathrm{sp}}_{\tilde{\s}}(x_{\s}(k_i))$ for some $k_i \ge 0$, and there exists a stable $(S^1 \times \Z_p)$-equivariant homotopy
    \[
    BF_{S^1 \times \Z_p}(W_\Gamma,\mathfrak{s}_{i+1}) \circ U^{\Delta_i}_{[k_i]} 
    \sim 
    BF_{S^1 \times \Z_p}(W_\Gamma,\mathfrak{s}_i).
    \]
    \item If $\Delta_i < 0$, then $\mathfrak{s}_i = \widetilde{\mathrm{sp}}_{\tilde{\s}}(x_{\s}(k_i))$ for some $k_i \ge 0$, and there exists a stable $(S^1 \times \Z_p)$-equivariant homotopy
    \[
    BF_{S^1 \times \Z_p}(W_\Gamma,\mathfrak{s}_{i+1}) 
    \sim 
    BF_{S^1 \times \Z_p}(W_\Gamma,\mathfrak{s}_i) \circ U^{-\Delta_i}_{[k_i]}.
    \]
\end{itemize}

Using these properties, we can now describe an $S^1 \times \Z_p$-equivariant lattice homotopy type model for $Y$ as follows. For each $i$, we define a sequence of virtual $S^1 \times \Z_p$-representations $V_i$ and $S^1 \times \Z_p$-equivariant virtual linear maps
\[
f_i\colon \begin{cases}
    V_i \longrightarrow V_{i+1} \quad&\text{if } \Delta_i \ge 0,\\
    V_{i+1} \longrightarrow V_i \quad &\text{if } \Delta_i < 0,
\end{cases}
\]
as follows. We start by defining $V_0 = 0$, the zero representation. Suppose that we have defined $V_0,\dots,V_i$. Then we define $V_{i+1}$ in the following manner:
\[
\begin{aligned}
V_{i+1} &= V_i \oplus \C^{\Delta_i}_{[k_i]}, 
&\qquad f_i &= U^{\Delta_i}_{[k_i]}, 
&\qquad \text{if } \Delta_i > 0, \\
V_{i+1} &= V_i, 
&\qquad f_i &= \mathrm{id}, 
&\qquad \text{if } \Delta_i = 0, \\
V_i &= V_{i+1} \oplus \C^{-\Delta_i}_{[k_i]}, 
&\qquad f_i &= U^{-\Delta_i}_{[k_i]}, 
&\qquad \text{if } \Delta_i < 0.
\end{aligned}
\]

This defines $V_0,\dots,V_N$ and $f_0,\dots,f_{N-1}$, which induce $(S^1 \times \Z_p)$-spectra $(V_i)^+$ and $(S^1 \times \Z_p)$-equivariant stable maps $f_i^+$ between them. For simplicity, denote the domain of $f_i^+$ by $W_i^+$. Using these data, we build an $S^1 \times \Z_p$--CW complex $\mathcal{H}_{S^1 \times \Z_p}(\Gamma,\tilde{\s})$ as follows:
\[
\mathcal{H}_{S^1 \times \Z_p}(\Gamma,\tilde{\s}) 
= 
\left( \bigvee_{i=0}^N V_i^+ \right) 
\;\vee\; 
\left( \bigvee_{i=0}^{N-1} (W_i^+ \wedge [0,1]) \right) \big/ \sim ,
\]
where we take quotients by identifying the ends of cylinders $W_i^+ \wedge [0,1]$ with spheres $V_i^+$ and $V_{i+1}^+$ as follows.
\begin{itemize}
    \item If $W_i^+ = V_i^+$, identify $W_i^+ \times \{0\}$ with $V_i^+$ via the identity map, and attach $W_i^+ \times \{1\}$ to $V_{i+1}^+$ via $f_i^+$.
    \item If $W_i^+ = V_{i+1}^+$, identify $W_i^+ \times \{0\}$ with $V_{i+1}^+$ via the identity map, and attach $W_i^+ \times \{1\}$ to $V_i^+$ via $f_i^+$.
\end{itemize}

\begin{defn}\label{defn: S1xZp lattice spectrum}
We denote the resulting $(S^1 \times \Z_p)$-spectrum by $\mathcal{H}_{S^1 \times \Z_p}(\Gamma,\tilde{\s})$.
\end{defn}

\begin{rem}\label{rem: lattice homotopy type as hocolim}
The construction of $\mathcal{H}_{S^1 \times \Z_p}(\Gamma,\tilde{\s})$ can also be described categorically as follows:
\[
\mathcal{H}_{S^1 \times \Z_p}(\Gamma,\tilde{\s}) 
= \operatorname{hocolim} \left[
\vcenter{
\xymatrix@C=4em@R=2.5em{
V_0^+ & V_1^+ & V_2^+ & \cdots & V_N^+ \\
& W_0^+ \ar[ul] \ar[u] 
& W_1^+ \ar[ul] \ar[u] 
& \cdots \ar@{}[ul]^(.25){}="a"^(.9){}="b" \ar "a";"b"
& W_{N-1}^+ \ar@{}[ul]^(.15){}="a"^(.75){}="b" \ar "a";"b" \ar[u]
}
}
\right]
\]
where $N$ is a sufficiently large integer.
\end{rem}
Then we have the following theorem.

\begin{thm}\label{thm: eqv lattice comparison map}
There exists an $(S^1 \times \Z_p)$-equivariant stable map
\[
\mathcal{T}_{S^1 \times \Z_p} \colon
(\mathrm{ind}_{\Z_p}^t \dirac_{W_\Gamma, \mathfrak{s}_1})^+
\wedge
\mathcal{H}_{S^1 \times \Z_p}(\Gamma, \tilde{\s})
\longrightarrow
SWF_{S^1 \times \Z_p}(-\partial W_\Gamma, \tilde{\s})
\]
that is an $S^1$-equivariant homotopy equivalence. Here, $\mathfrak{s}_1$ is the first term of the sequence
\[
\mathfrak{s}_1, \mathfrak{s}_2, \dots
\]
appearing in the discussion above.
\end{thm}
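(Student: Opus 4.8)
The plan is to assemble the equivariant comparison map by gluing together the equivariant Bauer--Furuta invariants $BF_{S^1\times\Z_p}(W_\Gamma,\mathfrak{s}_i)$ along the homotopy-colimit structure of $\mathcal{H}_{S^1\times\Z_p}(\Gamma,\tilde{\s})$, following the non-equivariant strategy of \cite{dai2023lattice}. First I would recall that the lattice homotopy type is, by \Cref{rem: lattice homotopy type as hocolim}, the homotopy colimit of the zig-zag diagram built from the spheres $V_i^+$, $W_i^+$ and the linear maps $f_i^+$. Each $f_i^+$ is exactly one of the stabilization maps $U^{\pm\Delta_i}_{[k_i]}$ (or the identity) that, by \Cref{lem: weight change at central node} together with \Cref{lem: BF is homotopic when weight is same}, is $(S^1\times\Z_p)$-equivariantly compatible with the Bauer--Furuta invariants $BF_{S^1\times\Z_p}(W_\Gamma,\mathfrak{s}_i)$ of consecutive terms of the $\Z_p$-equivariant $\mathrm{Spin}^c$ computation sequence. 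Thus the collection $\{BF_{S^1\times\Z_p}(W_\Gamma,\mathfrak{s}_i)\}_{i\le N}$, after smashing with $(\mathrm{ind}^t_{\Z_p}\dirac_{W_\Gamma,\mathfrak{s}_1})^+$ and using the index shift identities of \Cref{lem: weight change at central node} and \Cref{lem: eqv index change for c1 increment} to put everything over a common index bundle, gives a cone/cocone on the diagram whose homotopy colimit is the domain; hence the universal property of $\operatorname{hocolim}$ produces the desired stable map $\mathcal{T}_{S^1\times\Z_p}$ landing in $SWF_{S^1\times\Z_p}(-\partial W_\Gamma,\tilde{\s})$. (The orientation reversal appears because the plumbing $W_\Gamma$ is a negative-definite cobordism $\emptyset\to\partial W_\Gamma$, so its relative Bauer--Furuta invariant naturally maps into the Floer spectrum of the reversed boundary, exactly as in \Cref{prop:eqBF} with $Y$ replaced by $-\partial W_\Gamma$; I would be careful to track this sign throughout.)

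Second, I would verify that $\mathcal{T}_{S^1\times\Z_p}$ is an $S^1$-equivariant homotopy equivalence. The point is that forgetting the $\Z_p$-action turns $\mathcal{T}_{S^1\times\Z_p}$ into the non-equivariant (i.e.\ $S^1$-equivariant) comparison map of \cite{dai2023lattice}, because: (i) the forgetful functor $\mathcal{C}^{sp}_{S^1\times\Z_p}\to\mathcal{C}^{sp}_{S^1}$ is monoidal and sends $BF_{S^1\times\Z_p}(W_\Gamma,\mathfrak{s}_i)$ to the ordinary $S^1$-equivariant Bauer--Furuta invariant $BF_{S^1}(W_\Gamma,\mathcal{N}(\mathfrak{s}_i))$ (the last sentence of \Cref{prop:eqBF}); (ii) under this forgetful functor $\mathcal{H}_{S^1\times\Z_p}(\Gamma,\tilde{\s})$ becomes precisely Dai--Manolescu's $S^1$-equivariant lattice homotopy type $\mathcal{H}_{S^1}(\Gamma)$, since the underlying non-equivariant computation sequence $\mathcal{N}(\mathfrak{s}_1),\mathcal{N}(\mathfrak{s}_2),\dots$ is the usual $\mathrm{Spin}^c$ computation sequence of $(\Gamma,\s)$ and the representations $\C^{\Delta_i}_{[k_i]}$ have underlying $S^1$-representation $\C^{\Delta_i}$; and (iii) $\mathcal{T}_{S^1\times\Z_p}$ itself is built by the same hocolim/gluing recipe, so it forgets to the Dai--Manolescu map. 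By \cite{dai2023lattice} that map is an $S^1$-equivariant homotopy equivalence, hence so is $\mathcal{T}_{S^1\times\Z_p}$ after forgetting $\Z_p$; but the $S^1$-homotopy type and the $S^1$-equivariant homotopy equivalence condition in the statement refer precisely to this forgotten structure, so we are done.

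The main obstacle will be Step one: making the gluing of the equivariant Bauer--Furuta invariants genuinely coherent over the whole diagram, rather than merely pairwise. Concretely, one must check that the pairwise $(S^1\times\Z_p)$-equivariant homotopies supplied by \Cref{lem: BF is homotopic when weight is same} and \Cref{lem: weight change at central node} can be organized into a single map out of the homotopy colimit --- this requires either invoking a rigidification statement for zig-zag hocolims in the stable equivariant category $\mathcal{C}^{sp}_{S^1\times\Z_p}$, or, more in the spirit of \cite{dai2023lattice}, directly constructing $\mathcal{T}_{S^1\times\Z_p}$ cell-by-cell (on each $V_i^+$ by $BF_{S^1\times\Z_p}(W_\Gamma,\mathfrak{s}_i)$, and on each cylinder $W_i^+\wedge[0,1]$ by the chosen equivariant homotopy) and checking the attaching maps are respected. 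A secondary technical point, which I would handle by an explicit index bookkeeping lemma, is normalizing all the $BF_{S^1\times\Z_p}(W_\Gamma,\mathfrak{s}_i)$ to share the fixed domain sphere $(\mathrm{ind}^t_{\Z_p}\dirac_{W_\Gamma,\mathfrak{s}_1})^+\wedge V_i^+$: this is exactly what \Cref{lem: weight change at central node} (the stable equivalence $\mathrm{ind}^t_{\Z_p}\dirac_{W_\Gamma,\mathfrak{s}_{i+1}}\simeq\mathrm{ind}^t_{\Z_p}\dirac_{W_\Gamma,\mathfrak{s}_i}\oplus\C^{\Delta_i}_{[k_i]}$) and \Cref{lem: eqv index change for c1 increment} are designed to provide, so it is routine but must be spelled out to make the hocolim diagram well-defined. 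Finally I would remark that the edge cases $\Delta_i<0$ (where the arrow in the diagram points the other way, and $\mathfrak{s}_i$ rather than $\mathfrak{s}_{i+1}$ is of the form $\widetilde{\mathrm{sp}}_{\tilde{\s}}(x_\s(k_i))$) are handled symmetrically, and that the truncation at $N$ is harmless because the sequence $(\alpha_\C(\mathrm{ind}^t\dirac_{W_\Gamma,\mathcal{N}(\mathfrak{s}_i)}))_i$ is eventually increasing, so the tail of the diagram contributes only trivial stabilizations and does not change the hocolim up to $(S^1\times\Z_p)$-equivariant stable equivalence.
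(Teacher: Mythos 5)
Your proposal is correct and follows essentially the same route as the paper: the map is defined cell-by-cell (by $BF_{S^1\times\Z_p}(W_\Gamma,\mathfrak{s}_i)$ on each sphere $V_i^+$ and by a chosen $(S^1\times\Z_p)$-equivariant homotopy on each cylinder $W_i^+\wedge[0,1]$), with the index bookkeeping supplied by \Cref{lem: weight change at central node} and the $S^1$-equivalence cited from \cite{dai2023lattice}. The coherence issue you flag as the main obstacle is in fact vacuous here, since $\mathcal{H}_{S^1\times\Z_p}(\Gamma,\tilde{\s})$ is built concretely from spheres and cylinders glued only at their ends, so one homotopy per cylinder suffices and no higher coherences arise.
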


\begin{proof}
From the discussions above, we observe that
\[
\bigl(\mathrm{ind}_{\Z_p}^t \dirac_{W_\Gamma,\mathfrak{s}_1}\bigr)^+ \wedge V_i^+ 
\simeq 
\bigl(\mathrm{ind}_{\Z_p}^t \dirac_{W_\Gamma,\mathfrak{s}_i}\bigr)^+
\]
for all integers $i > 0$. Hence we define $\mathcal{T}_{S^1 \times \Z_p}$ as follows:
\begin{itemize}
    \item For each $i$, set 
    \[
    \mathcal{T}_{S^1 \times \Z_p}\vert_{V_i} = BF_{S^1 \times \Z_p}(W_\Gamma,\mathfrak{s}_i).
    \]
    \item For each $i$, define $\mathcal{T}_{S^1 \times \Z_p}\vert_{W_i \wedge [0,1]}$ using any $(S^1 \times \Z_p)$-equivariant homotopy between 
    \[
    BF_{S^1 \times \Z_p}(W_\Gamma,\mathfrak{s}_i) 
    \qquad \text{ and } \qquad 
    BF_{S^1 \times \Z_p}(W_\Gamma,\mathfrak{s}_{i+1}),
    \]
    which exists by the discussions above.
\end{itemize}
By construction, $\mathcal{T}_{S^1 \times \Z_p}$ is $(S^1 \times \Z_p)$-equivariant. The fact that it is an $S^1$-equivariant homotopy equivalence is precisely \cite[Theorem~1.1]{dai2023lattice}.
\end{proof}

Furthermore, it can be seen that the $(S^1 \times \Z_p)$-equivariant stable homotopy equivalence class of $\mathcal{H}_{S^1 \times \Z_p}(\Gamma,\tilde{\s})$ can be read off from the $\Z_p$-labelled planar graded root $\mathcal{R}_{\Gamma,\tilde{\s}}=(R,\lambda_V,\lambda_A)$ defined in \Cref{def: labelled gr root for Y and s}; the process is given as follows.
\begin{itemize}
    \item For each leaf $v$ of $R$, define the $(S^1 \times \Z_p)$-representation 
    \[
    V_v = \C^{-\lambda_V(v)}.
    \]
    \item For each simple angle $(v,v')$ of $R$, define the $(S^1 \times \Z_p)$-representation 
    \[
    W_{(v,v')} = \C^{-\lambda_V(v)-\lambda_A(v,v')}.
    \]
    Note that $W_{(v,v')}$ is naturally a subrepresentation of both $V_v$ and $V_{v'}$.
    \item Define 
    \[
    \mathcal{H}(\mathcal{R}_{\Gamma,\tilde{\s}}) 
    = 
    \left( \bigvee_{\text{leaf } v} V_v^+ \right) 
    \;\vee\; 
    \left( \bigvee_{\text{simple angle } (v,v')} (W_{(v,v')}^+ \wedge [0,1]) \right) \Big/ \sim,
    \]
    where $\sim$ is defined by attaching $W_{(v,v')}^+ \times \{0\}$ to $V_v^+$ and $W_{(v,v')}^+ \times \{1\}$ to $V_{v'}^+$ via the natural inclusions 
    \[
    W_{(v,v')} \hookrightarrow V_v 
    \qquad  \text{ and }\qquad 
    W_{(v,v')} \hookrightarrow V_{v'}.
    \]
\end{itemize}

\begin{lem}\label{lem: label gr root gives eqv lattice sp}
Possibly after a cyclic permutation of elements of $\Z_p$ applied to all leaf and angle labels of $\mathcal{R}_{\Gamma,\tilde{\s}}$, there exists an $(S^1 \times \Z_p)$-representation $V$ such that there is a $(S^1 \times \Z_p)$-equivariant map
\[
V^+ \wedge \mathcal{H}(\mathcal{R}_{\Gamma,\tilde{\s}})
\longrightarrow
SWF_{S^1 \times \Z_p}(-Y,\tilde{\s})
\]
which is an $S^1$-equivariant homotopy equivalence.
\end{lem}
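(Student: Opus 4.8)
The statement is essentially a bookkeeping lemma: it asserts that the homotopy type $\mathcal{H}_{S^1\times\Z_p}(\Gamma,\tilde{\s})$ built from the $\Z_p$-equivariant $\mathrm{Spin}^c$ computation sequence via \Cref{defn: S1xZp lattice spectrum} agrees (up to a suspension and up to the cyclic ambiguity inherent in $n_{\mathrm{eqv}}$) with the purely combinatorial construction $\mathcal{H}(\mathcal{R}_{\Gamma,\tilde{\s}})$ read off from the $\Z_p$-labelled planar graded root. Once this identification is in place, \Cref{thm: eqv lattice comparison map} provides the desired $(S^1\times\Z_p)$-equivariant map to $SWF_{S^1\times\Z_p}(-Y,\tilde{\s})$ that is an $S^1$-equivariant homotopy equivalence, after smashing with $(\mathrm{ind}^t_{\Z_p}\dirac_{W_\Gamma,\mathfrak{s}_1})^+$; so the representation $V$ in the statement is $\mathrm{ind}^t_{\Z_p}\dirac_{W_\Gamma,\mathfrak{s}_1}$.

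\textbf{Key steps.} First I would recall the homotopy-colimit description of $\mathcal{H}_{S^1\times\Z_p}(\Gamma,\tilde{\s})$ from \Cref{rem: lattice homotopy type as hocolim}: it is the hocolim over the zigzag diagram with vertices $V_i^+$ and intermediate objects $W_i^+$ for $0\le i\le N$, where the maps are the inclusions $U^{\pm\Delta_i}_{[k_i]}$. The combinatorial object $\mathcal{H}(\mathcal{R}_{\Gamma,\tilde{\s}})$ is likewise a hocolim over the ``tree'' diagram indexed by leaves and simple angles of $R$. The main task is to produce a map of diagrams inducing an $(S^1\times\Z_p)$-equivariant equivalence on hocolims. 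Concretely: (i) track the $\Z_p$-labels through the computation sequence using \Cref{lem: eqv num increase for central node} (which says $n_{\mathrm{eqv}}$ increases by $1$ when we add $PD[S_{v_c}]$) and \Cref{lem: eqv num is fixed in comp seq} (which says $n_{\mathrm{eqv}}$ is unchanged along the interior computation steps), so that the representation $\C^{\Delta_i}_{[k_i]}$ appearing in the definition of $V_{i+1}$ indeed has its $\Z_p$-weight equal to $[k_i]=n_{\mathrm{eqv}}(\widetilde{\mathrm{sp}}_{\tilde{\s}}(x_{\s}(k_i)))=[k_i]$ matching the leaf-labelling prescription in \Cref{defn: graded root from abstract sequence}; (ii) identify the virtual representations: the telescoping identity $\mathrm{ind}^t_{\Z_p}\dirac_{W_\Gamma,\mathfrak{s}_i}\oplus\C^{\Delta_i}_{[k_i]}\simeq\mathrm{ind}^t_{\Z_p}\dirac_{W_\Gamma,\mathfrak{s}_{i+1}}$ (from \Cref{lem: weight change at central node}) shows that, after smashing with $(\mathrm{ind}^t_{\Z_p}\dirac_{W_\Gamma,\mathfrak{s}_1})^+$, the sphere $V_i^+$ in the combinatorial model and the sphere $(\mathrm{ind}^t_{\Z_p}\dirac_{W_\Gamma,\mathfrak{s}_i})^+$ agree, and the label $\lambda_V(i_k)=\sum_{s=0}^{i_k-1}(n_{s+1}-n_s)\tau_p^s$ is exactly the accumulated $\Z_p$-weighted index jump; (iii) match the structure maps: the inclusions $W_{(v,v')}\hookrightarrow V_v$ and $W_{(v,v')}\hookrightarrow V_{v'}$ in $\mathcal{H}(\mathcal{R}_{\Gamma,\tilde{\s}})$ are the $U$-maps $U^{\pm}_{[k]}$, which by \Cref{lem: weight change at central node} are exactly the $(S^1\times\Z_p)$-equivariant homotopy classes through which consecutive Bauer--Furuta invariants are related. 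Then invoking that $\chi_\s$ is constant along each computation sequence segment (\cite[Lemma 9.1]{nemethi2005ozsvath}) collapses the redundant $\Delta_i=0$ steps, so that the hocolim over the full sequence reduces to the hocolim over the tree of leaves and simple angles, i.e. to $\mathcal{H}(\mathcal{R}_{\Gamma,\tilde{\s}})$. Finally, \Cref{thm: eqv lattice comparison map} is applied verbatim, with $V=\mathrm{ind}^t_{\Z_p}\dirac_{W_\Gamma,\mathfrak{s}_1}$; since it produces an $S^1$-equivariant homotopy equivalence, we are done, the cyclic-permutation clause in the statement absorbing the normalization $n_{\mathrm{eqv}}(\widetilde{\mathrm{sp}}_{\tilde{\s}}(x_\s(0)))=[0]$ we assumed when building the sequence.

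\textbf{Main obstacle.} The delicate point is \emph{not} the equivalence on hocolims (that is formal once the diagrams are identified) but verifying that the $\Z_p$-weights attached to each $\C^{\Delta_i}_{[k_i]}$ in the geometric construction genuinely coincide with the combinatorial leaf/angle labels $\lambda_V,\lambda_A$ of $\mathcal{R}_{\Gamma,\tilde{\s}}$. This requires carefully combining (a) \Cref{lem: eqv num increase for central node} and \Cref{lem: eqv num is fixed in comp seq} to locate where the equivariance number jumps, (b) \Cref{lem: eqv index change for c1 increment} and the explicit Lefschetz-number computation $\mathcal{I}_{k,s}=k\zeta_p^s$ preceding \Cref{lem: eqv index change for central node} to see that the jump in the equivariant index at a ``central-node step'' is precisely $\C^{\Delta_i}_{[k_i]}$ with $[k_i]$ the equivariance number at that stage, and (c) the fact (from the remark following \Cref{def: labelled gr root for Y and s}) that $\mathcal{R}_{\Gamma,\tilde{\s}}$ depends only on $\mathcal{N}(\tilde{\s})$, which must be reconciled with the fact that $\mathcal{H}_{S^1\times\Z_p}(\Gamma,\tilde{\s})$ \emph{does} carry a genuine $\Z_p$-action depending on $\tilde{\s}$ up to the cyclic reparametrization. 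In other words, the hard part is a careful simultaneous accounting of first Chern classes, weight functions $\chi_\s$, and equivariance numbers along the full $\mathrm{Spin}^c$ computation sequence; the rest is a routine hocolim comparison plus a citation of \Cref{thm: eqv lattice comparison map}.
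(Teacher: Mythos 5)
Your proposal is correct and follows essentially the same route as the paper: identify $\mathcal{H}(\mathcal{R}_{\Gamma,\tilde{\s}})$ with a suspension of $\mathcal{H}_{S^1\times\Z_p}(\Gamma,\tilde{\s})$ by collapsing, for each simple angle, the chain of cylinders glued along single $U^{\pm}_{[k]}$-inclusions down to one cylinder glued along their composite, and then invoke \Cref{thm: eqv lattice comparison map}. The only cosmetic difference is normalization: the paper smashes with $(\C^{\lambda_V(v_0)})^+$ for the leftmost leaf $v_0$ before composing with $\mathcal{T}_{S^1\times\Z_p}$, so its $V$ differs from your $\mathrm{ind}^t_{\Z_p}\dirac_{W_\Gamma,\mathfrak{s}_1}$ by that factor, which is immaterial since the lemma only asserts existence of some $V$.
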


\begin{proof}
Let $v_0$ be the leftmost leaf of $\mathcal{R}_{\Gamma,\tilde{\s}}$, i.e., there is no leaf $v'$ of $\mathcal{R}_{\Gamma,\tilde{\s}}$ such that $(v',v_0)$ forms a simple angle. Then it is straightforward to see that there exists an “inclusion”
\[
f \colon \left(\C^{\lambda_V(v_0)}\right)^+ \wedge \mathcal{H}(\mathcal{R}_{\Gamma,\tilde{\s}}) \;\hookrightarrow\; \mathcal{H}_{S^1 \times \Z_p}(\Gamma,\tilde{\s}).
\]
We claim that $f$ is an $(S^1 \times \Z_p)$-equivariant homotopy equivalence.

To see this, choose leaves $v,v'$ of $\mathcal{R}_{\Gamma,\tilde{\s}}$ such that $(v,v')$ forms a simple angle at some vertex $w$. The part of $\mathcal{H}(\mathcal{R}_{\Gamma,\tilde{\s}})$ corresponding to $v,v',w$ is
\[
\mathcal{H}(v,v') = 
\bigl(V_v^+ \vee V_{v'}^+ \vee (W_{(v,v')}^+ \wedge [0,1])\bigr) \big/ \sim,
\]
where $W_{(v,v')}^+ \times \{0\}$ and $W_{(v,v')}^+ \times \{1\}$ are attached to $V_v^+$ and $V_{v'}^+$ via inclusions.

On the other hand, in the $\Z_p$-equivariant $\mathrm{Spin}^c$ computation sequence
\[
\widetilde{\mathrm{sp}}_{\tilde{\s}}(x_{\s}(0)),
\widetilde{\mathrm{sp}}_{\tilde{\s}}(x^{\s}_{0,0}),\dots,\;
\widetilde{\mathrm{sp}}_{\tilde{\s}}(x^{\s}_{0,n_0-1}),
\widetilde{\mathrm{sp}}_{\tilde{\s}}(x_{\s}(1)),\dots
\]
used to define $\mathcal{H}_{S^1 \times \Z_p}(\Gamma,\tilde{\s})$, there exist indices $0 \le i < j < k$ such that the vertices $v,w,v'$ correspond respectively to
\[
\widetilde{\mathrm{sp}}_{\tilde{\s}}(x_{\s}(i)),
\qquad
\widetilde{\mathrm{sp}}_{\tilde{\s}}(x_{\s}(j)),
\qquad
\widetilde{\mathrm{sp}}_{\tilde{\s}}(x_{\s}(k)).
\]
By construction of $\mathcal{R}_{\Gamma,\tilde{\s}}$, the sequence 
\[
\bigl\{\chi_{\tilde{\s}}(x_{\s}(\ell))\bigr\}_{\ell \ge 0}
\]
is increasing for $i \le \ell \le j$ and decreasing for $j \le \ell \le k$. Moreover,
\[
\begin{aligned}
\ind_{\Z_p}^t \dirac_{W_\Gamma,\widetilde{\mathrm{sp}}_{\tilde{\s}}(x_{\s}(i))} 
&= \C^{\lambda_V(v_0)} \oplus V_v, \\
\ind_{\Z_p}^t \dirac_{W_\Gamma,\widetilde{\mathrm{sp}}_{\tilde{\s}}(x_{\s}(j))} 
&= \C^{\lambda_V(v_0)} \oplus W_{(v,v')}, \\
\ind_{\Z_p}^t \dirac_{W_\Gamma,\widetilde{\mathrm{sp}}_{\tilde{\s}}(x_{\s}(k))} 
&= \C^{\lambda_V(v_0)} \oplus V_{v'}.
\end{aligned}
\]
For simplicity, denote the $m$th term in the subsequence from $i$ to $j$ by $\tilde{\s}_m$, and let its length be $N$. Define
\[
V_m = \C^{-\lambda_V(v_0)} \oplus \ind^t_{\Z_p}\dirac_{W_\Gamma,\tilde{\s}_m}.
\]
Then we obtain a chain of inclusions
\[
W_{(v,v')} = V_N \;\hookrightarrow\; V_{N-1} \;\hookrightarrow\; \cdots \;\hookrightarrow\; V_1 = V_v,
\]
whose composition is precisely the inclusion $W_{(v,v')} \hookrightarrow V_v$.

Now, in $(\C^{-\lambda_V(v_0)})^+ \wedge \mathcal{H}(\Gamma,\tilde{\s})$, the piece corresponding to the subsequence from $i$ to $j$ is
\[
\bigl(V_1^+ \vee (V_2^+ \wedge [0,1]) \vee \cdots \vee (V_N^+ \wedge [0,1])\bigr)\big/ \sim,
\]
where $\sim$ attaches $V_{m+1}^+ \times \{0\}$ to $V_m^+$ for $1 \le m < N$. Up to $(S^1 \times \Z_p)$-equivariant homotopy equivalence, this simplifies to
\[
\bigl(V_1^+ \vee (V_N^+ \wedge [0,1])\bigr)\big/ \sim,
\]
where $\sim$ attaches $V_N^+ \times \{0\}$ directly to $V_1^+$ via the composition of inclusions, i.e.\ the inclusion $W_{(v,v')} \hookrightarrow V_v$. A similar argument applies to the subsequence between $j$ and $k$. 

Thus, the contribution of the computation sequence between $i$ and $k$ in $(\C^{-\lambda_V(v_0)})^+ \wedge \mathcal{H}_{S^1 \times \Z_p}(\Gamma,\tilde{\s})$ is
\[
\bigl(V_v^+ \vee V_{v'}^+ \vee (W_{(v,v')}^+ \wedge [0,1])\bigr)\big/ \sim,
\]
where $\sim$ attaches $W_{(v,v')}^+ \times \{0\}$ to $V_v^+$ and $W_{(v,v')}^+ \times \{1\}$ to $V_{v'}^+$ via inclusions. But this is exactly the corresponding part of $\mathcal{H}(\mathcal{R}_{\Gamma,\tilde{\s}})$. Applying this argument to all simple angles of $\mathcal{R}_{\Gamma,\tilde{\s}}$, we obtain an $(S^1 \times \Z_p)$-equivariant homotopy equivalence
\[
\mathcal{H}(\mathcal{R}_{\Gamma,\tilde{\s}}) \simeq \left(\C^{-\lambda_V(v_0)}\right)^+ \wedge \mathcal{H}_{S^1 \times \Z_p}(\Gamma,\tilde{\s}).
\]
This proves the claim, and the lemma follows from \Cref{thm: eqv lattice comparison map}.
\end{proof}

\subsection{The $\Z_p$-equivariant lattice chain} \label{subsec: Zp lattice chain}

In this subsection, we construct a chain model (in fact, a finite-dimensional bounded $A_\infty$-bimodule model) for the $\Z_p$-equivariant lattice homotopy type $\mathcal{H}_{S^1 \times \Z_p}(\Gamma,\tilde{\s})$, which computes its $(S^1 \times \Z_p)$-equivariant cohomology. Recall that, for any topological space $X$, the singular cochain complex $C^\ast(X)$ is an $E_\infty$-algebra. Moreover, if a topological group $G$ acts on a topological space $X$, then the reduced equivariant singular cochain complex $\widetilde{C}^\ast_G(X)$ is naturally an $E_\infty$-module over $C^\ast_G(\ast) \cong C^\ast(BG)$. 
The ``lattice chain model'' constructed here will be quasi-isomorphic to 
\[
C^\ast\!\left(\mathcal{H}_{S^1 \times \Z_p}(\Gamma,\tilde{\s})\right),
\]
and hence also to $C^\ast\!\left(SWF_{S^1 \times \Z_p}(Y,\tilde{\s})\right)$, as a $C^\ast(S^1 \times \Z_p)$-module.

\begin{lem}\label{lem: chain level Mayer Vietoris}
Let $G$ be a topological group acting on a topological space $X$. Suppose that we have two open subsets $U,V \subset X$, which are setwise $G$-invariant, satisfying $U \cup V = X$. Consider the inclusion maps
\[
i_U \colon U \cap V \hookrightarrow U,
\qquad 
i_V \colon U \cap V \hookrightarrow V.
\]
Then there exists a quasi-isomorphism of $C^\ast(BG)$-modules (with any coefficient ring):
\[
\widetilde{C}^\ast_G(X) \xrightarrow{\;\simeq\;} 
\mathrm{Cone}\!\left(
  \widetilde{C}^\ast_G(U) \;\oplus\; \widetilde{C}^\ast_G(V) 
  \xrightarrow{(i_U \sqcup i_V)^\ast} 
  \widetilde{C}^\ast_G(U \cap V)
\right).
\]
\end{lem}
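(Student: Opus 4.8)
\textbf{Proof plan for \Cref{lem: chain level Mayer Vietoris}.}
The plan is to reduce the equivariant statement to a non-equivariant one by passing to the Borel construction, and then to invoke the standard chain-level Mayer--Vietoris argument. First I would recall that, by definition, $\widetilde{C}^\ast_G(X) = \widetilde{C}^\ast(X \times_G EG)$ (with the conventions on normalized singular cochains fixed in \Cref{subsec: notations}), and similarly for $U$, $V$, and $U \cap V$. Since $U$ and $V$ are setwise $G$-invariant and open, the subsets $U \times_G EG$ and $V \times_G EG$ are open in $X \times_G EG$, they cover it, and their intersection is $(U \cap V) \times_G EG$. Thus it suffices to prove the non-equivariant assertion: for a topological space $Z = A \cup B$ with $A, B$ open, the natural map
\[
\widetilde{C}^\ast(Z) \longrightarrow \mathrm{Cone}\!\left( \widetilde{C}^\ast(A) \oplus \widetilde{C}^\ast(B) \xrightarrow{(i_A \sqcup i_B)^\ast} \widetilde{C}^\ast(A \cap B) \right)
\]
is a quasi-isomorphism, and then observe that all maps in sight are $C^\ast(BG)$-linear because they are induced by maps of spaces over $BG$ (the Borel-constructed inclusions commute with the projections to $BG$).

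The second step is the non-equivariant Mayer--Vietoris argument, which is classical. Let $C^\ast(\{A,B\}) \subset C^\ast(Z)$ denote the subcomplex of cochains on the subcomplex $C_\ast(\{A,B\})$ of small chains (those supported on $A$ or on $B$), i.e.\ $C^\ast(\{A,B\}) = \mathrm{Hom}(C_\ast(\{A,B\}), R)$. The small-chains inclusion $C_\ast(\{A,B\}) \hookrightarrow C_\ast(Z)$ is a quasi-isomorphism by the barycentric subdivision operator (this is exactly the excision/small simplices theorem), and since we work over a ring and $C_\ast(Z)/C_\ast(\{A,B\})$ is a complex of free modules, dualizing preserves the quasi-isomorphism; hence $C^\ast(Z) \to C^\ast(\{A,B\})$ is a quasi-isomorphism. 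On the other hand there is a short exact sequence of cochain complexes
\[
0 \longrightarrow C^\ast(\{A,B\}) \longrightarrow C^\ast(A) \oplus C^\ast(B) \xrightarrow{(i_A^\ast, -i_B^\ast)} C^\ast(A \cap B) \longrightarrow 0,
\]
the right-hand surjectivity again coming from the fact that a cochain on $A \cap B$ extends (choose any set-theoretic extension on simplices not lying in $A \cap B$). Exactness of this sequence is precisely the statement that $C^\ast(\{A,B\})$ is the kernel, and the associated mapping-cone description identifies $C^\ast(\{A,B\})$ with a complex quasi-isomorphic to $\mathrm{Cone}\!\left(C^\ast(A) \oplus C^\ast(B) \to C^\ast(A \cap B)\right)[-1]$; unwinding the grading (reduced versus unreduced, and the degree shift in the cone convention used in the statement) gives the claimed quasi-isomorphism, with the reduced versions obtained by working with reduced cochains throughout, which is harmless since all spaces involved are nonempty and the basepoint (the added one of the SWF-type construction) can be placed inside $A \cap B$.

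The passage from unreduced to reduced cochains and the bookkeeping of the cone's degree shift are the only genuinely fiddly points, but they are routine; I would handle the reduced case by noting $\widetilde{C}^\ast(-) = \ker(C^\ast(-) \to C^\ast(\mathrm{pt}))$ and that the point maps compatibly into $A$, $B$, $A\cap B$, $Z$ through the chosen basepoint, so the reduced Mayer--Vietoris sequence is a subsequence of the unreduced one with the constant-cochain part split off uniformly. The main obstacle, such as it is, is purely organizational: making sure that the quasi-isomorphism $\widetilde{C}^\ast(Z) \to \mathrm{Cone}(\cdots)$ is written down as an honest chain map (not just an abstract zig-zag), which one does by composing the small-chains quasi-isomorphism with the canonical map $C^\ast(\{A,B\}) \to \mathrm{Cone}(\cdots)$ coming from the short exact sequence, and then checking $C^\ast(BG)$-linearity, which is immediate since every map is $f^\ast$ for a map $f$ of spaces over $BG$. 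No new ideas are needed beyond the standard excision machinery.
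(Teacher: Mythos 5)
Your proof is correct, but it takes a more hands-on route than the paper. The paper's proof is shorter and softer: it writes down the cone $\mathcal{C}$, observes that the two restriction maps $\widetilde{C}^\ast_G(X)\to\widetilde{C}^\ast_G(U\sqcup V)$ and $\widetilde{C}^\ast_G(X)\to\widetilde{C}^\ast_G(U\cap V)$ fit into a homotopy-commutative triangle, chooses a commutation homotopy to produce the map into the cone, and then concludes by the five-lemma, comparing the cone's long exact sequence with the (already known) equivariant Mayer--Vietoris sequence. You instead pass explicitly to the Borel construction, reduce to the non-equivariant statement, and re-derive chain-level Mayer--Vietoris from the small-simplices theorem together with the short exact sequence $0\to C^\ast(\{A,B\})\to C^\ast(A)\oplus C^\ast(B)\to C^\ast(A\cap B)\to 0$, identifying the kernel of the surjection with the shifted cone. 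What your approach buys is an honest, explicitly written chain map and independence from the Mayer--Vietoris long exact sequence as a black box; what the paper's approach buys is brevity and the avoidance of all small-chains bookkeeping. The conclusions and module structures agree.

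One small justification in your argument should be tightened. You assert that the dual of the small-chains inclusion $C_\ast(\{A,B\})\hookrightarrow C_\ast(Z)$ remains a quasi-isomorphism "since the quotient is a complex of free modules." Freeness of an acyclic complex does not by itself guarantee that its dual is acyclic over an arbitrary coefficient ring. The correct (and standard) justification is that the small-chains inclusion is a chain \emph{homotopy} equivalence — the barycentric subdivision operator provides an explicit chain homotopy inverse — and chain homotopy equivalences are preserved by any additive functor, in particular by $\mathrm{Hom}(-,R)$. With that one-line fix, and the routine checks of $C^\ast(BG)$-linearity (which do hold, since the small-cochain subcomplex is closed under cup product with pullbacks from $BG$), your argument goes through.
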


\begin{proof}
Denote the mapping cone 
\[
\operatorname{Cone}\!\left(\widetilde{C}^\ast_G(U) \oplus \widetilde{C}^\ast_G(V) \xrightarrow{(i_U \sqcup i_V)^\ast} \widetilde{C}^\ast_G(U \cap V)\right)
\]
by $\mathcal{C}$. Then we have 
\[
\mathcal{C} = \bigl(\widetilde{C}^\ast_G(U \sqcup V) \oplus \widetilde{C}^\ast_G(U \cap V)[-1],\, d_{\mathrm{Cone}}\bigr).
\]
Observe that we have the following homotopy-commutative diagram, where all arrows are induced by inclusions:
\[
\xymatrix{
\widetilde{C}^\ast_G(X) \ar[d]\ar[dr] \\
\widetilde{C}^\ast_G(U \sqcup V) \ar[r] & \widetilde{C}^\ast_G(U \cap V)
}
\]
Choosing such a commutation homotopy induces the desired map 
\[
f \colon \widetilde{C}^\ast_G(X) \longrightarrow \mathcal{C}.
\]
In order to show that this map is a quasi-isomorphism, we observe that $f$ induces the following commutative diagram:
\[
\xymatrix{
\cdots \ar[r] & \widetilde{H}^{\ast-1}_G(U \cap V) \ar[r] \ar[d]^{\mathrm{id}} & \widetilde{H}^\ast_G(X) \ar[r] \ar[d]^{f^\ast} & \widetilde{H}^\ast_G(U \sqcup V) \ar[r] \ar[d]^{\mathrm{id}} & \cdots \\
\cdots \ar[r] & \widetilde{H}^{\ast-1}_G(U \cap V) \ar[r] & \widetilde{H}^\ast(\mathcal{C}) \ar[r] & \widetilde{H}^\ast_G(U \sqcup V) \ar[r] & \cdots
}
\]
Therefore $f$ is a quasi-isomorphism by the five-lemma.
\end{proof}

Observe that we have the following quasi-isomorphisms of $\Z_p$-dgas:
\[
    C^\ast(B(S^1 \times \Z_p);\Z_p) \simeq C^\ast(BS^1;\Z_p) \otimes_{\Z_p} C^\ast(B\Z_p;\Z_p).
\]
Since $H^\ast(BS^1;\Z_p)$ is generated by a single element, we can construct a quasi-isomorphism from $H^\ast(BS^1;\Z_p)$ to $C^\ast(BS^1;\Z_p)$, which implies that $C^\ast(BS^1;\Z_p)$ is formal. The same argument applies to $C^\ast(B\Z_2;\Z_2)$. Unfortunately, for \emph{every} prime $p>2$, $C^\ast(B\Z_p;\Z_p)$ is not formal: it was computed in \cite[Theorem 1.3]{benson2021massey} that we have a quasi-isomorphism of $A_\infty$-algebras over $\Z_p$\footnote{We work with $A_\infty$-algebras and $A_\infty$-modules, rather than $E_\infty$-algebras and ($E_\infty$-)modules, for simplicity. $C^\ast(B\Z_p;\Z_p)$ is easy to describe as an $A_\infty$-algebra but not so much as a dga.}:
\[
C^\ast(B\Z_p;\Z_p) \simeq \bigl(\Z_p[R,S]/(R^2),\, m_\ast\bigr),\qquad m_p(R,\dots,R)=S.
\]
Note that we are implicitly taking $m_2$ to be the multiplication operation in the $\Z_p$-algebra $\Z_p[R,S]/(R^2)$ and all other $A_\infty$ operations to be zero. Hence we get
\[
C^\ast\!\bigl(B(S^1\times \Z_p);\Z_p\bigr) \simeq 
\begin{cases}
    \bigl(\Z_p[U,\theta],\, m_\ast=0 \text{ for } \ast\neq 2\bigr), & \text{if } p=2, \\
    \bigl(\Z_p[U,R,S]/(R^2),\, m_p(R,\dots,R)=S\bigr), & \text{if } p>2.
\end{cases}
\]
Here, $\deg \theta = \deg R = 1$ and $\deg U = \deg S = 2$. For simplicity, we will use the following conventions from now on.
\begin{itemize}
    \item We denote $\mathcal{R}_2 = \bigl(\Z_p[U,\theta],\, m_\ast=0\bigr)$ and, for $p>2$, $\mathcal{R}_p = \bigl(\Z_p[U,R,S]/(R^2),\, m_p(R,\dots,R)=S\bigr)$, so that $C^\ast(B(S^1 \times \Z_p);\Z_p)\simeq \mathcal{R}_p$ for all primes $p$.
    \item When $p=2$, we will denote $\theta^2$ by $S$.
\end{itemize}

\begin{rem}
For any prime $p$, the dga $C^\ast(B(S^1 \times \Z_p);\Z_p)$ is homotopy equivalent, as an $A_\infty$-algebra over $\Z_p$, to $\mathcal{R}_p$. Hence, in general, we will not distinguish between $C^\ast(B(S^1 \times \Z_p);\Z_p)$ and $\mathcal{R}_p$. However, in contexts where we explicitly need the commutativity of $C^\ast(B(S^1 \times \Z_p);\Z_p)$, we shall denote the corresponding $E_\infty$-algebra $C^\ast(B(S^1 \times \Z_p);\Z_p)$ (over $\Z_p$) by $\mathcal{R}^\circ_p$.
\end{rem}

We then compute $\widetilde{C}^\ast_G(V^+)$ for various $(S^1 \times \Z_p)$-representations $V$.

\begin{lem}\label{lem: homology determines chain map}
Let 
\[
f \colon \mathcal{R}^\circ_p \longrightarrow \mathcal{R}^\circ_p
\]
be an $E_\infty$ $C^\ast(B(S^1 \times \Z_p);\Z_p)$-module endomorphism, regarded as an $A_\infty$ $\mathcal{R}_p$-$\mathcal{R}_p$-bimodule endomorphism, such that the induced map 
\[
f^\ast \colon H^\ast(\mathcal{R}_p) \longrightarrow H^\ast(\mathcal{R}_p)
\]
is given by 
\[
f^\ast = (U+kS)\cdot \mathrm{id}_{H^\ast(\mathcal{R}_p)}
\qquad \text{for some } k \in \Z_p.
\]
Then $f$ is homotopic to $(U+kS)\cdot \mathrm{id}_{\mathcal{R}_p}$.
\end{lem}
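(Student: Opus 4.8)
\textbf{Proof plan for \Cref{lem: homology determines chain map}.}

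The plan is to reduce the statement to a computation of self-maps of $\mathcal R_p$ up to homotopy, by understanding the space of $A_\infty$-bimodule morphisms $\mathcal R_p\to\mathcal R_p$ modulo homotopy. Concretely, I would first replace $f$ by $f-(U+kS)\cdot\mathrm{id}_{\mathcal R_p}$, so that the goal becomes: any $A_\infty$ $\mathcal R_p$-$\mathcal R_p$-bimodule endomorphism $g$ of $\mathcal R_p$ which induces the zero map on cohomology is null-homotopic. The key observation is that $\mathcal R_p$ is a free module of rank one over itself, so an $A_\infty$-bimodule endomorphism is, up to homotopy, controlled by where it sends the generator $1\in\mathcal R_p$; more precisely, there is a quasi-isomorphism $\operatorname{RHom}_{\mathcal R_p\text{-}\mathcal R_p}(\mathcal R_p,\mathcal R_p)\simeq \mathcal R_p$ (Hochschild-type computation, or simply using that $\mathcal R_p$ is the unit in the category of $\mathcal R_p$-bimodules, so $\operatorname{RHom}$ from the unit is the endomorphism dga of the unit, which is $\mathcal R_p$ itself since $\mathcal R_p$ is $E_\infty$, hence the center contains all of $\mathcal R_p$). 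Thus $H^0$ of this $\operatorname{RHom}$ complex (the set of homotopy classes of degree-zero endomorphisms) is exactly $H^0(\mathcal R_p)\cdot\mathrm{id}=\Z_p\cdot\mathrm{id}$, and the degree-two part is $H^2(\mathcal R_p)\cdot\mathrm{id}$, which is spanned by $U\cdot\mathrm{id}$ and $S\cdot\mathrm{id}$. Since $f$ is a degree-two map (it raises cohomological degree by $2$, being multiplication by $U+kS$ on cohomology), its homotopy class lies in this two-dimensional space, and is detected faithfully by the induced map on cohomology — which by hypothesis equals $(U+kS)\cdot\mathrm{id}$. Hence $f\simeq (U+kS)\cdot\mathrm{id}_{\mathcal R_p}$.

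In more detail, the steps I would carry out are: (1) set up the dg-category (or $A_\infty$-category) of $A_\infty$ $\mathcal R_p$-$\mathcal R_p$-bimodules, and identify $\mathcal R_p$ with the diagonal bimodule; (2) observe that because $C^\ast(B(S^1\times\Z_p);\Z_p)$ is an $E_\infty$-algebra and $\mathcal R^\circ_p$ is its module version, the endomorphism complex of the diagonal bimodule is quasi-isomorphic to the (derived) center, and for an $E_\infty$-algebra this receives $\mathcal R_p$ itself via left multiplication, giving a quasi-isomorphism $\mathcal R_p\xrightarrow{\simeq}\operatorname{REnd}_{\mathcal R_p\text{-}\mathcal R_p}(\mathcal R_p)$; (3) take $H^2$ of both sides to conclude that every degree-two bimodule endomorphism is homotopic to multiplication by a unique element of $H^2(\mathcal R_p)=\Z_p\langle U,S\rangle$; (4) note that the passage from a homotopy class of endomorphism to its induced map on $H^\ast(\mathcal R_p)$ is precisely the identification in step (3) followed by the action of $H^2(\mathcal R_p)$ on $H^\ast(\mathcal R_p)$ by multiplication, which is injective (both $U$ and $S$ act freely on $H^\ast(\mathcal R_p)$, as is visible from the explicit presentations $\Z_p[U,\theta]$ and $\Z_p[U,R,S]/(R^2)$); (5) apply the hypothesis $f^\ast=(U+kS)\cdot\mathrm{id}$ to pin down the homotopy class of $f$ as that of $(U+kS)\cdot\mathrm{id}_{\mathcal R_p}$.

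The main obstacle I expect is step (2): making precise the claim that the endomorphism dga of the diagonal bimodule of $\mathcal R_p$ is quasi-isomorphic to $\mathcal R_p$, and in particular that there are no extra homotopy classes of endomorphisms beyond multiplication operators (e.g., that the nontrivial $A_\infty$-operation $m_p(R,\dots,R)=S$ when $p>2$ does not produce exotic bimodule self-maps). The cleanest route is probably to avoid computing Hochschild cohomology of $\mathcal R_p$ directly and instead argue at the level of $C^\ast(B(S^1\times\Z_p);\Z_p)$ before strictification: since $\widetilde C^\ast_{S^1\times\Z_p}((\C^{1\cdot\alpha})^+)$ for a representation $V$ is, by the equivariant Thom isomorphism, a shift of the free rank-one module $C^\ast(B(S^1\times\Z_p);\Z_p)$, an $E_\infty$-module endomorphism is determined up to homotopy by the image of a generator, which lives in the module itself; then transport this along the formality/$A_\infty$-equivalence $C^\ast(B(S^1\times\Z_p);\Z_p)\simeq\mathcal R_p$. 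Care is needed because this transport is only an $A_\infty$-equivalence when $p>2$, so one must check that the identification of homotopy classes of endomorphisms with $H^\ast(\mathcal R_p)$ is compatible with the $A_\infty$-equivalence — but this follows formally since quasi-isomorphisms of $A_\infty$-algebras induce equivalences of their (derived) module and bimodule categories, hence isomorphisms on the relevant $\operatorname{Hom}$-sets in the homotopy category.
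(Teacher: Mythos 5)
Your proposal is correct and follows essentially the same route as the paper: the paper's proof also identifies homotopy classes of $A_\infty$ $\mathcal{R}_p$-bimodule endomorphisms of the diagonal bimodule with $H^\ast(\mathcal{R}_p)$ (via $[\varphi]\mapsto\varphi^\ast(1)$, i.e.\ the endomorphism complex of the rank-one free module is $\mathcal{R}_p$ itself) and then concludes from $f^\ast(1)=(U+kS)=g^\ast(1)$. Your steps (2)--(4) are a more carefully spelled-out version of the same identification, including the injectivity of evaluation at the unit, which the paper leaves implicit.
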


\begin{proof}
Denote the $A_\infty$ bimodule endomorphism $(U+kS)\cdot \mathrm{id}_{\mathcal{R}_p}$ by $g$. Then $f$ and $g$ are both $A_\infty$ bimodule endomorphisms of $\mathcal{R}_p$. Observe that homotopy classes of $A_\infty$ bimodule endomorphisms of $\mathcal{R}_p$ are in bijective correspondence with homology classes of
\[
\mathcal{R}_p^\vee \otimes_{\mathcal{R}_p} \mathcal{R}_p \simeq \mathcal{R}_p,
\]
which is simply $H^\ast(\mathcal{R}_p) = \Z_p[U,R,S]/(R^2)$. The correspondence is given by
\[
[\varphi] \longmapsto \varphi^\ast(1).
\]
By assumption, we know that $f^\ast = g^\ast$, and thus $f$ and $g$ correspond to the same homology class in $H^\ast(\mathcal{R}_p)$. The lemma follows.
\end{proof}

\begin{lem} \label{lem: inclusion map pullbacks}
    For any $\mathbf{n}\in \Z[\Z_p]$, we have 
    \[
    \widetilde{C}^\ast_{S^1\times \Z_p}((\C^\mathbf{n})^+;\Z_p) \simeq \mathcal{R}_p[-|\mathbf{n}|].
    \]
    Under this identification, the pullback of the map $U_{[k]}\colon (\C^0)^+ \hookrightarrow (\C_{[k]})^+$ satisfies
    \[
    (U_{[k]})^\ast \sim (U+kS)\cdot \mathrm{id}_{\mathcal{R}_p}
    \]
    for all $k=0,\ldots,p-1$. Hence, the pullback of the map $(\C^0)^+ \hookrightarrow (\C^\mathbf{n})^+$ is given by $U^\mathbf{n}\cdot \mathrm{id}_{\mathcal{R}_p}$.
\end{lem}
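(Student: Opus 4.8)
The plan is to first reduce the statement to the case of a single generator $\C_{[k]}$, then identify the equivariant cochain complex, and finally pin down the induced map on cochains by a homology computation together with \Cref{lem: homology determines chain map}.

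First I would establish the identification $\widetilde{C}^\ast_{S^1\times \Z_p}((\C^\mathbf{n})^+;\Z_p) \simeq \mathcal{R}_p[-|\mathbf{n}|]$. This is the $\Z_p$-equivariant Thom isomorphism: the $(S^1\times\Z_p)$-representation sphere $(\C^\mathbf{n})^+$ is the fiberwise one-point compactification of a $\C$-vector bundle over a point, so the Borel-equivariant cochains $\widetilde{C}^\ast_{S^1\times\Z_p}((\C^\mathbf{n})^+;\Z_p)$ are computed by a Thom class of degree $|\mathbf{n}| = \dim_\R(\C^\mathbf{n})$ over $C^\ast(B(S^1\times\Z_p);\Z_p)\simeq \mathcal{R}_p$. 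Since $\Z_p$ acts on $\C^\mathbf{n}$ through $S^1$-rotations only (it is a complex representation on which the $\Z_p$-action is by roots of unity), the relevant Euler class is invertible after inverting $U$ — but what we need here is just the free module structure, which follows because $H^\ast_{S^1\times\Z_p}((\C^\mathbf{n})^+;\Z_p)$ is a free rank-one $H^\ast(B(S^1\times\Z_p);\Z_p)$-module generated in degree $|\mathbf{n}|$, and then one promotes this to a homotopy equivalence of $\mathcal{R}_p$-modules by the standard obstruction-theory argument (a map of $A_\infty$-modules over $\mathcal{R}_p$ inducing an isomorphism on homology is a homotopy equivalence). Multiplicativity of the smash product reduces $\mathbf{n}=\sum n_k[k]$ to the basic building blocks $\C_{[k]}$.

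Next I would compute the pullback of the single inclusion $U_{[k]}\colon (\C^0)^+\hookrightarrow (\C_{[k]})^+$. On the level of Borel-equivariant cohomology, $(\C_{[k]})^+ \simeq S^{2}$-bundle, and the cofiber sequence $(\C^0)^+\hookrightarrow(\C_{[k]})^+\to \Sigma^{2}_{\text{twisted}}$ identifies the map $(U_{[k]})^\ast$ with multiplication by the equivariant Euler class $e(\C_{[k]})\in H^2_{S^1\times\Z_p}(\pt;\Z_p)$. This Euler class is exactly $U + kS$ in the presentation $H^\ast(B(S^1\times\Z_p);\Z_p)=\Z_p[U,R,S]/(R^2)$ (resp.\ $\Z_p[U,\theta]$ with $S=\theta^2$ when $p=2$): the $\C_{[k]}$ factor has first Chern class $c_1(\gamma) + k\cdot c_1(L)$ where $\gamma$ is the universal $S^1$-bundle and $L$ the order-$p$ character line, and $c_1(L)$ reduces to $S$ mod $p$. (One must be slightly careful with the $p=2$ convention that $\theta^2$ is what is being called $S$, and with the mod-$p$ reduction of the Euler class of the $\Z_p$-line; this is a short direct check with the definition of the CW structure on $E\Z_p$ fixed in \cite{KPT24}.) Having computed $f^\ast = (U+kS)\cdot\mathrm{id}$ on homology, I would invoke \Cref{lem: homology determines chain map} to conclude $f\sim (U+kS)\cdot\mathrm{id}_{\mathcal{R}_p}$ at the chain level. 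Finally, the statement for general $\mathbf{n}$ follows by composing the inclusions $(\C^0)^+\hookrightarrow(\C_{[k_1]})^+\hookrightarrow(\C_{[k_1]}\oplus\C_{[k_2]})^+\hookrightarrow\cdots$, using multiplicativity of $\widetilde{C}^\ast_{S^1\times\Z_p}$ under smash products and the notation $U^\mathbf{n} = \prod_k (U+kS)^{n_k}$.

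The main obstacle I anticipate is the bookkeeping in the second step: correctly identifying the equivariant Euler class of $\C_{[k]}$ as $U+kS$ in $\Z_p$-coefficients, including the subtlety that the $\Z_p$-part of the action contributes a degree-$2$ class which is the Bockstein-type square $S$ (or $\theta^2$ when $p=2$) rather than a new polynomial generator, and checking that no sign or normalization factor intervenes given the fixed CW models. Once the Euler class is correctly identified, \Cref{lem: homology determines chain map} does all the chain-level work, so everything else is formal.
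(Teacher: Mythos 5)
Your proposal is correct and has the same overall structure as the paper's proof (Thom identification, compute $(U_{[k]})^\ast$ on cohomology, then invoke \Cref{lem: homology determines chain map} to promote to the chain level), but the key computation showing $(U_{[k]})^\ast(1) = U+kS$ is done by a genuinely different route. You identify $(U_{[k]})^\ast$ as multiplication by the equivariant Euler class $e(\C_{[k]})$ and compute this directly as $c_1$ of the tensor product of the standard $S^1$-character and the $k$th power of the $\Z_p$-character, giving $U + kS$ after mod-$p$ reduction. The paper instead uses a reparametrization trick: it writes $\C_{[k]}$ as the pullback of $\C_{[0]}$ along a group automorphism $\varphi_{[k]}$ of $S^1\times\Z_p$, hence $U_{[k]} = (B\varphi_{[k]})^\ast(U)$, and then computes $(B\varphi_{[k]})^\ast$ by factoring $\varphi_{[k]}$ through the inclusion $i_{[k]}\colon\Z_p\to S^1$ and using $(Bi_{[1]})^\ast(U) = S$ together with functoriality under $z\mapsto z^k$. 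These are ultimately equivalent computations of the same Euler class; your version is slightly more direct and standard, while the paper's automorphism trick is what naturally feeds into the later notation $\varphi_{[k]}$, $\psi_{[k]}$ used throughout Section 4.5 (e.g.\ \Cref{thm: lattice chain model is correct}, \Cref{rem: S1xZp cochain for different eqv lifts}). One cosmetic gap in your write-up: your cofiber-sequence description "$(\C_{[k]})^+\to\Sigma^2_{\text{twisted}}$" is a bit loose — the Euler class identification is simply the statement that the inclusion of the zero section $S^0 = (\C^0)^+\hookrightarrow(\C_{[k]})^+$ pulls back the Thom class to the Euler class; this is cleaner to state than to set up a twisted cofiber. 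Also, for the first step the paper gives a concrete cocycle-level construction (cup with a chosen representative $\Omega$ of the Thom class), which is more self-contained than appealing to "the standard obstruction-theory argument," though both are fine.
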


\begin{proof}
    We may assume, without loss of generality, that $\mathbf{n}\ge 0$, so that the statement is now about pointed $(S^1 \times \Z_p)$-spaces rather than $(S^1 \times \Z_p)$-equivariant spectra. Consider the sphere bundle
\[
\xi\colon (\C^\mathbf{n})^+ \times_{S^1 \times \Z_p} E(S^1 \times \Z_p) \longrightarrow B(S^1 \times \Z_p)
\]
as the fiberwise one-point compactification of 
\[
\xi_0\colon \C^\mathbf{n}\times_{S^1 \times \Z_p} E(S^1 \times \Z_p) \longrightarrow B(S^1 \times \Z_p),
\]
and denote the zero section of $\xi_0$ by $s$. Since $\xi_0$ is oriented (the $(S^1 \times \Z_p)$-action on $\C^\mathbf{n}$ is orientation-preserving), choosing a cocycle $\Omega \in C^{|\mathbf{n}|}(\xi_0,\partial \xi_0;\Z_p)$ representing the Thom class of $\xi$ gives a chain-level Thom map
\[
\mathrm{Th}_\Omega\colon C^\ast(B\Z_p;\Z_p) \xleftarrow{\simeq} C^\ast(\xi_0;\Z_p) \xrightarrow{\cup \Omega} C^\ast(\xi_0,\partial \xi_0;\Z_p) \simeq C^\ast(\xi,s;\Z_p) \simeq \widetilde{C}^\ast_{S^1 \times \Z_p}((\C^\mathbf{n})^+;\Z_p),
\]
which is $C^\ast(B\Z_p;\Z_p)$-linear. Note that such a cocycle $\Omega$ can be constructed by choosing a $\Z_p$-invariant volume form on $(\C^\mathbf{n})^+$ and applying the Borel construction. Since $\mathrm{Th}_\Omega$ induces the Thom isomorphism in homology, it is a quasi-isomorphism. Hence the first statement of the lemma follows.

    To prove the second statement, first note that the case $k=0$ is obvious. For simplicity, we will abuse notation and denote the element $(U_{[k]})^\ast(1) \in \mathcal{R}_p$ by $U_{[k]}$. For each $[k] \in \Z_p$, we have an automorphism
\[
\varphi_{[k]}\colon S^1 \times \Z_p \xrightarrow{(z,[n])\mapsto \left(e^{\frac{2\pi nk i}{p}}z,[n]\right)} S^1 \times \Z_p.
\]
Under the new parametrization of $S^1 \times \Z_p$ given by $\varphi_{[-k]}$, the representation $\C_{[k]}$ becomes $\C_{[0]}$. Hence,
\[
U = U_{[0]} = (B\varphi_{[-k]})^\ast(U_{[k]}),
\]
that is,
\[
U_{[k]} = \bigl((B\varphi_{[-k]})^{-1}\bigr)^\ast(U) = (B\varphi_{[k]})^\ast(U),
\]
since $\varphi_{[-k]} = \varphi_{[k]}^{-1}$.  
To compute $(B\varphi_{[k]})^\ast$, note that we may write $\varphi_{[k]}$ as
\[
\varphi_{[k]} =
\begin{pmatrix}
    \mathrm{id}_{S^1} & i_{[k]} \\
    0 & \mathrm{id}_{\Z_p}
\end{pmatrix},
\]
where $i_{[k]}\colon \Z_p \to S^1$ is the map $i_{[k]}([n]) = e^{\frac{2\pi k n i}{p}}$. To compute $(Bi_{[k]})^\ast$, observe that $(Bi_{[1]})^\ast(U) = S$, and we have the following commutative diagram:
\[
\xymatrix{
\Z_p \ar[d]_{i_{[1]}} \ar[dr]^{i_{[k]}} \\
S^1 \ar[r]_{z\mapsto z^k} & S^1
}
\]
Since the pullback along $B(z \mapsto z^k)$ maps $U \in H^\ast(BS^1;\Z_p)$ to $kU$, we obtain $(Bi_{[k]})^\ast(U) = kS$. Thus,
\[
U_{[k]} = (B\varphi_{[k]})^\ast(U) = U+(Bi_{[k]})^\ast(U) = U+kS,
\]
as desired. The lemma therefore follows from \Cref{lem: homology determines chain map}.
\end{proof}

We can now construct the $\Z_p$-equivariant lattice chain $\mathcal{C}^\ast_{S^1 \times \Z_p}(\Gamma,\mathfrak{s})$ as follows. Recall that, in \Cref{subsec: Zp lattice homotopy}, we considered the $\Z_p$-equivariant $\mathrm{Spin}^c$ computation sequence
\[
\widetilde{\mathrm{sp}}_{\tilde{\s}}(x_{\s}(0)),\,
\widetilde{\mathrm{sp}}_{\tilde{\s}}(x^{\s}_{0,0}),\,
\dots,\,
\widetilde{\mathrm{sp}}_{\tilde{\s}}(x^{\s}_{0,n_0-1}),\,
\widetilde{\mathrm{sp}}_{\tilde{\s}}(x_{\s}(1)),\,
\dots
\]
and rewrote it as
\[
\mathfrak{s}_1,\mathfrak{s}_2,\mathfrak{s}_3,\dots.
\]
We then used their $\Z_p$-equivariant Dirac indices to define virtual $S^1 \times \Z_p$-representations $V_i$ and $S^1 \times \Z_p$-equivariant virtual linear maps $f_i$, each either from $V_i$ to $V_{i+1}$ or from $V_{i+1}$ to $V_i$. We also defined $W_i$ as the domain of $f_i$, as well as integers $\Delta_i$, and when $\Delta_i \neq 0$, the nonnegative integer $k_i$. Using these data, we define the $\mathcal{R}_p$-module
\[
\mathcal{C}^\ast_{S^1 \times \Z_p}(\Gamma,\mathfrak{s}) = (C_V \oplus C_W,m_\ast)
\]
as follows:
\begin{itemize}
    \item $C_V = \bigoplus_i \mathcal{V}_{V_i}$, where $\mathcal{V}_{V_i} = \mathcal{R}_p [-1-2\dim_\C V_i] = \mathcal{R}_p \bigl[-1-2\cdot \sum_{l=1}^i \Delta_l\bigr]$;
    \item $C_W = \bigoplus_i \mathcal{V}_{W_i}$, where $\mathcal{V}_{W_i} = \mathcal{R}_p [-\dim_\C W_i] = \mathcal{R}_p \bigl[-2\cdot \sum_{l=1}^i \Delta_l\bigr]$;
    \item $m_1\vert_{C_V}=0$;
    \item The image of $m_1\vert_{\mathcal{V}_{W_i}}$ is contained in $\mathcal{V}_{V_i} \oplus \mathcal{V}_{V_{i+1}}$, and its value, as an element of $\mathcal{V}_{V_i} \oplus \mathcal{V}_{V_{i+1}}$, is given by
    \[
    m_1\vert_{\mathcal{V}_{W_i}}(1) = \begin{cases}
        (1,1) & \text{if}\quad \Delta_i = 0, \\
        (1,(U+k_i S)^{\Delta_i}) & \text{if}\quad \Delta_i > 0, \\
        ((U+k_i S)^{-\Delta_i},1) & \text{if}\quad \Delta_i < 0.
    \end{cases}
    \]
    \item All other $A_\infty$ operations are inherited from $\mathcal{R}_p$.
\end{itemize}
Then we have the following theorem.

\begin{thm} \label{thm: lattice chain model is correct}
    For each $[k]\in \Z_p$, consider the $\Z_p$-algebra automorphism $\psi_{[k]}$ of $\mathcal{R}_p$ that fixes $R$ and $S$ and maps $U$ to $U+kS$.\footnote{This is precisely the pullback map $(B\varphi_{[k]})^\ast$, where $\varphi_{[k]}$ is the group automorphism of $S^1 \times \Z_p$ defined in the proof of \Cref{lem: inclusion map pullbacks}.} Then, under the identification $C^\ast(B(S^1 \times \Z_p);\Z_p)\simeq \mathcal{R}_p$ of $A_\infty$-modules up to quasi-isomorphism, the chain complex $\mathcal{C}^\ast_{S^1 \times \Z_p}(\Gamma,\tilde{\s})$ is quasi-isomorphic to $C^\ast_{S^1 \times \Z_p}(SWF_{S^1 \times \Z_p}(-Y,\tilde{\s}))$ as an $\mathcal{R}_p$-module, after a degree shift and a reparametrization of $\mathcal{R}_p$ via $\psi_{[k]}$ for some $[k]\in \Z_p$.
\end{thm}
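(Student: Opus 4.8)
The plan is to identify the $(S^1 \times \Z_p)$-equivariant CW spectrum $\mathcal{H}_{S^1 \times \Z_p}(\Gamma,\tilde{\s})$ from \Cref{defn: S1xZp lattice spectrum} with its equivariant cochain model, and then invoke the comparison map $\mathcal{T}_{S^1 \times \Z_p}$ of \Cref{thm: eqv lattice comparison map} to pass to $SWF_{S^1 \times \Z_p}(-Y,\tilde{\s})$. Concretely, first I would recall that $\mathcal{H}_{S^1 \times \Z_p}(\Gamma,\tilde{\s})$ is built as a (finite) homotopy colimit of the diagram of one-point compactifications $V_i^+ \leftarrow W_i^+ \to V_{i+1}^+$ displayed in \Cref{rem: lattice homotopy type as hocolim}, where the maps $W_i^+ \to V_i^+$ and $W_i^+ \to V_{i+1}^+$ are either identities or the stable inclusions $U_{[k_i]}^{\pm \Delta_i}$. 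Applying the contravariant functor $\widetilde{C}^\ast_{S^1 \times \Z_p}(-;\Z_p)$ — which sends homotopy colimits of spectra to homotopy limits of $\mathcal{R}_p$-modules — one obtains that $\widetilde{C}^\ast_{S^1 \times \Z_p}(\mathcal{H}_{S^1 \times \Z_p}(\Gamma,\tilde{\s});\Z_p)$ is quasi-isomorphic to the totalization (iterated mapping cone) of the dual cosimplicial-type diagram. The key input identifying the pieces is \Cref{lem: inclusion map pullbacks}: $\widetilde{C}^\ast_{S^1 \times \Z_p}((\C^\mathbf{n})^+;\Z_p) \simeq \mathcal{R}_p[-|\mathbf{n}|]$ and the pullback of $U_{[k]}$ is homotopic to multiplication by $U+kS$. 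Matching the degree shifts $-2\dim_\C V_i$, $-\dim_\C W_i$, the extra $[-1]$ on the $V$-summands coming from the reduced cone construction in the wedge-of-cylinders model, and the differential formulas $m_1|_{\mathcal{V}_{W_i}}(1) = (1,1)$, $(1,(U+k_iS)^{\Delta_i})$, or $((U+k_iS)^{-\Delta_i},1)$ exactly reproduces the complex $\mathcal{C}^\ast_{S^1 \times \Z_p}(\Gamma,\tilde{\s})$ defined just above the theorem. This would establish that $\mathcal{C}^\ast_{S^1 \times \Z_p}(\Gamma,\tilde{\s}) \simeq \widetilde{C}^\ast_{S^1 \times \Z_p}(\mathcal{H}_{S^1 \times \Z_p}(\Gamma,\tilde{\s});\Z_p)$ as $\mathcal{R}_p$-modules.

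Second, I would feed the comparison map $\mathcal{T}_{S^1 \times \Z_p}$ into this. By \Cref{thm: eqv lattice comparison map}, $(\mathrm{ind}^t_{\Z_p}\dirac_{W_\Gamma,\mathfrak{s}_1})^+ \wedge \mathcal{H}_{S^1 \times \Z_p}(\Gamma,\tilde{\s}) \to SWF_{S^1 \times \Z_p}(-\partial W_\Gamma,\tilde{\s})$ is an $S^1$-equivariant homotopy equivalence, but possibly not an $(S^1 \times \Z_p)$-equivariant one — this is the reason for the reparametrization $\psi_{[k]}$ in the statement. The precise source of this ambiguity is exactly \Cref{lem: eqv SWF of different eqv lifts}: the $\Z_p$-equivariant lift $\tilde{\s}$ is only well-defined up to $k$-twisting, and twisting changes the $\Z_p$-action on $SWF_{S^1 \times \Z_p}(Y,\tilde{\s})$ by the automorphism $(x,[n])\mapsto(e^{2\pi ikn/p}x,[n]) = \varphi_{[k]}$, whose induced pullback on $C^\ast(B(S^1 \times \Z_p);\Z_p)$ is precisely $\psi_{[k]}$ by the computation in the proof of \Cref{lem: inclusion map pullbacks}. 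So I would argue: since $\mathcal{T}_{S^1 \times \Z_p}$ is at least $S^1$-equivariant and both sides carry honest $(S^1 \times \Z_p)$-actions, the map on $(S^1 \times \Z_p)$-equivariant cochains is $\mathcal{R}_p$-linear up to a reparametrization of $\mathcal{R}_p$ on one side; tracking which twist is used in building the computation sequence (recall we normalized $n_{\mathrm{eqv}}(\widetilde{\mathrm{sp}}_{\tilde{\s}}(x_{\s}(0))) = [0]$, which pins down the twist up to the cyclic ambiguity) shows that the reparametrization is exactly by some $\psi_{[k]}$. Smashing with $(\mathrm{ind}^t_{\Z_p}\dirac_{W_\Gamma,\mathfrak{s}_1})^+$ only contributes the overall degree shift by $-2\alpha_\C(\mathrm{ind}^t_{\Z_p}\dirac_{W_\Gamma,\mathfrak{s}_1})$ (using \Cref{lem: inclusion map pullbacks} again, since this index is a genuine representation up to stabilization and equivariantly a sum of $\C_{[l]}$'s), which accounts for the "degree shift" clause.

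Third, to upgrade the $S^1$-equivalence of $\mathcal{T}_{S^1 \times \Z_p}$ to a statement about $(S^1 \times \Z_p)$-equivariant cochains, I would observe that $\widetilde{C}^\ast_{S^1 \times \Z_p}(-;\Z_p)$ of an $S^1$-equivariant homotopy equivalence between two spaces of type $S^1 \times \Z_p$-SWF need not be a quasi-isomorphism in general, but here $\mathcal{T}_{S^1 \times \Z_p}$ has more structure: it is built (in the proof of \Cref{thm: eqv lattice comparison map}) out of the $(S^1 \times \Z_p)$-equivariant Bauer--Furuta maps $BF_{S^1 \times \Z_p}(W_\Gamma,\mathfrak{s}_i)$ and $(S^1 \times \Z_p)$-equivariant homotopies between consecutive ones, so it \emph{is} $(S^1 \times \Z_p)$-equivariant after the reparametrization, and an $(S^1 \times \Z_p)$-equivariant map that is an $S^1$-homotopy equivalence of spaces of type $S^1 \times \Z_p$-SWF is automatically an $(S^1 \times \Z_p)$-homotopy equivalence — this is standard since the only isotropy beyond $S^1$ and $S^1 \times \Z_p$ consists of the finite groups $G_k$ of \Cref{lem: stabilizers in S1xZp}, and on fixed points of those the map is already a homotopy equivalence by the $S^1$-statement applied to $G_k$-fixed subspectra, which reduce to $S^1$-spheres. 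Hence $\mathcal{T}_{S^1 \times \Z_p}^\ast$ is an $\mathcal{R}_p$-module quasi-isomorphism after reparametrizing by $\psi_{[k]}$, and composing with the identification of step one finishes the proof.

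\textbf{Main obstacle.} The hard part will be Step 3, i.e.\ cleanly justifying that the $(S^1 \times \Z_p)$-equivariant cochain functor applied to $\mathcal{T}_{S^1 \times \Z_p}$ yields a quasi-isomorphism of $\mathcal{R}_p$-modules rather than merely an $S^1$-equivariant statement, and correctly pinning down \emph{which} twist $[k]$ (equivalently which reparametrization $\psi_{[k]}$) appears — this requires carefully threading through how the normalization $n_{\mathrm{eqv}}(\widetilde{\mathrm{sp}}_{\tilde{\s}}(x_{\s}(0))) = [0]$ interacts with the homotopies in the construction of $\mathcal{T}_{S^1 \times \Z_p}$ from \Cref{prop: BF gluing}, \Cref{lem: weight change at central node}, and \Cref{lem: BF is homotopic when weight is same}, and using \Cref{lem: eqv SWF of different eqv lifts} to see that any discrepancy between the chosen lift and the ``canonical'' one is absorbed into exactly one $\psi_{[k]}$. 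The bookkeeping of the three degree shifts (the $-1$ from reduced cones, the $-2\dim_\C V_i$ from the Thom identification, and the overall $-2\alpha_\C(\mathrm{ind}^t_{\Z_p}\dirac_{W_\Gamma,\mathfrak{s}_1})$) is routine but needs care to state correctly.
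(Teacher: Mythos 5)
Your overall route is the same as the paper's: the paper's proof is exactly your Steps 1--2, namely the chain-level Mayer--Vietoris decomposition of the glued cylinders-and-spheres model (\Cref{lem: chain level Mayer Vietoris}) together with the pullback computation of \Cref{lem: inclusion map pullbacks} to identify $\widetilde{C}^\ast_{S^1\times\Z_p}(\mathcal{H}_{S^1\times\Z_p}(\Gamma,\tilde{\s});\Z_p)$ with $\mathcal{C}^\ast_{S^1\times\Z_p}(\Gamma,\tilde{\s})$, followed by an appeal to \Cref{thm: eqv lattice comparison map}; your bookkeeping of shifts and of the twist $\psi_{[k]}$ via \Cref{lem: eqv SWF of different eqv lifts} matches the intended argument.

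One caution about your Step 3: the claim that an $(S^1\times\Z_p)$-equivariant map of spaces of type SWF which is an $S^1$-equivariant homotopy equivalence is ``automatically an $(S^1\times\Z_p)$-homotopy equivalence'' is false, and your justification does not work --- the $S^1$-statement gives no control over the $G_k$-fixed-point maps (compare the computation in \Cref{subsec: sanity check computation}, where the $G_k$-fixed loci carry genuinely new information; if your claim held, the equivariant theory would be determined by the non-equivariant one). Fortunately this stronger claim is not needed for the theorem: since $\mathcal{T}_{S^1\times\Z_p}$ is $(S^1\times\Z_p)$-equivariant (after the reparametrization) and is a non-equivariant homotopy equivalence, the induced map of Borel constructions is a map of fibrations over $B(S^1\times\Z_p)$ that is an equivalence on fibers, hence a weak equivalence of total spaces, and therefore induces a quasi-isomorphism of $\mathcal{R}_p$-modules on equivariant cochains. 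With that substitution your argument is complete and agrees with the paper's.
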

\begin{proof}
    By \Cref{lem: chain level Mayer Vietoris,lem: inclusion map pullbacks}, there exists a quasi-isomorphism
    \[
    C^\ast_{S^1 \times \Z_p}(\mathcal{H}_{S^1 \times \Z_p}(\Gamma,\tilde{\s});\Z_p) \longrightarrow \mathcal{C}^\ast_{S^1 \times \Z_p}(\Gamma,\tilde{\s}),
    \]
    since $\mathcal{H}_{S^1 \times \Z_p}(\Gamma,\tilde{\s})$ is constructed by gluing cylinders to spheres. The theorem then follows from \Cref{thm: eqv lattice comparison map}.
\end{proof}

\begin{rem} \label{rem: S1xZp cochain for different eqv lifts}
    Let $\mathfrak{s}$ be a $\mathrm{Spin}^c$ structure on $Y$. As shown in \Cref{lem: eqv SWF of different eqv lifts}, for any two $\Z_p$-equivariant lifts $\tilde{\s},\tilde{\s}' \in \mathrm{Spin}^c_{\Z_p}(Y)$ of $\mathfrak{s}$, there exists some $[k]\in \Z_p$ such that their $S^1 \times \Z_p$-equivariant Seiberg--Witten Floer spectra are related by the automorphism $\varphi_{[k]}$ defined in the proof of \Cref{lem: inclusion map pullbacks}. Consequently, their cochain complexes
    \[
    \widetilde{C}^\ast_{S^1 \times \Z_p}(SWF_{S^1 \times \Z_p}(Y,\tilde{\s});\Z_p) 
    \qquad \text{ and } \qquad 
    \widetilde{C}^\ast_{S^1 \times \Z_p}(SWF_{S^1 \times \Z_p}(Y,\tilde{\s}');\Z_p)
    \]
    are related, as $\mathcal{R}_p$-modules, by the automorphism $\psi_{[k]}$ of $\mathcal{R}_p$.

    Thus, once we compute $\widetilde{C}^\ast_{S^1 \times \Z_p}(SWF_{S^1 \times \Z_p}(Y,\tilde{\s});\Z_p)$ for one $\Z_p$-equivariant lift $\tilde{\s}$ of $\mathfrak{s}$, we can obtain $\widetilde{C}^\ast_{S^1 \times \Z_p}(SWF_{S^1 \times \Z_p}(Y,\tilde{\s}');\Z_p)$ for any other lift $\tilde{\s}'$ of $\mathfrak{s}$ simply by replacing every occurrence of $U$ with $U+kS$.
\end{rem}

For simplicity, from now on we use the following notation: given an element $\mathbf{n} = \sum_{[k]\in \Z_p} n_{[k]} \cdot [k]$, we define
\[
U^\mathbf{n} := \prod_{[k]\in \Z_p} (U+kS)^{n_{[k]}}.
\]

\begin{rem}
    In the proof of \Cref{thm: lattice chain model is correct}, we apply \Cref{lem: chain level Mayer Vietoris} to glue cylinders to spheres. For clarity, we present here a simplified case in a more explicit form. Suppose that we have three complex virtual $(S^1 \times \Z_p)$-representations
    \[
    V_1 = \C^{\mathbf{m}+\mathbf{n}_1},\qquad V_2 = \C^{\mathbf{m}+\mathbf{n}_2},\qquad W = \C^\mathbf{m},
    \]
    where $\mathbf{n}_1,\mathbf{n}_2 \ge 0$. Consider the inclusions
    \[
    i_1\colon W \hookrightarrow V_1, \qquad i_2\colon W \hookrightarrow V_2.
    \]
    Define
    \[
    X = \bigl(V_1^+ \vee V_2^+ \vee (W^+ \wedge [0,1])\bigr)\big/\sim,
    \]
    where $(w,0)\sim i_1^+(w)$ and $(w,1)\sim i_2^+(w)$ for $w\in W^+$.
    By \Cref{lem: inclusion map pullbacks}, under the identifications
    \[
    \tilde{H}^\ast_{S^1 \times \Z_p}(V_1^+;\Z_p) \cong \tilde{H}^\ast_{S^1 \times \Z_p}(V_2^+;\Z_p) \cong \tilde{H}^\ast_{S^1 \times \Z_p}(W^+;\Z_p) \cong \mathcal{R}_p
    \]
    (up to suitable degree shifts), the pullback maps along $i_1$ and $i_2$ are given by
    \[
    (i_1^+)^\ast = U^{\mathbf{n}_1}, \qquad (i_2^+)^\ast = U^{\mathbf{n}_2}.
    \]
    Hence, applying \Cref{lem: chain level Mayer Vietoris} shows that $C^\ast_{S^1 \times \Z_p}(X;\Z_p)$ is quasi-isomorphic to the following module:
    \[
    \xymatrix{
    & \mathcal{R}_p & & \mathcal{R}_p \\
    \mathcal{R}_p \ar[ur]^{U^{\mathbf{n}_1}} & & \mathcal{R}_p \ar[ul]_{\mathrm{id}}\ar[ur]^{\mathrm{id}} & & \mathcal{R}_p \ar[ul]_{U^{\mathbf{n}_2}}
    }
    \]
    Observe that whenever we encounter a sequence of differentials of the form
    \[
    x \xrightarrow{\;\;a\;\;} y \xleftarrow{\;\;1\;\;} z \xrightarrow{\;\;b\;\;} w,
    \]
    we can quotient out the acyclic submodule $(z \xrightarrow{1} y + b w)$ to replace it with $x \xrightarrow{ab} w$. Applying this simplification yields the following module:
    \[
    \xymatrix{
    & \mathcal{R}_p \\
    \mathcal{R}_p \ar[ur]^{U^{\mathbf{n}_1}} & & \mathcal{R}_p \ar[ul]_{U^{\mathbf{n}_2}},
    }
    \]
    which is precisely the part of $\mathcal{C}^\ast_{S^1 \times \Z_p}(\Gamma,\tilde{\s})$ that we wanted $C^\ast_{S^1 \times \Z_p}(X;\Z_p)$ to correspond to.
\end{rem}

We now describe how to read off $\mathcal{C}^\ast_{S^1 \times \Z_p}(\Gamma,\tilde{\s})$, up to quasi-isomorphism, from $\Z_p$-labelled planar graded roots. Given a $\Z_p$-labelled planar graded root $\mathcal{R} = (R,\lambda_V,\lambda_A)$, we define the $\mathcal{R}_p$-module 
\[
\mathcal{C}^\ast_{S^1 \times \Z_p}(\mathcal{R}) = (C_V \oplus C_A,m_\ast),
\]
where $C_V, C_A$, and $m_\ast$ are defined as follows:
\begin{itemize}
    \item $C_V = \bigoplus_{\text{leaf }v} \mathcal{V}_v$, where $\mathcal{V}_v = \mathcal{R}_p [-1+2|\lambda_V(v)|]$;
    \item $C_A = \bigoplus_{\text{simple angle }(v,v')} \mathcal{V}_{(v,v')}$, where $\mathcal{V}_{(v,v')} = \mathcal{R}_p[2|\lambda_V(v)+\lambda_A(v,v')|]$;
    \item $m_1\vert_{C_V} = 0$;
    \item $m_1\vert_{\mathcal{V}_{(v,v')}}$ is contained in $\mathcal{V}_v \oplus \mathcal{V}_{v'}$, and
    \[
    m_1\vert_{\mathcal{V}_{(v,v')}}(1) = \bigl(U^{\lambda_A(v,v')},\, U^{\lambda_A(v,v')+\lambda_V(v)-\lambda_V(v')}\bigr) \in \mathcal{V}_v \oplus \mathcal{V}_{v'};
    \]
    \item all other $A_\infty$ operations are inherited from $\mathcal{R}_p$.
\end{itemize}
Then the following lemma is immediate.

\begin{lem} \label{lem: lattice chain from label graded root}
    Under the identification $C^\ast(B(S^1 \times \Z_p);\Z_p)\simeq \mathcal{R}_p$ of $A_\infty$-algebras up to quasi-isomorphism, the complex $\mathcal{C}^\ast_{S^1 \times \Z_p}(\mathcal{R}_{\Gamma,\tilde{\s}})$ is quasi-isomorphic to $C^\ast_{S^1 \times \Z_p}(SWF_{S^1 \times \Z_p}(-Y,\tilde{\s}))$, up to a degree shift and a reparametrization of $\mathcal{R}_p$ via $\psi_{[k]}$ for some $[k]\in \Z_p$.
\end{lem}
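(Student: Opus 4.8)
The plan is to deduce \Cref{lem: lattice chain from label graded root} directly from \Cref{thm: lattice chain model is correct} by comparing the two combinatorial models, namely $\mathcal{C}^\ast_{S^1 \times \Z_p}(\Gamma,\tilde{\s})$ (built from the $\Z_p$-equivariant $\mathrm{Spin}^c$ computation sequence $\mathfrak{s}_1,\mathfrak{s}_2,\dots$) and $\mathcal{C}^\ast_{S^1 \times \Z_p}(\mathcal{R}_{\Gamma,\tilde{\s}})$ (built from the $\Z_p$-labelled planar graded root). The key point is that both are, by construction, the chain-level realizations of the spaces $\mathcal{H}_{S^1 \times \Z_p}(\Gamma,\tilde{\s})$ and $\mathcal{H}(\mathcal{R}_{\Gamma,\tilde{\s}})$ respectively, which are already known to be $(S^1 \times \Z_p)$-equivariantly homotopy equivalent (up to a desuspension and a reparametrization) by the proof of \Cref{lem: label gr root gives eqv lattice sp}. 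So the honest content of the lemma is that the explicit $\mathcal{R}_p$-module $\mathcal{C}^\ast_{S^1 \times \Z_p}(\mathcal{R}_{\Gamma,\tilde{\s}})$ is quasi-isomorphic to $\mathcal{C}^\ast_{S^1 \times \Z_p}(\Gamma,\tilde{\s})$, and then \Cref{thm: lattice chain model is correct} transports the conclusion to $C^\ast_{S^1 \times \Z_p}(SWF_{S^1 \times \Z_p}(-Y,\tilde{\s}))$.

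First I would set up the dictionary between the computation-sequence data and the graded-root data, exactly as in \Cref{subsec: eqv spin c comp seq} and \Cref{subsec: Zp lattice homotopy}: the weights $\chi_{\tilde{\s}}(x_{\s}(i))$ give the $y$-coordinates of the planar graded root $R_{\mathbf{x}(\Gamma,\tilde{\s})}$, the leaves correspond to the local minima $I$, and the simple angles correspond to consecutive pairs in $I$, with the $\Z_p$-labels $\lambda_V$ and $\lambda_A$ recording the partial sums $\sum_{s}(n_{s+1}-n_s)\tau_p^s$ of equivariance numbers, which by \Cref{lem: eqv num increase for central node} and \Cref{lem: eqv num is fixed in comp seq} jump by exactly $\tau_p^{k_i}$ precisely when $\Delta_i\ne 0$ and the relevant node is $v_c$. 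Then, just as in the final \Cref{rem} of \Cref{subsec: Zp lattice chain} (the simplified Mayer--Vietoris picture with $x \xrightarrow{a} y \xleftarrow{1} z \xrightarrow{b} w$ collapsing to $x \xrightarrow{ab} w$), I would observe that the long zig-zag of $\mathcal{R}_p$'s and maps $U^{\Delta_i}_{[k_i]}$ in $\mathcal{C}^\ast_{S^1 \times \Z_p}(\Gamma,\tilde{\s})$ indexed by a stretch of the computation sequence lying between two consecutive local minima of $(\chi_{\tilde{\s}}(x_{\s}(i)))_i$ is acyclic-quotient-equivalent to a single generator mapping by the product of the intermediate $U^{\Delta_i}_{[k_i]}$; by \Cref{lem: inclusion map pullbacks} that product is exactly $U^{\lambda_A(v,v')}$ (resp.\ $U^{\lambda_A(v,v')+\lambda_V(v)-\lambda_V(v')}$) on the two sides, matching the definition of $m_1|_{\mathcal{V}_{(v,v')}}$ in $\mathcal{C}^\ast_{S^1 \times \Z_p}(\mathcal{R}_{\Gamma,\tilde{\s}})$. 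The degree shifts match because $\dim_\C V_i = \sum_{l=1}^i \Delta_l$ equals $-|\lambda_V(v)|$ at a leaf $v$ up to the overall normalization by $\lambda_V(v_0)$, exactly the bookkeeping appearing in the proof of \Cref{lem: label gr root gives eqv lattice sp}.

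The step I expect to be the main obstacle is the clean handling of the reparametrization ambiguity and the degree shift. Two subtleties must be checked carefully: (i) the $\Z_p$-labels are only well-defined up to an overall cyclic shift (and an overall additive constant on leaf labels), which is precisely the freedom absorbed by the automorphism $\psi_{[k]}$ in the statement, and one has to match this against the analogous ambiguity already present in \Cref{thm: lattice chain model is correct} --- this should be purely formal once one notes that $\psi_{[k]}$ is ring-automorphic and commutes with all the collapse operations; (ii) the leftmost-leaf normalization $\C^{-\lambda_V(v_0)}$ appearing in \Cref{lem: label gr root gives eqv lattice sp} must be tracked through to give the ``degree shift'' in the conclusion, so that $\mathcal{C}^\ast_{S^1\times\Z_p}(\mathcal{R}_{\Gamma,\tilde{\s}})$ is quasi-isomorphic to $\mathcal{C}^\ast_{S^1\times\Z_p}(\Gamma,\tilde{\s})$ only after shifting by $-2|\lambda_V(v_0)|$ (or the appropriate value, depending on conventions). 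Modulo this bookkeeping, the lemma is ``immediate'' in the sense that no new geometric input is needed: everything reduces to the iterated acyclic-subcomplex cancellation already illustrated in the preceding remark, combined with \Cref{lem: chain level Mayer Vietoris}, \Cref{lem: inclusion map pullbacks}, and \Cref{thm: lattice chain model is correct}.
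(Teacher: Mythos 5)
Your proposal is correct and follows essentially the same route as the paper, which simply cites \Cref{lem: label gr root gives eqv lattice sp} together with \Cref{thm: lattice chain model is correct}; your extra elaboration of the acyclic-quotient cancellations and the $\psi_{[k]}$/degree-shift bookkeeping is exactly the content those two references are meant to supply.
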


\begin{proof}
    This follows directly from \Cref{lem: label gr root gives eqv lattice sp} and \Cref{thm: lattice chain model is correct}.
\end{proof}

\begin{rem}
    Note that \Cref{thm: lattice chain model is correct} can also be proven directly from the following two facts:
    \begin{itemize}
        \item $\mathcal{H}_{S^1 \times \Z_2}(\Gamma,\tilde{\s})$ is a homotopy colimit, as observed in \Cref{rem: lattice homotopy type as hocolim};
        \item The singular cochain complex functor $C^\ast(-;k)\colon \mathrm{Top}\longrightarrow E_\infty\mathrm{Alg}_{k}$ preserves homotopy colimits for any commutative coefficient ring $k$.
    \end{itemize}
    We nevertheless included this section because it offers a more explicit explanation.
\end{rem}

\subsection{A sanity check: an explicit computation for $\Sigma(2,3,19)$} \label{subsec: sanity check computation}

Consider the Seifert fibered homology sphere $Y=\Sigma(3,5,19)$. Since it has only one $\mathrm{Spin}^c$ structure, denoted by $\mathfrak{s}_0$, we have $\mathfrak{s} = \mathfrak{s}^{\mathrm{can}}_Y$, i.e., the canonical $\mathrm{Spin}^c$ structure on $Y$ is $\mathfrak{s}_0$. Moreover, because $|H_1(Y;\Z)|=1$ is not divisible by any prime, $\mathfrak{s}_0$ has exactly two equivariant lifts (we denote one of them by $\tilde{\mathfrak{s}}_0$). Using our method, we can compute its $S^1 \times \Z_p$-equivariant lattice model $\mathcal{H}_{S^1 \times \Z_p}(\Gamma,\tilde{\mathfrak{s}}_0)$. The star-shaped negative definite almost rational plumbing graph $\Gamma$ satisfying $Y\cong \partial W_\Gamma$ is given as follows:

\[
\begin{tikzpicture}[xscale=1.5, yscale=1, baseline={(0,-0.1)}]
    \node at (-0.1, 0.3) {$-1$};
    \node at (-1, 0.3) {$-3$};
    
    \node at (0, -1.3) {$-2$};

    \node at (1, 0.3) {$-7$};
    \node at (2, 0.3) {$-2$};
    \node at (3, 0.3) {$-2$};
    
    % Nodes
    \node at (0, 0) (A0) {$\bullet$};
    \node at (0, -1) (A1) {$\bullet$};

    \node at (-1, 0) (B1) {$\bullet$};
    
    \node at (1, 0) (C1) {$\bullet$};
    \node at (2, 0) (C2) {$\bullet$};
    \node at (3, 0) (C3) {$\bullet$};
    
    % Edges
    \draw (B1) -- (A0) -- (A1);
    \draw (A0) -- (C1) -- (C2) -- (C3);
\end{tikzpicture}
\]

In order to compute the $\Z_p$-labelled planar graded root by following the procedure described in \Cref{def: labelled gr root for Y and s}, it suffices to compute the delta-sequence for $\mathfrak{s}_0$. Since $\mathfrak{s}_0$ is the canonical $\mathrm{Spin}^c$ structure of $Y$, its delta-sequence can be computed very easily, as described in \Cref{rem: Delta seq for canonical spin c}. Since $N_Y = 13$, we have $\Delta_{Y,\mathfrak{s}_0}(i)\ge 0$ for all $i>13$, so we only need the values of $\Delta_{Y,\mathfrak{s}_0}(i)$ when they are nonzero. Thus it suffices to list their nonzero values up to $i=13$; these values are given below.

\begin{center}
\begin{tabular}{ |p{0.5cm}|p{1.2cm}||p{0.5cm}|p{1.2cm}||p{0.5cm}|p{1.2cm}||p{0.5cm}|p{1.2cm}||p{0.5cm}|p{1.2cm}||p{0.5cm}|p{1.2cm}|  }
 \hline
 $i$& $\Delta_{Y,\mathfrak{s}_0}(i)$ &$i$&$\Delta_{Y,\mathfrak{s}_0}$ &$i$&$\Delta_{Y,\mathfrak{s}_0}$ &$i$&$\Delta_{Y,\mathfrak{s}_0}$ &$i$&$\Delta_{Y,\mathfrak{s}_0}$ &$i$&$\Delta_{Y,\mathfrak{s}_0}$\\
 \hline
0 & 1 & 1 & $-1$ & 6 & 1 & 7 & $-1$ & 12 & 1 & 13 & $-1$ \\
 \hline
\end{tabular}
\end{center}

Now, using these values, we can construct the $\Z_p$-labelled planar graded root $\mathcal{R}_{\Gamma,\mathfrak{s}}$. First, the unlabeled planar graded root is as follows; its leaves are denoted by $v_i$ for $i \in \Z \cap [-5,5]$. Note that we draw planar graded roots upside down.

\begin{center}
\begin{tikzpicture}[scale=.8]
    \node at (-1.5,0.3) {$v_{-2}$};
    \draw [fill=black] (-1.5,0) circle (1.5pt);
    \node at (-0.5,0.3) {$v_{-1}$};
    \draw [fill=black] (-0.5,0) circle (1.5pt);
    \node at (0.5,0.3) {$v_1$};
    \draw [fill=black] (0.5,0) circle (1.5pt);
    \node at (1.5,0.3) {$v_2$};
    \draw [fill=black] (1.5,0) circle (1.5pt);
    \draw [fill=black] (0,-1) circle (1.5pt);
    \draw [fill=black] (0,-2) circle (1.5pt);
    \draw [fill=black] (0,-3) circle (1.5pt);
    \draw [fill=black] (0,-3.3) node {$\vdots$};
    \draw [thick] (0,-1)--(0,-2)--(0,-3);
    \draw [thick] (-0.5,0)--(0,-1)--(0.5,0);
    \draw [thick] (-1.5,0)--(0,-1)--(1.5,0);
\end{tikzpicture}
\end{center}Then the leaf labels and angle labels are given as follows. Note that $[0]$ and $0$ are distinct.

\begin{center}
\begin{tabular}{ |p{1cm}|p{0.5cm}|p{5cm}||p{2.2cm}|p{0.5cm}|p{0.5cm}|  }
 \hline
 leaves & $i$ & $\lambda_V$ & simple angles & $i$ & $\lambda_A$ \\
 \hline
$v_{-2}$ & 0 & 0 & $(v_{-2},v_{-1})$ & 1 & $[0]$ \\
$v_{-1}$ & 2 & $[0]-[1]$ & $(v_{-1},v_{1})$ & 7 & $[6]$ \\
$v_{1}$ & 8 & $[0]-[1]+[6]-[7]$ & $(v_{1},v_{2})$ & 13 & $[12]$ \\
$v_{2}$ & 14 & $[0]-[1]+[6]-[7]+[12]-[13]$ &  & & \\
 \hline
\end{tabular}
\end{center}
From this data, one can explicitly construct $\mathcal{H}(\Gamma,\tilde{\mathfrak{s}}_0)$ as follows. Consider the elements of $\Z[\Z_p]$:
\[
\begin{split}
    \mathbf{n}_{-2} &= [0]+[6]+[12],\quad \mathbf{n}_{-1} = [1]+[6]+[12],\quad \mathbf{n}_1 = [1]+[7]+[12],\quad \mathbf{n}_2 =[1]+[7]+[13], \\
    \mathbf{m}_{-1} &= [6]+[12],\quad \mathbf{m}_0 = [1]+[12],\quad \mathbf{m}_1 = [1]+[7].
\end{split}
\]
Then, up to suspension by a virtual representation sphere and reparametrization of $S^1 \times \Z_p$, we have
\[
\mathcal{H}_{S^1 \times \Z_p}(\Gamma,\tilde{\mathfrak{s}}_0) \simeq \mathrm{hocolim}\left[ \vcenter{ \xymatrix{
(\C^{\mathbf{n}_{-2}})^+ & (\C^{\mathbf{n}_{-1}})^+ &(\C^{\mathbf{n}_1})^+ & (\C^{\mathbf{n}_2})^+ \\ 
& (\C^{\mathbf{m}_{-1}})^+ \ar[lu]\ar[u] & (\C^{\mathbf{m}_0})^+ \ar[lu]\ar[u] & (\C^{\mathbf{m}_1})^+ \ar[lu]\ar[u]
} } \right],
\]where the arrows are the pointed maps induced by the canonical inclusions of $(S^1 \times \Z_p)$-representations. Observe that when $p=2$, the group ring elements $\mathbf{n}_i$ and $\mathbf{m}_j$ become
\[
\mathbf{n}_{-2} = 3[0],\quad \mathbf{n}_{-1} = 2[0]+[1],\quad \mathbf{n}_1 = [0]+2[1],\quad \mathbf{n}_2 = 3[1],\quad \mathbf{m}_{-1} = 2[0],\quad \mathbf{m}_0 = [0]+[1],\quad \mathbf{m}_1 = 2[1].
\]
Hence, taking the fixed point locus with respect to the action of the generator $\tau$ of $\Z_2 = \{0\}\times \Z_2 \subset S^1 \times \Z_2$, we obtain
\[
\mathcal{H}_{S^1 \times \Z_2}(\Gamma,\tilde{\mathfrak{s}}_0)^\tau \simeq \mathrm{hocolim}\left[ \vcenter{ \xymatrix{
(\C^3)^+ & (\C^2)^+ &(\C^1)^+ & (\C^0)^+ \\ 
& (\C^2)^+ \ar[lu]\ar[u] & (\C^1)^+ \ar[lu]\ar[u] & (\C^0)^+ \ar[lu]\ar[u]
} } \right] \simeq (\C^3)^+.
\]
Similarly, taking the fixed point locus with respect to the action of $-\tau$ (that is, $(-1)\circ \tau$, where $-1$ denotes the unique element of order two in $S^1$) yields
\[
\mathcal{H}_{S^1 \times \Z_2}(\Gamma,\tilde{\mathfrak{s}}_0)^{-\tau} \simeq \mathrm{hocolim}\left[ \vcenter{ \xymatrix{
(\C^0)^+ & (\C^1)^+ &(\C^2)^+ & (\C^3)^+ \\ 
& (\C^0)^+ \ar[lu]\ar[u] & (\C^1)^+ \ar[lu]\ar[u] & (\C^2)^+ \ar[lu]\ar[u]
} } \right] \simeq (\C^3)^+.
\]In both cases, the fixed point locus is a complex sphere spectrum. It then follows from \Cref{thm: lattice chain model is correct} together with a localization theorem for $\Z_2$-actions on finite $\Z_2$-CW complexes (see, for example, \cite[Theorem~2.1, p.~44]{may1998equivariant}) that
\[
\begin{split}
H^\ast_{\Z_2}(SWF_{S^1 \times \Z_2}(Y,\tilde{\mathfrak{s}}_0);\Z_2)\otimes_{\Z_2[\theta]} \Z_2[\theta,\theta^{-1}] &\cong H^\ast_{\Z_2}(\mathcal{H}_{S^1 \times \Z_2}(\Gamma,\tilde{\mathfrak{s}}_0)^\tau;\Z_2)\otimes_{\Z_2[\theta]} \Z_2[\theta,\theta^{-1}] \\
&\cong \Z_2[\theta,\theta^{-1}].
\end{split}
\]
In particular, $H^\ast_{\Z_2}(SWF_{S^1 \times \Z_2}(Y,\tilde{\mathfrak{s}}_0);\Z_2)$ has rank one over $\Z_2[\theta]$. Since the $\Z_2$-action on $Y$ is the deck transformation for the branched double cover of $S^3$ along the torus knot $T_{3,19}$, this agrees with \cite[Theorem~1.16]{iida2024monopoles}. Hence, we have passed a basic sanity check.

\begin{rem} \label{rem: this computation is for all eqv spin c}
    Note that this computation is carried out for some $\Z_p$-equivariant $\mathrm{Spin}^c$ structure on $Y$. However, since $Y$ is a homology sphere (which admits a unique nonequivariant $\mathrm{Spin}^c$ structure) and $|H_1(Y;\Z)| = 1$ is not divisible by any prime, it follows from \Cref{cor: twisting on seifert QHS} that any two $\Z_p$-equivariant $\mathrm{Spin}^c$ structures are related by twisting. Therefore, the computations in this subsection apply to any $\Z_p$-equivariant $\mathrm{Spin}^c$ structure on $Y$.
\end{rem}

\subsection{The chain-level $(S^1 \times \Z_p)$-local equivalence group and Fr\o yshov invariants} \label{subsec: S1xZp local equivalence group}
Recall that for any $E_\infty$-algebra $A$ over a field, the derived category $\mathcal{D}(A)$ of $A$-modules is well defined. The \emph{perfect derived category} $\mathcal{D}^{\mathrm{perf}}(A)$ is defined as the closure of $A$ itself, viewed as an $A$-module, inside $\mathcal{D}(A)$ under degree shifts, extensions, and passing to direct summands. The objects of $\mathcal{D}^{\mathrm{perf}}(A)$ are called \emph{perfect $A$-modules}.

\begin{rem} \label{rem: finite G-CW-complexes are perfect}
    Using \Cref{lem: chain level Mayer Vietoris}, it is straightforward to see that for any topological group $G$, any finite $G$-CW complex $X$, and any field $\mathbb{F}$, the equivariant cochain complex $\widetilde{C}_G(X;\mathbb{F})$ is perfect as a $C^\ast(BG;\mathbb{F})$-module.
\end{rem}

We will use the fact that 
\[
\mathcal{R}^\circ_p \simeq \Z_p[U]\otimes_{\Z_p} C^\ast(B\Z_p;\Z_p),
\]
which allows us to regard
\[
U^{-1}\mathcal{R}^\circ_p = \Z_p[U,U^{-1}]\otimes_{\Z_p} C^\ast(B\Z_p;\Z_p)
\]
as an $\mathcal{R}^\circ_p$-algebra. Note that since $\Z_p[U,U^{-1}]$ is a localization of $\Z_p[U]$ and hence flat, $U^{-1}\mathcal{R}^\circ_p$ is also flat over $\mathcal{R}^\circ_p$. We will also regard $\Z_p[U,U^{-1}]$ as an $\mathcal{R}^\circ_p$-module by discarding $C^\ast(B\Z_p;\Z_p)$ and then inverting $U$. More precisely, the following dga map gives $\Z_p[U,U^{-1}]$ the structure of an $\mathcal{R}^\circ_p$-algebra, where $\epsilon$ denotes the augmentation map:
\[
\mathcal{R}^\circ_p \simeq \Z_p[U]\otimes_{\Z_p} C^\ast(B\Z_p;\Z_p) \xrightarrow{\mathrm{id}\otimes \epsilon} \Z_p[U]\otimes_{\Z_p} \Z_p = \Z_p[U] \hookrightarrow \Z_p[U,U^{-1}].
\]

\begin{defn}
    An $\mathcal{R}^\circ_p$-module $M$ is said to be of \emph{weak SWF-type} if 
    \[
    M\otimes_{\mathcal{R}^\circ_p} \Z_p[U,U^{-1}] \simeq \Z_p[U,U^{-1}][n]
    \]
    as an ($A_\infty$) $\Z_p[U,U^{-1}]$-module for some $n\in\Z$.  

    Given two $\mathcal{R}^\circ_p$-modules $M,N$ of weak SWF-type, an $\mathcal{R}^\circ_p$-module map $f\colon M \to N$ is called \emph{local} if
    \[
    f\otimes \mathrm{id}\colon 
    M\otimes_{\mathcal{R}^\circ_p} \Z_p[U,U^{-1}] \longrightarrow 
    N\otimes_{\mathcal{R}^\circ_p} \Z_p[U,U^{-1}]
    \]
    is a quasi-isomorphism of (dg) $\Z_p[U,U^{-1}]$-modules.  

    Finally, two $\mathcal{R}^\circ_p$-modules $M,N$ of weak SWF-type are said to be \emph{weakly locally equivalent} if there exist local maps $M \to N[n]$ and $N \to M[m]$ for some integers $m,n$.
\end{defn}

Note that $\mathcal{R}_p$, regarded as an $\mathcal{R}^\circ_p$-module, is of weak SWF-type.

\begin{defn}
    An $\mathcal{R}^\circ_p$-module $M$ of weak SWF-type is said to be of \emph{SWF-type} if it is perfect and weakly locally equivalent to $\mathcal{R}^\circ_p$. Two $\mathcal{R}^\circ_p$-modules $M,N$ of SWF-type are \emph{locally equivalent} if there exist local maps $M \to N$ and $N \to M$. 
\end{defn}

\begin{lem} \label{lem: S1xZp chain Z-graded loc eqv gp is abelian}
    The following statements hold.
    \begin{enumerate}
        \item For any $\mathcal{R}^\circ_p$-modules $M,N$ of SWF-type, the tensor product $M\otimes_{\mathcal{R}^\circ_p} N$ is also of SWF-type and is quasi-isomorphic to $N\otimes_{\mathcal{R}^\circ_p} M$.
        \item Given an $\mathcal{R}^\circ_p$-module $M$ of SWF-type, its dual module $M^\vee$ (that is, the module such that the functor $M^\vee \otimes -$ is corepresented by $M$), which exists and is perfect by the perfectness of $M$ \cite[Proposition~7.2.4.4]{Lurie_HA}, is also of SWF-type. Moreover, $M\otimes_{\mathcal{R}^\circ_p} M^\vee$ is locally equivalent to $\mathcal{R}^\circ_p$.
    \end{enumerate}
\end{lem}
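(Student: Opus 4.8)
\textbf{Proof proposal for \Cref{lem: S1xZp chain Z-graded loc eqv gp is abelian}.}

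The plan is to establish the three closure properties (product, commutativity, duality) by combining the standard formal properties of $\mathcal{D}(\mathcal{R}^\circ_p)$ and $\mathcal{D}^{\mathrm{perf}}(\mathcal{R}^\circ_p)$ with the flatness of $U^{-1}\mathcal{R}^\circ_p$ and of $\Z_p[U,U^{-1}]$ over $\mathcal{R}^\circ_p$. For (1), first I would observe that perfectness is closed under derived tensor product: if $M,N$ are in the thick subcategory generated by $\mathcal{R}^\circ_p$, then so is $M\otimes_{\mathcal{R}^\circ_p} N$, since $-\otimes_{\mathcal{R}^\circ_p} N$ is exact and sends $\mathcal{R}^\circ_p$ to $N$, which is perfect. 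The commutativity $M\otimes_{\mathcal{R}^\circ_p} N \simeq N\otimes_{\mathcal{R}^\circ_p} M$ is just the symmetry of the derived tensor product over the $E_\infty$-algebra $\mathcal{R}^\circ_p$ (here it is essential that we are using the commutative model $\mathcal{R}^\circ_p$ rather than the $A_\infty$-algebra $\mathcal{R}_p$, which is why the lemma is phrased in terms of $\mathcal{R}^\circ_p$-modules). The weak SWF-type condition for $M\otimes_{\mathcal{R}^\circ_p}N$ follows by base-changing along $\mathcal{R}^\circ_p \to \Z_p[U,U^{-1}]$: since this functor is monoidal, $(M\otimes_{\mathcal{R}^\circ_p}N)\otimes_{\mathcal{R}^\circ_p}\Z_p[U,U^{-1}] \simeq (M\otimes_{\mathcal{R}^\circ_p}\Z_p[U,U^{-1}])\otimes_{\Z_p[U,U^{-1}]}(N\otimes_{\mathcal{R}^\circ_p}\Z_p[U,U^{-1}]) \simeq \Z_p[U,U^{-1}][n]\otimes_{\Z_p[U,U^{-1}]}\Z_p[U,U^{-1}][m] \simeq \Z_p[U,U^{-1}][n+m]$.

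For the weak local equivalence of $M\otimes_{\mathcal{R}^\circ_p}N$ with $\mathcal{R}^\circ_p$, I would take local maps $f\colon M\to \mathcal{R}^\circ_p[a]$, $f'\colon \mathcal{R}^\circ_p[a]\to M$, $g\colon N\to \mathcal{R}^\circ_p[b]$, $g'\colon \mathcal{R}^\circ_p[b]\to N$ witnessing the weak local equivalences of $M$ and $N$ separately, and form the composites $M\otimes_{\mathcal{R}^\circ_p}N \xrightarrow{f\otimes g} \mathcal{R}^\circ_p[a]\otimes_{\mathcal{R}^\circ_p}\mathcal{R}^\circ_p[b] \simeq \mathcal{R}^\circ_p[a+b]$ and similarly in the other direction with $f'\otimes g'$. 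To check these are local, one base-changes to $\Z_p[U,U^{-1}]$: the tensor product of two quasi-isomorphisms of $\Z_p[U,U^{-1}]$-modules is again a quasi-isomorphism, because $\Z_p[U,U^{-1}]$ is a PID (in fact a localization of a polynomial ring) and we are tensoring perfect complexes, so $-\otimes_{\Z_p[U,U^{-1}]}-$ preserves quasi-isomorphisms between perfect objects. This completes (1).

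For (2), the existence and perfectness of the dual $M^\vee$ is \cite[Proposition~7.2.4.4]{Lurie_HA}, as cited. The weak SWF-type of $M^\vee$ follows from dualizing commuting with the monoidal base change $-\otimes_{\mathcal{R}^\circ_p}\Z_p[U,U^{-1}]$ on perfect objects: $M^\vee\otimes_{\mathcal{R}^\circ_p}\Z_p[U,U^{-1}] \simeq (M\otimes_{\mathcal{R}^\circ_p}\Z_p[U,U^{-1}])^\vee \simeq (\Z_p[U,U^{-1}][n])^\vee \simeq \Z_p[U,U^{-1}][-n]$, using that $\Z_p[U,U^{-1}]$ is its own dual up to shift. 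The local equivalence $M\otimes_{\mathcal{R}^\circ_p}M^\vee \simeq \mathcal{R}^\circ_p$ comes from the coevaluation and evaluation maps $\mathcal{R}^\circ_p \to M\otimes_{\mathcal{R}^\circ_p}M^\vee$ and $M\otimes_{\mathcal{R}^\circ_p}M^\vee \to \mathcal{R}^\circ_p$ of the duality data; I would check that both become quasi-isomorphisms after inverting $U$, which holds because over $\Z_p[U,U^{-1}]$ the module $M$ becomes invertible (a shift of the unit), so its coevaluation and evaluation are equivalences there. Granting weak SWF-type of $M^\vee$ — which uses (1) applied to $M\otimes M^\vee$ and the just-established local equivalence with $\mathcal{R}^\circ_p$ — we then get that $M^\vee$ is of SWF-type.

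The main obstacle I anticipate is purely bookkeeping rather than conceptual: one must be careful that ``local equivalence'' for SWF-type modules (via honest local maps in both directions) is genuinely symmetric and transitive, and that it interacts correctly with the weaker notion ``weakly locally equivalent'' used to cut out the SWF-type condition — in particular one should verify that the composite of a local map and a quasi-isomorphism is local, and that base change along $\mathcal{R}^\circ_p\to \Z_p[U,U^{-1}]$ of a perfect complex is again perfect over $\Z_p[U,U^{-1}]$ (so that ``quasi-isomorphism between perfect complexes'' is tensor-stable, which is what makes the product of local maps local). None of this is deep, but it is the place where the argument could go wrong if one conflates the various module structures on $\Z_p[U,U^{-1}]$ introduced just before the lemma.
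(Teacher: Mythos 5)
Your proposal is correct and follows essentially the same architecture as the paper's proof: for (1) tensor the chosen local maps and use symmetry of the derived tensor product over the commutative model $\mathcal{R}^\circ_p$; for (2) use the duality data (evaluation/coevaluation, i.e.\ the paper's trace and cotrace) together with compatibility of dualization with the monoidal base change $-\otimes_{\mathcal{R}^\circ_p}\Z_p[U,U^{-1}]$. The one place where your route genuinely diverges is the verification that a tensor product of local maps is local: you argue directly that the derived tensor product over $\Z_p[U,U^{-1}]$ preserves quasi-isomorphisms (which is automatic for derived tensor products and does not actually need the PID/perfectness caveats you invoke — indeed $\Z_p[U,U^{-1}]$ is a graded field, so all modules are free), whereas the paper instead observes that the \emph{composite} $(f'\otimes g')\circ(f\otimes g)=(f'\circ f)\otimes(g'\circ g)$ is local and deduces locality of each factor from the indecomposability of $\Z_p[U,U^{-1}]$ via a retract argument; your version is the more direct of the two. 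Similarly, you get perfectness of $M\otimes N$ from closure of the thick subcategory under $-\otimes N$, while the paper deduces it from corepresentability of $M^\vee\otimes N^\vee\otimes-$ and the cited result of Lurie — both are standard and valid.
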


\begin{proof}
   For the first part of (1), choose local maps
\[
f\colon \mathcal{R}^\circ_p \longrightarrow M[m], \quad 
f'\colon M \longrightarrow \mathcal{R}^\circ_p[m'], \quad 
g\colon \mathcal{R}^\circ_p \longrightarrow N[n], \quad 
g'\colon N \longrightarrow \mathcal{R}^\circ_p[n'].
\]
Then the maps
\[
f\otimes g\colon \mathcal{R}^\circ_p \longrightarrow (M\otimes N)[m+n], \quad 
f'\otimes g'\colon M\otimes N \longrightarrow \mathcal{R}^\circ_p[m'+n'],
\]
are also local maps; this follows from the fact that their composition 
\[
(f'\otimes g')[m+n]\circ (f\otimes g) = (f'[m]\circ f)\otimes (g'[n]\circ g)
\]
is (obviously) local, $\Z_p[U,U^{-1}]$ is indecomposable (up to homotopy equivalence), and 
\[
M\otimes N\otimes\Z_p[U,U^{-1}] \simeq M\otimes (N\otimes \Z_p[U,U^{-1}]) \simeq M\otimes \Z_p[U,U^{-1}]\simeq \Z_p[U,U^{-1}]
\]
as ($E_\infty$) $\Z_p[U,U^{-1}]$-modules. Moreover, a simple hom–tensor adjunction shows that $M^\vee \otimes N^\vee \otimes -$ is corepresented by $M\otimes N$:
\[
M^\vee\otimes N^\vee \otimes L \simeq \mathrm{Mor}(M,N^\vee\otimes L) \simeq  \mathrm{Mor}(M,\mathrm{Mor}(N,L)) \simeq \mathrm{Mor}(M\otimes N,L).
\]
It then follows from \cite[Proposition~7.2.4.4]{Lurie_HA} that $M\otimes N$ is perfect, hence of SWF-type.  

For the second part, namely that $M\otimes N$ is quasi-isomorphic to $N\otimes M$, note that $\mathcal{R}^\circ_p$ is quasi-isomorphic to a commutative dga, over which the derived tensor product of modules is symmetric monoidal.

   For the first part of (2), choose local maps
\[
f\colon \mathcal{R}^\circ_p \longrightarrow M[m], \qquad 
f'\colon M \longrightarrow \mathcal{R}^\circ_p[m'].
\]
Since $\mathcal{R}^\circ_p$ is self-dual, we obtain their shifted duals
\[
(f')^\vee[m']\colon \mathcal{R}^\circ_p \longrightarrow M^\vee[m'], \qquad 
f^\vee[m]\colon M^\vee \longrightarrow \mathcal{R}^\circ_p[m].
\]
To see that these are local maps, observe that their composition satisfies
\[
((f')^\vee[m+m']) \circ (f^\vee[m]) = ((f'[m]) \circ f)^\vee.
\]
Since $(f'[m]) \circ f$ is local, its dual is also local. Therefore, the left-hand side is local, i.e. the composition
\[
(M^\vee \otimes \Z_p[U,U^{-1}]) \xrightarrow{(f^\vee \otimes \mathrm{id})[m]} \Z_p[U,U^{-1}][m] \xrightarrow{((f')^\vee \otimes \mathrm{id})[m+m']} (M^\vee \otimes \Z_p[U,U^{-1}])[m+m']
\]
is a quasi-isomorphism. Taking homology, we see that $H^\ast(M^\vee \otimes \Z_p[U,U^{-1}])[m]$ is a direct summand of $\Z_p[U,U^{-1}]$ as a $\Z_p[U,U^{-1}]$-module. However, $\Z_p[U,U^{-1}]$ is indecomposable as a module over itself, so the summand is either $0$ or $\Z_p[U,U^{-1}]$.  

A similar argument (using the reverse composition) shows that $\Z_p[U,U^{-1}]$ is a direct summand of $H^\ast(M^\vee \otimes \Z_p[U,U^{-1}])[m']$. Hence,
\[
H^\ast(M^\vee \otimes \Z_p[U,U^{-1}]) \not\cong 0,
\]
so in fact
\[
H^\ast(M^\vee \otimes \Z_p[U,U^{-1}]) \cong \Z_p[U,U^{-1}][-m].
\]
This implies that the maps $(f^\vee \otimes \mathrm{id})[m]$ and $((f')^\vee \otimes \mathrm{id})[m+m']$ induce isomorphisms on homology, i.e. they are quasi-isomorphisms. In other words, $f^\vee[m]$ and $(f')^\vee[m+m']$ are local maps. Hence $M^\vee$ is of SWF-type.

    Finally, for the second part of (2), observe that for any perfect module $L$ over an $E_\infty$-algebra $\mathcal{A}$, the trace and cotrace maps 
\[
\mathrm{tr}_L\colon L^\vee \otimes_\mathcal{A} L \longrightarrow \mathcal{A}, \qquad 
\mathrm{cotr}_L\colon \mathcal{A} \longrightarrow L^\vee \otimes_\mathcal{A} L,
\]
are naturally defined. Recall from the proof of the first part of (2) that the tensored dual map
\[
f^\vee \otimes \mathrm{id} = (f\otimes \mathrm{id})^\vee\colon  
M^\vee \otimes \Z_p[U,U^{-1}][m] \longrightarrow \Z_p[U,U^{-1}]
\]
is a quasi-isomorphism; we denote its homotopy inverse by $(f^\vee \otimes \mathrm{id})^{-1}$. Furthermore, by naturality of the trace, the composition
\[
\begin{array}{rcl}
\Z_p[U,U^{-1}] \otimes_{\Z_p[U,U^{-1}]} \Z_p[U,U^{-1}]
& \xrightarrow{\makebox[2.4cm][c]{\tiny$(f^\vee \otimes \mathrm{id})^{-1} \otimes (f^\vee \otimes \mathrm{id})$}} &
\bigl(M^\vee \otimes_{\mathcal{R}^\circ_p} \Z_p[U,U^{-1}]\bigr)
\otimes_{\Z[U,U^{-1}]} 
\bigl(M \otimes_{\mathcal{R}^\circ_p} \Z_p[U,U^{-1}]\bigr) \\[1ex]
& \xrightarrow{\makebox[2.4cm][c]{\scriptsize$\simeq$}} &
\bigl(M^\vee \otimes_{\mathcal{R}^\circ_p} M\bigr) 
\otimes_{\mathcal{R}^\circ_p} \Z_p[U,U^{-1}] \\[1ex]
& \xrightarrow{\makebox[2.4cm][c]{\scriptsize$\mathrm{tr}_M \otimes \mathrm{id}$}} &
\Z_p[U,U^{-1}]
\end{array}
\]
agrees with $\mathrm{tr}_{\Z_p[U,U^{-1}]}$,\footnote{Here we use 
$M^\vee \otimes_{\mathcal{R}^\circ_p} \Z_p[U,U^{-1}] = (M \otimes_{\mathcal{R}^\circ_p} \Z_p[U,U^{-1}])^\vee$, which follows from the fact that for any algebras $A,B$ and a map $\phi\colon A\to B$, the scalar extension functor $-\otimes_A B\colon \mathrm{Mod}_A \to \mathrm{Mod}_B$ is left adjoint to the forgetful functor $\phi^\ast\colon \mathrm{Mod}_B \to \mathrm{Mod}_A$ defined as $\phi^\ast M = {_A}B_B\otimes_B M$, and thus
\[
\mathrm{Mor}_B(M^\vee\otimes_A B,N)\simeq \mathrm{Mor}_A(M^\vee,\phi^\ast N)\simeq M\otimes_A (B\otimes_B N) \simeq (M\otimes_A B)\otimes_B N
\]
for any perfect $A$-module $M$ and any $B$-module $N$. This is a standard fact, see \cite[Proposition~4.6.2.17]{Lurie_HA}.} which is a quasi-isomorphism. Hence $\mathrm{tr}_M \otimes \mathrm{id}$ is a quasi-isomorphism, i.e. $\mathrm{tr}_M$ is a local map. Similarly, one shows that $\mathrm{cotr}_M$ is also a local map. Since $M^\vee \otimes M$ is of SWF-type by (1) and the first part of (2), we conclude that $M^\vee \otimes M$ is locally equivalent to $\mathcal{R}^\circ_p$.
\end{proof}

We now define the chain-level $S^1 \times \Z_p$-local equivalence group $\mathfrak{C}^{ch}_{S^1 \times \Z_p}$. First, set
\[
\mathfrak{C}^{ch,\Z}_{S^1 \times \Z_p}  
= \frac{\left\{ \mathcal{R}^\circ_p\text{-modules of SWF-type}\right\}}
       {\text{local equivalence}},
\]
and endow it with the group operation given by tensor product. By \Cref{lem: S1xZp chain Z-graded loc eqv gp is abelian}, $\mathfrak{C}^{ch,\Z}_{S^1 \times \Z_p}$ is an abelian group. Moreover, $\Z$ acts on $\mathfrak{C}^{ch,\Z}_{S^1 \times \Z_p}$ by translation.

\begin{defn}
    The \emph{chain-level $S^1 \times \Z_p$-local equivalence group} $\mathfrak{C}^{ch}_{S^1 \times \Z_p}$ is defined as the fiber product
    \[
    \mathfrak{C}^{ch}_{S^1 \times \Z_p} = \mathfrak{C}^{ch,\Z}_{S^1 \times \Z_p} \times_\Z \Q.
    \]
    In particular, elements of $\mathfrak{C}^{ch}_{S^1 \times \Z_p}$ are pairs $(M,r)$ with 
    $M \in \mathfrak{C}^{ch,\Z}_{S^1 \times \Z_p}$ and $r \in \Q$, subject to the identification
    \[
    (M[n],r) \sim (M,r+n) \qquad \text{for all } n \in \Z.
    \]
\end{defn}

It is clear that $\mathfrak{C}^{ch}_{S^1 \times \Z_p}$ is also an abelian group. With these formalisms in place, we can now show that the equivariant cochain functor induces a group homomorphism from the space-level local equivalence group to the chain-level local equivalence group.

\begin{lem} \label{lem: S1xZp eqv cochain is a group hom}
   The monoidal functor
   \[
   C^\ast_{S^1 \times \Z_p}(-;\Z_p)\colon 
   \mathcal{C}^{sp}_{S^1 \times \Z_p} \longrightarrow 
   \mathrm{Mod}^{op}_{C^\ast(B(S^1 \times \Z_p);\Z_p)}
   \]
   induces a group homomorphism
   \[
   C^\ast_{S^1 \times \Z_p}(-;\Z_p)\colon 
   \mathfrak{C}^{sp}_{S^1 \times \Z_p} \longrightarrow 
   \mathfrak{C}^{ch}_{S^1 \times \Z_p}.
   \]
\end{lem}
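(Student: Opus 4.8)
The plan is to establish three facts and assemble them. First, that $C^\ast_{S^1\times\Z_p}(-;\Z_p)$ sends every object $(X,a,b)$ of $\mathcal{C}^{sp}_{S^1\times\Z_p}$ to an element of $\mathfrak{C}^{ch}_{S^1\times\Z_p}=\mathfrak{C}^{ch,\Z}_{S^1\times\Z_p}\times_\Z\Q$, namely to the pair consisting of the $\mathcal{R}^\circ_p$-module $\widetilde{C}^\ast_{S^1\times\Z_p}(X;\Z_p)$ together with the rational shift $\alpha_\R(a)+2\alpha_\C(b)$ --- for which the main content is that $\widetilde{C}^\ast_{S^1\times\Z_p}(X;\Z_p)$ is of SWF-type. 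Second, that it sends local maps of order $0$ in $\mathcal{C}^{sp}_{S^1\times\Z_p}$ to local maps of $\mathcal{R}^\circ_p$-modules, so that it descends to a well-defined map of sets on local equivalence classes. Third, that it is monoidal compatibly with the additivity of the shifts, hence a group homomorphism.

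For the first fact I would fix a space $X$ of type $S^1\times\Z_p$-SWF, taken (as always for the Conley indices that occur here) to be a finite pointed $S^1\times\Z_p$-CW complex. Perfectness of $\widetilde{C}^\ast_{S^1\times\Z_p}(X;\Z_p)$ over $\mathcal{R}^\circ_p\simeq C^\ast(B(S^1\times\Z_p);\Z_p)$ is \Cref{rem: finite G-CW-complexes are perfect}. The rest rests on the localization theorem for $S^1\times\Z_p$ acting on a finite CW complex with $\Z_p$ coefficients (cf.\ \cite[Theorem~2.1]{may1998equivariant} and its evident extension to odd $p$): the inclusion $\iota\colon X^{S^1}\hookrightarrow X$ has mapping cone with contractible $S^1$-fixed set, so $\widetilde{C}^\ast_{S^1\times\Z_p}(C\iota;\Z_p)$ is $U$-power torsion; since $-\otimes_{\mathcal{R}^\circ_p}\Z_p[U,U^{-1}]$ factors through inverting $U$, it annihilates $U$-torsion modules, and hence $\iota^\ast$ becomes a quasi-isomorphism after $-\otimes_{\mathcal{R}^\circ_p}\Z_p[U,U^{-1}]$. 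As $X^{S^1}\simeq_{S^1\times\Z_p}(\R^{m_0})^+$, the equivariant Thom isomorphism ($\Z_p$-orientability being automatic) gives $\widetilde{C}^\ast_{S^1\times\Z_p}(X^{S^1};\Z_p)\simeq\mathcal{R}^\circ_p[-m_0]$, so $\iota^\ast$ simultaneously exhibits $\widetilde{C}^\ast_{S^1\times\Z_p}(X;\Z_p)$ as of weak SWF-type and as carrying a local map to $\mathcal{R}^\circ_p[-m_0]$. For the reverse local map I would use the equivariant Thom collapse along $X^{S^1}\hookrightarrow X$: collapsing the complement of an invariant tubular neighborhood of $X^{S^1}$ and pulling back the equivariant Thom class of the normal representation $\nu$ (with $\nu^{S^1}=0$) yields a map $\mathcal{R}^\circ_p[-m_0-\dim\nu]\simeq\widetilde{C}^\ast_{S^1\times\Z_p}(X^{S^1};\Z_p)[-\dim\nu]\to\widetilde{C}^\ast_{S^1\times\Z_p}(X;\Z_p)$ whose composite with $\iota^\ast$ is cup product with the equivariant Euler class $e(\nu)$, which becomes a unit after $-\otimes_{\mathcal{R}^\circ_p}\Z_p[U,U^{-1}]$ precisely because $\nu$ has no $S^1$-fixed directions; hence this Thom map is local as well, and $\widetilde{C}^\ast_{S^1\times\Z_p}(X;\Z_p)$ is weakly locally equivalent to $\mathcal{R}^\circ_p$, i.e.\ of SWF-type.

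For the second fact, let $f\colon X\wedge\alpha^+\wedge\beta^+\to X'\wedge(\alpha')^+\wedge(\beta')^+$ represent a morphism of $\mathcal{C}^{sp}_{S^1\times\Z_p}$ that is local of order $0$, i.e.\ $f^{S^1}$ is a homotopy equivalence. Applying $C^\ast$ together with the equivariant K\"unneth and Thom isomorphisms (which identify $\widetilde{C}^\ast_{S^1\times\Z_p}(X\wedge\alpha^+\wedge\beta^+;\Z_p)$ with $\widetilde{C}^\ast_{S^1\times\Z_p}(X;\Z_p)$ up to a degree shift by $\dim_\R\alpha+\dim_\R\beta$) and the naturality of $\iota^\ast$ in the space variable, one identifies $C^\ast(f)\otimes_{\mathcal{R}^\circ_p}\Z_p[U,U^{-1}]$ with $C^\ast(f^{S^1})\otimes_{\mathcal{R}^\circ_p}\Z_p[U,U^{-1}]$, which is a quasi-isomorphism; thus $C^\ast(f)$ is local. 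Moreover the constraints $\alpha-\alpha'=a-a'$ and $\beta-\beta'=b-b'$ force the difference of the degree shifts appearing on the two sides to be the integer matching the discrepancy between $\alpha_\R(a)+2\alpha_\C(b)$ and $\alpha_\R(a')+2\alpha_\C(b')$, and a routine bookkeeping then shows that locally equivalent objects of $\mathcal{C}^{sp}_{S^1\times\Z_p}$ (isomorphic objects in particular) have equal images in $\mathfrak{C}^{ch}_{S^1\times\Z_p}$ under the identification $(M[n],r)\sim(M,r+n)$. This yields a well-defined map of sets $\mathfrak{C}^{sp}_{S^1\times\Z_p}\to\mathfrak{C}^{ch}_{S^1\times\Z_p}$. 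The third fact is then formal: the monoidality of the cochain functor --- the quasi-isomorphism $\widetilde{C}^\ast_{S^1\times\Z_p}(X\wedge X';\Z_p)\simeq\widetilde{C}^\ast_{S^1\times\Z_p}(X;\Z_p)\otimes_{\mathcal{R}^\circ_p}\widetilde{C}^\ast_{S^1\times\Z_p}(X';\Z_p)$, whose behavior on local equivalence classes is covered by \Cref{lem: S1xZp chain Z-graded loc eqv gp is abelian} --- together with $\alpha_\R(a\oplus a')=\alpha_\R(a)+\alpha_\R(a')$ and $\alpha_\C(b\oplus b')=\alpha_\C(b)+\alpha_\C(b')$, shows the map respects the group operations.

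I expect the genuine work to lie entirely in the first fact: verifying that $\widetilde{C}^\ast_{S^1\times\Z_p}(X;\Z_p)$ is of SWF-type, and in particular producing the reverse local map $\mathcal{R}^\circ_p\to\widetilde{C}^\ast_{S^1\times\Z_p}(X;\Z_p)$. Perfectness and the weak SWF-type property come quickly from the localization theorem, but the reverse map requires the equivariant Thom/Gysin construction along the fixed-point inclusion and the invertibility, after inverting $U$, of the Euler class of a representation with no $S^1$-fixed directions; one must also keep track that $-\otimes_{\mathcal{R}^\circ_p}\Z_p[U,U^{-1}]$ --- which both inverts $U$ and derived-kills $C^\ast(B\Z_p;\Z_p)$ --- still detects these statements, for which the point is exactly that it factors through localization at $U$ and hence kills $U$-power torsion modules.
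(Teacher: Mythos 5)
There is a genuine gap in the construction of the reverse local map $\mathcal{R}^\circ_p \to \widetilde{C}^\ast_{S^1\times\Z_p}(X;\Z_p)$. Your plan is to perform an equivariant Thom collapse along the fixed-point inclusion $X^{S^1}\hookrightarrow X$, collapsing the complement of an invariant tubular neighborhood and using the invertibility of the equivariant Euler class of the normal representation $\nu$ after inverting $U$. But objects of $\mathcal{C}^{sp}_{S^1\times\Z_p}$ are arbitrary finite pointed $(S^1\times\Z_p)$-CW complexes of type SWF, not smooth manifolds, and $X^{S^1}$ need not admit a tubular neighborhood in $X$ with a vector-bundle normal structure. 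Without that, there is no Thom collapse and no Euler class $e(\nu)$; the argument silently assumes manifold-like regularity of the fixed-point inclusion. The localization theorem alone gives that $\iota^\ast\otimes_{\mathcal{R}^\circ_p}\Z_p[U,U^{-1}]$ is a quasi-isomorphism, but that does not by itself produce an $\mathcal{R}^\circ_p$-module map in the opposite direction before inverting $U$.

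The paper sidesteps this by exploiting the group structure of $\mathfrak{C}^{sp}_{S^1\times\Z_p}$: since $(X,r)$ has an additive inverse $(X^\vee,-r)$, there is a space-level local map $T_X\colon X\wedge X^\vee\to S^0$. Passing to cochains gives a local map $T_X^\ast\colon\mathcal{R}_p\to\widetilde{C}^\ast_{S^1\times\Z_p}(X;\Z_p)\otimes_{\mathcal{R}^\circ_p}\widetilde{C}^\ast_{S^1\times\Z_p}(X^\vee;\Z_p)$, and the forward local map for $X^\vee$ (which you \emph{do} construct correctly, via $\iota^\ast$ for $X^\vee$) composes with $T_X^\ast$ to produce the desired reverse local map for $X$. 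Your first two facts — perfectness via \Cref{rem: finite G-CW-complexes are perfect}, weak SWF-type and the forward local map via the localization theorem and the Thom isomorphism on the sphere $X^{S^1}$ — and the closing monoidality discussion all match the paper. You should replace the Thom-collapse step with the Spanier--Whitehead duality argument; as a bonus, that route is what makes the statement of \Cref{cor: dualities agree} drop out as a corollary, which is why the paper arranges the proof this way.
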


\begin{proof}
   Clearly, any local map between spaces of type $(S^1 \times \Z_p)$-SWF induces a local map between their $\Z_p$-coefficient $(S^1 \times \Z_p)$-equivariant cochains. Hence it suffices to prove that $\widetilde{C}^\ast_{S^1 \times \Z_p}(X;\Z_p)$ is an $\mathcal{R}^\circ_p$-module of SWF-type. 

Since $X$ is $(S^1 \times \Z_p)$-equivariantly homotopy equivalent to a finite $(S^1 \times \Z_p)$-CW complex, it follows from \Cref{lem: chain level Mayer Vietoris} that $\widetilde{C}^\ast_{S^1 \times \Z_p}(X;\Z_p)$ is a perfect $C^\ast(B(S^1 \times \Z_p);\Z_p)$-module (and thus an $\mathcal{R}^\circ_p$-module). Furthermore, the inclusion 
\[
i\colon X^{S^1} \hookrightarrow X
\]
induces an $\mathcal{R}^\circ_p$-module map
\[
i^\ast\colon \widetilde{C}^\ast_{S^1 \times \Z_p}(X;\Z_p) \longrightarrow \widetilde{C}^\ast_{S^1 \times \Z_p}(X^{S^1};\Z_p).
\]
Since $i$ itself is a local map, it follows that $i^\ast$ is also a local map. Moreover, because $X^{S^1}$ is homotopy equivalent to a sphere of some dimension $m\in \Z$, we have a Thom quasi-isomorphism 
\[
\widetilde{C}^\ast_{S^1 \times \Z_p}(X^{S^1};\Z_p) \simeq \mathcal{R}_p[-m].
\]
Hence the composition
\[
\widetilde{C}^\ast_{S^1 \times \Z_p}(X;\Z_p) 
\xrightarrow{i^\ast} 
\widetilde{C}^\ast_{S^1 \times \Z_p}(X^{S^1};\Z_p) 
\xrightarrow{\mathrm{Thom}} 
\mathcal{R}^\circ_p[-m]
\]
is a local map.

    Now let $(X^\vee,-r)$ be the additive inverse of $(X,r)$ in $\mathfrak{C}^{sp}_{S^1 \times \Z_p}$. By definition, there exists a (space-level) local map
\[
T_X\colon X \wedge X^\vee \longrightarrow S^0.
\]
Passing to equivariant cochains yields a local map
\[
T^\ast_X\colon \mathcal{R}_p \longrightarrow 
\widetilde{C}^\ast_{S^1 \times \Z_p}(X;\Z_p)\otimes_{\mathcal{R}^\circ_p} 
\widetilde{C}^\ast_{S^1 \times \Z_p}(X^\vee;\Z_p).
\]
Since $X^\vee$ is also a space of type $(S^1 \times \Z_p)$-SWF, we have already established the existence of a local map
\[
f\colon \widetilde{C}^\ast_{S^1 \times \Z_p}(X^\vee;\Z_p) \longrightarrow \mathcal{R}^\circ_p[n]
\]
for some $n\in \Z$. Hence the composition
\[
\mathcal{R}_p[-n] 
\xrightarrow{T^\ast_X[-n]} 
\widetilde{C}^\ast_{S^1 \times \Z_p}(X;\Z_p) \otimes_{\mathcal{R}^\circ_p} 
\widetilde{C}^\ast_{S^1 \times \Z_p}(X^\vee;\Z_p)[-n] 
\xrightarrow{\mathrm{id}\otimes f[-n]} 
\widetilde{C}^\ast_{S^1 \times \Z_p}(X;\Z_p)
\]
is a local map, as desired.
\end{proof}

While \Cref{lem: S1xZp eqv cochain is a group hom} appears quite natural and elementary, it has the following consequence, which may be of independent interest.

\begin{cor} \label{cor: dualities agree}
    For any space $X$ of type $(S^1 \times \Z_p)$-SWF and its additive inverse $X^\vee$ in $\mathfrak{C}^{sp}_{S^1 \times \Z_p}$, the $\mathcal{R}^\circ_p$-modules
    \[
    \widetilde{C}^\ast_{S^1 \times \Z_p}(X;\Z_p)^\vee 
    \qquad \text{ and } \qquad 
    \widetilde{C}^\ast_{S^1 \times \Z_p}(X^\vee;\Z_p)
    \]
    are locally equivalent.
\end{cor}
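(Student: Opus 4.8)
The plan is to combine \Cref{lem: S1xZp eqv cochain is a group hom} with the elementary fact that in any group a given element has a unique inverse, transported along a group homomorphism; the only real content is to identify which ``inverse'' the dual module represents. First I would recall that, by definition of $X^\vee$ as the additive inverse of $X$ in $\mathfrak{C}^{sp}_{S^1\times\Z_p}$, we have $[X]+[X^\vee]=0$, i.e.\ $X\wedge X^\vee$ is locally equivalent to $S^0$. Applying the group homomorphism $C^\ast_{S^1\times\Z_p}(-;\Z_p)\colon \mathfrak{C}^{sp}_{S^1\times\Z_p}\to\mathfrak{C}^{ch}_{S^1\times\Z_p}$ from \Cref{lem: S1xZp eqv cochain is a group hom} and using that this functor is monoidal, we obtain
\[
\big[\widetilde{C}^\ast_{S^1\times\Z_p}(X;\Z_p)\big] + \big[\widetilde{C}^\ast_{S^1\times\Z_p}(X^\vee;\Z_p)\big] = 0
\]
in $\mathfrak{C}^{ch}_{S^1\times\Z_p}$, where here I suppress the degree-shift bookkeeping; alternatively one argues at the level of $\mathfrak{C}^{ch,\Z}_{S^1\times\Z_p}$ after fixing a shift and then descends.

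Next I would invoke \Cref{lem: S1xZp chain Z-graded loc eqv gp is abelian}(2), which tells us that for the $\mathcal{R}^\circ_p$-module $M:=\widetilde{C}^\ast_{S^1\times\Z_p}(X;\Z_p)$ of SWF-type, the dual $M^\vee$ is again of SWF-type and $M\otimes_{\mathcal{R}^\circ_p} M^\vee$ is locally equivalent to $\mathcal{R}^\circ_p$. Hence $[M]+[M^\vee]=0$ in $\mathfrak{C}^{ch}_{S^1\times\Z_p}$ as well, so $[M^\vee]$ is \emph{also} an additive inverse of $[M]$. Since $\mathfrak{C}^{ch}_{S^1\times\Z_p}$ is a group (by \Cref{lem: S1xZp chain Z-graded loc eqv gp is abelian}), additive inverses are unique, and therefore
\[
\big[\widetilde{C}^\ast_{S^1\times\Z_p}(X;\Z_p)^\vee\big] = \big[\widetilde{C}^\ast_{S^1\times\Z_p}(X^\vee;\Z_p)\big]
\]
in $\mathfrak{C}^{ch}_{S^1\times\Z_p}$, which is precisely the assertion that the two $\mathcal{R}^\circ_p$-modules are locally equivalent.

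The one point that requires care — and which I expect to be the main (albeit minor) obstacle — is the degree-shift bookkeeping: $X^\vee$ comes with a rational weight $-r$, and $\widetilde{C}^\ast_{S^1\times\Z_p}(-;\Z_p)$ incorporates a shift by $\alpha_\C$ of the correction term, so the identities above a priori hold only up to a translation in the $\Q$-direction of $\mathfrak{C}^{ch}_{S^1\times\Z_p}=\mathfrak{C}^{ch,\Z}_{S^1\times\Z_p}\times_\Z\Q$. To handle this cleanly I would observe that the statement of the corollary is about local equivalence of $\mathcal{R}^\circ_p$-modules \emph{as such} — i.e.\ in $\mathfrak{C}^{ch,\Z}_{S^1\times\Z_p}$ up to an allowed degree shift, which is exactly the equivalence relation built into that group — and that the Thom/duality quasi-isomorphism $\widetilde{C}^\ast_{S^1\times\Z_p}(X^{S^1};\Z_p)\simeq\mathcal{R}_p[-m]$ matches the shift on the space side with the algebraic dual's shift. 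Concretely, tracking the explicit local maps produced in the proof of \Cref{lem: S1xZp eqv cochain is a group hom} (the composite through $T^\ast_X$ and the dual of the collapse to the fixed points) and in \Cref{lem: S1xZp chain Z-graded loc eqv gp is abelian}(2) (the dual $(f')^\vee[m']$, $f^\vee[m]$ together with trace/cotrace maps) shows the two inverses agree on the nose up to the permitted shift, so no new argument beyond uniqueness of inverses is needed. I would not grind through the shift arithmetic in the writeup; a sentence pointing out that both constructions produce local maps in both directions, combined with uniqueness of inverses in the abelian group $\mathfrak{C}^{ch}_{S^1\times\Z_p}$, suffices.
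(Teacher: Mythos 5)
Your proposal is correct and follows essentially the same route as the paper: the paper's proof likewise combines \Cref{lem: S1xZp chain Z-graded loc eqv gp is abelian}(2) (duals give additive inverses at the chain level) with \Cref{lem: S1xZp eqv cochain is a group hom} (the cochain functor is a group homomorphism, hence commutes with additive inverses), and concludes by uniqueness of inverses. Your extra discussion of the degree-shift bookkeeping is sound but not needed beyond noting that the homomorphism is defined on $\mathfrak{C}^{ch}_{S^1\times\Z_p}=\mathfrak{C}^{ch,\Z}_{S^1\times\Z_p}\times_\Z\Q$, so the $\Q$-components of the two inverses automatically agree.
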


\begin{proof}
    In $\mathfrak{C}^{sp}_{S^1 \times \Z_p}$ it follows from \Cref{lem: S1xZp chain Z-graded loc eqv gp is abelian} that the additive inverse operation is given by taking duals of perfect $\mathcal{R}^\circ_p$-modules. On the other hand, \Cref{lem: S1xZp eqv cochain is a group hom} shows that the functor $\widetilde{C}^\ast_{S^1 \times \Z_p}(-;\Z_p)$ commutes with taking additive inverses up to local equivalence. The result follows.
\end{proof}

We now define chain-level equivariant Fr\o yshov invariants. 

\begin{defn}
    Given an element $\mathcal{X} = (X,r)\in \mathfrak{C}^{ch}_{S^1 \times \Z_p}$, where $X$ is an $\mathcal{R}_p$-module of SWF-type, we set
    \[
    \begin{split}
    \delta^{(p)}_0(\mathcal{X}) 
    &= \tfrac{1}{2} \left( r + \min\bigl\{ n\in \Z \,\bigm|\, \text{there exists a local map } \mathcal{R}^\circ_p \to M[n] \bigr\} \right), \\[1ex]
    \delta(\mathcal{X}) 
    &= \tfrac{1}{2} \left( r + \min\bigl\{ n\in \Z \,\bigm|\, \exists\, x \in H^n(M \otimes_{\mathcal{R}^\circ_p} \Z_p[U,U^{-1}]) \text{ with } U^k x \neq 0 \text{ for all } k>0 \bigr\} \right).
    \end{split}
    \]
    These define functions 
    \[
    \delta, \, \delta^{(p)}_0 \colon \mathfrak{C}^{ch}_{S^1 \times \Z_p} \longrightarrow \Q.
    \]
\end{defn}

\noindent Since $X$ being of SWF-type implies the existence of a local map $\mathcal{R}_p \to M[n]$ for some $n \in \Z$, it follows that $\delta^{(p)}_0$ is well defined.

\begin{lem} \label{lem: Froyshov can be defined on chain level}
    For any $X \in \mathfrak{C}^{sp}_{S^1 \times \Z_p}$ we have
    \[
    \delta(X) = \delta\bigl(\widetilde{C}^\ast_{S^1 \times \Z_p}(X;\Z_p)\bigr) \qquad \text{ and }
    \qquad  
    \delta^{(p)}_0(X) = \delta^{(p)}_0\bigl(\widetilde{C}^\ast_{S^1 \times \Z_p}(X;\Z_p)\bigr),
    \]
    where the invariant $\delta^{(p)}_0(X)$ is the equivariant Fr\o yshov invariant introduced in \cite{Baraglia-Hekmati:2024-1}; see also \Cref{section:relation_to_BH} for our formulation. 
\end{lem}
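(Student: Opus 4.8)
\textbf{Proof proposal for \Cref{lem: Froyshov can be defined on chain level}.}
The plan is to reduce the statement to the compatibility of the localization functor $-\otimes_{\mathcal{R}^\circ_p}\Z_p[U,U^{-1}]$ with the equivariant cochain functor, together with \Cref{lem: S1xZp eqv cochain is a group hom} and the algebraic description of $\delta$ and $\delta^{(p)}_0$ in \Cref{section:relation_to_BH}. First I would recall, from \Cref{lem: S1xZp eqv cochain is a group hom} and its proof, that for $X\in \mathfrak C^{sp}_{S^1\times\Z_p}$ the module $\widetilde{C}^\ast_{S^1\times\Z_p}(X;\Z_p)$ is an $\mathcal{R}^\circ_p$-module of SWF-type, and that the inclusion of $S^1$-fixed points $i\colon X^{S^1}\hookrightarrow X$ is a space-level local map, hence induces a chain-level local map $i^\ast$ whose target is, via the Thom quasi-isomorphism, $\mathcal{R}^\circ_p[-m]$ for $m=\dim X^{S^1}$. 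This is the input needed to compare the two sides.

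Next I would unwind both definitions of $\delta^{(p)}_0$. On the space level, $\delta^{(p)}_0(X)$ is, up to the normalization by $\tfrac12$ and the shift $r$, the minimal degree $n$ in which a class $x\in \widetilde H^{\,\ast}_{S^1\times\Z_p}(X;\Z_p)$ maps under $\iota^\ast$ to $S^jU^k$ modulo higher filtration for $j=0$ (resp.\ $U^k\theta^j$ for $p=2$, $j=0$); by \Cref{lem:recoveringBH} and \Cref{prop:basic_of_n}(ii) this coincides with the Baraglia--Hekmati invariant. On the chain level, $\delta^{(p)}_0(M)$ is defined as the minimal $n$ admitting a local map $\mathcal{R}^\circ_p\to M[n]$. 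The key observation is that a local map $\mathcal{R}^\circ_p\to M[n]$ is the same data, up to homotopy, as a cohomology class in $H^n(M)$ whose image in $H^n(M\otimes_{\mathcal{R}^\circ_p}\Z_p[U,U^{-1}])$ is a $U$-power unit; applying the Thom identification to $M=\widetilde{C}^\ast_{S^1\times\Z_p}(X;\Z_p)$ and using that $i^\ast$ (equivalently the restriction to fixed points, which is exactly the localization map after inverting $U$) is a local map, this minimal degree matches the one in the definition of $\delta^{(p)}_0(X)$. For $\delta$ the argument is even more direct: $\delta(X)$ is the minimal degree of a $U$-nontorsion class in $\widetilde H^\ast_{S^1\times\Z_p}(X;\Z_p)$, a class survives to $H^\ast(M\otimes_{\mathcal{R}^\circ_p}\Z_p[U,U^{-1}])$ nontrivially under all powers of $U$ precisely when it is $U$-nontorsion, so the two minima agree verbatim. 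Both comparisons only require matching the gradings, which is handled by \Cref{prop:basic_of_n}(ii) (so that the real-valued shift $r=\alpha_\C(\mathbf n(Y,\fraks,\tilde\tau,g))$ on the space side equals the shift appearing in \Cref{def:eqSWF}) and by the sign/degree-shift conventions in \Cref{subsec: notations}.

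The main obstacle I anticipate is bookkeeping the degree shifts and the role of the rational correction term $r$ consistently: on the space level $SWF_{S^1\times\Z_p}(Y,\mathfrak s)$ is an isomorphism class of the triple $(SWF(Y,\mathfrak s,\tilde\tau,g),0,\mathbf n(Y,\mathfrak s,\tilde\tau,g))$ and the cochain functor applies the shift $[\alpha_\C(\mathbf n)]$, whereas on the chain level the pair $(M,r)$ carries the rational part separately; I would need to verify that under $C^\ast_{S^1\times\Z_p}(-;\Z_p)$ the object $(X,r)$ is sent to $(\widetilde{C}^\ast_{S^1\times\Z_p}(X;\Z_p),r)$ with matching $r$, so that the $\tfrac12(r+\cdots)$ expressions on the two sides literally coincide. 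Once this normalization is pinned down, the remaining steps are the routine identifications described above, and I would close by citing \Cref{lem:recoveringBH} and \Cref{section:relation_to_BH} to identify the chain-level quantities with the original Baraglia--Hekmati invariants.
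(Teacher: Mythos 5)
Your proposal is essentially correct and follows the same route as the paper. The paper likewise dismisses the $\delta$ case as obvious and proves the $\delta^{(p)}_0$ identity by trading back and forth between a minimum-degree cohomology class $x$ with $\iota^\ast x \equiv U^k \bmod (S,RS)$ and the corresponding local map $f_x\colon \mathcal{R}^\circ_p \to M[2n]$, establishing $\delta^{(p)}_0(C_X)\le \delta^{(p)}_0(X)$ and the reverse inequality separately; your ``bijection'' formulation packages the same content.

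One point you gloss over is the passage between the two places where the ``unit'' condition lives. The space-level definition asks that $\iota^\ast x$, an element of $\wt H^\ast_{S^1\times\Z_p}(X^{S^1};\Z_p)\cong \Z_p[U,R,S]/(R^2)$, be congruent to $U^k$ modulo $(S,RS)$; your chain-level criterion asks that the image of $[f(1)]$ in $H^\ast(M\otimes_{\mathcal{R}^\circ_p}\Z_p[U,U^{-1}])$ be a $U$-power unit. These conditions are not tautologically identical: the first kills only $S$, the second kills $R$ and $S$ and inverts $U$, and one also has to compare $H^\ast(M)\otimes_{H^\ast(\mathcal{R}^\circ_p)}\Z_p[U,U^{-1}]$ with $H^\ast(M\otimes_{\mathcal{R}^\circ_p}\Z_p[U,U^{-1}])$, which is exactly what the natural transformations $\mathfrak{T}_L$ in the paper's proof track. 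The parity of the grading handles the $R$-ambiguity (a homogeneous class of even degree $2k$ that is $\not\equiv 0\bmod S$ is automatically $cU^k$ with no $R$-term), so the equivalence does hold, but this degree-parity observation and the $\mathfrak{T}_L$-diagram chase are the actual content of the verification, and your phrase ``$i^\ast$ is exactly the localization map after inverting $U$'' conflates two maps that merely become homotopy equivalent after tensoring with $\Z_p[U,U^{-1}]$. Finally, the paper simply reduces to the case where $X$ is an actual space ($a=b=0$), which is a cleaner way to dispose of the rational shift $r$ than trying to track it directly through the computation as you suggest.
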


\begin{proof}
    Since the first equality is obvious, we only prove the second. Without loss of generality, assume that $X$ is a space of type $(S^1 \times \Z_p)$-SWF. Write $\delta^{(p)}_0(X)=n$. Then there exists a cohomology class $x \in \widetilde{H}^{2n}_{S^1 \times \Z_p}(X;\Z_p)$ such that its pullback 
\[
i^\ast(x) \in H^\ast(\mathcal{R}^\circ_p),
\]
where $i\colon X^{S^1} \hookrightarrow X$ is the inclusion and we identify $\widetilde{H}^\ast_{S^1 \times \Z_p}(X^{S^1};\Z_p)$ with $H^\ast(\mathcal{R}^\circ_p)$ via the Thom isomorphism, satisfies
\[
i^\ast(x) = U^k \pmod S \qquad \text{ for some } k \in \Z.
\]
The class $x$ defines an $\mathcal{R}^\circ_p$-module map 
\[
f_x\colon \mathcal{R}^\circ_p \longrightarrow M[2n]
\]
up to homotopy. For simplicity, write $C_X = \widetilde{C}^\ast_{S^1 \times \Z_p}(X;\Z_p)$. Then the composition
\[
\Z_p[U,U^{-1}] 
\xrightarrow{f_x^\ast \otimes \mathrm{id}} 
H^\ast(C_X)\otimes \Z_p[U,U^{-1}][2n] 
\xrightarrow{i^\ast[2n]\otimes \mathrm{id}} 
\Z_p[U,U^{-1}][2(n+k)]
\]
is an isomorphism. Hence we obtain the following commutative diagram, where for any $\mathcal{R}^\circ_p$-module $L$ we denote by $\mathfrak{T}_L$ the natural map
\[
H^\ast(L)\otimes_{H^\ast(\mathcal{R}^\circ_p)} \Z_p[U,U^{-1}]
\longrightarrow 
H^\ast(L\otimes_{\mathcal{R}^\circ_p} \Z_p[U,U^{-1}]).
\]
    \[
    \xymatrix{
    \Z_p[U,U^{-1}]\ar[rrr]^{(i^\ast[2n]\otimes \mathrm{id})^\ast \circ (f_x^\ast\otimes \mathrm{id})^\ast}_\cong \ar[d]_{\cong} &&& \Z_p[U,U^{-1}][2(n+k)]\ar[d]^{\cong} \\
    H^\ast(\mathcal{R}^\circ_p)\otimes_{H^\ast(\mathcal{R}^\circ_p)} \Z_p[U,U^{-1}] \ar[r]^{f_x^\ast\otimes \mathrm{id}}\ar[d]_{\mathfrak{T}_{\mathcal{R}^\circ_p}}^\cong & H^\ast(M)\otimes_{H^\ast(\mathcal{R}^\circ_p)} \Z_p[U,U^{-1}][2n]\ar[rr]^{(i^\ast[2n])^\ast\otimes \mathrm{id}}\ar[d]_{\mathfrak{T}_{M}[2n]}& &H^\ast(\mathcal{R}^\circ_p)\otimes_{H^\ast(\mathcal{R}^\circ_p)} \Z_p[U,U^{-1}][2(n+k)] \ar[d]^{\mathfrak{T}_{\mathcal{R}^\circ_p}[2(n+k)]}_\cong \\
    H^\ast(\mathcal{R}^\circ_p \otimes_{\mathcal{R}^\circ_p} \Z_p[U,U^{-1}]) \ar[r]^{(f_x\otimes \mathrm{id})^\ast} & H^\ast(M \otimes_{\mathcal{R}^\circ_p} \Z_p[U,U^{-1}])[2n] \ar[rr]^{(i^\ast[2n]\otimes \mathrm{id})^\ast} && H^\ast(\mathcal{R}^\circ_p \otimes_{\mathcal{R}^\circ_p} \Z_p[U,U^{-1}])[2(n+k)]
    }
    \]
   Since the left vertical maps, the right vertical maps, and the top map are isomorphisms, the composition of the two bottom maps must also be an isomorphism. Because
\[
H^\ast(M \otimes_{\mathcal{R}^\circ_p} \Z_p[U,U^{-1}]) \cong \Z_p[U,U^{-1}][\text{some degree shift}],
\]
we conclude that $f_x \otimes \mathrm{id}$ and $i^\ast \otimes \mathrm{id}$ are quasi-isomorphisms. In other words, $f_x$ is a local map. Hence
\[
\delta^{(p)}_0(C_X) \le n = \delta^{(p)}_0(X).
\]

Now write $\delta^{(p)}_0(C_X) = m$. Then there exists a local map 
\[
g\colon \mathcal{R}^\circ_p \longrightarrow M[2m].
\]
Since $i^\ast$ is also a local map, we obtain the following commutative diagram.
    \[
    \xymatrix{
    H^\ast(\mathcal{R}^\circ_p)\otimes_{H^\ast(\mathcal{R}^\circ_p)} \Z_p[U,U^{-1}] \ar[r]^{g^\ast\otimes \mathrm{id}}\ar[d]_{\mathfrak{T}_{\mathcal{R}^\circ_p}}^\cong & H^\ast(M)\otimes_{H^\ast(\mathcal{R}^\circ_p)} \Z_p[U,U^{-1}][2m]\ar[rr]^{(i^\ast[2m])^\ast\otimes \mathrm{id}}\ar[d]_{\mathfrak{T}_{M}[2m]}& &H^\ast(\mathcal{R}^\circ_p)\otimes_{H^\ast(\mathcal{R}^\circ_p)} \Z_p[U,U^{-1}][2(m+k)] \ar[d]^{\mathfrak{T}_{\mathcal{R}^\circ_p}[2(m+k)]}_\cong \\
    H^\ast(\mathcal{R}^\circ_p \otimes_{\mathcal{R}^\circ_p} \Z_p[U,U^{-1}]) \ar[r]^{(g\otimes \mathrm{id})^\ast}_\cong & H^\ast(M \otimes_{\mathcal{R}^\circ_p} \Z_p[U,U^{-1}])[2m] \ar[rr]^{(i^\ast[2n]\otimes \mathrm{id})^\ast}_\cong && H^\ast(\mathcal{R}^\circ_p \otimes_{\mathcal{R}^\circ_p} \Z_p[U,U^{-1}])[2(m+k)]
    }
    \]
   We know that the left map, the right map, and the two bottom maps are isomorphisms. Hence, if we define
\[
y = f^\ast(1) \in H^{2m}(M),
\]
then 
\[
((i^\ast[2n])^\ast \otimes \mathrm{id})(y \otimes 1) = U^{2(m+k)},
\]
which is equivalent to $i^\ast(y) = U^{2(m+k)} \pmod S$. Thus,
\[
\delta^{(p)}_0(X) \le m = \delta^{(p)}_0(C_X).
\]
Therefore, $\delta^{(p)}_0(X) = \delta^{(p)}_0(C_X)$, as desired.
\end{proof}

Hence we obtain the following commutative diagram of functions; note, however, that $\delta^{(p)}_0$ is not a group homomorphism.
\[
\xymatrix{
\mathfrak{C}^{sp}_{S^1 \times \Z_p} \ar[rr]^{\widetilde{C}^\ast_{S^1 \times \Z_p}(-;\Z_p)} \ar[rrd]^{\delta,\delta^{(p)}_0} && \mathfrak{C}^{ch}_{S^1 \times \Z_p} \ar[d]^{\delta,\delta^{(p)}_0} \\ && \Q
}
\]

\subsection{Example: an explicit computation for $\Sigma(3,5,19)$} \label{subsec: explicit computation for 3 5 19}

Consider the Seifert fibered homology sphere $Y=\Sigma(3,5,19)$. It has only one $\mathrm{Spin}^c$ structure, which we denote by $\mathfrak{s}_0$. In particular, $\mathfrak{s}_0 = \mathfrak{s}^{can}_Y$, the canonical $\mathrm{Spin}^c$ structure on $Y$. The star-shaped negative definite almost rational plumbing graph $\Gamma$ satisfying $Y \cong \partial W_\Gamma$ is given as follows:
\[
\begin{tikzpicture}[xscale=1.5, yscale=1, baseline={(0,-0.1)}]
    \node at (-0.1, 0.3) {$-1$};
    \node at (-1, 0.3) {$-3$};
    
    \node at (1, 1.3) {$-3$};
    \node at (2, 1.3) {$-2$};

    \node at (1, -0.7) {$-4$};
    \node at (2, -0.7) {$-5$};
    
    % Nodes
    \node at (0, 0) (A0) {$\bullet$};
    \node at (1, 1) (A1) {$\bullet$};
    \node at (2, 1) (A2) {$\bullet$};

    \node at (-1, 0) (B1) {$\bullet$};
    
    \node at (1, -1) (C1) {$\bullet$};
    \node at (2, -1) (C2) {$\bullet$};
    
    % Edges
    \draw (B1) -- (A0) -- (A1) -- (A2);
    \draw (A0) -- (C1) -- (C2);
\end{tikzpicture}
\]

We proceed as in \Cref{subsec: sanity check computation}. The nonzero values of the delta sequence $\Delta_{Y,\mathfrak{s}_0}(i)$ are listed below.
\begin{center}
\begin{tabular}{ |p{0.5cm}|p{1.2cm}||p{0.5cm}|p{1.2cm}||p{0.5cm}|p{1.2cm}||p{0.5cm}|p{1.2cm}|  }
 \hline
 $i$& $\Delta_{Y,\mathfrak{s}_0}(i)$ &$i$&$\Delta_{Y,\mathfrak{s}_0}(i)$&$i$&$\Delta_{Y,\mathfrak{s}_0}(i)$&$i$&$\Delta_{Y,\mathfrak{s}_0}(i)$\\
 \hline
0 & 1 & 30 & 1 & 72 & 1 & 105 & 1 \\
1 & $-1$ & 31 & $-1$ & 73 & $-1$ & 110 & 1 \\
4 & $-1$ & 43 & $-1$ & 75 & 1 & 114 & 1 \\
8 & $-1$ & 45 & 1 & 87 & 1 & 117 & 1 \\
13 & $-1$ & 46 & $-1$ & 88 & $-1$ & 118 & $-1$ \\
15 & 1 & 57 & 1 & 90 & 1 &  &  \\
16 & $-1$ & 58 & $-1$ & 95 & 1 & & \\
23 & $-1$ & 60 & 1 & 102 & 1 &  & \\
28 & $-1$ & 61 & $-1$ & 103 & $-1$ &  & \\
 \hline
\end{tabular}
\end{center}

The $\Z_p$-labelled planar graded root $\mathcal{R}_{\Gamma,\mathfrak{s}}$ is then given as follows (drawn upside down).

\begin{center}
\begin{tikzpicture}[scale=.8]
    \node at (0,0.3) {$v_0$};
    \draw [fill=black] (0,0) circle (1.5pt);
    \node at (-2,0.3) {$v_{-2}$};
    \draw [fill=black] (-2,0) circle (1.5pt);
    \node at (-1,0.3) {$v_{-1}$};
    \draw [fill=black] (-1,0) circle (1.5pt);
    \node at (1,0.3) {$v_1$};
    \draw [fill=black] (1,0) circle (1.5pt);
    \node at (2,0.3) {$v_2$};
    \draw [fill=black] (2,0) circle (1.5pt);
    \node at (-2,-0.7) {$v_{-3}$};
    \draw [fill=black] (-2,-1) circle (1.5pt);
    \draw [fill=black] (0,-1) circle (1.5pt);
    \node at (2,-0.7) {$v_3$};
    \draw [fill=black] (2,-1) circle (1.5pt);
    \draw [fill=black] (0,-2) circle (1.5pt);
    \node at (-1,-2.7) {$v_{-4}$};
    \draw [fill=black] (-1,-3) circle (1.5pt);
    \draw [fill=black] (0,-3) circle (1.5pt);
    \node at (1,-2.7) {$v_4$};
    \draw [fill=black] (1,-3) circle (1.5pt);
    \draw [fill=black] (0,-4) circle (1.5pt);
    \draw [fill=black] (0,-5) circle (1.5pt);
    \node at (-1,-5.7) {$v_{-5}$};
    \draw [fill=black] (-1,-6) circle (1.5pt);
    \draw [fill=black] (0,-6) circle (1.5pt);
    \node at (1,-5.7) {$v_5$};
    \draw [fill=black] (1,-6) circle (1.5pt);
    \draw [fill=black] (0,-7) circle (1.5pt);
    \draw [fill=black] (0,-8) circle (1.5pt);
    \draw [fill=black] (0,-9) circle (1.5pt);
    \draw [fill=black] (0,-9.3) node {$\vdots$};
    \draw [thick] (0,0)--(0,-1)--(0,-2)--(0,-3)--(0,-4)--(0,-5)--(0,-6)--(0,-7)--(0,-8)--(0,-9);
    \draw [thick] (-2,0)--(0,-1)--(2,0);
    \draw [thick] (-1,0)--(0,-1)--(1,0);
    \draw [thick] (-2,-1)--(0,-2)--(2,-1);
    \draw [thick] (-1,-3)--(0,-4)--(1,-3);
    \draw [thick] (-1,-6)--(0,-7)--(1,-6);
\end{tikzpicture}
\end{center}

The leaf labels and angle labels are then given as follows.

\begin{center}
\begin{tabular}{ |p{1cm}|p{0.5cm}|p{9cm}||p{1.5cm}|p{0.5cm}|p{2cm}|  }
 \hline
 leaves & $i$ & $\lambda_V$ & simple angles & $i$ & $\lambda_A$ \\
 \hline
$v_{-5}$ & 0 & 0 & $(v_{-5},v_{-4})$ & 1 & $[0]$ \\
$v_{-4}$ & 14 & $[0]-[1]-[4]-[8]-[13]$ & $(v_{-4},v_{-3})$ & 16 & $[15]$ \\
$v_{-3}$ & 29 & $[0]-[1]-[4]-[8]-[13]+[15]-[16]-[23]-[28]$ & $(v_{-3},v_{-2})$ & 31 & $[30]$ \\
$v_{-2}$ & 44 & $[0]-[1]-[4]-[8]-[13]+[15]-[16]-[23]-[28]+[30]-[31]-[43]$ & $(v_{-2},v_{-1})$ & 46 & $[45]$ \\
$v_{-1}$ & 47 & $[0]-[1]-[4]-[8]-[13]+[15]-[16]-[23]-[28]+[30]-[31]-[43]+[45]-[46]$ & $(v_{-1},v_0)$ & 58 & $[57]$ \\
$v_0$ & 59 & $[0]-[1]-[4]-[8]-[13]+[15]-[16]-[23]-[28]+[30]-[31]-[43]+[45]-[46]+[57]-[58]$ & $(v_0,v_1)$ & 61 & $[60]$ \\
$v_1$ & 62 & $[0]-[1]-[4]-[8]-[13]+[15]-[16]-[23]-[28]+[30]-[31]-[43]+[45]-[46]+[57]-[58]+[60]-[61]$ & $(v_1,v_2)$ & 73 & $[72]$ \\
$v_2$ & 74 & $[0]-[1]-[4]-[8]-[13]+[15]-[16]-[23]-[28]+[30]-[31]-[43]+[45]-[46]+[57]-[58]+[60]-[61] + [72] - [73]$ & $(v_2,v_3)$ & 88 & $[75]+[87]$ \\
$v_3$ & 89 & $[0]-[1]-[4]-[8]-[13]+[15]-[16]-[23]-[28]+[30]-[31]-[43]+[45]-[46]+[57]-[58]+[60]-[61] + [72] - [73] + [75] + [87] - [88]$ & $(v_3,v_4)$ & 103 & $[90]+[95]+[102]$ \\
$v_4$ & 104 & $[0]-[1]-[4]-[8]-[13]+[15]-[16]-[23]-[28]+[30]-[31]-[43]+[45]-[46]+[57]-[58]+[60]-[61] + [72] - [73] + [75] + [87] - [88] + [90] + [95] + [102] - [103]$ & $(v_4,v_5)$ & 118 & $[105] + [110] + [114] + [117]$ \\
$v_5$ & 119 & $[0]-[1]-[4]-[8]-[13]+[15]-[16]-[23]-[28]+[30]-[31]-[43]+[45]-[46]+[57]-[58]+[60]-[61] + [72] - [73] + [75] + [87] - [88] + [90] + [95] + [102] - [103] + [105] + [110] + [114] + [117] - [118]$ & & & \\
 \hline
\end{tabular}
\end{center}

Using \Cref{thm: lattice chain model is correct}, we see that 
$C^\ast_{S^1 \times \Z_p}(\mathcal{H}_{S^1 \times \Z_p}(\Gamma,\mathfrak{s});\Z_p)$ 
is quasi-isomorphic to $\mathcal{C}^\ast_{S^1 \times \Z_p}(\Gamma,\mathfrak{s})$, 
which is generated over $\mathcal{R}_p$ by elements $x_i$ and $y_j$ with 
$-5 \le i,j \le 5$ and $j \neq 0$. The $A_\infty$-operations are inherited from 
$\mathcal{R}_p$, together with the following differentials (i.e., the $m_1$ operations):

\[
\begin{split}
    \partial x_0 &= (U+58S)y_{-1} + (U+60S)y_1, \\
    \partial x_1 &= (U+61S)y_1 + (U+72S)y_2, \\
    \partial x_{-1} &= (U+57S)y_{-1} + (U+46S)y_{-2}, \\
    \partial x_2 &= (U+73S)y_2 + (U+75S)(U+87S)y_3, \\
    \partial x_{-2} &= (U+45S)y_{-2} + (U+43S)(U+31S)y_{-3}, \\
    \partial x_3 &= (U+88S)y_3 + (U+90S)(U+95S)(U+102S)y_4, \\
    \partial x_{-3} &= (U+30S)y_{-3} + (U+28S)(U+23S)(U+16S)y_{-4}, \\
    \partial x_4 &= (U+103S)y_4 + (U+105S)(U+110S)(U+114S)(U+117S)y_5, \\
    \partial x_{-4} &= (U+15S)y_{-4} + (U+13S)(U+8S)(U+4S)(U+1S)y_{-5}, \\
    \partial x_5 &= (U+118S)y_5, \\
    \partial x_{-5} &= Uy_{-5}.
\end{split}
\]

We now consider local homology classes of 
$H^\ast_{S^1 \times \Z_p}(\mathcal{H}_{S^1 \times \Z_p}(\Gamma,\mathfrak{s});\Z_p)$ 
for various primes $p$. Before proceeding with computations, we define the notion of 
local homology classes; note that $\mathcal{H}_{S^1 \times \Z_p}(\Gamma,\mathfrak{s})$ 
satisfies the assumptions below.

\begin{defn}
    Let $M$ be an $\mathcal{R}_p$-module such that 
    $M\otimes_{\mathcal{R}_p} \Z_p[U,U^{-1}]$ is quasi-isomorphic to 
    $\Z_p[U,U^{-1}][r]$ for some $r\in \Q$. A homology class 
    $\alpha \in H^\ast(M)$ is called \emph{local} if its image under the map
    \[
    H^\ast(M)\longrightarrow 
    H^\ast(M\otimes_{\mathcal{R}_p} \Z_p[U,U^{-1}]) 
    \cong \Z_p[U,U^{-1}][r]
    \]
    generates $\Z_p[U,U^{-1}][r]$ as a module over $\Z_p[U,U^{-1}]$.
\end{defn}

For various primes $p$, we compute the minimal degree of local homology classes in 
$H^\ast_{S^1 \times \Z_p}(\mathcal{H}_{S^1 \times \Z_p}(\Gamma,\mathfrak{s});\Z_p)$, 
which coincides with the value of 
$\delta_0^{(p)}(\mathcal{H}_{S^1 \times \Z_p}(\Gamma,\mathfrak{s}))$. 
Note, however, that $\mathcal{H}_{S^1 \times \Z_p}(\Gamma,\mathfrak{s})$ is defined only up to equivariant stable homotopy equivalence, so our computations are determined only up to an overall degree shift. To fix this ambiguity, we declare 
\[
\delta(\mathcal{H}_{S^1 \times \Z_p}(\Gamma,\mathfrak{s})) = 0,
\]
which amounts to setting $\deg(x_0)=0$.

Also observe that, although the chain model $\mathcal{C}^\ast(\Gamma,\mathfrak{s})$ carries nontrivial higher $A_\infty$ operations inherited from those of $\mathcal{R}_p$, these play no role in computing its homology. Thus, for the purpose of the calculations in this subsection, it suffices to ignore the higher operations and consider only the differential (i.e., the $m_1$ operations). In other words, we will pretend, falsely, that $\mathcal{R}_p$ is a formal $A_\infty$ algebra, that is, quasi-isomorphic to its homology.

\begin{lem} \label{lem: diff of froyshov in lattice floer}
    Choose any $\tilde{\mathfrak{s}}\in \mathrm{Spin}^c_{\Z_p}(Y)$ such that $\mathcal{N}(\tilde{\mathfrak{s}})=\mathfrak{s}$. Then, for any integer $i\ge 0$, we have
    \[
    \delta^{(p)}_0\bigl((\widetilde{C}^\ast_{S^1 \times \Z_p}(R_{\Gamma,\tilde{\mathfrak{s}}}))^\ast\bigr) - \delta\bigl((\widetilde{C}^\ast_{S^1 \times \Z_p}(R_{\Gamma,\tilde{\mathfrak{s}}}))^\ast\bigr) 
    = \delta^{(p)}_0(Y,\mathfrak{s}) - \delta(Y,\mathfrak{s}),
    \]
    where $\delta^{(p)}_0(Y,\mathfrak{s})$ denotes the $\Z_p$-equivariant Fr{\o}yshov invariant introduced in \cite{Baraglia-Hekmati:2024-1}, and $\delta(Y,\mathfrak{s})$ denotes the monopole Fr{\o}yshov invariant with $\Z_p$ coefficients.
\end{lem}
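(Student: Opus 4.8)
The plan is to recognize $(\widetilde{C}^\ast_{S^1\times\Z_p}(R_{\Gamma,\tilde{\mathfrak{s}}}))^\ast$ as a chain-level avatar of the equivariant Floer cochain complex $\widetilde{C}^\ast_{S^1\times\Z_p}(SWF_{S^1\times\Z_p}(Y,\tilde{\mathfrak{s}}))$, up to the three harmless ambiguities of an overall degree shift, a local equivalence, and a reparametrization of $\mathcal{R}_p$ by some $\psi_{[k]}$, and then to exploit that the difference $\delta^{(p)}_0-\delta$ is insensitive to all three. First I would assemble the comparison: by \Cref{thm: eqv lattice comparison map} together with \Cref{thm: lattice chain model is correct} (or \Cref{lem: lattice chain from label graded root}), the lattice chain of $(\Gamma,\tilde{\mathfrak{s}})$ is quasi-isomorphic, as an $\mathcal{R}_p$-module up to a degree shift and a $\psi_{[k]}$-reparametrization, to $C^\ast_{S^1\times\Z_p}(SWF_{S^1\times\Z_p}(-Y,\tilde{\mathfrak{s}}))$; then \Cref{Vduality} and \Cref{cor: dualities agree} identify, up to local equivalence and a further degree shift, the dual of $\widetilde{C}^\ast_{S^1\times\Z_p}(SWF_{S^1\times\Z_p}(-Y,\tilde{\mathfrak{s}}))$ with $\widetilde{C}^\ast_{S^1\times\Z_p}(SWF_{S^1\times\Z_p}(Y,\tilde{\mathfrak{s}}))$, using that the additive inverse of $SWF_{S^1\times\Z_p}(-Y,\tilde{\mathfrak{s}})$ in $\mathfrak{C}^{sp}_{S^1\times\Z_p}$ is $SWF_{S^1\times\Z_p}(Y,\tilde{\mathfrak{s}})$ up to the degree shift coming from the $\ker\diracpartial_{B_0}$ correction in the duality formula.

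Next I would verify the invariance statements. Both $\delta$ and $\delta^{(p)}_0$ descend to $\mathfrak{C}^{ch}_{S^1\times\Z_p}$, hence are local-equivalence invariant by construction; a degree shift by $n\in\Z$ changes each of them by $n/2$, so $\delta^{(p)}_0-\delta$ is degree-shift invariant. The crux is that reparametrization by $\psi_{[k]}$, which fixes $R$ and $S$ and sends $U\mapsto U+kS$, leaves both $\delta$ and $\delta^{(p)}_0$ unchanged: since the $\mathcal{R}^\circ_p$-algebra structure on $\Z_p[U,U^{-1}]$ factors through the augmentation killing $R$ and $S$, the composite $\mathcal{R}^\circ_p\xrightarrow{\psi_{[k]}}\mathcal{R}^\circ_p\to\Z_p[U,U^{-1}]$ equals $\mathcal{R}^\circ_p\to\Z_p[U,U^{-1}]$, so base change to $\Z_p[U,U^{-1}]$ and the notion of local map are unaffected by $\psi_{[k]}^\ast$ (together with $\psi_{[k]}^\ast\mathcal{R}^\circ_p\cong\mathcal{R}^\circ_p$), and the defining conditions for $\delta$ and for $\delta^{(p)}_0$ (with $j=0$), which involve only $U$ and $S$ modulo $(S)$ respectively, are preserved verbatim. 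Combining with the previous paragraph yields
\begin{align*}
&\delta^{(p)}_0\bigl((\widetilde{C}^\ast_{S^1\times\Z_p}(R_{\Gamma,\tilde{\mathfrak{s}}}))^\ast\bigr)-\delta\bigl((\widetilde{C}^\ast_{S^1\times\Z_p}(R_{\Gamma,\tilde{\mathfrak{s}}}))^\ast\bigr) \\
&\qquad=\delta^{(p)}_0\bigl(\widetilde{C}^\ast_{S^1\times\Z_p}(SWF_{S^1\times\Z_p}(Y,\tilde{\mathfrak{s}}))\bigr)-\delta\bigl(\widetilde{C}^\ast_{S^1\times\Z_p}(SWF_{S^1\times\Z_p}(Y,\tilde{\mathfrak{s}}))\bigr).
\end{align*}

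Finally I would invoke \Cref{lem: Froyshov can be defined on chain level}, which gives $\delta^{(p)}_0(\widetilde{C}^\ast_{S^1\times\Z_p}(SWF_{S^1\times\Z_p}(Y,\tilde{\mathfrak{s}})))=\delta^{(p)}_0(Y,\mathfrak{s})$ and $\delta(\widetilde{C}^\ast_{S^1\times\Z_p}(SWF_{S^1\times\Z_p}(Y,\tilde{\mathfrak{s}})))=\delta(Y,\mathfrak{s})$, the latter being the $\Z_p$-coefficient monopole Fr\o yshov invariant recovered from the $U$-localized equivariant cohomology; subtracting gives the claim. As a consistency check of the statement, note that the right-hand side is independent of the chosen equivariant lift $\tilde{\mathfrak{s}}$ of $\mathfrak{s}$: by \Cref{lem: eqv SWF of different eqv lifts} and \Cref{rem: S1xZp cochain for different eqv lifts}, changing the lift reparametrizes the cochain complex by some $\psi_{[k]}$, which as just observed affects neither $\delta$ nor $\delta^{(p)}_0$.

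The only genuinely delicate point is the orientation bookkeeping in the first paragraph: the lattice model naturally produces the Floer homotopy type of $-Y$, so one must pass through $[S^0,0,0]$-duality to return to $Y$, and check that the dual operation on chain complexes appearing in the lemma matches the space-level Spanier--Whitehead duality up to exactly the admissible ambiguities (degree shift, local equivalence, $\psi_{[k]}$-reparametrization). Everything else — the several degree shifts introduced by the comparison isomorphisms and all $\psi_{[k]}$-reparametrizations — is absorbed precisely because one only ever evaluates the difference $\delta^{(p)}_0-\delta$.
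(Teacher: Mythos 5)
Your proposal is correct and follows essentially the same route as the paper, which simply cites \Cref{lem:recoveringBH}, \Cref{lem: label gr root gives eqv lattice sp}, and \Cref{lem: Froyshov can be defined on chain level} without unwinding the details. Your version spells out the two things the paper leaves implicit: the passage from $SWF_{S^1\times\Z_p}(-Y,\tilde{\mathfrak{s}})$ to $SWF_{S^1\times\Z_p}(Y,\tilde{\mathfrak{s}})$ via duality (\Cref{Vduality}, \Cref{cor: dualities agree}), and the verification that the difference $\delta^{(p)}_0-\delta$ is insensitive to the $\psi_{[k]}$-reparametrization and the unknown overall degree shift inherent in the lattice model — both of which are indeed the correct reasons the comparison holds on the nose.
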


\begin{proof}
    The statement follows directly from \Cref{lem:recoveringBH,lem: label gr root gives eqv lattice sp,lem: Froyshov can be defined on chain level}.
\end{proof}

\begin{ex} \label{ex: p=2 case}
    Suppose $p=2$ (so that $S=\theta^2$). A minimal degree local homology class $\alpha$ is given by
    \[
    \alpha = \left[\begin{array}{l}(U+S)^2 x_0 + U(U+S)(x_1+x_{-1}) + U^2 (x_2+x_{-2}) + U(U+S)^2(x_3+x_{-3}) \\
    \quad + U^3(U+S)^2 (x_4+x_{-4}) + U^4(U+S)^4 (x_5+x_{-5})\end{array} \right].
    \]
    Since we set $\delta(Y)=\deg(x_0)=0$, it follows that $\deg \alpha = 4$. Therefore, by \Cref{lem: diff of froyshov in lattice floer}, we obtain
    \[
    \delta_0^{(2)}(Y)-\delta(Y)= \tfrac{1}{2}\deg \alpha = 2.
    \]
\end{ex}

\begin{ex} \label{ex: big p case}
    Suppose $p>118$, so that the elements $[0],[1],\dots,[118]\in \Z_p$ are pairwise disjoint. A minimal degree local homology class $\alpha$ is given by
    \[
    \alpha = \left[ P(U,S)\cdot x_0 \;+\; \text{other terms involving }x_i\text{ for }i=-5,\dots,-1,1,\dots,5 \right],
    \]
    where the homogeneous polynomial $P \in \Z_p[U,S]$ is
    \[
    \begin{split}
    P(U,S) &= U(U+15S)(U+30S)(U+45S)(U+57S) \\
           &\quad (U+61S)(U+73S)(U+88S)(U+103S)(U+118S).
    \end{split}
    \]
    Since we set $\delta(Y)=\deg(x_0)=0$, it follows that $\deg \alpha = 20$. Therefore, by \Cref{lem: diff of froyshov in lattice floer},
    \[
    \delta_0^{(p)}(Y)-\delta(Y)=\tfrac{1}{2}\deg \alpha = 10.
    \]
    Note that $10$ is also the dimension of $HF_{red}(Y,\mathfrak{s})$; in fact, this equality holds in a much more general sense, as we will see in \Cref{thm: Froyshov strict inequality}.
\end{ex}

\subsection{Behavior of $\delta^{(p)}_0$ for large primes $p$} \label{subsec: large p Froyshov}

In this subsection we study the behavior of the $\Z_p$-equivariant Fr\o yshov invariants $\delta^{(p)}_0(Y,\mathfrak{s})$, where $Y$ is a Seifert fibered homology sphere equipped with the Seifert $\Z_p$-action that is not an $L$-space and $\mathfrak{s}$ is a $\mathrm{Spin}^c$ structure on $Y$. Since the value of $\delta^{(p)}_0$ is clearly invariant under twisting operations, and any two $\Z_p$-equivariant lifts of a given $\mathrm{Spin}^c$ structure on $Y$ are related by twisting by \Cref{cor: twisting on seifert QHS}, we will deliberately conflate $\Z_p$-equivariant $\mathrm{Spin}^c$ structures with nonequivariant $\mathrm{Spin}^c$ structures throughout this section.

\begin{lem} \label{lem: homology of graded root}
    Let $k>0$ and let $n_1^\pm,\dots,n_k^\pm \ge 0$ be integers. Consider the $\Z$-graded cochain complex $C$ generated freely over $\Z_2[U]$ (with $\deg U=2$) by elements 
    \[
    x_0,\dots,x_k,\qquad y_1,\dots,y_k,
    \]
    with differential
    \[
    \partial y_1=\cdots=\partial y_k = 0,\quad 
    \partial x_0 = U^{n_1^-} y_1,\quad 
    \partial x_k = U^{n_k^+} y_k,\quad 
    \partial x_i = U^{n_i^+} y_i + U^{n_{i+1}^-} y_{i+1}\quad (1\le i<k).
    \]
    Suppose that $\ell \in \{0,\dots,k\}$ satisfies
    \[
    \deg x_\ell = \max_{0\le i\le k} \deg x_i.
    \]
    Then
    \[
    \dim\, H^\ast(C)_{\mathrm{tor}} \;=\; \sum_{i=0}^{\ell-1} n_i^- \;+\; \sum_{i=\ell+1}^k n_i^+,
    \]
    where $H^\ast(C)_{\mathrm{tor}}$ denotes the $\Z_2[U]$-torsion submodule of $H^\ast(C)$.%
    \footnote{Here $H^\ast(C)_{\mathrm{tor}}$ is viewed as a $\Z_2[U]$-module, but we are only counting its dimension as a $\Z_2$-vector space. For instance, $\dim\, \Z_2[U]/(U^n)=n$.}
\end{lem}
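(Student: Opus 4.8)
The plan is to compute $H^\ast(C)$ directly by a sequence of algebraic simplifications, exploiting the fact that $C$ is a ``zigzag'' complex over the PID $\Z_2[U]$. First I would observe that, since $\partial y_i = 0$ for all $i$ and the $x_i$ generate the rest, the homology splits into a free part and a torsion part, and the free part (which corresponds to the image in $U^{-1}C$) has rank one; so the content of the lemma is entirely about the torsion. The torsion arises from the fact that each $x_i$ ($i<k$) maps to a combination of two of the $y_j$'s, so that after taking homology the $y_j$'s become ``shared'' between consecutive $x_i$'s.

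The key step is to perform the standard graded-root-to-homology reduction: I would process the zigzag by repeatedly using the move described in the remark after \Cref{lem: lattice chain from label graded root}, i.e. whenever one has $x_i \xrightarrow{U^a} y \xleftarrow{U^0 \text{ or } U^b} \cdots$, one can eliminate acyclic pieces and merge. More concretely, for the zigzag with a single maximum at $x_\ell$, the two ``arms'' $x_0 \to x_1 \to \cdots \to x_\ell$ (where degrees increase, so the $U$-powers $n_i^-$ appear going one way) and $x_\ell \leftarrow \cdots \leftarrow x_k$ (with the $n_i^+$'s) can be analyzed separately. Along the left arm, each $y_i$ ($1 \le i \le \ell$) has two incoming arrows $U^{n_i^-}$ from $x_{i-1}$ and $U^{n_i^+}$ from $x_i$; after choosing a spanning tree / cancelling one free generator per $y_i$, one is left with torsion $\Z_2[U]/(U^{n_i^-})$ contributed by each $y_i$ on that arm — precisely because at the unique maximum the relevant differential contribution is the one with the $n^-$ exponent. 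Summing over $i = 1, \dots, \ell$ gives $\sum_{i=0}^{\ell-1} n_i^-$ after reindexing (note $n_0^- $ should be read as $n_1^-$ in my labelling; I would be careful with the index shift in the statement, where the sum is written $\sum_{i=0}^{\ell-1}n_i^-$ and $\sum_{i=\ell+1}^k n_i^+$), and the right arm symmetrically gives $\sum_{i=\ell+1}^k n_i^+$.

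The cleanest way to make this rigorous is probably an induction on $k$: delete the generators $x_k$ and $y_k$ (or $x_0, y_1$, whichever is on the longer arm away from the maximum), observe that this removes an acyclic subquotient or changes $H^\ast$ by exactly $\Z_2[U]/(U^{n_k^+})$ depending on whether $\ell = k$ or $\ell < k$, and invoke the inductive hypothesis on the truncated complex. The base case $k = 1$ is a direct two-by-two computation: $C$ has generators $x_0, x_1, y_1$ with $\partial x_0 = \partial x_1 = U^{n_1^-} y_1 = U^{n_1^+} y_1$ — wait, here one must note that for $k=1$ the exponents attached to $y_1$ from $x_0$ and from $x_1$ are $n_1^-$ and $n_1^+$ respectively, and whichever of $x_0, x_1$ is the maximum determines which exponent survives — and a short computation gives torsion of dimension $n_1^-$ (if $\ell = 1$) or $n_1^+$ (if $\ell = 0$), matching the formula.

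**The main obstacle** I anticipate is bookkeeping rather than conceptual: getting the index conventions exactly right (the statement's $\sum_{i=0}^{\ell-1}$ versus the natural $\sum_{i=1}^{\ell}$, and similarly on the other side), and carefully justifying that when one arm is ``flat'' (some $n_i^\pm = 0$) the corresponding $y_i$ contributes zero torsion and can simply be cancelled against an $x$ — this is where the hypothesis that $x_\ell$ is a \emph{strict} maximum is not assumed, so ties must be handled (one just picks one maximizer $\ell$ and the argument still goes through because a tie forces the connecting $U$-power to be $U^0$, contributing nothing). I would also want to double-check that working over $\Z_2[U]$ rather than a general $\Z_p[U]$ costs nothing here — it does not, since the differential has no coefficients other than $1$ — but the lemma is stated over $\Z_2$ so I will just work there. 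Modulo this careful index tracking, the proof is a routine application of the zigzag-reduction technique already used elsewhere in the paper.
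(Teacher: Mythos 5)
Your proposal is correct in outline but takes a genuinely different route from the paper's. The paper inducts on the quantity $k+\sum_i n_i^-+\sum_i n_i^+$ (over the relevant ranges), reducing it by exactly one at each step: either an end exponent vanishes, in which case an acyclic subcomplex $[x_0\to y_1]$ is split off, or both end exponents are positive, in which case one of them is decremented via a comparison map whose cokernel is $\Z_2[U]/(U)$. You instead induct on $k$ directly, peeling off the pair $(x_k,y_k)$ (or $(x_0,y_1)$) at the end farthest from the maximum in a single step. Your route is shorter when the exponents are large and makes the role of the maximality hypothesis more visible; the paper's route has the advantage that each step changes the torsion by at most one dimension, so no connecting-map analysis is ever needed. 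Your base case is correct, as is your observation about the index shift in the statement (the sums should be read as $\sum_{i=1}^{\ell}n_i^-$ and $\sum_{i=\ell+1}^{k}n_i^+$; the paper's own proof re-indexes in the same way).

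The one step you assert without justification is precisely the crux: that deleting $x_k,y_k$ when $\ell<k$ changes the torsion by exactly $n_k^+$. To close this, note that $S=\Z_2[U]\{x_k,y_k\}$ is a subcomplex with $H^\ast(S)\cong \Z_2[U]/(U^{n_k^+})$ and $C/S\cong C'$, and you must show that the connecting map $H^\ast(C')\to H^{\ast+1}(S)$ in the long exact sequence vanishes. On torsion classes this is clear, since they are represented by cycles among the $y_i$, which lift to cycles of $C$. On the free part, the generator of $\ker\partial\cap\bigoplus_{i<k}\Z_2[U]x_i$ is $\sum_i U^{e_i}x_i$ with $e_i=(\deg x_\ell-\deg x_i)/2$, and its boundary in $S$ is $U^{e_{k-1}+n_k^-}y_k$; this is zero in $\Z_2[U]/(U^{n_k^+})$ if and only if $e_{k-1}+n_k^-\ge n_k^+$, which unwinds to $\deg x_\ell\ge\deg x_k$. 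So the hypothesis that $x_\ell$ is a global maximum enters exactly here and nowhere else. Combined with the elementary fact that for a short exact sequence $0\to T\to M\to N\to 0$ of $\Z_2[U]$-modules with $T$ torsion one has $\dim M_{\mathrm{tor}}=\dim T+\dim N_{\mathrm{tor}}$, your induction then goes through.
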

\begin{proof}
    Define the nonnegative quantity
    \[
    K_C = k+\sum_{i=0}^{\ell-1} n_i^- + \sum_{i=\ell+1}^k n_i^+.
    \]
    If $K_C=0$, then $\Z_2[U]$ is generated by $x_0$ (with zero differential), so the lemma is clear. We now assume the statement holds whenever $K<K_0$ for some $K_0>0$, and take $K_C=K_0$.

    \smallskip
    \noindent
    \noindent{\bf \underline{Case 1: $n_1^-$ and $n_k^+$ are not both positive.}}  
    Without loss of generality assume $n_1^-=0$ (the case $n_k^+=0$ is analogous). Then we have an acyclic summand
    \[
    C_0 =[x_0 \to y_0]\subset C.
    \]
    It follows that $C/C_0$ is isomorphic to a chain complex $C'$ freely generated over $\Z_2[U]$ by $x'_0,\dots,x'_{k-1}$ and $y'_1,\dots,y'_{k-1}$ (with $\deg y'_{k-1}=\deg y_k$), where
    \[
    \partial y'_1=\cdots=\partial y'_{k-1}=0,\quad
    \partial x'_0=U^{n_2^-}y'_1,\quad
    \partial x'_{k-1}=U^{n_k^+}y'_k,\quad
    \partial x'_i=U^{n_{i+1}^+}y'_i+U^{n_{i+2}^-}y'_{i+1}\quad (1\le i<k-1).
    \]
    Clearly $\deg x'_{\ell-1}=\max_{0\le i\le k-1}\deg x'_i$. Since $K_{C'}=K_C-1$ and $C_0$ is acyclic, $C$ is homotopy equivalent to $C'$. By the inductive hypothesis,
    \[
    \dim H^\ast(C)_{\mathrm{tor}}
      = \dim H^\ast(C')_{\mathrm{tor}}
      = \sum_{i=0}^{\ell-2} n_{i+1}^- + \sum_{i=\ell}^{k-1} n_{i+1}^+
      = \sum_{i=0}^{\ell-1} n_i^- + \sum_{i=\ell+1}^k n_i^+.
    \]
    Thus the lemma holds in this case.

    \smallskip
    \noindent  
    \noindent{\bf \underline{Case 2: $n_1^-$ and $n_k^+$ are both positive.}}  
    We may assume $\ell\ne 1$ (the case $\ell\ne k$ is similar). Define a chain complex $C'$ generated over $\Z_2[U]$ by $x'_0,\dots,x'_k$ and $y'_1,\dots,y'_k$ (with $\deg y'_i=\deg y_i$), with
    \[
    \partial y'_1=\cdots=\partial y'_k=0,\quad
    \partial x'_0=U^{m_1^-}y'_1,\quad
    \partial x'_k=U^{n_k^+}y'_k,\quad
    \partial x'_i=U^{n_i^+}y'_i+U^{m_{i+1}^-}y'_{i+1}\quad (1\le i<k),
    \]
    where
    \[
    m_i^-=
    \begin{cases}
        n_1^--1 & i=1,\\
        n_i^- & i>1.
    \end{cases}
    \]
    Consider the degree-preserving map $f\colon C'\to C$ given by
    \[
    f(y'_i)=y_i\quad (1\le i\le k),\qquad
    f(x'_i)=
    \begin{cases}
        Ux_1 & i=1,\\
        x_i & i>1.
    \end{cases}
    \]
    We check that $\deg x'_0=\deg x_0-2$ and $\deg x'_i=\deg x_i$ for $1\le i\le k$, so $\deg x'_\ell=\max_{0\le i\le k}\deg x'_i$. Since $K_{C'}=K_C-1$, the inductive hypothesis gives
    \[
    \dim H^\ast(C')_{\mathrm{tor}}
       = \sum_{i=0}^{\ell-1}m_i^-+\sum_{i=\ell+1}^k n_i^+
       = -1+\sum_{i=0}^{\ell-1}n_i^-+\sum_{i=\ell+1}^k n_i^+.
    \]
    The map $f$ is injective, and $C/f(C')\cong \Z_2[U]/(U)$ with zero differential. This yields the exact triangle
    \[
    \cdots \longrightarrow H^\ast(C')\xrightarrow{\;\;f_\ast\;\;} H^\ast(C)\longrightarrow \Z_2[U]/(U)\longrightarrow\cdots,
    \]
    from which one checks that $f_\ast$ is injective. Hence we obtain the short exact sequence
    \[
    0\longrightarrow H^\ast(C')_{\mathrm{tor}}\xrightarrow{\;\;f_\ast\;\;} H^\ast(C)\longrightarrow \Z_2[U]/(U)\longrightarrow 0.
    \]
    Therefore
    \[
    \dim H^\ast(C)_{\mathrm{tor}}
      = \dim H^\ast(C')_{\mathrm{tor}}+\dim \Z_2[U]/(U)
      = \left(-1+\sum_{i=0}^{\ell-1} n_i^-+\sum_{i=\ell+1}^k n_i^+\right)+1
      = \sum_{i=0}^{\ell-1} n_i^-+\sum_{i=\ell+1}^k n_i^+.
    \]
    Thus the lemma holds for $C$ in this case as well, completing the proof.
\end{proof}

\begin{defn}
    For an element $\mathbf{n}\in \Z[\Z_p]$, written as $\mathbf{n}=\sum_{\alpha\in \Z_p} n_\alpha \cdot \alpha$, its \emph{support} is
    \[
    \mathrm{supp}(\mathbf{n})=\{\,\alpha\in \Z_p \mid n_\alpha\ne 0\,\}\subset \Z_p.
    \]
    Two elements $\mathbf{m},\mathbf{n}\in \Z[\Z_p]$ are said to be \emph{disjointly supported} if
    \[
    \mathrm{supp}(\mathbf{m})\cap \mathrm{supp}(\mathbf{n})=\emptyset.
    \]
\end{defn}

\begin{thm} \label{thm: Froyshov strict inequality}
    Let $Y$ be a Seifert fibered rational homology sphere, and let $\mathfrak{s}$ be a $\mathrm{Spin}^c$ structure on $Y$. Then, for all sufficiently large primes $p$, we have
    \[
    \delta^{(p)}_0(Y,\mathfrak{s}) = \delta(Y,\mathfrak{s}) + \dim\, HF_{red}(Y,\mathfrak{s}).
    \]
    In particular, if $\dim\, \widehat{HF}(Y,\mathfrak{s})>1$, then
    \[
    \delta^{(p)}_0(Y,\mathfrak{s}) > \delta(Y,\mathfrak{s})
    \]
    for all sufficiently large $p$.
\end{thm}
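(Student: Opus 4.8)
\textbf{Proof plan for \Cref{thm: Froyshov strict inequality}.}
The plan is to reduce the computation of $\delta^{(p)}_0(Y,\mathfrak{s})$ to a purely combinatorial calculation in the $\Z_p$-equivariant lattice chain model $\mathcal{C}^\ast_{S^1\times\Z_p}(\Gamma,\tilde{\mathfrak{s}})$ of \Cref{subsec: Zp lattice chain}, and then to show that for $p$ large this combinatorial quantity stabilizes to $\delta(Y,\mathfrak{s})+\dim HF_{red}(Y,\mathfrak{s})$. By \Cref{lem: Froyshov can be defined on chain level} together with \Cref{lem: lattice chain from label graded root}, it suffices to compute the minimal degree of a local homology class of the $\mathcal{R}_p$-module $\mathcal{C}^\ast_{S^1\times\Z_p}(\mathcal{R}_{\Gamma,\tilde{\mathfrak{s}}})^\ast$ (up to the overall degree shift pinned down by setting $\delta(Y,\mathfrak{s})=0$, i.e.\ by normalizing the grading of the leaf at the bottom of the graded root). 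As observed at the end of \Cref{subsec: explicit computation for 3 5 19}, the higher $A_\infty$ operations are irrelevant for computing homology, so one works with the differential $m_1$ only, over the polynomial ring $\Z_p[U,S]$.

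First I would make precise the large-$p$ stabilization. The leaf labels and angle labels $\lambda_V(v),\lambda_A(v,v')\in\Z[\Z_p]$ of $\mathcal{R}_{\Gamma,\tilde{\mathfrak{s}}}$ are built, via \Cref{def: labelled gr root for Y and s}, from the finitely many nonzero values of the $\Delta$-sequence $\Delta_{Y,\mathfrak{s}}(i)$, which (by \Cref{rem: Delta seq for canonical spin c} and the eventual-increase statement from \cite{nemethi2005ozsvath}) are supported in a bounded range $i\le N_{Y,\mathfrak{s}}$ independent of $p$. Hence for all primes $p>N_{Y,\mathfrak{s}}$ the finitely many indices $i$ occurring as exponents $[i]\in\Z_p$ are pairwise distinct, so that all the group-ring elements $\lambda_A(v,v')$ appearing in the differential are \emph{disjointly supported} in the sense of the Definition just stated; consequently, by \Cref{lem: inclusion map pullbacks}, the entries of the differential are products $U^{\lambda_A(v,v')}=\prod_{[k]}(U+kS)^{\ldots}$ of \emph{pairwise distinct} linear forms $U+kS\in\Z_p[U,S]$. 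This is the key structural simplification: over the graded PID-like behavior of $\Z_p[U,S]$ localized appropriately, the minimal degree of a local class depends only on the total number of these linear factors one is forced to ``push through'' along the graded root. I would then extract from the graded-root structure the analogue of \Cref{lem: homology of graded root}: setting $S\mapsto 0$ recovers exactly the non-equivariant lattice complex computing $d(Y,\mathfrak{s})$, whereas keeping $S$ and using disjointness of the linear forms shows that the $\Z_p[U,S]$-torsion that obstructs a low-degree local class is counted, with multiplicity one per linear factor, by $\sum$ of the $|\lambda_A|$'s on one side of the maximal leaf — which is precisely $\dim HF_{red}(Y,\mathfrak{s})$ read off from the graded root (equivalently, half the rank of $\widehat{HF}(Y,\mathfrak{s})$ minus the monopole count).

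The concrete execution is: (1) prove a $\Z_p[U,S]$-coefficient version of \Cref{lem: homology of graded root}, replacing powers of a single variable $U$ by products of distinct linear forms $U+k_iS$, and computing $\dim_{\Z_p}$ of the torsion submodule of the homology of the resulting ``graded-root complex'' as $\sum_{i<\ell}|\lambda^-_i|+\sum_{i>\ell}|\lambda^+_i|$, where $\ell$ indexes the maximal leaf; the induction is the same acyclic-summand / $U$-divisibility bookkeeping as in the $\Z_2[U]$ case, the only new point being that at each step one peels off a \emph{single} linear factor and a single $\Z_p[U,S]/(U+kS)$ torsion block, which is $1$-dimensional over $\Z_p$. (2) Identify this torsion dimension with $\dim HF_{red}(Y,\mathfrak{s})$: since the non-equivariant lattice homology $\mathbb{H}^+(\Gamma,\mathfrak{s})\cong HF^+(-Y,\mathfrak{s})$ is read off from the \emph{same} graded root, and $\dim HF_{red}$ equals precisely the sum of the ``angle heights'' $\chi(w)-\chi(v)$ over simple angles on the appropriate side (this is the standard graded-root formula), and these heights are exactly $|\lambda_A(v,v')|$. (3) Conclude via \Cref{lem: diff of froyshov in lattice floer} (or directly via \Cref{lem: Froyshov can be defined on chain level}) that $\delta^{(p)}_0(Y,\mathfrak{s})-\delta(Y,\mathfrak{s})=\dim HF_{red}(Y,\mathfrak{s})$ for $p>N_{Y,\mathfrak{s}}$. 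Finally, the last assertion is immediate: $\dim\widehat{HF}(Y,\mathfrak{s})>1$ forces $HF_{red}(Y,\mathfrak{s})\ne 0$, hence $\dim HF_{red}(Y,\mathfrak{s})\ge 1>0$, so $\delta^{(p)}_0>\delta$.

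I expect the main obstacle to be step (1)–(2), namely proving the $\Z_p[U,S]$-version of the graded-root homology lemma cleanly and matching the resulting torsion count with $\dim HF_{red}$ in the \emph{equivariant} normalization. The subtlety is that $\Z_p[U,S]$ is not a PID, so one cannot naively diagonalize the differential; the disjoint-support hypothesis (valid only for large $p$) is exactly what makes the distinct linear forms $U+kS$ behave like independent ``local coordinates'' so that the torsion splits as a direct sum of $1$-dimensional blocks, one per factor. Getting the induction to respect the interaction between the ``left'' and ``right'' branches at the maximal leaf $v_\ell$ — and verifying that a minimal-degree local class can always be chosen to clear exactly the factors on the shorter side — is where the real work lies; once that is in place, the identification with $\dim HF_{red}$ and the final inequality are formal.
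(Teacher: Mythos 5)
Your overall strategy coincides with the paper's: reduce to the lattice chain model via \Cref{lem: Froyshov can be defined on chain level}, use the fact that for $p$ large the group-ring exponents $\mathbf{n}_i^{\pm}$ occurring in the differential are pairwise disjointly supported, and identify the resulting count of linear factors with $\dim HF_{red}(Y,\mathfrak{s})$ via the graded root. The difference, and where your plan has a genuine gap, is in how you propose to execute the key step.

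First, your step (1) as formulated is not correct: you claim that each peeled-off torsion block is $\Z_p[U,S]/(U+kS)$ and is ``$1$-dimensional over $\Z_p$,'' but $\Z_p[U,S]/(U+kS)\cong\Z_p[S]$ is infinite-dimensional over $\Z_p$. More fundamentally, over the two-variable ring $\Z_p[U,S]$ the dimension of a torsion submodule is not the quantity that controls $\delta^{(p)}_0$; what matters is the minimal degree of a cycle whose image survives in $H^\ast(M\otimes_{\mathcal{R}^\circ_p}\Z_p[U,U^{-1}])$, i.e.\ after setting $R=S=0$ and inverting $U$. The paper avoids any two-variable analogue of \Cref{lem: homology of graded root} entirely: it uses that lemma only in its original $\Z_2[U]$ form, together with N\'emethi's identification of lattice homology with $HF^+$, to establish the purely combinatorial identity $N=\dim HF_{red}(Y,\mathfrak{s})=|\mathbf{n}_1^-|+\cdots+|\mathbf{n}_k^-|+|\mathbf{n}_{k+1}^+|+\cdots+|\mathbf{n}_n^+|$. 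The lower bound $\deg\alpha\ge 2N$ is then proved directly: writing a local cycle as $\sum r_ix_i+\sum s_jy_j$, reducing to $\alpha_0=\sum r_ix_i$, and using the cycle condition $U^{\mathbf{n}_j^+}r_j+U^{\mathbf{n}_j^-}r_{j-1}=0$ together with the coprimality of the monomials $U^{\sum\mathbf{n}^-}$ and $U^{\sum\mathbf{n}^+}$ (this is exactly where disjoint support enters) to force $U^{\mathbf{n}_1^-+\cdots+\mathbf{n}_k^-+\mathbf{n}_{k+1}^++\cdots+\mathbf{n}_n^+}$ to divide $r_k$. This replaces your torsion induction by a one-paragraph divisibility argument.

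Second, you defer the upper bound --- the existence of a local class of degree exactly $2N$ --- to ``remaining work.'' This is not automatic from a torsion count and must be exhibited; the paper does so with the explicit cycle
\[
\beta=\sum_{i=0}^n(-1)^iU^{\bigl(\sum_{j=0}^i\mathbf{n}_j^-\bigr)+\bigl(\sum_{j=i+1}^n\mathbf{n}_j^+\bigr)}\cdot x_i,
\]
which is checked to be a cycle of degree $2N$ whose image generates after localization. I would recommend restructuring your argument along these lines: keep the one-variable graded-root lemma solely to compute $N$, prove the lower bound by the divisibility argument, and write down the explicit minimal local cycle rather than attempting a structure theorem for torsion over $\Z_p[U,S]$.
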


\begin{proof}
    Since we are assuming $p$ to be large, we may take $p$ so that it does not divide $|H^2(Y;\Z)|$, ensuring that our results apply. Observe that $\mathcal{C}^\ast_{S^1 \times \Z_p}(\Gamma,\mathfrak{s})$ has the following general form, where $f_i^\pm = U^{\mathbf{n}_i^\pm}\cdot \mathrm{id}_{\mathcal{R}_p}$ for some nonzero elements $\mathbf{n}_i^\pm \in \Z[\Z_p]$ with $\mathbf{n}_i^\pm \ge 0$, which are pairwise disjointly supported since $p$ is large:
\[
\mathcal{C}^\ast_{S^1 \times \Z_p}(\Gamma,\tilde{\mathfrak{s}}_0) \simeq 
\mathrm{hocolim}\left[ 
\vcenter{ \xymatrix{
\mathcal{R}_p y_1 & \mathcal{R}_p y_2 &  & \hspace{-.4cm}\cdots \hspace{.2cm} & \mathcal{R}_p y_{n-1} & \mathcal{R}_p y_n \\
\mathcal{R}_p x_0 \ar[u]^{f_1^-} & \mathcal{R}_p x_1 \ar[ul]_{f_1^+}\ar[u]_{f_2^-} & \mathcal{R}_p x_2 \ar[ul]_{f_2^+}\ar@{}[u]^(.1){}="a"^(.75){}="b" \ar "a";"b" & \hspace{-.4cm}\cdots \hspace{.2cm} & \mathcal{R}_p x_{n-2} \ar@{}[ul]^(.1){}="a"^(.75){}="b" \ar "a";"b" \ar[u]_{f_{n-1}^-} & \mathcal{R}_p x_{n-1} \ar[ul]_{f_{n-1}^+}\ar[u]_{f_n^-} & \mathcal{R}_p x_n \ar[ul]_{f_n^+}
} } 
\right].
\]
Choose $k\in \{0,\dots,n\}$ such that $\deg(x_k)=\max_{0\le j\le n}\deg(x_j)$; without loss of generality we perform a degree shift so that $\deg(x_k)=0$. Define
\[
N := \dim\, HF_{red}(Y,\mathfrak{s}).
\]
As in \Cref{ex: p=2 case,ex: big p case}, it suffices to prove:
\begin{itemize}
    \item there exists a local homology class of degree $2N$ in $H^\ast(\mathcal{C}^\ast_{S^1 \times \Z_p}(\Gamma,\mathfrak{s}))$;
    \item no local homology class of degree less than $2N$ exists.
\end{itemize}

Following the computation techniques in \Cref{subsec: explicit computation for 3 5 19}, when analyzing the homology of $\mathcal{C}^\ast_{S^1 \times \Z_p}(\Gamma,\mathfrak{s})$ we will ignore its higher $A_\infty$-operations inherited from $\mathcal{R}_p$ and simply regard $\mathcal{R}_p$ as quasi-isomorphic to its homology. This makes $\mathcal{C}^\ast_{S^1 \times \Z_p}(\Gamma,\mathfrak{s})$ an ordinary chain complex over the ring
\[
H^\ast(\mathcal{R}_p)=\Z_p[U,R,S]/(R^2).
\]

Suppose first that there exists a local homology class $[\alpha] \in H^\ast(\mathcal{C}^\ast_{S^1 \times \Z_p}(\Gamma,\s))$ with $\deg \alpha < 2N$. Let $\alpha$ be a cycle in $\mathcal{C}^\ast_{S^1 \times \Z_p}(\Gamma,\s)$ representing $[\alpha]$. We may write
\[
\alpha = \sum_{i=0}^n r_i x_i + \sum_{j=1}^n s_j y_j
\]
for some homogeneous elements $r_i,s_j \in \Z_p[U,R,S]/(R^2)$. Since $y_1,\dots,y_n$ are cycles and their homology classes (after setting $R=S=0$) are $U$-torsion, the element
\[
\alpha_0 = \sum_{i=0}^n r_i x_i \in \mathcal{C}^\ast_{S^1 \times \Z_p}(\Gamma,\s)
\]
is also a cycle, and its homology class is local with $\deg \alpha_0 = \deg \alpha < 2N$.

Now
\[
0 = \partial \alpha_0 = \sum_{j=1}^n \big( U^{\mathbf{n}_j^+} r_j + U^{\mathbf{n}_j^-} r_{j-1} \big) y_j,
\]
so $U^{\mathbf{n}_j^+} r_j + U^{\mathbf{n}_j^-} r_{j-1} = 0$ for all $j=1,\dots,n$. Consider the projection
\[
\mathrm{pr} \colon \Z_p[U,R,S]/(R^2) \longrightarrow \Z_p[U,S]; \qquad R \longmapsto 0.
\]
Then
\[
U^{\mathbf{n}_j^+} \mathrm{pr}(r_j) + U^{\mathbf{n}_j^-} \mathrm{pr}(r_{j-1}) = 0 \quad \text{for all } j=1,\dots,n,
\]
and hence
\[
U^{\sum_{i=1}^j \mathbf{n}_{k+i}^-} \cdot \mathrm{pr}(r_k) \;=\; (-1)^j U^{\sum_{i=1}^j \mathbf{n}_{k+i}^+} \cdot \mathrm{pr}(r_{k+j}) \quad \text{for } j=1,\dots,n-k.
\]

Since the $\mathbf{n}_i^\pm$ are pairwise disjointly supported, the monomials $U^{\sum_{i=1}^j \mathbf{n}_{k+i}^-}$ and $U^{\sum_{i=1}^j \mathbf{n}_{k+i}^+}$ are relatively prime in $\Z_p[U,S]$. Thus $U^{\sum_{i=1}^j \mathbf{n}_{k+i}^+}$ divides $\mathrm{pr}(r_k)$, in particular $U^{\mathbf{n}_{k+1}^+ + \cdots + \mathbf{n}_n^+}$ divides $\mathrm{pr}(r_k)$. A similar argument shows that $U^{\mathbf{n}_1^- + \cdots + \mathbf{n}_k^-}$ also divides $\mathrm{pr}(r_k)$. Since these factors are relatively prime, their product
\[
U^{\mathbf{n}_1^- + \cdots + \mathbf{n}_k^- + \mathbf{n}_{k+1}^+ + \cdots + \mathbf{n}_n^+}
\]
divides $\mathrm{pr}(r_k)$. Therefore
\[
\begin{split}
2N > \deg \alpha_0 = \deg r_0 &= \deg \mathrm{pr}(r_0) \\
&\ge \deg U^{\mathbf{n}_1^- + \cdots + \mathbf{n}_k^- + \mathbf{n}_{k+1}^+ + \cdots + \mathbf{n}_n^+} \\
&= 2\big(|\mathbf{n}_1^-| + \cdots + |\mathbf{n}_k^-| + |\mathbf{n}_{k+1}^+| + \cdots + |\mathbf{n}_n^+|\big),
\end{split}
\]
which implies
\[
|\mathbf{n}_1^-| + \cdots + |\mathbf{n}_k^-| + |\mathbf{n}_{k+1}^+| + \cdots + |\mathbf{n}_n^+| < N.
\]
However, by \cite[Theorem~8.3]{nemethi2005ozsvath} and \Cref{lem: homology of graded root}, we have
\[
N = \dim HF_{red}(Y,\s) \;=\; |\mathbf{n}_1^-| + \cdots + |\mathbf{n}_k^-| + |\mathbf{n}_{k+1}^+| + \cdots + |\mathbf{n}_n^+|,
\]
a contradiction. Hence no local homology class of degree less than $2N$ can exist.

Finally, consider the cycle
\[
\beta = \sum_{i=0}^n (-1)^i U^{\big( \sum_{j=0}^i \mathbf{n}_j^- \big) + \big( \sum_{j=i+1}^n \mathbf{n}_j^+ \big)} \cdot x_i \in \mathcal{C}^\ast_{S^1 \times \Z_p}(\Gamma,\s).
\]
Since
\[
\deg \beta = 2\big(|\mathbf{n}_1^-| + \cdots + |\mathbf{n}_k^-| + |\mathbf{n}_{k+1}^+| + \cdots + |\mathbf{n}_n^+|\big) = 2N,
\]
this shows that there exists a local homology class of degree $2N$ in $H^\ast(\mathcal{C}^\ast_{S^1 \times \Z_p}(\Gamma,\s))$. The theorem follows.
\end{proof}

\begin{rem}
    A careful reader will notice that, if $\s$ is the canonical $\mathrm{Spin}^c$ structure of $Y$, the condition that $p$ be ``sufficiently large'' in \Cref{thm: Froyshov strict inequality} can in fact be quantified as $p > N_Y$, where $N_Y$ is the integer defined in \Cref{rem: Delta seq for canonical spin c}. This agrees with the assumption $p>118$ in \Cref{ex: big p case}, since for $Y=\Sigma(3,5,19)$ we have $N_Y=118$.
\end{rem}

\begin{rem}
    When $Y$ is a Seifert fibered homology sphere, it follows from \cite[Proposition 3.6]{baraglia2024brieskorn} that $\delta^{(p)}_0(Y)=\delta^{(p)}_\infty(Y)$. Thus, \Cref{thm: Froyshov strict inequality} specializes to \cite[Theorem 6.1]{baraglia2024brieskorn}, except that we replace $\mathrm{rk}(HF_{red}(Y,\s))$ with $\dim\,HF_{red}(Y,\s)$ and restrict to the case when $p$ is sufficiently large. However, we note that although \cite[Theorem 6.1]{baraglia2024brieskorn} is stated for all primes $p$, there appears to be a counterexample when $p$ is small. 

    Indeed, let $Y=\Sigma(3,5,19)$ with its unique $\mathrm{Spin}^c$ structure $\mathfrak{s}$. As computed in \Cref{ex: p=2 case}, we have
    \[
    \delta^{(2)}_\infty(Y,\mathfrak{s}) - \delta(Y,\mathfrak{s}) = \delta^{(2)}_0(Y,\mathfrak{s}) - \delta(Y,\mathfrak{s}) = 2.
    \]
    On the other hand, the quotient $Y/\Z_2$ has two $\mathrm{Spin}^c$ structures (both self-conjugate). Their graded roots can be computed directly using the algorithm in \Cref{subsec: graded root to delta seq}, see also \Cref{fig: two graded roots}. This yields
    \[
    \mathrm{rk}(HF_{red}(Y,\mathfrak{s})) = 10 \qquad \text{ and }
    \qquad 
    \mathrm{rk}(HF_{red}(Y/\Z_2,\mathfrak{s}_0)) = 4
    \]
    for either $\mathrm{Spin}^c$ structure $\mathfrak{s}_0$ on $Y/\Z_2$. Hence,
    \[
    \mathrm{rk}(HF_{red}(Y,\mathfrak{s})) - \mathrm{rk}(HF_{red}(Y/\Z_2,\mathfrak{s}_0)) = 6 \;\;>\;\; 2 = \delta^{(2)}_\infty(Y,\mathfrak{s}) - \delta(Y,\mathfrak{s}),
    \]
    contradicting the statement of \cite[Theorem 6.1]{baraglia2024brieskorn}. This contradiction persists even if one replaces $\mathrm{rk}$ with $\dim$ in the above formulas.
\end{rem}

    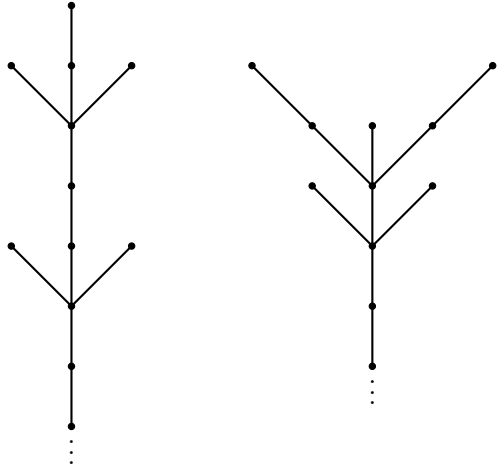
\begin{figure}[h!]
\begin{center}
\begin{tikzpicture}[scale=.8]
    \draw [fill=black] (0,-2) circle (1.5pt);
    \draw [fill=black] (-1,-3) circle (1.5pt);
    \draw [fill=black] (0,-3) circle (1.5pt);
    \draw [fill=black] (1,-3) circle (1.5pt);
    \draw [fill=black] (0,-4) circle (1.5pt);
    \draw [fill=black] (0,-5) circle (1.5pt);
    \draw [fill=black] (-1,-6) circle (1.5pt);
    \draw [fill=black] (0,-6) circle (1.5pt);
    \draw [fill=black] (1,-6) circle (1.5pt);
    \draw [fill=black] (0,-7) circle (1.5pt);
    \draw [fill=black] (0,-8) circle (1.5pt);
    \draw [fill=black] (0,-9) circle (1.5pt);
    \draw [fill=black] (0,-9.3) node {$\vdots$};
    \draw [thick] (0,-2)--(0,-3)--(0,-4)--(0,-5)--(0,-6)--(0,-7)--(0,-8)--(0,-9);
    \draw [thick] (-1,-3)--(0,-4)--(1,-3);
    \draw [thick] (-1,-6)--(0,-7)--(1,-6);

    \draw [fill=black] (3,-3) circle (1.5pt);
    \draw [fill=black] (7,-3) circle (1.5pt);
    \draw [fill=black] (4,-4) circle (1.5pt);
    \draw [fill=black] (5,-4) circle (1.5pt);
    \draw [fill=black] (6,-4) circle (1.5pt);
    \draw [fill=black] (4,-5) circle (1.5pt);
    \draw [fill=black] (5,-5) circle (1.5pt);
    \draw [fill=black] (6,-5) circle (1.5pt);
    \draw [fill=black] (5,-6) circle (1.5pt);
    \draw [fill=black] (5,-7) circle (1.5pt);
    \draw [fill=black] (5,-8) circle (1.5pt);
    \draw [fill=black] (5,-8.3) node {$\vdots$};
    \draw [thick] (3,-3)--(4,-4)--(5,-5)--(6,-4)--(7,-3);
    \draw [thick] (4,-5)--(5,-6)--(6,-5);
    \draw [thick] (5,-4)--(5,-5)--(5,-6)--(5,-7)--(5,-8);
\end{tikzpicture}
\end{center}
\caption{Left: the graded root of $Y/\Z_2$ with respect to its canonical $\mathrm{Spin}^c$ structure.  
Right: the graded root of $Y/\Z_2$ with respect to the other $\mathrm{Spin}^c$ structure.}
\label{fig: two graded roots}
    \end{figure}

\section{$\mathrm{Pin}(2)\times \Z_2$-equivariant lattice chain homotopy type}

Throughout this section, we fix a Seifert fibered rational homology sphere $Y$ such that $\mathfrak{s}^{\mathrm{can}}_Y$ is self-conjugate. We denote by $\Gamma$ the unique almost rational negative definite plumbing graph satisfying $Y \cong \partial W_\Gamma$. We also use the notation introduced in \Cref{subsec: computations sequence} and \Cref{subsec: graded root to delta seq}.

\subsection{The coefficient of the canonical class at the central node} \label{subsec: comp seq is semi reflective}

Recall from \Cref{subsec: graded root to delta seq} that $\Gamma$ has $\nu$ arms. The central node of $\Gamma$, whose weight is $e_0$, is denoted by $v_c$, and the $i$-th node of the $l$-th arm, whose weight is $-k^l_i$, is denoted by $v^l_i$. For any integers $i,l$ satisfying $1 \le i \le l \le \nu$, consider the matrix
\[
A^l_i := \begin{pmatrix}
    -k^l_i & 1 & 0 & \cdots & 0 & 0 \\
    1 & -k^l_{i+1} & 1 & \cdots & 0 & 0 \\
    0 & 1 & -k^l_{i+2} & \cdots & 0 & 0 \\
    \vdots & \vdots & \vdots & \ddots & \vdots & \vdots \\
    0 & 0 & 0 & \cdots  & -k^l_{s_l-1} & 1 \\
    0 & 0 & 0 & \cdots & 1 & -k^l_{s_l}
\end{pmatrix}.
\]
Note that $A^l_1$ is the intersection matrix for the $l$-th arm of $\Gamma$. Before moving on, we compute its determinant.

\begin{lem} \label{lem: det of Ali matrix}
    Let $p^l_i, q^l_i$ be the unique coprime positive integers satisfying 
    \[
        \frac{p^l_i}{q^l_i} = [k^l_i,\dots,k^l_{s_l}].
    \] 
    Note that $q^l_1 = q_l$, $p^l_1 = p_l$, and $q^l_i = p^l_{i+1}$ for all $i = 1, \dots, s_l - 1$. Then we have 
    \[
        \det A^l_i = (-1)^{s_l - i + 1} p^l_i.
    \]
\end{lem}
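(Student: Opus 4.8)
The plan is to prove the formula by downward induction on $i$, running from $i = s_l$ to $i = 1$, using a three-term recursion for $\det A^l_i$ that mirrors the recursion satisfied by the numerators of the Hirzebruch--Jung continued fractions $[k^l_i,\dots,k^l_{s_l}]$. Two recursions are needed, and I would establish them first. On the continued-fraction side, writing $p^l_i/q^l_i = [k^l_i,\dots,k^l_{s_l}]$ in lowest terms with $p^l_i, q^l_i > 0$, the identity
\[
[k^l_i, k^l_{i+1}, \dots, k^l_{s_l}] = k^l_i - \frac{1}{[k^l_{i+1}, \dots, k^l_{s_l}]} = \frac{k^l_i p^l_{i+1} - q^l_{i+1}}{p^l_{i+1}}
\]
together with $\gcd(k^l_i p^l_{i+1} - q^l_{i+1}, p^l_{i+1}) = \gcd(q^l_{i+1}, p^l_{i+1}) = 1$ forces $q^l_i = p^l_{i+1}$, and hence $p^l_i = k^l_i p^l_{i+1} - q^l_{i+1} = k^l_i p^l_{i+1} - p^l_{i+2}$ for $1 \le i \le s_l - 1$. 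Adopting the conventions $p^l_{s_l+1} := 1$ and $p^l_{s_l+2} := 0$ (consistent with $q^l_{s_l} = 1$), this recursion also holds at $i = s_l$, with base value $p^l_{s_l} = k^l_{s_l}$.

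On the determinant side, I would expand $\det A^l_i$ along its first row. Its only nonzero entries there are $(A^l_i)_{1,1} = -k^l_i$ and $(A^l_i)_{1,2} = 1$; the first minor is $\det A^l_{i+1}$, and the second minor, after a further expansion along its first column (which has a single nonzero entry, equal to $1$), equals $\det A^l_{i+2}$. A careful cofactor bookkeeping then gives
\[
\det A^l_i = -k^l_i \det A^l_{i+1} - \det A^l_{i+2},
\]
valid for $1 \le i \le s_l$ under the conventions $\det A^l_{s_l+1} = 1$ and $\det A^l_{s_l+2} = 0$. The base case is checked directly: $\det A^l_{s_l} = -k^l_{s_l} = (-1)^{s_l - s_l + 1} p^l_{s_l}$.

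The induction is then immediate: assuming $\det A^l_{i+1} = (-1)^{s_l-i} p^l_{i+1}$ and $\det A^l_{i+2} = (-1)^{s_l-i-1} p^l_{i+2}$, substitution into the determinant recursion and an application of $p^l_i = k^l_i p^l_{i+1} - p^l_{i+2}$ yield
\[
\det A^l_i = (-1)^{s_l-i+1}\bigl(k^l_i p^l_{i+1} - p^l_{i+2}\bigr) = (-1)^{s_l-i+1} p^l_i,
\]
which closes the induction. I do not expect a genuine obstacle here: the only points requiring care are the sign bookkeeping and the edge conventions near $i = s_l$, together with the standard fact---used implicitly throughout \Cref{subsec: graded root to delta seq}---that every partial continued fraction $[k^l_i,\dots,k^l_{s_l}]$ exceeds $1$ because each $k^l_j \ge 2$, which guarantees that the $p^l_i$ are honest positive integers and that reduction to lowest terms does not disturb the recursion.
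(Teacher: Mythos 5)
Your proposal is correct and follows essentially the same route as the paper: first-row cofactor expansion yielding the three-term recursion $\det A^l_i = -k^l_i \det A^l_{i+1} - \det A^l_{i+2}$, combined with the matching recursion $p^l_i = k^l_i p^l_{i+1} - p^l_{i+2}$ for the continued-fraction numerators, closed by induction. The only cosmetic difference is that you fold the second base case into the boundary conventions $\det A^l_{s_l+1}=1$, $\det A^l_{s_l+2}=0$, whereas the paper checks $i = s_l$ and $i = s_l - 1$ directly.
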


\begin{proof}
    We proceed by induction. First, when $i = s_l$ or $i = s_l - 1$, the lemma is obvious. Suppose that the lemma holds for $i = m+1$ and $i = m+2$ for some integer $m$ satisfying $1 \le m \le s_l - 2$. Then we get
\[
\begin{split}
    \det A^l_m &= -k^l_m \det A^l_{m+1} - \det \left( 
    \begin{array}{@{}c|c@{}}
        1 & \begin{matrix} 1 & 0 & \cdots & 0 & 0 \end{matrix}\\
        \hline
        \begin{matrix} 1 \\ 0 \\ \vdots \\ 0 \\ 0 \end{matrix} & A^l_{m+2}
    \end{array} \right) \\
    &= -(k^l_m \det A^l_{m+1} + \det A^l_{m+2}) \\
    &= (-1)^{s_l - m + 1} \left( k^l_m p^l_{m+1} - p^l_{m+2} \right).
\end{split}
\]
On the other hand, since
\[
\frac{p^l_m}{q^l_m} = [k^l_m, \dots, k^l_{s_l}] 
= k^l_m - \frac{1}{[k^l_{m+1}, \dots, k^l_{s_l}]} 
= k^l_m - \frac{q^l_{m+1}}{p^l_{m+1}} 
= \frac{k^l_m p^l_{m+1} - q^l_{m+1}}{p^l_{m+1}},
\]
we see that
\[
p^l_m = k^l_m p^l_{m+1} - q^l_{m+1} = k^l_m p^l_{m+1} - p^l_{m+2}.
\]
Therefore, we deduce that $\det A^l_m = (-1)^{s_l - m + 1} p^l_m$. The lemma follows.
\end{proof}
Then, with respect to the ordered basis
\[
V(\Gamma) = \{v_c, v^1_1, \dots, v^1_{s_1}, \dots, v^\nu_1, \dots, v^\nu_{s_\nu}\},
\]
the intersection matrix of $W_\Gamma$ is given by
\[
Q_\Gamma =
\left(
\begin{array}{@{}c|c|c|c|c}
    e_0 & \begin{matrix} 1 & 0 & \cdots & 0 & 0 \end{matrix} &
          \begin{matrix} 1 & 0 & \cdots & 0 & 0 \end{matrix} &
          \cdots &
          \begin{matrix} 1 & 0 & \cdots & 0 & 0 \end{matrix} \\
    \hline
    \begin{matrix} 1 \\ 0 \\ \vdots \\ 0 \\ 0 \end{matrix} & A^1_1 & O & \cdots & O \\
    \hline
    \begin{matrix} 1 \\ 0 \\ \vdots \\ 0 \\ 0 \end{matrix} & O & A^2_1 & \cdots & O \\
    \hline
    \vdots & \vdots & \vdots & \ddots & \vdots \\
    \hline
    \begin{matrix} 1 \\ 0 \\ \vdots \\ 0 \\ 0 \end{matrix} & O & O & \cdots & A^\nu_1
\end{array}
\right).
\]
Then the central coefficient $m_{v_c}(K)$ of the canonical class $K$ (of $\Gamma$) is the first component of the vector $Q_\Gamma^{-1} \mathbf{v}^K_\Gamma$, where 
\[
\mathbf{v}^K_\Gamma = (-e_0 - 2,\, k^1_1 - 2, \dots, k^1_{s_1} - 2, \dots, k^\nu_1 - 2, \dots, k^\nu_{s_\nu} - 2)^T,
\]
i.e., for each node $v \in V(\Gamma)$, the $v$-component of $\mathbf{v}^K_\Gamma$ is defined as $K(v) = -w(v) - 2$. 

To compute the first component of $Q_\Gamma^{-1}\mathbf{v}^K_\Gamma$, we have to compute the entries on the first row (i.e., the row corresponding to the central vertex $v_c$) of the adjugate matrix $\mathrm{adj}(Q_\Gamma)$. We label the rows and columns of $Q_\Gamma$ with the corresponding nodes of $\Gamma$. Then it is straightforward to see that
\[
\mathrm{adj}(Q_\Gamma)_{v_c,v_c} = \prod_{l=1}^\nu \det A^l_1 = (-1)^{|V(\Gamma)|-1} p_1 \cdots p_\nu.
\]
Fix any integers $l,i$ satisfying $1 \le l \le \nu$ and $1 \le i \le s_l$. The following fact is obvious:
\[
\mathrm{adj}(Q_\Gamma)_{v_c,v^l_i} = -[\mathrm{adj}(A^l_1)]_{1,i} \cdot \prod_{l' \neq l} \det A^{l'}_1 
= (-1)^{|V(\Gamma)|-s_l-1} [\mathrm{adj}(A^l_1)]_{1,i} \cdot \prod_{l' \neq l} p_{l'}.
\]
Since we may write
\[
(A^l_1 \text{ with the $1$st row and $i$th column deleted}) 
= \left( \begin{array}{@{}c|c@{}}
   X & Y\\
   \hline
   O & A^l_{i+1} \\
\end{array} \right)
\]
for some matrices $X, Y$, where $X$ is upper triangular with all diagonal entries equal to $1$, we get
\[
[\mathrm{adj}(A^l_1)]_{1,i} = (-1)^{i-1} \det A^l_{i+1} = (-1)^{s_l} p^l_{i+1},
\]
which implies that 
\[
\mathrm{adj}(Q_\Gamma)_{v_c,v^l_i} = (-1)^{|V(\Gamma)|-1} p^l_{i+1} \prod_{l' \neq l} p_{l'}.
\]
Since $\det Q_\Gamma = (-1)^{|V(\Gamma)|} |H_1(Y;\Z)|$, we deduce that
\[
-|H_1(Y;\Z)| \cdot m_{v_c}(K) 
= -(e_0 + 2) \prod_{l=1}^\nu p_l 
+ \sum_{l=1}^\nu \left( \prod_{l' \neq l} p_{l'} \right) \left( \sum_{i=1}^{s_l} (k^l_i - 2) p^l_{i+1} \right),
\]
where we define $p^l_{s_l+1} = 1$ and $p^l_{s_l+2} = 0$. To simplify this expression, recall from the proof of \Cref{lem: det of Ali matrix} that 
\[
k^l_i p^l_{i+1} - p^l_i - p^l_{i+2} = 0 \qquad \text{for each } i = 1, \dots, s_l.
\]
Taking the sum over all $i = 1, \dots, s_l$ and simplifying then gives
\[
\sum_{i=1}^{s_l} (k^l_i - 2) p^l_{i+1} = p^l_1 - p^l_2 - p^l_{s_l+1} + p^l_{s_l+2} = p_l - q_l - 1.
\]
Hence we obtain
\[
\begin{split}
    -|H_1(Y;\Z)| \cdot m_{v_c}(K) 
    &= -(e_0 + 2) \prod_{l=1}^\nu p_l 
    + \sum_{l=1}^\nu (p_l - q_l - 1) \left( \prod_{l' \neq l} p_{l'} \right) \\
    &= -\left[ e_0 \prod_{l=1}^\nu p_l + \sum_{i=1}^\nu q_i \left( \prod_{l' \neq l} p_{l'} \right) \right] 
       + (\nu - 2) \prod_{l=1}^\nu p_l   
       - \sum_{l=1}^\nu q_l \left( \prod_{l' \neq l} p_{l'} \right) \\
    &= |H_1(Y;\Z)| + (\nu - 2) \prod_{l=1}^\nu p_l   
       - \sum_{l=1}^\nu q_l \left( \prod_{l' \neq l} p_{l'} \right) \\
    &= |H_1(Y;\Z)| + |H_1(Y;\Z)| \cdot N_Y,
\end{split}
\]
which implies that $m_{v_c}(K) = -N_Y - 1$. We record this as a lemma, as it is very useful.

\begin{lem} \label{lem: central coeff of canonical class}
    Let $Y$ be a Seifert fibered rational homology sphere such that the canonical $\mathrm{Spin}^c$ structure $\mathfrak{s}^{can}_Y$ is self-conjugate. Let $\Gamma$ be the negative definite almost rational plumbing graph satisfying $Y \cong \partial W_\Gamma$, let $K$ be the canonical class of $\Gamma$, and let $v_c$ be the central node of $\Gamma$. Then
    \[
        m_{v_c}(K) = -N_Y - 1,
    \]
    where $N_Y$ denotes the number defined in \Cref{rem: Delta seq for canonical spin c}.
\end{lem}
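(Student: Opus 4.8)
The plan is to compute the first coordinate of $Q_\Gamma^{-1}\mathbf{v}^K_\Gamma$ directly, exactly along the lines already sketched in the preceding paragraphs, and then recognize the resulting expression in terms of $N_Y$ using the Seifert relation. First I would record the block form of the intersection matrix $Q_\Gamma$ with the ordered basis $\{v_c, v^1_1,\dots,v^\nu_{s_\nu}\}$ as above, and note that $m_{v_c}(K)$ is the $v_c$-component of $Q_\Gamma^{-1}\mathbf{v}^K_\Gamma$, where the $v$-component of $\mathbf{v}^K_\Gamma$ is $K(v)=-w(v)-2$. By Cramer's rule this equals $\tfrac{1}{\det Q_\Gamma}\sum_{v\in V(\Gamma)}\mathrm{adj}(Q_\Gamma)_{v_c,v}\cdot K(v)$, so the task reduces to computing the entries of the $v_c$-row of the adjugate, i.e.\ the cofactors along the central row.

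Next I would carry out those cofactor computations. Using \Cref{lem: det of Ali matrix}, which gives $\det A^l_i=(-1)^{s_l-i+1}p^l_i$, one finds $\mathrm{adj}(Q_\Gamma)_{v_c,v_c}=\prod_{l=1}^\nu\det A^l_1=(-1)^{|V(\Gamma)|-1}p_1\cdots p_\nu$, and for a node $v^l_i$ on the $l$-th arm, deleting the central row and the $v^l_i$-column leaves a matrix that is block upper-triangular with an identity block and a copy of $A^l_{i+1}$, so $[\mathrm{adj}(A^l_1)]_{1,i}=(-1)^{i-1}\det A^l_{i+1}=(-1)^{s_l}p^l_{i+1}$, hence $\mathrm{adj}(Q_\Gamma)_{v_c,v^l_i}=(-1)^{|V(\Gamma)|-1}p^l_{i+1}\prod_{l'\ne l}p_{l'}$, with the conventions $p^l_{s_l+1}=1$, $p^l_{s_l+2}=0$. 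Substituting $\det Q_\Gamma=(-1)^{|V(\Gamma)|}|H_1(Y;\Z)|$ and $K(v_c)=-e_0-2$, $K(v^l_i)=k^l_i-2$, I get
\[
-|H_1(Y;\Z)|\cdot m_{v_c}(K)=-(e_0+2)\prod_{l=1}^\nu p_l+\sum_{l=1}^\nu\Big(\prod_{l'\ne l}p_{l'}\Big)\sum_{i=1}^{s_l}(k^l_i-2)p^l_{i+1}.
\]

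The key algebraic simplification, and the step I expect to take the most care, is evaluating the inner sum $\sum_{i=1}^{s_l}(k^l_i-2)p^l_{i+1}$. For this I would telescope the recurrence $k^l_ip^l_{i+1}-p^l_i-p^l_{i+2}=0$ (from the proof of \Cref{lem: det of Ali matrix}): summing over $i=1,\dots,s_l$ gives $\sum_i k^l_ip^l_{i+1}=\sum_i p^l_i+\sum_i p^l_{i+2}$, and after subtracting $2\sum_i p^l_{i+1}$ the sum collapses to $p^l_1-p^l_2-p^l_{s_l+1}+p^l_{s_l+2}=p_l-q_l-1$, using $p^l_1=p_l$, $p^l_2=q^l_1=q_l$. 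Plugging this in and regrouping,
\[
-|H_1(Y;\Z)|\cdot m_{v_c}(K)=-\Big[e_0\textstyle\prod_l p_l+\sum_l q_l\prod_{l'\ne l}p_{l'}\Big]+(\nu-2)\textstyle\prod_l p_l-\sum_l q_l\prod_{l'\ne l}p_{l'}.
\]
The bracketed quantity is $-|H_1(Y;\Z)|$ by the Seifert relation $e_0+\sum_l q_l/p_l=-|H_1(Y;\Z)|/(p_1\cdots p_\nu)$ (stated in \Cref{subsec: graded root to delta seq}), and the remaining two terms are exactly $|H_1(Y;\Z)|\cdot N_Y$ by the definition of $N_Y$ in \Cref{rem: Delta seq for canonical spin c}. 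Dividing through by $-|H_1(Y;\Z)|$ yields $m_{v_c}(K)=-N_Y-1$, as claimed. The only genuine obstacle is bookkeeping the signs and the boundary conventions $p^l_{s_l+1}=1$, $p^l_{s_l+2}=0$ through the telescoping; everything else is formal manipulation of the blocks of $Q_\Gamma$ together with the two lemmas and the Seifert relation already available.
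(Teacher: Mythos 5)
Your proposal is correct and follows essentially the same route as the paper: the paper's argument is exactly the cofactor computation of the $v_c$-row of $\mathrm{adj}(Q_\Gamma)$ via \Cref{lem: det of Ali matrix}, the telescoping of $k^l_i p^l_{i+1} - p^l_i - p^l_{i+2} = 0$ to obtain $\sum_i (k^l_i-2)p^l_{i+1} = p_l - q_l - 1$, and the final regrouping using the Seifert relation and the definition of $N_Y$. All signs and boundary conventions in your write-up match the paper's.
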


This lemma has a very important corollary.

\begin{cor} \label{cor: NY is an integer}
    Let $Y$ be a Seifert fibered rational homology sphere such that the canonical $\mathrm{Spin}^c$ structure $\mathfrak{s}^{can}_Y$ is self-conjugate. Then $N_Y$ is an integer.
\end{cor}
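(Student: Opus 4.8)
The plan is to deduce this immediately from \Cref{lem: central coeff of canonical class} together with the characterization of self-conjugacy of $\mathfrak{s}^{\mathrm{can}}_Y$ recorded in \Cref{rem: canonical spin c str}. First I would recall that, by \Cref{rem: canonical spin c str}, the canonical $\mathrm{Spin}^c$ structure $\mathfrak{s}^{\mathrm{can}}_Y$ is self-conjugate precisely when the canonical class $K$ lies in the sublattice $\mathbb{Z} V(\Gamma) \subset \operatorname{Hom}_{\mathbb{Z}}(\mathbb{Z} V(\Gamma), \mathbb{Z})$, which in turn is equivalent to the condition that $m_v(K) \in \mathbb{Z}$ for every node $v \in V(\Gamma)$. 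In particular, this forces the central coefficient $m_{v_c}(K)$ to be an integer.

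Next I would invoke \Cref{lem: central coeff of canonical class}, which gives the explicit identity $m_{v_c}(K) = -N_Y - 1$. Combining this with the integrality of $m_{v_c}(K)$ established above yields $N_Y = -1 - m_{v_c}(K) \in \mathbb{Z}$, which is exactly the claim.

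Since the entire arithmetic content of the statement has already been carried out in \Cref{lem: central coeff of canonical class} (the computation of $\det Q_\Gamma$, $\operatorname{adj}(Q_\Gamma)_{v_c, -}$, and the telescoping sums involving the continued-fraction numerators $p^l_i$), there is no genuine obstacle remaining here; the corollary is a one-line consequence. If anything, the only point to be careful about is to cite the correct direction of the equivalence in \Cref{rem: canonical spin c str}, namely that self-conjugacy implies $m_v(K)\in\mathbb{Z}$ for all $v$ (and not merely for some distinguished $v$), so that the hypothesis applies in particular to the central node $v_c$.
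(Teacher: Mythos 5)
Your proposal is correct and follows exactly the paper's argument: invoke \Cref{rem: canonical spin c str} to get $m_{v_c}(K)\in\mathbb{Z}$ from self-conjugacy, then read off $N_Y = -1 - m_{v_c}(K)$ from \Cref{lem: central coeff of canonical class}. No further comment is needed.
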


\begin{proof}
    Recall from \Cref{rem: canonical spin c str} that, since $\mathfrak{s}^{can}_Y$ is self-conjugate, we have $m_v(K) \in \Z$ for all $v \in V(\Gamma)$. From \Cref{lem: central coeff of canonical class}, we know that $m_{v_c}(K) = -N_Y - 1$. Hence $-N_Y - 1 \in \Z$, which implies that $N_Y$ is also an integer.
\end{proof}

Now we consider the cycles $(x_{\mathfrak{s}^{can}_Y}(i))_{i \ge 0}$ induced by taking the central node $v_c$ as the base node; see \Cref{subsec: computations sequence} for the definition. The following lemma shows that their weights are symmetric under the reflection $i \leftrightarrow N_Y + 1 - i$ in the region $0 \le i \le N_Y + 1$.

\begin{lem} \label{lem: weights are symmetric}
    Let $Y$ be a Seifert fibered homology sphere such that the canonical $\mathrm{Spin}^c$ structure $\mathfrak{s}^{can}_Y$ is self-conjugate, and let $\Gamma$ be the negative definite almost rational plumbing graph satisfying $Y \cong \partial W_\Gamma$. Then, for any integer $i$ satisfying $0 \le i \le N_Y + 1$, we have
    \[
        \chi_{\mathfrak{s}^{can}_Y}(x_{\mathfrak{s}^{can}_Y}(i)) 
        = \chi_{\mathfrak{s}^{can}_Y}(x_{\mathfrak{s}^{can}_Y}(N_Y + 1 - i)).
    \]\footnote{This implies $\Delta_{Y, \mathfrak{s}^{can}_Y}(N_Y - i) = -\Delta_{Y, \mathfrak{s}^{can}_Y}(i)$; the special case when $Y$ is a homology sphere was proven in \cite[Theorem~4.1]{can2014calculating}.}
\end{lem}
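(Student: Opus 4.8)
The plan is to exploit the conjugation symmetry of $\mathrm{Spin}^c$ structures on $W_\Gamma$ described in \Cref{rem: conjugation in terms of cycles}, together with the identity $m_{v_c}(K)=-N_Y-1$ from \Cref{lem: central coeff of canonical class}. Recall that for the canonical $\mathrm{Spin}^c$ structure we have $l'_{\mathfrak{s}^{\mathrm{can}}_Y}=0$ and hence $k_{\mathfrak{s}^{\mathrm{can}}_Y}=K$, so the weight function is $\chi_{\mathfrak{s}^{\mathrm{can}}_Y}(x)=-\tfrac{1}{2}(K(x)+x\cdot x)$ and, by \Cref{rem: conjugation in terms of cycles}, $\mathrm{sp}_{\mathfrak{s}^{\mathrm{can}}_Y}(-K-x)=\overline{\mathrm{sp}_{\mathfrak{s}^{\mathrm{can}}_Y}(x)}$. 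First I would record the elementary identity $\chi_{\mathfrak{s}^{\mathrm{can}}_Y}(-K-x)=\chi_{\mathfrak{s}^{\mathrm{can}}_Y}(x)$ for all $x\in \Z V(\Gamma)$, which is a one-line computation: expanding $-\tfrac12\big(K(-K-x)+(-K-x)\cdot(-K-x)\big)$ and using that $K(y)=-y\cdot y \pmod 2$ only matters up to the fact that everything is an honest identity of rational numbers once one writes $K$ as an element of $\Q V(\Gamma)$ — more precisely $K(-K-x)+(-K-x)^2 = -K^2 - K(x) + K^2 + 2K\cdot x + x^2 = K(x)+x^2$ since $K(x)=K\cdot x$ under the identification. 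So $\chi$ is invariant under the involution $x\mapsto -K-x$.

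Next I would identify how this involution interacts with the distinguished cycles $x_{\mathfrak{s}^{\mathrm{can}}_Y}(i)$. Recall $x_{\mathfrak{s}^{\mathrm{can}}_Y}(i)$ is the unique minimal element of $D_i=\{x\in\Z V(\Gamma)\mid m_{v_c}(x)=i,\ (x+l'_{\mathfrak{s}^{\mathrm{can}}_Y})(v)\le 0\ \forall v\neq v_c\}=\{x\mid m_{v_c}(x)=i,\ x(v)\le 0\ \forall v\neq v_c\}$. The key claim is that $x\mapsto -K-x$ carries $D_i$ bijectively to $D_{N_Y+1-i}$: indeed $m_{v_c}(-K-x)=-m_{v_c}(K)-m_{v_c}(x)=(N_Y+1)-i$ by \Cref{lem: central coeff of canonical class}, and for $v\neq v_c$ one needs $(-K-x)(v)\le 0$, i.e. $x(v)\ge -K(v)=w(v)+2$; since $\Gamma$ is a Seifert graph with all arm-weights $\le -2$ one has $w(v)+2\le 0$ for arm nodes $v$, but this inequality is the wrong direction, so the naive claim fails and I must instead argue at the level of the minimal elements directly. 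The correct route: the involution $x\mapsto -K-x$ is order-reversing on $\Z V(\Gamma)$, so it sends minimal elements of a set to maximal elements of the image. I would therefore show that $x_{\mathfrak{s}^{\mathrm{can}}_Y}(i)$ is also characterized as the \emph{maximal} element among cycles $x$ with $m_{v_c}(x)=i$ lying in the ``reflected'' fundamental domain $\{x\mid (-K-x)(v)\le 0\ \forall v\neq v_c\}$, or alternatively use that $\overline{\mathrm{sp}_{\mathfrak{s}^{\mathrm{can}}_Y}(x_{\mathfrak{s}^{\mathrm{can}}_Y}(i))}=\mathrm{sp}_{\mathfrak{s}^{\mathrm{can}}_Y}(-K-x_{\mathfrak{s}^{\mathrm{can}}_Y}(i))$ and that conjugation preserves lattice homology gradings, hence $\chi_{\mathfrak{s}^{\mathrm{can}}_Y}(x_{\mathfrak{s}^{\mathrm{can}}_Y}(i))$ equals the weight of the minimal cycle in $D_{N_Y+1-i}$, which is $\chi_{\mathfrak{s}^{\mathrm{can}}_Y}(x_{\mathfrak{s}^{\mathrm{can}}_Y}(N_Y+1-i))$.

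Concretely, the cleanest argument I would actually write uses \Cref{rem: canonical spin c str} and the self-conjugacy of $\mathfrak{s}^{\mathrm{can}}_Y$: since $\mathfrak{s}^{\mathrm{can}}_Y$ is self-conjugate, conjugation of $\mathrm{Spin}^c$ structures on $W_\Gamma$ restricting to $\mathfrak{s}^{\mathrm{can}}_Y$ on $Y$ descends to the involution $x\mapsto -K-x$ on $\Z V(\Gamma)$, and this involution preserves $\chi_{\mathfrak{s}^{\mathrm{can}}_Y}$ by the computation above. The lattice homology $\mathbb H^+(\Gamma,\mathfrak{s}^{\mathrm{can}}_Y)\cong HF^+(-Y,\mathfrak{s}^{\mathrm{can}}_Y)$ is read off from the sequence $(\chi_{\mathfrak{s}^{\mathrm{can}}_Y}(x_{\mathfrak{s}^{\mathrm{can}}_Y}(i)))_{i\ge 0}$, and since this graded root is conjugation-symmetric it follows that the finite piece $i\in[0,N_Y+1]$ — which by \Cref{rem: Delta seq for canonical spin c} is exactly the range where $\Delta_{Y,\mathfrak{s}^{\mathrm{can}}_Y}$ can be negative, with $x_{\mathfrak{s}^{\mathrm{can}}_Y}(0)=0$ and the sequence increasing past $N_Y$ — is symmetric under $i\leftrightarrow N_Y+1-i$, with the endpoints matching because $\chi_{\mathfrak{s}^{\mathrm{can}}_Y}(x_{\mathfrak{s}^{\mathrm{can}}_Y}(0))=\chi_{\mathfrak{s}^{\mathrm{can}}_Y}(0)=0=\chi_{\mathfrak{s}^{\mathrm{can}}_Y}(-K-0)$ pairs with $i=N_Y+1$. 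The main obstacle I anticipate is the bookkeeping in step two: verifying that the order-reversing involution $x\mapsto -K-x$ really does send $x_{\mathfrak{s}^{\mathrm{can}}_Y}(i)$ to $x_{\mathfrak{s}^{\mathrm{can}}_Y}(N_Y+1-i)$ and not merely to some other cycle of the same weight — this requires either a careful analysis of the defining inequalities for $D_i$ under reflection (handling the boundary behaviour at the central node, where the constraint is an equality $m_{v_c}=i$ rather than an inequality) or, more robustly, invoking the conjugation invariance of the graded root / lattice homology directly so that one compares $\chi$-values rather than the cycles themselves. I would take the latter, more structural route to avoid the combinatorics.
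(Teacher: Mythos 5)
Your first two steps are exactly the paper's: the identity $\chi_{\mathfrak{s}^{\mathrm{can}}_Y}(-K-x)=\chi_{\mathfrak{s}^{\mathrm{can}}_Y}(x)$ (valid because $k_{\mathfrak{s}^{\mathrm{can}}_Y}=K$) and the computation $m_{v_c}(-K-x_{\mathfrak{s}^{\mathrm{can}}_Y}(i))=N_Y+1-i$ via \Cref{lem: central coeff of canonical class}. You also correctly diagnose that the naive claim ``$x\mapsto -K-x$ carries $D_i$ to $D_{N_Y+1-i}$'' fails. The problem is the route you then choose to close the argument.

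Appealing to ``conjugation invariance of the graded root / lattice homology'' does not finish the proof. The graded root $R_{\Gamma,\mathfrak{s}}$ is only a coarse invariant of the sequence $(\chi_{\mathfrak{s}}(x_{\mathfrak{s}}(i)))_{i\ge 0}$ (it records leaves and simple angles); two quite different sequences can produce the same, even symmetric, graded root, so symmetry of the root does not yield the term-by-term identity $\chi(x_{\mathfrak{s}}(i))=\chi(x_{\mathfrak{s}}(N_Y+1-i))$ that the lemma asserts. Likewise, ``conjugation preserves lattice homology gradings'' does not by itself identify $\chi(x_{\mathfrak{s}}(i))$ with the weight of the minimal cycle at central multiplicity $N_Y+1-i$. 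The missing ingredient is precisely the minimality statement you dismissed as combinatorics: by \cite[Lemma~9.1(a)]{nemethi2005ozsvath}, since $\Gamma$ is almost rational and $K\in\Z V(\Gamma)$ (equivalently, $\mathfrak{s}^{\mathrm{can}}_Y$ is self-conjugate, so $-K-x_{\mathfrak{s}}(i)$ is an integral cycle), $x_{\mathfrak{s}}(j)$ minimizes $\chi_{\mathfrak{s}}$ among \emph{all} integral cycles with $m_{v_c}=j$, not just those in $D_j$. Applying this to $-K-x_{\mathfrak{s}}(i)$, which has central multiplicity $N_Y+1-i$ and the same $\chi$-value as $x_{\mathfrak{s}}(i)$, gives $\chi(x_{\mathfrak{s}}(i))\ge \chi(x_{\mathfrak{s}}(N_Y+1-i))$ for every $i$ in the stated range; swapping $i$ and $N_Y+1-i$ gives the reverse inequality and hence equality. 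That is the paper's proof; your proposal needs this one-sided-inequality-plus-symmetry step in place of the graded-root appeal.
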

\begin{proof}
Recall that $m_{v_c}(x_{\mathfrak{s}^{can}_Y}(i)) = 0$. By \Cref{lem: central coeff of canonical class}, we have
\[
m_{v_c}(-K - x_{\mathfrak{s}^{can}_Y}(i)) 
= -m_{v_c}(K) - m_{v_c}(x_{\mathfrak{s}^{can}_Y}(i)) 
= N_Y + 1 - i 
= m_{v_c}(x_{\mathfrak{s}^{can}_Y}(N_Y + 1 - i)).
\]
Since $\Gamma$ is almost rational and $K \in \Z V(\Gamma)$, it follows from \cite[Lemma~9.1]{nemethi2005ozsvath} that 
\[
\chi_{\mathfrak{s}^{can}_Y}(-K - x_{\mathfrak{s}^{can}_Y}(i)) 
\ge \chi_{\mathfrak{s}^{can}_Y}(x_{\mathfrak{s}^{can}_Y}(N_Y + 1 - i)).
\]
On the other hand, since $k_{\mathfrak{s}^{can}_Y} = K$, we have 
\[
\chi_{\mathfrak{s}^{can}_Y}(-K - x) 
= -\tfrac{1}{2}\big(K(-K - x) + (K + x) \cdot (K + x)\big) 
= -\tfrac{1}{2}\big(K(x) + x \cdot x\big) 
= \chi_{\mathfrak{s}^{can}_Y}(x)
\]
for all $x \in \Z V(\Gamma)$. Therefore,
\[
\chi_{\mathfrak{s}^{can}_Y}(x_{\mathfrak{s}^{can}_Y}(i)) 
\le \chi_{\mathfrak{s}^{can}_Y}(x_{\mathfrak{s}^{can}_Y}(N_Y + 1 - i)).
\]
Since this holds for all integers $i$ with $0 \le i \le N_Y + 1$, the reverse inequality also follows by replacing $i$ with $N_Y + 1 - i$. The lemma follows.
\end{proof}

We now consider the \emph{spherical Wu class} $\mathrm{Wu}(\Gamma,\mathfrak{s})$ of $(\Gamma,\mathfrak{s})$, where $\mathfrak{s}$ is any self-conjugate $\mathrm{Spin}^c$ structure on $Y$. It is defined as the unique element of $H^2(W_\Gamma;\Z) \subset \Q V(\Gamma)$ satisfying the following conditions:
\begin{itemize}
    \item $\mathrm{Wu}(\Gamma, \mathfrak{s}^{can}_Y)\vert_Y = c_1(\mathfrak{s}^{can}_Y)$;
    \item There exists a function $\lambda_\mathfrak{s} \colon V(\Gamma) \to \{0,1\}$ such that 
    \[
        \mathrm{Wu}(\Gamma, \mathfrak{s}^{can}_Y) = \sum_{v \in V(\Gamma)} \lambda_\mathfrak{s}(v) v.
    \]
\end{itemize}
Note that $\mathrm{Wu}(\Gamma,\mathfrak{s})$ is a characteristic element and $\mathrm{sp}(\mathrm{Wu}(\Gamma,\mathfrak{s}))\vert_Y = \mathfrak{s}$. Thus, if we consider the spherical Wu class for the canonical $\mathrm{Spin}^c$ structure $\mathfrak{s}^{can}_Y$ on $Y$, then since $k_{\mathfrak{s}^{can}_Y} = K$, there exists a unique cycle $x^{can}_Y \in \Z V(\Gamma)$ satisfying
\[
    \mathrm{Wu}(\Gamma,\mathfrak{s}^{can}_Y) = K + 2x^{can}_Y,
\]
which we call the \emph{Wu cycle}.

From now on, we fix the following terminology: if $N_Y$ is even, write $\hat{N}_Y = \frac{N_Y}{2} + 1$; if $N_Y$ is odd, write $\hat{N}_Y = \frac{N_Y + 1}{2}$.

\begin{lem} \label{lem: central coeff of Wu cycle}
    We have $m_{v_c}(x^{can}_Y) = \hat{N}_Y$.
\end{lem}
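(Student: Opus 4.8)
The plan is to determine $m_{v_c}(x^{can}_Y)$ by combining the defining relation $\mathrm{Wu}(\Gamma,\mathfrak{s}^{can}_Y)=K+2x^{can}_Y$ with the formula for $m_{v_c}(K)$ from \Cref{lem: central coeff of canonical class}. Taking central coefficients on both sides gives
\[
m_{v_c}(\mathrm{Wu}(\Gamma,\mathfrak{s}^{can}_Y)) = m_{v_c}(K) + 2\,m_{v_c}(x^{can}_Y) = -N_Y - 1 + 2\,m_{v_c}(x^{can}_Y),
\]
so it suffices to compute $m_{v_c}(\mathrm{Wu}(\Gamma,\mathfrak{s}^{can}_Y))$. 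Since $\mathrm{Wu}(\Gamma,\mathfrak{s}^{can}_Y)=\sum_{v\in V(\Gamma)}\lambda_{\mathfrak{s}^{can}_Y}(v)\,v$ with $\lambda_{\mathfrak{s}^{can}_Y}\colon V(\Gamma)\to\{0,1\}$, the value $m_{v_c}(\mathrm{Wu}(\Gamma,\mathfrak{s}^{can}_Y))$ is obtained by writing $\mathrm{Wu}(\Gamma,\mathfrak{s}^{can}_Y)$ in the basis $V(\Gamma)$ (as opposed to the dual basis), i.e.\ by applying $Q_\Gamma^{-1}$ to the coordinate vector whose $v$-entry is $\lambda_{\mathfrak{s}^{can}_Y}(v)$, and reading off the central coordinate. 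The key point is therefore to identify $\lambda_{\mathfrak{s}^{can}_Y}(v_c)$ together with the $\lambda$-values along the arms, using the characterization of $\mathrm{Wu}(\Gamma,\mathfrak{s}^{can}_Y)$ as the unique characteristic element of $\{0,1\}$-type restricting to $c_1(\mathfrak{s}^{can}_Y)$ on $Y$.

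First I would recall that, since $\mathrm{Wu}(\Gamma,\mathfrak{s}^{can}_Y)=K+2x^{can}_Y$ is characteristic, its coordinates in the dual basis (equivalently, the values $\mathrm{Wu}(v)$ for $v\in V(\Gamma)$) satisfy $\mathrm{Wu}(v)\equiv v\cdot v = w(v)\pmod 2$ for every $v$. Then I would exploit the reflection symmetry already established: by \Cref{rem: conjugation in terms of cycles}, $\overline{\mathrm{sp}_{\mathfrak{s}^{can}_Y}(x)}=\mathrm{sp}_{\mathfrak{s}^{can}_Y}(-K-x)$, and since $\mathfrak{s}^{can}_Y$ is self-conjugate the Wu cycle is the fixed point of this involution on the characteristic elements compatible with the $\{0,1\}$-condition; equivalently $\mathrm{Wu}(\Gamma,\mathfrak{s}^{can}_Y)=-\mathrm{Wu}(\Gamma,\mathfrak{s}^{can}_Y)$ up to the $2\Z V(\Gamma)$-ambiguity pinned down by the $\{0,1\}$ normalization. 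Translating the relation $\mathrm{Wu}=K+2x^{can}_Y$ under the involution $x\mapsto -K-x$ yields $-K-x^{can}_Y$ as the Wu cycle of $\overline{\mathfrak{s}^{can}_Y}=\mathfrak{s}^{can}_Y$, hence $x^{can}_Y=-K-x^{can}_Y$, i.e.\ $2x^{can}_Y=-K$ at the level of $\Z V(\Gamma)$-classes modulo the normalization — more precisely $m_{v_c}(x^{can}_Y)$ is forced by the requirement $0\le \lambda_{\mathfrak{s}^{can}_Y}(v_c)\le 1$ together with $m_{v_c}(\mathrm{Wu})=m_{v_c}(K)+2m_{v_c}(x^{can}_Y)$.

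The main obstacle is the bookkeeping needed to pin down $\lambda_{\mathfrak{s}^{can}_Y}(v_c)$ exactly, i.e.\ deciding its parity/value from the $\{0,1\}$-minimality condition rather than merely up to an even shift. My approach here would be to note that $-N_Y-1=m_{v_c}(K)$, so modulo $2$ we need $m_{v_c}(\mathrm{Wu})\equiv -N_Y-1\pmod 2$; combined with the explicit computation of $\mathrm{Wu}$ in the basis $V(\Gamma)$ via the adjugate formula for $Q_\Gamma^{-1}$ (exactly as in the computation of $m_{v_c}(K)$ in \Cref{subsec: comp seq is semi reflective}, but with input vector $(\lambda_{\mathfrak{s}^{can}_Y}(v))_v$ instead of $(K(v))_v$), this determines $m_{v_c}(x^{can}_Y)=\tfrac{1}{2}\big(m_{v_c}(\mathrm{Wu})-m_{v_c}(K)\big)=\tfrac12\big(m_{v_c}(\mathrm{Wu})+N_Y+1\big)$. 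One then checks that the unique admissible value is $\hat N_Y$: when $N_Y$ is even, $m_{v_c}(\mathrm{Wu})$ must be even so $m_{v_c}(x^{can}_Y)=\tfrac{N_Y}{2}+1=\hat N_Y$; when $N_Y$ is odd, $m_{v_c}(\mathrm{Wu})$ must be odd so $m_{v_c}(x^{can}_Y)=\tfrac{N_Y+1}{2}=\hat N_Y$. I expect the only genuinely delicate point is justifying that the $\{0,1\}$ normalization forces $\lambda_{\mathfrak{s}^{can}_Y}(v_c)$ to take the claimed value (equivalently, that $x^{can}_Y$ is the cycle selected by the spherical Wu condition and not one differing by an element of $2\Z V(\Gamma)$), which follows from the uniqueness clause in the definition of $\mathrm{Wu}(\Gamma,\mathfrak{s}^{can}_Y)$ together with $K\in\Z V(\Gamma)$.
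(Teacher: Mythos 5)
Your overall strategy is the paper's: take $m_{v_c}$ of $\mathrm{Wu}(\Gamma,\mathfrak{s}^{can}_Y)=K+2x^{can}_Y$, plug in $m_{v_c}(K)=-N_Y-1$ from \Cref{lem: central coeff of canonical class}, and pin down the answer using integrality of $m_{v_c}(x^{can}_Y)$ together with $\lambda_{\mathfrak{s}^{can}_Y}(v_c)\in\{0,1\}$. However, the write-up contains a genuine conceptual error that you should not carry forward. You claim that $m_{v_c}(\mathrm{Wu}(\Gamma,\mathfrak{s}^{can}_Y))$ is obtained ``by applying $Q_\Gamma^{-1}$ to the coordinate vector whose $v$-entry is $\lambda_{\mathfrak{s}^{can}_Y}(v)$.'' This is backwards: by the definition in \Cref{subsec: computations sequence}, $m_v$ extracts the coefficient of $v$ in the expansion in the basis $V(\Gamma)$, and the spherical Wu class is \emph{already given} in that basis as $\mathrm{Wu}(\Gamma,\mathfrak{s}^{can}_Y)=\sum_v \lambda_{\mathfrak{s}^{can}_Y}(v)\,v$. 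Hence $m_{v_c}(\mathrm{Wu}(\Gamma,\mathfrak{s}^{can}_Y))=\lambda_{\mathfrak{s}^{can}_Y}(v_c)$ immediately, with no inversion of $Q_\Gamma$. (The inversion was only needed for $K$ because $K$ was specified by its pairings $K(v)$, i.e.\ in the dual basis.) If you actually inverted $Q_\Gamma$ here you would not be computing the correct quantity.

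Two further issues: the tangent about the involution $x\mapsto -K-x$ leading to ``$2x^{can}_Y=-K$'' is false in general (it would force $\mathrm{Wu}=0$); what is true is only that conjugation sends $\mathrm{sp}(\mathrm{Wu})$ to $\mathrm{sp}(-\mathrm{Wu})$, which is a different $\mathrm{Spin}^c$ structure on $W_\Gamma$ restricting to the same boundary structure, so this line of reasoning should be dropped. And your final parity check has the cases swapped: when $N_Y$ is even, $N_Y+1$ is odd, so $\lambda_{\mathfrak{s}^{can}_Y}(v_c)$ must be \emph{odd} (i.e.\ equal to $1$) for $(\lambda_{\mathfrak{s}^{can}_Y}(v_c)+N_Y+1)/2$ to be an integer, and when $N_Y$ is odd it must be \emph{even} (i.e.\ equal to $0$). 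The final numerical conclusions you state are correct, but as written they follow from parity claims that are the opposite of what the arithmetic requires. Once the $Q_\Gamma^{-1}$ confusion is removed the whole proof is two lines: $m_{v_c}(x^{can}_Y)=\tfrac{1}{2}\bigl(\lambda_{\mathfrak{s}^{can}_Y}(v_c)+N_Y+1\bigr)$, and since $\lambda_{\mathfrak{s}^{can}_Y}(v_c)\in\{0,1\}$ while the left-hand side is an integer, the value is forced to be $\hat N_Y$.
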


\begin{proof}
By \Cref{lem: central coeff of canonical class}, we obtain
\[
m_{v_c}(x^{can}_Y) 
= \frac{m_{v_c}(\mathrm{Wu}(\Gamma,\mathfrak{s}^{can}_Y)) - m_{v_c}(K)}{2} 
= \frac{N_Y + 1 + \lambda_{\mathfrak{s}^{can}_Y}(v_c)}{2}.
\]
Since $\lambda_{\mathfrak{s}^{can}_Y}(v_c)$ is either $0$ or $1$ and $m_{v_c}(x^{can}_Y)$ is an integer, the right-hand side equals $\hat{N}_Y$. Thus $m_{v_c}(x^{can}_Y) = \hat{N}_Y$.
\end{proof}

Finally, we consider \emph{constant-weight sequences} of elements of $\Z V(\Gamma)$. 

\begin{defn}
A sequence $x_1, \dots, x_n$ of elements of $\Z V(\Gamma)$ is called \emph{constant-weight} if $\chi_{\mathfrak{s}^{can}_Y}(x_i)$ is independent of $i$, and for each $i = 1, \dots, n-1$, there exists some node $v_i \in V(\Gamma) \smallsetminus \{v_c\}$ such that $x_{i+1} = x_i \pm v_i$. We also say that such a sequence is between $x_1$ and $x_n$, since its reverse sequence is again a constant-weight sequence. Moreover, we say that the sequence 
\[
\mathrm{sp}_{\mathfrak{s}^{can}_Y}(x_1), \dots, \mathrm{sp}_{\mathfrak{s}^{can}_Y}(x_n)
\]
is a constant-weight sequence.
\end{defn}

\begin{lem} \label{lem: constant wt seq exists}
    Suppose there exists an integer $i \ge 0$ and a cycle $x \in \Z V(\Gamma)$ satisfying $m_{v_c}(x) = i$ and $\chi_{\mathfrak{s}^{can}_Y}(x_{\mathfrak{s}^{can}_Y}(i)) = \chi_{\mathfrak{s}^{can}_Y}(x)$. Then there exists a constant-weight sequence between $x$ and $x_{\mathfrak{s}^{can}_Y}(i)$.
\end{lem}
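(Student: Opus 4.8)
The plan is to connect $x$ to $x_{\mathfrak{s}^{can}_Y}(i)$ by running a ``reverse computation sequence'' (Laufer's algorithm) starting from $x$, using in an essential way that the subgraph $\Gamma\smallsetminus\{v_c\}$ is a disjoint union of strings with all weights $\le -2$, hence a rational plumbing graph. Throughout I write $\chi=\chi_{\mathfrak{s}^{can}_Y}$; since $l'_{\mathfrak{s}^{can}_Y}=0$ and $k_{\mathfrak{s}^{can}_Y}=K$ (\Cref{rem: canonical spin c str}), one has $\chi(y)=-\tfrac12\big(K(y)+y\cdot y\big)$ for $y\in\Z V(\Gamma)$, the set $D_i$ equals $\{y\in\Z V(\Gamma):m_{v_c}(y)=i,\ y\cdot v\le 0 \text{ for all } v\in V(\Gamma)\smallsetminus\{v_c\}\}$, and $x_{\mathfrak{s}^{can}_Y}(i)$ is its minimal element \cite[Lemma 7.6]{nemethi2005ozsvath}. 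I will repeatedly use the elementary identity $\chi(y+E)=\chi(y)+\chi(E)-y\cdot E$. The first and central step is to show that $\chi$ is \emph{strictly} increasing along the partial order inside $D_i$, so that $x_{\mathfrak{s}^{can}_Y}(i)$ is the unique element of $D_i$ of minimal weight: if $E>0$ is an effective cycle supported on $V(\Gamma)\smallsetminus\{v_c\}$, then, because the weights and mutual intersection numbers of the nodes away from $v_c$ are unaffected by deleting $v_c$, the value $\chi(E)$ coincides with the analogous invariant of the rational graph $\Gamma\smallsetminus\{v_c\}$, hence $\chi(E)\ge 1$ by Artin's rationality criterion; and if $y<y'$ in $D_i$ then $E:=y'-y>0$ is supported away from $v_c$ and $y\cdot E\le 0$, so $\chi(y')-\chi(y)=\chi(E)-y\cdot E\ge 1>0$.

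With Step~1 in hand, the next step is to deduce that $\chi(x_{\mathfrak{s}^{can}_Y}(i))$ is the minimum of $\chi$ over the whole slice $\{y:m_{v_c}(y)=i\}$. Given any such $y$, I would repeatedly replace $y$ by $y+v$ whenever there is some $v\ne v_c$ with $y\cdot v>0$ (such a move exists while $y\notin D_i$ by \cite[Lemma 7.7]{nemethi2005ozsvath}); each move does not raise $\chi$, since $\chi(y+v)=\chi(y)+1-y\cdot v$ with $y\cdot v\ge 1$, and strictly increases $\sum_w m_w(y)$. As the process stays inside the finite set $\{z:m_{v_c}(z)=i,\ \chi(z)\le\chi(y)\}$ (a sublevel set of a positive–definite quadratic on the affine slice), it terminates at some $y'\in D_i$ with $\chi(y')\le\chi(y)$, and by Step~1 $\chi(y')\ge\chi(x_{\mathfrak{s}^{can}_Y}(i))$, giving $\chi(x_{\mathfrak{s}^{can}_Y}(i))\le\chi(y)$.

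Finally, I would run the same algorithm starting from the given $x$. Since $\chi(x)=\chi(x_{\mathfrak{s}^{can}_Y}(i))$ is already minimal on the slice, any move $y\mapsto y+v$ (with $v\ne v_c$, $y\cdot v>0$) encountered along the way must have $y\cdot v=1$ — otherwise $\chi(y+v)<\chi(y)$ would contradict minimality — so $\chi(y+v)=\chi(y)$ and the list of cycles produced is a constant-weight sequence in the sense of the paper. As before the algorithm terminates, now at some $x'\in D_i$ with $\chi(x')=\chi(x_{\mathfrak{s}^{can}_Y}(i))$, and Step~1 forces $x'=x_{\mathfrak{s}^{can}_Y}(i)$. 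Hence $x,\ x+v_1,\ x+v_1+v_2,\ \dots,\ x_{\mathfrak{s}^{can}_Y}(i)$ is the desired constant-weight sequence between $x$ and $x_{\mathfrak{s}^{can}_Y}(i)$.

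I expect Step~1 to be the main obstacle: it is exactly where (almost-)rationality of $\Gamma$ is used essentially, and the inequality $\chi(E)\ge 1$ it supplies is what pins down the a priori ambiguous sign of $\chi(y')-\chi(y)$ for $y<y'$ in $D_i$. Two further points should be written out but are routine: that $D_i$ is closed under the lattice meet (so its minimal element is its least element, and every element of $D_i$ dominates $x_{\mathfrak{s}^{can}_Y}(i)$), and that both runs of Laufer's algorithm terminate while remaining in the relevant (sub)level set of $\chi$, using the strict monotonicity of the coefficient sum together with the finiteness of sublevel sets of $\chi$ on the slice.
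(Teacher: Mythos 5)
Your argument is correct, and it reconstructs in full the content of the proof of N\'emethi's Lemma 9.1 in \cite{nemethi2005ozsvath}, which is precisely what the paper cites for this statement. The three steps you isolate --- strict monotonicity of $\chi$ on $D_i$ via Artin's criterion applied to the rational graph $\Gamma\smallsetminus\{v_c\}$, minimality of $\chi(x_{\mathfrak{s}^{can}_Y}(i))$ over the slice via Laufer's algorithm, and extraction of the constant-weight path by rerunning the algorithm from $x$ --- are exactly the ingredients of N\'emethi's argument, so the two approaches are the same.
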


\begin{proof}
    This follows from the proof of \cite[Lemma~9.1]{nemethi2005ozsvath}.
\end{proof}

\begin{lem} \label{lem: middle cycle has mu bar weight}
    We have
    $\chi_{\mathfrak{s}^{can}_Y}(x_{\mathfrak{s}^{can}_Y}(\hat{N}_Y)) 
        = \chi_{\mathfrak{s}^{can}_Y}(x^{can}_Y)$.
\end{lem}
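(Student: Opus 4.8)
The plan is to show both inequalities $\chi_{\mathfrak{s}^{can}_Y}(x_{\mathfrak{s}^{can}_Y}(\hat N_Y)) \le \chi_{\mathfrak{s}^{can}_Y}(x^{can}_Y)$ and $\chi_{\mathfrak{s}^{can}_Y}(x^{can}_Y) \le \chi_{\mathfrak{s}^{can}_Y}(x_{\mathfrak{s}^{can}_Y}(\hat N_Y))$. By \Cref{lem: central coeff of Wu cycle} we have $m_{v_c}(x^{can}_Y) = \hat N_Y$, and recall that $x_{\mathfrak{s}^{can}_Y}(\hat N_Y)$ is by definition the \emph{minimal} element of $D_{\hat N_Y}$, the set of cycles $x$ with $m_{v_c}(x) = \hat N_Y$ satisfying the non-positivity condition $(x + l'_{\mathfrak{s}^{can}_Y})(v)\le 0$ for all $v\ne v_c$. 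Since $\mathfrak{s}^{can}_Y$ is self-conjugate, $l'_{\mathfrak{s}^{can}_Y}=0$ by \Cref{rem: canonical spin c str}. So the first step is to check that $x^{can}_Y$ itself lies in $D_{\hat N_Y}$: the coefficient condition at $v_c$ holds by the previous lemma, and I must verify that $x^{can}_Y(v)\le 0$ for every non-central node $v$. This should follow from the fact that $\mathrm{Wu}(\Gamma,\mathfrak{s}^{can}_Y) = \sum_v \lambda_{\mathfrak{s}^{can}_Y}(v)v$ with $\lambda\in\{0,1\}$, combined with $\mathrm{Wu}(\Gamma,\mathfrak{s}^{can}_Y) = K + 2x^{can}_Y$ and $K(v) = -w(v)-2$: one gets $x^{can}_Y(v) = \tfrac12(\lambda_{\mathfrak{s}^{can}_Y}(v) - K(v) - \text{(contributions)})$, and the definition of the spherical Wu class is precisely engineered so that the resulting characteristic vector is "small". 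More concretely, I expect that the standard characterization of the spherical Wu representative (as in Némethi's work, e.g.\ that $\mathrm{Wu}$ is the characteristic vector in its class whose coordinates all lie in $\{0,1\}$-translates, equivalently the one minimizing coordinatewise among characteristic vectors satisfying the reduction inequality) directly yields $x^{can}_Y\in D_{\hat N_Y}$. Granting this, minimality of $x_{\mathfrak{s}^{can}_Y}(\hat N_Y)$ gives $x_{\mathfrak{s}^{can}_Y}(\hat N_Y)\le x^{can}_Y$ in the partial order, hence (since $\chi_{\mathfrak{s}^{can}_Y}$ is monotone on $D_i$ along downward moves, or more precisely by \cite[Lemma 7.6]{nemethi2005ozsvath} the minimal element minimizes $\chi$) we get $\chi_{\mathfrak{s}^{can}_Y}(x_{\mathfrak{s}^{can}_Y}(\hat N_Y)) \le \chi_{\mathfrak{s}^{can}_Y}(x^{can}_Y)$.

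For the reverse inequality I would use the symmetry already established. By \Cref{lem: weights are symmetric} we have $\chi_{\mathfrak{s}^{can}_Y}(x_{\mathfrak{s}^{can}_Y}(i)) = \chi_{\mathfrak{s}^{can}_Y}(x_{\mathfrak{s}^{can}_Y}(N_Y+1-i))$, and the weight function $\chi_{\mathfrak{s}^{can}_Y}$ is invariant under $x\mapsto -K-x$ (as shown in the proof of that lemma). The Wu cycle $x^{can}_Y$ is the cycle with $\mathrm{Wu}(\Gamma,\mathfrak{s}^{can}_Y) = K + 2x^{can}_Y$; since $\mathrm{Wu}(\Gamma,\mathfrak{s}^{can}_Y)$ is characteristic and restricts to the self-conjugate $\mathfrak{s}^{can}_Y$, the class of $x^{can}_Y$ is "central" in the sense that $-K - x^{can}_Y$ and $x^{can}_Y$ have the same $\chi$, and $m_{v_c}(-K-x^{can}_Y) = -m_{v_c}(K) - \hat N_Y = N_Y+1-\hat N_Y$. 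When $N_Y$ is even this is $\hat N_Y$ again (a genuine self-symmetry), and when $N_Y$ is odd it is $\hat N_Y$ as well since $N_Y + 1 - \tfrac{N_Y+1}{2} = \tfrac{N_Y+1}{2} = \hat N_Y$. In either case $-K - x^{can}_Y$ is another cycle with central coefficient $\hat N_Y$ and the same weight as $x^{can}_Y$. The key point is then to invoke \Cref{lem: constant wt seq exists}: if $\chi_{\mathfrak{s}^{can}_Y}(x_{\mathfrak{s}^{can}_Y}(\hat N_Y)) = \chi_{\mathfrak{s}^{can}_Y}(x^{can}_Y)$ we are done, otherwise I must rule out $\chi_{\mathfrak{s}^{can}_Y}(x_{\mathfrak{s}^{can}_Y}(\hat N_Y)) < \chi_{\mathfrak{s}^{can}_Y}(x^{can}_Y)$. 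But wait — this is exactly the inequality I just proved in the first paragraph, so what remains is $\chi_{\mathfrak{s}^{can}_Y}(x^{can}_Y) \le \chi_{\mathfrak{s}^{can}_Y}(x_{\mathfrak{s}^{can}_Y}(\hat N_Y))$.

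For this last inequality I would argue as follows. Since $\mathfrak{s}^{can}_Y$ is self-conjugate, $\overline{\mathrm{sp}_{\mathfrak{s}^{can}_Y}(x_{\mathfrak{s}^{can}_Y}(\hat N_Y))} = \mathrm{sp}_{\mathfrak{s}^{can}_Y}(-K - x_{\mathfrak{s}^{can}_Y}(\hat N_Y))$ by \Cref{rem: conjugation in terms of cycles}, and this conjugate structure also restricts to $\mathfrak{s}^{can}_Y$ on $Y$. Its central coefficient is $-m_{v_c}(K) - \hat N_Y = N_Y + 1 - \hat N_Y = \hat N_Y$. Moreover $\chi_{\mathfrak{s}^{can}_Y}(-K - x_{\mathfrak{s}^{can}_Y}(\hat N_Y)) = \chi_{\mathfrak{s}^{can}_Y}(x_{\mathfrak{s}^{can}_Y}(\hat N_Y))$ by the $\chi$-invariance under $x\mapsto -K-x$. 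Now the set of cycles with central coefficient $\hat N_Y$ and with the given weight $\chi_{\mathfrak{s}^{can}_Y}(x_{\mathfrak{s}^{can}_Y}(\hat N_Y))$ (which is the minimal weight in that column) contains a "reflection-symmetric" element; I would like to conclude that the Wu cycle $x^{can}_Y$ is precisely such an element, or at least has weight no larger. The cleanest route: the Wu representative $\mathrm{Wu}(\Gamma,\mathfrak{s}^{can}_Y)$ is, by construction, the characteristic vector in the class of $\mathfrak{s}^{can}_Y$ whose coordinates are minimal (all in $\{0,1\}$), which forces $x^{can}_Y$ to satisfy the reduction inequality $(x^{can}_Y)(v)\le 0$ for $v\ne v_c$ — already used above — and thus $x^{can}_Y\ge x_{\mathfrak{s}^{can}_Y}(\hat N_Y)$ is \emph{not} automatic, but since both lie in $D_{\hat N_Y}$ and $x_{\mathfrak{s}^{can}_Y}(\hat N_Y)$ is the minimum of $D_{\hat N_Y}$, we have $x_{\mathfrak{s}^{can}_Y}(\hat N_Y) \le x^{can}_Y$, hence by \cite[Lemma 7.6]{nemethi2005ozsvath} and the structure of $\chi$ on $D_i$ (the minimum of $D_i$ minimizes $\chi$ over $D_i$) we only get $\chi(x_{\mathfrak{s}^{can}_Y}(\hat N_Y)) \le \chi(x^{can}_Y)$ again — the wrong direction. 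The resolution must come from the \emph{other} symmetry: $x_{\mathfrak{s}^{can}_Y}(\hat N_Y)$ also equals, up to a constant-weight sequence, the minimal element of the reflected column, and by Némethi's description of lattice cohomology generators the weight $\chi(x_{\mathfrak{s}^{can}_Y}(\hat N_Y))$ is realized by a cycle in the class of the Wu representative. I therefore expect the \textbf{main obstacle} to be this second inequality: showing that $x^{can}_Y$, the Wu cycle, actually achieves the minimal weight $\bar\mu$-type value $\chi_{\mathfrak{s}^{can}_Y}(x_{\mathfrak{s}^{can}_Y}(\hat N_Y))$, rather than merely being bounded below by it. This is essentially the statement that the Wu cycle is "optimal" — a known feature of $\bar\mu$-invariant computations (cf.\ \cite{can2014calculating}) — and I would establish it by combining \Cref{lem: weights are symmetric}, \Cref{lem: constant wt seq exists}, and a direct verification that the reduction inequality is sharp at $v_c$ along a constant-weight sequence connecting $x^{can}_Y$ to $x_{\mathfrak{s}^{can}_Y}(\hat N_Y)$, using that both have central coefficient $\hat N_Y$ and that $\hat N_Y$ is the unique fixed point (or near-fixed-point) of the reflection $i\mapsto N_Y+1-i$.
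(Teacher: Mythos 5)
Your first inequality, $\chi_{\mathfrak{s}^{can}_Y}(x_{\mathfrak{s}^{can}_Y}(\hat N_Y)) \le \chi_{\mathfrak{s}^{can}_Y}(x^{can}_Y)$, is the easy half and is recoverable, though you overcomplicate it: you do not need to verify that $x^{can}_Y$ lies in $D_{\hat N_Y}$ (a claim you only assert is ``engineered'' to hold). N\'emethi's \cite[Lemma~9.1(a)]{nemethi2005ozsvath} gives $\chi_{\mathfrak{s}}(x_{\mathfrak{s}}(i)) \le \chi_{\mathfrak{s}}(x)$ for \emph{any} cycle $x$ with $m_{v_c}(x)=i$, so \Cref{lem: central coeff of Wu cycle} alone suffices for this direction.

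The genuine gap is the reverse inequality $\chi_{\mathfrak{s}^{can}_Y}(x^{can}_Y) \le \chi_{\mathfrak{s}^{can}_Y}(x_{\mathfrak{s}^{can}_Y}(\hat N_Y))$, which you yourself identify as the ``main obstacle'' and never close: your argument visibly loops back to the wrong direction and ends with a promise of ``a direct verification'' that is not supplied. The missing idea is a connected-component argument in the lattice, not a pointwise comparison of cycles. Concretely: by \Cref{lem: weights are symmetric} and the $\chi$-invariance under $x \mapsto -K-x$, the cycle $-K - x_{\mathfrak{s}^{can}_Y}(\hat N_Y)$ has central coefficient $N_Y+1-\hat N_Y$ and the same weight as $x_{\mathfrak{s}^{can}_Y}(N_Y+1-\hat N_Y)$, so \Cref{lem: constant wt seq exists} connects them by a constant-weight sequence; a short parity case analysis (using that the computation sequence from $x_{\mathfrak{s}^{can}_Y}(N_Y+1-\hat N_Y)$ to $x_{\mathfrak{s}^{can}_Y}(\hat N_Y)$ is constant-weight when $N_Y$ is even, by \Cref{lem: weights are symmetric} and \cite[Lemma~9.1(c)]{nemethi2005ozsvath}) then connects $x_{\mathfrak{s}^{can}_Y}(\hat N_Y)$ to $-K - x_{\mathfrak{s}^{can}_Y}(\hat N_Y)$ by a constant-weight sequence. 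Hence the connected component $C$ of the sublevel set $\{\chi_{\mathfrak{s}^{can}_Y} \le \chi_{\mathfrak{s}^{can}_Y}(x_{\mathfrak{s}^{can}_Y}(\hat N_Y))\}$ containing $x_{\mathfrak{s}^{can}_Y}(\hat N_Y)$ is setwise invariant under the involution $x \mapsto -K-x$, and by \cite[Lemma~2.1]{dai2018pin} the unique such invariant component contains the Wu cycle $x^{can}_Y$. Membership $x^{can}_Y \in C$ immediately forces $\chi_{\mathfrak{s}^{can}_Y}(x^{can}_Y) \le \chi_{\mathfrak{s}^{can}_Y}(x_{\mathfrak{s}^{can}_Y}(\hat N_Y))$. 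Without this input (or an equivalent substitute) your proof does not go through.
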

\begin{proof}
    Observe from \Cref{lem: weights are symmetric} and the invariance of $\chi_{\mathfrak{s}^{can}_Y}$ under the involution $x \leftrightarrow -K - x$ that
    \[
        m_{v_c}(-K - x_{\mathfrak{s}^{can}_Y}(\hat{N}_Y)) = m_{v_c}(N_Y + 1 - \hat{N}_Y), 
        \qquad 
        \chi_{\mathfrak{s}^{can}_Y}(-K - x_{\mathfrak{s}^{can}_Y}(\hat{N}_Y)) 
        = \chi_{\mathfrak{s}^{can}_Y}(x_{\mathfrak{s}^{can}_Y}(N_Y + 1 - \hat{N}_Y)).
    \]
    Hence, by \Cref{lem: constant wt seq exists}, there exists a constant-weight sequence between $-K - x_{\mathfrak{s}^{can}_Y}(\hat{N}_Y)$ and $x_{\mathfrak{s}^{can}_Y}(N_Y + 1 - \hat{N}_Y)$. 

    \emph{Claim.} There exists a constant-weight sequence between $-K - x_{\mathfrak{s}^{can}_Y}(\hat{N}_Y)$ and $x_{\mathfrak{s}^{can}_Y}(\hat{N}_Y)$.  
    
    \noindent To prove the claim, we divide into two cases. If $N_Y$ is odd, then $N_Y + 1 - \hat{N}_Y = \hat{N}_Y$, so the claim is immediate. If $N_Y$ is even, then $\hat{N}_Y = (N_Y + 1 - \hat{N}_Y) + 1$. By \Cref{lem: weights are symmetric} and \cite[Lemma~9.1(c)]{nemethi2005ozsvath}, the computation sequence from $x_{\mathfrak{s}^{can}_Y}(N_Y + 1 - \hat{N}_Y)$ to $x_{\mathfrak{s}^{can}_Y}(\hat{N}_Y)$ is a constant-weight sequence. Composing this with the previously constructed sequence proves the claim.  

    Next, consider the connected component $C \subset \mathbb{R} V(\Gamma)$ of the sublevel set of $\chi_{\mathfrak{s}^{can}_Y}$ consisting of cubes of weight at most $\chi_{\mathfrak{s}^{can}_Y}(x_{\mathfrak{s}^{can}_Y}(\hat{N}_Y))$, which contains the cycle $x_{\mathfrak{s}^{can}_Y}(\hat{N}_Y)$; see \cite[Section~3.1]{nemethi2008lattice} for a precise definition. Since there exists a constant-weight sequence between $x_{\mathfrak{s}^{can}_Y}(\hat{N}_Y)$ and $-K - x_{\mathfrak{s}^{can}_Y}(\hat{N}_Y)$, it follows that $-K - x_{\mathfrak{s}^{can}_Y}(\hat{N}_Y)$ is also contained in $C$. Following the argument of \cite[Lemma~2.1]{dai2018pin}, we see that $C$ is the unique component (of the given sublevel set) that is setwise fixed under the involution $x \leftrightarrow -K - x$, and that $x^{can}_Y \in C$. This implies
    \[
        \chi_{\mathfrak{s}^{can}_Y}(x^{can}_Y) \le \chi_{\mathfrak{s}^{can}_Y}(x_{\mathfrak{s}^{can}_Y}(\hat{N}_Y)).
    \]
    On the other hand, since $m_{v_c}(x^{can}_Y) = \hat{N}_Y$ by \Cref{lem: central coeff of Wu cycle}, it follows from \cite[Lemma~9.1(a)]{nemethi2005ozsvath} that 
    \[
        \chi_{\mathfrak{s}^{can}_Y}(x^{can}_Y) \ge \chi_{\mathfrak{s}^{can}_Y}(x_{\mathfrak{s}^{can}_Y}(\hat{N}_Y)).
    \]
    Therefore, equality holds, and the lemma follows.
\end{proof}

\subsection{A $\mathrm{Pin}(2)\times \Z_2$-equivariant lattice homotopy type} \label{subsec: eqv almost J-inv path}

% \[
% \widetilde{\mathrm{sp}}_\mathfrak{s}(x_{\mathcal{N}(\mathfrak{s})}(0)),\widetilde{\mathrm{sp}}_\mathfrak{s}(x^{\mathcal{N}(\mathfrak{s})}_{0,0}),\dots,\widetilde{\mathrm{sp}}_\mathfrak{s}(x^{\mathcal{N}(\mathfrak{s})}_{0,n_0-1}),\widetilde{\mathrm{sp}}_\mathfrak{s}(x_{\mathcal{N}(\mathfrak{s})}(1)),\dots
% \]

From now on, in addition to the assumptions made in the previous subsection, we further assume that $|H_1(Y;\Z)|$ is odd; this implies that $\mathfrak{s}^{can}_Y$ is the unique self-conjugate $\mathrm{Spin}^c$ structure on $Y$. As discussed in \Cref{subsec: eqv spin c comp seq}, we construct the equivariant $\mathrm{Spin}^c$ computation sequence associated to $(\Gamma,\tilde{\s})$ for any $\tilde{\s} \in \mathrm{Spin}^c_{\Z_2}(Y)$ with $\mathcal{N}(\tilde{\s}) = \s$:
\[
\widetilde{\mathrm{sp}}_{\tilde{\s}}(x_{\s}(0)),\,
\widetilde{\mathrm{sp}}_{\tilde{\s}}(x^{\s}_{0,0}),\,
\dots,\,
\widetilde{\mathrm{sp}}_{\tilde{\s}}(x^{\s}_{0,n_0-1}),\,
\widetilde{\mathrm{sp}}_{\tilde{\s}}(x_{\s}(1)),\,
\dots
\]
In this subsection, we will modify this sequence to obtain a $\Z_2$-equivariant almost $J$-invariant path that carries the lattice homology.

\begin{defn} \label{defn: eqv almost J-inv path}
    Given a self-conjugate $\Z_2$-equivariant $\mathrm{Spin}^c$ structure $\tilde{\s}$ on $Y$, a \emph{$\Z_2$-equivariant almost $J$-invariant path} for $(\Gamma,\tilde{\s})$ is a finite sequence
    \[
        \mathfrak{s}_{-n}, \dots, \mathfrak{s}_{-2}, \mathfrak{s}_{-1}, \mathfrak{s}_1, \mathfrak{s}_2, \dots, \mathfrak{s}_n
    \]
    of $\Z_2$-equivariant $\mathrm{Spin}^c$ structures, together with a sequence of nodes
    \[
        v_{-n}, \dots, v_{-2}, v_{-1}, v_1, v_2, \dots, v_n
    \]
    of $V(\Gamma)$, satisfying the following conditions:
    \begin{itemize}
        \item $\mathfrak{s}_1 = \widetilde{\mathrm{sp}}_{\tilde{\s}}(\mathrm{Wu}(\Gamma,\tilde{\s}))$.
        \item For each $i = 1, \dots, n$, we have $\mathfrak{s}_i\vert_Y = \mathfrak{s}_{-i}\vert_Y = \tilde{\s}$.
        \item For each $i = 1, \dots, n-1$, $\mathfrak{s}_{i+1}$ agrees with $\mathfrak{s}_i$ outside the interior of $D_{v_i}$, and
        \[
            \mathcal{N}(\mathfrak{s}_{i+1}) = \mathcal{N}(\mathfrak{s}_i) \pm PD[S_{v_i}].
        \]
        \item If $v_i \neq v_c$, then
        \[
            \alpha_\C \big(\mathrm{ind}^t_{\Z_p} \dirac_{W_\Gamma,\mathfrak{s}_i}\big) 
            = \alpha_\C \big(\mathrm{ind}^t_{\Z_p} \dirac_{W_\Gamma,\mathfrak{s}_{i+1}}\big).
        \]
        \item For each $i = 1, \dots, n$, we have $\mathfrak{s}_{-i} = \overline{\mathfrak{s}_i}$.
    \end{itemize}
\end{defn}

\begin{rem}
    In \Cref{defn: eqv almost J-inv path}, we refer to the sequence $\{v_i\}$ as the sequence of nodes associated with the given $\Z_2$-equivariant almost $J$-invariant path. For simplicity, we usually do not specify the associated sequence of nodes when discussing $\Z_2$-equivariant almost $J$-invariant paths, unless it is necessary to do so.
\end{rem}

We also need to define the notion of $\Z_2$-equivariant $J$-almost rational paths carrying the lattice homology. To do so, we require a sequence
\[
    \mathfrak{t}_1, \dots, \mathfrak{t}_m
\]
of $\Z_2$-equivariant $\mathrm{Spin}^c$ structures satisfying the following conditions:
\begin{itemize}
    \item $\mathfrak{t}_m = \widetilde{\mathrm{sp}}_{\tilde{\s}}(\mathrm{Wu}(\Gamma,\tilde{\s}))$ and $\mathfrak{t}_1 = \overline{\mathfrak{t}_1}$;
    \item For each $i = 1, \dots, m-1$, the structure $\mathfrak{t}_{i+1}$ agrees with $\mathfrak{t}_i$ outside the interior of $D_v$ for some $v \in V(\Gamma)$, and satisfies 
    \[
        \mathcal{N}(\mathfrak{t}_{i+1}) = \mathcal{N}(\mathfrak{t}_i) \pm PD[S_v];
    \]
    \item The sequence $\mathcal{N}(\mathfrak{t}_1), \dots, \mathcal{N}(\mathfrak{t}_m)$ is a constant-weight sequence.
\end{itemize}
Such a sequence always exists, since its nonequivariant analogue exists by \cite[Lemma~3.2]{dai2018pin}, and this can be lifted to a sequence of equivariant $\mathrm{Spin}^c$ structures by \Cref{lem: unique relative eqv lifting}.

\begin{defn} \label{defn: eqv almost J inv carries homology}
    Given a $\Z_2$-equivariant almost $J$-invariant path 
    \[
        \gamma = \{\mathfrak{s}_{-n}, \dots, \mathfrak{s}_{-2}, \mathfrak{s}_{-1}, \mathfrak{s}_1, \mathfrak{s}_2, \dots, \mathfrak{s}_n\},
    \]
    we glue in the sequence $\mathfrak{t}_1, \dots, \mathfrak{t}_m$ discussed above to obtain a new sequence
    \[
        \gamma' = \{\mathfrak{s}_{-n}, \dots, \mathfrak{s}_{-2}, \mathfrak{s}_{-1} = \mathfrak{t}_1, \mathfrak{t}_2, \dots, \mathfrak{t}_{m-1}, \mathfrak{t}_m = \mathfrak{s}_1, \mathfrak{s}_2, \dots, \mathfrak{s}_n\}.
    \]
    This induces a $\Z[U]$-linear map
    \[
        \mathbb{H}(\gamma,\mathfrak{s}) \longrightarrow HF^-(Y,\mathfrak{s}).
    \]
    We say that $\gamma$ \emph{carries the lattice homology} of $(\Gamma,\mathfrak{s})$ if this map is an isomorphism; see \cite[Theorem~4.9]{dai2023lattice} and the surrounding discussion for more details.\footnote{The notion of “carrying the lattice homology” is defined in \cite{dai2023lattice} for any sequence of $\mathrm{Spin}^c$ structures whose consecutive terms differ by $PD[S_v]$ for some node $v$.}
\end{defn}

Then, when $\tilde{\s} = \mathfrak{s}^{can}_Y$, we show that a $\Z_2$-equivariant almost $J$-invariant path carrying the lattice homology of $(\Gamma,\tilde{\s})$ always exists. Note that every $\Z_2$-equivariant lift of $\mathfrak{s}^{can}_Y$ is self-conjugate.

\begin{thm}\label{thm: eqv almost J inv seq exists}
    Let ${\tilde{\s}}$ be any $\Z_2$-equivariant lift of $\mathfrak{s}^{can}_Y$. Then there exists a $\Z_2$-equivariant almost $J$-invariant path for $(\Gamma,\tilde{\s})$ that carries the lattice homology of $(\Gamma,\mathcal{N}(\mathfrak{s}))$.
\end{thm}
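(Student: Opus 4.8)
The plan is to build the $\Z_2$-equivariant almost $J$-invariant path by starting from the equivariant $\mathrm{Spin}^c$ computation sequence associated to $(\Gamma,\tilde{\s})$ and then exploiting the conjugation symmetry established in \Cref{subsec: comp seq is semi reflective}. Concretely, first I would consider the non-equivariant computation sequence
\[
\mathrm{sp}_{\s}(x_{\s}(0)),\,\dots,\,\mathrm{sp}_{\s}(x_{\s}(\hat N_Y)),\,\dots
\]
obtained by taking $v_c$ as the base node. By \Cref{lem: central coeff of canonical class}, \Cref{lem: weights are symmetric}, and \Cref{lem: middle cycle has mu bar weight}, the cycle $x_{\s}(\hat N_Y)$ has the same weight as the Wu cycle $x^{can}_Y$, and by \Cref{lem: central coeff of Wu cycle} it has the same central coefficient. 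Hence by \Cref{lem: constant wt seq exists} there is a constant-weight sequence connecting $x_{\s}(\hat N_Y)$ to $x^{can}_Y$. Splicing this constant-weight sequence onto the initial segment $x_{\s}(0),\dots,x_{\s}(\hat N_Y)$ produces a non-equivariant sequence from $x_{\s}(0)$ to $x^{can}_Y$ in which, along the constant-weight portion, the base-node coefficient never changes and the weight is constant, while along the initial portion the base-node coefficient increases by one at each $x_{\s}(i)\mapsto x_{\s}(i+1)$ step. I will call the resulting sequence (after refining each $x_{\s}(i)\mapsto x_{\s}(i+1)$ step into its own computation subsequence, and after reflecting via $x\mapsto -K-x$ to get the negative-index part) the candidate path $\mathfrak{s}_1,\dots,\mathfrak{s}_n$, with $\mathfrak{s}_1 = \mathrm{sp}_\s(x^{can}_Y)$, and extended to negative indices by setting $\mathfrak{s}_{-i}=\overline{\mathfrak{s}_i}$, using the identity $\mathrm{sp}_\s(-K-x)=\overline{\mathrm{sp}_\s(x)}$ from \Cref{rem: conjugation in terms of cycles}.

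Next I would lift this entire sequence of non-equivariant $\mathrm{Spin}^c$ structures to $\Z_2$-equivariant ones restricting to $\tilde{\s}$ on $Y$, using the unique relative lifting results \Cref{lem: unique relative lifting for plumbing}, \Cref{lem: unique relative eqv lifting}, and \Cref{lem: general relative unique extension}, exactly as in the construction reviewed at the start of \Cref{subsec: eqv spin c comp seq}. The conditions of \Cref{defn: eqv almost J-inv path} then need to be checked term by term: that $\mathfrak{s}_1 = \widetilde{\mathrm{sp}}_{\tilde{\s}}(\mathrm{Wu}(\Gamma,\tilde{\s}))$ holds because $\mathrm{Wu}(\Gamma,\tilde\s)=K+2x^{can}_Y=k_\s+2x^{can}_Y$ and the lift is the unique equivariant one over $\tilde\s$; that each $\mathfrak{s}_{i+1}$ differs from $\mathfrak{s}_i$ only inside one disk bundle $D_{v_i}$ with $c_1$ changing by $\pm PD[S_{v_i}]$, which is inherited from the computation/constant-weight sequence structure and the uniqueness of equivariant lifts; that the index-constancy condition when $v_i\ne v_c$ holds because, along the constant-weight portions, the weight $\chi_\s$ is constant and $\ind^t\dirac_{W_\Gamma,\mathcal N(\mathfrak{s}_i)}$ is determined by $\chi_\s(x)$ up to the fixed constant $-\tfrac{1}{8}(k_\s^2+|V(\Gamma)|)$ (as recorded in \Cref{subsec: computations sequence}), and along the genuinely ``ascending'' steps $v_i=v_c$; and finally that $\mathfrak{s}_{-i}=\overline{\mathfrak{s}_i}$, which holds by construction together with \Cref{rem: conjugation in terms of cycles} and the uniqueness of equivariant lifts (the conjugate of an equivariant lift over $\tilde\s=\overline{\tilde\s}$ is again a lift over $\tilde\s$). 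The step I expect to be the most delicate is verifying the index-constancy clause: one has to be careful that when I refine an $x_\s(i)\mapsto x_\s(i+1)$ transition into a genuine computation subsequence, the intermediate terms $x^\s_{i,j}$ all have the \emph{same} weight (this is the ``almost rational'' property, \cite[Lemma~9.1]{nemethi2005ozsvath}), so that the only node at which the complex dimension of the Dirac index jumps is $v_c$, occurring precisely once per unit increment of $m_{v_c}$; this matches the two separate clauses for $v_i\ne v_c$ and $v_i=v_c$ in \Cref{defn: eqv almost J-inv path}.

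Finally, for the ``carries the lattice homology'' assertion (\Cref{defn: eqv almost J inv carries homology}), I would invoke the non-equivariant statement: the spliced path $\gamma'$ obtained by inserting the constant-weight sequence $\mathfrak{t}_1,\dots,\mathfrak{t}_m$ between $\mathfrak{s}_{-1}$ and $\mathfrak{s}_1$ realizes, at the level of underlying non-equivariant $\mathrm{Spin}^c$ structures, a path of the type considered in \cite{dai2023lattice} whose associated $\Z[U]$-linear map $\mathbb H(\gamma,\s)\to HF^-(Y,\s)$ is an isomorphism; this is exactly the content of \cite[Theorem~4.9]{dai2023lattice} combined with \cite[Lemma~3.2]{dai2018pin} (which produces the required $J$-almost-rational constant-weight connector through the Wu cycle), together with \Cref{lem: weights are symmetric} and \Cref{lem: middle cycle has mu bar weight}, which guarantee that the path we built passes through the symmetric component of the relevant sublevel set in the style of \cite[Lemma~2.1]{dai2018pin}. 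Since the lattice homology and $HF^-$ only depend on $\mathcal N(\mathfrak{s})$, equipping the path with its equivariant lift does not affect this map, so $\gamma$ carries the lattice homology of $(\Gamma,\mathcal N(\mathfrak{s}))$, completing the proof.
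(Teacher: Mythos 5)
Your proposal is correct and follows essentially the same route as the paper's proof: truncate the equivariant $\mathrm{Spin}^c$ computation sequence at $x_{\s}(N_Y)$, use \Cref{lem: central coeff of Wu cycle}, \Cref{lem: middle cycle has mu bar weight}, and \Cref{lem: constant wt seq exists} to attach a constant-weight connector through the Wu cycle, reflect by $x\mapsto -K-x$ to produce the conjugate half, and reduce the ``carries the lattice homology'' claim to the non-equivariant statement via \Cref{lem: weights are symmetric}. The only cosmetic difference is that you reflect the initial segment $x_{\s}(0),\dots,x_{\s}(\hat N_Y)$ whereas the paper works with the latter half $x_{\s}(\hat N_Y),\dots,x_{\s}(N_Y)$; these are interchangeable by \Cref{lem: weights are symmetric}.
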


\begin{proof}
    For simplicity, we denote $\mathcal{N}(\tilde{\s}) = \mathfrak{s}^{can}_Y$ by $\mathfrak{s}$. We have the following $\Z_2$-equivariant $\mathrm{Spin}^c$ computation sequence for $(\Gamma,\mathfrak{s})$:
    \[
        \widetilde{\mathrm{sp}}_{\tilde{\s}}(x_{\s}(0)),\,
        \widetilde{\mathrm{sp}}_{\tilde{\s}}(x^{\s}_{0,0}),\,
        \dots,\,
        \widetilde{\mathrm{sp}}_{\tilde{\s}}(x^{\s}_{0,n_0-1}),\,
        \widetilde{\mathrm{sp}}_{\tilde{\s}}(x_{\s}(1)),\,
        \dots.
    \]
    By the observations in \Cref{rem: Delta seq for canonical spin c}, this sequence continues to carry the lattice homology of $(\Gamma,\mathfrak{s})$ after removing all terms beyond $\widetilde{\mathrm{sp}}_{\tilde{\s}}(x_{\s}(N_Y))$. After this truncation, we focus on its latter half:
    \[
        \widetilde{\mathrm{sp}}_{\tilde{\s}}(x_{\s}(\hat{N}_Y)),\,
        \widetilde{\mathrm{sp}}_{\tilde{\s}}(x^{\s}_{\hat{N}_Y,0}),\,
        \dots,\,
        \widetilde{\mathrm{sp}}_{\tilde{\s}}(x^{\s}_{\hat{N}_Y,n_{\hat{N}_Y}-1}),\,
        \widetilde{\mathrm{sp}}_{\tilde{\s}}(x_{\s}(\hat{N}_Y+1)),\,
        \dots,\,
        \widetilde{\mathrm{sp}}_{\tilde{\s}}(x^{\s}_{N_Y-1,n_{N_Y-1}-1}),\,
        \widetilde{\mathrm{sp}}_\mathfrak{s}(x_{\s}(N_Y)).
    \]
    For simplicity, we rewrite this sequence as
    \[
        \mathfrak{s}_1, \dots, \mathfrak{s}_M.
    \]

    By \Cref{lem: central coeff of Wu cycle,lem: middle cycle has mu bar weight,lem: constant wt seq exists}, there exists a constant-weight sequence $\mathfrak{r}_1, \dots, \mathfrak{r}_s$ such that 
    \[
        \mathfrak{r}_1 = \widetilde{\mathrm{sp}}_{\tilde{\s}}(\mathrm{Wu}(\Gamma,{\tilde{\s}})) 
        \qquad \text{and} \qquad 
        \mathfrak{r}_s = \widetilde{\mathrm{sp}}_{\tilde{\s}}(x_{\s}(\hat{N}_Y)).
    \]
    Then the following sequence is a $\Z_2$-equivariant almost $J$-invariant sequence for $(\Gamma,\mathfrak{s})$:
    \[
        \gamma_0 = \{\overline{\mathfrak{s}_M}, \dots, \overline{\mathfrak{s}_1} = \overline{\mathfrak{r}_s}, \dots, \overline{\mathfrak{r}_1}, \mathfrak{r}_1, \dots, \mathfrak{r}_s = \mathfrak{s}_1, \dots, \mathfrak{s}_M\}.
    \]

    To show that this path carries the lattice homology of $(\Gamma,\mathfrak{s})$, observe from \Cref{lem: weights are symmetric} that for any constant-weight sequence $\mathfrak{u}_1, \dots, \mathfrak{u}_{M'}$ with $\mathfrak{u}_1 = \overline{\mathfrak{s}_1}$ and $\mathfrak{u}_{M'} = \mathfrak{s}_1$, the sequence
    \[
        \overline{\mathfrak{s}_M}, \dots, \overline{\mathfrak{s}_1} = \mathfrak{u}_1, \dots, \mathfrak{u}_{M'} = \mathfrak{s}_1, \dots, \mathfrak{s}_M
    \]
    carries the lattice homology of $(\Gamma,\mathfrak{s})$. Since the sequence $\mathfrak{t}_1, \dots, \mathfrak{t}_m$ in \Cref{defn: eqv almost J inv carries homology} is constant-weight, we see that 
    \[
        \overline{\mathfrak{s}_1} = \overline{\mathfrak{r}_s}, \dots, \overline{\mathfrak{r}_1} = \mathfrak{t}_1, \dots, \mathfrak{t}_m = \mathfrak{r}_1, \dots, \mathfrak{r}_s = \mathfrak{s}_1
    \]
    is also a constant-weight sequence. Hence the composed sequence 
    \[
        \overline{\mathfrak{s}_M}, \dots, \overline{\mathfrak{s}_1} = \overline{\mathfrak{r}_s}, \dots, \overline{\mathfrak{r}_1} = \mathfrak{t}_1, \dots, \mathfrak{t}_m = \mathfrak{r}_1, \dots, \mathfrak{r}_s = \mathfrak{s}_1, \dots, \mathfrak{s}_M
    \]
    carries the lattice homology of $(\Gamma,\mathfrak{s})$. Therefore, $\gamma_0$ is a $\Z_2$-equivariant almost $J$-invariant path that carries the lattice homology of $(\Gamma,\mathfrak{s})$, as desired.
\end{proof}

From now on, we impose the following additional condition on $Y$: the $\Z_2$-action on $Y$ is free. This condition implies that, for the singular orbits $\{(p_i,q_i)\}_{i=1}^\nu$ of the Seifert action on $Y$, the integers $p_1, \dots, p_\nu$ are all odd. Combined with the assumption that $|H_1(Y;\Z)|$ is odd, this shows that $N_Y$ is even.

\begin{rem}
    For simplicity, given $\mathbf{n} = n_+\cdot [0]+n_- \cdot [1] \in \Z[\Z_2]$, we adopt the following notation:
    \[
    \R^\mathbf{n} = \R_+^{n_+} \oplus \R_-^{n_-},\quad \widetilde{\R}^\mathbf{n} = \widetilde{\R}_+^{n_+} \oplus \widetilde{\R}_-^{n_-},\quad \H^\mathbf{n} = \H_+^{n_+} \oplus \H_-^{n_-}.
    \]
\end{rem}

Given a $\Z_2$-equivariant self-conjugate $\mathrm{Spin}^c$ structure $\tilde{\s}$ on $Y$ and a $\Z_2$-equivariant almost $J$-invariant path $\gamma$ for $(\Gamma,\tilde{\s})$, written as
\[
\mathfrak{s}_{-n},\dots,\mathfrak{s}_{-2},\mathfrak{s}_{-1},\mathfrak{s}_1,\mathfrak{s}_2,\dots,\mathfrak{s}_n,
\]
we denote its latter half, i.e., the sequence $\mathfrak{s}_1,\mathfrak{s}_2,\dots,\mathfrak{s}_n$, by $\gamma_0$. Following the constructions leading to \Cref{defn: S1xZp lattice spectrum}, we obtain a finite $S^1 \times \Z_2$-spectrum $\mathcal{H}(\gamma_0)$ by gluing various representation spheres $V_i^+$, $i=1,\dots,n$, and cylinders $W_j^+ \wedge [0,1]$, $j=1,\dots,n-1$. We may also suspend $V_i^+$ and $W_j^+$ by $(\C^\mathbf{n})^+$ for some fixed $\mathbf{n}\in \Z[\Z_2]$, so that $V_1^+ = (\C^{2\mathbf{m}})^+$ for some $\mathbf{m}\in \Z_2$. Consider the “identity map”
\[
f\colon (\H^\mathbf{m})^+ \xrightarrow{\;\simeq\;} V_1^+,
\]
which is an $(S^1 \times \Z_2)$-equivariant homotopy equivalence. Note that while $V_1^+$ carries only an $(S^1 \times \Z_2)$-action, $(\H^\mathbf{m})^+$ carries a $(\mathrm{Pin}(2)\times \Z_2)$-action. Also consider the $(\mathrm{Pin}(2)\times \Z_2)$-equivariant map
\[
\beta \colon (\widetilde{\R}_+)^+ \longrightarrow S^1 \vee j\cdot S^1,
\]
defined by identifying the point at infinity of $(\widetilde{\R}_+)^+$ with the basepoint. Here, $\mathrm{Pin}(2)\times \Z_2$ acts on $S^1 \vee j\cdot S^1$ as follows: the $(S^1 \times \Z_2)$-subaction is trivial, and $j$ acts by swapping $S^1$ and $j\cdot S^1$.

Then we define the map\footnote{This is an example of an \emph{induced map}; for more details on induced spaces and induced maps, see \Cref{subsec: weak lifting lemma}.}
\[
\mathrm{Ind}(f)\colon \Sigma^{\widetilde{\R}} (\H^\mathbf{m})^+ = (\widetilde{\R}_+)^+ \wedge (\H^\mathbf{m})^+ \xrightarrow{\beta \wedge f} (S^1 \vee j \cdot S^1)\wedge V_1^+ = \Sigma^\R V_1^+ \vee j\cdot \Sigma^\R V_1^+,
\]
which is a $(\mathrm{Pin}(2)\times \Z_2)$-equivariant (stable) map. Using this, we make the following definition.

\begin{defn} \label{defn: pin(2)xZ2 homotopy type of path}
    Consider the inclusion $\mathrm{inc}\colon V_1^+ \hookrightarrow \mathcal{H}(\gamma_0)$, which induces the doubled map
    \[
    \mathrm{inc} \vee j\cdot \mathrm{inc}\colon V_1^+ \vee j\cdot V_1^+ \longrightarrow \mathcal{H}(\gamma_0) \vee j\cdot \mathcal{H}(\gamma_0),
    \]
    a $(\mathrm{Pin}(2)\times \Z_2)$-equivariant map. We then define the $(\mathrm{Pin}(2)\times \Z_2)$-equivariant lattice homotopy type of the given $\Z_2$-equivariant almost $J$-invariant path $\gamma$ by
    \[
    \mathcal{H}_{\mathrm{Pin}(2)\times \Z_2}(\gamma) = \mathrm{Cone}\big((\mathrm{inc}\vee j\cdot \mathrm{inc})\circ \mathrm{Ind}(\Sigma^{-\widetilde{\R}}f)\big).
    \]
\end{defn}

\begin{rem}
    We may regard $V_1^+$ as the ``boundary'' of $\mathcal{H}(\gamma_0)$. The map $\mathrm{Ind}(\Sigma^{-\widetilde{\R}}f)$ can then be viewed as a ``parametrization'' of the boundary $V_1^+ \vee j \cdot V_1^+$ of $\mathcal{H}(\gamma_0) \vee j \cdot \mathcal{H}(\gamma_0)$. Thus, taking its mapping cone can be interpreted as ``gluing'' $V_0^+$ with $j \cdot V_0^+$ so as to connect $\mathcal{H}(\gamma_0)$ with its copy $j \cdot \mathcal{H}(\gamma_0)$ in a $(\mathrm{Pin}(2)\times \Z_2)$-equivariant way.
\end{rem}

Then we have the following lemma.

\begin{lem} \label{lem: Pin(2)xZ2 lattice sp is S1-eqv to SWF}
    Let $\tilde{\s}$ be a $\Z_2$-equivariant self-conjugate $\mathrm{Spin}^c$ structure on $Y$, and let $\gamma$ be a $\Z_2$-equivariant almost $J$-invariant path for $(\Gamma,\tilde{\s})$. Then there exists a virtual $(S^1 \times \Z_p)$-representation $V$ together with a $(S^1 \times \Z_2)$-equivariant map
    \[
    \mathcal{T}\colon V^+ \wedge \mathcal{H}(\gamma) \longrightarrow SWF_{S^1 \times \Z_2}(Y,\tilde{\s}),
    \]
    which is an $S^1$-equivariant homotopy equivalence, possibly after modifying the $(S^1 \times \Z_2)$-action on the codomain via the automorphism
    \[
    S^1 \times \Z_2 \xrightarrow{(z,[n])\mapsto((-1)^n z,[n])} S^1 \times \Z_2.
    \]
\end{lem}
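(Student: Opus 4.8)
The plan is to reduce \Cref{lem: Pin(2)xZ2 lattice sp is S1-eqv to SWF} to the $S^1 \times \Z_2$-equivariant statement already established in \Cref{thm: eqv lattice comparison map} (equivalently, \Cref{lem: label gr root gives eqv lattice sp}), by exhibiting a canonical $S^1$-equivariant homotopy equivalence between $\mathcal{H}_{\mathrm{Pin}(2)\times\Z_2}(\gamma)$ and the $S^1 \times \Z_2$-equivariant lattice homotopy type $\mathcal{H}_{S^1\times\Z_2}(\Gamma,\tilde{\s})$ (suitably suspended), and then composing with the comparison map $\mathcal{T}_{S^1\times\Z_2}$ from \Cref{thm: eqv lattice comparison map}. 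First I would unwind the mapping cone in \Cref{defn: pin(2)xZ2 homotopy type of path}: since $\mathrm{Ind}(\Sigma^{-\widetilde{\R}}f)$ is built from the ``doubling'' map $\beta\colon (\widetilde{\R}_+)^+ \to S^1 \vee j\cdot S^1$ and the homotopy equivalence $f\colon (\H^{\mathbf m})^+ \xrightarrow{\simeq} V_1^+$, after forgetting the $j$-action the map $(\mathrm{inc}\vee j\cdot\mathrm{inc})\circ\mathrm{Ind}(\Sigma^{-\widetilde{\R}}f)$ restricts on each of the two $S^1\times\Z_2$-wedge summands to (a suspension of) the inclusion $\mathrm{inc}\colon V_1^+ \hookrightarrow \mathcal{H}(\gamma_0)$. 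Taking the mapping cone of a map of the form $A \xrightarrow{(\mathrm{inc},\mathrm{inc})} B\vee B$ and remembering that $\mathrm{inc}$ is split (it is the inclusion of a wedge summand $V_1^+$ in $\mathcal{H}(\gamma_0)$, which is glued from spheres and cylinders) shows that, $S^1\times\Z_2$-equivariantly, $\mathcal{H}_{\mathrm{Pin}(2)\times\Z_2}(\gamma)$ is homotopy equivalent to $\mathcal{H}(\gamma_0) \vee_{V_1^+} \mathcal{H}(\gamma_0)$, the two halves glued along the common sphere $V_1^+$; but this wedge-along-a-sphere is precisely the homotopy type built from the full doubled computation sequence $\mathfrak{s}_{-n},\dots,\mathfrak{s}_{-1},\mathfrak{s}_1,\dots,\mathfrak{s}_n$ (glued through the constant-weight bridge of \Cref{defn: eqv almost J inv carries homology}), which is exactly the lattice homotopy type $\mathcal{H}_{S^1\times\Z_2}(\Gamma,\tilde{\s})$ of \Cref{defn: S1xZp lattice spectrum} up to an overall representation-sphere suspension.

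The second step is to identify this doubled path-homotopy-type with the genuine $\mathcal{H}_{S^1\times\Z_2}(\Gamma,\tilde{\s})$. Here I would invoke \Cref{thm: eqv almost J inv seq exists}: the path $\gamma$ carries the lattice homology of $(\Gamma,\mathcal{N}(\tilde{\s}))$, and after gluing in the constant-weight sequence $\mathfrak{t}_1,\dots,\mathfrak{t}_m$ one gets a sequence of equivariant $\mathrm{Spin}^c$ structures whose non-equivariant underlying sequence is a standard computation-sequence fragment carrying $HF^-$; applying the same cylinder-collapsing argument used in the proof of \Cref{lem: label gr root gives eqv lattice sp} (collapsing chains $x\xrightarrow{a} y\xleftarrow{1} z\xrightarrow{b}w$) reduces the glued homotopy type to $\mathcal{H}(\mathcal{R}_{\Gamma,\tilde{\s}})$, hence to $\mathcal{H}_{S^1\times\Z_2}(\Gamma,\tilde{\s})$ up to an overall $(\C^{\mathbf n})^+$-suspension and a cyclic relabeling of $\Z_2$ (which is exactly the source of the automorphism $(z,[n])\mapsto((-1)^n z,[n])$ appearing in the statement — this is the $p=2$ incarnation of the reparametrization ambiguity recorded in \Cref{lem: eqv SWF of different eqv lifts} and \Cref{cor: eqv pin(2) SWF of different eqv spin}). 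Finally, I would set $V$ to absorb all accumulated suspensions and define $\mathcal{T}$ as the composite
\[
V^+ \wedge \mathcal{H}_{\mathrm{Pin}(2)\times\Z_2}(\gamma) \xrightarrow{\ \simeq\ } V'^+ \wedge \mathcal{H}_{S^1\times\Z_2}(\Gamma,\tilde{\s}) \xrightarrow{\ \mathcal{T}_{S^1\times\Z_2}\ } SWF_{S^1\times\Z_2}(Y,\tilde{\s}),
\]
where the first arrow is the $S^1$-equivariant homotopy equivalence just constructed (it is genuinely only $S^1$-equivariant, since the identification of the doubled path with the computation sequence does not respect the $\Z_2$-factor on the nose) and the second is the comparison map of \Cref{thm: eqv lattice comparison map}, which is an $S^1$-equivariant homotopy equivalence. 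Being a composite of $S^1$-equivariant homotopy equivalences, $\mathcal{T}$ is one as well, which is the assertion.

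The main obstacle I anticipate is bookkeeping the $j$-action carefully enough in the first step to be sure that the mapping cone in \Cref{defn: pin(2)xZ2 homotopy type of path} really does collapse, after forgetting $j$, to the symmetric wedge $\mathcal{H}(\gamma_0)\vee_{V_1^+}\mathcal{H}(\gamma_0)$ rather than to something with an extra spurious cell; this requires checking that $\beta\colon(\widetilde{\R}_+)^+\to S^1\vee j\cdot S^1$, after restriction of structure group to $S^1\times\Z_2$, is (stably) the pinch map $S^1 \to S^1 \vee S^1$ up to homotopy, so that its mapping cone together with the two copies of $\mathrm{inc}$ produces exactly the two-sided gluing along $V_1^+$. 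The second potential subtlety is matching up the degree shifts and the $\Z_2$-reparametrization precisely; but both are already handled in the $S^1\times\Z_2$ theory (\Cref{rem: S1xZp cochain for different eqv lifts}, \Cref{lem: eqv SWF of different eqv lifts}) and the statement only claims equivalence "up to" that automorphism, so this should be routine once the cone computation is pinned down. Everything else — the existence of the equivariant almost $J$-invariant path, the fact that it carries the lattice homology, and the $S^1$-equivariant comparison — is supplied by \Cref{thm: eqv almost J inv seq exists} and \Cref{thm: eqv lattice comparison map}.
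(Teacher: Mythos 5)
Your proposal is correct and takes essentially the same route as the paper's proof: both arguments glue in the constant-weight bridge of \Cref{defn: eqv almost J inv carries homology} to obtain a sequence carrying the lattice homology, invoke \Cref{thm: eqv lattice comparison map} for the $S^1$-equivariant comparison map, and then collapse the cylinder corresponding to the constant-weight part to identify the result with $\mathcal{H}(\gamma)$ up to $(S^1\times\Z_2)$-equivariant homotopy. The only difference is that you additionally unwind the mapping cone of \Cref{defn: pin(2)xZ2 homotopy type of path} to see that, after forgetting $j$, it collapses to the two copies of $\mathcal{H}(\gamma_0)$ glued along $V_1^+$; the paper elides this by treating $\mathcal{H}(\gamma)$ directly as the $(S^1\times\Z_2)$-lattice spectrum of the doubled sequence, and your extra step (the cone of the diagonal inclusion into the wedge being the two-sided gluing) is sound and harmless.
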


\begin{proof}
    Write the given path $\gamma$ as 
    \[
    \mathfrak{s}_{-n},\dots,\mathfrak{s}_{-2},\mathfrak{s}_{-1},\mathfrak{s}_1,\mathfrak{s}_2,\dots,\mathfrak{s}_n.
    \]
    By following the arguments of \Cref{thm: eqv almost J inv seq exists}, we obtain a constant-weight sequence
    \[
    \mathfrak{s}_{-1} = \mathfrak{t}_1,\mathfrak{t}_2,\dots,\mathfrak{t}_{m-1},\mathfrak{t}_m = \mathfrak{s}_1
    \]
    such that the concatenated sequence
    \[
    \mathfrak{s}_{-n},\dots,\mathfrak{s}_{-2},\mathfrak{s}_{-1}= \mathfrak{t}_1,\mathfrak{t}_2,\dots,\mathfrak{t}_{m-1},\mathfrak{t}_m = \mathfrak{s}_1,\mathfrak{s}_2,\dots,\mathfrak{s}_n
    \]
    carries the lattice homology of $(\Gamma,\mathcal{N}(\tilde{\s}))$.  

    Following the constructions leading to \Cref{defn: S1xZp lattice spectrum}, this concatenated sequence defines a $(S^1 \times \Z_2)$-spectrum $\mathcal{H}'$. By \Cref{thm: eqv lattice comparison map}, there exists a virtual $(S^1 \times \Z_p)$-representation $V$ and a $(S^1 \times \Z_2)$-equivariant map
    \[
    \mathcal{T}'\colon  V^+ \wedge \mathcal{H}' \longrightarrow SWF_{S^1 \times \Z_2}(Y,\tilde{\s}),
    \]
    which is an $S^1$-equivariant homotopy equivalence, possibly after modifying the $(S^1 \times \Z_2)$-action on the codomain via the automorphism
    \[
    S^1 \times \Z_2 \xrightarrow{(z,[n])\mapsto ((-1)^n z,[n])} S^1 \times \Z_2.
    \]

    Furthermore, since the sequence $\mathfrak{t}_1,\mathfrak{t}_2,\dots,\mathfrak{t}_{m-1},\mathfrak{t}_m$ is constant-weight, the corresponding part of the construction of $\mathcal{H}'$ is simply a cylinder (i.e., of the form $W^+ \wedge [0,1]$ for some virtual $(S^1 \times \Z_2)$-representation $W$). Removing this cylinder and directly identifying its two boundaries yields $\mathcal{H}(\gamma)$. Hence, there is an $(S^1 \times \Z_2)$-equivariant homotopy equivalence
    \[
    \mathcal{T}_0 \colon \mathcal{H}(\gamma) \longrightarrow \mathcal{H}'.
    \]
Finally, setting $\mathcal{T} = \mathcal{T}' \circ (\mathrm{id}_{V^+} \wedge \mathcal{T}_0)$ gives the desired map.
\end{proof}

Thus, we define the $(\mathrm{Pin}(2)\times \Z_2)$-equivariant lattice homotopy type of $(\Gamma,\tilde{\s})$ as follows. 

\begin{defn} \label{defn: pin(2)xZ2 for Gamma and s}
    Let $\tilde{\s}$ be any $\Z_2$-equivariant lift of $\mathfrak{s}^{can}_Y$. The $\Z_2$-equivariant almost $J$-invariant path $\gamma$ constructed in the proof of \Cref{thm: eqv almost J inv seq exists} carries the lattice homology of $(\Gamma,\mathcal{N}(\tilde{\s}))$. We define the $(\mathrm{Pin}(2)\times \Z_2)$-equivariant homotopy type of $\mathcal{H}(\gamma)$ to be the \emph{$(\mathrm{Pin}(2)\times \Z_2)$-equivariant lattice homotopy type} of $(\Gamma,\tilde{\s})$, and denote it by
    \[
    \mathcal{H}_{\mathrm{Pin}(2)\times \Z_2}(\Gamma,\tilde{\s}).
    \]
\end{defn}

Note that since we have assumed $|H_1(Y;\Z)|$ is odd, it follows from \Cref{lem: eqv spin str equals self-conj eqv spin c} that the two $\Z_2$-equivariant lifts of the unique self-conjugate $\mathrm{Spin}^c$ structure (which we have already assumed to be the canonical $\mathrm{Spin}^c$ structure $\mathfrak{s}^{can}_Y$) are both self-conjugate, and correspond to the two $\Z_2$-equivariant Spin structures on $Y$.

\begin{rem}
    A priori, \Cref{defn: pin(2)xZ2 for Gamma and s} depends on the choice of a constant-weight sequence between $\widetilde{\mathrm{sp}}_{\tilde{\s}}(\mathrm{Wu}(\Gamma,\tilde{\s}))$ and $\widetilde{\mathrm{sp}}_{\tilde{\s}}(x_{\mathcal{N}(\mathfrak{{\tilde{\s}}})}(\hat{N}_Y))$ made in the proof of \Cref{thm: eqv almost J inv seq exists}. However, since a constant-weight sequence contributes a subspace of the form $V^+ \wedge [0,1]$ to $\mathcal{H}(\gamma_0)$, it follows that choosing a different constant-weight sequence does not change the $(S^1 \times \Z_2)$-equivariant homotopy type of $\mathcal{H}(\gamma_0)$, and hence also does not change the $(\mathrm{Pin}(2)\times \Z_2)$-equivariant homotopy type of $\mathcal{H}(\gamma)$.
\end{rem}

As in the $(S^1 \times \Z_p)$-equivariant case, the $(\mathrm{Pin}(2)\times \Z_2)$-equivariant lattice homotopy type can also be read off from a planar graded root with additional structure.

\begin{defn}
    Consider the reflection map $T\colon (x,y)\mapsto (-x,y)$ of $\R^2$. A $\Z_2$-labelled planar graded root $(R,\lambda_V,\lambda_A)$ is called \emph{symmetric} if the following conditions are satisfied:
    \begin{itemize}
        \item The embedded graph $R$ (in $\R^2$) is setwise $T$-invariant;
        \item For each leaf $v$ of $R$, we have $\lambda_V(v)=\lambda_V(T(v))$;
        \item For each simple angle $(v,v')$ of $R$, note that $(T(v'),T(v))$ is also a simple angle of $R$; we then require
        \[
        \lambda_A(T(v'),T(v)) = \lambda_A(v,v') + \lambda_A(v) - \lambda_A(v').
        \]
    \end{itemize}
    Two symmetric $\Z_2$-labelled planar graded roots are said to be equivalent if they become identical after a $T$-equivariant isotopy, possibly combined with swapping the two elements of $\Z_2$.
\end{defn}
\begin{lem}
    Given any $\Z_2$-labelled planar graded root $\mathcal{R}=(R,\lambda_V,\lambda_A)$, exactly one of the following two statements holds:
    \begin{itemize}
        \item $R$ is not equivalent to any symmetric $\Z_2$-labelled planar graded root as a $\Z_2$-labelled planar graded root. In this case, we say that $\mathcal{R}$ is \emph{nonreflective}.
        \item $R$ is equivalent to some symmetric $\Z_2$-labelled planar graded root $\mathrm{Sym}(\mathcal{R})$ as a $\Z_2$-labelled planar graded root, and $\mathrm{Sym}(\mathcal{R})$ is unique up to equivalence of symmetric $\Z_2$-labelled planar graded roots. In this case, we say that $\mathcal{R}$ is \emph{reflective}.
    \end{itemize}
\end{lem}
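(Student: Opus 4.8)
The statement is a dichotomy: every $\Z_2$-labelled planar graded root $\mathcal{R}=(R,\lambda_V,\lambda_A)$ is either nonreflective, or reflective with a symmetrization $\mathrm{Sym}(\mathcal{R})$ that is unique up to equivalence of symmetric $\Z_2$-labelled planar graded roots. The plan is to reduce everything to the combinatorial data described in \Cref{lem: how to describe a graded root}, namely the weights of leaves and the weighted simple angles, together with the $\Z_2$-labels $\lambda_V$ and $\lambda_A$. First I would observe that the dichotomy itself (nonreflective vs.\ reflective) is essentially a tautology once one defines ``reflective'' to mean ``equivalent to some symmetric representative''; the only real content is the \emph{uniqueness} of $\mathrm{Sym}(\mathcal{R})$ up to equivalence of symmetric objects. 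So the proof has two genuine components: (i) showing that if two symmetric $\Z_2$-labelled planar graded roots $\mathcal{S}$, $\mathcal{S}'$ are equivalent as (un-symmetrized) $\Z_2$-labelled planar graded roots, then they are equivalent as symmetric ones; and (ii) verifying that these notions are well-posed, i.e.\ that symmetry is preserved under the allowed ambiguities (cyclic shift of $\Z_2$ by multiplication, overall shift of leaf labels, and $T$-equivariant isotopy).

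For (i): an equivalence between $\mathcal{S}$ and $\mathcal{S}'$ as $\Z_2$-labelled planar graded roots consists of (a) an equivalence of the underlying planar graded roots $R\simeq R'$ — which by definition is a horizontal isotopy up to vertical shift — and (b) agreement of leaf/angle labels up to a fixed element $c\in\Z_2$ acting by multiplication and a fixed $d\in\Z[\Z_2]$ added to all leaf labels. The key point is that the underlying \emph{unlabelled} planar graded root of a symmetric $\Z_2$-labelled root is already symmetric under $T$ as an embedded graph, and the horizontal isotopy class of a planar graded root has a canonical symmetric representative: since $R$ is a tree with a linear ordering of leaves (left-to-right) determined by the planar embedding, $T$ reverses this order, and the isotopy class is rigid enough that the $T$-invariant representative is unique up to $T$-equivariant isotopy. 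Concretely, I would argue that the horizontal positions of leaves are determined up to affine reparametrization of the horizontal axis by the combinatorial tree structure plus the requirement of no crossings; imposing $T$-invariance pins down this affine reparametrization up to the ambiguity allowed in the definition of equivalence of symmetric roots. Then the label compatibility transports automatically: the symmetry conditions $\lambda_V(v)=\lambda_V(T(v))$ and $\lambda_A(T(v'),T(v))=\lambda_A(v,v')+\lambda_A(v)-\lambda_A(v')$ are preserved under multiplication by $c$ and under adding $d$ to every leaf label (since the angle condition is homogeneous in a way that the $d$-shift respects — the correction term $\lambda_A(v)-\lambda_A(v')$ is unaffected), so if $\mathcal{S}$ is symmetric and $\mathcal{S}'$ differs from it by such an operation, $\mathcal{S}'$ is symmetric as well, and the induced identification is an equivalence of symmetric $\Z_2$-labelled planar graded roots.

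The main obstacle I anticipate is making precise the claim that ``a planar graded root has a canonical symmetric representative when its equivalence class is $T$-invariant.'' This is intuitively clear but requires care: one must show that if $R$ can be horizontally isotoped (up to vertical shift) to its mirror image $T(R)$, then in fact $R$ can be isotoped to a \emph{genuinely $T$-symmetric} position, i.e.\ the ``square root'' of the mirror symmetry exists within the isotopy group. For planar trees embedded with straight edges this follows from the fact that the horizontal-isotopy group acts on leaf-positions by order-preserving affine maps (after suitable normalization), an abelian group in which every element $g$ with $g\cdot(\text{mirror})=g^{-1}$ — wait, more precisely: $T$ conjugates the affine reparametrization group by inversion, so a $T$-invariant isotopy class corresponds to a coset that intersects the symmetric locus, and unwinding this gives the symmetric representative, unique up to the residual $T$-equivariant isotopies. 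I would spell this out by choosing the base node $v_0$ (the central node $v_c$) as a fixed reference and normalizing so that the infinite stem is at horizontal coordinate $0$, which is automatically $T$-fixed; then symmetry of the rest follows from symmetry of the leaf weights and angle weights, reducing (i) to a finite check on the finitely many leaves. Once this rigidity lemma for planar graded roots is in hand, both parts of the statement fall out, and I would present the whole argument as: define reflective/nonreflective by the dichotomy; prove well-definedness of the equivalence relation on symmetric objects; prove the rigidity lemma; deduce uniqueness of $\mathrm{Sym}(\mathcal{R})$.
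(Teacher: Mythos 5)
Your proposal is correct and, once the geometric detour is stripped away, lands on the same argument as the paper: both proofs reduce the question to the finite combinatorial data that determines the equivalence class of a $\Z_2$-labelled planar graded root, namely the left-to-right ordered list of leaves together with their labels and the labels of the consecutive simple angles (this is \Cref{lem: how to describe a graded root} plus the label data). The paper's proof is considerably more direct: listing the leaves as $v_0,\dots,v_n$ in planar order, it observes that reflectivity is equivalent to the explicit palindromic conditions $\lambda_V(v_i)=\lambda_V(v_{n-i})$ and $\lambda_A(v_{i-1},v_i)+\lambda_V(v_{i-1})-\lambda_V(v_i)=\lambda_A(v_{n-i},v_{n-i+1})$, and that when these hold the symmetric representative is forced by the data, giving uniqueness immediately. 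Your excursion through ``rigidity'' of planar isotopies --- the claim that horizontal isotopies act by order-preserving \emph{affine} maps and the coset argument about $T$ conjugating the reparametrization group by inversion --- is both imprecise (horizontal isotopies are arbitrary order-preserving deformations, not affine ones) and unnecessary: your own fallback of normalizing the infinite stem to the $y$-axis and reducing to a finite check on the leaves is exactly what makes the argument work, and is what the paper does. Your handling of the label ambiguities (multiplication by a fixed element of $\Z_2$ and an overall shift of leaf labels preserving the symmetry conditions) is correct and is left implicit in the paper's ``straightforward to see.''
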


\begin{proof}
    We may list the leaves of $R$ as $v_0,\dots,v_n$, so that for each $i=1,\dots,n$, the pair $(v_{i-1},v_i)$ forms a simple angle. It is then straightforward to see that $R$ is reflective if and only if the following conditions are satisfied:
    \begin{itemize}
        \item $\lambda_V(v_i)=\lambda_V(v_{n-i})$ for all $i=0,\dots,n$;
        \item $\lambda_A(v_{i-1},v_i)+\lambda_V(v_{i-1})-\lambda_V(v_i)=\lambda_A(v_{n-i},v_{n-i+1})$ for all $i=1,\dots,n$.
    \end{itemize}
    Moreover, if these conditions are satisfied, there is a unique (up to equivalence) way to construct a symmetric $\Z_2$-labelled planar graded root with these leaves, angles, and $\Z_2$-labels. This proves the lemma.
\end{proof}

Then the following lemma is clear.

\begin{lem}\label{lem: R Gamma s is reflective}
    Let $\tilde{\s}$ be any $\Z_2$-equivariant lift of $\mathfrak{s}^{can}_Y$. Then the $\Z_2$-labelled planar graded root $\mathcal{R}_{\Gamma,\tilde{\s}}$ is reflective.
\end{lem}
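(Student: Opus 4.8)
The plan is to verify the two reflectivity conditions established in the preceding lemma for the specific $\Z_2$-labelled planar graded root $\mathcal{R}_{\Gamma,\tilde{\s}}$, using the symmetry properties of the $\Delta$-sequence and the equivariance numbers developed in \Cref{subsec: comp seq is semi reflective}. Recall that $\mathcal{R}_{\Gamma,\tilde{\s}}$ is built from the eventually increasing sequence $\mathbf{x}(\Gamma,\tilde{\s}) = (\chi_\mathfrak{s}(x_\mathfrak{s}(i)))_{i\ge 0}$ together with the $\Z_p$-labels coming from the equivariance numbers $n_{\mathrm{eqv}}(\widetilde{\mathrm{sp}}_{\tilde{\s}}(x_\mathfrak{s}(i)))$, which increment by one at each step (\Cref{lem: eqv num increase for central node}); concretely, the leaf label of the leaf at index $i$ is $\lambda_V(i) = \sum_{s=0}^{i-1}(\chi_\mathfrak{s}(x_\mathfrak{s}(s+1))-\chi_\mathfrak{s}(x_\mathfrak{s}(s)))\tau_2^s = \sum_{s=0}^{i-1}\Delta_{Y,\mathfrak{s}}(s)\cdot\tau_2^s$.

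First I would invoke the truncation from \Cref{rem: Delta seq for canonical spin c}: since $\mathfrak{s} = \mathfrak{s}^{can}_Y$ and $\Delta_{Y,\mathfrak{s}^{can}_Y}(i)\ge 0$ for all $i > N_Y$, the graded root $\mathcal{R}_{\Gamma,\tilde{\s}}$ is already completely determined by the finite initial segment indexed by $0\le i\le N_Y+1$. The key input is \Cref{lem: weights are symmetric}, which gives $\chi_{\mathfrak{s}^{can}_Y}(x_{\mathfrak{s}^{can}_Y}(i)) = \chi_{\mathfrak{s}^{can}_Y}(x_{\mathfrak{s}^{can}_Y}(N_Y+1-i))$ for $0\le i\le N_Y+1$, equivalently $\Delta_{Y,\mathfrak{s}^{can}_Y}(N_Y-i) = -\Delta_{Y,\mathfrak{s}^{can}_Y}(i)$. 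This weight symmetry says precisely that the underlying (unlabelled) planar graded root is $T$-symmetric, and it pairs the leaf at index $i$ with the leaf at index $N_Y+1-i$; under this pairing I need to check $\lambda_V(v_i) = \lambda_V(v_{N_Y+1-i})$ and the angle-label compatibility. For the leaf labels, I would compute $\lambda_V(v_{N_Y+1-i}) = \sum_{s=0}^{N_Y-i}\Delta_{Y,\mathfrak{s}}(s)\tau_2^s$ and compare with $\lambda_V(v_i) = \sum_{s=0}^{i-1}\Delta_{Y,\mathfrak{s}}(s)\tau_2^s$; the difference is a sum over the middle range, and the relation $\Delta_{Y,\mathfrak{s}}(N_Y-s) = -\Delta_{Y,\mathfrak{s}}(s)$ should make this difference a multiple of the norm element $1+\tau_2$ in $\Z[\Z_2]$ up to the overall additive ambiguity built into the equivalence relation on $\Z_2$-labelled planar graded roots, or—more carefully tracking indices with the identity $\tau_2^{N_Y+1} = \tau_2^{N_Y-1}$ after the cyclic-shift normalization—exactly match after the permitted overall shift.

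The angle labels require tracking, for each simple angle $(v_k, v_s)$ with $k<s$, the chosen index $j_k$ of minimal weight strictly between the local-minimum indices, and the label $\lambda_A(\alpha_k) = \sum_{s'=i_k}^{j_k-1}\Delta_{Y,\mathfrak{s}}(s')\tau_2^{s'}$. Under $i\mapsto N_Y+1-i$, a descending-then-ascending "valley" maps to another valley, the minimal-weight index $j_k$ maps to $N_Y+1-j_k$ (this is forced by the weight symmetry, since the minimum value is preserved), and I would verify the reflected angle satisfies the required relation $\lambda_A(T(v'),T(v)) = \lambda_A(v,v') + \lambda_V(v) - \lambda_V(v')$ by a direct index-shift computation in $\Z[\Z_2]$, again using $\Delta_{Y,\mathfrak{s}}(N_Y-s) = -\Delta_{Y,\mathfrak{s}}(s)$ to flip the sign of each summand and reverse the summation range. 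The main obstacle I anticipate is bookkeeping: making the reindexing $s'\mapsto N_Y-s'$ interact correctly with the truncation of the $\Z_2$-grading (the labels live in $\Z[\Z_2]$, so $\tau_2^s$ only depends on $s \bmod 2$, and $N_Y$ being even—guaranteed here since $|H_1(Y;\Z)|$ is odd and the $\Z_2$-action on $Y$ is free, so all $p_i$ are odd—means $\tau_2^{N_Y-s} = \tau_2^{-s} = \tau_2^s$, so parities are preserved under reflection), and confirming that the residual discrepancies are absorbed precisely by the two allowed ambiguities in the equivalence of $\Z_2$-labelled planar graded roots (overall cyclic shift of $\Z_2$ and overall additive shift of leaf labels). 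Once these two conditions are checked, $\mathcal{R}_{\Gamma,\tilde{\s}}$ is reflective by the classification lemma, completing the proof.
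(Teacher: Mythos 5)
Your proposal is correct and follows essentially the same route as the paper, whose entire proof is the one-line observation that the claim follows from the weight symmetry $\chi_{\mathfrak{s}^{can}_Y}(x_{\mathfrak{s}^{can}_Y}(i)) = \chi_{\mathfrak{s}^{can}_Y}(x_{\mathfrak{s}^{can}_Y}(N_Y+1-i))$ together with the evenness of $N_Y$; you simply write out the label-matching computations (where the evenness of $N_Y$ gives $\tau_2^{N_Y-s}=\tau_2^{s}$, so the paired terms $\Delta(s)\tau_2^s$ and $\Delta(N_Y-s)\tau_2^{N_Y-s}$ cancel) that the paper leaves implicit.
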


\begin{proof}
    The claim follows from \Cref{lem: weights are symmetric}, together with the fact that $N_Y$ is even.
\end{proof}

\begin{defn} \label{defn: symm graded root for Gamma s}
    By \Cref{lem: R Gamma s is reflective}, the symmetrization $\mathrm{Sym}(\mathcal{R}_{\Gamma,\tilde{\s}})$ exists. Since it is unique up to equivalence of symmetric $\Z_2$-labelled planar graded roots, we call it the \emph{symmetric $\Z_2$-labelled planar graded root of $(\Gamma,\tilde{\s})$} and denote it by $R^s_{\Gamma,\tilde{\s}}$.
\end{defn}

We now describe a process for constructing a $(\mathrm{Pin}(2)\times \Z_2)$-equivariant homotopy type from the symmetric $\Z_2$-labelled planar graded root $\mathrm{Sym}(\mathcal{R}_{\Gamma,\tilde{\s}})$. Observe that if $\mathcal{R} = (R,\lambda_V,\lambda_A)$ is a symmetric $\Z_2$-labelled planar graded root and $w$ is a $T$-invariant vertex of $R$ with minimal $y$-coordinate among all $T$-invariant vertices of $R$, then exactly one of the following holds:
\begin{itemize}
    \item $w$ is a leaf, and $R$ has an odd number of leaves;
    \item There exists a unique leaf $v$ such that $(T(v),v)$ forms a simple angle at $w$, and $R$ has an even number of leaves.
\end{itemize}
If the first case holds, we call $w$ the \emph{central leaf} of $\mathcal{R}$; if the second case holds, we call $(T(v),v)$ the \emph{central angle} of $\mathcal{R}$.

List the leaves of $\mathcal{R}_{\Gamma,\tilde{\s}}$ as $v_0,\dots,v_n$, so that for each $i=1,\dots,n$, the pair $(v_{i-1},v_i)$ forms a simple angle. Recall from the construction of $\mathcal{H}(\mathcal{R}_{\Gamma,\tilde{\s}})$ in \Cref{subsec: Zp lattice homotopy} that each leaf $v_i$ corresponds to $V_{v_i}^+$ and each angle $(v_i,v_{i+1})$ corresponds to $W_{(v_i,v_{i+1})}^+ \wedge [0,1]$; $\mathcal{H}(\mathcal{R}_{\Gamma,\tilde{\s}})$ is obtained by gluing these together.

First suppose that $n$ is odd, say $n=2k-1$. Then $v_k$ is the central leaf of $\mathrm{Sym}(\mathcal{R}_{\Gamma,\tilde{\s}})$. Denote by $\mathcal{H}_0(\mathcal{R}_{\Gamma,\tilde{\s}})$ the subspace of $\mathcal{R}_{\Gamma,\tilde{\s}}$ consisting of $V_{v_i}^+$ for $k \le i \le n$ and $W_{v_i,v_{i+1}}^+ \wedge [0,1]$ for $k \le i \le n-1$. After stabilizing all $V_{v_i}^+$ and $W_{v_i,v_{i+1}}^+$ by $(\C^\mathbf{n})^+$ for some fixed $\mathbf{n}\in\Z[\Z_2]$, we may assume that $V_{v_k}^+ = (\C^{2\mathbf{m}})^+$ for some $\mathbf{m}\in \Z[\Z_2]$. Then, following the discussion preceding \Cref{defn: pin(2)xZ2 homotopy type of path}, we obtain a $(\mathrm{Pin}(2)\times \Z_2)$-spectrum, denoted by $\mathcal{H}(\mathrm{Sym}(\mathcal{R}_{\Gamma,\tilde{\s}}))$.

Next suppose that $n$ is even, say $n=2k$. Then $(v_k,v_{k+1})$ is the central angle of $\mathrm{Sym}(\mathcal{R}_{\Gamma,\tilde{\s}})$. Denote by $\mathcal{H}_0(\mathcal{R}_{\Gamma,\tilde{\s}})$ the subspace of $\mathcal{R}_{\Gamma,\tilde{\s}}$ consisting of $V_{v_i}^+$ for $k+1 \le i \le n$, $W_{v_i,v_{i+1}}^+ \wedge [0,1]$ for $k+1 \le i \le n-1$, and $W_{v_k,v_{k+1}}^+ \wedge \left[\tfrac{1}{2},1\right]$. After stabilizing all $V_{v_i}^+$ and $W_{v_i,v_{i+1}}^+$ by $(\C^\mathbf{n})^+$ for some fixed $\mathbf{n}\in\Z[\Z_2]$, we may assume that $W_{v_k,v_{k+1}}^+ \times \left\{ \tfrac{1}{2} \right\} = (\C^{2\mathbf{m}})^+$ for some $\mathbf{m}\in \Z[\Z_2]$. Then, following the discussion preceding \Cref{defn: pin(2)xZ2 homotopy type of path}, we obtain a $(\mathrm{Pin}(2)\times \Z_2)$-spectrum, which we again denote by $\mathcal{H}(\mathrm{Sym}(\mathcal{R}_{\Gamma,\tilde{\s}}))$.

\begin{lem} \label{lem: graded root gives pin(2) lattice sp}
    $\mathcal{H}(\mathrm{Sym}(\mathcal{R}_{\Gamma,\tilde{\s}}))$ is $(\mathrm{Pin}(2)\times \Z_2)$-equivariantly homotopy equivalent to $\mathcal{H}_{\mathrm{Pin}(2)\times \Z_2}(\Gamma,\tilde{\s})$.
\end{lem}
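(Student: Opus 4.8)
The plan is to reduce the statement to the (already-established) correspondence between $\mathcal{H}_{S^1\times\Z_2}$ and the non-equivariant $\Z_2$-labelled graded root, and then to check that the $\mathrm{Pin}(2)$-equivariant ``doubling and gluing'' construction behaves compatibly under the two equivalent ways of producing it: one starting from the $\Z_2$-equivariant almost $J$-invariant path $\gamma$ of \Cref{defn: pin(2)xZ2 for Gamma and s}, and one starting from the symmetric graded root $\mathrm{Sym}(\mathcal{R}_{\Gamma,\tilde{\s}})$. The key point is that both constructions have exactly the same shape: take an $(S^1\times\Z_2)$-spectrum built from spheres and cylinders (the ``half'' $\mathcal{H}(\gamma_0)$, resp.\ $\mathcal{H}_0(\mathcal{R}_{\Gamma,\tilde{\s}})$), fix an identification of its ``boundary'' with $(\H^{\mathbf m})^+$, and form the mapping cone of $(\mathrm{inc}\vee j\cdot\mathrm{inc})\circ\mathrm{Ind}(\Sigma^{-\widetilde{\R}}f)$. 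So the real content is: (1) $\mathcal{H}(\gamma_0)$ and $\mathcal{H}_0(\mathcal{R}_{\Gamma,\tilde{\s}})$ are $(S^1\times\Z_2)$-equivariantly homotopy equivalent as spectra-with-boundary, i.e.\ the equivalence can be chosen to intertwine the two parametrizations of the boundary $(\H^{\mathbf m})^+$; (2) mapping cones of $(\mathrm{Pin}(2)\times\Z_2)$-equivariant maps are preserved by such equivalences.

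\textbf{Step 1: identify the half-spectra.} First I would recall, from the construction preceding \Cref{defn: S1xZp lattice spectrum} and from \Cref{lem: label gr root gives eqv lattice sp}, that the $(S^1\times\Z_2)$-spectrum attached to the \emph{latter half} $\gamma_0 = \{\mathfrak{s}_1,\dots,\mathfrak{s}_n\}$ of $\gamma$ is, up to a suspension and reparametrization, exactly the spectrum $\mathcal{H}_0(\mathcal{R}_{\Gamma,\tilde{\s}})$ read off from the part of the graded root $\mathcal{R}_{\Gamma,\tilde{\s}}$ corresponding to leaves $v_k,\dots,v_n$ (when $n=2k-1$) or to $v_{k+1},\dots,v_n$ together with the upper half-cylinder over the central angle (when $n=2k$). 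Concretely: the construction of $\gamma$ in the proof of \Cref{thm: eqv almost J inv seq exists} is obtained from the truncated $\mathrm{Spin}^c$ computation sequence by inserting a constant-weight sequence, and constant-weight sequences contribute only cylinders $W^+\wedge[0,1]$, which can be contracted away without changing the $(S^1\times\Z_2)$-homotopy type (this is precisely the argument of \Cref{lem: Pin(2)xZ2 lattice sp is S1-eqv to SWF} and the Remark after \Cref{defn: pin(2)xZ2 for Gamma and s}). Using \Cref{lem: weights are symmetric} and \Cref{lem: middle cycle has mu bar weight}, the ``central'' end of $\gamma_0$ — where $\mathfrak{s}_1 = \widetilde{\mathrm{sp}}_{\tilde{\s}}(\mathrm{Wu}(\Gamma,\tilde{\s}))$ sits at weight $\chi_{\mathfrak{s}^{can}_Y}(x^{can}_Y) = \chi_{\mathfrak{s}^{can}_Y}(x_{\mathfrak{s}^{can}_Y}(\hat N_Y))$ — corresponds to the central leaf (if $N_Y/2$ is such that $n$ is odd) or to the midpoint of the central-angle cylinder. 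Hence there is an $(S^1\times\Z_2)$-equivariant homotopy equivalence $\mathcal{H}(\gamma_0)\simeq \mathcal{H}_0(\mathcal{R}_{\Gamma,\tilde{\s}})$, and by the way the stabilizations are chosen in both \Cref{defn: pin(2)xZ2 homotopy type of path} and in the construction of $\mathcal{H}(\mathrm{Sym}(\mathcal{R}_{\Gamma,\tilde{\s}}))$, this equivalence carries the chosen ``boundary'' sphere $V_1^+ = (\C^{2\mathbf m})^+$ of one side to the chosen boundary sphere of the other side by the identity map (after matching the suspension $\mathbf n$). This is the step I expect to need the most care, since one must track the degree shifts/reparametrizations coming from $\ind^t_{\Z_p}\dirac_{W_\Gamma,\mathfrak{s}_1}$ and check that the ``identity map'' $f\colon(\H^{\mathbf m})^+\xrightarrow{\simeq}V_1^+$ really is common to both descriptions; the clean way to do this is to observe that both $\mathbf m$'s are determined by the \emph{same} datum, namely $\lambda_V$ of the central leaf (resp.\ $\lambda_V(v)+\lambda_A$ of the central angle) of $\mathcal{R}_{\Gamma,\tilde{\s}}$, via \Cref{lem: central coeff of Wu cycle} and \Cref{lem: central coeff of canonical class}.

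\textbf{Step 2: pass to mapping cones.} Once Step 1 gives an $(S^1\times\Z_2)$-equivariant homotopy equivalence $\Phi\colon V^+\wedge\mathcal{H}(\gamma_0)\xrightarrow{\simeq}V^+\wedge\mathcal{H}_0(\mathcal{R}_{\Gamma,\tilde{\s}})$ (some common suspension $V$) commuting up to homotopy with the two inclusions of $(\H^{\mathbf m})^+$, I would form the $(\mathrm{Pin}(2)\times\Z_2)$-equivariant ``double'' $\Phi\vee j\cdot\Phi$ on $\mathcal{H}(\gamma_0)\vee j\cdot\mathcal{H}(\gamma_0)$, which by construction intertwines the two maps $(\mathrm{inc}\vee j\cdot\mathrm{inc})\circ\mathrm{Ind}(\Sigma^{-\widetilde\R}f)$ since these are built purely from $j$-doubling, the map $\beta\colon(\widetilde\R_+)^+\to S^1\vee j\cdot S^1$, and the common $f$ — none of which depend on which half-spectrum we started from. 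Because homotopy (co)limits in the $(\mathrm{Pin}(2)\times\Z_2)$-equivariant stable category are preserved by equivariant homotopy equivalences of the defining diagrams, taking mapping cones of the two (equivariantly homotopic) maps yields $(\mathrm{Pin}(2)\times\Z_2)$-equivariantly homotopy equivalent spectra. Comparing with \Cref{defn: pin(2)xZ2 homotopy type of path} (applied to $\gamma$, giving $\mathcal{H}_{\mathrm{Pin}(2)\times\Z_2}(\Gamma,\tilde{\s})$ via \Cref{defn: pin(2)xZ2 for Gamma and s}) and with the construction of $\mathcal{H}(\mathrm{Sym}(\mathcal{R}_{\Gamma,\tilde{\s}}))$ just described, this is exactly the claimed equivalence. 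The only remaining bookkeeping is to confirm that the symmetry condition defining $\mathrm{Sym}(\mathcal{R}_{\Gamma,\tilde{\s}})$ — the relation $\lambda_A(T(v'),T(v)) = \lambda_A(v,v') + \lambda_A(v)-\lambda_A(v')$ — is precisely the condition that the $j$-action identifies $\mathcal{H}_0$ with its mirror image compatibly with $\beta$, which follows from \Cref{lem: weights are symmetric} and \Cref{rem: conjugation in terms of cycles} applied term-by-term along the computation sequence. I expect Step 2 to be essentially formal given Step 1.
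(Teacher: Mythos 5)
Your proposal is correct and follows essentially the same route as the paper: reduce to comparing the half-spectra $\mathcal{H}(\gamma_0)$ and $\mathcal{H}_0(\mathcal{R}_{\Gamma,\tilde{\s}})$, observe that the mapping-cone/doubling construction feeding into $\mathcal{H}_{\mathrm{Pin}(2)\times\Z_2}$ and $\mathcal{H}(\mathrm{Sym}(\mathcal{R}_{\Gamma,\tilde{\s}}))$ is literally the same functorial recipe, and then invoke the argument of \Cref{lem: label gr root gives eqv lattice sp}. The paper's own proof is only three sentences and tacitly includes the boundary-parametrization compatibility in the phrase ``only depends on''; your Step~1, which spells out that the equivalence must intertwine the two identifications of $V_1^+$ with $(\H^{\mathbf m})^+$ and that both $\mathbf m$'s are determined by the same datum ($\lambda_V$ of the central leaf, resp.\ $\lambda_V+\lambda_A$ of the central angle), is exactly the content the paper elides, so your write-up is a faithful expansion rather than a different proof.
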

\begin{proof}
    Since the construction of $\mathcal{H}(\mathrm{Sym}(\mathcal{R}_{\Gamma,\tilde{\s}}))$ only depends on $\mathcal{H}_0(\mathcal{R}_{\Gamma,\tilde{\s}})$ and the construction of $\mathcal{H}_{\mathrm{Pin}(2)\times \Z_2}(\Gamma,\tilde{\s})$ only depends on $\mathcal{H}(\gamma_0)$, it suffices to show that $\mathcal{H}_0(\mathcal{R}_{\Gamma,\tilde{\s}})$ and $\mathcal{H}(\gamma_0)$ are $(\mathrm{Pin}(2) \times \Z_2)$-equivariantly homotopy equivalent. This is essentially the same as \Cref{lem: label gr root gives eqv lattice sp}.
\end{proof}

\subsection{The weak lifting lemma} \label{subsec: weak lifting lemma}

Throughout this subsection, we fix a topological group $G$ and a finite-index normal subgroup $N$ containing the identity component $G_0$ of $G$. This ensures that $G/N$ is a finite discrete group. We omit coefficient rings from the notation unless they are required to state results in full generality. We also adopt the following notation: given a pointed $N$-space $X$, we denote the induced $G$-space by
\[
\mathrm{Ind}^G_N X = X \vee g_1X \vee \cdots \vee g_n X,
\]
where $G = g_1 N \sqcup \cdots \sqcup g_n N$ with $g_1=1$ and $n=|G/N|$.

Before moving on, we briefly survey some properties of induced spaces and induced maps, following \cite[Section~5]{Ada84}. Given a pointed $G$-space $X$ and a pointed $N$-space $Z$, any $N$-equivariant pointed map $f\colon Z\to X$ induces a $G$-equivariant map
\[
\mathrm{Ind}^G_N f\colon \mathrm{Ind}^G_N Z \longrightarrow X,
\]
defined by $(\mathrm{Ind}^G_N f)(g_i x)=g_i f(x)$ for any $x\in X$.  

Furthermore, any $N$-equivariant pointed map $f\colon X\to Z$ induces a $G$-equivariant \emph{stable map} $\mathrm{Ind}^G_N f\colon X\to \mathrm{Ind}^G_N Z$ as follows:
\begin{itemize}
    \item Consider the canonical $G$-equivariant embedding $G/N \hookrightarrow \mathrm{GL}(V)$, where $V=\bigoplus_{i=1}^n \R g_i$, and define the ``duplication'' map
    \[
    \beta\colon V^+ \longrightarrow V^+/(V^+ \smallsetminus \nu(G/N)) \simeq \mathrm{Ind}^G_N V^+,
    \]
    where $\nu(G/N)$ is a regular neighborhood of $G/N$ in $V$, setwise invariant under $G$.
    \item Then we define
    \[
    \mathrm{Ind}^G_N f\colon S^1 \wedge X \xrightarrow{\;\beta \wedge f\;} \left( \bigvee_{i=1}^n g_i \cdot S^1 \right)\wedge Z = \mathrm{Ind}^G_N Z,
    \]
    which can be made $G$-equivariant.
\end{itemize}

Conversely, given a $G$-equivariant pointed map $f\colon \mathrm{Ind}^G_N Z \longrightarrow X$, we restrict to $N$-equivariance and precompose with the inclusion $g_1 Z \hookrightarrow \mathrm{Ind}^G_N Z$ to obtain an $N$-equivariant map $\mathrm{Res}^N_G f\colon Z \longrightarrow X$.  
Similarly, given a $G$-equivariant pointed map $f\colon X \longrightarrow \mathrm{Ind}^G_N Z$, we restrict to $N$-equivariance and postcompose with the collapsing map
\[
\mathrm{Ind}^G_N Z \longrightarrow Z,
\]
which collapses $g_2 Z,\dots,g_n Z$ to the basepoint, to obtain an $N$-equivariant map $\mathrm{Res}^N_G f\colon X \longrightarrow Z$.  

The operations $\mathrm{Ind}^G_N$ and $\mathrm{Res}^N_G$ are inverses of each other up to equivariant homotopy and therefore induce bijections
\[
[X,Z]^N \simeq [X,\mathrm{Ind}^G_N Z]^G, \qquad [Z,X]^N \simeq [\mathrm{Ind}^G_N Z,X]^G.
\]

\begin{rem}
    For simplicity, in this subsection we will often conflate honest maps with stable maps. This causes no issues, since our focus is on pullback maps between reduced cochain complexes: pullbacks along equivariant stable maps are also well defined in $\widetilde{C}^\ast_G(-)$.
\end{rem}

Observe that since $N\hookrightarrow G$ induces $BN \to BG$, the pullback $C^\ast(BG)\to C^\ast(BN)$ endows $C^\ast(BN)$ with the structure of a $C^\ast(BG)$-algebra. Given any $G$-space $X$, the map
\[
X\times_N EG \longrightarrow X\times_G EG
\]
is a finite covering. Hence we obtain the pullback 
\[\widetilde{C}^\ast_G(X) \to \widetilde{C}^\ast_N(X)\] and the \emph{transfer map}
\[
\mathrm{Tr}^G_N X\colon \widetilde{C}^\ast_N(X) \longrightarrow \widetilde{C}^\ast_G(X).
\]
Both of these are $C^\ast(BG)$-module maps.

\begin{lem} \label{lem: induced map to transfer}
    Let $X$ be a pointed $G$-space, $Z$ a pointed $N$-space, and $f\colon X \to Z$ an $N$-equivariant pointed map. Consider the induced map
    \[
    \mathrm{Ind}^G_N f\colon X \longrightarrow \mathrm{Ind}^G_N Z.
    \]
    There is a canonical homotopy equivalence
    \[
    \mathrm{eqv}_Z\colon \widetilde{C}^\ast_N(Z) \longrightarrow \widetilde{C}^\ast_G(\mathrm{Ind}^G_N Z)
    \]
    of $C^\ast(BG)$-modules. Then the following square is homotopy commutative:
    \[
    \xymatrix{
    \widetilde{C}^\ast_N(Z) \ar[rr]^{\mathrm{eqv}_Z}_\simeq \ar[d]_{f^\ast} && \widetilde{C}^\ast_G(\mathrm{Ind}^G_N Z) \ar[d]^{(\mathrm{Ind}^G_N f)^\ast} \\
    \widetilde{C}^\ast_N(X) \ar[rr]^{\mathrm{Tr}^G_N X} && \widetilde{C}^\ast_G(X)
    }
    \]
\end{lem}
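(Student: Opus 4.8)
The plan is to prove the lemma by reducing everything to a statement about a finite covering space and the transfer map, and then invoking the standard naturality of transfer maps under pullback. First I would make the homotopy equivalence $\mathrm{eqv}_Z$ explicit: by definition $\mathrm{Ind}^G_N Z = Z \vee g_2 Z \vee \cdots \vee g_n Z$, and the Borel construction gives
\[
(\mathrm{Ind}^G_N Z)\times_G EG \;=\; \bigvee_{i=1}^n \big(g_i Z \times_G EG\big),
\]
with each wedge summand $g_i Z \times_G EG$ canonically homeomorphic to $Z \times_N EG$ via the action of $g_i$. Collapsing all summands except the first (i.e. the one corresponding to $g_1 = 1$) yields a map $\mathrm{Ind}^G_N Z \times_G EG \to Z \times_N EG$, whose induced pullback on reduced cochains is $\mathrm{eqv}_Z$; that it is a quasi-isomorphism of $C^\ast(BG)$-modules follows because the wedge decomposition above is a wedge of $C^\ast(BG)$-module summands and the other summands are carried isomorphically onto each other by the residual $G/N$-action, so the first summand already computes the whole equivariant cohomology. (Alternatively one may phrase $\mathrm{eqv}_Z$ via the Shapiro-type isomorphism $\widetilde C^\ast_G(\mathrm{Ind}^G_N Z) \simeq \widetilde C^\ast_N(Z)$, which is standard.)

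Next I would identify the map $\mathrm{Ind}^G_N f$ Borel-theoretically. Recall $\mathrm{Ind}^G_N f$ is the composite $S^1 \wedge X \xrightarrow{\beta \wedge f} \big(\bigvee_i g_i S^1\big) \wedge Z = \mathrm{Ind}^G_N Z$, where $\beta \colon V^+ \to \mathrm{Ind}^G_N V^+$ is the duplication (pinch) map associated to the embedding $G/N \hookrightarrow \mathrm{GL}(V)$. The key geometric point is that, after applying the Borel construction and using the collapse map defining $\mathrm{eqv}_Z$, the composite $\widetilde C^\ast_N(Z) \xrightarrow{\mathrm{eqv}_Z} \widetilde C^\ast_G(\mathrm{Ind}^G_N Z) \xrightarrow{(\mathrm{Ind}^G_N f)^\ast} \widetilde C^\ast_G(X)$ is exactly the composition of $f^\ast \colon \widetilde C^\ast_N(Z) \to \widetilde C^\ast_N(X)$ with the transfer $\mathrm{Tr}^G_N X \colon \widetilde C^\ast_N(X) \to \widetilde C^\ast_G(X)$. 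Concretely: the pinch map $\beta$ is precisely the stable map whose effect on (co)homology, after Borel construction, is the transfer associated to the finite cover $X \times_N EG \to X \times_G EG$; this is the standard description of transfer maps for finite coverings via the Becker–Gottlieb / Pontryagin–Thom pinch construction. So $(\mathrm{Ind}^G_N f)^\ast \circ \mathrm{eqv}_Z$ naturally factors as $\mathrm{Tr}^G_N X \circ f^\ast$, which is the content of the homotopy-commutative square.

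To organize this cleanly I would proceed in three steps. Step one: establish the commutative square at the level of \emph{spaces} (or spectra), i.e. show that $\mathrm{Ind}^G_N f$, post-composed with the collapse $\mathrm{Ind}^G_N Z \to Z$ defining the Shapiro equivalence, is $N$-equivariantly homotopic to $f$ itself — this is essentially the statement that $\mathrm{Res}^N_G \circ \mathrm{Ind}^G_N \simeq \mathrm{id}$, which is recorded in \cite{Ada84}. Step two: identify, for a general $G$-space $W$, the composite $\widetilde C^\ast_G(\mathrm{Ind}^G_N W) \xleftarrow{\mathrm{eqv}_W} \widetilde C^\ast_N(W)$ followed by any map — here the observation is that $\mathrm{eqv}_W$ is inverse to the pullback along $W \hookrightarrow \mathrm{Ind}^G_N W$ precomposed with the collapse, so naturality of pullback handles the bookkeeping. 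Step three: insert the transfer. The transfer $\mathrm{Tr}^G_N X$ arises exactly because the map $X \xrightarrow{\mathrm{Ind}^G_N f} \mathrm{Ind}^G_N Z$, when Borel-ified and composed with the collapse map, is \emph{not} simply $f^\ast$ but rather $f^\ast$ twisted by the pinch map on $X$'s own Borel construction, and that pinch map is by definition the transfer of the covering $X\times_N EG \to X \times_G EG$.

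The main obstacle I expect is being careful about the pinch/duplication map $\beta$ and verifying rigorously that its Borel construction induces the transfer map $\mathrm{Tr}^G_N$ on cochains — i.e. reconciling the two standard descriptions of transfer (the "wrong-way" / umkehr map of a finite covering versus the Pontryagin–Thom collapse onto a regular neighborhood of $G/N \hookrightarrow \mathrm{GL}(V)$) at the \emph{chain level} rather than just on homology, since we want a homotopy-commutative square of $C^\ast(BG)$-\emph{modules}. This is routine but requires either citing a chain-level model for the transfer (e.g. via the normalized singular cochain functor applied to the Pontryagin–Thom collapse, as in the $E_\infty$-algebra framework already invoked in the paper) or unwinding the definitions on a CW model of $EG$; I would take the former route, since $C^\ast(-)$ preserves the relevant homotopy colimits and the transfer is natural.
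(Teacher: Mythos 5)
Your proposal is correct and rests on the same key idea as the paper: after Borel construction, the pinch/duplication map $\beta$ realizes the transfer of the finite covering $X\times_N EG \to X\times_G EG$, and this is the only nontrivial ingredient. The paper organizes the argument slightly more cleanly than your three-step plan: it factors the square through $\widetilde{C}^\ast_G(\mathrm{Ind}^G_N X)$ via the decomposition $\mathrm{Ind}^G_N f = (\mathrm{Ind}^G_N \mathrm{id}_X \to \mathrm{Ind}^G_N Z) \circ (\mathrm{Ind}^G_N \mathrm{id}_X)$, which by naturality of $\mathrm{eqv}$ reduces the whole lemma to the universal triangle $\mathrm{Tr}^G_N X \simeq (\mathrm{Ind}^G_N \mathrm{id}_X)^\ast \circ \mathrm{eqv}_X$ not involving $f$ at all; that triangle is then verified exactly as you anticipated — replacing $X$ by its Borel construction so that the $G$-action is free off the basepoint, whereupon the transfer is fiberwise integration along the $G/N$-cover $X/N \to X/G$. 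Your flagged obstacle (getting the transfer/pinch identification at the cochain level, not just on cohomology) is precisely what the free-case reduction resolves, so if you adopt that factorization the argument closes up without needing a separate citation for a chain-level transfer model.
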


\begin{proof}
    Consider the following diagram:
\[
\xymatrix{
\widetilde{C}^\ast_N(Z) \ar[rr]^{\mathrm{eqv}_Y}\ar[dd]_{f^\ast} && \widetilde{C}^\ast_G(\mathrm{Ind}^G_N Z) \ar[ld]^{f^\ast} \ar[dd]^{(\mathrm{Ind}^G_N f)^\ast} \\
& \widetilde{C}^\ast_G(\mathrm{Ind}^G_N(X)) \ar[rd]^{(\mathrm{Ind}^G_N \mathrm{id}_X)^\ast} \\
\widetilde{C}^\ast_N(X) \ar[ru]^{\mathrm{eqv}_X} \ar[rr]^{\mathrm{Tr}^G_N X} && \widetilde{C}^\ast_G(X)
}
\]
The upper left triangle (which is actually a square) and the right triangle clearly homotopy commute. Thus it remains to show that the bottom triangle also homotopy commutes.  

To this end, by replacing $X$ with its $G$-Borel construction, we may assume that the $G$-action on $X$ is free outside the basepoint (which is $G$-invariant). Under this assumption, equivariant cochain complexes can be canonically identified, up to homotopy equivalence, with the cochain complexes of the quotient spaces. Hence the bottom triangle reduces to the following diagram, where $p_!$ denotes fiberwise integration along the $G/N$-fibers of $X/N \to X/G$:
\[
\xymatrix{
& \widetilde{C}^\ast(X/N) \ar[rd]^{p_!} \\
\widetilde{C}^\ast(X/N) \ar[ru]^{\mathrm{id}} \ar[rr]^{\mathrm{transfer}} && \widetilde{C}^\ast(X/G)
}
\]
Since transfer maps are precisely the fiberwise integration maps for finite coverings, this triangle commutes. The lemma follows.
\end{proof}
\begin{lem} \label{lem: bigger equivariance}
    Let $X$ and $X'$ be pointed $G$-spaces which are $N$-equivariantly weakly homotopy equivalent, and (non-equivariantly) weakly homotopy equivalent to a sphere. Let $f\colon X \to X'$ be an $N$-equivariant pointed map which is a (non-equivariant) homotopy equivalence. Suppose that $G$ acts trivially on $H^\ast(X)$ and $H^\ast(X')$. Then there exists a homotopy equivalence 
    \[
    f^\ast_G\colon \widetilde{C}^\ast_G(X') \longrightarrow \widetilde{C}^\ast_G(X)
    \]
    making the following diagram homotopy commutative:
    \[
    \xymatrix{
    \widetilde{C}^\ast_N(X) \ar[rr]^{\mathrm{Tr}^G_N X} \ar[d]_{f^\ast} && \widetilde{C}^\ast_G(X) \ar[d]^{f^\ast_G} \\
    \widetilde{C}^\ast_N(X') \ar[rr]^{\mathrm{Tr}^G_N X'} && \widetilde{C}^\ast_G(X')
    }
    \]
\end{lem}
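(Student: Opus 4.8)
The plan is to build $f^\ast_G$ by a direct obstruction-theoretic argument, using that both $\widetilde{C}^\ast_G(X)$ and $\widetilde{C}^\ast_G(X')$ are perfect modules over $C^\ast(BG)$ whose underlying non-equivariant cochain complexes are spheres. First I would reduce to the case where $X$ and $X'$ are actual finite $G$-CW complexes (replacing them by $G$-CW approximations), so that $\widetilde{C}^\ast_G(X)$ and $\widetilde{C}^\ast_G(X')$ are perfect $C^\ast(BG)$-modules by \Cref{lem: chain level Mayer Vietoris} (cf. \Cref{rem: finite G-CW-complexes are perfect}). Since $G$ acts trivially on $H^\ast(X)\cong H^\ast(S^m)$ and on $H^\ast(X')$, the Borel spectral sequences collapse in the relevant range and we get Thom-type identifications $\widetilde{C}^\ast_G(X)\simeq C^\ast(BG)[-m]$ and $\widetilde{C}^\ast_G(X')\simeq C^\ast(BG)[-m']$ as $C^\ast(BG)$-modules; in particular $m=m'$ since $f$ is a non-equivariant homotopy equivalence. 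Thus both sides are invertible rank-one $C^\ast(BG)$-modules after a degree shift.

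Next I would produce the candidate map. Because $\widetilde{C}^\ast_G(X')\simeq C^\ast(BG)[-m]$ is free of rank one, specifying $f^\ast_G$ up to homotopy amounts to choosing an element of $H^0\bigl(\mathrm{Mor}_{C^\ast(BG)}(\widetilde{C}^\ast_G(X'),\widetilde{C}^\ast_G(X))\bigr)\cong H^0(C^\ast(BG))\cong k$, i.e.\ a scalar. I want to choose the scalar so that the square commutes. The constraint comes from the bottom-left composite $\mathrm{Tr}^G_N X'\circ f^\ast$: restricting the desired commuting square along $C^\ast(BG)\to C^\ast(BN)$ and using that $f^\ast\colon\widetilde{C}^\ast_N(X)\to\widetilde{C}^\ast_N(X')$ is a quasi-isomorphism (as $f$ is an $N$-equivariant homotopy equivalence), one sees that $\mathrm{Tr}^G_N X'\circ f^\ast$ and $\mathrm{Tr}^G_N X$ both become the transfer on the Borel construction, hence they agree after inverting $f^\ast$. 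Concretely: composing the would-be square with the $N$-level transfer in the other direction, or using the projection formula $\mathrm{Tr}^G_N(\mathrm{res}(a)\cdot b)=a\cdot\mathrm{Tr}^G_N(b)$ for transfers, pins down $f^\ast_G$ as multiplication by the unique scalar making $\mathrm{Tr}^G_N X\colon k[-m]\to C^\ast(BG)[-m]$ factor correctly. I would then check that this $f^\ast_G$ is a homotopy equivalence: it is a $C^\ast(BG)$-module self-map of an invertible module realizing a unit scalar (the scalar is a unit because $f$ is a non-equivariant equivalence, which forces the induced map on $H^\ast(-;k)$ after inverting, equivalently on the $S^1$- or identity-component-fixed points, to be multiplication by a unit), hence invertible up to homotopy.

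The main obstacle I expect is verifying the homotopy-commutativity of the square at the chain level rather than merely on cohomology. The subtlety is that $\mathrm{Tr}^G_N$ is a genuine chain-level map and the identifications $\widetilde{C}^\ast_G(X)\simeq C^\ast(BG)[-m]$ are only canonical up to homotopy; to glue them into a strictly commuting diagram one must track a coherence homotopy. I would handle this exactly as in \Cref{lem: induced map to transfer}: reduce to free $G$-actions via the Borel construction, identify all equivariant cochain complexes with cochain complexes of quotient spaces, and identify the transfer maps with fiberwise integration along the finite covers $X/N\to X/G$ and $X'/N\to X'/G$; then $f$ descends to a map of covers $X/N\to X'/N$ compatible with the base map $X/G\to X'/G$ up to homotopy, and naturality of fiberwise integration (transfer) under such a map of finite covers gives the required homotopy. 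Once this naturality square for transfers is in hand, defining $f^\ast_G$ as the induced map on the $X/G$-level cochains (after the Thom shifts) and invoking the five lemma on the associated exact triangle shows it is the desired homotopy equivalence.
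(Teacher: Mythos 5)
Your proposal is correct and takes essentially the same route as the paper: both identify $\widetilde{C}^\ast_G(X)$ and $\widetilde{C}^\ast_G(X')$ with shifts of $C^\ast(BG)$ via Thom quasi-isomorphisms, use the fact that the transfer and the Thom maps are both fiberwise integration over $BG$ to get commutativity of the relevant parallelograms, and use that $f$ pulls back the Thom class (equivalently, your observation that $\mathrm{Tr}^G_N X'\circ f^\ast$ and $\mathrm{Tr}^G_N X$ agree after the Thom identifications) to close the square. The paper simply defines $f^\ast_G = (\mathrm{Th}_G X')^{-1}\circ \mathrm{Th}_G X$ directly and verifies the three sub-diagrams commute, which is a cleaner packaging of the same argument than your "pin down the scalar" framing, but the content is identical.
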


\begin{proof}
    Consider the following diagram, where $\mathrm{Th}$ denotes Thom (quasi-)isomorphisms:
\[
\xymatrix{
\widetilde{C}^\ast_N(X) \ar[rrd]^{\mathrm{Th}_N X} \ar[rr]^{\mathrm{Tr}^G_N X} \ar[dd]_{f^\ast} && \widetilde{C}^\ast_G(X) \ar[rrd]^{\mathrm{Th}_G X} \\
&& C^\ast(BN) \ar[rr]^{\mathrm{transfer}} && C^\ast(BG) \\
\widetilde{C}^\ast_N(X') \ar[rru]^{\mathrm{Th}_N X'} \ar[rr]^{\mathrm{Tr}^G_N X'} && \widetilde{C}^\ast_G(X') \ar[rru]^{\mathrm{Th}_G X'}
}
\]
Since transfer maps and Thom quasi-isomorphisms are both fiberwise integration maps over the base $BG$, the upper parallelogram and the lower parallelogram are homotopy commutative. Furthermore, since $f$ is a non-equivariant homotopy equivalence, the Thom class for the Borel $X'$-bundle over $BN$ can be pulled back along $f$ to obtain a Thom class for the Borel $X$-bundle over $BN$, so the left triangle is also homotopy commutative. Therefore, setting
\[
f^\ast_G = (\mathrm{Th}_G X')^{-1} \circ \mathrm{Th}_G X,
\]
where $(\mathrm{Th}_G X')^{-1}$ denotes a homotopy inverse of $\mathrm{Th}_G X'$, proves the lemma.
\end{proof}

\begin{lem}[Weak lifting lemma] \label{lem: weak lifting lemma}
    Let $X$ and $X'$ be pointed $G$-spaces, let $Z$ be a pointed $N$-space, and let $f\colon Z \to X$ and $f'\colon Z \to X'$ be pointed $N$-equivariant maps. Suppose the following conditions hold:
    \begin{itemize}
        \item $\mathrm{Cone}(\mathrm{Ind}^G_N f)$ and $\mathrm{Cone}(\mathrm{Ind}^G_N f')$ are (non-equivariantly) weakly homotopy equivalent to a sphere;
        \item There exists a pointed $N$-equivariant map $g\colon X \to X'$, which is a (non-equivariant) homotopy equivalence, such that $f'$ is $N$-equivariantly homotopic to $g \circ f$.
    \end{itemize}
    Then $\widetilde{C}^\ast_G(X;\Z_2)$ and $\widetilde{C}^\ast_G(X';\Z_2)$ are quasi-isomorphic as $C^\ast(BG;\Z_2)$-modules.
\end{lem}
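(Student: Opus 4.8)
The plan is to peel off the two $G$-equivariant cofiber sequences, reduce the comparison of $\widetilde{C}^\ast_G(X;\Z_2)$ and $\widetilde{C}^\ast_G(X';\Z_2)$ to a statement about an $N$-equivariant attaching map, and then control that attaching map using the $N$-equivariant data supplied by $g$. First I would set $F := \mathrm{Ind}^G_N f$ and $F' := \mathrm{Ind}^G_N f'$, which are $G$-equivariant; since $\mathrm{Ind}^G_N$ carries $N$-equivariant homotopies to $G$-equivariant ones, $f'\simeq_N g\circ f$ gives $F'\simeq_G \mathrm{Ind}^G_N(g\circ f)$, so replacing $f'$ by $g\circ f$ changes $C' := \mathrm{Cone}(F')$ only by a $G$-equivariant homotopy equivalence and leaves $X'$ untouched; thus I may assume $f' = g\circ f$. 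Write $C := \mathrm{Cone}(F)$. By hypothesis $C$ and $C'$ are non-equivariant weak homotopy spheres; working with $\Z_2$-coefficients, their reduced cohomology is one-dimensional, so $G$ (hence $N$) acts trivially on it. Rotating $\mathrm{Ind}^G_N Z\xrightarrow{F}X\to C$ to $\Sigma^{-1}C\xrightarrow{\phi}\mathrm{Ind}^G_N Z\xrightarrow{F}X$ and applying $\widetilde{C}^\ast_G(-;\Z_2)$ presents $\widetilde{C}^\ast_G(X;\Z_2)$ as the fiber of $\phi^\ast$. Using the canonical identification $\mathrm{Ind}^G_N Z\times_G EG = Z\times_N EG$ I would identify $\widetilde{C}^\ast_G(\mathrm{Ind}^G_N Z;\Z_2)\simeq \widetilde{C}^\ast_N(Z;\Z_2)$ as $C^\ast(BG;\Z_2)$-modules; and since, by the $\mathrm{Ind}$--$\mathrm{Res}$ adjunction, $\phi$ is the $G$-equivariant stable map induced from its $N$-equivariant adjoint $\bar\phi\colon\Sigma^{-1}C\to Z$, \Cref{lem: induced map to transfer} applies (with $\Sigma^{-1}C$ in the role of $X$ and $\bar\phi$ in the role of $f$) to give $\phi^\ast = \mathrm{Tr}^G_N(\Sigma^{-1}C)\circ\bar\phi^\ast$. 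The same identities hold verbatim with $X,C,\phi,\bar\phi$ replaced by $X',C',\phi',\bar\phi'$.

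Next I would feed in the sphere condition. Since $C$ and $C'$ are $\Z_2$-homology spheres with trivial action on cohomology, their Borel constructions over $BG$ and $BN$ are $\Z_2$-orientable sphere bundles, so the $\Z_2$-Thom isomorphism gives $\widetilde{C}^\ast_G(C;\Z_2)\simeq C^\ast(BG;\Z_2)[-m]$ and $\widetilde{C}^\ast_N(C;\Z_2)\simeq C^\ast(BN;\Z_2)[-m]$ (and likewise for $C'$), under which $\mathrm{Tr}^G_N(\Sigma^{-1}C)$ becomes a degree shift of the group-level transfer $C^\ast(BN;\Z_2)\to C^\ast(BG;\Z_2)$. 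At this stage $\widetilde{C}^\ast_G(X;\Z_2)$ is, up to a degree shift, the mapping cone of $\mathrm{Tr}^G_N\circ\bar\phi^\ast$, and the whole problem reduces to comparing $\bar\phi^\ast$ with $\bar\phi'^\ast$ compatibly with the transfers. For this I would exploit $f'=g\circ f$: the $g$-induced equivalences of the $N$-equivariant cofiber sequences of $f$ and of $g\circ f$, together with the corresponding maps on the conjugated copies of $\mathrm{Cone}(f)$ inside $C$, should assemble into an $N$-equivariant map $w\colon C'\to C$ that is a non-equivariant homotopy equivalence and satisfies $\bar\phi\circ\Sigma^{-1}w\simeq_N\bar\phi'$ (in particular $\dim C=\dim C'$). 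Then \Cref{lem: bigger equivariance}, applied to $\Sigma^{-1}w$, produces a $C^\ast(BG;\Z_2)$-module equivalence $w^\ast_G$ intertwining the transfers with $(\Sigma^{-1}w)^\ast$ on $N$-cochains; combined with the identities above, $\phi^\ast$ and $\phi'^\ast$ have the same source $\widetilde{C}^\ast_N(Z;\Z_2)$ and differ only by the equivalence $w^\ast_G$ on their targets, so their fibers --- namely $\widetilde{C}^\ast_G(X;\Z_2)$ and $\widetilde{C}^\ast_G(X';\Z_2)$ --- are quasi-isomorphic as $C^\ast(BG;\Z_2)$-modules, as desired.

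The hard part will be the construction of the $N$-equivariant comparison map $w$ and the verification that it intertwines the attaching maps $\bar\phi$ and $\bar\phi'$. The obstruction is that the conjugates $g_i g g_i^{-1}\colon X\to X'$ over the various cosets $g_i N$ are each $N$-equivariant non-equivariant homotopy equivalences, but need not agree --- even up to $N$-equivariant homotopy --- on the common subspace $X$ along which the pieces $g_i\cdot\mathrm{Cone}(f)$ of $C$ are glued, so the naive ``glue the $g_i$-twisted equivalences of the pieces'' argument does not immediately produce a well-defined $N$-equivariant map. Resolving this should use crucially that $C$ and $C'$ are non-equivariant homotopy spheres, which forces the obstructions to carrying out the gluing (and the ambiguity in the required homotopies) to live in essentially one-dimensional groups; equivalently, on $\Z_2$-cochains the conjugation twists act trivially, which is precisely the structural input packaged by \Cref{lem: bigger equivariance} and the reason the statement is formulated with $\Z_2$-coefficients. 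Once $w$ is in hand, the remaining points --- naturality of transfer maps and Thom classes, and the bookkeeping among the several distinguished triangles --- are routine.
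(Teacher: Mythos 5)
Your overall strategy coincides with the paper's: both proofs peel off the cofiber sequence of $\mathrm{Ind}^G_N f$ (your rotation to $\Sigma^{-1}C\xrightarrow{\phi}\mathrm{Ind}^G_NZ\to X$ is the same as the paper's presentation of $\Sigma X$ as $\mathrm{Cone}(C_f)$ with $C_f\colon \mathrm{Cone}(\mathrm{Ind}^G_Nf)\to\mathrm{Ind}^G_N\Sigma Z$), both convert the attaching map into a transfer composed with the pullback of its $N$-equivariant adjoint via \Cref{lem: induced map to transfer}, and both close the argument with \Cref{lem: bigger equivariance}. The only cosmetic difference is that your comparison map $w$ runs $C'\to C$ whereas the paper's $\tilde g$ runs $\mathrm{Cone}(\mathrm{Ind}^G_Nf)\to\mathrm{Cone}(\mathrm{Ind}^G_Nf')$; either direction feeds into \Cref{lem: bigger equivariance} equally well.

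The genuine gap is that you stop at precisely the step that carries the content: you never construct $w$, writing only that the $g_i$-twisted pieces "should assemble" and that the sphere hypothesis "should force" the obstructions to vanish. That is not the paper's mechanism and it is not a proof. The paper produces $\tilde g$ by a direct mapping-cone construction: it is $g$ on $X$, and over each cone $C(g_iZ)$ it is the map determined by a chosen $N$-equivariant homotopy $H\colon f'\simeq g\circ f$; it is a non-equivariant equivalence simply because $g$ is, and the required identity $C^0_f\simeq_N C^0_{f'}\circ\tilde g$ is read off from the construction because the collapse $c$ retains only the summand $g_1Z=Z$, where the comparison is governed by $H$ alone. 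In particular, the sphere hypothesis is consumed entirely by \Cref{lem: bigger equivariance} (trivial action on the $\Z_2$-cohomology of a sphere, plus the Thom identifications), not by the existence of the comparison map, so an obstruction-theoretic detour is neither needed nor available in the form you sketch (you give no identification of the relevant Bredon obstruction groups, and there is no reason they are "one-dimensional"). Your worry about the conjugates $g_igg_i^{-1}$ is a fair one — it is exactly what prevents $\tilde g$ from being $G$-equivariant and is why the paper only asserts $N$-equivariance and only checks compatibility after collapsing to the first summand — but to complete your proof you must replace the sentence "should assemble into an $N$-equivariant map $w$" with an explicit construction of the map on every piece $C(g_iZ)$, including an $N$-equivariant extension over the cones indexed by $g_i\notin N$, and then verify $\bar\phi\circ\Sigma^{-1}w\simeq_N\bar\phi'$ from that construction rather than from a vanishing claim.
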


\begin{proof}
    Since $\Sigma X$ is the mapping cone of $C_f\colon \mathrm{Cone}(\mathrm{Ind}^G_N f)\to \mathrm{Ind}^G_N \Sigma Z$ induced by $f$, and similarly $\Sigma X'$ is the mapping cone of $C_{f'}\colon \mathrm{Cone}(\mathrm{Ind}^G_N f')\to \mathrm{Ind}^G_N \Sigma Z$ induced by $f'$, we obtain from \Cref{lem: chain level Mayer Vietoris} that
\[
\begin{split}
\widetilde{C}^\ast_G(X;\Z_2) &\simeq \mathrm{Cone}\!\left((C_f)^\ast\colon \widetilde{C}^\ast_G(\mathrm{Ind}^G_N \Sigma Z;\Z_2) \longrightarrow \widetilde{C}^\ast_G(\mathrm{Cone}(\mathrm{Ind}^G_N f);\Z_2)\right)[1], \\
\widetilde{C}^\ast_G(X';\Z_2) &\simeq \mathrm{Cone}\!\left((C_{f'})^\ast\colon \widetilde{C}^\ast_G(\mathrm{Ind}^G_N \Sigma Z;\Z_2) \longrightarrow \widetilde{C}^\ast_G(\mathrm{Cone}(\mathrm{Ind}^G_N f');\Z_2)\right)[1].
\end{split}
\]
Thus, to prove the lemma, it suffices to construct a quasi-isomorphism 
\[
F\colon \widetilde{C}^\ast_G(\mathrm{Cone}(\mathrm{Ind}^G_N f);\Z_2) \longrightarrow \widetilde{C}^\ast_G(\mathrm{Cone}(\mathrm{Ind}^G_N f');\Z_2)
\]
that makes the following diagram homotopy commutative:
\[
\xymatrix{
&& \widetilde{C}^\ast_G(\mathrm{Cone}(\mathrm{Ind}^G_N f);\Z_2) \ar[dd]^{F} \\
\widetilde{C}^\ast_G(\mathrm{Ind}^G_N \Sigma Z;\Z_2) \ar[rru]^{(C_f)^\ast} \ar[rrd]^{(C_{f'})^\ast} \\
&& \widetilde{C}^\ast_G(\mathrm{Cone}(\mathrm{Ind}^G_N f');\Z_2)
}
\]

Consider the collapsing map
\[
c\colon \mathrm{Ind}^G_N \Sigma Z = \bigvee_{i=1}^n g_i \Sigma Z \longrightarrow \Sigma Z,
\]
which is $N$-equivariant. Define $C^0_f = \mathrm{Res}^N_G C_f = c \circ C_f$ and $C^0_{f'} = \mathrm{Res}^N_G C_{f'} = c \circ C_{f'}$. Then
\[
C_f = \mathrm{Ind}^G_N C^0_f, \qquad C_{f'} = \mathrm{Ind}^G_N C^0_{f'}.
\]
Next, consider the map $\tilde{g}\colon \mathrm{Cone}(\mathrm{Ind}^G_N f) \to \mathrm{Cone}(\mathrm{Ind}^G_N f')$ induced by $g$ together with a choice of an $N$-equivariant homotopy between $f'$ and $g\circ f$. Since $g$ is (non-equivariantly) a homotopy equivalence, $\tilde{g}$ is also a homotopy equivalence. The following diagram is therefore $N$-equivariantly homotopy commutative:
\[
\xymatrix{
\mathrm{Cone}(\mathrm{Ind}^G_N f) \ar[dd]_{\tilde{g}} \ar[rd]_{C_f} \ar[rrrrd]^{C^0_f}  \\
& \mathrm{Ind}^G_N \Sigma Z \ar[rrr]^{c} &&& \Sigma Z \\
\mathrm{Cone}(\mathrm{Ind}^G_N f') \ar[ru]^{C_{f'}} \ar[rrrru]_{C^0_{f'}}
}
\]
In particular, $C^0_f$ is $N$-equivariantly homotopic to $C^0_{f'} \circ \tilde{g}$. Since $\tilde{g}$ is an $N$-equivariant map that is a (non-equivariant) homotopy equivalence, precomposing with the equivalence 
\[
\widetilde{C}^\ast_N(\Sigma Z;\Z_2) \longrightarrow \widetilde{C}^\ast_G(\mathrm{Ind}^G_N \Sigma Z;\Z_2)
\]
and applying \Cref{lem: induced map to transfer}, we reduce the problem to making the following diagram homotopy commutative:
\[
\xymatrix{
\widetilde{C}^\ast_N (\Sigma Z;\Z_2) \ar[rr]^{(C^0_f)^\ast} \ar[rrd]^{(C^0_{f'})^\ast} && \widetilde{C}^\ast_N(\mathrm{Cone}(\mathrm{Ind}^G_N f);\Z_2) \ar[rrrr]^{\mathrm{Tr}^G_N \mathrm{Cone}(\mathrm{Ind}^G_N f)} \ar[d]_{\tilde{g}^\ast} &&&& \widetilde{C}^\ast_G(\mathrm{Cone}(\mathrm{Ind}^G_N f);\Z_2) \ar[d]^{F} \\
&& \widetilde{C}^\ast_N(\mathrm{Cone}(\mathrm{Ind}^G_N f');\Z_2) \ar[rrrr]^{\mathrm{Tr}^G_N \mathrm{Cone}(\mathrm{Ind}^G_N f')} &&&& \widetilde{C}^\ast_G(\mathrm{Cone}(\mathrm{Ind}^G_N f');\Z_2)
}
\]
Here, the middle vertical map is the pullback along $\tilde{g}\colon \mathrm{Cone}(\mathrm{Ind}^G_N f)\to \mathrm{Cone}(\mathrm{Ind}^G_N f')$ induced by $g\colon X\to X'$. By \Cref{lem: bigger equivariance}, there exists a quasi-isomorphism $F$ making the right square homotopy commutative. Furthermore, since $C^0_f$ is $N$-equivariantly homotopic to $C^0_{f'} \circ \tilde{g}$, the left triangle is also homotopy commutative. The lemma follows.
\end{proof}

We can now prove that the $(\mathrm{Pin}(2)\times \Z_2)$-equivariant lattice homotopy type and the $(\mathrm{Pin}(2)\times \Z_2)$-equivariant Seiberg--Witten Floer homotopy type have quasi-isomorphic $\Z_2$-coefficient cochain complexes over $C^\ast(B(\mathrm{Pin}(2)\times \Z_2);\Z_2)$. Note that we are not claiming that they are $(\mathrm{Pin}(2)\times \Z_2)$-equivariantly homotopy equivalent. Nevertheless, this weaker statement suffices for our purposes.

\begin{lem} \label{lem: pin(2) lattice floer cochain}
    Let $\tilde{\s}$ be a self-conjugate $\Z_2$-equivariant $\mathrm{Spin}^c$ structure on $Y$. Suppose there exists a $\Z_2$-equivariant almost $J$-invariant path $\gamma$ for $(Y,\tilde{\s})$ which carries the lattice homology of $(\Gamma,\mathcal{N}(\tilde{\s}))$. Then the $C^\ast(B(\mathrm{Pin}(2)\times \Z_2);\Z_2)$-modules
    \[
    \widetilde{C}^\ast_{\mathrm{Pin}(2)\times \Z_2}(\mathcal{H}_{\mathrm{Pin}(2)\times \Z_2}(\gamma);\Z_2), \qquad 
    \widetilde{C}^\ast_{\mathrm{Pin}(2)\times \Z_2}(SWF_{\mathrm{Pin}(2)\times \Z_2}(Y,\tilde{\s});\Z_2)
    \]
    are quasi-isomorphic, up to a degree shift and reparametrization of $C^\ast(B(\mathrm{Pin}(2)\times \Z_2);\Z_2)$ by pullback along the automorphism
    \[
    \mathrm{Pin}(2)\times \Z_2 \xrightarrow{(x,[n])\mapsto (x \cdot (-1)^n,[n])} \mathrm{Pin}(2)\times \Z_2,
    \]
    where $-1$ denotes the order-two element of the identity component of $\mathrm{Pin}(2)$.
\end{lem}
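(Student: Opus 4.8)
The plan is to run the same argument that produced the $S^1\times\Z_p$ comparison results (\Cref{thm: lattice chain model is correct} together with \Cref{lem: Pin(2)xZ2 lattice sp is S1-eqv to SWF} and \Cref{lem: graded root gives pin(2) lattice sp}), but now applying the \emph{weak lifting lemma} (\Cref{lem: weak lifting lemma}) with the group pair $G=\mathrm{Pin}(2)\times\Z_2$ and $N=S^1\times\Z_2$, since $S^1$ is the identity component of $\mathrm{Pin}(2)$ and $G/N\cong\Z_2$ is finite discrete. The point is that both $\mathcal{H}_{\mathrm{Pin}(2)\times\Z_2}(\gamma)$ and the $\mathrm{Pin}(2)\times\Z_2$-equivariant Seiberg--Witten Floer homotopy type $SWF_{\mathrm{Pin}(2)\times\Z_2}(Y,\tilde\s)$ are, by construction, mapping cones of induced maps $\mathrm{Ind}^G_N f$ for $N$-equivariant maps $f$ out of a common $N$-space.

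First I would set up the $N$-equivariant data. On the lattice side, \Cref{defn: pin(2)xZ2 homotopy type of path} writes $\mathcal{H}_{\mathrm{Pin}(2)\times\Z_2}(\gamma)=\mathrm{Cone}\bigl((\mathrm{inc}\vee j\cdot\mathrm{inc})\circ\mathrm{Ind}(\Sigma^{-\widetilde{\R}}f)\bigr)$, where $f\colon(\H^\mathbf{m})^+\xrightarrow{\simeq}V_1^+$ is the ``identity'' $N$-equivalence and $\mathrm{inc}\colon V_1^+\hookrightarrow\mathcal{H}(\gamma_0)$; so this is exactly $\mathrm{Cone}(\mathrm{Ind}^G_N f')$ with $Z=\Sigma^{-\widetilde{\R}}(\H^\mathbf{m})^+$ (an $N$-space, in fact $\mathrm{Pin}(2)\times\Z_2$-space) and $f'=\Sigma^{-\widetilde\R}(\mathrm{inc}\circ f)$. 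On the Floer side, I would observe that the $\mathrm{Pin}(2)\times\Z_2$-equivariant Conley-index construction of Montague likewise exhibits $SWF_{\mathrm{Pin}(2)\times\Z_2}(Y,\tilde\s)$ as a mapping cone of an induced map from the same type of $N$-space (the $S^1\times\Z_2$-level Conley index with its $j$-action realized by induction from the boundary collapse), i.e. $\mathrm{Cone}(\mathrm{Ind}^G_N f)$ with $f=\mathcal{T}\circ(\mathrm{id}\wedge\mathcal{T}_0)$ being, up to suspension, the $S^1$-equivalence of \Cref{lem: Pin(2)xZ2 lattice sp is S1-eqv to SWF}. Both mapping cones are, non-equivariantly and after forgetting everything, weakly homotopy equivalent to a representation sphere (this is where \Cref{thm: eqv lattice comparison map} and the fact that $SWF$ is a space of type SWF come in), so the first hypothesis of \Cref{lem: weak lifting lemma} holds.

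Next I would verify the second hypothesis: there is an $N=S^1\times\Z_2$-equivariant map $g$ between the two target $N$-spaces $X=\mathcal{H}(\gamma_0)$ (suitably suspended) and $X'=$ the metric-dependent $S^1\times\Z_2$-Conley index, which is a non-equivariant homotopy equivalence, compatible with the two structure maps $f'$ and $f$ up to $N$-equivariant homotopy. This map $g$ is precisely (a desuspension of) the $S^1$-equivariant homotopy equivalence $\mathcal{T}$ from \Cref{lem: Pin(2)xZ2 lattice sp is S1-eqv to SWF}, once one checks it is actually $S^1\times\Z_2$-equivariant up to homotopy — which it is, since the $\Z_2$-action on both sides is the residual Seifert $\Z_2$-action carried through all the Bauer--Furuta gluings, and the possible $(z,[n])\mapsto((-1)^nz,[n])$ reparametrization is exactly the source of the reparametrization of $C^\ast(B(\mathrm{Pin}(2)\times\Z_2);\Z_2)$ by $(x,[n])\mapsto(x(-1)^n,[n])$ stated in the lemma. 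The compatibility $f'\simeq_N g\circ f$ holds because both structure maps are, by design, the ``inclusion of the boundary sphere'', and $g$ restricts to the identity on that boundary up to homotopy. Then \Cref{lem: weak lifting lemma} directly yields a quasi-isomorphism of $C^\ast(B(\mathrm{Pin}(2)\times\Z_2);\Z_2)$-modules between $\widetilde{C}^\ast_{\mathrm{Pin}(2)\times\Z_2}(\mathcal{H}_{\mathrm{Pin}(2)\times\Z_2}(\gamma);\Z_2)$ and $\widetilde{C}^\ast_{\mathrm{Pin}(2)\times\Z_2}(SWF_{\mathrm{Pin}(2)\times\Z_2}(Y,\tilde\s);\Z_2)$, up to the advertised degree shift and reparametrization.

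The main obstacle I anticipate is the second bullet point of \Cref{lem: weak lifting lemma}: producing the $S^1\times\Z_2$-equivariant map $g$ and checking that it intertwines the two ``boundary parametrizations'' up to $S^1\times\Z_2$-equivariant homotopy, rather than merely up to $S^1$-equivariant homotopy. The map $\mathcal{T}$ of \Cref{lem: Pin(2)xZ2 lattice sp is S1-eqv to SWF} is asserted only to be an $S^1$-equivariant homotopy equivalence; I would need to either (i) upgrade it to an $S^1\times\Z_2$-equivariant map by tracking the $\Z_2$-equivariance of each Bauer--Furuta invariant $BF_{S^1\times\Z_2}(W_\Gamma,\mathfrak{s}_i)$ and the adjunction homotopies in the almost $J$-invariant path (these are all genuinely $S^1\times\Z_2$-equivariant by \Cref{prop:eqBF} and \Cref{prop: BF gluing}, so this should go through), or (ii) argue that for the purpose of the pullback on $\Z_2$-coefficient cochains only the $S^1\times\Z_2$-homotopy class near the fixed points matters, which again reduces to the computations in \Cref{subsec: eqv almost J-inv path}. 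A secondary subtlety is bookkeeping the suspensions so that the two mapping cones are literally built from the \emph{same} $N$-space $Z$ (one may need to stabilize by a common representation and invoke the suspension invariance built into $\mathcal{C}^{sp}$ and $\mathcal{F}^{sp}$); this is routine but must be done carefully to land the degree shift correctly.
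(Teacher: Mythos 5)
The key idea is right, and in that sense this is the paper's approach: apply \Cref{lem: weak lifting lemma} with $G=\mathrm{Pin}(2)\times\Z_2$ and $N=S^1\times\Z_2$, and attribute the allowed reparametrization to the $\Z_2$-twist in \Cref{lem: Pin(2)xZ2 lattice sp is S1-eqv to SWF}. But your choice of data for the lemma is backwards, and this matters.

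In \Cref{lem: weak lifting lemma} the maps run $f\colon Z\to X$ and $f'\colon Z\to X'$, with $X$ and $X'$ the $G$-spaces whose cochains you want to compare, and the first hypothesis demands that $\mathrm{Cone}(\mathrm{Ind}^G_N f)$ and $\mathrm{Cone}(\mathrm{Ind}^G_N f')$ be \emph{non-equivariantly spheres}; this is what makes \Cref{lem: bigger equivariance} applicable inside its proof. You set $Z=\Sigma^{-\widetilde\R}(\H^\mathbf{m})^+$, the small representation sphere, so that the cone of the induced map \emph{is} the large $G$-space $\mathcal{H}_{\mathrm{Pin}(2)\times\Z_2}(\gamma)$. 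That cone is not a sphere, so the first hypothesis fails, and the lemma's internal use of \Cref{lem: bigger equivariance} is not available. (There is also a secondary type mismatch: your $f'=\Sigma^{-\widetilde\R}(\mathrm{inc}\circ f)$ has target $\Sigma^{-\widetilde\R}\mathcal{H}(\gamma_0)$, which is only an $N$-space, so the convention $\mathrm{Ind}^G_N f'\colon\mathrm{Ind}^G_N Z\to X$ with $X$ a $G$-space does not even apply; the construction in \Cref{defn: pin(2)xZ2 homotopy type of path} uses the \emph{other} direction of induction, $\mathrm{Ind}^G_N\colon X\to\mathrm{Ind}^G_N Z$ for stable maps.) You want the complementary leg of the cofiber triangle: take $Z=\mathcal{H}_{S^1\times\Z_2}(\gamma_0)$ (an $N$-spectrum), $X=\mathcal{H}_{\mathrm{Pin}(2)\times\Z_2}(\gamma)$, $X'=SWF_{\mathrm{Pin}(2)\times\Z_2}(Y,\tilde\s)$, $f=i$ the $N$-equivariant inclusion, $f'=\mathcal{T}\circ i$, $g=\mathcal{T}$. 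Then $\mathrm{Cone}(\mathrm{Ind}^G_N i)$ is (up to suspension) exactly the single extra cell $(\H^\mathbf{m})^+$, hence a sphere, and since $\mathcal{T}$ is a non-equivariant homotopy equivalence, $\mathrm{Cone}(\mathrm{Ind}^G_N(\mathcal{T}\circ i))$ is a sphere too.

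Two further points. Your assertion that $SWF_{\mathrm{Pin}(2)\times\Z_2}(Y,\tilde\s)$ is itself a mapping cone of an induced map from an $S^1\times\Z_2$-level Conley index is both unjustified (Montague's construction produces a genuine $\mathrm{Pin}(2)\times\Z_2$-equivariant Conley index, not a cone of an induced map) and unnecessary: the second hypothesis of \Cref{lem: weak lifting lemma} only asks for an $N$-equivariant non-equivariant-equivalence $g\colon X\to X'$ with $f'\simeq_N g\circ f$, which is tautological once you set $f'=\mathcal{T}\circ i$ and $g=\mathcal{T}$. Finally, the concern you flag about upgrading $\mathcal{T}$ from $S^1$- to $S^1\times\Z_2$-equivariance is already resolved by \Cref{lem: Pin(2)xZ2 lattice sp is S1-eqv to SWF}, whose statement grants $S^1\times\Z_2$-equivariance after the reparametrization $(z,[n])\mapsto((-1)^n z,[n])$; this is precisely where the allowed $(x,[n])\mapsto(x\cdot(-1)^n,[n])$ twist of $C^\ast(B(\mathrm{Pin}(2)\times\Z_2);\Z_2)$ in the conclusion comes from.
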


\begin{proof}
Write the given path $\gamma$ as
\[
\mathfrak{s}_{-n},\dots,\mathfrak{s}_{-1},\mathfrak{s}_1,\dots,\mathfrak{s}_n.
\]
Denote its latter half, i.e.\ $\mathfrak{s}_1,\dots,\mathfrak{s}_n$, by $\gamma_0$. Then we have an $(S^1 \times \Z_2)$-equivariant inclusion
\[
i\colon \mathcal{H}_{S^1 \times \Z_2}(\gamma_0) \hookrightarrow \mathcal{H}_{\mathrm{Pin}(2)\times \Z_2}(\gamma).
\]
By construction, $\mathrm{Cone}(i)$ is homotopy equivalent to a sphere. Recall from \Cref{lem: Pin(2)xZ2 lattice sp is S1-eqv to SWF} that, after suitable reparametrization (of $S^1 \times \Z_2$) and suspension, there exists an $S^1 \times \Z_2$-equivariant map
\[
\mathcal{T}\colon \mathcal{H}_{\mathrm{Pin}(2)\times \Z_2}(\gamma) \longrightarrow SWF_{\mathrm{Pin}(2)\times \Z_2}(Y,\tilde{\s})
\]
which is a (non-equivariant) homotopy equivalence.\footnote{It is in fact an $S^1$-equivariant homotopy equivalence, but this refinement is irrelevant here.}  

Consider the composite
\[
\mathcal{T}\circ i\colon \mathcal{H}_{S^1 \times \Z_2}(\gamma_0) \longrightarrow SWF_{\mathrm{Pin}(2)\times \Z_2}(Y,\tilde{\s}).
\]
Then
\[
\mathrm{Cone}(\mathcal{T} \circ i) \simeq \mathrm{Cone}(i) \simeq \text{(sphere)}.
\]
Therefore, applying \Cref{lem: weak lifting lemma} yields the desired quasi-isomorphism.
\end{proof}

\subsection{The $\mathrm{Pin}(2)\times \Z_2$-equivariant lattice chain model}

We define $\mathfrak{R} = (\Z_2[U,Q,\theta],d)$ with $dU=Q^3$, where $\deg \theta = \deg Q = 1$ and $\deg U = 2$. By \Cref{thm: singular cochain of BPin(2) is R}, $\mathfrak{R}$ is quasi-isomorphic to $C^\ast(B\mathrm{Pin}(2);\Z_2)$ as a $\Z_2$-dga.\footnote{The authors first learned of this fact through a private conversation with Matthew Stoffregen.} It follows that the group automorphism $\mathrm{Pin}(2)\times \Z_2$ induces the automorphism of $\mathfrak{R} \simeq C^\ast(B(\mathrm{Pin}(2)\times \Z_2);\Z_2)$ given by
\[
\varphi\colon \mathfrak{R}\xrightarrow{U\mapsto U+\theta^2} \mathfrak{R}.
\]
Thus, for any $\mathfrak{R}$-module $C$, we may compose its $\mathfrak{R}$-module structure, i.e., the $\Z_2$-dga morphism $\mathfrak{R}\rightarrow \mathrm{End}_{\Z_2}(C)$, with $\varphi$ to obtain a new $\mathfrak{R}$-module structure. We call this process \emph{twisting}. Note that $\varphi$ is precisely the pullback map along the automorphism
\[
\mathrm{Pin}(2)\times \Z_2 \xrightarrow{(x,[n])\mapsto (x\cdot (-1)^n,[n])} \mathrm{Pin}(2)\times \Z_2,
\]
where $-1$ denotes the unique order-two element in the identity component of $\mathrm{Pin}(2)$.

\begin{lem} \label{lem: S1xZp as Pin(2)xZp algebra}
    Under the identification $C^\ast(B(\mathrm{Pin}(2)\times \Z_2);\Z_2) \simeq \mathfrak{R}$ of quasi-isomorphic $\Z_2$-dgas, there is a quasi-isomorphism
    \[
    C^\ast(B(S^1 \times \Z_2);\Z_2)\simeq \mathfrak{R}/(Q)
    \]
    of $\mathfrak{R}$-bimodules.
\end{lem}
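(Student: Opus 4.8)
The plan is to realize the restriction map $C^\ast(B(\mathrm{Pin}(2)\times\Z_2);\Z_2)\to C^\ast(B(S^1\times\Z_2);\Z_2)$, up to quasi-isomorphism, as the quotient map $\mathfrak{R}\to\mathfrak{R}/(Q)$, by applying the Gysin cofiber sequence of the double cover $B(S^1\times\Z_2)\to B(\mathrm{Pin}(2)\times\Z_2)$. First I would observe that $S^1\times\Z_2\hookrightarrow\mathrm{Pin}(2)\times\Z_2$ has index two, with quotient group $\mathrm{Pin}(2)/S^1\cong\Z_2$, so $B\iota\colon B(S^1\times\Z_2)\to B(\mathrm{Pin}(2)\times\Z_2)$ is a connected double cover, classified by some $w\in H^1(B(\mathrm{Pin}(2)\times\Z_2);\Z_2)$. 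The classifying homomorphism $\mathrm{Pin}(2)\times\Z_2\to(\mathrm{Pin}(2)\times\Z_2)/(S^1\times\Z_2)\cong\Z_2$ factors through $\pi_0(\mathrm{Pin}(2))$, so $w$ is pulled back from $H^1(B\mathrm{Pin}(2);\Z_2)\cong\Z_2$ and is nonzero; hence, under the identification $C^\ast(B(\mathrm{Pin}(2)\times\Z_2);\Z_2)\simeq\mathfrak{R}$ (which one reads off from \Cref{thm: singular cochain of BPin(2) is R} together with the formality of $C^\ast(B\Z_2;\Z_2)$ noted earlier), $w$ is represented by the cocycle $Q\in\mathfrak{R}$.

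Next I would write down the Gysin cofiber sequence. Let $\xi\to B(\mathrm{Pin}(2)\times\Z_2)$ be the real line bundle associated to the double cover, so that $w_1(\xi)=w$ and the $S^0$-bundle $S(\xi)$ is $B(S^1\times\Z_2)$ with bundle projection $B\iota$. The cofiber sequence $S(\xi)_+\xrightarrow{(B\iota)_+}B(\mathrm{Pin}(2)\times\Z_2)_+\to B(\mathrm{Pin}(2)\times\Z_2)^{\xi}$ induces, on normalized $\Z_2$-cochains, a (co)fiber sequence of $C^\ast(B(\mathrm{Pin}(2)\times\Z_2);\Z_2)$-modules; the $\Z_2$-coefficient Thom isomorphism identifies $\widetilde C^\ast(B(\mathrm{Pin}(2)\times\Z_2)^{\xi};\Z_2)$ with $C^\ast(B(\mathrm{Pin}(2)\times\Z_2);\Z_2)[-1]$ as modules, and identifies the resulting map into $C^\ast(B(\mathrm{Pin}(2)\times\Z_2);\Z_2)$ with multiplication by the Euler class $e(\xi)=w_1(\xi)=w$. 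Thus
\[
C^\ast(B(S^1\times\Z_2);\Z_2)\;\simeq\;\mathrm{Cone}\!\Big(C^\ast(B(\mathrm{Pin}(2)\times\Z_2);\Z_2)\xrightarrow{\;\cdot w\;}C^\ast(B(\mathrm{Pin}(2)\times\Z_2);\Z_2)\Big)
\]
as $C^\ast(B(\mathrm{Pin}(2)\times\Z_2);\Z_2)$-modules, and under this equivalence the restriction map $(B\iota)^\ast$ becomes the canonical structure map of the mapping cone.

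Finally I would transport everything to $\mathfrak{R}$. Since $w$ corresponds to $Q$, and $Q$ is a non-zero-divisor in $\Z_2[U,Q,\theta]$ with $dQ=0$, the sequence of dg-$\mathfrak{R}$-modules $0\to\mathfrak{R}\xrightarrow{\cdot Q}\mathfrak{R}\to\mathfrak{R}/(Q)\to 0$ is short exact; note that $\mathfrak{R}/(Q)=(\Z_2[U,\theta],d=0)$ because $dU=Q^3\equiv 0\pmod Q$. Consequently $\mathrm{Cone}(\cdot Q)\xrightarrow{\ \simeq\ }\mathfrak{R}/(Q)$, and precomposing with the canonical structure map shows that the composite $\mathfrak{R}\to\mathrm{Cone}(\cdot Q)\xrightarrow{\ \simeq\ }\mathfrak{R}/(Q)$ is the quotient map. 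Combining with the previous step yields a quasi-isomorphism $C^\ast(B(S^1\times\Z_2);\Z_2)\simeq\mathfrak{R}/(Q)$ of $\mathfrak{R}$-modules intertwining the restriction map with the quotient map. Because $\mathfrak{R}$ is a model for the $E_\infty$-algebra $C^\ast(B(\mathrm{Pin}(2)\times\Z_2);\Z_2)$---equivalently, $\mathfrak{R}$ is quasi-isomorphic to a strictly commutative dga---left $\mathfrak{R}$-module structures and $\mathfrak{R}$-bimodule structures coincide up to coherent homotopy, so the quasi-isomorphism upgrades to one of $\mathfrak{R}$-bimodules, as claimed. The step I expect to be the main obstacle is the bookkeeping in the middle paragraph: verifying that the Gysin/Thom identification is compatible with the $C^\ast(B(\mathrm{Pin}(2)\times\Z_2);\Z_2)$-module structures and correctly identifies the connecting map with cup product by $e(\xi)$, and then matching the resulting degree-one cocycle with the chosen generator $Q$ of $\mathfrak{R}$. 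This is entirely standard Gysin-sequence and Thom-isomorphism theory with $\Z_2$-coefficients, but it is the only place where one must be careful rather than purely formal.
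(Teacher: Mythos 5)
Your proposal is correct, but it takes a genuinely different route from the paper. The paper's proof is a one-line formality argument: $C^\ast(B(S^1\times \Z_2);\Z_2)$ is formal, its cohomology is the graded algebra $\Z_2[U,\theta]$, and this coincides with $\mathfrak{R}/(Q)$ as a graded algebra (note $dU=Q^3\equiv 0 \bmod Q$), so the two are quasi-isomorphic with the $\mathfrak{R}$-action factoring through the quotient. You instead run the $\Z_2$-coefficient Gysin cofiber sequence of the double cover $B(S^1\times\Z_2)\to B(\mathrm{Pin}(2)\times\Z_2)$, identify the classifying class with $Q$ (correct: the quotient homomorphism kills the $\Z_2$ factor and factors through $\pi_0\mathrm{Pin}(2)$, and with $\Z_2$ coefficients the line bundle is orientable so the Thom isomorphism applies), and then use that $Q$ is a non-zero-divisor cocycle to collapse $\mathrm{Cone}(\cdot Q)$ to $\mathfrak{R}/(Q)$. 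Your argument is longer but buys something the paper's proof leaves implicit: it identifies the restriction map $(B\iota)^\ast$ with the quotient map $\mathfrak{R}\to\mathfrak{R}/(Q)$ on the nose, which is exactly the compatibility of module structures that the formality argument glosses over (and which is tacitly used later, e.g.\ when $(\mathfrak{R}x_+\oplus\mathfrak{R}x_-,d)$ is taken as a free resolution of $\mathfrak{R}/(Q)$ in \Cref{lem: noninv to noninv map}). The bimodule upgrade at the end via the $E_\infty$/commutative model is also fine and consistent with the paper's conventions. The only cosmetic point: your worry about "matching the degree-one cocycle with $Q$" is resolved by noting that the module map $\cdot w$ depends only on the cohomology class of $w$ up to homotopy of module maps, and $H^1(\mathfrak{R})$ has exactly one nonzero class pulled back from $B\mathrm{Pin}(2)$, namely $[Q]$.
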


\begin{proof}
    The claim follows from the fact that $C^\ast(B(S^1 \times \Z_2);\Z_2)$ is formal and that its homology, as a graded $\Z_2$-algebra, is freely generated by a single degree-2 element.
\end{proof}

\begin{lem} \label{lem: noninv to noninv map}
    For any finite-dimensional $(S^1 \times \Z_2)$-representation $V$, there is an isomorphism
    \[
    \widetilde{C}^\ast_{\mathrm{Pin}(2)\times \Z_2}\!\left(\mathrm{Ind}^{\mathrm{Pin}(2)\times \Z_2}_{S^1 \times \Z_2} (V^+);\Z_2\right) \simeq (\mathfrak{R}x_+ \oplus \mathfrak{R}x_-,d)
    \]
    of $\mathfrak{R}$-bimodules, where $dx_+ = dx_- = Q(x_+ + x_-)$. Furthermore, for any $\alpha \in \Z_2$ and the corresponding inclusion map $U_\alpha \colon S^0 \hookrightarrow (\C_\alpha)^+$, consider the doubled map
    \[
    U_\alpha \vee j\cdot U_\alpha \colon \mathrm{Ind}^{\mathrm{Pin}(2)\times \Z_2}_{S^1 \times \Z_2}(S^0) \longrightarrow \mathrm{Ind}^{\mathrm{Pin}(2)\times \Z_2}_{S^1 \times \Z_2}((\C_\alpha)^+),
    \]
    which is $(\mathrm{Pin}(2)\times \Z_2)$-equivariant. Its pullback is
    \[
    (U_\alpha \vee j\cdot U_\alpha)^\ast \colon \widetilde{C}^\ast_{\mathrm{Pin}(2)\times \Z_2}\!\left(\mathrm{Ind}^{\mathrm{Pin}(2)\times \Z_2}_{S^1 \times \Z_2}((\C_\alpha)^+);\Z_2\right) \longrightarrow \widetilde{C}^\ast_{\mathrm{Pin}(2)\times \Z_2}\!\left(\mathrm{Ind}^{\mathrm{Pin}(2)\times \Z_2}_{S^1 \times \Z_2}(S^0);\Z_2\right).
    \]
    Then, under the identifications of both domain and codomain with $(\mathfrak{R}x_+ \oplus \mathfrak{R}x_-,d)$, the map $(\mathrm{Ind}^{\mathrm{Pin}(2)\times \Z_2}_{S^1 \times \Z_2} U_\alpha)^\ast$ is given up to homotopy by
    \[
    x_+ \longmapsto (U+\alpha\theta^2)x_+, \qquad x_- \longmapsto (U+\alpha\theta^2)x_- + Q^2 x_+.
    \]
\end{lem}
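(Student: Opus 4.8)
The plan is to compute both identifications by reducing everything to the induced-space formula $\widetilde{C}^\ast_{\mathrm{Pin}(2)\times\Z_2}(\mathrm{Ind}^{\mathrm{Pin}(2)\times\Z_2}_{S^1\times\Z_2}(X))\simeq \widetilde{C}^\ast_{S^1\times\Z_2}(X)$, which follows from \Cref{lem: induced map to transfer} (the homotopy equivalence $\mathrm{eqv}_X$), together with the description $C^\ast(B(S^1\times\Z_2);\Z_2)\simeq\mathfrak{R}/(Q)$ from \Cref{lem: S1xZp as Pin(2)xZp algebra}. First I would recall from \Cref{lem: inclusion map pullbacks} that $\widetilde{C}^\ast_{S^1\times\Z_2}((\C^{\mathbf n})^+;\Z_2)\simeq\mathcal{R}_2[-|\mathbf n|]$, where $\mathcal{R}_2=\Z_2[U,\theta]$. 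However, $\mathrm{Ind}^{\mathrm{Pin}(2)\times\Z_2}_{S^1\times\Z_2}(V^+)$ is not free over $\mathfrak{R}$ on one generator; rather, applying $\mathrm{eqv}_{V^+}$ over the $C^\ast(B\mathrm{Pin}(2))$-algebra $C^\ast(B(S^1\times\Z_2))\simeq\mathfrak{R}/(Q)$ exhibits it as $\mathfrak{R}/(Q)$, and then we must resolve $\mathfrak{R}/(Q)$ as an $\mathfrak{R}$-bimodule. The correct statement is that $\mathfrak{R}/(Q)$ is quasi-isomorphic to the two-term complex $(\mathfrak{R}x_+\oplus\mathfrak{R}x_-,d)$ with $dx_\pm=Q(x_++x_-)$: indeed this complex has homology concentrated in degree $0$ equal to $\mathfrak{R}/(Q)$ (the two copies of $\mathfrak R$ in each nonzero degree cancel via the differential, which is injective since $Q$ is a non-zero-divisor and $x_+\mapsto Q x_+$ is injective), and it visibly carries the $\Z_2=\mathrm{Pin}(2)/S^1$-action by swapping $x_+\leftrightarrow x_-$, matching the induced-space structure. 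This gives the first assertion.

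For the pullback of the doubled map, the key point is naturality of $\mathrm{eqv}_{(-)}$ together with \Cref{lem: induced map to transfer}. The doubled map $U_\alpha\vee j\cdot U_\alpha$ is precisely $\mathrm{Ind}^{\mathrm{Pin}(2)\times\Z_2}_{S^1\times\Z_2}(U_\alpha)$, so its pullback fits into a homotopy-commutative square whose other two sides are the equivalences $\mathrm{eqv}_{S^0}$, $\mathrm{eqv}_{(\C_\alpha)^+}$ and whose remaining side is $(U_\alpha)^\ast\colon \widetilde{C}^\ast_{S^1\times\Z_2}((\C_\alpha)^+;\Z_2)\to\widetilde{C}^\ast_{S^1\times\Z_2}(S^0;\Z_2)$; by \Cref{lem: inclusion map pullbacks} the latter is multiplication by $U+\alpha\theta^2=U+\alpha S$. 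So on the level of $\mathfrak{R}/(Q)\simeq\mathcal{R}_2$ the map is just $U+\alpha S$; the content of the lemma is to lift this to the explicit two-term $\mathfrak R$-bimodule resolutions. Concretely, I would produce a chain map $(\mathfrak{R}x_+\oplus\mathfrak{R}x_-,d)\to(\mathfrak{R}x_+\oplus\mathfrak{R}x_-,d)$ that induces $\times(U+\alpha S)$ on homology and that respects the $\Z_2$-symmetry up to the required twist. The natural candidate is $x_+\mapsto (U+\alpha\theta^2)x_+$, $x_-\mapsto (U+\alpha\theta^2)x_-+Q^2x_+$: one checks $d$-compatibility using $dU=Q^3$ (so $d((U+\alpha\theta^2)x_+)=Q^3x_++\alpha\theta^2\cdot Q(x_++x_-)$, and the correction term $Q^2x_+$ has $d(Q^2x_+)=Q^3(x_++x_-)$, making the two images of $Q(x_++x_-)$ agree), and that on $H^0$ it is $\times(U+\alpha S)$ as required.

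The main obstacle I anticipate is not the verification of $d$-compatibility of the displayed formula — that is a short computation with $dU=Q^3$ — but rather pinning down \emph{exactly which} $\mathfrak{R}$-bimodule resolution of $\mathfrak{R}/(Q)$ arises geometrically from $\mathrm{Ind}^{\mathrm{Pin}(2)\times\Z_2}_{S^1\times\Z_2}(V^+)$, and verifying that the geometric $\Z_2$-swap symmetry is carried to the algebraic swap $x_+\leftrightarrow x_-$ on the nose (so that there is no ambiguity by an automorphism of the resolution that would alter the off-diagonal entry $Q^2$). I would handle this by invoking \Cref{lem: induced map to transfer}: the pullback of $\mathrm{Ind}^{\mathrm{Pin}(2)\times\Z_2}_{S^1\times\Z_2}(U_\alpha)$ is identified with the transfer-compatible map, and transfers for the double cover $B(S^1\times\Z_2)\to B(\mathrm{Pin}(2)\times\Z_2)$ are computed by the standard $\Z_2$-Gysin/transfer description, whose cochain-level model is exactly the two-term complex above with differential given by multiplication by $Q$ (the Euler class of the normal line bundle $\widetilde{\R}_+$). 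Any choice ambiguity in the resolution is absorbed by the homotopy that \Cref{lem: weak lifting lemma}-style arguments already allow; since we only claim the map up to homotopy, the displayed formula is the canonical representative. Finally, the degree bookkeeping: $\deg x_\pm$ is forced by requiring the homology to be $\widetilde{C}^\ast_{S^1\times\Z_2}((\C_\alpha)^+)\simeq\mathcal{R}_2[-2]$ versus $\mathcal{R}_2$ for $S^0$, which is consistent with $U+\alpha\theta^2$ having degree $2$, so nothing further is needed there.
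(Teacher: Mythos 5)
Your proposal is correct and follows essentially the same route as the paper: reduce to $\widetilde{C}^\ast_{S^1\times\Z_2}(V^+)\simeq\mathfrak{R}/(Q)$, resolve $\mathfrak{R}/(Q)$ by the two-term complex, transport $(U_\alpha)^\ast=U+\alpha\theta^2$ via \Cref{lem: inclusion map pullbacks}, and verify the displayed lift is a chain map (the paper leaves this last computation to the reader, and addresses your worry about the asymmetric $Q^2$ term in \Cref{rem: Q2 term can be in either direction}). The only blemish is the parenthetical $d((U+\alpha\theta^2)x_+)=Q^3x_++\alpha\theta^2\cdot Q(x_++x_-)$, which should read $Q^3x_++(U+\alpha\theta^2)Q(x_++x_-)$; the conclusion you draw from it is nevertheless correct.
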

\begin{proof}
    Since
    \[
    \widetilde{C}^\ast_{\mathrm{Pin}(2)\times \Z_2}\!\left(\mathrm{Ind}^{\mathrm{Pin}(2)\times \Z_2}_{S^1 \times \Z_2} (V^+);\Z_2\right) \simeq \widetilde{C}^\ast_{S^1 \times \Z_2}(V^+;\Z_2),
    \]
    and $\widetilde{C}^\ast_{S^1 \times \Z_2}(V^+;\Z_2) \simeq \Z_2[U]$ as $C^\ast(B(S^1 \times \Z_2);\Z_2)$-modules, it follows from \Cref{lem: S1xZp as Pin(2)xZp algebra} that, as $\mathfrak{R}$-modules,
    \[
    \widetilde{C}^\ast_{\mathrm{Pin}(2)\times \Z_2}\!\left(\mathrm{Ind}^{\mathrm{Pin}(2)\times \Z_2}_{S^1 \times \Z_2} (V^+);\Z_2\right) \simeq \mathfrak{R}/(Q).
    \]
    The first part of the lemma then follows from the fact that $(\mathfrak{R}x_+ \oplus \mathfrak{R}x_-,d)$ is a free resolution of $\mathfrak{R}/(Q)$.

    Next, observe that under the identification of both the domain and codomain of $(U_\alpha \vee j\cdot U_\alpha)^\ast$ with $\mathfrak{R}/(Q)$, we have
    \[
    (U_\alpha \vee j\cdot U_\alpha)^\ast(1) = U+\alpha\theta^2
    \]
    by \Cref{lem: inclusion map pullbacks}. To prove the second part of the lemma, it remains to verify that the stated map is a chain map and that it induces multiplication by $U+\alpha\theta^2$ in homology. This is a straightforward computation, which we leave to the reader.
\end{proof}
\begin{lem} \label{lem: inv to noninv map}
    Let $S^0$ denote the trivial $(\mathrm{Pin}(2)\times \Z_2)$-representation sphere. Consider the induced (stable) map $\mathrm{Ind}^{\mathrm{Pin}(2)\times \Z_2}_{S^1 \times \Z_2} \mathrm{id}$ of the identity map $\mathrm{id}\colon S^0 \to S^0$, and its pullback
    \[
    (\mathrm{Ind}^{\mathrm{Pin}(2)\times \Z_2}_{S^1 \times \Z_2}\mathrm{id})^\ast \colon \widetilde{C}^\ast_{\mathrm{Pin}(2)\times \Z_2}\!\left(\mathrm{Ind}^{\mathrm{Pin}(2)\times \Z_2}_{S^1 \times \Z_2}S^0;\Z_2\right) \longrightarrow \widetilde{C}^\ast_{\mathrm{Pin}(2)\times \Z_2}(S^0;\Z_2).
    \]
    Under the identification of its domain with $(\mathfrak{R}x_+ \oplus \mathfrak{R}x_-,d)$ (as in \Cref{lem: noninv to noninv map}) and of its codomain with $\mathfrak{R}$ via the Thom isomorphism, this pullback is given up to homotopy by
    \[
    (\mathrm{Ind}^{\mathrm{Pin}(2)\times \Z_2}_{S^1 \times \Z_2}(\mathrm{id}))^\ast(x_+) = (\mathrm{Ind}^{\mathrm{Pin}(2)\times \Z_2}_{S^1 \times \Z_2}(\mathrm{id}))^\ast(x_-) = 1.
    \]
    Furthermore, if we consider the induced map in the reverse direction, i.e.,
    \[
    \mathrm{Ind}^{\mathrm{Pin}(2)\times \Z_2}_{S^1 \times \Z_2}\mathrm{id}\colon \mathrm{Ind}^{\mathrm{Pin}(2)\times \Z_2}_{S^1 \times \Z_2}S^0 \longrightarrow S^0,
    \]
    then its pullback is given by
    \[
    (\mathrm{Ind}^{\mathrm{Pin}(2)\times \Z_2}_{S^1 \times \Z_2}(\mathrm{id}))^\ast(1) = x_+ + x_-.
    \]
\end{lem}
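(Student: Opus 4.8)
�The plan is to prove both statements by reducing everything to chain-level statements over $\mathfrak{R} \simeq C^\ast(B(\mathrm{Pin}(2)\times \Z_2);\Z_2)$, using the adjunction between induced maps and restricted maps together with the transfer-map machinery developed in \Cref{lem: induced map to transfer}, \Cref{lem: bigger equivariance}, and \Cref{lem: noninv to noninv map}. First I would recall that for the induced stable map $\mathrm{Ind}^{\mathrm{Pin}(2)\times \Z_2}_{S^1 \times \Z_2}\mathrm{id}\colon S^0 \to \mathrm{Ind}^{\mathrm{Pin}(2)\times \Z_2}_{S^1 \times \Z_2}S^0$, \Cref{lem: induced map to transfer} (applied with $G = \mathrm{Pin}(2)\times \Z_2$, $N = S^1 \times \Z_2$, $f = \mathrm{id}_{S^0}$) identifies its pullback, up to the canonical equivalence $\mathrm{eqv}_{S^0}\colon \widetilde{C}^\ast_{S^1 \times \Z_2}(S^0;\Z_2) \xrightarrow{\simeq} \widetilde{C}^\ast_{\mathrm{Pin}(2)\times \Z_2}(\mathrm{Ind}^{\mathrm{Pin}(2)\times \Z_2}_{S^1\times \Z_2}S^0;\Z_2)$, with the transfer map $\mathrm{Tr}^{\mathrm{Pin}(2)\times\Z_2}_{S^1\times\Z_2}(S^0)\colon \widetilde{C}^\ast_{S^1\times\Z_2}(S^0;\Z_2) \to \widetilde{C}^\ast_{\mathrm{Pin}(2)\times\Z_2}(S^0;\Z_2)$. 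So the entire content reduces to computing this transfer map, together with tracking the identification of $\widetilde{C}^\ast_{S^1\times\Z_2}(S^0;\Z_2)$ with $\mathfrak{R}/(Q)$ (via \Cref{lem: S1xZp as Pin(2)xZp algebra}) and then with the free resolution $(\mathfrak{R}x_+\oplus \mathfrak{R}x_-, d)$ used in \Cref{lem: noninv to noninv map}.

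Next I would compute the transfer map $\mathfrak{R}/(Q) \to \mathfrak{R}$. Under the identification $\mathfrak{R} \simeq C^\ast(B\mathrm{Pin}(2);\Z_2) \otimes_{\Z_2} C^\ast(B\Z_2;\Z_2)$ and the splitting $C^\ast(B(S^1\times\Z_2);\Z_2) \simeq \mathfrak{R}/(Q) \simeq C^\ast(BS^1;\Z_2)\otimes C^\ast(B\Z_2;\Z_2)$, the transfer map is $(\text{transfer for } BS^1 \to B\mathrm{Pin}(2)) \otimes \mathrm{id}$. The double cover $BS^1 \to B\mathrm{Pin}(2)$ has transfer sending $1 \mapsto Q$ (the degree-one class $Q$ is the Euler class of the associated line bundle, equivalently the transfer of the unit, since $H^\ast(B\mathrm{Pin}(2);\Z_2) = \Z_2[Q,V]/(Q^3)$ with $Q$ detecting the $\Z_2 = \mathrm{Pin}(2)/S^1$ quotient). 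Hence the transfer $\mathfrak{R}/(Q)\to \mathfrak{R}$ is multiplication by $Q$ followed by the quotient-section; tracing this through the free resolution $(\mathfrak{R}x_+\oplus\mathfrak{R}x_-,d)$ of $\mathfrak{R}/(Q)$, where $x_\pm$ both map to the generator $1 \in \mathfrak{R}/(Q)$ and $dx_\pm = Q(x_++x_-)$, the transfer of the generator becomes $x_+ + x_-$: indeed the element $x_+ + x_-$ is a cycle in the resolution (since $d(x_++x_-) = 2Q(x_++x_-) = 0$ over $\Z_2$) whose image in $\mathfrak{R}/(Q)$ is $2 \cdot 1 = 0$... so I would instead argue directly at the level of the map \emph{into} $\mathfrak{R}$: the transfer of $1 \in \mathfrak{R}/(Q)$ is $Q \in \mathfrak{R}$, and under the chain homotopy equivalence $(\mathfrak{R}x_+\oplus \mathfrak{R}x_-,d) \xrightarrow{\simeq} \mathfrak{R}/(Q)$ one lifts the transfer to the chain map $x_+ \mapsto 1,\ x_- \mapsto 1$ composed with... here the cleanest route is to observe that, dually, the first statement (the pullback of $\mathrm{Ind}\,\mathrm{id}\colon S^0 \to \mathrm{Ind}\,S^0$) and the second statement (the pullback of $\mathrm{Ind}\,\mathrm{id}\colon \mathrm{Ind}\,S^0 \to S^0$) are Spanier–Whitehead duals of each other, so it suffices to establish one of them and then dualize; the map $x_+ \mapsto 1,\ x_-\mapsto 1$ for the first and $1 \mapsto x_+ + x_-$ for the second are exactly dual under the pairing making $(\mathfrak{R}x_+\oplus\mathfrak{R}x_-, d)$ self-dual up to shift.

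To verify that $x_+\mapsto 1,\ x_-\mapsto 1$ is the correct formula for $(\mathrm{Ind}\,\mathrm{id})^\ast$, I would check two things, exactly as suggested at the end of \Cref{lem: noninv to noninv map}: that this $\mathfrak{R}$-linear assignment is a chain map $(\mathfrak{R}x_+\oplus\mathfrak{R}x_-,d)\to \mathfrak{R}$ (it kills $d x_\pm = Q(x_++x_-)$ since $1 + 1 = 0$ in $\Z_2$, so this is immediate), and that it induces the transfer map $\mathfrak{R}/(Q)\to \mathfrak{R}$ on homology, i.e. multiplication by $Q$ — this follows because after inverting $U$ (or passing to $H^\ast$), $H^\ast$ of the resolution is $\mathfrak{R}/(Q)$ generated by the class of $x_+$ (equivalently $x_-$), and the transfer composed with the quotient-inclusion $\mathfrak{R}\to \mathfrak{R}/(Q)$ is the norm map for the $\Z_2$-cover, which on $\mathfrak{R}/(Q)$ is the zero map (norm = multiplication by $1 + \text{(deck action)} = 0$ since the deck action is trivial on cohomology mod $Q$), consistent with the composite $\mathfrak{R}/(Q)\xrightarrow{\cdot Q}\mathfrak{R}\twoheadrightarrow \mathfrak{R}/(Q)$ being zero. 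For the reverse-direction statement, I would apply \Cref{lem: bigger equivariance}'s transfer formalism together with the collapse map $\mathrm{Ind}^{\mathrm{Pin}(2)\times\Z_2}_{S^1\times\Z_2}S^0 \to S^0$, whose pullback is the pullback $\widetilde{C}^\ast_{\mathrm{Pin}(2)\times\Z_2}(S^0) \to \widetilde{C}^\ast_{\mathrm{Pin}(2)\times\Z_2}(\mathrm{Ind}\,S^0) \simeq \widetilde{C}^\ast_{S^1\times\Z_2}(S^0)$ — i.e. the restriction-of-groups map $\mathfrak{R}\to \mathfrak{R}/(Q)$ — lifted through the resolution to $1\mapsto x_+ + x_-$ (the unique lift up to homotopy, since $x_++x_-$ is the unique element of $\mathfrak{R}x_+\oplus\mathfrak{R}x_-$ of the right degree mapping to $1 \in \mathfrak{R}/(Q)$ modulo the image of $d$, after imposing $\mathfrak{R}$-linearity). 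The main obstacle will be bookkeeping the two different identifications of $\widetilde{C}^\ast_{S^1\times\Z_2}(S^0)$ — the Thom-isomorphism identification with $\mathfrak{R}$ used for the codomain of the forward map versus the resolution identification $(\mathfrak{R}x_+\oplus\mathfrak{R}x_-,d)$ used for the induced space — and making sure the canonical equivalence $\mathrm{eqv}_{S^0}$ from \Cref{lem: induced map to transfer} is compatible with the resolution-level description from \Cref{lem: noninv to noninv map}; once those identifications are pinned down, both formulas drop out of the chain-map-plus-homology-computation criterion without further work.
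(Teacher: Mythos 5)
Your overall strategy is sound and matches what the paper has in mind: reduce via \Cref{lem: induced map to transfer} to the statement that $(\mathrm{Ind}^{\mathrm{Pin}(2)\times\Z_2}_{S^1\times\Z_2}\mathrm{id})^\ast$ is the transfer for the double cover $B(S^1\times\Z_2)\to B(\mathrm{Pin}(2)\times\Z_2)$, then identify the transfer as a chain map out of the free resolution $(\mathfrak{R}x_+\oplus\mathfrak{R}x_-,d)$. The second half (fold map $\mapsto$ restriction, lift $1\mapsto x_++x_-$) and the Spanier--Whitehead-duality observation are both correct. However, your computation of the transfer contains a genuine error.

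You assert that the transfer of $1\in H^\ast(BS^1;\Z_2)$ is $Q\in H^\ast(B\mathrm{Pin}(2);\Z_2)$. This cannot be right on degree grounds alone: the covering-space transfer has degree $0$, so $\mathrm{Tr}(1)$ has degree $0$, while $Q$ has degree $1$. In fact $\mathrm{Tr}(1)=0$: since $p^\ast\circ\mathrm{Tr}=1+\sigma^\ast$ and $\sigma^\ast$ fixes $1$, we get $p^\ast(\mathrm{Tr}(1))=2\cdot 1=0$, and $p^\ast$ is injective in degree $0$. You are conflating the transfer with the Euler class of the double cover (which is $Q$ but is an element, not a map) and with the degree-shifting connecting map of the Smith--Gysin sequence. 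As a result, your intended ``verification'' — that $x_\pm\mapsto 1$ induces multiplication by $Q$ on homology — is checking a false statement. In fact the degree-$0$ generator of $H^0((\mathfrak{R}x_+\oplus\mathfrak{R}x_-,d))$ is the class of $x_++x_-$ (the only degree-$0$ cycle), and $x_\pm\mapsto 1$ sends it to $1+1=0$, consistent with $\mathrm{Tr}(1)=0$. Also note that $x_\pm$ are not cycles ($dx_\pm=Q(x_++x_-)\neq 0$), so speaking of ``the class of $x_+$'' as a generator of homology is not meaningful.

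The repair is simpler than what you attempted. Compute $H^0(\mathrm{Hom}_\mathfrak{R}(M,\mathfrak{R}))$ directly where $M=(\mathfrak{R}x_+\oplus\mathfrak{R}x_-,d)$: an $\mathfrak{R}$-linear chain map $M\to\mathfrak{R}$ of degree $0$ is determined by $x_\pm\mapsto a_\pm\in\mathfrak{R}^0=\Z_2$, and the chain-map condition $d(a_\pm)=Q(a_++a_-)$ forces $a_+=a_-$; there are no nontrivial homotopies since $\mathfrak{R}^{-1}=0$. Hence $H^0(\mathrm{Hom}_\mathfrak{R}(M,\mathfrak{R}))\cong\Z_2$, generated by $x_\pm\mapsto 1$. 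The transfer is a degree-$0$ chain map $M\to\mathfrak{R}$, so it is either this generator or zero. It is nonzero because (e.g.) $\mathrm{Tr}(U)=Q^2\neq 0$, which follows from the Smith--Gysin exact sequence
\[
\cdots\to H^{\ast-1}(B\mathrm{Pin}(2))\xrightarrow{\cup Q}H^\ast(B\mathrm{Pin}(2))\xrightarrow{p^\ast}H^\ast(BS^1)\xrightarrow{\mathrm{Tr}}H^\ast(B\mathrm{Pin}(2))\to\cdots,
\]
where exactness at $H^2(BS^1)$ gives $\ker\mathrm{Tr}=\mathrm{im}\,p^\ast$, and $p^\ast$ vanishes on $H^2(B\mathrm{Pin}(2))=\Z_2\langle Q^2\rangle$, so $\mathrm{Tr}$ is injective there. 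Therefore the transfer equals $x_\pm\mapsto 1$. The reverse direction then follows either by your duality observation (which does work, since $(\mathfrak{R}x_+\oplus\mathfrak{R}x_-,d)$ is self-dual and $x_\pm\mapsto 1$ corresponds to $x_++x_-$ under the pairing) or by the analogous uniqueness argument: a degree-$0$ chain map $\mathfrak{R}\to M$ is determined by $1\mapsto c\in\ker(d)\cap M^0=\Z_2(x_++x_-)$, and the restriction is nonzero since $p^\ast\colon H^\ast(B\mathrm{Pin}(2))\to H^\ast(BS^1)$ is nonzero.
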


\begin{proof}
    This is a straightforward computation.
\end{proof}

\begin{rem} \label{rem: Q2 term can be in either direction}
    In \Cref{lem: noninv to noninv map}, the $Q^2$ terms in the pullback maps are nonsymmetric with respect to the symmetry $x_+ \leftrightarrow x_-$, which may seem counterintuitive. However, one can verify that moving the $Q^2$ terms to the ``opposite'' side yields a homotopic map. To see this, consider the endomorphism $F$ of $(\mathfrak{R}x_+ \oplus \mathfrak{R}x_-,d)$ defined by
    \[
    F(x_+) = Q^2 x_-, \qquad F(x_-) = Q^2 x_+.
    \]
    We claim that $F$ is nullhomotopic. Indeed, define a (non-dg) $\mathfrak{R}$-linear endomorphism $H$ by
    \[
    H(x_+) = Qx_+.
    \]
    Then
    \[
    \begin{split}
        (dH+Hd)(x_+) &= d(Qx_+) + H(Qx_+ + Qx_-) = Q^2x_-, \\
        (dH+Hd)(x_-) &= d(Qx_-) + H(Qx_+ + Qx_-) = Q^2x_+.
    \end{split}
    \]
    Hence $F = dH+Hd$ is nullhomotopic.
\end{rem}

Now we are ready to define the $\mathrm{Pin}(2)\times \Z_2$-equivariant lattice chain model directly from symmetric $\Z_2$-labelled planar graded roots. For simplicity, for $\mathbf{n} = n_+\cdot [0] + n_-\cdot [1]$ with $n_+, n_- \ge 0$, set
\[
U_Q^\mathbf{n} =
\begin{cases}
    0 & \text{if } n_+ = n_- = 0, \\
    n_+ U^{n_+-1} & \text{if } n_+ > 0 \text{ and } n_- = 0, \\
    n_- (U+\theta^2)^{n_- -1} & \text{if } n_+ = 0 \text{ and } n_- > 0, \\
    n_+ U^{n_+-1}(U+\theta^2)^{n_-} + n_- U^{n_+}(U+\theta^2)^{n_- -1} & \text{if } n_+, n_- > 0.
\end{cases}
\]
For brevity, denote the $\mathfrak{R}$-module $(\mathfrak{R}x \oplus \mathfrak{R}y,d)$ from \Cref{lem: noninv to noninv map} by $\mathfrak{M}$.  

Recall that $\mathrm{Sym}(\mathcal{R}_{\Gamma,\tilde{\s}})$, defined in \Cref{defn: symm graded root for Gamma s}, may have either a central vertex or a central angle. Suppose first that it has a central vertex. In this case, we may label its leaves as
\[
v_{-n}, \dots, v_{-1}, v_0, v_1, \dots, v_n,
\]
where each pair of consecutive leaves forms a simple angle. Note that $v_i$ is the reflection of $v_{-i}$ along the $y$-axis for all $-n \le i \le n$. We then define $\mathcal{C}^\ast_{\mathrm{Pin}(2)\times \Z_2}(\Gamma,\tilde{\s})$ to be the following $\mathfrak{R}$-bimodule:
\[ \xymatrix{ && \mathfrak{M} &&&& \cdots& & \mathfrak{M} \\ &&&&& &&\ar[ur]^{\mathfrak{f}_n^-}\\ \mathfrak{R} \ar[rruu]^{\mathfrak{f}_0} &&&& \mathfrak{M}\ar[uull]_{\mathfrak{f}_1^+} \ar[ur]^{\mathfrak{f}_2^-} && \cdots& &&& \mathfrak{M} \ar[uull]_{\mathfrak{f}_n^+} } \]

The maps $\mathfrak{f}_0$ and $\mathfrak{f}_i^\pm$ are defined as follows:
\begin{itemize}
    \item $\mathfrak{f}_0(x_+) = U^{\lambda_A(v_0,v_1)}$ and $\mathfrak{f}_0(x_-) = U^{\lambda_A(v_0,v_1)} + Q^2 U_Q^{\lambda_A(v_0,v_1)}$.
    \item For $2 \le i \le n$, set
    \[
    \mathfrak{f}_i^-(x_+) = U^{\lambda_A(v_{i-1},v_i)}x_+,\qquad
    \mathfrak{f}_i^-(x_-) = U^{\lambda_A(v_{i-1},v_i)}x_- + Q^2 U_Q^{\lambda_A(v_{i-1},v_i)}x_+.
    \]
    \item For $1 \le i \le n$, set
    \[
    \begin{split}
    \mathfrak{f}_i^+(x_+) &= U^{\lambda_A(v_{i-1},v_i)+\lambda_V(v_{i-1})-\lambda_V(v_i)}x_+, \\
    \mathfrak{f}_i^+(x_-) &= U^{\lambda_A(v_{i-1},v_i)+\lambda_V(v_{i-1})-\lambda_V(v_i)}x_- 
    + Q^2 U_Q^{\lambda_A(v_{i-1},v_i)+\lambda_V(v_{i-1})-\lambda_V(v_i)}x_+.
    \end{split}
    \]
\end{itemize}

Now suppose that $\mathrm{Sym}(\mathcal{R}_{\Gamma,\tilde{\s}})$ has a central angle. Then we may label its leaves as 
\[
v_{-n},\dots,v_{-1},v_1,\dots,v_n
\]
where any pair of consecutive leaves form a simple angle; note that $v_i$ is the reflection of $v_{-i}$ along the $y$-axis for all $1 \le i \le n$. Then we define $\mathcal{C}^\ast_{\mathrm{Pin}(2)\times \Z_2}(\Gamma,\tilde{\s})$ as the following $\mathfrak{R}$-bimodule.
\[
\xymatrix{
\mathfrak{R} &&&& \mathfrak{M} && \cdots &&&& \mathfrak{M} \\
&&&&& \ar[ul] &&&&&\\
&& \mathfrak{M} \ar[uull]_{\mathfrak{f}_0}\ar[rruu]^{\mathfrak{f}_1^-} &&&& \cdots && \mathfrak{M} \ar[lu] \ar[rruu]^{\mathfrak{f}_{n-1}^-} && && \mathfrak{M}\ar[uull]_{\mathfrak{f}_{n-1}^+}
}
\]
Here, the maps $\mathfrak{f}_0$ and $\mathfrak{f}_{i,i+1}$ are defined as follows.
\begin{itemize}
    \item $\mathfrak{f}_{0}(1) = U^{\lambda_A(v_{-1},v_1)}x_+ + (U^{\lambda_A(v_{-1},v_1)}+ Q^2 U_Q^{\lambda_A(v_{-1},v_1)}) x_-$.
    \item For $1 \le i \le n-1$, we define
    \[
    \mathfrak{f}_i^-(x_+) = U^{\lambda_A(v_i,v_{i+1})}x_+,\quad \mathfrak{f}_{i,i+1}(x_-) = U^{\lambda_A(v_i,v_{i+1})}x_- + Q^2 U_Q^{\lambda_A(v_i,v_{i+1})}x_+.
    \]
    \item For $1 \le i \le n-1$, we define
    \[
    \begin{split}
    \mathfrak{f}_i^+(x_+) &= U^{\lambda_A(v_i,v_{i+1})+\lambda_V(v_i)-\lambda_V(v_{i+1})}x_+,\\
    \mathfrak{f}_i^+(x_-) &= U^{\lambda_A(v_i,v_{i+1})+\lambda_V(v_i)-\lambda_V(v_{i+1})}x_- + Q^2 U_Q^{\lambda_A(v_i,v_{i+1})+\lambda_V(v_i)-\lambda_V(v_{i+1})}x_+.
    \end{split}
    \]
\end{itemize}

\begin{defn}
    The \emph{$(\mathrm{Pin}(2)\times \Z_2)$-equivariant lattice cochain} of $(Y,\tilde{\s})$ is defined to be the $\mathfrak{R}$-bimodule $\mathcal{C}_{\mathrm{Pin}(2)\times \Z_2}(\Gamma,\tilde{\s})$.
\end{defn}

As in the $(S^1 \times \Z_p)$-equivariant case, the lattice cochain $\mathcal{C}_{\mathrm{Pin}(2)\times \Z_2}(\Gamma,\tilde{\s})$ computes the $\Z_2$-coefficient $(\mathrm{Pin}(2)\times \Z_2)$-equivariant cochain complex of $SWF_{\mathrm{Pin}(2)\times \Z_2}(-Y,\tilde{\s})$.

\begin{lem} \label{lem: Pin(2) lattice cochain computes SWF}
    For any self-conjugate $\Z_2$-equivariant $\mathrm{Spin}^c$ structure $\tilde{\s}$ satisfying $\mathcal{N}(\tilde{\s}) = \mathfrak{s}^{can}_Y$, the $\mathfrak{R}$-bimodules
    \[
    \mathcal{C}_{\mathrm{Pin}(2)\times \Z_2}(\Gamma,\tilde{\s}) 
    \qquad \text{ and } \qquad 
    \widetilde{C}^\ast_{\mathrm{Pin}(2)\times \Z_2}\!\left(SWF_{\mathrm{Pin}(2)\times \Z_2}(Y,\tilde{\s});\Z_2\right)
    \]
    are quasi-isomorphic, up to a degree shift and possibly a twisting.
\end{lem}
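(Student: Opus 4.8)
The plan is to reduce the statement to \Cref{lem: pin(2) lattice floer cochain} and then to the explicit chain-level computations in \Cref{lem: noninv to noninv map,lem: inv to noninv map}. First I would invoke \Cref{thm: eqv almost J inv seq exists}: since $\mathcal{N}(\tilde{\s}) = \mathfrak{s}^{can}_Y$, there exists a $\Z_2$-equivariant almost $J$-invariant path $\gamma$ for $(\Gamma,\tilde{\s})$ carrying the lattice homology of $(\Gamma,\mathfrak{s}^{can}_Y)$. By \Cref{defn: pin(2)xZ2 for Gamma and s} we have $\mathcal{H}_{\mathrm{Pin}(2)\times \Z_2}(\Gamma,\tilde{\s}) \simeq \mathcal{H}_{\mathrm{Pin}(2)\times \Z_2}(\gamma)$, and by \Cref{lem: graded root gives pin(2) lattice sp} this is $(\mathrm{Pin}(2)\times\Z_2)$-equivariantly homotopy equivalent to $\mathcal{H}(\mathrm{Sym}(\mathcal{R}_{\Gamma,\tilde{\s}}))$ — the latter is defined using the symmetric $\Z_2$-labelled planar graded root $R^s_{\Gamma,\tilde{\s}}$, which exists by \Cref{lem: R Gamma s is reflective}. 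Combining with \Cref{lem: pin(2) lattice floer cochain}, the module $\widetilde{C}^\ast_{\mathrm{Pin}(2)\times \Z_2}(SWF_{\mathrm{Pin}(2)\times \Z_2}(Y,\tilde{\s});\Z_2)$ is quasi-isomorphic, up to degree shift and the twisting $\varphi$, to $\widetilde{C}^\ast_{\mathrm{Pin}(2)\times \Z_2}(\mathcal{H}(\mathrm{Sym}(\mathcal{R}_{\Gamma,\tilde{\s}}));\Z_2)$. So it remains to identify the latter cochain complex with the combinatorially defined $\mathfrak{R}$-bimodule $\mathcal{C}_{\mathrm{Pin}(2)\times \Z_2}(\Gamma,\tilde{\s})$.

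The core of the argument is this last identification, and I would carry it out by a chain-level Mayer--Vietoris computation exactly parallel to the $(S^1\times\Z_2)$-case treated in \Cref{thm: lattice chain model is correct}, but now keeping track of the extra $\mathrm{Pin}(2)$-structure. Recall that $\mathcal{H}(\mathrm{Sym}(\mathcal{R}_{\Gamma,\tilde{\s}}))$ is built from the ``half'' $\mathcal{H}_0(\mathcal{R}_{\Gamma,\tilde{\s}})$ — a wedge of representation spheres $V_{v_i}^+$ and cylinders $W_{(v_i,v_{i+1})}^+\wedge[0,1]$ glued along inclusions — together with its $j$-translate, the two copies being glued via $\mathrm{Ind}(\Sigma^{-\widetilde{\R}}f)$ over the central leaf or central angle. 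Applying \Cref{lem: chain level Mayer Vietoris} with $G = \mathrm{Pin}(2)\times\Z_2$ expresses $\widetilde{C}^\ast_{\mathrm{Pin}(2)\times\Z_2}(\mathcal{H}(\mathrm{Sym}(\mathcal{R}_{\Gamma,\tilde{\s}}));\Z_2)$ as an iterated mapping cone. For the generators corresponding to leaves $v_i$ and angles $(v_{i-1},v_i)$ with $i\neq 0$, the $(\mathrm{Pin}(2)\times\Z_2)$-spaces involved are induced up from $(S^1\times\Z_2)$-representation spheres, so \Cref{lem: noninv to noninv map} identifies their cochains with $\mathfrak{M} = (\mathfrak{R}x_+\oplus\mathfrak{R}x_-,d)$ and identifies the inclusion-pullbacks with the maps $x_+\mapsto U^{\mathbf{n}}x_+$, $x_-\mapsto U^{\mathbf{n}}x_- + Q^2 U_Q^{\mathbf{n}}x_+$; these are precisely the $\mathfrak{f}_i^\pm$ in the definition of $\mathcal{C}_{\mathrm{Pin}(2)\times\Z_2}(\Gamma,\tilde{\s})$, where the exponents $\mathbf{n}$ read off from $R^s_{\Gamma,\tilde{\s}}$ match the angle and leaf labels $\lambda_A$, $\lambda_V$ by the same bookkeeping as in \Cref{lem: lattice chain from label graded root}. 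For the central generator — the $\mathrm{Pin}(2)$-genuine sphere $(\H^{\mathbf{m}})^+$ (central leaf case) or $W_{(v_0,v_1)}^+$ (central angle case) — its cochain is $\mathfrak{R}$ via the Thom isomorphism, and the gluing map $\mathrm{Ind}(\Sigma^{-\widetilde{\R}}f)$ induces on cochains the map computed in \Cref{lem: inv to noninv map}: up to homotopy, $1\mapsto x_++x_-$, which is exactly what $\mathfrak{f}_0$ encodes. I would then simplify the resulting iterated mapping cone — discarding acyclic summands of the form $[z\xrightarrow{1}y+bw]$ as in the remark following \Cref{thm: lattice chain model is correct} — to land on $\mathcal{C}_{\mathrm{Pin}(2)\times\Z_2}(\Gamma,\tilde{\s})$ on the nose.

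The main obstacle I anticipate is verifying compatibility of all the identifications of generators with the $\mathfrak{R}$-\emph{bimodule} structure and, relatedly, the correct placement of the $Q^2$ correction terms. The pullback maps in \Cref{lem: noninv to noninv map} are nonsymmetric with respect to $x_+\leftrightarrow x_-$, and one must check that the choices made in gluing the ``positive'' and ``negative'' halves of the symmetric graded root are mutually consistent, i.e., that the asymmetries fit together rather than obstruct the identification; \Cref{rem: Q2 term can be in either direction} is the key tool here, showing that moving a $Q^2$-term across $x_+\leftrightarrow x_-$ is a homotopy, so the two halves can be matched up to homotopy equivalence of the total complex. A secondary bookkeeping point is tracking the degree shifts: the $(\mathrm{Pin}(2)\times\Z_2)$-equivariant desuspension of Montague's spectrum contributes a shift by $4\alpha_\H(\mathbf{n}(Y,\tilde{\s},g))$ together with the shift coming from the leaf $v_{-n}$ used to ``start'' the model (as in \Cref{lem: label gr root gives eqv lattice sp}), but all of these are absorbed into the ``degree shift'' clause of the statement. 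Finally, the twisting by $\varphi$ (equivalently, the reparametrization $(x,[n])\mapsto(x(-1)^n,[n])$) is exactly the one already appearing in \Cref{lem: pin(2) lattice floer cochain} via \Cref{lem: Pin(2)xZ2 lattice sp is S1-eqv to SWF} and \Cref{cor: eqv pin(2) SWF of different eqv spin}, so no new ambiguity is introduced; one simply carries it along. With these points addressed, the lemma follows by concatenating the quasi-isomorphisms above.
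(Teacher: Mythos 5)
Your proposal is correct and follows essentially the same route as the paper's own proof: reduce via \Cref{lem: graded root gives pin(2) lattice sp}, \Cref{lem: pin(2) lattice floer cochain}, and \Cref{cor: eqv pin(2) SWF of different eqv spin} to identifying $\widetilde{C}^\ast_{\mathrm{Pin}(2)\times \Z_2}(\mathcal{H}(\mathrm{Sym}(\mathcal{R}_{\Gamma,\tilde{\s}}));\Z_2)$ with $\mathcal{C}_{\mathrm{Pin}(2)\times \Z_2}(\Gamma,\tilde{\s})$, then compute the iterated mapping cone via \Cref{lem: chain level Mayer Vietoris}, \Cref{lem: noninv to noninv map}, and \Cref{lem: inv to noninv map}, cancel acyclic pieces, and invoke \Cref{rem: Q2 term can be in either direction} to resolve the $x_+\leftrightarrow x_-$ asymmetry of the $Q^2$ terms. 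The paper organizes the final step slightly differently (quotienting an explicit acyclic submodule $\mathcal{A}$ of the cone $[\mathfrak{R}\leftarrow \mathfrak{M}\rightarrow \mathcal{C}]$ to produce $\mathfrak{f}_0$), but this is only a presentational difference.
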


\begin{proof}
    By \Cref{cor: eqv pin(2) SWF of different eqv spin}, \Cref{lem: graded root gives pin(2) lattice sp}, and \Cref{lem: pin(2) lattice floer cochain}, it remains to show that $\mathcal{C}_{\mathrm{Pin}(2)\times \Z_2}(\Gamma,\tilde{\s})$ and $\widetilde{C}^\ast_{\mathrm{Pin}(2)\times \Z_2}(\mathcal{H}(\mathrm{Sym}(\mathcal{R}_{\Gamma,\tilde{\s}}));\Z_2)$ are quasi-isomorphic. We only present the case where $\mathrm{Sym}(\mathcal{R}_{\Gamma,\tilde{\s}})$ has an invariant leaf; the case of an invariant angle is analogous and omitted.

    Suppose that $\mathrm{Sym}(\mathcal{R}_{\Gamma,\tilde{\s}})$ has an invariant leaf. Label its leaves
    \[
    v_{-n}, \dots, v_{-1}, v_0, v_1, \dots, v_n,
    \]
    where each pair of consecutive leaves forms a simple angle. Consider the following $\mathfrak{R}$-module, denoted by $\mathcal{C}$:
    \[ \xymatrix{ && \mathfrak{M} &&&&\cdots & & \mathfrak{M} \\ &&&&& &&\ar[ur]^{\mathfrak{f}_n^-}\\ \mathfrak{M} \ar[rruu]^{\mathfrak{f}_1^-} &&&& \mathfrak{M}\ar[uull]_{\mathfrak{f}_1^+} \ar[ur]^{\mathfrak{f}_2^-} &&\cdots& &&& \mathfrak{M} \ar[uull]_{\mathfrak{f}_n^+} } \]
    Here the maps $\mathfrak{f}_i^\pm$ are the same as in the definition of $\mathcal{C}_{\mathrm{Pin}(2)\times \Z_2}(\Gamma,\tilde{\s})$, except that $\mathfrak{f}_1^-$ is now given by
    \[
    \mathfrak{f}_1^-(x_+) = U^{\lambda_A(v_0,v_1)}x_+,\qquad 
    \mathfrak{f}_1^-(x_-) = U^{\lambda_A(v_0,v_1)}x_- + Q^2 U_Q^{\lambda_A(v_0,v_1)}x_+.
    \]

    Applying \Cref{lem: chain level Mayer Vietoris} together with \Cref{lem: noninv to noninv map,lem: inv to noninv map}, we find that $\widetilde{C}^\ast_{\mathrm{Pin}(2)\times \Z_2}(\mathcal{H}(\mathrm{Sym}(\mathcal{R}_{\Gamma,\tilde{\s}}));\Z_2)$ is quasi-isomorphic to the $\mathfrak{R}$-module
    \[
    \mathfrak{N} = \bigl[ \mathfrak{R} \xleftarrow{g_1} \mathfrak{M} \xrightarrow{g_2} \mathcal{C} \bigr],
    \]
    where
    \begin{itemize}
        \item $g_1(x_+) = g_1(x_-) = 1$;
        \item $g_2$ is the inclusion of the codomain of $\mathfrak{f}_1^-$ into $\mathcal{C}$.
    \end{itemize}
    Observe that $\mathfrak{N}$ contains the acyclic submodule
    \[
    \mathcal{A} = \bigl[\mathfrak{M} \xrightarrow{g_1 \oplus g_2} \mathrm{Im}(g_1 \oplus g_2) \subset \mathfrak{R} \oplus \mathcal{C} \subset \mathfrak{N}\bigr].
    \]
    It follows that $\mathfrak{N}/\mathcal{A}$ coincides with $\mathcal{C}_{\mathrm{Pin}(2)\times \Z_2}(\Gamma,\tilde{\s})$, except that in the map $\mathfrak{f}_0 \colon \mathfrak{M}\to \mathfrak{R}$ the $Q^2$ term arises from $x_+$ rather than $x_-$. By \Cref{rem: Q2 term can be in either direction}, this does not affect the quasi-isomorphism class. The lemma follows.
\end{proof}

\subsection{The chain-level $(\mathrm{Pin}(2)\times \Z_2)$-local equivalence group}
Recall that while $C^\ast(B(\mathrm{Pin}(2)\times \Z_2);\Z_2)$ is an $E_\infty$-algebra over $\Z_2$, when regarded as an $A_\infty$-algebra (i.e., a $\Z_2$-dga), it is homotopy equivalent to $\mathfrak{R}$. Consequently, their derived categories of left, right, or bimodules are equivalent. Hence, whenever the full $E_\infty$-structure is not required (for example, when computing the homology of an $E_\infty$-module), we will treat $C^\ast(B(\mathrm{Pin}(2)\times \Z_2);\Z_2)$ and $\mathfrak{R}$ as the ``same'' $\Z_2$-dga. In particular, we will define the chain-level local equivalence group using the derived category of perfect $\mathfrak{R}$-modules, closely following the constructions in \Cref{subsec: S1xZp local equivalence group}. Note that
\[
H^\ast(\mathfrak{R}) \cong \Z_2[Q,V]/(Q^3),
\]
where $\deg V = 4$ and $V$ corresponds to $U^2$, since $U$ itself is not a cocycle in $\mathfrak{R}$.

Consider the $\Z_2$-dga $\mathfrak{R}_0 := (\Z_2[Q,U,U^{-1}],d)$, where 
\[
dQ = 0, \qquad 
dU^n =
\begin{cases}
    nQ^3 U^{n-1} & \text{if } n \neq 0, \\
    0 & \text{if } n = 0.
\end{cases}
\]
Although $\mathfrak{R}_0$ does not appear to admit a natural structure of an $\mathfrak{R}$-algebra in the $E_\infty$ sense, when regarded as an $A_\infty$-algebra over $\Z_2$, it carries the structure of an $A_\infty$ $\mathfrak{R}$--$\mathfrak{R}_0$-bimodule. Moreover,
\[
H^\ast(\mathfrak{R}_0) \cong \Z_2[Q,V,V^{-1}]/(Q^3).
\]
Throughout, all maps are assumed to be degree-preserving unless stated otherwise.
\begin{defn}\label{def:localmapSWFtype}
    An $\mathfrak{R}$-module $M$ is said to be of \emph{weak SWF-type} if 
    \[
    M \otimes_\mathfrak{R} \mathfrak{R}_0 \simeq \mathfrak{R}_0[n]
    \]
    as an $\mathfrak{R}_0$-module for some $n \in \Z$.  
    Given two $\mathfrak{R}$-modules $M,N$ of weak SWF-type, an $\mathfrak{R}$-module map $f \colon M \to N$ is called \emph{local of level $i$} for $i \in \{0,1,2\}$ if 
    \[
    f \otimes \mathrm{id} \colon M \otimes_{\mathfrak{R}} \mathfrak{R}_0 \longrightarrow N \otimes_\mathfrak{R} \mathfrak{R}_0
    \]
    is homotopic to $Q^i \cdot f'$ for some $\mathfrak{R}_0$-module quasi-isomorphism $f'$. Two $\mathfrak{R}$-modules $M,N$ of weak SWF-type are said to be \emph{weakly locally equivalent} if there exist local maps $M \to N[n]$ and $N \to M[m]$ of level $0$ for some integers $m,n$.  

    An $\mathfrak{R}$-module of weak SWF-type is said to be of \emph{SWF-type} if it is perfect and weakly locally equivalent to $\mathfrak{R}$. Finally, two $\mathfrak{R}$-modules $M,N$ of SWF-type are said to be \emph{locally equivalent} if there exist local maps $M \to N$ and $N \to M$, both of level $0$.
\end{defn}

Then, by following the arguments in the proofs of various lemmas in \Cref{subsec: S1xZp local equivalence group} with minimal modifications, we obtain the following result.

\begin{lem}\label{lem: one big lemma for Pin(2)xZ2 chain level}
    The following statements hold.
    \begin{enumerate}
        \item Consider the set
        \[
        \mathfrak{C}^{ch,\Z}_{\mathrm{Pin}(2)\times \Z_2} := \frac{\{\mathfrak{R}\text{-modules of SWF-type}\}}{\text{local equivalence}},
        \]
        endowed with the group operation given by tensor product. Then $\mathfrak{C}^{ch,\Z}_{\mathrm{Pin}(2)\times \Z_2}$ is an abelian group.
        
        \item The monoidal functor
        \[
        C^\ast_{\mathrm{Pin}(2)}(-;\Z_2) \colon \mathcal{F}^{sp}_{\mathrm{Pin}(2)\times \Z_2} \longrightarrow \mathrm{Mod}^{op}_{C^\ast(B(\mathrm{Pin}(2)\times \Z_2);\Z_2)}
        \]
        induces a group homomorphism
        \[
        C^\ast_{\mathrm{Pin}(2)}(-;\Z_2) \colon \mathfrak{F}^{sp,str}_{\mathrm{Pin}(2)\times \Z_2} \longrightarrow \mathfrak{C}^{ch}_{\mathrm{Pin}(2)\times \Z_2},
        \]
        and hence also a group homomorphism
        \[
        C^\ast_{\mathrm{Pin}(2)\times \Z_2}(-;\Z_2) \colon \mathfrak{C}^{sp}_{\mathrm{Pin}(2)\times \Z_2} \longrightarrow \mathfrak{C}^{ch}_{\mathrm{Pin}(2)\times \Z_2}
        \]
        by composing with the Borel construction map $\mathfrak{B}\colon \mathfrak{C}^{sp}_{\mathrm{Pin}(2)\times \Z_2} \to \mathfrak{F}^{sp,str}_{\mathrm{Pin}(2)\times \Z_2}$.
        
        \item For any space $X$ of type $(\mathrm{Pin}(2)\times \Z_2)$-SWF and its additive inverse $X^\vee \in \mathfrak{C}^{sp}_{\mathrm{Pin}(2)\times \Z_2}$, the $\mathfrak{R}$-modules
        \[
        \widetilde{C}^\ast_{\mathrm{Pin}(2)\times \Z_2}(X;\Z_2)^\vee 
        \qquad \text{ and } \qquad 
        \widetilde{C}^\ast_{\mathrm{Pin}(2)\times \Z_2}(X^\vee;\Z_2)
        \]
        are locally equivalent.
    \end{enumerate}
\end{lem}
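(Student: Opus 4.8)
The plan is to mirror, mutatis mutandis, the proofs of \Cref{lem: S1xZp chain Z-graded loc eqv gp is abelian}, \Cref{lem: S1xZp eqv cochain is a group hom}, and \Cref{cor: dualities agree}, replacing the pair $(\mathcal{R}^\circ_p, \Z_p[U,U^{-1}])$ by $(\mathfrak{R}, \mathfrak{R}_0)$ throughout and keeping track of the extra subtlety that local maps now come with a ``level'' in $\{0,1,2\}$ rather than just being quasi-isomorphisms on the localization. The key point that makes everything work exactly as before is that $\mathfrak{R}_0$ remains indecomposable (up to quasi-isomorphism) as a module over itself — its homology $\Z_2[Q,V,V^{-1}]/(Q^3)$ is a local ring in the graded sense — so the argument that a direct summand of $\mathfrak{R}_0$ is either $0$ or all of $\mathfrak{R}_0$ goes through verbatim.

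First I would establish part (1): $\mathfrak{C}^{ch,\Z}_{\mathrm{Pin}(2)\times\Z_2}$ is a well-defined abelian group. Here I would prove the two-part lemma analogous to \Cref{lem: S1xZp chain Z-graded loc eqv gp is abelian}: (a) if $M$ and $N$ are of SWF-type then so is $M\otimes_\mathfrak{R} N$, with $(M\otimes N)\otimes_\mathfrak{R}\mathfrak{R}_0 \simeq M\otimes_\mathfrak{R}(N\otimes_\mathfrak{R}\mathfrak{R}_0)\simeq M\otimes_\mathfrak{R}\mathfrak{R}_0\simeq \mathfrak{R}_0$ (using that $N\otimes\mathfrak{R}_0\simeq\mathfrak{R}_0$ as $\mathfrak{R}_0$-modules and that tensoring a weak equivalence is a weak equivalence), and $M\otimes N$ is perfect by the hom-tensor adjunction identifying $(M\otimes N)^\vee$ with $M^\vee\otimes N^\vee$ together with \cite[Proposition~7.2.4.4]{Lurie_HA}; commutativity of $M\otimes N\simeq N\otimes M$ follows since $\mathfrak{R}$ is quasi-isomorphic to a commutative dga (indeed to $C^\ast(B(\mathrm{Pin}(2)\times\Z_2);\Z_2)$, which is $E_\infty$). (b) If $M$ is of SWF-type then $M^\vee$ is, and $M\otimes M^\vee$ is locally equivalent to $\mathfrak{R}$ via the trace and cotrace maps, exactly as in the $S^1\times\Z_p$ case; the only modification is checking that the level of the relevant maps is $0$, which holds because one only ever composes level-$0$ local maps and their duals, and a dual of a level-$0$ local map (a quasi-isomorphism after $-\otimes\mathfrak{R}_0$) is again level $0$.

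Next I would prove part (2). The content is that for a space $X$ of type $(\mathrm{Pin}(2)\times\Z_2)$-SWF, the module $\widetilde{C}^\ast_{\mathrm{Pin}(2)\times\Z_2}(X;\Z_2)$ is of SWF-type, and that local maps of level $0$ between such spaces induce level-$0$ local maps on cochains. Perfectness comes from \Cref{lem: chain level Mayer Vietoris} and \Cref{rem: finite G-CW-complexes are perfect} (with $G=\mathrm{Pin}(2)\times\Z_2$). For weak local equivalence to $\mathfrak{R}$: the inclusion $i\colon X^{S^1}\hookrightarrow X$ is a local map of level $0$, composing with the Thom quasi-isomorphism $\widetilde{C}^\ast_{\mathrm{Pin}(2)\times\Z_2}(X^{S^1};\Z_2)\simeq\mathfrak{R}[-m]$ gives a local map $\widetilde{C}^\ast_{\mathrm{Pin}(2)\times\Z_2}(X;\Z_2)\to\mathfrak{R}[-m]$; here I must check that $i^\ast\otimes_\mathfrak{R}\mathfrak{R}_0$ is a quasi-isomorphism — this is precisely where the hypothesis ``level $0$'' for $i$ (which holds since $i^{S^1}$ is the identity) translates into the statement, using the localization theorem and the fact that inverting $U$ is the algebraic shadow of inverting the Euler class. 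For the reverse direction one uses, as in \Cref{lem: S1xZp eqv cochain is a group hom}, the space-level duality map $T_X\colon X\wedge X^\vee\to S^0$, which is a local map, passes to cochains, and composes with a local map $\widetilde{C}^\ast_{\mathrm{Pin}(2)\times\Z_2}(X^\vee;\Z_2)\to\mathfrak{R}[n]$. The factorization through $\mathfrak{F}^{sp,str}_{\mathrm{Pin}(2)\times\Z_2}$ then follows because $\mathcal{B}$ sends level-$0$ local maps to level-$0$ local maps and $C^\ast_{\mathrm{Pin}(2)}(-;\Z_2)\circ\mathcal{B}=C^\ast_{\mathrm{Pin}(2)\times\Z_2}(-;\Z_2)$ by the commutative triangle in \Cref{subsec: families categories}.

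Finally, part (3) is a formal consequence of (1) and (2), exactly as \Cref{cor: dualities agree} follows from \Cref{lem: S1xZp chain Z-graded loc eqv gp is abelian} and \Cref{lem: S1xZp eqv cochain is a group hom}: in $\mathfrak{C}^{ch,\Z}_{\mathrm{Pin}(2)\times\Z_2}$ the additive inverse is represented by the $\mathfrak{R}$-linear dual, while $C^\ast_{\mathrm{Pin}(2)\times\Z_2}(-;\Z_2)$ is a group homomorphism and hence commutes with passing to additive inverses up to local equivalence, so $\widetilde{C}^\ast_{\mathrm{Pin}(2)\times\Z_2}(X;\Z_2)^\vee$ and $\widetilde{C}^\ast_{\mathrm{Pin}(2)\times\Z_2}(X^\vee;\Z_2)$ are locally equivalent. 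The main obstacle I anticipate is not any single step but the bookkeeping around levels: one must be careful that all the maps used to establish the group structure and the functoriality are genuinely level $0$ (not merely level $\le 2$), since otherwise the dualities and the group homomorphism property degrade; verifying this amounts to checking that the $S^1$-fixed-point maps of all the space-level local maps involved are honest homotopy equivalences, which they are by construction, but it requires care to phrase correctly in the families ($\mathcal{F}^{sp}$) setting where ``level $0$'' is defined via a commuting square with fiberwise equivalences $g_E,g_F$.
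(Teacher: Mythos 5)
Your proposal is correct and coincides with the paper's approach: the paper's own proof is simply the one-sentence remark that the lemma ``follows directly from the arguments of'' \Cref{lem: S1xZp chain Z-graded loc eqv gp is abelian}, \Cref{lem: S1xZp eqv cochain is a group hom}, and \Cref{cor: dualities agree}, which is precisely the mirroring under the replacement $(\mathcal{R}^\circ_p,\Z_p[U,U^{-1}])\rightsquigarrow(\mathfrak{R},\mathfrak{R}_0)$ that you carry out. Your added bookkeeping on levels — checking that the indecomposability of $H^\ast(\mathfrak{R}_0)\cong\Z_2[Q,V,V^{-1}]/(Q^3)$ plays the role of the indecomposability of $\Z_2[U,U^{-1}]$, and that all the maps entering the group-structure and functoriality arguments are genuinely level $0$ — is exactly the content that the paper leaves implicit.
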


\begin{proof}
    The proof follows directly from the arguments of \Cref{lem: S1xZp chain Z-graded loc eqv gp is abelian,lem: S1xZp eqv cochain is a group hom} and \Cref{cor: dualities agree}.
\end{proof}

However, unlike the $S^1 \times \Z_2$ case, here we encounter the notion of \emph{levels} of local maps. Among these, only local maps of level $0$ correspond to the ``true'' local maps in the $(S^1 \times \Z_2)$-equivariant sense.

\begin{lem} \label{lem: two-out-of-three property}
    Let $M,N,L$ be $\mathfrak{R}$-modules of SWF-type, and let $f\colon M \to N$ and $g\colon N \to L$ be $\mathfrak{R}$-module maps. Choose integers $i,j \in \{0,1,2\}$ such that $i+j \le 2$. Then any two of the following statements imply the third:
    \begin{itemize}
        \item $f$ is a local map of level $i$;
        \item $g$ is a local map of level $j$;
        \item $g \circ f$ is a local map of level $i+j$.
    \end{itemize}
\end{lem}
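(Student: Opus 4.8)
The plan is to push everything down to the algebra $\mathfrak{R}_0$, where the statement collapses to an elementary computation in the graded ring $R:=H^\ast(\mathfrak{R}_0)$. First I would record that $R\cong\Z_2[Q,V,V^{-1}]/(Q^3)$ with $\deg Q=1$, $\deg V=4$ and $V=[U^2]$ (it is the localization of $H^\ast(\mathfrak{R})=\Z_2[Q,V]/(Q^3)$ at $V$, which one reads off directly from $\mathfrak{R}_0=(\Z_2[Q,U,U^{-1}],d)$, $dU=Q^3$); in particular $R^n\cong\Z_2$ for $n\not\equiv 3\pmod 4$ and $R^n=0$ for $n\equiv 3\pmod 4$, and the units of $R$ are exactly the powers $V^t$ since $(Q)$ is nilpotent. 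Since $M$, $N$, $L$ are of SWF-type (hence weak SWF-type), I would fix quasi-isomorphisms $M\otimes_{\mathfrak{R}}\mathfrak{R}_0\simeq\mathfrak{R}_0[a]$, $N\otimes_{\mathfrak{R}}\mathfrak{R}_0\simeq\mathfrak{R}_0[b]$, $L\otimes_{\mathfrak{R}}\mathfrak{R}_0\simeq\mathfrak{R}_0[c]$ (the integers $a,b,c$ being well defined modulo $4$, which is all that enters). Under these identifications $f\otimes\mathrm{id}$, $g\otimes\mathrm{id}$, and $(g\circ f)\otimes\mathrm{id}=(g\otimes\mathrm{id})\circ(f\otimes\mathrm{id})$ become morphisms in $\mathcal{D}(\mathfrak{R}_0)$ between shifts of $\mathfrak{R}_0$; as $\mathfrak{R}_0$ is free of rank one over itself, homotopy classes of such maps $\mathfrak{R}_0[p]\to\mathfrak{R}_0[q]$ are identified with $R^{q-p}$, with composition given by multiplication in $R$.

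The next step is to reinterpret ``local of level $i$'' in these terms. A map $\mathfrak{R}_0[p]\to\mathfrak{R}_0[q]$ is a quasi-isomorphism iff $q-p\equiv 0\pmod 4$ and its class in $R^{q-p}$ is nonzero (necessarily $V^{(q-p)/4}$). Writing $x:=[f\otimes\mathrm{id}]\in R^{b-a}$ and $y:=[g\otimes\mathrm{id}]\in R^{c-b}$, so that $[(g\circ f)\otimes\mathrm{id}]=xy\in R^{c-a}$, I would then check the equivalences: $f$ is local of level $i$ $\iff$ $b-a\equiv i\pmod 4$ and $x\neq 0$; $g$ is local of level $j$ $\iff$ $c-b\equiv j\pmod 4$ and $y\neq 0$; and $g\circ f$ is local of level $i+j$ $\iff$ $c-a\equiv i+j\pmod 4$ and $xy\neq 0$. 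The forward direction uses that if $f\otimes\mathrm{id}\simeq Q^i f'$ with $f'$ a quasi-isomorphism then $f'$ has degree $\equiv 0\pmod 4$, forcing $b-a\equiv i\pmod 4$, while $x=Q^i\cdot(\text{unit})\neq 0$ because $i\le 2$ and hence $Q^i\neq 0$; conversely, when $b-a\equiv i\pmod 4$ and $x\neq 0$ the class $x$ is forced to be $Q^iV^{(b-a-i)/4}$, which factors as $Q^i$ followed by the quasi-isomorphism $V^{(b-a-i)/4}$.

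Finally the lemma becomes pure arithmetic in $R$, split into a congruence part and a nonvanishing part. The congruences $b-a\equiv i$, $c-b\equiv j$, $c-a\equiv i+j\pmod 4$ are additively linked, so any two imply the third, the hypothesis $i+j\le 2<4$ ensuring all residues are unambiguous. For the nonvanishing part: if $x\neq 0$ and $y\neq 0$ then, being nonzero homogeneous elements in degrees $\equiv i,j\pmod 4$, they are monomials $x=Q^iV^{\alpha}$, $y=Q^jV^{\beta}$, so $xy=Q^{i+j}V^{\alpha+\beta}\neq 0$ since $i+j\le 2$ keeps $Q^{i+j}\neq 0$; and conversely $xy\neq 0$ forces $x\neq 0$ and $y\neq 0$ because either factor vanishing kills the product. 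Combining the congruence and nonvanishing statements yields each of the three implications, treating the three pairs of hypotheses in turn. I do not expect a genuine obstacle here; the only delicate point is getting the reformulation of ``level $i$'' right in both directions and keeping track of the constraint $i+j\le 2$, which is exactly what prevents $Q^{i+j}$ from vanishing and makes the mod-$4$ residues unambiguous.
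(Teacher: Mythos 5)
Your proposal is correct, and the underlying engine is the same as in the paper: push everything into $H^\ast(\mathfrak{R}_0)\cong\Z_2[Q,V,V^{-1}]/(Q^3)$ via $-\otimes_{\mathfrak{R}}\mathfrak{R}_0$, identify homotopy classes of $\mathfrak{R}_0$-module maps between shifts of $\mathfrak{R}_0$ with homogeneous elements of that ring, and reduce to graded arithmetic. The organizational difference is worth noting: the paper treats the three implications asymmetrically, proving only the hardest one (first and third imply second) by writing the relation $Q^i[g\otimes\mathrm{id}]=Q^{i+j}[h_0\circ f_0^{-1}]$, formally ``dividing by $Q^i$'' to get $[g\otimes\mathrm{id}]=Q^j\cdot(\text{unit})+Q^{j+1}c$, and then killing the error term $c$ by a degree count. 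You instead first establish a clean equivalence, ``local of level $i$'' $\iff$ ``degree $\equiv i\pmod 4$ and nonvanishing in $H^\ast(\mathfrak{R}_0)$'', after which all three implications become symmetric and follow at once from additivity of congruences and the fact that $Q^{i+j}\neq 0$ when $i+j\le 2$. This reformulation is slightly more transparent and avoids the error-term bookkeeping, while losing nothing; the one place where care is required, and where you handle it correctly, is observing that units of $H^\ast(\mathfrak{R}_0)$ are exactly the powers of $V$, so a quasi-isomorphism forces the degree to be $\equiv 0\pmod 4$.
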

\begin{proof}
    If the first two statements hold, then so does the third.  
    Suppose that the first and third statements hold. Then there exist quasi-isomorphisms
    \[
    f_0 \colon M \otimes_\mathfrak{R} \mathfrak{R}_0 \longrightarrow N \otimes_\mathfrak{R} \mathfrak{R}_0,
    \qquad 
    h_0 \colon M \otimes_\mathfrak{R} \mathfrak{R}_0 \longrightarrow L \otimes_\mathfrak{R} \mathfrak{R}_0
    \]
    such that $f \otimes \mathrm{id} \sim Q^i f_0$ and $(g \circ f) \otimes \mathrm{id} \sim Q^{i+j} h_0$. Take a homotopy inverse $f_0^{-1}$ of $f_0$. Then
    \[
    Q^i (g \otimes \mathrm{id}) \sim ((g \circ f) \otimes \mathrm{id}) \circ f_0^{-1} \sim Q^{i+j} \bigl(h_0 \circ f_0^{-1}\bigr).
    \]
    Since 
    \[
    \mathrm{Hom}_{\mathfrak{R}_0}(M \otimes_\mathfrak{R} \mathfrak{R}_0,\, N \otimes_\mathfrak{R} \mathfrak{R}_0) 
    \cong \mathrm{Hom}_{\mathfrak{R}_0}(\mathfrak{R}_0[m], \mathfrak{R}_0[n]) 
    \cong \mathfrak{R}[n-m]
    \]
    for some integers $m,n$, the homotopy classes of maps between $M \otimes_\mathfrak{R} \mathfrak{R}_0$ and $N \otimes_\mathfrak{R} \mathfrak{R}_0$ can be viewed as elements of $H^\ast(\mathfrak{R}_0) \cong \Z_2[Q,V,V^{-1}]/(Q^3)$. Thus, we obtain
    \[
    Q^i \cdot [g \otimes \mathrm{id}] = Q^{i+j} \cdot [h_0 \circ f_0^{-1}]
    \]
    in $H^\ast(\mathfrak{R}_0)$. Since $i+j \le 2$, it follows that
    \[
    [g \otimes \mathrm{id}] = Q^j \cdot [h_0 \circ f_0^{-1}] + Q^{j+1} c
    \]
    for some $c \in \Z_2[Q,V,V^{-1}]/(Q^3)$.

    Because $f_0$ and $h_0$ are quasi-isomorphisms, their composition $h_0 \circ f_0^{-1}$ is also a quasi-isomorphism, hence corresponds to a homogeneous invertible element of $\Z_2[Q,V,V^{-1}]/(Q^3)$. The only invertible homogeneous elements in $\Z_2[Q,V,V^{-1}]/(Q^3)$ are powers of $V$, so we may write $[h_0 \circ f_0^{-1}] = V^k$ for some $k \in \Z$. Since no homogeneous element $c$ can satisfy $\deg Q^{j+1} c = \deg Q^j V^k$, it follows that $c = 0$. Hence
    \[
    g \otimes \mathrm{id} \sim Q^j \cdot (h_0 \circ f_0^{-1}).
    \]
    Therefore $g$ is a local map of level $j$. The case when the second and third statements hold is analogous.
\end{proof}

\begin{lem} \label{lem: levels of local maps agree}
    Let $X,Y$ be spaces of type $(\mathrm{Pin}(2)\times \Z_2)$-SWF, and let $f \colon X \to Y$ be a local map of level $i$ for some $i \in \{0,1,2\}$. Then the induced pullback
    \[
    f^\ast \colon \widetilde{C}^\ast_{\mathrm{Pin}(2)\times \Z_2}(Y;\Z_2) \longrightarrow \widetilde{C}^\ast_{\mathrm{Pin}(2)\times \Z_2}(X;\Z_2)
    \]
    is also a local map of level $i$.
\end{lem}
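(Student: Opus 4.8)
The plan is to reduce the statement, via a localization argument, to the $S^1$-fixed point sets, where the families-theoretic meaning of ``level $i$'' (a codimension-$i$ jump along the $\widetilde{\R}$-direction) is converted by the Thom isomorphism into multiplication by $Q^i$ once $U$ has been inverted and $\theta$ killed in $\mathfrak{R}_0$.

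First I would pick a representative $f\colon X\wedge\alpha^+\wedge\beta^+\to Y\wedge(\alpha')^+\wedge(\beta')^+$, with $\alpha,\alpha'$ real (with $S^1$ acting trivially and $j$ by $-1$) and $\beta,\beta'$ quaternionic, and record the naturality square relating $f$ and its $S^1$-fixed part $f^{S^1}\colon X^{S^1}\wedge\alpha^+\to Y^{S^1}\wedge(\alpha')^+$ (using that $S^1$ acts freely on $\beta^+\smallsetminus\{0,\infty\}$):
$$\begin{CD}
X^{S^1} @>>> X\\
@V{f^{S^1}}VV @VV{f}V\\
Y^{S^1} @>>> Y
\end{CD}$$
Applying $\widetilde{C}^\ast_{\mathrm{Pin}(2)\times\Z_2}(-;\Z_2)$ and then $-\otimes_\mathfrak{R}\mathfrak{R}_0$, I want to analyze the three resulting maps of $\mathfrak{R}_0$-modules. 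The two horizontal (restriction) maps $i_X^\ast, i_Y^\ast$ become quasi-isomorphisms after $\otimes_\mathfrak{R}\mathfrak{R}_0$: this is precisely the localization statement contained in the proof of \Cref{lem: one big lemma for Pin(2)xZ2 chain level}(2) (cf.\ the proof of \Cref{lem: S1xZp eqv cochain is a group hom}), which shows that the composite of $i_X^\ast$ with the Thom quasi-isomorphism $\widetilde{C}^\ast_{\mathrm{Pin}(2)\times\Z_2}(X^{S^1};\Z_2)\simeq\mathfrak{R}[-m]$ is a local map of level $0$, and likewise for $Y$.

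For the fixed-point map I would use that $f$ being local of level $i$ means, via the Borel functor (i.e.\ via the families definition), that $f^{S^1}$ fits, up to $\mathrm{Pin}(2)\times\Z_2$-equivariant homotopy, into a commutative square whose other side is a fiberwise inclusion $E_0^+\hookrightarrow F_0^+$ of Thom spaces of $\Z_2$-vector bundles with $F_0/E_0$ of rank $i$ and $\mathrm{Pin}(2)$ acting on it through $\mathrm{Pin}(2)/S^1\cong\Z_2$, the vertical maps being fiberwise homotopy equivalences. Taking $\widetilde{C}^\ast_{\mathrm{Pin}(2)}(-;\Z_2)$ over the skeleta and passing to the homotopy limit, the pullback along $E_0^+\hookrightarrow F_0^+$ becomes, after Thom isomorphisms, multiplication by the equivariant mod-$2$ Euler class $e(F_0/E_0)\in H^i(B(\mathrm{Pin}(2)\times\Z_2);\Z_2)$; since $w_1(\widetilde{\R}_+)=Q$, $w_1(\widetilde{\R}_-)=Q+\theta$, and $H^i$ contains no $V=U^2$ class for $i\le 2$, this Euler class equals $Q^i$ plus terms divisible by $\theta$, hence maps to $Q^i$ in $\mathfrak{R}_0$ because the structure map $\mathfrak{R}\to\mathfrak{R}_0$ kills $\theta$. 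The vertical equivalences become quasi-isomorphisms, so $(f^{S^1})^\ast\otimes_\mathfrak{R}\mathrm{id}$ is homotopic to $Q^i$ times a quasi-isomorphism. Feeding this back through the square — together with $i_X^\ast\otimes\mathrm{id}$ and $i_Y^\ast\otimes\mathrm{id}$ being quasi-isomorphisms — shows that $f^\ast\otimes_\mathfrak{R}\mathrm{id}$ is homotopic to $Q^i$ times a quasi-isomorphism, which is exactly the assertion that $f^\ast$ is local of level $i$.

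The main obstacle I anticipate is the bookkeeping of the auxiliary suspensions $\alpha,\alpha',\beta,\beta'$, and checking that the integer $i$ supplied by the families definition is genuinely representative-independent and matches the exponent on $Q$: $\widetilde{\R}$-suspensions contribute extra powers of $Q$, quaternionic suspensions contribute invertible classes $V=U^2$, and one must verify that after passing to $\mathfrak{R}_0$ (where $V$ is a unit and $\theta=0$) the net contribution of all of these, together with the inclusion $E_0^+\hookrightarrow F_0^+$, is precisely $Q^i$. This is the $\mathrm{Pin}(2)$-analogue of \Cref{lem: inclusion map pullbacks}, and it is the relation $Q^3=0$ in $H^\ast(\mathfrak{R}_0)$ that makes $i\in\{0,1,2\}$ the natural range for the conclusion to carry information.
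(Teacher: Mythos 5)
Your proposal is correct and follows essentially the same route as the paper: the paper's proof consists of the single observation that $\widetilde{C}^\ast_{\mathrm{Pin}(2)\times\Z_2}(-;\Z_2)\otimes_\mathfrak{R}\mathfrak{R}_0 \simeq U^{-1}\widetilde{C}^\ast_{\mathrm{Pin}(2)}(-;\Z_2)$ (because $\mathfrak{R}\to\mathfrak{R}_0$ factors through $\theta\mapsto 0$, i.e.\ through forgetting the $\Z_2$-equivariance, and then inverts $U$), after which the claim ``follows immediately.'' Your write-up simply unpacks that ``immediately'': the naturality square for the $S^1$-fixed-point inclusion, Atiyah--Segal localization making the two restriction maps quasi-isomorphisms after $\otimes_\mathfrak{R}\mathfrak{R}_0$, and the Thom/Euler-class computation showing the pullback along $E_0^+\hookrightarrow F_0^+$ becomes multiplication by $Q^i$ once $\theta$ is killed and $V=U^2$ is inverted.
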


\begin{proof}
    Since the map $\mathfrak{R} \to \mathfrak{R}_0$ factors through $(\Z_2[Q,U],d)$ with $dU = Q^3$, and the factoring map
    \[
    \mathfrak{R} \xrightarrow{\;\theta \mapsto 0\;} (\Z_2[Q,U],d)
    \]
    is induced by the map 
    \[
    C^\ast(B(\mathrm{Pin}(2)\times \Z_2);\Z_2) \longrightarrow C^\ast(B\mathrm{Pin}(2);\Z_2),
    \]
    which ``forgets'' the $\Z_2$-equivariance, we obtain the following natural quasi-isomorphisms of $\mathfrak{R}_0$-modules (up to mild abuse of notation):
    \[
    \widetilde{C}^\ast_{\mathrm{Pin}(2)\times \Z_2}(X;\Z_2) \otimes_{\mathfrak{R}} \mathfrak{R}_0 
    \;\simeq\; \widetilde{C}^\ast_{\mathrm{Pin}(2)}(X;\Z_2).
    \]
    The claim follows immediately from this observation.
\end{proof}

Recall that we defined the Fr\o yshov invariants $\delta, \delta^{(p)}_0$ for $\mathcal{R}_p$-modules of SWF-type in \Cref{subsec: S1xZp local equivalence group}. The dga morphism
\[
C^\ast(B(\mathrm{Pin}(2)\times \Z_2);\Z_2) \longrightarrow C^\ast(B(S^1 \times \Z_2);\Z_2),
\]
induced by the inclusion $S^1 \times \Z_2 \hookrightarrow \mathrm{Pin}(2)\times \Z_2$, is identified with
\[
\mathfrak{R} \xrightarrow{Q=0} \mathcal{R}_2 = (\Z_2[U,\theta],d=0),
\]
which describes the canonical $\mathfrak{R}$-algebra structure on $\mathcal{R}_2$ induced by the inclusion $S^1 \times \Z_2 \subset \mathrm{Pin}(2)\times \Z_2$. As in the proof of \Cref{lem: levels of local maps agree}, for any space $X$ of type $(\mathrm{Pin}(2)\times \Z_2)$-SWF we have a natural quasi-isomorphism of $\mathcal{R}_2$-modules:
\[
\widetilde{C}^\ast_{\mathrm{Pin}(2)\times \Z_2}(X;\Z_2) \otimes_\mathfrak{R} \mathcal{R}_2 
\simeq \widetilde{C}^\ast_{S^1 \times \Z_2}(X;\Z_2).
\]

Equivalently, there is a commutative diagram of abelian groups in which the top map is the ``forgetful map'' that retains only the $(S^1 \times \Z_2)$-subaction of the given $(\mathrm{Pin}(2)\times \Z_2)$-action:
\[
\xymatrix{
\mathfrak{C}^{sp}_{\mathrm{Pin}(2)\times \Z_2} \ar[rr] \ar[d]_{\widetilde{C}^\ast_{\mathrm{Pin}(2)\times \Z_2}(-;\Z_2)} && \mathfrak{C}^{sp}_{S^1 \times \Z_2} \ar[d]^{\widetilde{C}^\ast_{S^1 \times \Z_2}(-;\Z_2)} \\
\mathfrak{C}^{ch}_{\mathrm{Pin}(2)\times \Z_2} \ar[rr]^{-\otimes_\mathfrak{R} \mathcal{R}_2} && \mathfrak{C}^{ch}_{S^1 \times \Z_2}.
}
\]
Thus, we may abuse notation and write
\[
\delta(M) = \delta(M \otimes_\mathfrak{R} \mathcal{R}_2), 
\qquad 
\delta^{(2)}_0(M) = \delta^{(2)}_0(M \otimes_\mathfrak{R} \mathcal{R}_2),
\]
so that for any $X \in \mathfrak{C}^{sp}_{\mathrm{Pin}(2)\times \Z_2}$ we have
\[
\begin{split}
\delta(\widetilde{C}^\ast_{\mathrm{Pin}(2)\times \Z_2}(X;\Z_2)) 
&= \delta(\widetilde{C}^\ast_{S^1 \times \Z_2}(X;\Z_2)) = \delta(X), \\
\delta^{(2)}_0(\widetilde{C}^\ast_{\mathrm{Pin}(2)\times \Z_2}(X;\Z_2)) 
&= \delta^{(2)}_0(\widetilde{C}^\ast_{S^1 \times \Z_2}(X;\Z_2)) = \delta^{(2)}_0(X).
\end{split}
\]

Finally, we consider the relation between $\mathfrak{C}^{ch}_{\mathrm{Pin}(2)\times \Z_2}$ and the strict families local equivalence group. We begin with the functor
\[
C^\ast_{\mathrm{Pin}(2)}(-;\Z_2)\colon \mathcal{F}^{sp}_{\mathrm{Pin}(2)\times \Z_2} \longrightarrow \mathrm{Mod}^{op}_{C^\ast(B(\mathrm{Pin}(2)\times \Z_2);\Z_2)}
\]
defined in \Cref{subsec: families categories}. Since
\[
C^\ast_{\mathrm{Pin}(2)\times \Z_2}(-;\Z_2) = C^\ast_{\mathrm{Pin}(2)}(\mathfrak{B}(-);\Z_2),
\]
we obtain a well-defined group homomorphism
\[
C^\ast_{\mathrm{Pin}(2)}(-;\Z_2)\colon \mathfrak{F}^{sp,str}_{\mathrm{Pin}(2)\times \Z_2} \longrightarrow \mathfrak{C}^{ch}_{\mathrm{Pin}(2)\times \Z_2}.
\]
Following the notion of $k$-stable local triviality for elements of $\mathfrak{F}^{sp,str}_{\mathrm{Pin}(2)\times \Z_2}$, we introduce the following definition.

\begin{defn}
    Two elements $[(X,r)], [(Y,s)] \in \mathfrak{C}^{ch}_{\mathrm{Pin}(2)\times \Z_2}$ are said to be \emph{locally equivalent} if $r-s \in \Z$ and $X$ is locally equivalent to $Y[r-s]$.  

    Furthermore, given an integer $k \ge 0$, an element $[(X,r)] \in \mathfrak{C}^{ch}_{\mathrm{Pin}(2)\times \Z_2}$ is said to be \emph{$k$-stably locally trivial} if $r \in \Z$ and there exist local maps of level $k$ between $X[r]$ and $C^\ast(B(\mathrm{Pin}(2)\times \Z_2);\Z_2)$.
\end{defn}

Then we have the following lemma.

\begin{lem} \label{lem: k-stable local triviality is preserved}
    Let $k \in \{0,1,2\}$. For any $k$-stably locally trivial element     $X \in \mathfrak{F}^{sp,str}_{\mathrm{Pin}(2)\times \Z_2}$,
    its image
    \[
    C^\ast_{\mathrm{Pin}(2)}(X;\Z_2) \in \mathfrak{C}^{ch}_{\mathrm{Pin}(2)\times \Z_2}
    \]
    is also $k$-stably locally trivial.
\end{lem}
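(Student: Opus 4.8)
The plan is to reduce the lemma to the single assertion that the functor $C^\ast_{\mathrm{Pin}(2)}(-;\Z_2)\colon \mathcal{F}^{sp}_{\mathrm{Pin}(2)\times\Z_2}\to\mathrm{Mod}^{\mathrm{op}}_{C^\ast(B(\mathrm{Pin}(2)\times\Z_2);\Z_2)}$ carries every families local map of level $k$ to a chain-level local map of level $k$ in the sense of \Cref{def:localmapSWFtype}. Granting this, the lemma is immediate: if $X\in\mathfrak{F}^{sp,str}_{\mathrm{Pin}(2)\times\Z_2}$ is $k$-stably locally trivial there are, by definition, families local maps of level $k$ between $(X,-k)$ and $\mathfrak{B}(S^0,0,0)$ in both directions; since $C^\ast_{\mathrm{Pin}(2)}(-;\Z_2)$ is monoidal and sends $\mathfrak{B}(S^0,0,0)$ to $C^\ast(B(\mathrm{Pin}(2)\times\Z_2);\Z_2)\simeq\mathfrak{R}$, applying it produces, after tracking the $(-k)$–shift, chain-level local maps of level $k$ between $C^\ast_{\mathrm{Pin}(2)}(X;\Z_2)$ and $\mathfrak{R}$ in both directions; and $C^\ast_{\mathrm{Pin}(2)}(X;\Z_2)$ is of SWF-type by \Cref{lem: one big lemma for Pin(2)xZ2 chain level}(2). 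This is exactly the statement that its class in $\mathfrak{C}^{ch}_{\mathrm{Pin}(2)\times\Z_2}$ is $k$-stably locally trivial.

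The first step toward the reduced assertion is to understand the localization $(-)\otimes_\mathfrak{R}\mathfrak{R}_0$ applied to $C^\ast_{\mathrm{Pin}(2)}(-;\Z_2)$. Since $\mathfrak{R}_0=\mathfrak{R}[U^{-1}]/(\theta)$, this functor both inverts $U$ and kills $\theta$; as in the proof of \Cref{lem: levels of local maps agree}, the factorization $\mathfrak{R}\to(\Z_2[Q,U],d)\cong C^\ast(B\mathrm{Pin}(2);\Z_2)\to\mathfrak{R}_0$ shows that setting $\theta=0$ corresponds, for $n$ sufficiently large (which is all we need by the remark after the definition of $\mathcal{F}^{sp}_{\mathrm{Pin}(2)\times\Z_2}$), to restricting a family $\mathrm{Pin}(2)$–SWF space $X_n\to(B\Z_2)_n$ to its fiber $X$ with its genuine $\mathrm{Pin}(2)$–action, after which inverting $U$ is the ordinary $\mathrm{Pin}(2)$–Borel localization. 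Concretely, for $\mathcal{X}=\{X_n\}$ one gets natural quasi-isomorphisms
\[
C^\ast_{\mathrm{Pin}(2)}(\mathcal{X};\Z_2)\otimes_\mathfrak{R}\mathfrak{R}_0\;\simeq\;\widetilde{C}^\ast_{\mathrm{Pin}(2)}(X^{S^1};\Z_2)[U^{-1}]\;\simeq\;\mathfrak{R}_0[m]
\]
for some $m\in\Z$, using the $\mathrm{Pin}(2)$–Borel localization theorem with $\Z_2$–coefficients together with the Thom isomorphism and the fact that $X^{S^1}\simeq V^+$ for a real $\mathrm{Pin}(2)$–representation $V$ on which $\mathrm{Pin}(2)$ acts through $\mathrm{Pin}(2)/S^1\cong\Z_2$. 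In particular $C^\ast_{\mathrm{Pin}(2)}(\mathcal{X};\Z_2)$ is of weak SWF-type, and the localization realizes it as an $S^1$–fixed-point cochain functor.

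Now take a families local map $f\colon\mathcal{X}\to\mathcal{Y}$ of level $k$, with representing data at each level $n$ consisting of $f_n$, a sub-bundle $E_0\subset F_0$ of $\mathrm{Pin}(2)$–vector bundles with $F_0/E_0$ of fiber $\widetilde{\mathbb{R}}^k$ ($\mathrm{Pin}(2)$ acting through $\Z_2$), fiberwise homotopy equivalences $g_E,g_F$, and the commuting square relating $f_n^{S^1}$ to the inclusion $E_0^+\hookrightarrow F_0^+$. Restricting this square to the fiber, i.e.\ applying $\widetilde{C}^\ast_{\mathrm{Pin}(2)}(-;\Z_2)\otimes_\mathfrak{R}\mathfrak{R}_0$: the maps $g_E^\ast,g_F^\ast$ become quasi-isomorphisms (fiberwise homotopy equivalences), while $E_0^+\hookrightarrow F_0^+$ restricts to an inclusion of $\mathrm{Pin}(2)$–representation spheres with cofiber $(\widetilde{\mathbb{R}}^k)^+$, so after the Thom isomorphisms its pullback is multiplication by the $\mathrm{Pin}(2)$–equivariant Euler class $e_{\mathrm{Pin}(2)}(\widetilde{\mathbb{R}}^k)=Q^k\in H^k(B\mathrm{Pin}(2);\Z_2)$. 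Crucially, after restricting to the fiber this is an honest power of $Q$ with no lower-order corrections, because we are dealing with a genuine representation rather than a bundle over $(B\Z_2)_n$, and the hypothesis $k\in\{0,1,2\}$ guarantees that $Q^k$ is a nonzero operation since $Q^3=0$. Chasing the square, $(f_n)^\ast\otimes\mathrm{id}$ is homotopic to $Q^k$ times a composite of $\mathfrak{R}_0$–module quasi-isomorphisms, hence $(f_n)^\ast$ is a chain-level local map of level $k$; passing to the homotopy limit over $n$ gives the reduced assertion.

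The main obstacle is the second paragraph: identifying $(-)\otimes_\mathfrak{R}\mathfrak{R}_0$ with the composite of fiber-restriction and $\mathrm{Pin}(2)$–Borel localization, which requires controlling the $\pi_1(B\Z_2)$–monodromy on the fiber cohomology (handled by passing to the pulled-back double cover, which is highly connected over $(B\Z_2)_n$ for large $n$) and assembling the level-$n$ statements compatibly with the restriction functors $\mathrm{res}_n$ so that they descend to the homotopy limit. Once these standard but slightly delicate inputs are in place, together with \Cref{lem: one big lemma for Pin(2)xZ2 chain level}(2) and the identification $e_{\mathrm{Pin}(2)}(\widetilde{\mathbb{R}})=Q$, the remaining diagram chases and degree bookkeeping are routine, and indeed parallel to the proof of \Cref{lem: levels of local maps agree}.
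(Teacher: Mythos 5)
Your proof is correct and takes essentially the same approach as the paper's: both reduce to tensoring the induced $\mathfrak{R}$-module map with $\mathfrak{R}_0$, identifying this with fiber restriction followed by $\mathrm{Pin}(2)$-Borel localization, and then reading off the level as the Euler class $e_{\mathrm{Pin}(2)}(\widetilde{\mathbb{R}}^k)=Q^k$. The only organizational difference is that you frame the argument as the cochain functor preserving the level of local maps, whereas the paper constructs the $\mathfrak{R}$-module map $f$ by hand from a cohomology class $\alpha\in\widetilde{H}^s_{\mathrm{Pin}(2)\times\Z_2}(X_0;\Z_2)$, obtained by lifting $[f_N^\ast(1)]$ along the isomorphism $\widetilde{H}^s_{\mathrm{Pin}(2)\times\Z_2}(X_0;\Z_2)\to\widetilde{H}^s_{\mathrm{Pin}(2)}(X_N;\Z_2)$ for $N$ large; this is precisely the ``slightly delicate input'' you gesture at in your final paragraph, and making it explicit is what replaces your monodromy/double-cover remark.
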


\begin{proof}
    We begin by recalling the definition of $\mathfrak{F}^{sp,str}_{\mathrm{Pin}(2)\times \Z_2}$:
\[
\mathfrak{F}^{sp,str}_{\mathrm{Pin}(2)\times \Z_2} 
= \mathrm{Im}\!\left( \mathfrak{B}\colon \mathfrak{C}^{sp}_{\mathrm{Pin}(2)\times \Z_2} 
\longrightarrow \mathfrak{F}^{sp}_{\mathrm{Pin}(2)\times \Z_2} \right).
\]
Hence any element $X \in \mathfrak{F}^{sp,str}_{\mathrm{Pin}(2)\times \Z_2}$ can be written as $X = (\mathfrak{B}(X_0),r)$ for some space $X_0$ of type $(\mathrm{Pin}(2)\times \Z_2)$-SWF. Since $X$ is $k$-stably locally trivial, we know that $r \in \Z$.  
For simplicity, we assume $r=0$, so that $X$ is given as the fibration
\[
X_0 \times_{\Z_2} E\Z_2 \longrightarrow B\Z_2;
\]
the general case can be treated in the same way. Note that $(X_0)^{S^1}$ may be taken to be $V^+$ for some finite-dimensional $\Z_2$-representation $V$.  
For convenience, denote the induced $\Z_2$-vector bundle $V \times_{\Z_2} E\Z_2 \to B\Z_2$ by $E$, and let $E_n$ denote its restriction to the $n$-skeleton of $B\Z_2$. Likewise, denote the restriction of the $X_0$-bundle $X_0 \times_{\Z_2} E\Z_2 \to B\Z_2$ to $(B\Z_2)_n$ by $X_n$.

By the definition of $k$-stable local triviality, for any integer $N \ge 0$ there exists, after a suspension by some $\mathrm{Pin}(2)$-vector bundle over $(B\Z_2)_N$ of sufficiently high rank, a $\Z_2$-vector bundle $F_N$ containing $E_N$ as a subbundle such that each fiber of $F_N/E_N$ is given by $\widetilde{\R}^k$, together with a bundle map
\[
f_N \colon X_N \longrightarrow F_N^+
\]
whose $S^1$-fixed locus map
\[
F_N^{S^1}\colon E_X(L) = (X\!\mid_{(B\Z_2)_N})^{S^1} \longrightarrow F_X(L)
\]
is induced by the inclusion $E_N \hookrightarrow F_N$.  

By considering the inclusions $\ast = (B\Z_2)_0 \hookrightarrow (B\Z_2)_N \hookrightarrow B\Z_2$ and taking $\mathrm{Pin}(2)$-equivariant singular cochains, we obtain the following commutative diagram of $E_\infty$-algebras over $\Z_2$ (indeed, $C^\ast(B\mathrm{Pin}(2);\Z_2)$-algebras), where $s$ denotes the rank of $E$:
\[
\xymatrix{
C^\ast(B(\mathrm{Pin}(2)\times \Z_2);\Z_2) \ar[d] 
  &&& \widetilde{C}^\ast_{\mathrm{Pin}(2)\times \Z_2}(X_0;\Z_2)[-s] \ar[d] \\
C^\ast(B\mathrm{Pin}(2)\times (B\Z_2)_N;\Z_2) \ar[rrr]^{F_N^\ast} \ar[d] 
  &&& \widetilde{C}^\ast_{\mathrm{Pin}(2)}(X_N;\Z_2)[-s] \ar[d] \\
C^\ast(B\mathrm{Pin}(2);\Z_2) \ar[rrr]^{F_0^\ast} 
  &&& \widetilde{C}^\ast_{\mathrm{Pin}(2)}(X_0;\Z_2)[-s]
}
\]

Observe that $C^\ast(B\mathrm{Pin}(2)\times (B\Z_2)_N;\Z_2)$ is itself an $E_\infty$-algebra over $\Z_2$, that $\widetilde{C}^\ast_{\mathrm{Pin}(2)}(X_N;\Z_2)$ is a module over it, and that $f_N$ is a map of such modules. Hence the homotopy class of $f_N^\ast$ is determined by the cohomology class
\[
[f_N^\ast(1)] \in \widetilde{H}^s_{\mathrm{Pin}(2)}(X_N;\Z_2).
\]
Since $X_0$ is a finite $\mathrm{Pin}(2)\times \Z_2$-spectrum, the restriction map
\[
\widetilde{H}^s_{\mathrm{Pin}(2)\times \Z_2}(X_0;\Z_2) 
\longrightarrow \widetilde{H}^s_{\mathrm{Pin}(2)}(X_N;\Z_2)
\]
is an isomorphism whenever $N$ is sufficiently large. Choosing such an $N$, we obtain a cohomology class
\[
\alpha \in \widetilde{H}^i_{\mathrm{Pin}(2)\times \Z_2}(X_0;\Z_2)
\]
mapping to $[f_N^\ast(1)]$. This class $\alpha$ then induces (up to homotopy) a $C^\ast(B(\mathrm{Pin}(2)\times \Z_2);\Z_2)$-module map
\[
f \colon C^\ast(B(\mathrm{Pin}(2)\times \Z_2);\Z_2) 
\longrightarrow \widetilde{C}^\ast_{\mathrm{Pin}(2)\times \Z_2}(X_0;\Z_2)[-s],
\]
making the following diagram homotopy-commutative:
\[
\xymatrix{
C^\ast(B(\mathrm{Pin}(2)\times \Z_2);\Z_2) \ar[rrr]^{f} \ar[d] 
  &&& \widetilde{C}^\ast_{\mathrm{Pin}(2)\times \Z_2}(X_0;\Z_2)[-s] \ar[d] \\
C^\ast(B\mathrm{Pin}(2)\times (B\Z_2)_N;\Z_2) \ar[rrr]^{F_N^\ast} \ar[d] 
  &&& \widetilde{C}^\ast_{\mathrm{Pin}(2)}(X_N;\Z_2)[-s] \ar[d] \\
C^\ast(B\mathrm{Pin}(2);\Z_2) \ar[rrr]^{F_0^\ast} 
  &&& \widetilde{C}^\ast_{\mathrm{Pin}(2)}(X_0;\Z_2)[-s]
}
\]
Recall that we are identifying $C^\ast(B(\mathrm{Pin}(2)\times \Z_2);\Z_2)$ with the commutative differential graded algebra $\mathfrak{R}$. Then tensoring with $\mathfrak{R}_0$ gives the following homotopy commutative diagram. Here, $\mathfrak{R}'$ denotes the differential graded algebra $(\Z_2[Q,U], d)$ with $dU = Q^3$, which is quasi-isomorphic to (and hence identified with) $C^\ast(B\mathrm{Pin}(2);\Z_2)$.

\[
\xymatrix{
\mathfrak{R}_0 \ar[rrrr]^-{f \otimes \mathrm{id}} \ar[d] &&&& \widetilde{C}^\ast_{\mathrm{Pin}(2)\times \Z_2}(X_0;\Z_2)[-s] \otimes_\mathfrak{R} \mathfrak{R}_0 \ar[d] \\
\mathfrak{R}_0 \ar[rrrr]^-{f_0^\ast \otimes \mathrm{id}} &&&& \widetilde{C}^\ast_{\mathrm{Pin}(2)}(X_0;\Z_2)[-s] \otimes_{\mathfrak{R}'} \mathfrak{R}_0
}
\]

It is clear that the left vertical map is the identity. Moreover, since tensoring with $\mathfrak{R}_0$ over $\mathfrak{R}_0$ has the effect of forgetting the $\Z_2$-action on $(\mathrm{Pin}(2)\times \Z_2)$-equivariant cochains, the right vertical map can also be identified with the identity.  

Furthermore, since $f_0$ can be written as the inclusion $V \hookrightarrow V \oplus \widetilde{\R}^k$, the bottom map is multiplication by $Q^k$ under the identification
\[
\widetilde{C}^\ast_{\mathrm{Pin}(2)}(X_0;\Z_2)[-s] \otimes_{\mathfrak{R}'} \mathfrak{R}_0 = \mathfrak{R}_0[-s].
\]
It follows that, under the identification
\[
\widetilde{C}^\ast_{\mathrm{Pin}(2)\times \Z_2}(X_0;\Z_2)[-s] \otimes_\mathfrak{R} \mathfrak{R}_0 = \mathfrak{R}_0[-s],
\]
the top map $f \otimes \mathrm{id}$ is homotopic to multiplication by $Q^k$. Thus $f$ is a local map of level $k$.  
A similar argument also shows that a local map of level $k$ exists in the reverse direction. The lemma follows.
\end{proof}

\subsection{Example: an explicit computation for $\Sigma(3,5,19)$}

Consider $Y=\Sigma(3,5,19)$, which is a Seifert homology sphere. In \Cref{subsec: explicit computation for 3 5 19}, we computed 
$\widetilde{C}^\ast_{S^1 \times \Z_p}(SWF_{S^1 \times \Z_p}(Y);\Z_p)$
for various primes $p$, up to quasi-isomorphism and twisting. In this subsection, we will compute 
\[
\widetilde{C}^\ast_{\mathrm{Pin}(2)\times \Z_2}(SWF_{\mathrm{Pin}(2)\times \Z_2}(Y);\Z_2)
\]
up to quasi-isomorphism and twisting.  

Define an $\mathfrak{R}$-module $M=(C,d)$, where $C$ is freely generated over $\mathfrak{R}$ by elements $x_i$ and $y_j$ for $|i|\leq 5$ and $0<|j|<5$. The differential $d$ is defined on generators as follows:
\[
\begin{split}
    dx_0 &= (U+\theta^2)y_1 + (U+\theta^2+Q^2)y_{-1}, \\
    dx_{1} &= Uy_{1} + (U+\theta^2)y_{2} + Q^2(y_{-1}+y_{-2}) + Q(x_1+x_{-1}), \\
    dx_{-1} &= Uy_{-1} + (U+\theta^2)y_{-2} + Q(x_1+x_{-1}), \\
    dx_{2} &= Uy_{2} + U^2 y_{3} + Q^2 y_{-2} + Q(x_2+x_{-2}), \\
    dx_{-2} &= Uy_{-2} + U^2 y_{-3} + Q(x_2+x_{-2}), \\
    dx_{3} &= (U+\theta^2)y_{3} + U(U^2+\theta^4)y_{4} + Q^2 y_{-3} 
             + Q^2(U^2+\theta^4) y_{-4} + Q(x_3+x_{-3}), \\
    dx_{-3} &= (U+\theta^2)y_{-3} + U(U^2+\theta^4)y_{-4} + Q(x_3+x_{-3}), \\
    dx_{4} &= Uy_{4} + U^2(U^2+\theta^4) y_{5} + Q^2 y_{-4} + Q(x_4+x_{-4}), \\
    dx_{-4} &= Uy_{-4} + U^2(U^2+\theta^4) y_{-5} + Q(x_4+x_{-4}), \\
    dx_{5} &= (U+\theta^2)y_{5} + Q^2 y_{-5} + Q(x_5+x_{-5}), \\
    dx_{-5} &= (U+\theta^2)y_{-5} + Q(x_5+x_{-5}), \\
    dy_j &= Q(y_j + y_{-j}) \quad \text{for all } j.
\end{split}
\]
Here the degree is given by $\deg x_0 = 0$. Then we have the following theorem.

\begin{thm} \label{thm: Pin(2)xZ_2 cochain for 3 5 19}
    After a possible twisting (but no degree shift), the $\mathfrak{R}$-module 
    $$\widetilde{C}^\ast_{\mathrm{Pin}(2)\times \Z_2}(SWF_{\mathrm{Pin}(2)\times \Z_2}(Y);\Z_2)$$
    is quasi-isomorphic to $M$ as an $A_\infty$ $\mathfrak{R}$-bimodule.
\end{thm}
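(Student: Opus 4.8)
The plan is to deduce \Cref{thm: Pin(2)xZ_2 cochain for 3 5 19} directly from \Cref{lem: Pin(2) lattice cochain computes SWF}, which already identifies $\widetilde{C}^\ast_{\mathrm{Pin}(2)\times\Z_2}(SWF_{\mathrm{Pin}(2)\times\Z_2}(Y,\tilde{\s});\Z_2)$ with the combinatorial complex $\mathcal{C}_{\mathrm{Pin}(2)\times\Z_2}(\Gamma,\tilde{\s})$ as $A_\infty$ $\mathfrak{R}$-bimodules, up to a degree shift and a twisting. Here $\tilde{\s}$ is either of the two $\Z_2$-equivariant lifts of the canonical $\mathrm{Spin}^c$ structure $\mathfrak{s}^{can}_Y=\mathfrak{s}_0$ of $Y=\Sigma(3,5,19)$ (these are self-conjugate and interchanged by $1$-twisting, which is exactly the ambiguity allowed by \Cref{cor: eqv pin(2) SWF of different eqv spin}), and the hypotheses of \Cref{lem: Pin(2) lattice cochain computes SWF} hold since $|H_1(Y;\Z)|=1$ is odd and the Seifert $\Z_2$-action on $Y$ is free. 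Thus the whole theorem reduces to writing down $\mathcal{C}_{\mathrm{Pin}(2)\times\Z_2}(\Gamma,\tilde{\s})$ explicitly and recognizing it, up to quasi-isomorphism, as the module $M$ in the statement.

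Concretely, I would reuse all the data assembled in \Cref{subsec: explicit computation for 3 5 19}: the star-shaped plumbing graph $\Gamma$, the $\Delta$-sequence, and the $\Z_p$-labelled planar graded root $\mathcal{R}_{\Gamma,\mathfrak{s}_0}$ with leaves $v_{-5},\dots,v_5$ and the tabulated labels $\lambda_V,\lambda_A$. Since $N_Y=118$ (computed there; an integer by \Cref{cor: NY is an integer}) is even, \Cref{lem: R Gamma s is reflective} shows that $\mathcal{R}_{\Gamma,\mathfrak{s}_0}$ is reflective, so I may pass to its symmetrization $\mathrm{Sym}(\mathcal{R}_{\Gamma,\mathfrak{s}_0})$ of \Cref{defn: symm graded root for Gamma s}; as it has an odd number ($11$) of leaves, its central vertex is the leaf $v_0$. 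Specializing all labels to $p=2$ (so $[k]$ depends only on the parity of $k$) and feeding the resulting symmetric $\Z_2$-labelled graded root into the explicit ``central leaf'' recipe for $\mathcal{C}_{\mathrm{Pin}(2)\times\Z_2}(\Gamma,\tilde{\s})$ yields a copy of $\mathfrak{R}$ for $v_0$ and copies of $\mathfrak{M}=(\mathfrak{R}x_+\oplus\mathfrak{R}x_-,\,dx_\pm=Q(x_++x_-))$ for the remaining leaves and for the angles, glued by the maps $\mathfrak{f}_0$ and $\mathfrak{f}_i^\pm$ built from the exponents $\lambda_A(v_{i-1},v_i)$ and $\lambda_A(v_{i-1},v_i)+\lambda_V(v_{i-1})-\lambda_V(v_i)$ together with their $Q^2U_Q^{\mathbf{n}}$-corrections.

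The remaining work is to match this output with $M$. I expect three bookkeeping phenomena. First, applying the twist $\psi_{[1]}\colon U\mapsto U+\theta^2$ (the ambiguity allowed by \Cref{lem: Pin(2) lattice cochain computes SWF}, coming from the choice of equivariant lift; cf.\ \Cref{rem: S1xZp cochain for different eqv lifts}) converts many $U$'s into $U+\theta^2$'s, and through the formula for $U_Q^{\mathbf{n}}$ it produces the mixed coefficients such as $U^2+\theta^4=(U+\theta^2)^2$ occurring in $dx_{\pm3},dx_{\pm4}$. Second, the $Q^2$-terms in the differentials $dx_i$ are precisely the images of the $Q^2U_Q^{\lambda_A}$-summands of the $\mathfrak{f}$'s, and \Cref{rem: Q2 term can be in either direction} permits relocating any such $Q^2$-summand between the two $\mathfrak{M}$-generators it connects, which legitimizes the asymmetric placement of the $Q^2$-terms in $M$. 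Third, the central leaf and the outermost leaf produce evident acyclic sub-bimodules that must be quotiented out, exactly as in the simplification at the end of \Cref{subsec: Zp lattice chain} and in the proof of \Cref{lem: Pin(2) lattice cochain computes SWF}; after these quotients and a relabelling the generators acquire the names $x_0,x_{\pm1},\dots,x_{\pm5},y_{\pm1},\dots,y_{\pm4}$ and the differentials take the displayed form. Finally, the claim that there is \emph{no} degree shift is pinned down by the $S^1\times\Z_2$ computation in \Cref{subsec: explicit computation for 3 5 19}, where the normalization $\deg x_0=0$ was fixed: reducing along the forgetful map $-\otimes_{\mathfrak{R}}\mathcal{R}_2$ identifies the absolute grading here with the one there.

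The main obstacle will be the calculation underlying the first two phenomena: correctly reducing the tabulated $\Z_p$-labels modulo $2$, tracking which $U$'s survive the twist, and verifying that the $Q^2U_Q^{\mathbf{n}}$-corrections together with the internal differentials $Q(x_i+x_{-i})$ on the $\mathfrak{M}$-factors assemble — after the acyclic quotients and the permitted $Q^2$-relocations — into exactly the listed formulas for $dx_i$ and $dy_j$. This is a finite but delicate computation; no new geometric input beyond \Cref{lem: Pin(2) lattice cochain computes SWF} is needed, since that lemma already delivers the conclusion at the level of $A_\infty$ $\mathfrak{R}$-bimodules, and the conceptual ingredients — reduction to $\mathcal{C}_{\mathrm{Pin}(2)\times\Z_2}(\Gamma,\tilde{\s})$, reflectivity, and the freedom to twist and to move $Q^2$'s — are all in place.
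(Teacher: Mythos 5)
Your overall route is the same as the paper's: invoke \Cref{lem: Pin(2) lattice cochain computes SWF} (whose hypotheses you correctly verify), feed in the $\Z_2$-labelled planar graded root computed in \Cref{subsec: explicit computation for 3 5 19}, and match the resulting combinatorial complex with $M$ up to twisting, acyclic quotients, and the $Q^2$-relocations of \Cref{rem: Q2 term can be in either direction}. That part is fine.

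There is, however, a genuine gap in how you dispose of the degree shift. \Cref{lem: Pin(2) lattice cochain computes SWF} only gives $M[m]$ for some unknown $m\in\Z$, and your claim that $m=0$ ``is pinned down by the $S^1\times\Z_2$ computation\ldots where the normalization $\deg x_0=0$ was fixed'' is circular: in \Cref{subsec: explicit computation for 3 5 19} the condition $\deg x_0=0$ was \emph{declared} (by setting $\delta(\mathcal{H}_{S^1\times\Z_p}(\Gamma,\mathfrak{s}))=0$) precisely because the lattice model is only defined up to an overall shift; it is a convention about the model, not a computation of the absolute grading of $SWF_{\mathrm{Pin}(2)\times\Z_2}(Y)$. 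Reducing along $-\otimes_{\mathfrak{R}}\mathcal{R}_2$ merely transports the unknown shift to the $S^1\times\Z_2$ level without determining it. What is actually needed is an absolute input: the paper uses that $Y=\Sigma(3,5,19)$ bounds a smooth contractible $4$-manifold (Fintushel--Stern), hence $\delta(Y)=0$, and then \Cref{lem: Froyshov can be defined on chain level} together with $\delta(M)=\tfrac12\deg x_0=0$ forces $0=\delta(M[m])=-\tfrac{m}{2}$, i.e.\ $m=0$. You should replace your grading argument with this Fr\o yshov comparison (or an equivalent computation of $\delta(Y)$).
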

\begin{proof}
    Since $Y$ is a homology sphere, we have $|H_1(Y;\Z)| = 1$, which is odd. As $Y$ carries a unique $\mathrm{Spin}^c$ structure, namely $\mathfrak{s}^{can}_Y$, the canonical $\mathrm{Spin}^c$ structure of $Y$ is self-conjugate. Moreover, because $3,5,19$ are all odd, the $\Z_2$-action on $Y$ given by the subaction of the Seifert $S^1$-action is free. Hence, by \Cref{lem: Pin(2) lattice cochain computes SWF}, the cochain complex $\widetilde{C}^\ast_{\mathrm{Pin}(2)\times \Z_2}(SWF_{\mathrm{Pin}(2)\times \Z_2}(Y, \tilde{\s});\Z_2)$ can be determined directly from the $\Z_2$-labelled planar graded root of $(Y,\tilde{\s})$, where $\tilde{\s}$ is any of the two self-conjugate $\Z_2$-equivariant lifts of $\mathfrak{s}^{can}_Y$, already computed in \Cref{subsec: explicit computation for 3 5 19}. This computation yields
    \[
        \widetilde{C}^\ast_{\mathrm{Pin}(2)\times \Z_2}(SWF_{\mathrm{Pin}(2)\times \Z_2}(Y,\tilde{\s});\Z_2) \simeq M[m]
    \]
    for some degree shift $m \in \Z$, possibly up to twisting. Thus it remains to show that $m=0$.

    To prove this, we compare the Fr{\o}yshov invariants of $Y$ and $M$. By construction, $\delta(M) = \tfrac{1}{2}\deg x_0 = 0$. On the other hand, since $Y$ bounds a smooth contractible $4$-manifold~\cite{fintushel1981exotic}, we have $\delta(Y) = 0$. Therefore, applying \Cref{lem: Froyshov can be defined on chain level}, we obtain
    \[
        0 = \delta(Y) = \delta(M[m]) = -\tfrac{m}{2} + \delta(M) = -\tfrac{m}{2},
    \]
    which implies $m=0$, as desired.
\end{proof}

\begin{rem} \label{rem: level 0 projection and level 1 inclusion}
    Consider the projection map $p \colon M \rightarrow \mathfrak{R}$ defined by
    \[
        p(x_0) = 1, \qquad  p(x_i) = p(y_i) = 0 \quad \text{for all } i \in \{\pm1,\pm2,\pm3,\pm4,\pm5\}.
    \]
    It is straightforward to see that $p$ is a local map of level $0$. 
    On the other hand, there is no local map of level $0$ from $\mathfrak{R}$ to $M$. 
    However, the following map is a local map of level $1$:
    \[
        f(1) = Qx_0 + (U+\theta^2)y_{-1} \colon \mathfrak{R} \longrightarrow M.
    \]
    This illustrates a more general phenomenon, which will be discussed in \Cref{appendix: local maps of degree n-2}.
\end{rem}

For the reader’s convenience, we include here the chart of $\Z[\Z_2]$-labels of the leaves and simple angles of $\mathcal{R}_{\Gamma,\tilde{\s}}$.

\begin{center}
\begin{tabular}{ |p{1cm}|p{0.5cm}|p{2.5cm}||p{1.5cm}|p{0.5cm}|p{2cm}|  }
 \hline
 leaves & $i$ & $\lambda_V$ & simple angles & $i$ & $\lambda_A$ \\
 \hline
$v_{-5}$ & 0 & 0 & $(v_{-5},v_{-4})$ & 1 & $[0]$ \\
$v_{-4}$ & 14 & $-[0]-2[1]$ & $(v_{-4},v_{-3})$ & 16 & $[1]$ \\
$v_{-3}$ & 29 & $-3[0]-2[1]$ & $(v_{-3},v_{-2})$ & 31 & $[0]$ \\
$v_{-2}$ & 44 & $-2[0]-4[1]$ & $(v_{-2},v_{-1})$ & 46 & $[1]$ \\
$v_{-1}$ & 47 & $-3[0]-3[1]$ & $(v_{-1},v_0)$ & 58 & $[1]$ \\
$v_0$ & 59 & $-4[0]-2[1]$ & $(v_0,v_1)$ & 61 & $[0]$ \\
$v_1$ & 62 & $-3[0]-3[1]$ & $(v_1,v_2)$ & 73 & $[0]$ \\
$v_2$ & 74 & $-2[0]-4[1]$ & $(v_2,v_3)$ & 88 & $2[1]$ \\
$v_3$ & 89 & $-3[0]-2[1]$ & $(v_3,v_4)$ & 103 & $2[0]+[1]$ \\
$v_4$ & 104 & $-[0]-2[1]$ & $(v_4,v_5)$ & 118 & $2[0]+2[1]$ \\
$v_5$ & 119 & $0$ & & & \\
 \hline
\end{tabular}
\end{center}
\vspace{.3cm}

\section{Dehn twists and stabilizations}\label{sec:Dehn twists and stabilizations}

\subsection{The connected sum argument}

In this subsection, we develop the ``connected sum technique'', which allows us to obstruct a boundary Dehn twist of a 4-manifold $X$ from being isotopic to the identity rel.\ boundary by considering the problem to an analogous one for a connected sum $X \# \cdots \# X$.
Whenever we have an embedded $3$-sphere $S$ with trivial normal bundle in a $4$-manifold $X$, we denote the Dehn twist of $X$ along $S$ by $T_S$, corresponding to the unique nontrivial element of $\pi_1 \mathrm{Diff}^+(S^3) \cong \pi_1 SO(4) \cong \mathbb{Z}_2$. In fact, the notion of Dehn twist generalizes to higher dimensions, and the following results hold for general $n$-manifolds $X$ with $n \geq 3$, where the Dehn twist of $X$ along an embedded $(n{-}1)$-sphere $S$ refers to the one corresponding to the unique nontrivial element of $\pi_1 SO(n) \cong \mathbb{Z}_2$. Throughout this subsection, we write $T \sim T'$ to indicate that the diffeomorphisms $T$ and $T'$ are smoothly isotopic rel.\ boundary.

\begin{lem} \label{lem: Ruberman}
Let $X$ be a smooth, simply-connected $n$-manifold with $n \geq 3$, possibly with boundary. Choose a point $p \in \mathrm{int}(X)$, and let $X^0 := X \smallsetminus \nu(p)$, where $\nu(p)$ is an open ball neighborhood of $p$, so that
\[
\partial X^0 = \partial X \sqcup S^{n-1}.
\]
Let $Y$ be a closed, smooth $(n{-}1)$-manifold, and fix a class $\phi \in \pi_1 \mathrm{Diff}^+(Y)$ based at the identity. Suppose we are given embeddings
\[
f, g \colon Y \hookrightarrow X^0
\]
that are isotopic in $X$ and have orientable normal bundles. Then there exists an element $\alpha \in \mathbb{Z}_2$, depending only on $X$ and $\phi$, such that the Dehn twists $T_{X^0, f(Y), \phi}$ and $T_{X^0, g(Y), \phi}$ along $f(Y)$ and $g(Y)$, respectively, defined via $\phi$, satisfy the relation
\[
T_{X^0, f(Y), \phi} \sim T_{X^0, g(Y), \phi} \circ T_{S^{n-1}}^{\alpha} \qquad \text{rel.\ } \partial X^0.
\]
\end{lem}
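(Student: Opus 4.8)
The plan is to reduce everything to an understanding of how the Dehn twist $T_{X^0, f(Y), \phi}$ changes when the embedding $f$ is replaced by an isotopic embedding, and then to exploit the special feature of $X^0$, namely that it has an extra spherical boundary component $S^{n-1}$. First I would fix an isotopy $F\colon Y \times [0,1] \to X$ from $f$ to $g$; since $Y$ is closed and $X$ may have boundary, I may assume (after a small perturbation and after shrinking) that $F$ is an ambient isotopy, realized by an isotopy $\Psi_t \colon X \to X$ with $\Psi_0 = \mathrm{id}$ and $\Psi_1 \circ f = g$. The subtlety is that $\Psi_1$ need not fix the basepoint $p$ or a neighborhood of it; however, any two embeddings of a ball in the interior of a connected manifold are isotopic, so we may post-compose $\Psi_t$ with a further isotopy supported away from $f(Y)$ (using that $f(Y)$ misses $p$) to arrange that $\Psi_1$ restricts to a diffeomorphism of $X^0$ which fixes $\partial X^0 = \partial X \sqcup S^{n-1}$ setwise. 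Conjugating the Dehn twist by such an ambient isotopy gives $T_{X^0, g(Y), \phi} \sim \Psi_1 \circ T_{X^0, f(Y), \phi} \circ \Psi_1^{-1}$, so the discrepancy between the two Dehn twists is measured by the failure of $\Psi_1$ to be isotopic rel boundary to the identity on $X^0$.

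The next step is to analyze this discrepancy. Since $\Psi_1$ is isotopic to the identity in $X$ (through $\Psi_t$, which however moves $\partial X^0$), but we need an isotopy rel $\partial X^0$, the obstruction lives in $\pi_0$ of the space of diffeomorphisms of $X^0$ rel $\partial X^0$ that become trivial after capping off the $S^{n-1}$ component. The key observation is that the map from diffeomorphisms of $X^0$ fixing $S^{n-1}$ to diffeomorphisms of $X$ (by gluing back a ball) fits into a fibration-type sequence whose relevant fiber contribution is controlled by $\pi_1 \mathrm{Diff}^+(S^{n-1}) \cong \pi_1 SO(n) \cong \mathbb{Z}_2$; concretely, an isotopy in $X$ that returns $p$ to itself traces out a loop in the space of framed points, i.e. a loop in $\mathrm{Fr}(X) \to X$, whose class in $\pi_1 SO(n) = \mathbb{Z}_2$ (after using simple-connectedness of $X$ to kill the $\pi_1 X$ ambiguity) is exactly the integer $\alpha$. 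This $\alpha$ depends only on the isotopy class data, hence only on $X$; and since the Dehn twist $T_{S^{n-1}}$ is precisely the element realizing the nontrivial class of $\pi_1 SO(n)$ as a diffeomorphism supported near $S^{n-1}$, the discrepancy $\Psi_1$ is isotopic rel $\partial X^0$ to $T_{S^{n-1}}^{\alpha}$ composed with something already isotopic to the identity. Actually I must be slightly careful: $\alpha$ could a priori depend on $\phi$ as well, but because $T_{S^{n-1}}$ commutes up to isotopy with anything supported away from $S^{n-1}$ and the Dehn twist construction along $Y$ is linear in $\phi$ in the appropriate sense, one checks the correction term is insensitive to $\phi$; this is where I would invoke the structure of the Dehn twist construction carefully.

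Assembling these, one obtains
\[
T_{X^0, f(Y), \phi} \sim \Psi_1^{-1} \circ T_{X^0, g(Y), \phi} \circ \Psi_1 \sim T_{X^0, g(Y), \phi} \circ T_{S^{n-1}}^{\alpha} \qquad \text{rel } \partial X^0,
\]
using in the last step that $T_{S^{n-1}}$ is central up to isotopy (it is supported in a collar of the $S^{n-1}$ boundary component, which $T_{X^0,g(Y),\phi}$ can be isotoped to avoid) and that $\Psi_1$ is isotopic to $T_{S^{n-1}}^{\alpha}$ rel the rest of the boundary. The main obstacle, and the step deserving the most care, is the identification of the correction class $\alpha$ with an element of $\mathbb{Z}_2$ depending only on $X$: one must show that the choice of ambient isotopy $\Psi_t$ and the choice of how to return the puncture to its place contribute a well-defined element of $\pi_1 SO(n) \cong \mathbb{Z}_2$, independent of all choices and of $\phi$. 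This is essentially a statement about $\pi_0$ of the diffeomorphism group of $X^0$ rel $\partial X$ mapping onto $\pi_0$ of diffeomorphisms of $X$ with kernel generated by $T_{S^{n-1}}$, together with simple-connectedness of $X$ to trivialize the framing ambiguity coming from $\pi_1 X$; I would carry this out by tracking the framed basepoint along the isotopy, following Ruberman's original argument.
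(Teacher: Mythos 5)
Your plan substitutes the paper's use of the long exact sequence of the fibration $\mathrm{Diff}^+(X, D \sqcup \partial X) \to \mathrm{Diff}^+(X,\partial X) \to \mathrm{Emb}^0(D^n,X)$ with an explicit ambient-isotopy-and-conjugation argument, and there are two problems, the first of which is the essential one. You assert that after composing $\Psi_t$ with a further isotopy supported away from $f(Y)$ one can arrange $\Psi_1$ to preserve $\partial X^0$; this requires carrying the displaced ball $\Psi_1(\nu(p))$ back to $\nu(p)$ without meeting the embedded hypersurface. But since $X$ is simply connected, the closed codimension-one submanifold $g(Y) = \Psi_1(f(Y))$ separates $X$, and $\nu(p)$ and $\Psi_1(\nu(p))$ may lie in different components of the complement; in that case no such correcting isotopy exists, and there is no $\Psi_1 \in \mathrm{Diff}^+(X^0,\partial X^0)$ with $\Psi_1 \circ f = g$ at all. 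This obstruction is precisely where $\alpha$ lives; by eliding it you have papered over the content of the lemma.

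Second, even granting such a $\Psi_1$, your final algebraic step is off. You claim $\Psi_1^{-1} \circ T_{X^0, g(Y), \phi} \circ \Psi_1 \sim T_{X^0, g(Y), \phi} \circ T_{S^{n-1}}^\alpha$, citing that $\Psi_1 \sim T_{S^{n-1}}^\alpha$ and that $T_{S^{n-1}}$ is central up to isotopy. But if both hold, then conjugation is trivial:
\[
\Psi_1^{-1} \circ T_{X^0, g(Y), \phi} \circ \Psi_1 \sim T_{S^{n-1}}^{-\alpha}\, T_{X^0, g(Y), \phi}\, T_{S^{n-1}}^{\alpha} = T_{X^0, g(Y), \phi},
\]
which would force $\alpha = 0$ in every case---strictly stronger than the lemma, and a symptom of the gap above. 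The paper's route avoids both issues: since $f$ and $g$ are isotopic in $X$, the classes $[T_{X^0,f(Y),\phi}]$ and $[T_{X^0,g(Y),\phi}]$ agree in $\pi_0\mathrm{Diff}^+(X,\partial X)$, so by exactness at $\pi_0\mathrm{Diff}^+(X^0,\partial X^0) \cong \pi_0\mathrm{Diff}^+(X,D\sqcup\partial X)$ their quotient lies in the image of the connecting map from $\pi_1\mathrm{Emb}^0(D^n,X)\cong \pi_1\mathrm{Fr}(\mathrm{int}(X))$, a quotient of $\pi_1 SO(n)\cong\mathbb{Z}_2$ whose generator maps to $[T_{S^{n-1}}]$. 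No comparing diffeomorphism $\Psi_1$ in $\mathrm{Diff}^+(X^0,\partial X^0)$ is constructed or needed.
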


\begin{proof}
While this is essentially {\cite[Proposition 5.2]{auckly2015stable}}, we include the proof here in our setting for the sake of self-containedness. Denote by $D$ the closure of $\nu(p)$; note that $D \cap f(Y) = D \cap g(Y) = \emptyset$. Let $\mathrm{Emb}^0(D^n, X)$ denote the space of smooth embeddings of the $n$-dimensional closed disk $D^n$ into the interior of $X$. Then we have the following Serre fibration:
\[
\mathrm{Diff}^+(X, D \sqcup \partial X) \longrightarrow \mathrm{Diff}^+(X, \partial X) \longrightarrow \mathrm{Emb}^0(D^n, X).
\]
This yields the associated long exact sequence on homotopy groups:
\[
\pi_1 \mathrm{Diff}^+(X, D \sqcup \partial X) \longrightarrow \pi_1 \mathrm{Diff}^+(X, \partial X) \longrightarrow \pi_1 \mathrm{Emb}^0(D^n, X).
\]
We have canonical identifications
\[
\mathrm{Diff}^+(X, D \sqcup \partial X) \cong \mathrm{Diff}^+(X^0, \partial X^0), \qquad \pi_1 \mathrm{Emb}^0(D^n, X) \cong \pi_1 \mathrm{Fr}(\mathrm{int}(X)),
\]
where $\mathrm{int}(X)$ denotes the interior of $X$, and $\mathrm{Fr}(\mathrm{int}(X))$ its frame bundle. Since $X$ is simply-connected, so is $\mathrm{int}(X)$, and we have a natural surjection
\[
\pi_1 SO(n) \cong  \mathbb{Z}_2 \twoheadrightarrow \pi_1 \mathrm{Fr}(\mathrm{int}(X)).
\]
The lemma follows.
\end{proof}

\begin{lem} \label{lem: 4-punctured n-sphere}
    Let $n$ be a positive integer with $n \geq 3$, and denote by $S^n_4$ the $n$-sphere with 4 open $n$-balls removed. Let $S_1, \dots, S_4$ denote the four boundary components of $S^n_4$. Then the composition
\[
T_{S_1} \circ T_{S_2} \circ T_{S_3} \circ T_{S_4}
\]
is smoothly isotopic to the identity rel.\ $\partial S^n_4$.

\end{lem}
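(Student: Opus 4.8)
The plan is to realize $S^n_4$ concretely as a four-times-punctured round sphere and to extract the relation from the long exact homotopy sequence of the boundary-restriction fibration of diffeomorphism groups; the one nontrivial input will be an explicit rotation loop in $\mathrm{Diff}^+(S^n)$ that simultaneously ``spins'' all four punctures.

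First I would fix the model. Regard $S^n\subset\mathbb R^{n+1}$ as the unit sphere and pick four distinct points $p_1=e_1$, $p_2=-e_1$, $p_3=e_2$, $p_4=-e_2$ (distinct since $n+1\ge 3$). Let $D_i$ be a geodesic ball centered at $p_i$, with radius small enough that the $D_i$ are pairwise disjoint, and set $S^n_4 = S^n\smallsetminus\bigsqcup_i\mathrm{int}(D_i)$, $S_i=\partial D_i$. Next I would invoke the fibration obtained by restricting to a collar of the boundary,
\[
\mathrm{Diff}^+(S^n_4,\partial)\longrightarrow \mathrm{Diff}^+(S^n_4)\xrightarrow{\ \mathrm{res}\ }\mathrm{Diff}^+(\partial S^n_4)\simeq\prod_{i=1}^4\mathrm{Diff}^+(S^{n-1}),
\]
together with the standard fact that in its homotopy exact sequence the connecting map $\pi_1\mathrm{Diff}^+(\partial S^n_4)\to\pi_0\mathrm{Diff}^+(S^n_4,\partial)$ sends a loop $\phi$ to the boundary Dehn twist $T_{\partial S^n_4,\phi}$. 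Writing $\rho$ for the image of the generator of $\pi_1 SO(n)\cong\mathbb Z_2$ under $\pi_1 SO(n)\to\pi_1\mathrm{Diff}^+(S^{n-1})$, the element $(\rho,\rho,\rho,\rho)$ is then sent to $T_{S_1}\circ T_{S_2}\circ T_{S_3}\circ T_{S_4}$; note that these four twists commute on the nose, having disjoint supports. Thus it suffices to show that $(\rho,\rho,\rho,\rho)$ lies in the image of $\mathrm{res}_*\colon\pi_1\mathrm{Diff}^+(S^n_4)\to\pi_1\mathrm{Diff}^+(\partial S^n_4)$.

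For the lift, consider $g\colon[0,1]\to SO(n+1)$, where $g_t$ is rotation by angle $2\pi t$ in the $(e_3,e_4)$-coordinate plane; this uses only that $n\ge 3$, which is exactly the hypothesis. Each $g_t$ fixes $\pm e_1$ and $\pm e_2$ pointwise and is an isometry of $S^n$, hence preserves every geodesic ball centered at one of these points; so $g_t$ preserves each $D_i$ setwise and restricts to a loop $\bar g_t\in\mathrm{Diff}^+(S^n_4)$ with $\bar g_0=\bar g_1=\mathrm{id}$. Near $p_i$ the map $g_t$ acts on $T_{p_i}S^n\cong\mathbb R^n$ by rotation by $2\pi t$ in a fixed $2$-plane, so under the identification $S_i\cong S^{n-1}$ the loop $\bar g_\bullet|_{S_i}$ traverses once a circle subgroup of $SO(n)$; for $n\ge3$ this represents the nontrivial class, i.e.\ $[\bar g_\bullet|_{S_i}]=\rho$. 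Hence $\mathrm{res}_*[\bar g_\bullet]=(\rho,\rho,\rho,\rho)$, and exactness yields $T_{S_1}\circ T_{S_2}\circ T_{S_3}\circ T_{S_4}\sim\mathrm{id}$ rel $\partial S^n_4$, as claimed (the case $n=3$ is included, since $\mathbb R^4$ still contains an $(e_3,e_4)$-plane).

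The step I expect to be most delicate is the computation in the previous paragraph that the ``framing twist'' of $\bar g_\bullet$ at each puncture is the \emph{nontrivial} element of $\pi_1 SO(n)$ and not the trivial one: a full $2\pi$ rotation in a coordinate $2$-plane of $\mathbb R^n$ represents the generator of $\pi_1 SO(n)$ precisely when $n\ge3$, which is where the hypothesis is used, and care is needed to match this against the definition of $T_{S_i}$ via the generator of $\pi_1 SO(n)$. A secondary point to spell out carefully is the identification of the connecting homomorphism of the boundary-restriction fibration with the boundary Dehn twist, together with the bookkeeping that the four commuting, disjointly supported twists $T_{S_i}$ (each of order dividing $2$, since $2\rho=0$) really do assemble into the image of $(\rho,\rho,\rho,\rho)$; both are standard but worth stating explicitly.
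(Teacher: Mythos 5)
Your proof is correct, but it takes a genuinely different route from the paper's. The paper decomposes $S^n_4$ as two copies of the thrice-punctured sphere glued along a common boundary sphere $S_3=S'_3$, applies \Cref{lem: Ruberman} in each copy to replace $T_{S_1}\circ T_{S_2}$ by $T_{S_3}^{\alpha}$ (and likewise $T_{S'_1}\circ T_{S'_2}$ by $T_{S'_3}^{\alpha}$ with the \emph{same} $\alpha$, since the two pieces are identical), and then cancels the two corrections using that Dehn twists along $(n-1)$-spheres have order two. You instead exhibit an explicit loop in $\mathrm{Diff}^+(S^n_4)$ --- the restriction of the circle of rotations of $S^n$ in the $(e_3,e_4)$-plane, which fixes $\pm e_1,\pm e_2$ and hence preserves the four geodesic balls --- whose boundary restriction is $(\rho,\rho,\rho,\rho)$, and conclude by exactness in the restriction fibration. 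The two points you flag as delicate are both fine: a full $2\pi$ rotation in a coordinate $2$-plane of $\R^n$ represents the generator of $\pi_1 SO(n)\cong\Z_2$ exactly when $n\ge 3$, and the identification of the connecting homomorphism with the boundary Dehn twist is the same standard fact the paper itself invokes in the proof of \Cref{lem: Ruberman}. Your route buys two things: it is independent of \Cref{lem: Ruberman}, and since the fixed locus of your rotation is the sphere $S^n\cap\mathrm{span}(e_1,e_2,e_5,\dots,e_{n+1})\cong S^{n-2}$, which has positive dimension for $n\ge 3$, one may place any number $k$ of punctures on it and run the identical argument; this yields \Cref{cor: even punctured n-sphere} in one step (indeed for all $k$, not only even $k$) without the inductive gluing. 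The paper's argument, on the other hand, is purely formal and applies in settings where no global circle action is at hand.
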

\begin{proof}
Denote by $S^n_3$ the $n$-sphere with three open $n$-balls removed. Consider two copies $X$ and $X'$ of $S^n_3$, and denote their boundary components by $S_1, S_2, S_3$ and $S'_1, S'_2, S'_3$, respectively. Then we have
\[
S^n_4 \cong X \sqcup_{S_3 = S'_3} X';
\]
under this identification, the four components of $\partial S^n_4$ are $S_1, S_2, S'_1, S'_2$. By \Cref{lem: Ruberman}, there exists some $\alpha \in \mathbb{Z}_2$ such that
\[
T_{S_1} \circ T_{S_2} \sim T_{S_3}^\alpha \qquad \text{rel.\ } \partial X.
\]
Since $X'$ is another copy of $X$, we likewise have
\[
T_{S'_1} \circ T_{S'_2} \sim T_{S'_3}^\alpha \qquad \text{rel.\ } \partial X'.
\]
Hence, in $S^n_4$, we obtain
\[
T_{S_1} \circ T_{S_2} \circ T_{S'_1} \circ T_{S'_2} \sim T_{S_3}^\alpha \circ T_{S'_3}^\alpha \sim \mathrm{id} \qquad \text{rel.\ } \partial S^n_4,
\]
since $S_3 = S'_3$ and Dehn twists along an embedded $(n{-}1)$-sphere have order $2$, as desired.
\end{proof}

% \[
%     \begin{split}
%     T_{S_1} \circ T_{S_2} \circ T_{S'_1} \circ T_{S'_2} &\sim T_{S_1} \circ T_{S_2} \circ T_{S_3}^{-2\alpha} \circ T_{S'_1} \circ T_{S'_2} \\
%     &\sim (T_{S_1} \circ T_{S_2} \circ T^{-\alpha}_{S_3}) \circ (T_{S'_1} \circ T_{S'_2} \circ T^{-\alpha}_{S'_3}) \\
%     &\sim \mathrm{id} \circ \mathrm{id} \\
%     &\sim \mathrm{id},
%     \end{split}
%     \]

\begin{cor} \label{cor: even punctured n-sphere}
Let $n, m$ be positive integers with $n \geq 3$, and denote by $S^n_{2m}$ the $n$-sphere with $2m$ open $n$-balls removed. Let $S_1, S_2, \dots, S_{2m}$ be its boundary components, and let $T_{S_i}$ denote the Dehn twist along $S_i$. Then
\[
T_{S_1} \circ T_{S_2} \circ \cdots \circ T_{S_{2m}}.
\]
is smoothly isotopic to the identity rel.\ $\partial S^n_{2m}$.
\end{cor}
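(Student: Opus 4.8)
The statement is a straightforward induction on $m$ using \Cref{lem: 4-punctured n-sphere} as the base case. The plan is to peel off four boundary components at a time. First I would set up the decomposition: write $S^n_{2m}$ as $S^n_{2m-2}$ glued to a copy $X$ of $S^n_4$ along one of the boundary spheres, say gluing the last boundary component $S_{2m-2}$ of $S^n_{2m-2}$ to one boundary component $S'_4$ of $X$; the remaining boundary components of $X$ become three of the boundary spheres $S_{2m-3}, S_{2m-2}, S_{2m-1}$ of $S^n_{2m}$ (relabeling appropriately), and the other $2m-3$ boundary spheres of $S^n_{2m-2}$ become $S_1,\dots,S_{2m-4}$, with one internal gluing sphere not appearing in $\partial S^n_{2m}$. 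Actually, to keep the bookkeeping clean, I would instead proceed more symmetrically: for $m\ge 2$, decompose $S^n_{2m}$ as $S^n_{2m-1}\cup_{S} S^n_3$ — that is, remove a small collar neighborhood of two of the puncture-spheres $S_{2m-1}, S_{2m}$ together with a connecting tube, obtaining an $S^n_3$-piece $X$ with boundary components $S_{2m-1}, S_{2m}, S$, where $S$ is an interior separating $(n-1)$-sphere, and the complement is an $S^n_{2m-1}$-piece carrying $S_1,\dots,S_{2m-2}$ and $S$.

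Then the key step is: by \Cref{lem: Ruberman} (applied to the simply-connected piece $X\cong S^n_3$, which is indeed simply connected for $n\ge 3$), there exists $\alpha\in\mathbb{Z}_2$, depending only on $X$ and the chosen $\phi\in\pi_1\mathrm{Diff}^+(S^{n-1})$ (the nontrivial one), such that $T_{S_{2m-1}}\circ T_{S_{2m}} \sim T_S^{\alpha}$ rel.\ $\partial X$. The crucial point, which is exactly what makes \Cref{lem: 4-punctured n-sphere} work, is that $\alpha$ is independent of which copy of $S^n_3$ one uses, so it is the same constant at every stage of the induction. Substituting this isotopy into $S^n_{2m}$ replaces $T_{S_1}\circ\cdots\circ T_{S_{2m}}$ (up to rel-boundary isotopy, and using that the $T_{S_i}$'s commute since they are supported in disjoint collar neighborhoods) by $(T_{S_1}\circ\cdots\circ T_{S_{2m-2}})\circ T_S^{\alpha}$ acting on $S^n_{2m-1}$ — but now $S$ is one of the boundary spheres of this smaller piece.

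From here I would organize the induction over $m$ by keeping track of a product of Dehn twists along all $2m$-many (or, after collapsing, fewer) boundary/separating spheres. The cleanest inductive statement to carry is: \emph{for $S^n_{2k}$ with boundary spheres $S_1,\dots,S_{2k}$, the composite $T_{S_1}\circ\cdots\circ T_{S_{2k}}$ is isotopic rel.\ boundary to $\mathrm{id}$.} The base case $k=1$ is \Cref{lem: 4-punctured n-sphere}'s predecessor (two-sphere-removed case is handled by \Cref{lem: Ruberman} directly: $T_{S_1}\circ T_{S_2}\sim T_{S}^{\alpha}$ where $S$ is... ) — in fact the genuine base case is $k=2$, which is precisely \Cref{lem: 4-punctured n-sphere}. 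For the inductive step, decompose $S^n_{2(k+1)} = S^n_{2k-1}\cup_S (S^n_4)$ as in the proof of \Cref{lem: 4-punctured n-sphere}: one copy $X$ of $S^n_3$ contains $S_{2k+1}, S_{2k+2}$ and gives $T_{S_{2k+1}}\circ T_{S_{2k+2}}\sim T_S^{\alpha}$, while a second copy $X'$ of $S^n_3$ contains $S_{2k-1}, S_{2k}$ and gives $T_{S_{2k-1}}\circ T_{S_{2k}}\sim T_{S'}^{\alpha}$ with $S=S'$ the common separating sphere; the remaining piece is $S^n_{2k-2}$ carrying $S_1,\dots,S_{2k-2}$. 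Then
\[
T_{S_1}\circ\cdots\circ T_{S_{2(k+1)}}
\sim \bigl(T_{S_1}\circ\cdots\circ T_{S_{2k-2}}\bigr)\circ T_S^{\alpha}\circ T_S^{\alpha}
\sim T_{S_1}\circ\cdots\circ T_{S_{2k-2}}
\]
rel.\ boundary, since $T_S$ has order $2$; and the right-hand side is $\mathrm{id}$ rel.\ boundary by the inductive hypothesis applied to $S^n_{2k-2}$.

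\textbf{Main obstacle.} The only genuine subtlety — and it is a mild one — is verifying that the three kinds of moves used (replacing $T_{S_i}\circ T_{S_j}$ by $T_S^{\alpha}$ inside a simply-connected summand via \Cref{lem: Ruberman}; commuting Dehn twists supported in disjoint collars; and cancelling $T_S^{\alpha}\circ T_S^{\alpha}$) are all performed rel.\ the \emph{outer} boundary $\partial S^n_{2m}$, not merely rel.\ the boundary of the local piece. This is fine because each Dehn twist $T_{S_i}$ is supported in an arbitrarily small collar of $S_i$, and the isotopies produced by \Cref{lem: Ruberman} are likewise supported away from the other boundary components; so one extends everything by the identity. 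Thus there is no hard step: the entire argument is an induction whose engine is \Cref{lem: Ruberman}, exactly as in \Cref{lem: 4-punctured n-sphere}, and the proof is essentially a one- or two-line reduction to that lemma.
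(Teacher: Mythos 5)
Your overall strategy --- induct on $m$ and cancel two Dehn twists along a common separating sphere --- is the same engine the paper uses, but the inductive step as written has a genuine gap. The decomposition you describe is geometrically impossible for $k\ge 2$: if $X\cong S^n_3$ has boundary $S_{2k+1}, S_{2k+2}, S$ and $X'\cong S^n_3$ has boundary $S_{2k-1},S_{2k},S'$ with $S=S'$, then $X\cup_S X'$ is a compact codimension-zero piece all of whose boundary components lie on $\partial S^n_{2(k+1)}$, hence is a union of connected components; by connectedness it would have to be all of $S^n_{2(k+1)}$, forcing $k=1$. For $k\ge 2$ you are therefore forced to take $S\ne S'$, and then the two applications of \Cref{lem: Ruberman} give $T_{S_{2k-1}}\circ T_{S_{2k}}\sim T_{S'}^{\alpha}$ and $T_{S_{2k+1}}\circ T_{S_{2k+2}}\sim T_{S}^{\alpha}$ along \emph{different} spheres, which do not cancel. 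When $\alpha=1$ you are left with $T_S\circ T_{S'}$; when $\alpha=0$ you are left with $T_{S_1}\circ\cdots\circ T_{S_{2k-2}}$, a product of twists along only $2k-2$ of the boundary spheres of the middle piece (which is $S^n_{2k}$ with boundary $S_1,\dots,S_{2k-2},S,S'$, not $S^n_{2k-2}$). In neither case is the remaining expression the product of \emph{all} boundary twists of an even-punctured sphere, so your inductive hypothesis does not apply to it; note also that $S_1,\dots,S_{2k-2}$ cannot bound an embedded $S^n_{2k-2}$ inside $S^n_{2(k+1)}$, by the same connectedness argument. (There are in addition off-by-one errors: $S^n_{2k-1}\cup_S S^n_4$ has $2k+1$ boundary components, not $2k+2$.)

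The paper closes the induction differently, and the difference is exactly what repairs this. It writes $S^n_{2(m+1)}=S^n_{2m}\cup_{S=S'}S^n_4$ and applies the inductive hypothesis to the embedded $S^n_{2m}$ \emph{with the gluing sphere $S$ counted among its boundary components}: the hypothesis says the product of all $2m$ boundary twists of that piece is trivial, hence the product of its $2m-1$ outer twists is isotopic rel.\ boundary to $T_S^{-1}=T_S$, with exponent exactly $1$ rather than an undetermined $\alpha$. Likewise \Cref{lem: 4-punctured n-sphere} converts the product of the three outer twists of the $S^n_4$ piece into $T_{S'}=T_S$. Since $S=S'$ really is a single gluing sphere in this two-piece decomposition, the two copies of $T_S$ cancel. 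If you wish to keep your ``peel off two spheres at a time via \Cref{lem: Ruberman}'' approach, you would need an extra argument handling the unknown exponent $\alpha$ and the residual twists along the leftover separating spheres; the paper's formulation of the inductive statement avoids this entirely.
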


\begin{proof}
If $m = 1$, the corollary is immediate, since $S_1$ is isotopic to $S_2$. If $m = 2$, the result follows from \Cref{lem: 4-punctured n-sphere}. Now suppose that the corollary holds for some $m \geq 2$. Consider the decomposition
\[
S^n_{2(m+1)} = S^n_{2m} \sqcup_{S = S'} S^n_4,
\]
where $S$ and $S'$ are boundary components of $S^n_{2m}$ and $S^n_4$, respectively. By the inductive hypothesis, the composition $F$ of Dehn twists along all boundary components of $S^n_{2m}$ is smoothly isotopic to the identity relative to $\partial S^n_{2m}$. Similarly, let $G$ be the composition of Dehn twists along all boundary components of $S^n_4$. By \Cref{lem: 4-punctured n-sphere}, $G$ is smoothly isotopic to the identity relative to $\partial S^n_4$. Then, applying the same argument as in \Cref{lem: 4-punctured n-sphere}, we conclude that the composition of Dehn twists along the $2(m+1)$ boundary components of $S^n_{2(m+1)}$ is smoothly isotopic to the identity relative to $\partial S^n_{2(m+1)}$. This completes the induction.
\end{proof}

% Since the Dehn twist along $S^{n-1}$ has order $2$ and thus the composition of Dehn twists along every component of $S^n_{2(m+1)}$ is smoothly isotopic to $F\circ G$ rel.\ boundary, the corollary follows.

Using \Cref{cor: even punctured n-sphere}, we can prove the following lemma.

\begin{lem} \label{lem: topological obstruction by duplicating}
Let $Y$ be a closed 3-manifold bounding a simply-connected smooth oriented 4-manifold $X$. Let $Z$ be a closed 4-manifold obtained by a connected sum of copies of either $S^2 \times S^2$, $\mathbb{CP}^2$, or $\overline{\mathbb{CP}}^2$. Choose a class $\phi \in \pi_1\mathrm{Diff}(Y)$ and denote the resulting boundary Dehn twist by
$T_{X \# Z, Y, \phi} \in \mathrm{Diff}^+(X \# Z, Y)$.
Suppose that
\[
T^k_{X \# Z, Y, \phi} \in \mathrm{Diff}^+(X \# Z, Y)
\]
is isotopic rel.\ $Y$ to the identity for some integer $k$. Then, for any integer $n \ge 0$, the diffeomorphism
\[
T^k_{X^{\# 2n} \# Z,\, Y \sqcup \cdots \sqcup Y,\, \phi \sqcup \cdots \sqcup \phi} \in \mathrm{Diff}^+\big(X^{\# 2n} \# Z,\, Y \sqcup \cdots \sqcup Y\big)
\]
is also isotopic to the identity rel.\ $Y \sqcup \cdots \sqcup Y$, where $T_{X^{\# 2n} \# Z,\, Y \sqcup \cdots \sqcup Y,\, \phi \sqcup \cdots \sqcup \phi}$ denotes the boundary Dehn twist applied to each copy of $Y$ corresponding to the class~$\phi$.
\end{lem}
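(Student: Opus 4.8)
The idea is to build a single model $4$-manifold inside which all the relevant boundary Dehn twists appear simultaneously, and then apply \Cref{lem: Ruberman} and \Cref{cor: even punctured n-sphere} to trade the $2n$ separate boundary twists on $X^{\#2n}\#Z$ for a single boundary twist on $X\#Z$ (which we know is trivial) together with a product of Dehn twists along separating $S^3$'s that cancels. First I would set up notation: write $W = X \# Z$ with $\partial W = Y$, and let $T = T_{W,Y,\phi}$; by hypothesis $T^k \sim \mathrm{id}$ rel $Y$. The plan is to realize $X^{\#2n}\#Z$ as $W$ with $2n-1$ interior connected-sum copies of $X$ attached along small balls, i.e.\ to fix $2n$ disjoint small balls $B_0, B_1, \dots, B_{2n-1}$ in the interior of $W$ (the first one used for the $Z$-summand bookkeeping is irrelevant; the point is simply that $X^{\#2n}\#Z \cong W \natural (X\smallsetminus \mathring B)^{\natural (2n-1)}$ in the obvious way), with the effect that the new $2n$ boundary components $Y_1, \dots, Y_{2n}$ all arise from the pattern.

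The key step is the following isotopy identity. Consider the $3$-punctured $4$-sphere $S^4_3$, i.e.\ $X^{\#2n}\#Z$ decomposes as a union of $W$ with $X\smallsetminus\mathring B$-pieces along a system of separating $S^3$'s; alternatively, and more cleanly, I would argue by induction on $n$. For $n=1$: the manifold $X^{\#2}\#Z = (X\#Z) \#_{S^3} X$, and the diffeomorphism performing the boundary Dehn twist on each of the two copies of $Y$ is, by \Cref{lem: Ruberman} applied with the embedded sphere being the separating $S^3$, isotopic rel.\ boundary to (boundary Dehn twist on one $Y$, done inside $W$) composed with (boundary Dehn twist on the other $Y$) composed with $T_{S^3}^\alpha$ for the appropriate $\alpha \in \mathbb{Z}_2$. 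But \Cref{lem: Ruberman} is really a statement about moving a Dehn twist across a connected sum: the precise mechanism is that $T_{W, Y_1, \phi} \circ T_{W', Y_2,\phi}$ on $W \#_{S^3} W'$ differs from the ``honest'' two-component boundary twist by a Dehn twist $T_{S^3}^\alpha$ along the connecting sphere. Iterating the $k$-th power and using $T_{S^3}^2 = \mathrm{id}$, the $T_{S^3}$-corrections pair up and cancel in even number, so $T^k_{X^{\#2n}\#Z, Y\sqcup\cdots\sqcup Y,\phi\sqcup\cdots\sqcup\phi}$ is isotopic rel.\ boundary to a product of $2n$ copies of $T^k_{W,Y,\phi}$ conjugated into disjoint collar regions, \emph{provided} the $T_{S^3}$-corrections all cancel; this is precisely where \Cref{cor: even punctured n-sphere} enters, since the union of the connecting spheres bounds a $2n$-punctured $S^4$ (or a disjoint union of such), and the composition of Dehn twists along an even number of parallel separating $S^3$'s is isotopically trivial rel.\ boundary.

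Then, since each $T^k_{W,Y,\phi}$ is supported in a collar of its boundary component and is individually isotopic rel.\ boundary to the identity by hypothesis, and these collars are disjoint, the product is isotopic rel.\ boundary to the identity, giving the conclusion. The main obstacle I anticipate is the careful bookkeeping of the $\mathbb{Z}_2$-valued Dehn-twist corrections along the connecting spheres: one must verify that the total number of such corrections appearing in $T^k_{X^{\#2n}\#Z,\cdots}$ is even (so that they cancel in pairs via $T_{S^3}^2 = \mathrm{id}$), which requires organizing the decomposition so that the connecting spheres are arranged as the boundary of an even-punctured $S^4$ and invoking \Cref{cor: even punctured n-sphere} in exactly the right form. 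A secondary technical point is ensuring \Cref{lem: Ruberman} applies: one needs $X^{\#2n}\#Z$ (and each intermediate piece) to be simply connected, which holds since $X$ is simply connected and $Z$ is a connected sum of simply connected manifolds, and one needs the relevant embedded $3$-spheres (the connecting spheres) to have trivial — hence orientable — normal bundle, which is automatic for separating spheres arising from connected sums.
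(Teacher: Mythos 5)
Your first half is essentially the paper's argument: push each boundary Dehn twist along the collar past the ball used for the connected sum (this is the correct use of \Cref{lem: Ruberman} --- it compares twists along two embeddings of $Y$ that are isotopic in $X_i$ but not in $X_i$ minus the ball, the correction being $T_{S_i}^{\alpha}$ along the boundary of that ball), and then observe that the $2n$ correction terms sit on the boundary spheres of an embedded $S^4_{2n}$ and cancel by \Cref{cor: even punctured n-sphere}. Note, however, that your stated ``mechanism'' for $n=1$ is wrong as written: the two-component boundary twist on $W\#_{S^3}W'$ \emph{is} literally $T_{W,Y_1,\phi}\circ T_{W',Y_2,\phi}$ extended by the identity, with no $T_{S^3}^{\alpha}$ correction; the correction only appears when you move a single twist along $Y$ from one parallel copy to another across the connect-sum ball.

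The genuine gap is in your last step. After the cancellation you are left with a product of $2n$ Dehn twists, each supported near a separating copy of $Y$ that bounds a copy of $X$ (not of $X\#Z$) on its inner side, and you then claim each factor ``is individually isotopic rel.\ boundary to the identity by hypothesis'' because the collars are disjoint. This does not follow: the hypothesis concerns $T^k_{X\#Z,Y,\phi}$, and the required isotopy is an isotopy through diffeomorphisms of all of $X\#Z$, not one supported in a collar. To apply the hypothesis to the $i$th factor you need an embedded copy of $X\#Z$ containing its support, but $X^{\#2n}\#Z$ contains only one $Z$ summand, so the $2n$ copies of $X\#Z$ you would need cannot be made disjoint. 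The missing ingredient is the well-definedness of the stabilization map $f\mapsto f\#\mathrm{id}_Z$ (independence of where $Z$ is attached, which uses that boundary Dehn twists of punctured $S^2\times S^2$, $\mathbb{CP}^2$, $\overline{\mathbb{CP}}^2$ extend over the interior): the paper kills the factors one at a time, each time rewriting the remaining product as (something)$\,\#\,\mathrm{id}_Z$ with $Z$ conceptually repositioned next to the factor currently being killed. Without this step the argument only shows that \emph{one} of the $2n$ factors can be trivialized.
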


\begin{proof}
Let $X_1, \dots, X_{2n}$ be $2n$ copies of $X$. For each $i = 1, \dots, 2n$, choose collar neighborhoods
\[
f_i \colon Y \times [0,1] \hookrightarrow X_i,
\]
and let $B_i$ be an open 4-ball embedded in $f_i(Y \times [0.2, 0.8])$. Denote by $S_i$ the boundary of $B_i$, and by $S'_i$ the $i$th component of the boundary of $S^4_{2n}$, the $2n$-punctured 4-sphere. Define
\[
Y^+_i = f_i(Y \times \{0.1\}), \qquad Y^-_i = f_i(Y \times \{0.9\}), \qquad X^0_i = X_i \smallsetminus B_i,
\]
and write
\[
X^{\# 2n} = \big(X^0_1 \sqcup \cdots \sqcup X^0_{2n} \sqcup S^4_{2n}\big) \mathbin{/} \sim,
\]
where $\sim$ identifies each $S_i$ with $S'_i$. By \Cref{lem: Ruberman}, there exists $\alpha \in \mathbb{Z}_2$ such that for each $i$,
\[
T_{X^{\# 2n}, Y^+_i, \phi} \sim T_{X^{\# 2n}, Y^-_i, \phi} \circ T_{S_i}^\alpha \sim T_{X^{\# 2n}, Y^-_i, \phi} \circ T_{S'_i}^\alpha \qquad \text{rel.\ } \partial X^{\# 2n}.
\]
Since $T_{S'_1} \circ \cdots \circ T_{S'_{2n}} \sim \mathrm{id}_{S^4_{2n}}$ rel.\ $\partial S^4_{2n}$ by \Cref{cor: even punctured n-sphere}, we deduce:
\[
\begin{split}
T_{X^{\# 2n},\, Y \sqcup \cdots \sqcup Y,\, \phi \sqcup \cdots \sqcup \phi}
&\sim T_{X^{\# 2n}, Y^+_1, \phi} \circ \cdots \circ T_{X^{\# 2n}, Y^+_{2n}, \phi} \\
&\sim T_{X^{\# 2n}, Y^-_1, \phi} \circ T_{S'_1}^\alpha \circ \cdots \circ T_{X^{\# 2n}, Y^-_{2n}, \phi} \circ T_{S'_{2n}}^\alpha \\
&\sim T_{X^{\# 2n}, Y^-_1, \phi} \circ \cdots \circ T_{X^{\# 2n}, Y^-_{2n}, \phi} \circ \big(T_{S'_1} \circ \cdots \circ T_{S'_{2n}}\big)^\alpha \\
&\sim T_{X^{\# 2n}, Y^-_1, \phi} \circ \cdots \circ T_{X^{\# 2n}, Y^-_{2n}, \phi} \qquad \text{rel.\ } \partial X^{\# 2n}.
\end{split}
\]

Now observe that, as discussed in the proof of \cite[Theorem 5.3]{auckly2015stable}, since the boundary Dehn twists on punctured $S^2 \times S^2$, $\mathbb{CP}^2$, and $\overline{\mathbb{CP}}^2$ extend smoothly to their interiors, it follows from \Cref{lem: Ruberman} that the following map is well-defined, regardless of where we attach $Z$:
\[
\pi_0 \mathrm{Diff}^+(X^{\# 2n},\, Y \sqcup \cdots \sqcup Y) \longrightarrow \pi_0 \mathrm{Diff}^+(X^{\# 2n} \# Z,\, Y \sqcup \cdots \sqcup Y), \qquad f \longmapsto f \# \mathrm{id}_Z.
\]
Therefore, we have
\[
\begin{split}
T^k_{X^{\# 2n} \# Z,\, Y \sqcup \cdots \sqcup Y,\, \phi \sqcup \cdots \sqcup \phi}
&\sim \left(T^k_{X^{\# 2n},\, Y \sqcup \cdots \sqcup Y,\, \phi \sqcup \cdots \sqcup \phi}\right) \# \mathrm{id}_Z \\
&\sim \left(T_{X^{\# 2n}, Y^-_1, \phi} \circ \cdots \circ T_{X^{\# 2n}, Y^-_{2n}, \phi}\right)^k \# \mathrm{id}_Z \\
&\sim \left(T^k_{X^{\# 2n}, Y^-_1, \phi} \circ \cdots \circ T^k_{X^{\# 2n}, Y^-_{2n}, \phi}\right) \# \mathrm{id}_Z \qquad \text{rel.\ } \partial(X^{\# 2n} \# Z),
\end{split}
\]
and we may assume that $Z$ is attached to $X_1 \smallsetminus f_1(Y \times [0,1]) \subset X^{\# 2n}$, which is diffeomorphic to $X_1$ itself. Moreover, by the assumption of the lemma,
\[
T^k_{X^{\# 2n}, Y^-_1, \phi} \# \mathrm{id}_Z \sim \mathrm{id}_{X^{\# 2n} \# Z} \qquad \text{rel.\ } \partial(X^{\# 2n} \# Z).
\]
Hence,
\[
\begin{split}
T^k_{X^{\# 2n} \# Z,\, Y \sqcup \cdots \sqcup Y,\, \phi \sqcup \cdots \sqcup \phi}
&\sim \left(T^k_{X^{\# 2n}, Y^-_1, \phi} \circ \cdots \circ T^k_{X^{\# 2n}, Y^-_{2n}, \phi}\right) \# \mathrm{id}_Z \\
&\sim \left(T^k_{X^{\# 2n}, Y^-_1, \phi} \# \mathrm{id}_Z\right) \circ T^k_{X^{\# 2n} \# Z, Y^-_2, \phi} \circ \cdots \circ T^k_{X^{\# 2n} \# Z, Y^-_{2n}, \phi} \\
&\sim T^k_{X^{\# 2n} \# Z, Y^-_2, \phi} \circ \cdots \circ T^k_{X^{\# 2n} \# Z, Y^-_{2n}, \phi} \\
&\sim \left(T^k_{X^{\# 2n}, Y^-_2, \phi} \circ \cdots \circ T^k_{X^{\# 2n}, Y^-_{2n}, \phi}\right) \# \mathrm{id}_Z \qquad \text{rel.\ } \partial(X^{\# 2n} \# Z).
\end{split}
\]
As before, we may now assume that $Z$ is attached to $X_2 \smallsetminus f_2(Y \times [0,1]) \subset X^{\# 2n}$, which is diffeomorphic to $X_2$, and repeat the argument $(2n - 1)$ more times to conclude the proof.
\end{proof}

\subsection{Family Spin structures and an algebraic obstruction}

\begin{lem} \label{lem: existence of lift}
Let $X$ be a connected, compact, smooth, oriented 4-manifold with boundary 
\[
\partial X = \bigsqcup_{i=1}^n Y
\]
for some closed, oriented 3-manifold $Y$, and suppose that $b_1(X) = 0$. Let $\rho \colon B\mathbb{Z}_2 \to B\mathrm{Diff}^+(X)$ be a homotopy coherent smooth $\mathbb{Z}_2$-action on $X$. Assume that the homotopy monodromy preserves each component of $\partial X$ and the Spin structure $\mathfrak{s} \sqcup \cdots \sqcup \mathfrak{s}$ on $\partial X$ for some Spin structure $\mathfrak{s}$ on $Y$.
Further assume that the restriction $\rho|_{\partial X} \colon B\mathbb{Z}_2 \to B\mathrm{Diff}^+(\partial X)$ is induced by a free $\mathbb{Z}_2$-action on $Y$. Suppose also that $\mathfrak{s}$ admits a $\mathbb{Z}_2$-equivariant lift $\tilde{\s}$ and a non-equivariant extension to a Spin structure $\mathfrak{s}_X$ on $X$. Then the $\mathbb{Z}_2$-equivariant Spin structure 
$\tilde{\s} \sqcup \cdots \sqcup \tilde{\s}$ on $\partial X$ extends to a fiberwise Spin structure on the smooth $X$-bundle associated to $\rho$, whose restriction to each fiber is $\mathfrak{s}_X$.
\end{lem}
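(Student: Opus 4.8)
The plan is to build the fiberwise Spin structure by obstruction theory over $B\mathbb{Z}_2$, using the given non-equivariant extension $\mathfrak{s}_X$ as the fiber data and the equivariant lift $\tilde{\s}\sqcup\cdots\sqcup\tilde{\s}$ as the boundary data, and showing that the only potential obstruction vanishes. Concretely, the homotopy coherent action $\rho$ produces a smooth $X$-bundle $X\to E\to B\mathbb{Z}_2$ with a fiberwise tangent bundle, hence a classifying map $E\to BSO(4)$. A fiberwise Spin structure restricting to $\mathfrak{s}_X$ on each fiber is the same as a lift of this map to $BSpin(4)$ that restricts, on each fiber, to the fixed lift determined by $\mathfrak{s}_X$; since the homotopy fiber of $BSpin(4)\to BSO(4)$ is $K(\mathbb{Z}_2,1)$, the obstruction to extending a given lift over the fibers to the total space lives in $H^2(B\mathbb{Z}_2; \mathcal{H}^0)$-type groups, and more precisely the relevant obstruction to lifting the section of the bundle of ``choices of Spin structure'' (a bundle over $B\mathbb{Z}_2$ with fiber $H^1(X;\mathbb{Z}_2)$, a torsor under that group once a basepoint lift exists) lies in $H^2(B\mathbb{Z}_2; H^1(X;\mathbb{Z}_2))$, with the $\mathbb{Z}_2$-action on $H^1(X;\mathbb{Z}_2)$ given by the monodromy. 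The first thing I would do is set up this bundle of Spin structures precisely, using that $\mathfrak{s}_X$ gives a basepoint over the $0$-skeleton of $B\mathbb{Z}_2$.

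Next I would handle the boundary. Over $B\mathbb{Z}_2$ we are given the boundary family $E^\partial\to B\mathbb{Z}_2$, which by hypothesis is the Borel construction of the free $\mathbb{Z}_2$-action on $Y=\partial X$ together with the equivariant Spin structure $\tilde{\s}$; so $E^\partial\cong P(\mathfrak{s})\times_{\mathbb{Z}_2}E\mathbb{Z}_2$ already carries a fiberwise Spin structure, namely the one induced from $\tilde{\s}$, and this agrees fiberwise with $\mathfrak{s}$. The task is a \emph{relative} lifting problem: extend the fiberwise Spin structure from $E^\partial$ over all of $E$, compatibly with the fiber data $\mathfrak{s}_X$. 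The relative obstruction then lies in $H^2(B\mathbb{Z}_2;\,\mathcal{L})$ where $\mathcal{L}$ is the local system with stalk $H^1(X,\partial X;\mathbb{Z}_2)$, using that $\mathfrak{s}_X|_{\partial X}=\mathfrak{s}$ and $\tilde{\s}$ restricts to $\mathfrak{s}$, so the difference cocycle on the boundary is trivialized. Since $B\mathbb{Z}_2$ has cells only in dimensions $0$ and $1$ up to the range that matters (or, more honestly, since one may work with a sufficiently large skeleton $(B\mathbb{Z}_2)_n$ as is done elsewhere in the paper, and the obstruction groups stabilize), the key point reduces to showing the obstruction cocycle, a priori in $H^2$, is actually a coboundary; for $B\mathbb{Z}_2$ with $\mathbb{Z}_2$-coefficient-type local systems this amounts to a direct check that the first-order extension over the $1$-cell can be corrected.

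The cleanest way to organize this, and the route I would actually take, is to invoke the relative lifting lemmas already proved for disk bundles and plumbings, but adapted: the problem is that $X$ is an arbitrary $b_1=0$ spin filling, not a plumbing. So instead I would argue directly as follows. Because $b_1(X)=0$, we have $H^1(X;\mathbb{Z}_2)=0$, so the bundle of choices of Spin structure on $X$ has \emph{trivial} fiber $H^1(X;\mathbb{Z}_2)=0$ — there is a \emph{unique} Spin structure on $X$ restricting to $\mathfrak{s}$ on $\partial X$ — wait, more carefully: Spin structures on $X$ extending $\mathfrak{s}$ form a torsor under $H^1(X,\partial X;\mathbb{Z}_2)$, which need not vanish. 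The honest reduction is: fiberwise Spin structures extending $\mathfrak{s}_X$ and restricting to $\tilde{\s}$ on the boundary family are, if nonempty, a torsor under $H^1_{\mathbb{Z}_2}(E,E^\partial;\mathbb{Z}_2)$, and the obstruction to nonemptiness is the single class in $H^2_{\mathbb{Z}_2}(E,E^\partial;\mathbb{Z}_2)\cong H^2(B\mathbb{Z}_2; H^0(\cdots))\oplus H^1(B\mathbb{Z}_2; H^1(X,\partial X;\mathbb{Z}_2))\oplus H^0(B\mathbb{Z}_2; H^2(X,\partial X;\mathbb{Z}_2))$ coming from the Serre spectral sequence, whose $H^0$-component is precisely the obstruction to the existence of $\mathfrak{s}_X$ itself (which vanishes by hypothesis) and whose higher components must be killed. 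The main obstacle — and the step I expect to require genuine care — is showing that the $H^1(B\mathbb{Z}_2;H^1(X,\partial X;\mathbb{Z}_2))$-component vanishes, i.e.\ that the ambiguity in how the equivariant boundary Spin structure $\tilde{\s}$ ``propagates inward'' around the generator of $\pi_1 B\mathbb{Z}_2$ can be corrected; here I would use that $\tilde{\s}$ is an \emph{order-two} (even) lift, so the monodromy of the family of Spin structures on $X$ squares to the identity in a compatible way with the boundary trivialization, forcing the monodromy-twisted cocycle to be a coboundary. Once that vanishing is established, a choice of such a fiberwise Spin structure exists, completing the proof; I would close by noting (as a remark, not needed for the statement) that the set of such structures is then a torsor under $H^1_{\mathbb{Z}_2}(E,E^\partial;\mathbb{Z}_2)$.
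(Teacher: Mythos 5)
Your proposal and the paper's proof both reduce to obstruction theory over $B\mathbb{Z}_2$, but they attack genuinely different lifting problems, and the version you set up leaves the hard part unresolved. Concretely, the paper works with the fibration $B\mathrm{Aut}(X;\mathfrak{s}_X)\to B\mathrm{Diff}^+(X,[\mathfrak{s}_X])$, whose fiber is $B\mathrm{Map}(X,\mathbb{Z}_2)$. Since $X$ is connected, $\mathrm{Map}(X,\mathbb{Z}_2)=H^0(X;\mathbb{Z}_2)\cong\mathbb{Z}_2$ is discrete, so $\pi_{i-1}B\mathrm{Map}(X,\mathbb{Z}_2)$ vanishes except for $i=2$, and the \emph{only} obstruction class lives in $H^2\bigl(B\mathbb{Z}_2;H^0(X;\mathbb{Z}_2)\bigr)$. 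This class maps to the boundary obstruction in $H^2\bigl(B\mathbb{Z}_2;H^0(\partial X;\mathbb{Z}_2)\bigr)$ under the restriction-induced map, which is injective because $H^0(X;\mathbb{Z}_2)\to H^0(\partial X;\mathbb{Z}_2)$ is the diagonal; and the boundary class vanishes because $\tilde{\s}$ is an even equivariant lift. That is the entire existence argument.

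Your version lifts $E\to BSO(4)$ to $BSpin(4)$ over the total space, and then decomposes the candidate obstruction via the Serre spectral sequence into
\[
H^0\bigl(B\mathbb{Z}_2;H^2(X,\partial X;\mathbb{Z}_2)\bigr),\quad
H^1\bigl(B\mathbb{Z}_2;H^1(X,\partial X;\mathbb{Z}_2)\bigr),\quad
H^2\bigl(B\mathbb{Z}_2;H^0(X,\partial X;\mathbb{Z}_2)\bigr).
\]
This does not line up with the actual obstruction. First, the third summand has $H^0(X,\partial X;\mathbb{Z}_2)=0$ since $\partial X\neq\emptyset$ and $X$ is connected, whereas the paper's genuine obstruction sits precisely in the $(2,0)$-type slot but with \emph{absolute} coefficients $H^0(X;\mathbb{Z}_2)=\mathbb{Z}_2$; your relative pair $(E,E^\partial)$ has quietly killed the very group where the obstruction lives and should be compared with the boundary. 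Second, your earlier sentence placing the obstruction in $H^2\bigl(B\mathbb{Z}_2;H^1(X,\partial X;\mathbb{Z}_2)\bigr)$ is inconsistent with this decomposition (that would be the $(2,1)$-piece of $H^3$, not $H^2$). Third, and most seriously, you explicitly flag the $(1,1)$-piece $H^1\bigl(B\mathbb{Z}_2;H^1(X,\partial X;\mathbb{Z}_2)\bigr)$ as ``the main obstacle'' and ``requiring genuine care'', but the sketch you give — that evenness of $\tilde{\s}$ forces the ``monodromy-twisted cocycle to be a coboundary'' — is not an argument, and this coefficient group is nonzero whenever $n\ge 2$: the long exact sequence of $(X,\partial X)$ gives $H^1(X,\partial X;\mathbb{Z}_2)\supseteq\mathbb{Z}_2^{\,n-1}$ already from $H^0(\partial X)/H^0(X)$, before any topology of $X$ enters. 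In the paper's formulation this difficulty never arises because the coefficient group is only $H^0(X;\mathbb{Z}_2)$; the evenness hypothesis is used at a different point, to kill the boundary class. So there is a genuine gap: the missing idea is to do obstruction theory with coefficients in the \emph{gauge group} $\mathrm{Map}(X,\mathbb{Z}_2)$ (which is discrete since $X$ is connected) rather than in the \emph{torsor} $H^1(X,\partial X;\mathbb{Z}_2)$ of boundary-relative Spin structures, and to then compare the resulting single obstruction to the boundary obstruction via the injection $H^0(X;\mathbb{Z}_2)\hookrightarrow H^0(\partial X;\mathbb{Z}_2)$.
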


\begin{proof}
    Since $\mathfrak{s}_X$ is invariant under the homotopy monodromy of $\rho$, it defines a fiberwise Spin structure on $\rho\vert_{(B\mathbb{Z}_2)_1}$, where we choose a simplicial complex structure on $B\mathbb{Z}_2$ and let $(B\mathbb{Z}_2)_1$ denote its 1-skeleton. To extend this to a fiberwise Spin structure on all of $\rho$, we must ensure the vanishing of a sequence of obstruction classes. Following the proof of \cite[Lemma 2.4]{KPT24}, we see that these obstruction classes are given by
\[
o_i(\rho,\mathfrak{s}_X) \in H^i(B\mathbb{Z}_2;\pi_{i-1} B\mathrm{Map}(X,\mathbb{Z}_2)),\quad i \ge 1.
\]
Since $\mathrm{Map}(X,\mathbb{Z}_2) = H^0(X;\mathbb{Z}_2)$ is a discrete space, it follows that $o_i(\rho) = 0$ for all $i \ne 2$. Furthermore, by arguing as in the proof of \cite[Corollary 2.5]{KPT24}, we see that the image of $o_2(\rho,\mathfrak{s}_X)$ under the map
\[
H^2(B\mathbb{Z}_2;H^0(X;\mathbb{Z}_2)) \longrightarrow H^2(B\mathbb{Z}_2;H^0(\partial X;\mathbb{Z}_2)) = H^2\left(B\mathbb{Z}_2;\bigoplus_{i=1}^n \mathbb{Z}_2\right),
\]
is equal to the boundary obstruction class $o_2(\rho\vert_{\partial X},\mathfrak{s})$. This map is clearly injective, since the local systems $H^0(X;\mathbb{Z}_2)$ and $H^0(\partial X;\mathbb{Z}_2)$ are trivial. The class $o_2(\rho\vert_{\partial X},\mathfrak{s})$ vanishes because $\mathfrak{s}$ admits a $\mathbb{Z}_2$-equivariant lift. Therefore, $\mathfrak{s}_X$ extends to a fiberwise Spin structure on $\rho$.

We now classify fiberwise Spin structures on $\rho$ whose restriction to each fiber is $\mathfrak{s}_X$. A standard obstruction theory argument shows that such structures are classified by elements of
\[
H^1(B\mathbb{Z}_2; H^0(X; \mathbb{Z}_2)) \cong H^0(X; \mathbb{Z}_2) \cong \mathbb{Z}_2.
\]
Similarly, fiberwise Spin structures on $\rho|_{\partial X}$ whose restriction to each fiber is $\mathfrak{s}$ are classified by elements of $H^0(\partial X; \mathbb{Z}_2) \cong (\mathbb{Z}_2)^n$. Since the pullback map
\[
i^* \colon \mathbb{Z}_2 \cong H^0(X; \mathbb{Z}_2) \longrightarrow H^0(\partial X; \mathbb{Z}_2) \cong (\mathbb{Z}_2)^n
\]
is given by the diagonal embedding $1 \mapsto (1,\dots,1)$, we obtain a canonical bijection between fiberwise Spin structures on $\rho$ whose restriction to each fiber is $\mathfrak{s}_X$ and fiberwise Spin structures on $\rho|_{\partial X}$ whose restriction to each fiber is $\mathfrak{s} \sqcup \cdots \sqcup \mathfrak{s}$. 

Moreover, by appealing to the discussion of the classification of equivariant $\mathrm{Spin}^c$ structures via equivariant $H^2$-classes in \Cref{subsec: eqv spin c structure classification}, we see that the latter are in canonical bijection with $\mathbb{Z}_2$-equivariant Spin structures on $\partial X$ whose restrictions to each component are identical. Therefore, the lemma follows.
\end{proof}

We now prove an algebraic analogue of \Cref{lem: topological obstruction by duplicating}, which will be used directly to prove \Cref{thm: main}.

\begin{lem} \label{lem: algebraic obstruction by duplicating}
Let $Y$ be a Seifert fibered $\mathbb{Z}_2$-homology sphere, equipped with a free $\mathbb{Z}_2$-action arising as a subaction of the Seifert $S^1$-action. Let $X$ be a $\mathbb{Z}_2$-homology ball bounded by $Y$, and suppose that the boundary Dehn twist $T^k_{X,Y} \in \mathrm{Diff}^+(X,Y)$ induced by the Seifert action on $Y$ is smoothly isotopic to the identity rel.\ boundary after $s$ stabilizations for some $s \in \{0,1,2\}$ and some odd integer $k$, i.e.,
\[
T^k_{X,Y} \# \mathrm{id}_{(S^2 \times S^2)^{\# s}} \sim \mathrm{id}_{X \# (S^2 \times S^2)^{\# s}} \qquad \text{rel.\ boundary}.
\]
Then, for any integer $n \ge 1$ and any self-conjugate $\mathbb{Z}_2$-equivariant Spin structure $\tilde{\s}$ on $Y$, there exists a local map of level $s$ of the form
\[
C^\ast\big(B(\mathrm{Pin}(2) \times \mathbb{Z}_2); \mathbb{Z}_2\big) \longrightarrow 
\bigotimes_{i=1}^{2n} 
C^\ast_{\mathrm{Pin}(2) \times \mathbb{Z}_2}
\big(SWF_{\mathrm{Pin}(2) \times \mathbb{Z}_2}(Y, \tilde{\s}); \mathbb{Z}_2\big).
\] Here, the tensor product is taken over $C^\ast(B(\mathrm{Pin}(2)\times \mathbb{Z}_2); \mathbb{Z}_2)$.
\end{lem}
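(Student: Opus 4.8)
\textbf{Proof strategy for \Cref{lem: algebraic obstruction by duplicating}.}
The plan is to combine the ``connected sum technique'' (\Cref{lem: topological obstruction by duplicating}) with the homotopy coherent Bauer--Furuta machinery developed in \Cref{subsec: families categories} and the preservation statement for stable local triviality (\Cref{lem: k-stable local triviality is preserved}). First I would set $Z = (S^2\times S^2)^{\# s}$ and apply \Cref{lem: topological obstruction by duplicating} with $\phi$ the Seifert rotation class, concluding that for every $n\ge 1$ the relative diffeomorphism obtained by performing the boundary Dehn twist $T^k$ on each of the $2n$ boundary copies of $Y$ in $X^{\# 2n}\# (S^2\times S^2)^{\# s}$ is smoothly isotopic rel boundary to the identity. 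Next, following the discussion in the introduction and \cite[Section~3]{KPT24}, the smooth triviality of this simultaneous odd Dehn twist means that the $\mathbb{Z}_2$-action on $\bigsqcup_{2n} Y$ induced by the Seifert $S^1$-action extends to a smooth homotopy coherent $\mathbb{Z}_2$-action on $W := X^{\# 2n}\# (S^2\times S^2)^{\# s}$; since $b_1(W)=0$, each boundary component has $b_1=0$, and (by \Cref{lem: existence of lift}, using that the relevant Spin structure on $\bigsqcup_{2n}Y$ is self-conjugate, admits a $\mathbb{Z}_2$-equivariant lift $\tilde{\s}$, and extends non-equivariantly over $W$) this lifts to a spin homotopy coherent $\mathbb{Z}_2$-action on $W$ which is strict on the boundary.

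With this spin homotopy coherent action in hand, I would invoke \Cref{cor: htpy coherent action implies stably loc trivial}: since $W$ is obtained from a $\mathbb{Z}_2$-homology ball by connected sum with copies of $S^2\times S^2$, we have $b^+(W) = b^-(W) = s$, so $\max\{b^+(W),b^-(W)\} = s$, and therefore the element $\mathfrak{B}\big(\bigsqcup_{2n}Y\big) \in \mathfrak{F}^{sp,str}_{\mathrm{Pin}(2)\times\mathbb{Z}_2}$ is $s$-stably locally trivial. By the monoidal property of the Borel construction functor and the definition of $SWF_{\mathrm{Pin}(2)\times\mathbb{Z}_2}$ on disjoint unions, this element is
\[
\mathfrak{B}\Big(\bigsqcup_{2n} Y\Big) = \bigwedge_{i=1}^{2n}\, \mathfrak{B}\big(SWF_{\mathrm{Pin}(2)\times\mathbb{Z}_2}(Y,\tilde{\s})\big)
\]
in $\mathfrak{F}^{sp,str}_{\mathrm{Pin}(2)\times\mathbb{Z}_2}$, where I use that all $2n$ boundary copies carry the same self-conjugate equivariant lift $\tilde{\s}$ (any two differ by twisting by \Cref{lem: eqv spin str equals self-conj eqv spin c}, and twisting does not affect the local equivalence class relevant here — or one simply runs the argument with all lifts equal). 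Then I would push this $s$-stably locally trivial element through the group homomorphism $C^\ast_{\mathrm{Pin}(2)}(-;\mathbb{Z}_2)\colon \mathfrak{F}^{sp,str}_{\mathrm{Pin}(2)\times\mathbb{Z}_2}\to \mathfrak{C}^{ch}_{\mathrm{Pin}(2)\times\mathbb{Z}_2}$ of \Cref{lem: one big lemma for Pin(2)xZ2 chain level}(2); by \Cref{lem: k-stable local triviality is preserved} the image is $s$-stably locally trivial in $\mathfrak{C}^{ch}_{\mathrm{Pin}(2)\times\mathbb{Z}_2}$. Finally, using that $C^\ast_{\mathrm{Pin}(2)}(-;\mathbb{Z}_2)$ is monoidal (so it sends $\bigwedge_{2n}$ to $\bigotimes_{2n}$ over $C^\ast(B(\mathrm{Pin}(2)\times\mathbb{Z}_2);\mathbb{Z}_2)$) and unrolling the definition of $s$-stable local triviality, there exist local maps of level $s$ in both directions between $\bigotimes_{i=1}^{2n} C^\ast_{\mathrm{Pin}(2)\times\mathbb{Z}_2}(SWF_{\mathrm{Pin}(2)\times\mathbb{Z}_2}(Y,\tilde{\s});\mathbb{Z}_2)$ (shifted appropriately) and $C^\ast(B(\mathrm{Pin}(2)\times\mathbb{Z}_2);\mathbb{Z}_2)$; in particular the desired level-$s$ local map from $C^\ast(B(\mathrm{Pin}(2)\times\mathbb{Z}_2);\mathbb{Z}_2)$ into the tensor product exists.

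\textbf{Main obstacle.} The routine bookkeeping is the degree shift: $s$-stable local triviality is stated for $(X,-s)$ versus $\mathfrak{B}(S^0,0,0)$, so I must keep careful track of the $\mathbb{Q}$-grading component (equivalently, identify the shift with $-\mathrm{rank}$ of the relevant family index bundle $\mathrm{ind}^t_f(\dirac)$ and $H^+$), and verify that after passing to cochains this shift is exactly $s = b^+(W)$, consistent with the level of the local map. The conceptual heart of the argument, however, is the chain of identifications: that smooth triviality of the iterated Dehn twist genuinely produces a spin homotopy coherent $\mathbb{Z}_2$-action strict on the boundary (this is where \Cref{lem: existence of lift} and the $b_1 = 0$, self-conjugacy, and extendability hypotheses are all essential, and where one must be careful that the action induced on $-W$ also admits a spin lift so that \Cref{cor: htpy coherent action implies stably loc trivial} applies in both orientations), and that the Borel-to-cochain functors are monoidal and preserve the level of local maps — all of which are supplied by the cited results, so the proof is essentially an assembly of these pieces once the homotopy coherent action is produced.
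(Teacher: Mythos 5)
Your proposal is correct and follows essentially the same route as the paper's proof: apply the connected sum technique (\Cref{lem: topological obstruction by duplicating}) to trivialize the simultaneous boundary Dehn twist on $X^{\#2n}\#(S^2\times S^2)^{\#s}$, produce the spin homotopy coherent $\mathbb{Z}_2$-action strict on the boundary via \Cref{lem: existence of lift}, deduce $s$-stable local triviality from \Cref{cor: htpy coherent action implies stably loc trivial}, and push through \Cref{lem: k-stable local triviality is preserved} and monoidality of the cochain functor. The only cosmetic difference is that you spell out the $b^\pm(W)=s$ computation and the orientation-reversal/degree-shift bookkeeping that the paper leaves implicit (along with its reduction of general odd $k$ to $k=1$ via \cite[Remark~3.6]{KPT24}).
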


% \[
%     C^\ast(B(\mathrm{Pin}(2)\times \Z_2);\Z_2) \rightarrow \underbrace{C^\ast_{\mathrm{Pin}(2)\times \Z_2}(SWF_{\mathrm{Pin}(2)\times \Z_2}(Y,\mathfrak{s});\Z_2)\otimes\cdots\otimes C^\ast_{\mathrm{Pin}(2)\times \Z_2}(SWF_{\mathrm{Pin}(2)\times \Z_2}(Y,\mathfrak{s});\Z_2)}_{2n}
%     \]

\begin{proof}
   We will assume $k = 1$; the general case can be proven in a very similar way. See \cite[Remark 3.6]{KPT24} for an explanation. Let $Y_n$ denote the $2n$-fold disjoint union $Y \sqcup \cdots \sqcup Y$, and consider the Borel construction
\[
E_{Y_n} \colon Y_n \longrightarrow Y_n \times_{\mathbb{Z}_2} E\mathbb{Z}_2 \longrightarrow B\mathbb{Z}_2,
\]
induced by the given $\mathbb{Z}_2$-action on $Y_n$. Let us also consider the stabilized manifold
\[
X^{\mathrm{st}} = X^{\# 2n} \# (S^2 \times S^2)^{\# s}.
\]
By \Cref{lem: topological obstruction by duplicating} and \cite[Proposition 3.5]{KPT24}, there exists a smooth $X^{\mathrm{st}}$ bundle $E_{X^{\mathrm{st}}}$ over $B\mathbb{Z}_2$ whose associated $Y_n$ bundle is precisely $E_{Y_n}$. Let $\tilde{\s}$ be a $\mathbb{Z}_2$-equivariant Spin structure on $Y$. Abusing notation, we also denote by $\tilde{\s}$ the induced $\mathbb{Z}_2$-equivariant Spin structure $\tilde{\s} \sqcup \cdots \sqcup \tilde{\s}$ on $Y_n$. Then, by \Cref{lem: existence of lift}, there exists a fiberwise Spin structure $\mathfrak{s}_{X^{\mathrm{st}}}$ on $E_{X^{\mathrm{st}}}$ that restricts to the fiberwise Spin structure on $E_{Y_n}$ induced by $\tilde{\s}$.
Thus it follows from \Cref{cor: htpy coherent action implies stably loc trivial} that the element
\[
\mathfrak{B}(SWF_{\mathrm{Pin}(2)\times \Z_2}(Y_n,\tilde{\s})) \in \mathfrak{F}^{sp,str}_{\mathrm{Pin}(2)\times \Z_2}
\]
is $s$-locally trivial. Hence, by \Cref{lem: k-stable local triviality is preserved}, its singular $\mathrm{Pin}(2)$-cochain complex
\[
C^\ast_{\mathrm{Pin}(2)}(\mathfrak{B}(SWF_{\mathrm{Pin}(2)\times \Z_2}(Y_n,\tilde{\s}));\Z_2) \in \mathfrak{C}^{ch}_{\mathrm{Pin}(2)\times \Z_2}
\]
is also $s$-locally trivial. Since 
\[
SWF_{\mathrm{Pin}(2)\times \Z_2}(Y_n,\tilde{\s}) \simeq \bigwedge_{i=1}^{2n} SWF_{\mathrm{Pin}(2)\times \Z_2}(Y,\tilde{\s}),
\]
we obtain
\[
\begin{split}
C^\ast_{\mathrm{Pin}(2)}\!\big(\mathfrak{B}(SWF_{\mathrm{Pin}(2)\times \Z_2}(Y_n,\tilde{\s}));\Z_2\big) 
&\simeq \bigotimes_{i=1}^{2n} C^\ast_{\mathrm{Pin}(2)}\!\big(\mathfrak{B}(SWF_{\mathrm{Pin}(2)\times \Z_2}(Y,\tilde{\s}));\Z_2\big) \\
&\simeq \bigotimes_{i=1}^{2n} C^\ast_{\mathrm{Pin}(2)\times \Z_2}(SWF_{\mathrm{Pin}(2)\times \Z_2}(Y,\tilde{\s});\Z_2).
\end{split}
\]
This establishes the lemma.
\end{proof}

\subsection{Proof of the main theorem}\label{subsec:proofofthemain}
\begin{comment}
We denote by $x_i^\ast$ and $y_j^\ast$ the canonical duals of $x_i$ and $y_i$, respectively, so that $M^\ast$ is freely generated over $\mathfrak{R}$ by them. Note that differential (also denoted $d$) on $M^\ast$ is given as follows:
\[
\begin{split}
        dy_{\pm 1} &= (U+\theta^2)x_0 + Ux_{\pm 1} + Q(y_1+y_{-1}), \\
        dy_{\pm 2} &= (U+\theta^2)x_{\pm 1} + Ux_{\pm 2} + Q(y_2+y_{-2}), \\
        dy_{\pm 3} &= U^2 x_{\pm 2} + (U+\theta^2)x_{\pm 3} + Q(y_3 + y_{-3}), \\
        dy_{\pm 4} &= U(U+\theta^2)^2 x_{\pm 3} + Ux_{\pm 4} + Q(y_4 + y_{-4}), \\
        dy_{\pm 5} &= U^2(U+\theta^2)^2 x_{\pm 4} + (U+\theta^2)x_{\pm 5} + Q(y_5 + y_{-5}), \\
        dx^\ast_j &= Q(x^\ast_j + y^\ast_{-j}) \text{ for all }j\neq 0.
\end{split}
\]

\begin{lem} \label{lem: when is f local}
    An $\mathfrak{R}$-module map $f:M^\ast \rightarrow M$ is local of level $1$ if and only if the coefficient of $x_0$ in $f(x^\ast_0)$ is $Q$.
\end{lem}
\begin{proof}
    Since $\deg x_0 = 0$, the inclusion $i$ of $\mathfrak{R}\cdot x^\ast_0$ into $M^\ast$ is a local map of level 0, and the projection $p$ of $M$ onto $\mathfrak{R}\cdot x_0$ is also a local map of level 0. Since the coefficient of $x_0$ in $f(x^\ast_0)$ can be written as $p \circ f \circ i$, the lemma follows from \Cref{lem: two-out-of-three property}.
\end{proof}
\end{comment}

In this section, we denote by $M = (C, d)$ the $\mathfrak{R}$-bimodule defined in \Cref{thm: Pin(2)xZ_2 cochain for 3 5 19}.  
For convenience, we adopt the following notation. A \emph{monomial} (in $\mathfrak{R}$) is an element of the form $Q^i U^j \theta^k$, where $i$, $j$, and $k$ are integers. Recall that $M$ has a basis set
\[
\mathcal{B} = \{x_i \mid -5 \le i \le 5\} \cup \{y_j \mid -5 \le j \le 5,\ j \ne 0\}.
\]
Given an element $x \in M^{\otimes_{\mathfrak{R}} n}$ and a sequence $b_1,\dots,b_n \in \mathcal{B}$, note that $x$ admits a unique expression of the form
\[
x = \sum_{m_1,\dots,m_n \in \mathcal{B}} K_{m_1,\dots,m_n}\, m_1 \otimes \cdots \otimes m_n,
\]
where each $K_{m_1,\dots,m_n} \in \mathfrak{R}$ is a polynomial. Then we write $K_{b_1, \dots, b_n}$ uniquely as a sum of pairwise distinct monomials:
\[
K_{b_1, \dots, b_n} = \sum_{i=1}^p S_i.
\]
We call $K_{b_1, \dots, b_n}$ the \emph{coefficient of $b_1 \otimes \cdots \otimes b_n$ in $x$}, and denote it by $\mathrm{Coef}(x; b_1, \dots, b_n)$. Moreover, given a monomial $m$, we say that \emph{$m$ is contained in $\mathrm{Coef}(x; b_1, \dots, b_n)$} if $m \in \{S_1, \dots, S_p\}$.

\begin{rem}
    For making computations as simple as possible, we extend the notion of local maps of level $k$ to $\mathfrak{R}$-module maps between right $\mathfrak{R}$-modules as follows. Given two right $\mathfrak{R}$-modules $M,N$ such that $M\otimes_\mathfrak{R} \mathfrak{R}_0$ and $N\otimes_\mathfrak{R} \mathfrak{R}_0$ are quasi-isomorphic as right $\mathfrak{R}_0$-modules to some degree shifts of $\mathfrak{R}_0$, we say that a (right) $\mathfrak{R}$-module map $f\colon M\rightarrow N$ is a \emph{local map of level $k$} if the map
    \[
    f\otimes\mathrm{id}\colon M\otimes_\mathfrak{R} \mathfrak{R}_0 \longrightarrow N\otimes_\mathfrak{R} \mathfrak{R}_0
    \]
    is homotopic to $Q^k \cdot f'$ for some right $\mathfrak{R}_0$-module quasi-isomorphism $f'$.

    It is then clear that, for any $E_\infty$ $\mathfrak{R}$-modules $M,N$ of SWF-type, if an $E_\infty$ $\mathfrak{R}$-module map $f\colon M\rightarrow N$ is a local map of level $k$, then it is also a local map of level $k$ as a right $A_\infty$ $\mathfrak{R}$-module map.
\end{rem}

\begin{lem} \label{lem: when is f local n copies}
Given an integer $i \in \{0,1,2\}$, an $\mathfrak{R}$-module map
\[
f \colon \mathfrak{R} \longrightarrow M \otimes_{\mathfrak{R}} \cdots \otimes_{\mathfrak{R}} M
\]
is local of level $i$ if and only if the coefficient of $x_0 \otimes \cdots \otimes x_0$ in $f(1)$ is $Q^i + \theta y$ for some $y \in \mathbb{Z}_2[Q,\theta]$ of degree $i-1$.
\end{lem}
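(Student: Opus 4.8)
The statement is a purely algebraic criterion for detecting the level of a map out of $\mathfrak{R}$ into an $n$-fold derived tensor power of $M$, in the spirit of \Cref{lem: when is f local n copies} (for $n=1$, stated as ``\Cref{lem: when is f local}'' in an earlier draft). The strategy is to reduce the level computation to a single explicit ``projection to the top cell'' computation, using the two-out-of-three property \Cref{lem: two-out-of-three property} together with the observation that $x_0 \otimes \cdots \otimes x_0$ is the class of the lowest-degree (equivalently, the ``bottom'') generator on which the localization $-\otimes_{\mathfrak{R}} \mathfrak{R}_0$ is supported. The first step is to record that, since $\deg x_0 = 0$ and $\delta(M) = 0$ (this is exactly \Cref{thm: Pin(2)xZ_2 cochain for 3 5 19}, where we verified the degree shift is zero), the inclusion $\iota_0 \colon \mathfrak{R} \hookrightarrow M$, $1 \mapsto x_0$, is a local map of level $0$, and dually the projection $p_0 \colon M \to \mathfrak{R}$, $x_0 \mapsto 1$ and all other basis elements $\mapsto 0$, is a local map of level $0$; both facts follow because after tensoring with $\mathfrak{R}_0$ the summand $\mathfrak{R}_0 x_0$ survives as a quasi-isomorphic copy of $\mathfrak{R}_0$ (the $y_j$'s become acyclic via the $U$-powers and $Q$-differentials, and the $x_i$'s for $i \ne 0$ carry strictly higher degree after localization). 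This last point — identifying precisely which basis elements survive in $M \otimes_{\mathfrak{R}} \mathfrak{R}_0$ and checking $H^\ast(M\otimes_{\mathfrak{R}}\mathfrak{R}_0) \cong \mathfrak{R}_0$ generated by the image of $x_0$ — is the one genuine computation, but it is short: one runs the same kind of cancellation argument already used in \Cref{subsec: explicit computation for 3 5 19} and \Cref{ex: p=2 case}, now over the localized ring.

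\textbf{Key steps.} (1) From step above, $\iota_0^{\otimes n} \colon \mathfrak{R} \to M^{\otimes_{\mathfrak{R}} n}$ and $p_0^{\otimes n} \colon M^{\otimes_{\mathfrak{R}} n} \to \mathfrak{R}$ are local of level $0$, since tensor products of level-$0$ local maps are level-$0$ local (tensoring with $\mathfrak{R}_0$ commutes with $\otimes_{\mathfrak{R}}$ up to quasi-isomorphism, and a tensor product of $\mathfrak{R}_0$-module quasi-isomorphisms is a quasi-isomorphism; compare the proof of \Cref{lem: S1xZp chain Z-graded loc eqv gp is abelian}(1)). In particular $M^{\otimes_{\mathfrak{R}} n}$ is again of SWF-type with $\delta = 0$. (2) Given $f \colon \mathfrak{R} \to M^{\otimes_{\mathfrak{R}} n}$, the composite $p_0^{\otimes n} \circ f \colon \mathfrak{R} \to \mathfrak{R}$ is, up to homotopy, multiplication by the element $\mathrm{Coef}(f(1); x_0, \dots, x_0) \in H^\ast(\mathfrak{R}) = \mathbb{Z}_2[Q,V]/(Q^3)$, because $p_0^{\otimes n}(f(1))$ is by definition that coefficient; here one must note that only the class of $f(1)$ in homology matters and that the $\mathfrak{R}$-module endomorphisms of $\mathfrak{R}$ up to homotopy are classified by $H^\ast(\mathfrak{R})$. (3) By \Cref{lem: two-out-of-three property} applied to $\mathfrak{R} \xrightarrow{f} M^{\otimes_{\mathfrak{R}} n} \xrightarrow{p_0^{\otimes n}} \mathfrak{R}$, with $p_0^{\otimes n}$ local of level $0$: $f$ is local of level $i$ (for $i \le 2$) if and only if $p_0^{\otimes n} \circ f$ is local of level $i$, i.e.\ if and only if the homology class of $\mathrm{Coef}(f(1); x_0,\dots,x_0)$ equals $Q^i$ times a unit in $\mathbb{Z}_2[Q,V,V^{-1}]/(Q^3)$ after tensoring with $\mathfrak{R}_0$ — and the only homogeneous units are powers of $V$, which live in degree $\equiv 0 \pmod 4$, so by a degree count this forces the class to be exactly $Q^i$ (no $V$-power can appear because $\deg f(1)$ at the $x_0$-slot is $i$, matching $\deg Q^i = i < 4$). (4) Finally, translate the homology-class condition back to the chain level: an element of $\mathbb{Z}_2[Q,U,\theta]$ represents $Q^i \in H^\ast(\mathfrak{R}) = \mathbb{Z}_2[Q,V]/(Q^3)$ (with $V = U^2$) precisely when, modulo the image of $d$ and modulo $\theta$-divisible error terms coming from the non-formality, it equals $Q^i + \theta y$ with $\deg(\theta y) = i$, i.e.\ $\deg y = i-1$ and $y \in \mathbb{Z}_2[Q,\theta]$; this is the asserted normal form. (The $\theta y$ ambiguity is exactly the kernel of the restriction $H^\ast(\mathfrak{R}) \to H^\ast(\mathfrak{R}_0)$ in the relevant low degrees.)

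\textbf{Main obstacle.} The only real work is step (4) — pinning down the precise normal form ``$Q^i + \theta y$ with $\deg y = i - 1$'' rather than some messier description. Concretely one must verify that, in the relevant degrees $0 \le i \le 2$, the kernel of $\mathbb{Z}_2[Q,U,\theta] \to \mathbb{Z}_2[Q,U,U^{-1}]$ on the nose (not just on homology) together with the $d$-boundaries in $\mathfrak{R}$ contribute exactly the $\theta$-divisible elements of the stated degree, and that no additional relations appear from the differential $dU = Q^3$ in this range (they cannot, since $Q^3 = 0$ in $H^\ast(\mathfrak{R})$ and the degrees involved are too small). A secondary but routine check is that $x_0 \otimes \cdots \otimes x_0$ is genuinely the generator of $H^\ast(M^{\otimes_{\mathfrak{R}} n} \otimes_{\mathfrak{R}} \mathfrak{R}_0)$, i.e.\ that the localization of the $n$-fold tensor product does not acquire unexpected homology in the slot of $x_0\otimes\cdots\otimes x_0$; this follows from step (1) since $p_0^{\otimes n}$ is a level-$0$ local map, hence a quasi-isomorphism after $-\otimes_{\mathfrak{R}}\mathfrak{R}_0$. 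I expect the whole argument to be short once these bookkeeping points are in place.
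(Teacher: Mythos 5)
Your proposal takes essentially the same route as the paper's three-sentence proof: post-compose with the projection $p^{\otimes n}$ onto the $x_0\otimes\cdots\otimes x_0$ slot (a local map of level $0$, by \Cref{rem: level 0 projection and level 1 inclusion}), invoke \Cref{lem: two-out-of-three property} to equate the level of $f$ with the level of $p^{\otimes n}\circ f$, and then use a degree count to identify the coefficient with $Q^i+\theta y$. No genuinely different decomposition or key lemma appears; the paper simply compresses the bookkeeping.

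Two assertions in your write-up are, however, wrong. First, you claim ``dually'' to $p_0$ that the inclusion $\iota_0\colon\mathfrak{R}\to M$, $1\mapsto x_0$, is a local map of level $0$. This contradicts \Cref{rem: level 0 projection and level 1 inclusion}, which explicitly observes that no level-$0$ local map $\mathfrak{R}\to M$ exists (the minimum is level $1$); moreover $1\mapsto x_0$ is not even a chain map, since $dx_0\ne 0$ in $M$. You never use $\iota_0$ in steps (2)--(4), so your argument survives, but the sentence reflects a misreading of the differential on $M$ and should be removed. Second, $H^\ast(\mathfrak{R})\cong\Z_2[Q,\theta,V]/(Q^3)$, not $\Z_2[Q,V]/(Q^3)$; you have dropped the generator $\theta$ from the $B\Z_2$ factor, which is precisely what produces the $\theta y$ ambiguity in the statement. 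Once $\theta$ is reinstated, your step (4) collapses to the one-line degree count the paper actually uses: the coefficient is a degree-$i$ cocycle in $\mathfrak{R}$, there are no coboundaries in degree $\le 2$ (the lowest boundary $Q^3$ lives in degree $3$), and $U$ (hence $V$) cannot occur in $y$ since $\deg U=2>i-1$; the level-$i$ condition forces its image under $\theta\mapsto 0$ to be $Q^i$.
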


\begin{proof}
As observed in \Cref{rem: level 0 projection and level 1 inclusion}, the projection $p$ of $M$ onto $\mathfrak{R} \cdot x_0$ defines a local map of level~0. Since the coefficient of $x_0 \otimes \cdots \otimes x_0$ in $f(1)$ equals the value of $(p \otimes \cdots \otimes p) \circ f(1)$, the result follows from \Cref{lem: two-out-of-three property}. Note that $U$ or $V$ does not appear in $y$ since $\deg U = 2$ while $\deg y = i - 1 \le 1$.
\end{proof}

\begin{lem} \label{lem: no local map of level 2 in 3 copies}
    There does not exist a local map $f\colon \mathfrak{R}\rightarrow M\otimes_{\mathfrak{R}}M\otimes_{\mathfrak{R}}M$ of level 2.
\end{lem}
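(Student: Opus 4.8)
The plan is to argue by contradiction: assume a local map $f\colon \mathfrak{R}\rightarrow M\otimes_{\mathfrak{R}}M\otimes_{\mathfrak{R}}M$ of level $2$ exists, and derive an impossibility by tracking a single coefficient through the constraint that $f$ be a chain map. By \Cref{lem: when is f local n copies}, locality of level $2$ means that the coefficient of $x_0\otimes x_0\otimes x_0$ in $f(1)$ is of the form $Q^2+\theta y$ with $y\in\Z_2[Q,\theta]$ of degree $1$, so this coefficient contains the monomial $Q^2$. The core of the argument is then to show that the cycle condition $d(f(1))=0$ forces the $Q^2$-summand to be cancelled off, which is impossible because nothing else in the differential of a degree-$4$ element of $M^{\otimes 3}$ can produce such a term without an accompanying $\theta$ or $U$ factor. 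Concretely, I would write $f(1)=\sum K_{b_1,b_2,b_3}\,b_1\otimes b_2\otimes b_3$ with each $K_{b_1,b_2,b_3}\in\mathfrak{R}$ homogeneous of degree $4-\deg b_1-\deg b_2-\deg b_3$, and impose $d(f(1))=0$ using the formula for $d$ on $M$ recorded just before \Cref{thm: Pin(2)xZ_2 cochain for 3 5 19} (extended to the tensor product by the Leibniz rule, remembering the $Q(y_j+y_{-j})$ terms and the $Q(x_i+x_{-i})$ terms).

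\textbf{Key steps, in order.} First I would set up notation: degrees of all basis elements ($\deg x_0=0$; the $x_i,y_j$ with $i,j\ne 0$ have the degrees forced by the differential in \Cref{thm: Pin(2)xZ_2 cochain for 3 5 19}), and note that $\deg f(1)=0$ so each $K_{b_1,b_2,b_3}$ is a homogeneous polynomial of a fixed (small, typically nonnegative) degree. Second, I would isolate the component of $d(f(1))=0$ living in the summand $\mathfrak{R}\cdot(y_{\pm1}\otimes x_0\otimes x_0)$ and its $x_0\leftrightarrow x_0$ permutations: the only basis elements whose differential hits $x_0$ are $y_1$ and $y_{-1}$ (via $dy_{\pm1}\ni (U+\theta^2)x_0$ or, in the version with the $Q^2y_{-1}$ correction, via $dx_0\ni (U+\theta^2)y_1+(U+\theta^2+Q^2)y_{-1}$ — I would use whichever sign convention matches the stated $M$), so the coefficient $Q^2$ in $\mathrm{Coef}(f(1);x_0,x_0,x_0)$ can only be killed by contributions coming from $y_{\pm1}$ in one of the three slots, and from $x_{\pm1}$ through the $dx_{\pm 1}\ni Ux_{\pm1}+\cdots$ terms. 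Third, I would show a divisibility obstruction: any such cancelling contribution carries a factor of $U+\theta^2$ or of $Q$ times a strictly positive-degree polynomial, hence cannot equal $Q^2$ on the nose — mirroring exactly the mechanism in \Cref{rem: level 0 projection and level 1 inclusion} (where going from $\mathfrak{R}$ to $M$ one necessarily picks up an $(U+\theta^2)y_{-1}$ term alongside $Qx_0$, i.e.\ level $1$ is achievable but level $0$ is not). Iterating this bookkeeping across the three tensor slots, the $Q^2$ term survives in $d(f(1))$, contradicting that $f(1)$ is a cycle.

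\textbf{Main obstacle.} The delicate point is the combinatorial bookkeeping of the chain-map equation across three tensor factors: there are many summands $b_1\otimes b_2\otimes b_3$ of the relevant low degrees, and I must make sure no indirect cancellation route (e.g.\ a chain of terms involving $x_{\pm1},x_{\pm2}$, the $Q(x_i+x_{-i})$ and $Q(y_j+y_{-j})$ pieces of the differential, and the $\theta$-generator) can conspire to remove the $Q^2$ monomial. The clean way to do this is to pass to the associated graded with respect to the $Q$-adic and $\theta$-adic filtrations, or equivalently to work modulo $(U,\theta)$ and modulo $Q^3$, reducing the whole question to a finite linear-algebra check over $\Z_2$; I expect that in this reduction every element of the image of $d$ in the $x_0^{\otimes 3}$-coefficient lies in the ideal generated by $U+\theta^2$ and by $Q\cdot(\text{positive degree})$, which visibly does not contain $Q^2$. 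Once that reduction lemma is in hand the contradiction is immediate, and the proof of \Cref{lem: no local map of level 2 in 3 copies} follows; this is then what powers \Cref{cor: no map of level 2 in 4 copies} and, via \Cref{lem: algebraic obstruction by duplicating}, the proof of \Cref{thm: main}.
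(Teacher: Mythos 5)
There is a genuine gap, and it lies in the central mechanism you propose. You claim that the cycle condition forces the $Q^2$ summand of $\mathrm{Coef}(f(1);x_0,x_0,x_0)$ to be killed by terms that "carry a factor of $U+\theta^2$ or of $Q$ times a strictly positive-degree polynomial, hence cannot equal $Q^2$ on the nose," and that this already yields the contradiction. This first-order divisibility obstruction does not exist. Writing $\alpha=f(1)$, the term $Q^2\,x_0\otimes x_0\otimes x_0$ contributes $(Q^2+\lambda_1Q\theta+\lambda_2\theta^2)(U+\theta^2)$ to $\mathrm{Coef}(d\alpha;y_{\pm1},x_0,x_0)$, and the dangerous monomial $Q^2\theta^2$ there \emph{can} be cancelled: the summand $Q(y_1+y_{-1})$ of $dy_{\pm1}$ contributes $Q\cdot\sum_{i=\pm1}\mathrm{Coef}(\alpha;y_i,x_0,x_0)$, so it suffices that $Q\theta^2$ appear in exactly one of $\mathrm{Coef}(\alpha;y_{\pm1},x_0,x_0)$. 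Indeed, \Cref{rem: level 0 projection and level 1 inclusion} exhibits a level-$1$ cycle $Qx_0+(U+\theta^2)y_{-1}$ in a single copy of $M$, and its tensor square is a level-$2$ local map into $M\otimes_{\mathfrak{R}}M$ (\Cref{rmk:algebraic}); any argument of the "no term can produce $Q^2$" type would apply equally to two tensor factors and is therefore necessarily false. Your proposed clean-up — passing to the quotient modulo $(U,\theta)$ and modulo $Q^3$ — makes matters worse rather than better: the genuine obstruction lives entirely in the $\theta$-dependent (i.e.\ $\Z_2$-equivariant) part of the coefficients, so reducing modulo $\theta$ discards it.

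What actually happens in the paper's proof is a three-stage coefficient chase ending in a parity contradiction, not a divisibility obstruction. First, the equation $\mathrm{Coef}(d\alpha;y_{\pm1},x_0,x_0)=0$ (and its two permutations) pins down signs $i,j,k\in\{\pm1\}$ recording which of $\mathrm{Coef}(\alpha;y_{\pm1},x_0,x_0)$, etc., contains $Q\theta^2$. Second, the equations $\mathrm{Coef}(d\alpha;y_i,y_{-j},x_0)=0$ (mod $Q^2$) and their analogues force $\theta^4$ to appear in exactly one of each pair $\mathrm{Coef}(\alpha;y_{ti},y_{tj},x_0)$, $t=\pm1$, giving $c_{s,1}+c_{s,-1}=1$ for $s=1,2,3$. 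Third, the equations $\mathrm{Coef}(d\alpha;y_{\pm i},y_{\pm j},y_{\pm k})=0$ (mod $U,Q$) give $c_{1,t}+c_{2,t}+c_{3,t}=0$ for each $t$, and summing all six relations yields $1=0$ in $\Z_2$. The impossibility is thus a global consistency condition that only appears with three tensor factors; your proposal as written stops at the first stage, where no contradiction is available.
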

\begin{proof}
    Write $f(1)$ as $\alpha$, which is a cocycle in $M \otimes_{\mathfrak{R}} M \otimes_{\mathfrak{R}} M$. From \Cref{lem: when is f local n copies}, we know that
\[
\mathrm{Coef}(\alpha; x_0, x_0, x_0) = Q^2 + \lambda_1 Q\theta + \lambda_2 \theta^2
\]
for some $\lambda_1, \lambda_2 \in \mathbb{Z}_2$. Since $d\alpha = 0$, we compute
\[
\begin{split}
0 &= \mathrm{Coef}(d\alpha; y_1, x_0, x_0) \\
&= (Q^2 + \lambda_1 Q\theta + \lambda_2 \theta^2)(U + \theta^2) + U \cdot \mathrm{Coef}(\alpha; x_1, x_0, x_0) + Q \cdot \sum_{i = \pm 1} \mathrm{Coef}(\alpha; y_i, x_0, x_0) \pmod{Q^3}.
\end{split}
\]
To cancel the $Q^2 \theta^2$ term in the product $(Q^2 + \lambda_1 Q\theta + \lambda_2 \theta^2)(U + \theta^2)$, the same term must appear in $\mathrm{Coef}(\alpha; y_1, x_0, x_0) + \mathrm{Coef}(\alpha; y_{-1}, x_0, x_0)$. Hence, $Q\theta^2$ must be contained in either $\mathrm{Coef}(\alpha; y_1, x_0, x_0)$ or $\mathrm{Coef}(\alpha; y_{-1}, x_0, x_0)$.

    The same statements also apply to $\mathrm{Coef}(\alpha; x_0, y_{\pm 1}, x_0)$ and $\mathrm{Coef}(\alpha; x_0, x_0, y_{\pm 1})$. Hence, we see that there exist unique indices $i, j, k \in \{-1, 1\}$ such that the term $Q\theta^2$ is contained
\begin{itemize}
    \item in $\mathrm{Coef}(\alpha; y_i, x_0, x_0)$, $\mathrm{Coef}(\alpha; x_0, y_j, x_0)$, and $\mathrm{Coef}(\alpha; x_0, x_0, y_k)$,
    \item but not in $\mathrm{Coef}(\alpha; y_{-i}, x_0, x_0)$, $\mathrm{Coef}(\alpha; x_0, y_{-j}, x_0)$, and $\mathrm{Coef}(\alpha; x_0, x_0, y_{-k})$.
\end{itemize}
We then compute:
\[
\begin{split}
0 &= \mathrm{Coef}(d\alpha; y_i, y_{-j}, x_0) \\
&= (U + \theta^2) \cdot \mathrm{Coef}(\alpha; x_0, y_{-j}, x_0) + U \cdot \mathrm{Coef}(\alpha; x_i, y_{-j}, x_0) + Q \cdot \sum_{\ell = \pm 1} \mathrm{Coef}(\alpha; y_\ell, y_{-j}, x_0) \\
&\quad + (U + \theta^2) \cdot \mathrm{Coef}(\alpha; y_i, x_0, x_0) + U \cdot \mathrm{Coef}(\alpha; y_i, x_{-j}, x_0) + Q \cdot \sum_{\ell = \pm 1} \mathrm{Coef}(\alpha; y_i, y_\ell, x_0) \pmod{Q^2} \\
&= \theta^2 \cdot \big(\mathrm{Coef}(\alpha; x_0, y_{-j}, x_0) + \mathrm{Coef}(\alpha; y_i, x_0, x_0)\big) + U \cdot (\text{something}) \\
&\quad + Q \cdot \big(\mathrm{Coef}(\alpha; y_i, y_j, x_0) + \mathrm{Coef}(\alpha; y_{-i}, y_{-j}, x_0)\big) \pmod{Q^2}.
\end{split}
\]
To cancel the $Q\theta^4$ term in $\theta^2 \cdot \big(\mathrm{Coef}(\alpha; x_0, y_{-j}, x_0) + \mathrm{Coef}(\alpha; y_i, x_0, x_0)\big)$, the term $\theta^4$ must be contained in $\mathrm{Coef}(\alpha; y_i, y_j, x_0) + \mathrm{Coef}(\alpha; y_{-i}, y_{-j}, x_0)$.
For simplicity, for each $s \in \{1,2,3\}$ and $t \in \{-1,1\}$, define the mod 2 values $c_{s,t} \in \mathbb{Z}_2$ by:
\[
\begin{split}
c_{1,t} &= \begin{cases}
1 & \text{if the term } \theta^4 \text{ is contained in } \mathrm{Coef}(\alpha; y_{ti}, y_{tj}, x_0), \\
0 & \text{otherwise},
\end{cases} \\
c_{2,t} &= \begin{cases}
1 & \text{if the term } \theta^4 \text{ is contained in } \mathrm{Coef}(\alpha; y_{ti}, x_0, y_{tk}), \\
0 & \text{otherwise},
\end{cases} \\
c_{3,t} &= \begin{cases}
1 & \text{if the term } \theta^4 \text{ is contained in } \mathrm{Coef}(\alpha; x_0, y_{tj}, y_{tk}), \\
0 & \text{otherwise}.
\end{cases}
\end{split}
\]
Then the above computation implies that $c_{1,1} + c_{1,-1} = 1$. A similar argument yields $c_{2,1} + c_{2,-1} = 1$ and $c_{3,1} + c_{3,-1} = 1$.

   Now, by considering the coefficients of $d\alpha$ for $y_i \otimes y_j \otimes y_k$, we obtain
\[
0 = \mathrm{Coef}(d\alpha; y_i, y_j, y_k) = \theta^2 \big( \mathrm{Coef}(\alpha; x_0, y_j, y_k) + \mathrm{Coef}(\alpha; y_i, x_0, y_k) + \mathrm{Coef}(\alpha; y_i, y_j, x_0) \big) \pmod{U, Q}.
\]
By extracting the coefficient of $\theta^6$, we deduce that $c_{1,1} + c_{2,1} + c_{3,1} = 0$. Similarly, since we also have
\[
0 = \mathrm{Coef}(d\alpha; y_{-i}, y_{-j}, y_{-k}) = \theta^2 \big( \mathrm{Coef}(\alpha; x_0, y_{-j}, y_{-k}) + \mathrm{Coef}(\alpha; y_{-i}, x_0, y_{-k}) + \mathrm{Coef}(\alpha; y_{-i}, y_{-j}, x_0) \big) \pmod{U, Q},
\]
we obtain $c_{1,-1} + c_{2,-1} + c_{3,-1} = 0$. But then we have
\[
\begin{split}
1 &= (c_{1,1} + c_{1,-1}) + (c_{2,1} + c_{2,-1}) + (c_{3,1} + c_{3,-1}) \\
&= (c_{1,1} + c_{2,1} + c_{3,1}) + (c_{1,-1} + c_{2,-1} + c_{3,-1}) \\
&= 0 \quad \text{in } \mathbb{Z}_2,
\end{split}
\]
a contradiction. The lemma follows.
\end{proof}

\begin{cor} \label{cor: no map of level 2 in 4 copies}
    There does not exist a local map $f\colon \mathfrak{R}\rightarrow M\otimes_{\mathfrak{R}}M\otimes_{\mathfrak{R}}M\otimes_{\mathfrak{R}}M$ of level 2.
\end{cor}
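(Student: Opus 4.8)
The plan is to reduce the four-copy statement to the three-copy statement, which has already been established in \Cref{lem: no local map of level 2 in 3 copies}. Suppose, for contradiction, that there exists a local map $f\colon \mathfrak{R}\rightarrow M\otimes_{\mathfrak{R}}M\otimes_{\mathfrak{R}}M\otimes_{\mathfrak{R}}M$ of level $2$. The idea is to post-compose $f$ with a local map of level $0$ of the form $\mathrm{id}\otimes\mathrm{id}\otimes\mathrm{id}\otimes p$, where $p\colon M\rightarrow \mathfrak{R}$ is the projection onto $\mathfrak{R}\cdot x_0$ described in \Cref{rem: level 0 projection and level 1 inclusion}; this $p$ is a local map of level $0$, hence $\mathrm{id}\otimes\mathrm{id}\otimes\mathrm{id}\otimes p$ is also a local map of level $0$ by the monoidality of $-\otimes_{\mathfrak{R}}\mathfrak{R}_0$ (the tensor of local equivalences is a local equivalence, as in \Cref{lem: one big lemma for Pin(2)xZ2 chain level}). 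The composition
\[
\mathfrak{R}\xrightarrow{\;f\;} M\otimes_{\mathfrak{R}}M\otimes_{\mathfrak{R}}M\otimes_{\mathfrak{R}}M \xrightarrow{\;\mathrm{id}\otimes\mathrm{id}\otimes\mathrm{id}\otimes p\;} M\otimes_{\mathfrak{R}}M\otimes_{\mathfrak{R}}M
\]
would then be a local map of level $2$ by \Cref{lem: two-out-of-three property} (with $i=2$, $j=0$), contradicting \Cref{lem: no local map of level 2 in 3 copies}.

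The one point requiring a small amount of care is the identification $M\otimes_{\mathfrak{R}}\mathfrak{R}\simeq M$ used implicitly when collapsing the last factor; since $p$ sends $M$ to $\mathfrak{R}$ and $\mathfrak{R}$ is the monoidal unit for $-\otimes_{\mathfrak{R}}-$, the target $M\otimes_{\mathfrak{R}}M\otimes_{\mathfrak{R}}M\otimes_{\mathfrak{R}}\mathfrak{R}$ is canonically quasi-isomorphic to $M\otimes_{\mathfrak{R}}M\otimes_{\mathfrak{R}}M$, and this quasi-isomorphism is manifestly a local map of level $0$. Concretely, using the basis description, one checks directly that the coefficient of $x_0\otimes x_0\otimes x_0$ in the image cocycle equals the coefficient of $x_0\otimes x_0\otimes x_0\otimes x_0$ in $f(1)$, which by \Cref{lem: when is f local n copies} is of the form $Q^2+\theta y$ with $\deg y=1$; applying the same lemma in the three-copy setting shows the composite is local of level $2$.

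I do not expect any genuine obstacle here: the corollary is a formal consequence of \Cref{lem: no local map of level 2 in 3 copies} together with the two-out-of-three property and the existence of the level-$0$ projection $p$. If one wanted an even more self-contained argument avoiding the invocation of \Cref{lem: two-out-of-three property}, one could instead repeat the combinatorial argument of \Cref{lem: no local map of level 2 in 3 copies} verbatim in the four-copy setting, tracking the mod-$2$ values $c_{s,t}$ for $s\in\{1,2,3,4\}$ and deriving the same parity contradiction; but the reduction via projection is cleaner and shorter, so that is the route I would take.

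\begin{proof}[Proof sketch]
Suppose such a local map $f$ exists. Let $p\colon M\rightarrow \mathfrak{R}$ be the projection onto $\mathfrak{R}\cdot x_0$, which is a local map of level $0$ (see \Cref{rem: level 0 projection and level 1 inclusion}). Then $g := (\mathrm{id}_M\otimes\mathrm{id}_M\otimes\mathrm{id}_M\otimes p)\circ f\colon \mathfrak{R}\rightarrow M\otimes_{\mathfrak{R}}M\otimes_{\mathfrak{R}}M\otimes_{\mathfrak{R}}\mathfrak{R}\simeq M\otimes_{\mathfrak{R}}M\otimes_{\mathfrak{R}}M$ is a composition of a level-$2$ map with a level-$0$ map, hence a local map of level $2$ by \Cref{lem: two-out-of-three property}. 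Equivalently, by \Cref{lem: when is f local n copies} applied to $f$, the coefficient of $x_0\otimes x_0\otimes x_0\otimes x_0$ in $f(1)$ is $Q^2+\theta y$ with $\deg y=1$, and this coefficient is carried by $p$ to the coefficient of $x_0\otimes x_0\otimes x_0$ in $g(1)$; so $g$ is local of level $2$ by the three-copy version of \Cref{lem: when is f local n copies}. This contradicts \Cref{lem: no local map of level 2 in 3 copies}. The corollary follows.
\end{proof}
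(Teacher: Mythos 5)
Your proposal is correct and is essentially identical to the paper's own proof: compose $f$ with $\mathrm{id}\otimes\mathrm{id}\otimes\mathrm{id}\otimes p$, apply \Cref{lem: two-out-of-three property} to conclude the composite is local of level $2$, and contradict \Cref{lem: no local map of level 2 in 3 copies}. The extra remarks on the unit identification and the coefficient check are fine but not needed.
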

\begin{proof}
Suppose that such a map $f$ exists. Consider the local map $p \colon M \to \mathfrak{R}$ of level $0$, defined in \Cref{rem: level 0 projection and level 1 inclusion}. Then, by \Cref{lem: two-out-of-three property}, the composed map
\[
\mathfrak{R} \xrightarrow{f} M \otimes_{\mathfrak{R}} M \otimes_{\mathfrak{R}} M \otimes_{\mathfrak{R}} M \xrightarrow{\mathrm{id} \otimes \mathrm{id} \otimes \mathrm{id} \otimes p} M \otimes_{\mathfrak{R}} M \otimes_{\mathfrak{R}} M
\]
is also a local map of level $2$, contradicting \Cref{lem: no local map of level 2 in 3 copies}.
\end{proof}

\begin{rem}\label{rmk:algebraic}
As discussed in \Cref{rem: level 0 projection and level 1 inclusion}, there exists a map $f \colon \mathfrak{R} \to M$ which is local of level $1$. Taking its tensor square gives a local map of level $2$ from $\mathfrak{R}$ to $M \otimes_{\mathfrak{R}} M$. Hence, we needed to take the tensor product of at least three copies of $M$ to obstruct the existence of a local map of level $2$ from $\mathfrak{R}$, which is exactly what we did in \Cref{lem: no local map of level 2 in 3 copies}.
\end{rem}

Now we can prove the main theorem.

\begin{proof}[Proof of \Cref{thm: main}]
Consider the Mazur manifold $X$ bounded by $Y = \Sigma(3,5,19)$. Since $Y$ is Seifert fibered, we can define the boundary Dehn twist $T_{X,Y}$ via the Seifert action on $Y$. For each $i \in \mathbb{N}$, let
\[
f_i := T_{X,Y}^{2i+1}.
\]
Since $T_{X,Y}$ is orientation-preserving, acts trivially on $H_\ast(X; \mathbb{Z})$, and $X$ is simply connected, it is topologically isotopic~\cite[Corollary C]{Orson-Powell:2022-1} and stably smoothly isotopic~\cite{saeki2006stable, gabai2023pseudo, gabai3} (see also~\cite[Theorem 2.5]{KMPW24}) to the identity rel.\ boundary.
 Also, since $Y$ is a Brieskorn homology sphere, no nontrivial power of $T_{X,Y}$ is smoothly isotopic to the identity rel.\ boundary by \cite[Theorem 1.1]{KPT24}.\footnote{While the original proof relies on \cite[Theorem 6.1]{baraglia2024brieskorn}, it can now be replaced with \Cref{thm: Froyshov strict inequality}.} Thus, $f_i$ and $f_j$ are not smoothly isotopic rel.\ boundary whenever $i \ne j$.

   Now suppose that $f_i \# \mathrm{id}$ is smoothly isotopic to the identity in $X \# (S^2 \times S^2)^{\# 2}$ rel.\ boundary. By \Cref{thm: Pin(2)xZ_2 cochain for 3 5 19} and \Cref{lem: one big lemma for Pin(2)xZ2 chain level}, we see that
\[
\widetilde{C}^\ast_{\mathrm{Pin}(2)\times \Z_2}(SWF_{\mathrm{Pin}(2) \times \mathbb{Z}_2}(Y, \tilde{\s}); \mathbb{Z}_2)
\text{ is locally equivalent to }  M.
\]
Thus, it follows from \Cref{lem: algebraic obstruction by duplicating,lem: two-out-of-three property} that there exists a local map $\mathfrak{R} \to M^{\otimes 4}$ of level 2. However, we have shown in \Cref{cor: no map of level 2 in 4 copies} that such a map does not exist, a contradiction. Therefore, $f_i$ is not smoothly isotopic to the identity in $X \# (S^2 \times S^2)^{\# 2}$ rel.\ boundary. The theorem follows.\end{proof}

\appendix

\section{Atiyah--Segal--Singer's equivariant index theorem for manifolds with boundary} \label{appendix A}

\subsection{Equivariant index theorem}
We use the equivariant index theorem of Atiyah–Segal–Singer for $4$-manifolds with boundary~\cite{donnelly1978eta}, applied to $\mathrm{Spin}^c$ Dirac operators. This theorem expresses the index as the sum of an integral involving certain combinations of differential forms and a boundary correction term. Note that the integral part coincides with that of the equivariant version of the Atiyah–Singer index theorem~\cite{atiyah1968indexII, atiyah1968indexIII, atiyah1968lefschetz, atiyah1970spin, berline2003heat}.

Let $X$ be a compact smooth $4$-manifold with boundary, equipped with a smooth $\mathbb{Z}_p$-action. Suppose that $\partial X = Y$ is the disjoint union of rational homology $3$-spheres (possibly empty). We assume that the $\mathbb{Z}_p$-action preserves each component.  
Let $\tilde{\mathfrak{s}}$ be a $\mathbb{Z}_p$-equivariant $\mathrm{Spin}^c$ structure on $X$, equipped with a $\mathbb{Z}_p$-invariant Riemannian metric $g$ that is a product metric near the boundary. By definition of an equivariant $\mathrm{Spin}^c$ structure, there is a $\mathbb{Z}_p$-action on the principal $\mathrm{Spin}^c$ bundle $P$ that covers the $\mathbb{Z}_p$-action on $X$.

For each $\Z_p$-fixed point $x$ in the interior of $X$, recall that we may write the action of $\gamma = [1] \in \Z_p$ locally around $x$ as follows.
\begin{itemize}
    \item If $x$ is an isolated fixed point, then there exist integers $k_1,k_2$ such that $0 < k_1 ,k_2 < p$, and the action of $\gamma$ on $\s$ near $x$ is given by
    \[
    [(x,y,z)] \longmapsto \bigl[\,((-1)^{k_1+k_2+1}\zeta_{2p}^{k_1+k_2}x,\;(-1)^{k_1+k_2+1}\zeta_{2p}^{k_1-k_2}y,\;\zeta_p^m \zeta_{2p} z)\,\bigr].
    \]
    \item If $x$ is contained in a $2$-dimensional component of $X^\gamma$, then there exists an integer $k$ such that $0<k<p$, and the action of $\gamma$ on the fiber of $S$ at $x$ is given by
    \[
    [(x,y,z)] \longmapsto \bigl[\,((-1)^{k+1} \zeta_{2p}^k x,\;(-1)^{k+1} \zeta_{2p}^{-k} y,\;\zeta_p^m \zeta_{2p} z)\,\bigr].
    \]
    \item Note that, in both cases, $m$ is the equivariance number $n_{\mathrm{eqv}}^x(\s)$ of $\s$ at $x$, as defined in \Cref{def: eqv number and det line bundle}, and $x,y\in SO(2)$ and $z\in U(1)$.
\end{itemize}

Observe that, by averaging, we obtain a $\mathbb{Z}_p$-invariant $\mathrm{Spin}^c$ connection $A_0$ on $\tilde{\mathfrak{s}}$ that is flat near the boundary $Y$. Then we have an associated $\mathbb{Z}_p$-equivariant Dirac operator with respect to $A_0$:
\[
\dirac_{A_0} \colon \Gamma(S^+) \longrightarrow \Gamma(S^-).
\]
Since we have chosen a product metric near the boundary, the operator $\dirac_{A_0}$ takes the form
\[
\dirac_{A_0} = \frac{d}{dt} + \diracpartial_{B_0}
\]
near the boundary, where $B_0$ denotes the restriction of $A_0$ to $Y$, and $\diracpartial_{B_0}$ is the $\mathbb{Z}_p$-equivariant $\mathrm{Spin}^c$ Dirac operator on $\tilde{\mathfrak{s}}|_Y$.

With respect to $\diracpartial_{B_0} \colon \Gamma(S) \to \Gamma(S)$, we have the $L^2$-eigenvalue decomposition
\[
\Gamma(S) = \bigoplus_{\lambda \ \text{eigenvalue of } \diracpartial_{B_0}} V(\lambda),
\]
where each eigenspace $V(\lambda)$ is a finite-dimensional complex $\mathbb{Z}_p$-representation.  
Using the spectral projection, we define the operator
\[
\dirac_{A_0} + p_{(-\infty, 0]} \colon 
\Gamma(S^+) \longrightarrow 
\Gamma(S^-) \oplus \left( \bigoplus_{\lambda \leq  0} V(\lambda) \right),
\]
which is known to be Fredholm. Moreover, both 
$\ker(\dirac_{A_0} + p_{(-\infty, 0]})$ and 
$\operatorname{coker}(\dirac_{A_0} + p_{(-\infty, 0]})$ 
are finite-dimensional complex $\mathbb{Z}_p$-representations.

We define the associated $\mathbb{Z}_p$-equivariant index by
\[
\operatorname{ind}_{\mathbb{Z}_p}^{\mathrm{APS}}(\dirac_{A_0}) := \ker(\dirac_{A_0} + p_{(-\infty, 0]}) - \operatorname{coker}(\dirac_{A_0} + p_{(-\infty, 0]}) \in R(\mathbb{Z}_p).
\]
For any element $\gamma \in \mathbb{Z}_p$, we define its trace version as
\[
\operatorname{ind}_\gamma^{\mathrm{APS}}(\dirac_{A_0}) := \operatorname{Tr}_\gamma \left( \ker(\dirac_{A_0} + p_{(-\infty, 0]}) \right) - \operatorname{Tr}_\gamma \left( \operatorname{coker}(\dirac_{A_0} + p_{(-\infty, 0]}) \right).
\]
For the equivariant Atiyah--Patodi--Singer (APS) index of Dirac operators, Donnelly~\cite{donnelly1978eta} proved the following formula:
\[
\operatorname{ind}_\gamma^{\mathrm{APS}}(\dirac_{A_0}) = \int_{X^\gamma}  (-1)^{\frac{\dim X^\gamma}{2}} \cdot \frac{\operatorname{ch}_\gamma (j^*(S^+ - S^-)) \cdot \operatorname{td}(TX^\gamma \otimes \mathbb{C})}{e(TX^\gamma) \cdot \operatorname{ch}_\gamma(\Lambda^{-1}N \otimes \mathbb{C})} + \overline{\eta}_\gamma(\diracpartial_{B_0}).
\]
Note that the different components $X^\gamma$ can have different dimensions, although they will always be even. Hence the integral should be understood as a sum of their values over each of its components. The terms in the formula are explained below:\begin{itemize}
\item $j \colon X^\gamma \to X$ denotes the inclusion map of the fixed-point set.
\item $L$ is the determinant line bundle $\det(S^+)$ (which is isomorphic to $\det(S^-)$).
\item $\operatorname{ch}_\gamma (j^*(S^+ - S^-))$ denotes the $\gamma$-equivariant Chern character of the virtual spinor bundle pulled back to $X^\gamma$ via $j$, computed using the $\gamma$-invariant $\mathrm{Spin}^c$ connection $A_0$.
\item $\operatorname{td}$ is the Todd class of $TX_2^\gamma \otimes \mathbb{C}$, computed using the Riemannian metric restricted on $TX^\gamma$.
\item $e(TX^\gamma)$ is the Euler class of the tangent bundle $TX^\gamma$, again computed using the restricted Riemannian metric.
\item $N$ denotes the (equivariant) normal bundle of $X^\gamma$ in $X$.
\item $\operatorname{ch}_\gamma(\Lambda^{-1}N \otimes \mathbb{C})$ is the equivariant Chern character of the virtual bundle $$\Lambda^{-1}(N \otimes \mathbb{C}) := \sum_i (-1)^i \Lambda^i(N \otimes \mathbb{C}),$$ computed using the normal curvature induced by the restricted metric on $N$.
\item $\overline{\eta}_\gamma(\diracpartial_{B_0})$ is the reduced $\Z_p$-equivariant $\eta$-invariant associated to the given twisted Dirac operator on the boundary $Y= \partial X$, defined as 
\[
\overline{\eta}_\gamma(\diracpartial_{B_0}) := \frac{\eta_\gamma(\diracpartial_{B_0}) -c_\gamma(\diracpartial_{B_0})  }{2}, 
\]
where $\eta_\gamma(\diracpartial_{B_0})$ denotes the value at $s=0$ of the analytic extension of the function
\[
\eta(s) := \sum_{0\neq \lambda \in \mathrm{Spec}\left(\diracpartial_{B_0}\right)} \frac{\operatorname{sign}(\lambda)}{|\lambda|^s} \operatorname{Tr}(\gamma \colon V_\lambda \longrightarrow V_\lambda),
\]
which is a priori only defined on the region $\mathrm{Re}(s)>3 = \dim Y$, $V_\lambda $ is the eigenspace for the eigenvalue $\lambda$ and $c_\gamma(\diracpartial_{B_0}) $ is the trace of the action of $\gamma$ on $\ker \diracpartial_{B_0}$. Note that the finiteness of $\eta(s)$ is verified in   Donnelly~\cite{donnelly1978eta} using an equivariant version of the heat kernel representation of it, together with the small-time asymptotic expansion of the heat kernel, which shows that all potentially divergent terms cancel, leaving a regular value at. 
\end{itemize}

To describe the fixed point set more precisely, we suppose $\gamma$ acts on $X$ nontrivially and write the fixed point set as the union of its connected components of dimensions 0 and 2: 
\[
X_0^{\gamma} = \{ p_{1}, \ldots, p_{m} \}, \qquad X^{\gamma}_{2} = \Sigma_{1} \sqcup \cdots \sqcup \Sigma_{n}.
\]
We assume that $X^{\gamma}_{2}$ is orientable. Note that each fixed point $p_i$ lies in the interior of $X$, and each surface $\Sigma_i$, possibly with boundary, is a properly embedded surface in $X$.

We perform degree-wise computations:

\smallskip
\noindent\textbf{\underline{Degree 0 part:}}   The technique of the following computation mainly follows \cite[Section 5]{atiyah1968indexIII} and \cite[page 169]{shanahan2006atiyah}.

For each $p_i$, let $\alpha_i, \beta_i \in \mathbb{R}/2\pi\mathbb{Z}$ be the nonzero angles by which $\gamma$ acts on an equivariant neighborhood $\nu(p_i) \cong T_{p_i}X = \C^2$. With respect to some local complex basis, this action is given by
\[
\begin{pmatrix}
    \zeta_p^{k_1} & 0 \\
    0 & \zeta_p^{k_2}
\end{pmatrix} \text{ for some } \alpha_i, \beta_i \in \R/2\pi  \Z. 
\]
Note that the pair $(\alpha_{i}, \beta_{i})$ is well-defined up to reordering. 

Associated to this decomposition of the tangent bundle, we consider 
the principal $T=SO(2)\times SO(2)$-bundle $P(T)$ over fixed points associated to the framed bundle obtained from $T_{p_i}X$. We denote by $\tilde{P}(T)$ the  $SO(2)\times SO(2)\times U(1) / \{ \pm (1, 1,1)\} $-bundle equipped with the double covering projection $\pi\colon \tilde{P}(T) \to P(T) $ obtained as 
\[
[(z, w, u)] \longmapsto (zw, zw^{-1})
\]
which describes a $\mathrm{Spin}^c$ structure on the fixed point set. Recall that the $\Z_p$-action on the fibers of $\bar{P}(T)$ near $p_i$ is given by 
\begin{equation} \label{eqn: eqv number in codim 4A}
    [(x,y,z)]\longmapsto \left[\left( (-1)^{k_1+k_2+1}\zeta_{2p}^{k_1+k_2} x , (-1)^{k_1+k_2+1}\zeta_{2p}^{k_1-k_2} y , \zeta_p^m \zeta_{2p} z) \right)\right],
\end{equation}
where $m$ is the equivariance number of $\s$ at $p_i$. The representations for $S^{\pm }$ are given as 
\begin{align*}
\rho_+ : [(x,y,z)] \longmapsto \begin{pmatrix}
     xz & 0 \\
     0 & x^{-1}z
\end{pmatrix} \in U(2) \\
\rho_- : [(x,y,z)] \longmapsto \begin{pmatrix}
     yz & 0 \\
     0 & y^{-1}z
\end{pmatrix} \in U(2).
\end{align*}

Based on the descriptions, the $\Z_p$-actions on $S^{\pm}$ are described as 
\begin{align*}
\begin{pmatrix}
(-1)^{k_1+k_2+1}\zeta_{2p}^{k_1+k_2+2m+1} & 0 \\
     0 & (-1)^{k_1+k_2+1}\zeta_{2p}^{-k_1-k_2+ 2m+1}
\end{pmatrix} \\
\begin{pmatrix}
(-1)^{k_1+k_2+1}\zeta_{2p}^{k_1+k_2+2m+1} & 0 \\
     0 & (-1)^{k_1+k_2+1}\zeta_{2p}^{-k_1+k_2+ 2m+1}
\end{pmatrix}  
\end{align*}
which induces the following $\Z_p$-equivariant decomposition of $S^\pm$ into the direct sums of their line subbundles: 
\[
S^{\pm} = L^{\pm}_1 \oplus L^{\pm}_2.
\]
Thus we have 
\begin{align*}
   & \operatorname{td}(TX^\gamma \otimes \C) =1, \quad  e(TX^\gamma ) =1. 
\end{align*}
Also, since $\operatorname{ch}_\gamma$ is a ring homomorphism, we see: 
\begin{align*}
    \operatorname{ch}_\gamma (j^*(S^+-S^-) )&
    = \operatorname{ch}_\gamma (j^*S^+) - \operatorname{ch}_\gamma (j^*S^- )  \\
    & = \operatorname{ch}_\gamma (L^+_1) + \operatorname{ch}_\gamma (L^+_2) -\operatorname{ch}_\gamma (L^-_1) - \operatorname{ch}_\gamma (L^-_2) \\
    & =  (-1)^{k_1+k_2+1}\zeta_p^m\zeta_{2p}(\zeta_{2p}^{k_1+k_2} + \zeta_{2p}^{-k_1-k_2} -\zeta_{2p}^{k_1-k_2} - \zeta_{2p}^{-k_1+k_2}) \\
    & =(-1)^{k_1+k_2+1}\zeta_p^m \zeta_{2p}(\zeta_{2p}^{k_1}-\zeta_{2p}^{-k_1} ) (\zeta_{2p}^{k_2}-\zeta_{2p}^{-k_2} ) 
\end{align*}
and similarly 
\begin{align*}
    \operatorname{ch}_\gamma (\Lambda^{-1} N\otimes \C) & = \operatorname{ch}_\gamma(1-N\otimes \C)\\
   & = (1-\zeta_p^{k_1})(1-\zeta_p^{-k_1})(1-\zeta_p^{k_2})(1-\zeta_p^{k_2})
\end{align*}

With respect to this expression, the contribution comes from discrete points are  
\begin{align*}
\int_{X^\gamma_0} \frac{\operatorname{ch}_\gamma (j^*(S^+-S^-)) \operatorname{td} (TX^\gamma \otimes \C)}{e(TX^\gamma)\operatorname{ch}_\gamma(\Lambda^{-1}N \otimes \C) }  & =  \frac{(-1)^{k_1+k_2+1}\zeta_p^m\zeta_{2p} (\zeta_{2p}^{k_1}-\zeta_{2p}^{-k_1} ) (\zeta_{2p}^{k_2}-\zeta_{2p}^{-k_2} ) }{(1-\zeta_p^{k_1})(1-\zeta_p^{-k_1})(1-\zeta_p^{k_2})(1-\zeta_p^{k_2})} \\
& = \frac{(-1)^{k_1+k_2+1}\zeta_p^m \zeta_{2p}2i\sin\!\left(\frac{\pi k_1}{p}\right) 2i\sin\!\left(\frac{\pi k_2}{p}\right)}{\left(2-2\cos\!\left(\frac{2\pi k_1}{p}\right) \right)\left(2-2\cos\!\left(\frac{2\pi k_2}{p}\right)\right)} \\
& = \frac{(-1)^{k_1+k_2+1}\zeta_p^m\zeta_{2p} 2i\sin\!\left(\frac{\pi k_1}{p}\right) 2i\sin\!\left(\frac{\pi k_2}{p}\right)}{16 \sin^2\!\left(\frac{\pi k_1}{p}\right) \sin^2\!\left(\frac{\pi k_2}{p}\right)} \\
& = \frac{(-1)^{k_1+k_2}\,\zeta_p^{m}\zeta_{2p}}{4}\,
\csc\!\left(\frac{\pi k_1}{p}\right)\,
\csc\!\left(\frac{\pi k_2}{p}\right).
\end{align*}

This coincides with the known localization formula for $\mathrm{Spin}^c$ Dirac operators \cite{math0602654, montague2022seiberg}.

\smallskip
\noindent\textbf{\underline{Degree 2 part:}}  Again, we follow \cite[Section 4 and 5]{atiyah1968indexIII} and \cite[page 169]{shanahan2006atiyah} to do the following computation. 
Let $k$ be the angle by which $\gamma$ acts fiberwise on $\nu(\Sigma_i)$
by $\zeta_p^k$ with respect to some local (complex) basis. 

Similar to the discrete case, associated with this decomposition of the tangent bundle, we consider 
the principal $T=SO(2)_T\times SO(2)_N$-bundle $P(T)$ over fixed point surface associated to the framed bundle obtained from $T_xX \cong T_xX^\gamma \oplus N_x = \C \oplus \C $ for $x \in \Sigma_i$. Again in this case, we set 
$\tilde{P}(T)$ as the  $SO(2)_T\times SO(2)_N \times U(1)$-bundle equipped with the double covering projection $\pi\colon \tilde{P}(T) \to P(T) $ obtained as $(z, w, u) \mapsto (zw, zw^{-1}). $

Recall again that the action of $\Z_p$ on the fibers of $\bar{P}(T)$ near any point in a 2-dimensional component of $X^\gamma$ can be written as
\begin{equation} \label{eqn: eqv number in codim 2A}
[(x,y,z)] \longmapsto 
\left[\left((-1)^{k+1}\zeta_{2p}^{k} x, (-1)^{k+1}\zeta_{2p} ^{-k} y,\zeta_p^m \zeta_{2p} z\right)\right],\quad m\in\Z_p,\quad 0 < k < p.
\end{equation}
Again, as in the discrete case, we see the $\Z_p$-actions on $S^{\pm}$ are described as 
\begin{align*}
\begin{pmatrix}
(-1)^{k + 1}\zeta_{2p}^{k + 2m+1} & 0 \\
     0 & (-1)^{k + 1}\zeta_{2p}^{-k + 2m+1}
\end{pmatrix} \\
\begin{pmatrix}
(-1)^{k + 1}\zeta_{2p}^{-k + 2m+1} & 0 \\
     0 & (-1)^{k + 1}\zeta_{2p}^{k + 2m+1}
\end{pmatrix} 
\end{align*}
 with respect to the restricted spinor representations: 
\begin{align*}
\rho_+ \colon [(x,y,z)] \longmapsto \begin{pmatrix}
     xz & 0 \\
     0 & x^{-1}z
\end{pmatrix} \in U(2), \\
\rho_- \colon [(x,y,z)] \longmapsto \begin{pmatrix}
     yz & 0 \\
     0 & y^{-1}z
\end{pmatrix} \in U(2),
\end{align*} 
which gives decompositions into equivariant line bundles: 
\[
S^{\pm} = L^{\pm}_1 \oplus L^{\pm}_2
\]
as $\Z_p$-equivariant bundles. Let us denote by $A_0^t$ the induced connection on the determinant line bundle $L$ of $\mathfrak{s}$

First, we have: 
\begin{align*}
\begin{cases}
&\operatorname{td} (TX^\gamma_2 \otimes \C) = \frac{\tilde{F}_{X_2^\gamma}}{\left(1-e^{-\tilde{F}_{X_2^\gamma} }\right)} \frac{-\tilde{F}_{X_2^\gamma}}{\left(1-e^{\tilde{F}_{X_2^\gamma} }\right)},  \\
& e(TX^\gamma_2) = \tilde{F}_{X_2^\gamma},  \\
&\mathrm{ch}_\gamma(\Lambda^{-1}N\otimes \C) = \left( 1-\zeta_{p}^k e^{\tilde{F}_N} \right) \left( 1-\zeta_p^{-k} e^{-\tilde{F}_N} \right),
\end{cases}
\end{align*}
and
\[
\begin{split}
    \operatorname{ch}_\gamma (j^*(S^+-S^-))  & =  \operatorname{ch}_\gamma (j^*S^+)-\operatorname{ch}_\gamma (j^*S^-) \\
     & =  \operatorname{ch}_\gamma (L^+_1 ) +  \operatorname{ch}_\gamma (L^+_2 )-\operatorname{ch}_\gamma (L^-_1 ) -  \operatorname{ch}_\gamma (L^-_2) \\
     & = (-1)^{k+1}\zeta_p^m \zeta_{2p} e^{\frac{\tilde{F}_{A^t_0}}{2} } \left( \zeta_{2p}^k e^{\frac{\tilde{F}_{X^\gamma_2} + \tilde{F}_N}{2}} + \zeta_{2p}^{-k} e^{-\frac{\tilde{F}_{X^\gamma_2} + \tilde{F}_N}{2}} - \zeta_{2p}^{-k} e^{\frac{\tilde{F}_{X^\gamma_2} - \tilde{F}_N}{2}} - \zeta_{2p}^k e^{\frac{-\tilde{F}_{X^\gamma_2} + \tilde{F}_N}{2}} \right) \\
     & = (-1)^{k+1} \zeta_p^m \zeta_{2p} e^{\frac{\tilde{F}_{A^t_0}}{2} } \left( e^\frac{\tilde{F}_{X^\gamma_2}}{2} - e^{-\frac{\tilde{F}_{X^\gamma_2}}{2}} \right) \left( \zeta_{2p}^k e^{\frac{\tilde{F}_N}{2}} - \zeta_{2p}^{-k}e^{-\frac{\tilde{F}_N}{2}} \right)
\end{split}
\]
where we use the following notations: 
\begin{itemize} 
\item For a $\mathrm{Spin}^c$ connection $A$, $A^t$ denotes the induced connection on the determinant line bundle $L$.
    \item 
$R_N$ denotes the normal curvature form of the normal bundle of $X^\gamma$.
\item The notation $F_{X_2^\gamma }$ denotes the curvature form of the Levi--Civita connection on $X^\gamma$. 
\item For a $\mathrm{Spin}^c$ bundle with a $\mathrm{Spin}^c$ connection $A$, we put $\tilde{F}_{A_0^t} := \frac{1}{2\pi i} {F}_{A_0^t}$. 
\item Similary, for an oriented rank 2 real bundle $E$ (regarded as a $U(1)$-bundle) with a connection $A$, we define $\tilde{F}_A = \frac{1}{2\pi i} {F}_A$. 
    \item The 2-dimensional connected components of $X^\gamma_2$ are given by $\Sigma_s$, $1\le s\le N$, where each $\Sigma_s$ is orientable;
    \item Near any point in the component $\Sigma_s$, the given $\Z_p$-action on $\s$ is locally described near any point of $\Sigma_s$ as\footnote{Note that, when we explicitly use this formula throughout the paper, we always have $k_s=1$.}
    \[
    [(x,y,z)] \longmapsto 
\left[\left((-1)^{k_s+1}\zeta_{2p}^{k_s} x, (-1)^{k_s+1}\zeta_{2p} ^{-k_s} y,\zeta_p^{m_s} z\right)\right],\quad m_s\in\Z_p,\quad 0 < k_s < p.
    \]
\end{itemize}

Observe that we have
\[
    \begin{split}
        \frac{\left( e^\frac{\tilde{F}_{X^\gamma_2}}{2} - e^{-\frac{\tilde{F}_{X^\gamma_2}}{2}} \right) \cdot \tilde{F}_{X^\gamma_2} \cdot \left( -\tilde{F}_{X^\gamma_2} \right) }{\tilde{F}_{X^\gamma_2} \cdot \left(1-e^{\tilde{F}_{X_2^\gamma} }\right) \left(1-e^{-\tilde{F}_{X_2^\gamma} }\right)} = -\frac{e^\frac{\tilde{F}_{X^\gamma_2}}{2} \cdot \tilde{F}_{X^\gamma_2}}{1-e^{\tilde{F}_{X^\gamma_2}}} = \frac{\tilde{F}_{X^\gamma_2}}{e^\frac{\tilde{F}_{X^\gamma_2}}{2} - e^\frac{\tilde{F}_{X^\gamma_2}}{2}} = \hat{A}(X^\gamma_2) = 1
    \end{split}
\]
as $X^\gamma_2$ is a surface, and
\[
\begin{split}
    \frac{\zeta_{2p}^k e^{\frac{\tilde{F}_N}{2}} - \zeta_{2p}^{-k}e^{-\frac{\tilde{F}_N}{2}}}{\left( 1-\zeta_{p}^k e^{\tilde{F}_N} \right) \left( 1-\zeta_p^{-k} e^{-\tilde{F}_N} \right)} &= \frac{\zeta_{2p}^k e^{\frac{\tilde{F}_N}{2}}}{1-\zeta_p^k e^{\tilde{F}_N}} \\
    &= -\frac{1}{\zeta_{2p}^k e^{\frac{\tilde{F}_N}{2}} - \zeta_{2p}^{-k} e^{-\frac{\tilde{F}_N}{2}} } \\
    &= -\frac{1}{2i\sin\frac{k\pi}{p} + \cos \frac{k\pi}{p} \cdot \tilde{F}_N} \\
    &= \frac{\frac{i}{2}\csc \frac{k\pi}{p}}{1-i\cot\frac{k\pi}{p} \cdot \frac{1}{2}\tilde{F}_N}.
\end{split}
\]
Then the integral can be computed as follows:
\[
\begin{split}
 \int_{X^\gamma_2 } - \frac{\operatorname{ch}_\gamma (S^+-S^-) \operatorname{td} (TX^\gamma_2 \otimes \C)}{e(TX^\gamma_2)\operatorname{ch}_\gamma(\Lambda^{-1}N \otimes \C) }  
& = -\sum_s (-1)^{k_s+1}\zeta_p^{m_s} \zeta_{2p} \cdot \frac{i}{2} \csc \frac{k_s \pi}{p} \cdot \int_{\Sigma_s} \frac{1+\frac{1}{2}\tilde{F}_{A^t_0}}{1-i\cot\frac{k\pi}{p}\cdot \frac{1}{2} \tilde{F}_N} \\
&= \sum_s (-1)^{k_s} \zeta_p ^{m_s} \zeta_{2p} \csc \frac{k_s \pi}{p} \cdot  \int_{\Sigma_s} \frac{i}{2} \left( 1 +\frac{1}{2}\tilde{F}_{A^t_0} \right) \left( 1+i\cot\frac{k\pi}{p}\cdot \frac{1}{2} \tilde{F}_N \right) \\
&= \sum_s (-1)^{k_s} \zeta_p ^{m_s}\zeta_{2p}\csc \frac{k_s \pi}{p}\cdot  \int_{\Sigma_s} \frac{1}{2} \left( 1 +\frac{1}{2}\tilde{F}_{A^t_0} \right) \left( i-\cot\frac{k\pi}{p}\cdot \frac{1}{2} \tilde{F}_N \right) \\
&= \sum_s \frac{1}{4} (-1)^{k_s} \zeta_p ^{m_s}\zeta_{2p} \int_{\Sigma_s} i\csc \frac{k_s \pi}{p} \cdot \tilde{F}_{A^t_0} - \csc \frac{k_s \pi}{p}\cot \frac{k_s \pi}{p}\cdot \tilde{F}_N \\
&= \frac{1}{4}\sum_s (-1)^{k_s} \zeta_p^{m_s}\zeta_{2p} \left( i \csc \frac{k_s \pi}{p} \cdot \left\langle c_1(\s),[\Sigma_s]\right\rangle - \csc \frac{k_s \pi}{p} \cot \frac{k_s \pi}{p} \cdot [\Sigma_s]^2 \right).
\end{split}
\]
When $\mathfrak{s}$ is induced by a $\Z_p$-equivariant Spin structure, we have $m_s=0$ with removing $\zeta_{2p}$ from the formula and $c_1(L)=0$, and hence our computation agrees with Montague's formula. Also, this is compatible with \cite{math0602654, cho2003Z, li2023monopole}; in fact, our formula is exactly the same as the one in \cite[page 23]{cho2003Z}. As a summary, we shall get the following: 

\begin{thm}\label{even_equivariant_index}
Let $X$ be a compact smooth $4$-manifold, possibly with boundary, equipped with a smooth $\Z_p$-action.  
Let $\s$ be a $\Z_p$-equivariant $\mathrm{Spin}^c$ structure on $X$, and denote the generator $[1]\in \Z_p$ by $\gamma$.  
Write
\[
X^\gamma = X^\gamma_0 \cup X^\gamma_2, \qquad 
X^\gamma_0 = \{p_1,\dots,p_m\}, \qquad  
X^\gamma_2 = \Sigma_1 \sqcup \dots \sqcup \Sigma_n,
\]
where each $\Sigma_s$ is a closed orientable surface.  
Suppose that for each $i=1,\dots,m$, the action of $\gamma$ near $p_i$ can be modeled as 
\[
(z,w)\mapsto \bigl(\zeta_p^{k_{i,1}} z,\, \zeta_p^{k_{i,2}} w \bigr),
\]
with $0 < k_{i,1},k_{i,2} < p$.  
For each $s=1,\dots,n$, assume that the action of $\gamma$ near any point (say $x_s$) of $\Sigma_s$ can be modeled as
\[
(z,w)\mapsto (z, \zeta_p^{k_s} w),
\]
with $0 < k_s < p$.  

Then we have
\[
    \mathrm{ind}_{\Z_p}^{\mathrm{APS}} \dirac_{A_0} 
    = \overline{\eta}_Y (\diracpartial_{B_0}) 
    + \zeta_{2p} \cdot \left(
    \begin{array}{l}
        \displaystyle \sum_{i=1}^m (-1)^{k_{i,1}+k_{i,2}} 
        \zeta_p^{n_{\mathrm{eqv}}^{p_i} (\s)} 
        R(k_{i,1},k_{i,2}) \\[0.6em]
        \qquad + \displaystyle \sum_{s=1}^n (-1)^{k_s} 
        \zeta_p ^{n_{\mathrm{eqv}}^{x_s}(\s)} 
        \Bigl( S(k_s) \cdot \langle c_1(\s),[\Sigma_s] \rangle 
        + T(k_s) \cdot [\Sigma_s]^2 \Bigr)
    \end{array}\right),
\]
where $n_{\mathrm{eqv}}^x(\s)$ denotes the equivariance number of $\s$ at $x$, defined in \Cref{def: eqv number and det line bundle}.  
We are using the following abbreviations:
\[
R(u,v) = \tfrac{1}{4}\csc \tfrac{u\pi}{p}\csc\tfrac{v\pi}{p}, \qquad 
S(u) = \tfrac{i}{4}\csc \tfrac{u \pi}{p}, \qquad 
T(u) = -\tfrac{1}{4}\csc \tfrac{u\pi}{p}\cot \tfrac{u\pi}{p}.
\]
\end{thm}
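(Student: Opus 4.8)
The plan is to derive the formula directly from Donnelly's equivariant Atiyah--Patodi--Singer index theorem for $\mathrm{Spin}^c$ Dirac operators~\cite{donnelly1978eta}, which was recalled above in the form
\[
\operatorname{ind}_\gamma^{\mathrm{APS}}(\dirac_{A_0}) = \int_{X^\gamma} (-1)^{\frac{\dim X^\gamma}{2}}\cdot\frac{\operatorname{ch}_\gamma(j^*(S^+-S^-))\cdot\operatorname{td}(TX^\gamma\otimes\C)}{e(TX^\gamma)\cdot\operatorname{ch}_\gamma(\Lambda^{-1}N\otimes\C)} + \overline{\eta}_\gamma(\diracpartial_{B_0}).
\]
Since both sides are the $\gamma$-traces of classes in $R(\Z_p)$, it suffices to prove the stated identity for the generator $\gamma=[1]$; the case of an arbitrary element follows by replacing the weights by their appropriate multiples. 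First I would average to obtain a $\Z_p$-invariant $\mathrm{Spin}^c$ connection $A_0$ that is flat near $\partial X$ (so $\dirac_{A_0}=\tfrac{d}{dt}+\diracpartial_{B_0}$ near the boundary), and then decompose the integral over $X^\gamma$ as a sum over its connected components, which by hypothesis are isolated interior fixed points $p_i$ or closed orientable surfaces $\Sigma_s$.

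For an isolated fixed point $p_i$, the key input is the local model of the equivariant $\mathrm{Spin}^c$ structure recorded in \Cref{eqn: eqv number in codim 4A}: it describes the $\Z_p$-action on the reduction of the principal bundle near $p_i$ in terms of the tangential weights $k_{i,1},k_{i,2}$ and the equivariance number $m=n^{p_i}_{\mathrm{eqv}}(\s)$. Pushing this model through the spinor representations $\rho_\pm$ splits $S^\pm$ equivariantly into line bundles, so that $\operatorname{ch}_\gamma(j^*(S^+-S^-))$, $\operatorname{td}$, $e$, and $\operatorname{ch}_\gamma(\Lambda^{-1}N\otimes\C)$ all become explicit monomials in roots of unity (or products of the form $1-\zeta_p^k$). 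Applying the trigonometric identities $1-\zeta_p^k=-2i\zeta_{2p}^k\sin(k\pi/p)$ and $1-\cos(2\pi k/p)=2\sin^2(k\pi/p)$, the quotient collapses to $\tfrac{(-1)^{k_{i,1}+k_{i,2}}}{4}\zeta_p^{m}\zeta_{2p}\csc(k_{i,1}\pi/p)\csc(k_{i,2}\pi/p)$, i.e.\ $\zeta_{2p}\cdot(-1)^{k_{i,1}+k_{i,2}}\zeta_p^{n^{p_i}_{\mathrm{eqv}}(\s)}R(k_{i,1},k_{i,2})$; this is precisely the degree-$0$ computation performed above. For a $2$-dimensional component $\Sigma_s$, the analogous model is \Cref{eqn: eqv number in codim 2A}: again $S^\pm$ splits into equivariant line bundles, but now the Chern characters carry the curvatures $\tilde F_{A_0^t}$ of the determinant line, $\tilde F_N$ of the normal bundle, and $\tilde F_{X_2^\gamma}$ of $T\Sigma_s$. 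Using $\hat A(\Sigma_s)=1$ since $\Sigma_s$ is a surface, the fiber factor reduces to $\tfrac{i}{2}\csc(k_s\pi/p)\big/\bigl(1-i\cot(k_s\pi/p)\cdot\tfrac12\tilde F_N\bigr)$, and expanding and integrating over $\Sigma_s$ (keeping only the degree-$2$ part) yields $\tfrac14(-1)^{k_s}\zeta_p^{n^{x_s}_{\mathrm{eqv}}(\s)}\zeta_{2p}\bigl(i\csc(k_s\pi/p)\langle c_1(\s),[\Sigma_s]\rangle-\csc(k_s\pi/p)\cot(k_s\pi/p)[\Sigma_s]^2\bigr)$, the $S(k_s)$ and $T(k_s)$ terms. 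Summing the two kinds of contributions and reinstating $\overline{\eta}_Y(\diracpartial_{B_0})$ gives the claimed formula.

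As a final sanity check I would verify that the spin specialization (taking $m_s=0$, deleting the $\zeta_{2p}$ prefactor, and setting $c_1(L)=0$) recovers Montague's $n^{\nu^k}$-formula, and that for cyclic-group actions the expression matches the known localization formulas for $\mathrm{Spin}^c$ Dirac operators in the literature (e.g.\ \cite{math0602654, cho2003Z}).

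The main obstacle is the careful bookkeeping of the ``half-integer'' phases: the ubiquitous factors of $\zeta_{2p}$ and the signs $(-1)^{k+1}$ that appear because a $\mathrm{Spin}^c$ structure is a square root of $TX$ twisted by the determinant line, so that a lift of $\gamma$ to $S^\pm$ acts through $\zeta_{2p}$-powers rather than $\zeta_p$-powers. Keeping these consistent with the chosen orientations of the fixed components---and checking that, although the individual $S$- and $T$-terms depend on these orientations, the full right-hand side does not (so that $\operatorname{ind}^{\mathrm{APS}}_{\Z_p}\dirac_{A_0}\in R(\Z_p)$ is well defined)---is the delicate point; the remaining manipulations are the routine localization calculus already spelled out above.
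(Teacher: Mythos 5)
Your proposal reproduces the paper's own argument: starting from Donnelly's equivariant APS index theorem, splitting $X^\gamma$ into its $0$- and $2$-dimensional components, plugging in the local models from \Cref{eqn: eqv number in codim 4A} and \Cref{eqn: eqv number in codim 2A} to split $S^\pm$ into equivariant line bundles, and then carrying out the trigonometric/curvature simplifications degree by degree (the $\hat A=1$ collapse for the surface components, the $\csc$-expansions for the point contributions) before summing. This is the same route the paper takes, and your sanity checks (spin specialization and comparison with known localization formulas) match the remarks the paper makes as well.
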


\begin{rem} \label{rem: index thm for nonclosed fixed points}
    If some $\Sigma_k$ are not closed anymore but still orientable, then the index formula becomes
    \[
    \mathrm{ind}_{\Z_p}^{\mathrm{APS}} \dirac_{A_0} = \overline{\eta}_Y (\diracpartial_{B_0}) +\zeta_{2p} \cdot \left( \begin{array}{l}    \sum_{i=1}^m (-1)^{k_{i,1}+k_{i,2}} \zeta_p^{n_{\mathrm{eqv}}^{p_i} (\s)} R(k_{i,1},k_{i,2}) \\ \quad + \sum_{s=1}^n (-1)^{k_s} \zeta_p ^{n_{\mathrm{eqv}}^{x_s}(\s)} \left( S(k_s) \cdot \left\langle c_1(\s),[\Sigma_s] \right\rangle + T(k_s) \cdot \int_{\Sigma_s} \tilde{F}_N \right)\end{array} \right).
    \]
    Here, the term $\int_{\Sigma_s} \tilde{F}_N$ is not a purely homological quantity anymore; they additionally depend on our choice of a Riemannian metric on $Y$ by Chern--Gauss--Bonnet theorem. Here, we have used the fact that the connection $A_0^t$ is flat on a neighborhood of the boundary, so that $\int_{\partial \Sigma_s} \tilde{F}_{B^t_0}=0$ and thus
    \[
    \int_{\Sigma_s} \tilde{F}_{A^t_0} = \left\langle c_1(\s),[\Sigma_s] \right\rangle - \int_{\partial \Sigma_s} \tilde{F}_{B^t_0} = \left\langle c_1(\s),[\Sigma_s] \right\rangle.
    \]
\end{rem}

\subsection{Equivariant spectral flow}
In this section, we review the definitions of two invariants
\[
\operatorname{Sf}^k(\diracpartial_{B_0}(g_s)) \in \mathbb{C} 
\qquad \text{ and } \qquad 
\operatorname{Sf}(\diracpartial_{B_0}(g_s)) \in R(\mathbb{Z}_p)
\]
for a given $\mathbb{Z}_p$-equivariant $\mathrm{Spin}^c$ rational homology $3$-sphere $(Y,\mathfrak{s})$ equipped with a one-parameter family of $\mathbb{Z}_p$-invariant Riemannian metrics $\{g_s\}$ on $Y$. These invariants are called the $\mathbb{Z}_p$-equivariant spectral flows of $\mathbb{Z}_p$-equivariant $\mathrm{Spin}^c$ Dirac operators. Here $B_0$ denotes a fixed $\mathbb{Z}_p$-invariant flat connection on $\mathfrak{s}$. See \cite{lim2024equivariant} for details.

We consider a one-parameter family of $\mathbb{Z}_p$-equivariant Dirac operators
\[
\{\diracpartial_{B_0}(g_s)\}_{s \in [0,1]} \colon \Gamma(S) \longrightarrow \Gamma(S).
\]
We regard this as a one-parameter family of self-adjoint unbounded Fredholm operators
\[
D_s := \diracpartial_{B_0}(g_s) \colon H \longrightarrow H,
\]
where $H=L^2(S)$. For each $s \in [0,1]$, the operator $D_s$ has a discrete spectrum in $\mathbb{R}$ with no accumulation point, which we visualize as a graph in $[0,1] \times \mathbb{R}$.

Next, choose subdivisions of $[0,1]$ and $\mathbb{R}$,
\[
s_0 = 0 < s_1 < \cdots < s_N = 1,
\qquad
a_{-m} < \cdots < a_{-1} < a_0 < a_1 < \cdots < a_m,
\]
such that the following conditions hold:
\begin{itemize}
    \item For each rectangle $[s_{i-1},s_i] \times [a_{j-1},a_j]$, the path $s \mapsto D_s$ has at most finitely many eigenvalues in the open interval $(a_{j-1},a_j)$, and no eigenvalue lies on the horizontal lines $\lambda = a_j$ at the four corner points.
    \item If necessary, perturb the $a_j$ slightly so that $D_s$ is invertible at all corner points $(s_i,a_j)$.
\end{itemize}

Such a subdivision is called a \emph{good grid partition}.
A refinement of a grid partition is obtained by subdividing each rectangle into finitely many
smaller rectangles, for example by bisecting in both directions.  
Let $D_s$ be a continuous path of self-adjoint $\mathbb{Z}_p$-equivariant Fredholm operators,
and fix a good grid partition with horizontal cuts $0=s_0<\dots<s_N=1$
and vertical cuts $\{a_j\}_{j=0}^m$.  
For each vertical strip $[a_{j-1},a_j]$ and each $s_i \in \{s_{i-1},s_i\}$,
let
\[
  P_j^+(s_i) \colon H \longrightarrow H
\]
denote the spectral projection of $D_{s_i}$ onto the direct sum of eigenspaces
with eigenvalues lying in $(a_{j-1},a_j)$
and with positive orientation (the ``positive spectral subspace'').
We then define
\[
  E_j(s_i) := \operatorname{Im}\bigl(P_j^+(s_i)\bigr) \subset H
\]
to be the corresponding finite-dimensional eigenspace.

\begin{defn}
The $\mathbb{Z}_p$-equivariant spectral flows of $(\mathfrak{s},B_0,\{g_s\})$ are defined by 
\[
\operatorname{Sf}^k\bigl( \diracpartial_{B_0}(g_s)\bigr)
  := \sum_{j=1}^m
     \Bigl(
       \operatorname{Tr}\!\bigl([k]|_{E_j(s_j)}\bigr)
       - \operatorname{Tr}\!\bigl([k]|_{E_j(s_{j-1})}\bigr)
     \Bigr),
\]
and 
\[
\operatorname{Sf}\bigl( \diracpartial_{B_0}(g_s)\bigr) 
   := \frac{1}{p}\sum_{l=0}^{p-1} 
      \left( \sum_{k=0}^{p-1} \operatorname{Sf}^k\bigl( \diracpartial_{B_0}(g_s)\bigr) \cdot \zeta_p^{-kl} \right) 
      \otimes [\mathbb{C}_{[l]}] \;\in\; R(\mathbb{Z}_p). 
\]
\end{defn}
The $k =0$ case $\operatorname{Sf}^0\bigl( \diracpartial_{B_0}(g_s)\bigr)$ is nothing but the usual spectral flow of the family of Dirac operators. 
As shown in \cite{lim2024equivariant}, the quantity 
$\operatorname{Sf}^k( \diracpartial_{B_0}(g_s))$ does not depend on the choice of good grid partitions.  
Hence $\operatorname{Sf}( \diracpartial_{B_0}(g_s))$ is also independent of such auxiliary data. 
Moreover, $\operatorname{Sf}^k( \diracpartial_{B_0}(g_s))$ depends only on the homotopy class of a smooth path 
$\{g_s\}_{s \in [0,1]}$ of $\mathbb{Z}_p$-equivariant Riemannian metrics with boundary conditions $g_0=h$ and $g_1=h'$.

As in the non-equivariant case, an alternative definition of spectral flow is given by the 
$\mathbb{Z}_p$-equivariant trace index of the Dirac operator on $[0,1]\times Y$ with respect to the data 
$(\pi^*\mathfrak{s}, \pi^* B_0, dt^2 + g_s)$:
\[
\operatorname{ind}^{\mathrm{APS}}_{[k]} \dirac_{[0,1]\times Y, \pi^*\mathfrak{s}, \pi^* B_0} \in \mathbb{C}. 
\]
We then have
\[
\operatorname{ind}^{\mathrm{APS}}_{[k]} \dirac_{[0,1]\times Y, \pi^*\mathfrak{s}, \pi^* B_0} 
= \operatorname{Sf}^k( \diracpartial_{B_0}(g_s)). 
\]

%%%%%%%%%%%%%%%%%%%%%%%%%%%%%%%%%%%%%%%%%%%%%%%%%%%%%%%%%%%%%%%%%%%%%%%%%%%%%%%%%%%%%%%%%%%%%%%%%%%%%%%%%%%%%%%%%%%%%%%%%%%%%%%%%%%%%%%%%%%%%%%%%%%%%%%%%%%%%%%%%%%%%%%%%%%%%%%%%%%%%%
\section{$\mathbb{Z}_2$-coefficient singular cochain dga of $B\mathrm{Pin}(2)$} \label{appendix: BPin(2)}

We begin by recalling the two-sided bar construction in the context of modules over dgas.  
Let $R$ be a coefficient ring, assumed to be a PID, and let $A$ be a homologically graded unital dga over $R$, 
together with an augmentation map $\epsilon\colon A \rightarrow R$ satisfying $\epsilon(1)=1$.  
Via $\epsilon$ we regard $R$ as an $A$-bimodule. Define
\[
\bar{A} := \ker \epsilon,
\]
which is also naturally an $A$-bimodule.

\begin{defn}
Let $M$ be a right $A$-module and $N$ a left $A$-module.  
The \emph{two-sided bar construction} for $(M,N)$ is the chain complex
\[
B(M,A,N) = \bigoplus_{n\ge 0} M \otimes_A \bar{A}[1]^{\otimes n} \otimes_A N,
\]
with differential given by
\[
\begin{split}
d(m\otimes a_1 \otimes \cdots \otimes a_k \otimes n) 
   &= (-1)^k \, dm\otimes a_1 \otimes \cdots \otimes a_k \otimes n \\
   &\quad + \sum_{i=1}^k (-1)^{k+\deg m + \sum_{j=1}^{i-1} \deg a_j} 
       \, m\otimes a_1 \otimes \cdots \otimes a_{i-1} \otimes da_i \otimes a_{i+1} \otimes \cdots \otimes a_k \otimes n \\
   &\quad + (-1)^{k+\deg m+\sum_{j=1}^k \deg a_j} 
       \, m \otimes a_1 \otimes \cdots \otimes a_k \otimes dn \\
   &\quad + ma_1 \otimes a_2 \otimes \cdots \otimes a_k \otimes n \\
   &\quad + \sum_{i=1}^{k-1} m \otimes a_1 \otimes \cdots \otimes a_{i-1} \otimes a_i a_{i+1} 
       \otimes a_{i+2} \otimes \cdots \otimes a_k \otimes n \\
   &\quad + (-1)^k m\otimes a_1 \otimes \cdots \otimes a_{k-1} \otimes a_k n.
\end{split}
\]
\end{defn}

When $M=N=R$, we can endow the associated bar construction $B(R,A,R)$ a structure of a dg coalgebra over $R$ via the canonical comultiplication
\[
\Delta\colon B(R,A,R)\longrightarrow B(R,A,R)\otimes_R B(R,A,R)
\]
defined as follows:
\[
\Delta(r\otimes a_1 \otimes \cdots a_k \otimes s) = \sum_{i=0}^k (-1)^{k+\deg r+\sum_{j=1}^{i} \deg a_j}\cdot [r\otimes a_1 \otimes \cdots \otimes a_i\otimes 1]\otimes [1\otimes a_{i+1}\otimes \cdots \otimes a_k \otimes s].
\]
For simplicity, we will write the dg coalgebra $B(R,A,R)$ as $BA$. We note that the operation $A\mapsto BA$ defines a functor $B\colon \mathbf{dga}_R\rightarrow \mathbf{codga}_R$, where $\mathbf{dga}_R$ and $\mathbf{codga}_R$ denote the categories of dgas over $R$ and dg coalgebras over $R$, respectively; this is one direction of the \emph{bar-cobar adjunction}
\[
\Omega:\mathbf{codga}_R \rightleftharpoons \mathbf{dga}_R :B,
\]
which is in fact a Quillen equivalence. For more details on this adjunction, see \cite[Section 2.2.8]{loday2012algebraic}.

Suppose that a topological group $G$ is given. Then the composition
\[
C_\ast(G;R)\otimes C_\ast(G;R)\xrightarrow{\mathrm{Eilenberg-Zilber}} C_\ast(G\times G;R) \xrightarrow{\mathrm{prod}_\ast} C_\ast(G;R),
\]
where $\mathrm{prod}\colon G
\times G\longrightarrow G$ denotes the multiplication map, endows $C_\ast(G;R)$ a structure of a homologically graded unital dga over $R$, together with the obvious augmentation map. On the other hand, for any topological space $X$, dualizing the cup product formula gives $C_\ast(X;R)$ a structure of a dg coalgebra over $R$. When $X=BG$ and $G$ is a compact Lie group, these two structures are related via the bar construction, as shown in the following lemma.

\begin{lem}{\cite[Lemma A.17]{eismeier2019equivariant}}
    For any compact Lie group $G$, we have a quasi-isomorphism
    \[
    BC_\ast(G;R)\simeq C_\ast(BG;R)
    \]
    of dg coalgebras over $R$.
\end{lem}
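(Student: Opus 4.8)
The plan is to identify both sides with the (normalized) singular chains on the simplicial space $N_\bullet G$ whose space of $n$-simplices is $G^{\times n}$, with face maps given by group multiplication, projection, and unit insertion, and degeneracies given by unit insertion; by definition $BG = |N_\bullet G|$. Since $G$ is a compact Lie group it is a well-pointed (indeed cellular) space, so $N_\bullet G$ is a proper (``good'') simplicial space, and its geometric realization is computed, at the level of singular chains, by the total complex of the simplicial chain complex $[n]\mapsto C_\ast(G^{\times n};R)$. Concretely, the skeletal filtration of $|N_\bullet G|$ yields a first-quadrant spectral sequence with $E^1$-page $C_\ast(G^{\times\bullet};R)$ equipped with the alternating-sum-of-faces differential, converging to $H_\ast(BG;R)$; equivalently there is a natural quasi-isomorphism $\mathrm{Tot}\,C_\ast(G^{\times\bullet};R)\xrightarrow{\ \simeq\ }C_\ast(BG;R)$. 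This is the standard comparison between the realization of a simplicial space and the associated bar-type total complex, and I would cite it rather than reprove it.

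Next I would feed in the Eilenberg--Zilber theorem. The shuffle maps $\nabla\colon C_\ast(G;R)^{\otimes n}\to C_\ast(G^{\times n};R)$ are natural chain homotopy equivalences forming a lax symmetric monoidal transformation, so they intertwine the simplicial structure maps on $[n]\mapsto C_\ast(G;R)^{\otimes n}$ (built from the product $C_\ast(G;R)^{\otimes 2}\to C_\ast(G;R)$ induced by $G\times G\to G$, from the augmentation $\epsilon$, and from unit insertion $[e]$) with those on $[n]\mapsto C_\ast(G^{\times n};R)$. Hence $\nabla$ induces a quasi-isomorphism on the associated total complexes. The total complex of $[n]\mapsto C_\ast(G;R)^{\otimes n}$, after passing from the unnormalized to the normalized complex — which, since $C_\ast(G;R)\cong R[e]\oplus\bar A$ as chain complexes with $\bar A=\ker\epsilon=\widetilde C_\ast(G;R)$, replaces $C_\ast(G;R)$ by $\bar A[1]$ degreewise — is precisely the reduced two-sided bar construction $B(R,C_\ast(G;R),R)=BC_\ast(G;R)$ as defined in the excerpt. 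Chaining these quasi-isomorphisms produces $BC_\ast(G;R)\simeq C_\ast(BG;R)$ as complexes of $R$-modules.

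Finally I would upgrade this to an equivalence of dg coalgebras. On $C_\ast(BG;R)$ the coproduct is the Alexander--Whitney approximation to the diagonal $BG\to BG\times BG$, while on $BC_\ast(G;R)$ it is the canonical deconcatenation coproduct $[a_1|\cdots|a_k]\mapsto\sum_i[a_1|\cdots|a_i]\otimes[a_{i+1}|\cdots|a_k]$ recorded above. I would check these agree by using that, through the comparison of the first two paragraphs, the diagonal of $|N_\bullet G|$ is modeled by the levelwise diagonals of the $G^{\times n}$ together with the Alexander--Whitney map, and that the Alexander--Whitney map is lax comonoidal and compatible with the simplicial structure; tracing the combinatorics of front faces and back faces then reproduces exactly the splitting of a bar word. \emph{This coalgebra-compatibility step is the main obstacle}, since the Eilenberg--Zilber and Alexander--Whitney maps are only lax (co)monoidal rather than strictly multiplicative, so one must verify that the chain homotopies introduced along the way do not disturb the coproduct; the cleanest route is to invoke the known compatibility of Alexander--Whitney with geometric realization of simplicial spaces, which is essentially the content of \cite[Lemma A.17]{eismeier2019equivariant} cited here.
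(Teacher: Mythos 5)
The paper does not prove this lemma at all: it is quoted verbatim from \cite[Lemma A.17]{eismeier2019equivariant}, so there is no in-paper argument to compare yours against. Judged on its own, your outline is the standard proof and the first four steps are sound: the nerve $N_\bullet G$ is a proper simplicial space because a compact Lie group is a CW complex with nondegenerate basepoint, so the skeletal filtration of $|N_\bullet G|=BG$ gives a natural quasi-isomorphism from the totalization of $[n]\mapsto C_\ast(G^{\times n};R)$ to $C_\ast(BG;R)$; the shuffle maps are natural, strictly associative and unital, so they assemble into a levelwise quasi-isomorphism of simplicial chain complexes from $[n]\mapsto C_\ast(G;R)^{\otimes n}$ (the simplicial two-sided bar object), and normalization identifies its totalization with $B(R,C_\ast(G;R),R)$ as defined in the paper.

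The one genuine gap is exactly where you place it, but your resolution of it is circular: you propose to close the coalgebra-compatibility step by ``invoking \cite[Lemma A.17]{eismeier2019equivariant}'', which is the statement being proved. To actually close it you need two separate verifications, neither of which follows formally from ``lax comonoidality''. First, the shuffle map $\nabla\colon C_\ast(X;R)\otimes C_\ast(Y;R)\to C_\ast(X\times Y;R)$ is a \emph{strict} morphism of dg coalgebras for the Alexander--Whitney coproducts (a classical but nontrivial computation); this makes each levelwise comparison $C_\ast(G;R)^{\otimes n}\to C_\ast(G^{\times n};R)$ a coalgebra map. Second, and harder, you must check that the quasi-isomorphism $\mathrm{Tot}\,C_\ast(N_\bullet G;R)\to C_\ast(|N_\bullet G|;R)$ intertwines the ``external deconcatenation $\otimes$ internal Alexander--Whitney'' coproduct on the source with the Alexander--Whitney coproduct on the target; this requires an explicit chain homotopy (or an acyclic-models argument) because the diagonal of a realization is not the realization of the levelwise diagonals on the nose. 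For a discrete group the second step is vacuous, since the Alexander--Whitney coproduct on the nerve is literally deconcatenation of bar words, which is a useful sanity check but not a proof in the topological case. As written, your argument establishes the quasi-isomorphism of chain complexes but only gestures at the coalgebra statement, which is the actual content of the cited lemma.
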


Suppose that $G$ admits a CW-complex structure and $G\times G$ admits a product $G$-CW-complex structure so that the map $\mathrm{prod}\colon G\times G\rightarrow G$ is cellular. Then the cellular chain complex $C_\ast^{CW}(G;R)$ becomes a dga over $R$. Clearly the natural map
\[
C_\ast^{CW}(G;R)\longrightarrow C_\ast(G;R)
\]
is a quasi-isomorphism of dgas. By the homotopy invariance of the two-sided bar construction \cite[Theorem A.1]{eismeier2019equivariant}, we deduce that we have a quasi-isomorphism
\[
BC_\ast^{CW}(G;R) \simeq C_\ast(BG;R)
\]
of dg coalgebras over $R$. Since $C_\ast^{CW}(G;R)$ is a purely combinatorial object which only requires a finite amount of computation, this gives an easy way to explicitly compute the homotopy type of the dg coalgebra $C_\ast(BG;R)$.

Now we restrict to the case $G=\mathrm{Pin}(2)$ and $R=\Z_2$, which is of our main interest. The required CW decompositions of $\mathrm{Pin}(2)$ and $\mathrm{Pin}(2)\times \mathrm{Pin}(2)$ are constructed in \cite[Examples 2.9 and 2.10]{Sto20}. The homotopy type of the $\Z_2$-dga $C_\ast^{CW}(\mathrm{Pin}(2);\Z_2)$ induced from those CW structures are then given in \cite[Section 2.3]{Sto20} as
\[
C_\ast^{CW}(\mathrm{Pin}(2);\Z_2)\simeq \mathcal{A}_0:=\Z_2 \langle s,j \rangle/(sj+j^3 s,s^2,j^4+1),
\]
where $\deg s=1$, $\deg j=0$, and the augmentation map $\epsilon$ is given by $\epsilon(1)=\epsilon(j)=1$ and $\epsilon(s)=0$; note that $j$ and $s$ do not commute.\footnote{More precisely, $\Z_2\langle s,j \rangle$ is the free $\Z_2$-algebra generated by noncommuting variables $s$ and $j$; we are then taking quotient by the two-sided ideal generated by the elements $sj+j^3s$, $s^2$, and $j^4+1$.} Also, the differential is given as follows:
\[
d(j^n)=0,\quad d(j^n s) = j^n (1+j^2).
\]
Then we have the following lemma. Note that $\mathcal{A}_0$ has a canonical structure of a $\Z_2$-bialgebra, as it can also be seen as a $\Z_2$-algebra; this fact will be used later in this section.

\begin{lem} \label{lem: coalgebra for BPin is BA0}
    The $\Z_2$-dg coalgebra $C_\ast(B\mathrm{Pin}(2);\Z_2)$ is quasi-isomorphic to $B\mathcal{A}_0$.
\end{lem}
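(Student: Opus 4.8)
The plan is to assemble the two facts recalled immediately above. On one hand, for any compact Lie group $G$ equipped with a CW structure, and with $G\times G$ given a product $G$-CW structure making the multiplication $\mathrm{prod}\colon G\times G\to G$ cellular, the cellular chain complex $C^{CW}_\ast(G;\Z_2)$ is an augmented $\Z_2$-dga, and homotopy invariance of the two-sided bar construction together with the quasi-isomorphism $BC_\ast(G;\Z_2)\simeq C_\ast(BG;\Z_2)$ yields a quasi-isomorphism $BC^{CW}_\ast(G;\Z_2)\simeq C_\ast(BG;\Z_2)$ of $\Z_2$-dg coalgebras. On the other hand, by \cite[Examples 2.9 and 2.10]{Sto20} the group $\mathrm{Pin}(2)$ and the product $\mathrm{Pin}(2)\times\mathrm{Pin}(2)$ carry the required CW structures, and by \cite[Section 2.3]{Sto20} the resulting cellular chain dga satisfies $C^{CW}_\ast(\mathrm{Pin}(2);\Z_2)\simeq \mathcal{A}_0$ as augmented $\Z_2$-dgas. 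Specializing the first fact to $G=\mathrm{Pin}(2)$ then already gives
\[
C_\ast(B\mathrm{Pin}(2);\Z_2)\simeq BC^{CW}_\ast(\mathrm{Pin}(2);\Z_2),
\]
so the remaining task is to upgrade the algebra-level equivalence $C^{CW}_\ast(\mathrm{Pin}(2);\Z_2)\simeq \mathcal{A}_0$ to a coalgebra-level equivalence after applying the bar construction.

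First I would verify that the quasi-isomorphism of \cite[Section 2.3]{Sto20} is one of \emph{augmented} $\Z_2$-dgas: the algebra $\mathcal{A}_0$ carries the augmentation with $\epsilon(1)=\epsilon(j)=1$ and $\epsilon(s)=0$, while $C^{CW}_\ast(\mathrm{Pin}(2);\Z_2)$ carries the obvious augmentation, and the connecting maps respect them. This is exactly the data required for the functor $B=B(\Z_2,-,\Z_2)\colon \mathbf{dga}_{\Z_2}\to \mathbf{codga}_{\Z_2}$ to apply. Then I would invoke homotopy invariance of the two-sided bar construction \cite[Theorem A.1]{eismeier2019equivariant}: a quasi-isomorphism of augmented $\Z_2$-dgas induces a quasi-isomorphism of the associated bar constructions. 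Finally, since the comultiplication $\Delta$ on $B(\Z_2,A,\Z_2)$ recalled above is given by an explicit formula natural in $A$, the induced map $B(C^{CW}_\ast(\mathrm{Pin}(2);\Z_2))\to B(\mathcal{A}_0)$ intertwines the comultiplications, so it is a quasi-isomorphism in $\mathbf{codga}_{\Z_2}$. Composing with the previous equivalence yields $C_\ast(B\mathrm{Pin}(2);\Z_2)\simeq B\mathcal{A}_0$ as $\Z_2$-dg coalgebras, which is the assertion.

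I do not expect a genuine obstacle here; the argument is a chain of cited results. The only point demanding care is the bookkeeping that every map in the chain preserves augmentations and intertwines the comultiplications, so that the final statement is an equivalence of dg coalgebras rather than merely of underlying chain complexes. Concretely, the one thing worth spelling out is the naturality of $B$ and of the comultiplication $\Delta$ with respect to the explicit dga quasi-isomorphism extracted from \cite{Sto20}, which is routine once the formulas are written down.
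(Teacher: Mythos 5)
Your argument is exactly the paper's proof: chain the quasi-isomorphisms $C_\ast(B\mathrm{Pin}(2);\Z_2)\simeq BC_\ast(\mathrm{Pin}(2);\Z_2)\simeq BC^{CW}_\ast(\mathrm{Pin}(2);\Z_2)\simeq B\mathcal{A}_0$, using the cited homotopy invariance of the bar construction and the description of $C^{CW}_\ast(\mathrm{Pin}(2);\Z_2)$ from Stoffregen. You simply spell out the augmentation/comultiplication bookkeeping that the paper leaves implicit; there is no difference in substance.
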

\begin{proof}
    We have $C_\ast(B\mathrm{Pin}(2);\Z_2)\simeq BC_\ast(\mathrm{Pin}(2);\Z_2)\simeq BC_\ast^{CW}(\mathrm{Pin}(2);\Z_2)\simeq B\mathcal{A}_0$.
\end{proof}

Unfortunately, the dg coalgebra $B\mathcal{A}_0$ is still quite complicated; in order to simplify it, we have to explicitly describe the cycles whose homology classes generate $H_\ast(B\mathcal{A}_0)$. In order to do so, we recall that since our coefficient ring $\Z_2$ is a field, the $\Z_2$-coalgebra $H_\ast(B\mathrm{Pin}(2);\Z_2)$ (which is isomorphic to the coalgebra $H_\ast(B\mathcal{A}_0)$) is the dual of the $\Z_2$-algebra $H^\ast(B\mathrm{Pin}(2);\Z_2)$, which is proven in \cite[Section 2.1]{manolescu2016pin} to be isomorphic to the ring $\Z_2[Q,V]/(Q^3)$, where $\deg Q=1$ and $\deg V=4$. Hence, to describe the homologically nontrivial cycles of $H_\ast(B\mathcal{A}_0)$, we only have to find homologically nontrivial cycles $\phi,\psi\in B\mathcal{A}_0$ such that $\deg \phi=1$ and $\deg\psi=4$. One possible description of such cycles is given by the following lemma.

\begin{lem} \label{lem: cycles of BA0}
Consider the following elements of $B\mathcal{A}_0$:
\[
\phi = 1 \otimes (j+1) \otimes 1, \quad
\quad \psi = 1 \otimes (js+j^3s) \otimes (js+j^3s)\otimes 1.
\]
Then $\phi$ and $\psi$ are cycles whose homology classes generate $H_1(B\mathcal{A}_0)$ and $H_4(B\mathcal{A}_0)$, respectively.
\end{lem}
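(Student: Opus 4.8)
The plan is to verify the cycle condition by direct computation with the bar differential, and then to identify the homology classes via the coalgebra structure on $H_\ast(B\mathcal{A}_0) \cong H^\ast(B\mathrm{Pin}(2);\Z_2)^\vee$. First I would check that $\phi = 1\otimes(j+1)\otimes 1$ is a cycle: since $d(j+1)=0$ in $\mathcal{A}_0$ and $\epsilon(j+1)=1+1=0$ (so that $j+1\in\bar{\mathcal{A}}_0$, making the bar expression well defined), the only potentially nonzero contributions to $d\phi$ come from the outer bar differentials $1\cdot(j+1)\otimes 1$ and $1\otimes(j+1)\cdot 1$, which are both images of $j+1$ in $R = \Z_2$ under $\epsilon$, hence zero. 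So $d\phi = 0$. For $\psi = 1\otimes(js+j^3s)\otimes(js+j^3s)\otimes 1$, I would first record that $a := js+j^3s = (1+j^2)js = j s(1+j^2)$ lies in $\bar{\mathcal{A}}_0$ (since $\epsilon(s)=0$), that $da = d(js)+d(j^3s) = j(1+j^2) + j^3(1+j^2) = (1+j^2)^2 = 1 + j^4 = 0$ in $\mathcal{A}_0$, and that the internal product $a\cdot a = 0$: indeed $a^2 = (js+j^3s)(js+j^3s)$, and using $s j = j^3 s$ and $s^2=0$ one gets $sjs = j^3 s^2 = 0$, so every term vanishes. Then $d\psi$ has contributions of three types — the internal differentials $da$ (zero), the internal multiplication $a\cdot a$ (zero), and the two outer face maps $1\cdot a \otimes a\otimes 1$ and $1\otimes a\otimes a\cdot 1$ (both zero since they factor through $\epsilon(a)=0$). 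Hence $d\psi = 0$. These computations are routine once the three relations $sj=j^3s$, $s^2=0$, $j^4=1$ are in hand.

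The substantive part is showing that $[\phi]$ and $[\psi]$ are \emph{nonzero} and generate $H_1$ and $H_4$. For degree reasons this is almost forced: by \Cref{lem: coalgebra for BPin is BA0} we have $H_\ast(B\mathcal{A}_0)\cong H_\ast(B\mathrm{Pin}(2);\Z_2)$, which is the graded dual of $\Z_2[Q,V]/(Q^3)$ with $\deg Q=1$, $\deg V=4$; thus $H_1$ and $H_4$ are each one-dimensional over $\Z_2$ (spanned by the duals of $Q$ and $V$ respectively). So it suffices to prove $[\phi]\neq 0$ and $[\psi]\neq 0$. For $\phi$ the cleanest route is to observe that $\phi$ is precisely the image of the fundamental chain detecting $H_1(B\mathbb{Z}_2;\Z_2)$ under the map induced by a subgroup inclusion — more concretely, the $1$-simplex $1\otimes (j+1)\otimes 1$ in the bar complex of $\mathcal{A}_0$ pairs nontrivially with the degree-one generator $Q$ of $H^1(B\mathrm{Pin}(2);\Z_2)$, which restricts nontrivially to $B\pi_0(\mathrm{Pin}(2)) = B\mathbb{Z}_2$; since $j$ represents the nontrivial component, $\phi$ is not a boundary. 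For $\psi$ I would instead argue that $\psi$ is (up to a known sign/quasi-isomorphism) the bar representative of the image under $B$ of the class detecting $V$ — equivalently, that the Bockstein-type/power operation structure forces $\psi$ to be cohomologous to the square of an appropriate degree-$2$ class — but a cleaner approach is to exhibit an explicit cochain in $\widetilde{C}^\ast(B\mathcal{A}_0)$ representing the dual of $V$ and pair it against $\psi$, checking the pairing is $1$. The main obstacle I expect is precisely this last pairing computation for $\psi$: one must either construct a small cochain model of $B\mathcal{A}_0$ in low degrees (using the known minimal free resolution of $\Z_2$ over $\mathcal{A}_0$, which is what makes $H^\ast = \Z_2[Q,V]/(Q^3)$ visible) and read off that $\psi$ is the generating cycle in bar-degree $2$, weight $4$, or invoke that the bar-cobar quasi-isomorphism identifies $\psi$ with the canonical generator coming from the cobar differential $\Omega B\mathcal{A}_0 \to \mathcal{A}_0$.

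In practice I would organize the writeup as two short paragraphs: one doing the (routine) verification that $d\phi = d\psi = 0$ using the three defining relations of $\mathcal{A}_0$; and one invoking \Cref{lem: coalgebra for BPin is BA0} together with the computation $H^\ast(B\mathrm{Pin}(2);\Z_2)\cong\Z_2[Q,V]/(Q^3)$ to reduce nonvanishing to a single pairing check in each degree, which one then performs against explicit representatives of $Q^\vee$ and $V^\vee$. If setting up the cochain model for the $V$-pairing turns out to be cumbersome, the fallback is to appeal to the fact (implicit in \cite{Sto20} and the bar-cobar formalism) that the minimal resolution of $\Z_2$ over $\mathcal{A}_0$ has its degree-$4$ generator represented on the bar side exactly by $1\otimes a\otimes a\otimes 1$ with $a = js+j^3s$, which is the statement we want; I would cite this and keep the exposition brief.
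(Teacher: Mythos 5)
Your cycle verification for $\phi$ and $\psi$ is correct and is essentially the paper's computation (the relations $sj=j^3s$, $s^2=0$, $j^4=1$ give $da=a^2=0$ for $a=js+j^3s$, and the end face maps vanish through the augmentation). Your reduction of the generation statement to the nonvanishing of $[\phi]$ and $[\psi]$, using $H^\ast(B\mathrm{Pin}(2);\Z_2)\cong\Z_2[Q,V]/(Q^3)$ to see that $H_1$ and $H_4$ are one-dimensional, is also what the paper does, and your argument for $[\phi]\neq 0$ (restriction to $\pi_0(\mathrm{Pin}(2))$, or equivalently a direct low-degree computation) is fine.

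The genuine gap is the nonvanishing of $[\psi]$. You correctly flag this as "the main obstacle," but none of the routes you sketch is carried out, and your fallback --- citing that the degree-$4$ generator of the minimal resolution "is represented on the bar side exactly by $1\otimes a\otimes a\otimes 1$" --- is essentially the statement to be proved, so invoking it is circular. The paper closes the gap by restricting to the \emph{identity component} rather than to $\pi_0$: the inclusion $S^1\hookrightarrow\mathrm{Pin}(2)$ corresponds to the dga map $f\colon\mathcal{A}'=\Z_2[t]/(t^2)\to\mathcal{A}_0$, $t\mapsto s+j^2s$, and $(B\mathrm{inc})_\ast$ is an isomorphism on $H_4$. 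Since $\mathcal{A}'$ has zero differential, $H_4(B\mathcal{A}')$ is visibly generated by $1\otimes t\otimes t\otimes 1$, so $H_4(B\mathcal{A}_0)$ is generated by $(Bf)(1\otimes t\otimes t\otimes 1)=1\otimes(s+j^2s)\otimes(s+j^2s)\otimes 1$. The remaining point is the explicit chain homotopy
\[
d\bigl(1\otimes (js+j^3s)\otimes j\otimes (s+j^2s)\otimes 1\bigr)
= 1\otimes (s+j^2s)\otimes (s+j^2s)\otimes 1 + \psi ,
\]
which follows from $(js+j^3s)\,j = s+j^2s$ and $j\,(s+j^2s)=js+j^3s$. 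This identifies $[\psi]$ with the generator. If you insist on your pairing strategy instead, you would have to construct the degree-$4$ cocycle dual to $V$ on $B\mathcal{A}_0$ from scratch, which amounts to the same amount of work with none of the structural payoff; I recommend adopting the restriction-to-$S^1$ argument.
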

\begin{proof}
We first prove that $\phi$ and $\psi$ are cycles. This fact is very easy to see; since we have
\[
d(j+1)=0,\quad  d(js+j^3 s) = (j^2+1)j\cdot ds = (j^4+1)j = 0,
\]
and 
\[
\begin{split}
(js+j^3 s)^2 &= jsjs+jsj^3 s+j^3 sjs+j^3 sj^3 s \\
&= (\text{some polynomial in }j)\cdot s^2 \\
&= 0,
\end{split}
\]
we get
\[
\begin{split}
    d\phi &= 1\otimes d(j+1)\otimes 1 = 0, \\
    d\psi &= 1\otimes (js+j^3s)^2 \otimes 1 \\
    &= 1\otimes j \otimes 1 \\
    &= 0.
\end{split}
\]
It remains to prove that $\phi$ and $\psi$ are not boundaries and therefore their homology classes are nonzero. For $\phi$, this can be done by a very simple computation and thus is left to the reader.

To prove that $\psi$ is not a boundary, we consider the $\Z_2$-dg coalgebra $\mathcal{A}' = \Z_2[t]/(t^2)$, where the differential is zero and the comultiplication is given as the dual of its canonical multiplication map. Obviously, there exists a quasi-isomorphism
\[
\mathcal{A}' \xrightarrow{t\mapsto s+j^2 s} C^{CW}_\ast(S^1;\Z_2)\simeq C_\ast(S^1;\Z_2),
\]
where we are using the restriction of the CW-complex structure on $\mathrm{Pin}(2)$ to its identity component. Then we get the following homotopy-commutative diagram, where $\mathrm{inc}:S^1 \hookrightarrow \mathrm{Pin}(2)$ denotes the inclusion of the identity component of $\mathrm{Pin}(2)$ and the $\Z_2$-dg coalgebra morphism $f$ is defined by $f(t)=s+j^2 s$.
\[
\xymatrix{
\mathcal{A}' \ar[rr]^\simeq \ar[d]_{f} && C_\ast(S^1;\Z_2) \ar[d]^{\mathrm{inc}_\ast} \\
\mathcal{A}_0 \ar[rr]^\simeq && C_\ast(\mathrm{Pin}(2);\Z_2)
}
\] 
Applying the functor $B$ then gives the following homotopy-commutative diagram.
\[
\xymatrix{
B\mathcal{A}' \ar[rr]^\simeq \ar[d]_{Bf} && BC_\ast(S^1;\Z_2) \ar[r]^\simeq \ar[d]^{B\mathrm{inc}_\ast} & C_\ast(BS^1;\Z_2) \ar[d]^{(B\mathrm{inc})_\ast} \\
B\mathcal{A}_0 \ar[rr]^\simeq && BC_\ast(\mathrm{Pin}(2);\Z_2) \ar[r]^\simeq & C_\ast(B\mathrm{Pin}(2);\Z_2)
}
\]
It is clear that the map
\[
(B\mathrm{inc})_\ast\colon H_\ast(BS^1;\Z_2)\longrightarrow H_\ast(B\mathrm{Pin}(2);\Z_2)
\]
gives an isomorphism between $H_4$, and $H_4(B\mathcal{A}')$ is generated by $1\otimes t\otimes t\otimes 1$.\footnote{In general, it is straightforward to check that $H_{2n}(B\mathcal{A}')$ is generated by $1\otimes t\otimes\cdots\otimes t\otimes 1$.} Hence we see that $H_4(B\mathcal{A}_0)$ is generated by the homology class of the cocycle 
\[
(Bf)(1\otimes t\otimes t\otimes 1) = 1\otimes f(t)\otimes f(t)\otimes 1 = 1\otimes (s+j^2s)\otimes (s+j^2s)\otimes 1.
\]
Furthermore, since we have $d(s+j^2s)=d(js+j^3s)=0$, we get
\[
\begin{split}
d(1\otimes (js+j^3s)\otimes j\otimes (s+j^2s)\otimes 1) &= 1\otimes (js+j^3s)j \otimes (s+j^2s) \otimes 1 + 1\otimes (js+j^3s)\otimes (js+j^3s)\otimes 1 \\
&= 1\otimes (s+j^2s)\otimes (s+j^2s)\otimes 1+1\otimes (js+j^3s)\otimes (js+j^3s)\otimes 1 \\
&= (Bf)(1\otimes t\otimes t\otimes 1)+\psi.
\end{split}
\]
Therefore we have $[\psi]=[(Bf)(1\otimes t\otimes t\otimes 1)]$, which implies that $[\psi]$ also generates $H_4(B\mathcal{A}_0)$. The lemma follows.
\end{proof}

Now consider the dg coalgebra
\[
\mathfrak{R}^\ast = \Z_2[U,Q],\quad \deg Q=1,\quad \deg U=2,
\]
where the comultiplication is given as the dual of the canonical multiplication structure and the differential is given by
\[
d(Q^i U^j) = \begin{cases}
    Q^{i-3}U^{j-1} &\text{if}\quad j\text{ is even and }i\ge 3, \\
    0&\text{else}.
\end{cases}
\]
Then the differential satisfies the following coLeibniz rule:
\[
\Delta \circ d = (d\otimes \mathrm{id} + \mathrm{id}\otimes d)\circ \Delta,
\]
i.e. it is a coderivation on the coalgebra $\mathfrak{R}^\ast$. We then consider the $\Z_2$-linear map $\Phi_0\colon \mathfrak{R}^\ast\rightarrow \mathcal{A}_0$
of degree $-1$, defined as follows:
\[
\Phi_0(Q)=j+1,\quad \Phi_0(Q^2) = s,\quad \Phi_0(U)=js+j^3 s,\quad \Phi(\text{any other monomial})=0.
\]
Then $\Phi_0$ satisfies the following property.
\begin{lem}\label{lem: property of Phi zero}
    We have $d\Phi_0+\Phi_0 d = \mu\circ (\Phi_0\otimes \Phi_0)\circ \Delta$, where $\mu\colon \mathcal{A}_0\otimes \mathcal{A}_0\rightarrow \mathcal{A}_0$ is the canonical multiplication map of $\mathcal{A}_0$.
\end{lem}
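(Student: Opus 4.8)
The statement is an identity of degree-$(-1)$ maps $\mathfrak{R}^\ast\to\mathcal{A}_0$, so the plan is to verify it monomial by monomial, using the explicit formulas for $d$ on $\mathfrak{R}^\ast$, for $d$ on $\mathcal{A}_0$, for $\Delta$ on the coalgebra $\mathfrak{R}^\ast$, and for $\Phi_0$ itself. The key observation is that $\Phi_0$ is supported on only the four monomials $1$, $Q$, $Q^2$, $U$, with $\Phi_0(1)=0$ (since $\Phi_0$ lowers degree by $1$ and there is nothing in degree $-1$ of $\mathcal{A}_0$) and $\Phi_0$ vanishing on everything else; hence $\Phi_0\otimes\Phi_0$ kills most terms in the coproduct, and only finitely many monomials can contribute to either side of the claimed equation.

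First I would record the coproduct of a monomial $Q^iU^j$ in $\mathfrak{R}^\ast$: dualizing the multiplication, $\Delta(Q^iU^j)=\sum_{a+b=i,\,c+d=j}(Q^aU^c)\otimes(Q^bU^d)$ (up to the usual Koszul signs, which are irrelevant mod $2$). Applying $\Phi_0\otimes\Phi_0$ and then $\mu$, the only surviving summands are those where both tensor factors land in $\{Q,Q^2,U\}$; this forces $i\in\{2,3,4\}$ and $j\in\{0,2\}$ with small total degree, giving a short finite list of monomials to check: $Q^2$, $Q^3$, $Q^4$, $QU$, $Q^2U$, $Q^3U$, $Q^2U^2$, and a few others. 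For each such monomial $m$ I would compute (i) $d\Phi_0(m)$ using the relations $sj+j^3s=0$, $s^2=0$, $j^4=1$ in $\mathcal{A}_0$ and the differential $d(j^ns)=j^n(1+j^2)$; (ii) $\Phi_0(dm)$ using the differential on $\mathfrak{R}^\ast$; and (iii) $\mu\circ(\Phi_0\otimes\Phi_0)\circ\Delta(m)$ using the coproduct list above. Matching (i)+(ii) against (iii) in each case completes the verification. A sanity check that the whole thing is consistent is that both sides have degree $-2$ and that, after passing to the bar constructions, this identity is exactly what makes the induced map $B\mathfrak{R}^\ast\to B\mathcal{A}_0$ (or rather the twisted extension built from $\Phi_0$) a map of dg coalgebras, which in turn must carry the generators $Q\mapsto[\phi]$, $U$-related class $\mapsto[\psi]$ described in \Cref{lem: cycles of BA0}.

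The main obstacle is bookkeeping: there are noncommuting elements $j$ and $s$ in $\mathcal{A}_0$, and the products $\Phi_0(Q^a)\cdot\Phi_0(Q^b)$, $\Phi_0(Q^a)\cdot\Phi_0(U)$, $\Phi_0(U)\cdot\Phi_0(U)$ must be simplified carefully using $sj=j^3s$ and $j^4=1$ before the mod-$2$ cancellations become visible. For instance $\Phi_0(U)^2=(js+j^3s)^2$ must be reduced — exactly as in the proof of \Cref{lem: cycles of BA0}, this is $(j^2+1)^2 j s^2=0$ after commuting, and one needs the analogous reductions for the mixed terms $(j+1)(js+j^3s)$ and $(j+1)s$. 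I expect each individual monomial check to be a three- or four-line computation, so the risk is purely a matter of not dropping a term; there is no conceptual difficulty, and the degree constraint guarantees the verification terminates. Once all monomials are checked, the identity $d\Phi_0+\Phi_0 d=\mu\circ(\Phi_0\otimes\Phi_0)\circ\Delta$ holds, which is the assertion of the lemma.
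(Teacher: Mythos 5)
Your proposal is correct and follows essentially the same route as the paper: a direct finite verification over the monomials of $\mathfrak{R}^\ast$, using that $\Phi_0$ is supported only on $Q$, $Q^2$, $U$ and reducing the products in $\mathcal{A}_0$ via $sj=j^3s$, $s^2=0$, $j^4=1$. The paper merely trims your case list further — observing that every case except $Q^2$, $Q^3$, and $QU$ has both sides vanishing (the right side because every surviving term contains $s^2$ after commuting) — and then carries out the three remaining computations you correctly identified, namely $(j+1)^2=ds$, $(j+1)s+s(j+1)=js+j^3s=\Phi_0(dQ^3)$, and $(j+1)(js+j^3s)+(js+j^3s)(j+1)=0$.
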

\begin{proof}
    One only has to check the identity
    \[
    d\Phi_0(Q^i U^j)=\mu\circ (\Phi_0\otimes \Phi_0)\circ \Delta(Q^i U^j)
    \]
    under the condition $0 \le i \le 4$ and $0 \le j \le 2$, as both sides of the identity vanish otherwise. One can shrink this even further using the relation $s^2=0$; in fact, we only have to check the identity for the monomials $Q^2$, $Q^3$, and $QU$, as otherwise every term involve will be either zero or contain $s^2$ (and thus also zero).

    We check these remaining cases one by one. In the case $Q^2$, we have
    \[
    \mu\circ (\Phi_0\otimes \Phi_0)\circ \Delta(Q^2) = \Phi_0(Q)\Phi_0(Q) = (j+1)^2 = j^2 + 1=ds=d\Phi_0(Q^2)+\Phi_0d(Q^2).
    \]
    In the case $Q^3$, we have
    \[
    \begin{split}
    \mu\circ (\Phi_0\otimes \Phi_0)\circ \Delta(Q^3) &= \Phi_0(Q)\Phi_0(Q^2)+\Phi_0(Q^2)\Phi_0(Q) \\
    &= (j+1)s+s(j+1) \\
    &= js+j^3 s \\
    &= d\Phi_0(Q^3)+\Phi_0d(Q^3);
    \end{split}
    \]
    note that $dQ^3=U$ and thus $\Phi_0d(Q^3)=\Phi_0(U) = js+j^3 s$. Finally, in the case $QU$, we have
    \[
    \begin{split}
        \mu\circ (\Phi_0\otimes \Phi_0)\circ \Delta(QU) &= \Phi_0(Q)\Phi_0(U)+\Phi_0(U)\Phi_0(Q) \\
        &= (j+1)(js+j^3 s)+(js+j^3 s)(j+1) \\
        &= 0 \\
        &= d\Phi_0(QU)+\Phi_0d(QU).
    \end{split}
    \]
    The lemma is thus proven.
\end{proof}

We then define the $\Z_2$-linear map $\Phi\colon \mathfrak{R}^\ast\rightarrow B\mathcal{A}_0$ as 
\[
\Phi = \sum_{n=0}^\infty 1\otimes ((\Phi_0\otimes\cdots\otimes \Phi_0)\circ \tilde\Delta^n) \otimes 1,
\]
where $\tilde\Delta(x)=\Delta(x)-1\otimes x-x\otimes 1$ denotes the reduced comultiplication of $\mathfrak{R}^\ast$ and the iterated reduced comultiplication $\tilde\Delta^n$ is defined inductively for any integer $n\ge 2$ as follows:
\[
\tilde\Delta^n(x) = (\tilde\Delta\otimes \mathrm{id}\otimes\cdots\otimes\mathrm{id})\circ \tilde\Delta^{n-1}.
\]
Then we have the following lemmas.

\begin{lem} \label{lem: Phi is a dg coalgebra morphism}
    The map $\Phi$ is a $\Z_2$-dg coalgebra morphism.
\end{lem}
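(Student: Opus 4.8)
The plan is to verify that $\Phi$ is a map of dg coalgebras by checking three things: (i) it is counital, (ii) it is comultiplicative, i.e.\ it intertwines the comultiplications on $\mathfrak{R}^\ast$ and $B\mathcal{A}_0$, and (iii) it is a chain map, i.e.\ it intertwines the differentials. The key input for all three is \Cref{lem: property of Phi zero}, which says that $\Phi_0$ behaves like a ``twisting cochain'' relative to the multiplication $\mu$ of $\mathcal{A}_0$, together with the coassociativity of $\tilde\Delta$ on $\mathfrak{R}^\ast$ and the coassociativity/counitality that defines the bar comultiplication on $B\mathcal{A}_0$.

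First I would recall that the comultiplication on $B\mathcal{A}_0 = B(\Z_2,\mathcal{A}_0,\Z_2)$ is the ``deconcatenation'' coproduct: it splits the bar word $1\otimes a_1\otimes\cdots\otimes a_k\otimes 1$ at each slot into $[1\otimes a_1\otimes\cdots\otimes a_i\otimes 1]\otimes[1\otimes a_{i+1}\otimes\cdots\otimes a_k\otimes 1]$ (up to signs, which are trivial over $\Z_2$). Since $\Phi$ sends $x$ to the sum over $n\ge 0$ of the bar word whose entries are the components of $\tilde\Delta^n(x)$ run through $\Phi_0$, comultiplicativity $\Delta_{B\mathcal{A}_0}\circ\Phi = (\Phi\otimes\Phi)\circ\Delta_{\mathfrak{R}^\ast}$ follows from the fact that $\tilde\Delta^{n}$ decomposed at the $i$-th cut equals $\tilde\Delta^i\otimes\tilde\Delta^{n-i}$ applied after $\tilde\Delta$ — precisely the coassociativity of the reduced coproduct. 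Counitality is immediate: the $n=0$ term is $1\otimes 1$, which the bar counit sends to $1$, matching the counit of $\mathfrak{R}^\ast$, and all $n\ge1$ terms lie in the augmentation ideal of $B\mathcal{A}_0$.

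The substantive step is (iii): showing $\Phi$ is a chain map. Here one computes $d_{B\mathcal{A}_0}\circ\Phi$ on a monomial $x$. The differential on the bar construction has three kinds of terms: the internal differential of $\mathcal{A}_0$ applied in each slot, and the ``multiply two adjacent slots'' terms $a_ia_{i+1}$. Applying this to the bar word built from $(\Phi_0\otimes\cdots\otimes\Phi_0)\tilde\Delta^n(x)$, the internal-differential terms contribute $\sum (\cdots\otimes d\Phi_0\otimes\cdots)$, while the multiplication terms contribute $\sum(\cdots\otimes \mu(\Phi_0\otimes\Phi_0)\otimes\cdots)$ applied to adjacent components of $\tilde\Delta^n(x)$. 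By \Cref{lem: property of Phi zero}, $d\Phi_0+\mu\circ(\Phi_0\otimes\Phi_0)\circ\tilde\Delta = \Phi_0\circ d$ (using that on $\mathfrak{R}^\ast$ the reduced part of $\Delta$ suffices because $\Phi_0$ kills $1$), so these two families of terms combine — after re-indexing the iterated coproducts via coassociativity — to exactly $\sum_n 1\otimes(\Phi_0\otimes\cdots\otimes\Phi_0)\tilde\Delta^n(dx)\otimes 1 = \Phi(dx)$. I expect the bookkeeping of which iterate of $\tilde\Delta$ produces which bar word, and matching the ``collapse a slot'' terms of $d_{B\mathcal{A}_0}$ against the $\mu\circ(\Phi_0\otimes\Phi_0)$ term of \Cref{lem: property of Phi zero}, to be the main obstacle; it is a standard telescoping argument in the theory of twisting cochains and the bar--cobar adjunction, but it requires care to set up the indices so the cancellation is visible. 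Over $\Z_2$ all signs vanish, which removes the usual sign-chasing nuisance. Once (i)--(iii) are in place, $\Phi$ is a morphism of dg coalgebras, proving the lemma.
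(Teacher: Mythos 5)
Your proposal is correct in substance but takes a genuinely different route from the paper's. The paper disposes of the lemma in three sentences: it observes that $\mathfrak{R}^\ast$ is conilpotent, that \Cref{lem: property of Phi zero} says exactly that $\Phi_0$ is a twisting morphism from the conilpotent dg coalgebra $\mathfrak{R}^\ast$ to the dga $\mathcal{A}_0$, and then invokes the general bar--cobar adjunction of Loday--Vallette (their Proposition~1.2.7 and Theorem~2.2.9) which asserts that such twisting morphisms correspond bijectively to dg coalgebra morphisms $\mathfrak{R}^\ast \to B\mathcal{A}_0$ under precisely the formula defining $\Phi$. You instead propose to unpack that theorem by hand, checking counitality, comultiplicativity, and the chain map property directly, with the chain-map step reduced to a telescoping cancellation driven by \Cref{lem: property of Phi zero} (in the equivalent form $d\Phi_0 + \mu\circ(\Phi_0\otimes\Phi_0)\circ\tilde\Delta = \Phi_0 d$, valid since $\Phi_0(1)=0$). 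That is exactly the content of the cited theorem, so your approach buys self-containedness at the cost of bookkeeping, while the paper's buys brevity by outsourcing to a textbook. One small thing your write-up glosses over: conilpotency of $\mathfrak{R}^\ast$ is needed not just as a hypothesis of the cited theorem but to make the defining sum $\Phi = \sum_{n\ge 0} 1\otimes(\Phi_0^{\otimes n}\circ\tilde\Delta^n)\otimes 1$ well-defined at all (each monomial has only finitely many nonzero terms). You use this implicitly but never state it; the paper states it up front. If you were to write out your direct verification fully, you should make that point explicit before anything else.
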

\begin{proof}
    The domain $\mathfrak{R}^\ast$ of $\Phi$ is conilpotent, i.e. for any element $x\in \mathfrak{R}^\ast\smallsetminus\{1\}$, there exists some integer $N>0$ such that $\tilde\Delta^N(x)=0$. Furthermore, \Cref{lem: property of Phi zero} implies that $\Phi_0$ is a twisting morphism from the dg coalgebra $\mathfrak{R}^\ast$ to the dga $\mathcal{A}_0$. Hence the lemma follows from 
    \cite[Proposition 1.2.7 and Theorem 2.2.9]{loday2012algebraic}.
\end{proof}

\begin{lem} \label{lem: Phi is a quasi isom}
    $\Phi$ is a quasi-isomorphism.
\end{lem}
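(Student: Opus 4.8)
The statement to prove is that the $\Z_2$-dg coalgebra morphism $\Phi\colon \mathfrak{R}^\ast\rightarrow B\mathcal{A}_0$ constructed above is a quasi-isomorphism. The natural approach is to reduce the question to a statement about homology, since both sides have explicitly understood homology: by \Cref{lem: coalgebra for BPin is BA0}, $H_\ast(B\mathcal{A}_0)\cong H_\ast(B\mathrm{Pin}(2);\Z_2)$, which is the $\Z_2$-linear dual of the ring $\Z_2[Q,V]/(Q^3)$ with $\deg Q=1$ and $\deg V=4$; and the homology of $\mathfrak{R}^\ast = (\Z_2[U,Q],d)$ with $d(Q^iU^j)=Q^{i-3}U^{j-1}$ for $j$ even and $i\ge 3$ (and zero otherwise) can be computed directly. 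First I would compute $H_\ast(\mathfrak{R}^\ast)$: the differential pairs up certain monomials, and after a routine bookkeeping of which $Q^iU^j$ survive, one finds that a basis for the homology is given (in each degree) by the classes of $1$, $Q$, $Q^2$, and the powers $U^n$ together with $QU^n$ and $Q^2U^n$ — in other words $H_\ast(\mathfrak{R}^\ast)$ is, degree by degree, abstractly isomorphic to the dual of $\Z_2[Q,V]/(Q^3)$ with $V$ corresponding to $U^2$ (this matches the fact already used elsewhere in the paper that $\mathfrak{R}\simeq C^\ast(B\mathrm{Pin}(2);\Z_2)$ has homology $\Z_2[Q,V]/(Q^3)$, and $\mathfrak{R}^\ast$ is a linear dual model). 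So both $H_\ast(\mathfrak{R}^\ast)$ and $H_\ast(B\mathcal{A}_0)$ have the same Poincaré series, and it suffices to show that $\Phi_\ast$ is injective (equivalently surjective) in each degree; by a dimension count, injectivity in each degree is enough.

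\textbf{Key steps.} The cleanest route to injectivity is to check that $\Phi$ carries the explicit generating cycles of $H_\ast(\mathfrak{R}^\ast)$ to cycles generating $H_\ast(B\mathcal{A}_0)$, using the description of the latter already established in \Cref{lem: cycles of BA0}. Concretely: (i) the class of $Q\in\mathfrak{R}^\ast$ is sent by $\Phi$ to $1\otimes\Phi_0(Q)\otimes 1 = 1\otimes(j+1)\otimes 1 = \phi$, which by \Cref{lem: cycles of BA0} generates $H_1(B\mathcal{A}_0)$; (ii) the class of $U^2\in\mathfrak{R}^\ast$ is sent by $\Phi$ to $1\otimes((\Phi_0\otimes\Phi_0)\circ\tilde\Delta(U^2))\otimes 1$, and since $\tilde\Delta(U^2) = U\otimes U$ (all mixed terms $QU^2$-type involve monomials on which $\Phi_0$ vanishes, and $\Phi_0(U^2)=0$), this equals $1\otimes(js+j^3s)\otimes(js+j^3s)\otimes 1 = \psi$, which by \Cref{lem: cycles of BA0} generates $H_4(B\mathcal{A}_0)$. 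Since $H_\ast(B\mathcal{A}_0)$, as a coalgebra, is dual to the ring $\Z_2[Q,V]/(Q^3)$ which is multiplicatively generated by $Q$ (degree $1$) and $V$ (degree $4$), and $\Phi$ is a morphism of dg coalgebras by \Cref{lem: Phi is a dg coalgebra morphism}, the induced map $\Phi_\ast$ is a morphism of coalgebras hitting the (co)generators; then a standard dualization argument — dualize to the ring map $H^\ast(B\mathrm{Pin}(2);\Z_2)\to H^\ast(\mathfrak{R}^\ast)$, which is a $\Z_2$-algebra homomorphism sending the algebra generators $Q\mapsto [Q]$, $V\mapsto [U^2]$ — shows it is surjective, hence by the dimension count an isomorphism in every degree. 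It therefore only remains to verify that $[Q]$ and $[U^2]$ really are algebra generators of $H^\ast(\mathfrak{R}^\ast)\cong \Z_2[Q,V]/(Q^3)$ with $Q^2\cdot V^n$ etc. all nonzero in homology, which is exactly what the explicit homology computation of step one gives; and to note $\Phi_\ast([1])=[1]$ so the unit is preserved.

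\textbf{Main obstacle.} The only genuinely delicate point is the homology computation of $\mathfrak{R}^\ast$ and the identification of its multiplicative structure: one must be careful that the differential $d(Q^iU^j)=Q^{i-3}U^{j-1}$ (for $j$ even, $i\ge 3$) interacts correctly with the coalgebra (equivalently, dual algebra) structure, so that the surviving classes genuinely form the ring $\Z_2[Q,V]/(Q^3)$ rather than some abstractly-isomorphic-as-graded-vector-space but multiplicatively different object. I would handle this by working on the dual side throughout, where $\mathfrak{R}^\ast$ dualizes to the genuine dga $\mathfrak{R}=(\Z_2[Q,U],d)$ with $dU=Q^3$ appearing earlier in the paper, whose cohomology $\Z_2[Q,V]/(Q^3)$ ($V=U^2$) is stated there; then the dual of $\Phi$ is a dga map $\mathfrak{R}\to \mathcal{A}$ (where $\mathcal{A}$ is a model for $C^\ast(\mathrm{Pin}(2);\Z_2)$-type cochains, or more precisely one compares via the cobar construction), and a dga map out of $\mathfrak{R}\simeq C^\ast(B\mathrm{Pin}(2);\Z_2)$ hitting the two algebra generators $Q,V$ of the target cohomology is automatically a quasi-isomorphism. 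This reduces the whole lemma to checking $\Phi_\ast(Q)=\phi$ and $\Phi_\ast(U^2)=\psi$, which are the short computations above, plus invoking \Cref{lem: cycles of BA0} and the known cohomology ring of $B\mathrm{Pin}(2)$.
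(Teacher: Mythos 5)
Your proposal is correct and follows essentially the same route as the paper: both reduce to checking that the coalgebra morphism $\Phi_\ast$ hits the generators in degrees $1$ and $4$, verify $\Phi(Q)=\phi$ and $\Phi(U^2)=\psi$ by the same short computations, and invoke \Cref{lem: cycles of BA0} together with the identification $H_\ast(\mathfrak{R}^\ast)\cong \Z_2[Q,V]/(Q^3)\cong H_\ast(B\mathcal{A}_0)$. Your extra care with the dualization and the computation of $\tilde\Delta(U^2)=U\otimes U$ just makes explicit what the paper leaves implicit.
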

\begin{proof}
    Since it is straightforward to check that
    \[
    H_\ast(\mathfrak{R}^\ast)\cong \Z_2[Q,V]/(Q^3) \quad (\cong H_\ast(B\mathcal{A}_0)),
    \]
    and the induced map
    \[
    \Phi_\ast\colon H_\ast(\mathfrak{R}^\ast)\longrightarrow H_\ast(B\mathcal{A}_0)
    \]
    is a $\Z_2$-coalgebra morphism, it suffices to check that $\Phi_\ast$ is surjective in $H_1$ and $H_4$. To check this, we observe that
    \[
    \begin{split}
    \Phi(Q) &= 1\otimes \Phi_0(Q)\otimes 1 =  1\otimes (j+1)\otimes 1 = \phi, \\
    \Phi(U^2) &= 1\otimes \Phi_0(U)\otimes \Phi_0(U)\otimes 1 = 1 \otimes (js+j^3s) \otimes (js+j^3s)\otimes 1 = \psi.
    \end{split}
    \]
    By \Cref{lem: cycles of BA0}, we see that $\Phi_\ast$ is indeed surjective in $H_1$ and $H_4$. The lemma follows.
\end{proof}

We are now ready to show the main result of this section.

\begin{thm} \label{thm: singular cochain of BPin(2) is R}
The $\Z_2$-dga $C^\ast(B\mathrm{Pin}(2);\Z_2)$ is quasi-isomorphic to $\mathfrak{R} = (\Z_2[Q,U],d)$, where $d$ is defined $\Z_2$-linearly and by the Leibniz rule from $dQ=0$ and $dU=Q^3$.
\end{thm}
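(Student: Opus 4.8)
The plan is to deduce the statement by $\mathbb{Z}_2$-linear duality from the dg coalgebra computations already carried out in this appendix. Recall first the standard fact that, for a space whose homology is of finite type over a field $k$, the singular cochain $E_\infty$-algebra $C^\ast(-;k)$ is quasi-isomorphic, as a dga, to the $k$-linear graded dual of the singular chain dg coalgebra $C_\ast(-;k)$, with cup product dual to the coproduct. This applies to $B\mathrm{Pin}(2)$ over $k=\Z_2$, since $H_\ast(B\mathrm{Pin}(2);\Z_2)$ is finitely generated in each degree. Thus the first step is simply to record
\[
C^\ast(B\mathrm{Pin}(2);\Z_2)\ \simeq\ \mathrm{Hom}_{\Z_2}\bigl(C_\ast(B\mathrm{Pin}(2);\Z_2),\Z_2\bigr)
\]
as $\Z_2$-dgas.

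Next I would assemble the zigzag of dg coalgebra quasi-isomorphisms already established. By \Cref{lem: coalgebra for BPin is BA0} (and its proof, which factors through $BC_\ast^{CW}(\mathrm{Pin}(2);\Z_2)=B\mathcal{A}_0$) the dg coalgebra $C_\ast(B\mathrm{Pin}(2);\Z_2)$ is quasi-isomorphic to $B\mathcal{A}_0$, and by \Cref{lem: Phi is a dg coalgebra morphism} and \Cref{lem: Phi is a quasi isom} the map $\Phi\colon \mathfrak{R}^\ast\to B\mathcal{A}_0$ is a dg coalgebra quasi-isomorphism. Composing, we obtain a zigzag of dg coalgebra quasi-isomorphisms connecting $C_\ast(B\mathrm{Pin}(2);\Z_2)$ with $\mathfrak{R}^\ast$. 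I would then apply the contravariant functor $\mathrm{Hom}_{\Z_2}(-,\Z_2)$: over the field $\Z_2$ this is exact, carries dg coalgebras to dgas, and—because every complex in sight is of finite type in each degree—preserves quasi-isomorphisms. The finite-type property is immediate here: $\mathcal{A}_0$ is finite-dimensional, so $\bar{\mathcal{A}}_0[1]^{\otimes n}$ is concentrated in degrees $n$ through $2n$ and only finitely many tensor powers contribute in any fixed degree, while $\mathfrak{R}^\ast=\Z_2[Q,U]$ is visibly finite-dimensional in each degree. Dualizing the zigzag therefore yields a zigzag of dga quasi-isomorphisms between $C^\ast(B\mathrm{Pin}(2);\Z_2)$ and $(\mathfrak{R}^\ast)^\vee:=\mathrm{Hom}_{\Z_2}(\mathfrak{R}^\ast,\Z_2)$.

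Finally I would identify $(\mathfrak{R}^\ast)^\vee$ with $\mathfrak{R}=(\Z_2[Q,U],d)$, $dQ=0$, $dU=Q^3$. Since the comultiplication on $\mathfrak{R}^\ast$ was defined as the dual of the canonical polynomial multiplication on $\Z_2[Q,U]$, its graded dual recovers exactly the polynomial algebra $\Z_2[Q,U]$ on the classes dual to the degree-one and degree-two generators; and since the differential on $\mathfrak{R}^\ast$ is the coderivation dual to the unique derivation of $\Z_2[Q,U]$ sending $Q\mapsto 0$ and $U\mapsto Q^3$, its dual is precisely that derivation. Hence $(\mathfrak{R}^\ast)^\vee\cong\mathfrak{R}$ as $\Z_2$-dgas, completing the proof.

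The step requiring the most care is this last identification together with the finite-type hypothesis that makes duality well behaved: over $\Z_2$ one must avoid confusing polynomial and divided-power structures when dualizing, and the reason the double dual of a polynomial algebra returns the polynomial algebra (rather than a divided-power algebra) is exactly that we use the \emph{graded} dual throughout, every graded piece being finite-dimensional. Everything else is formal, since the genuine content—the construction of the twisting morphism $\Phi_0$ and the verification that $\Phi$ is a coalgebra quasi-isomorphism—has already been established.
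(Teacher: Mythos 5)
Your proposal is correct and is essentially the paper's own argument: the paper likewise reduces to the dg coalgebra quasi-isomorphism between $C_\ast(B\mathrm{Pin}(2);\Z_2)$ and $\mathfrak{R}^\ast$ furnished by \Cref{lem: coalgebra for BPin is BA0}, \Cref{lem: Phi is a dg coalgebra morphism}, and \Cref{lem: Phi is a quasi isom}, and then dualizes. You simply make explicit the finite-type and graded-duality points that the paper leaves implicit.
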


\begin{proof}
Since the $\Z_2$-dgas $C^\ast(B\mathrm{Pin}(2);\Z_2)$ and $\mathfrak{R}$ are dual to the $\Z_2$-dg coalgebras $C_\ast(B\mathrm{Pin}(2);\Z_2)$ and $\mathfrak{R}^\ast$, respectively, it suffices to show that the $\Z_2$-dg coalgebras $C_\ast(B\mathrm{Pin}(2);\Z_2)$ and $\mathfrak{R}^\ast$ are quasi-isomorphic. This fact follows from \Cref{lem: coalgebra for BPin is BA0,lem: Phi is a dg coalgebra morphism,lem: Phi is a quasi isom}.
\end{proof}

\section{Estimating the stable local triviality of Seifert homology spheres} 
\label{appendix: local maps of degree n-2}

Given an integer $n>0$, let $Y$ be a Seifert homology sphere with $n$ singular orbits. 
Choose any $\mathbb{Z}_2$-equivariant even spin structure $\tilde{\mathfrak{s}}$ on $Y$. 
The goal of this section is to prove \Cref{lem: local maps of positive level}, thereby providing a geometric explanation of \Cref{rem: level 0 projection and level 1 inclusion}.

We first prove the following simple fact from linear algebra.

\begin{lem} \label{lem: linear algebra}
    Let $A$ be an $n \times n$ matrix which satisfies $A_{ij}=0$ whenever $|i-j|>1$. 
    Given a real number $\alpha$, consider the matrix
    \[ M_x = \left(\begin{array}{@{}c|c@{}} A & \begin{matrix} 0 \\ \vdots \\ 0 \\ x \end{matrix}\\ \hline \begin{matrix} 0 & \cdots & 0 & x \end{matrix} & \alpha+x \\ \end{array} \right), \]
    defined for any $x \in \mathbb{R}$. Then, whenever $|x|$ is sufficiently small and $A$ is nonsingular, 
    $\det M_x$ has the same sign as $\alpha \cdot \det A$.
\end{lem}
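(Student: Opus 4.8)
\textbf{Proof proposal for \Cref{lem: linear algebra}.}

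The plan is to expand $\det M_x$ along its last row (or last column), treating $x$ as a small parameter and collecting terms by order in $x$. Expanding along the last row, one obtains $\det M_x = (\alpha+x)\det A - x^2 \cdot C$, where $C$ is the $(n,n)$-cofactor of the $(n{+}1)\times(n{+}1)$ matrix obtained by deleting the last row and last column in the appropriate pattern; more precisely, $C$ is the determinant of the matrix $A$ with its last row and last column deleted, which is a fixed real number independent of $x$. Thus $\det M_x = \alpha \det A + x\det A - x^2 C$, a polynomial in $x$ whose constant term is $\alpha \det A$.

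From this explicit formula the conclusion is immediate: since $A$ is nonsingular, $\det A \neq 0$, so $\alpha\det A$ is nonzero precisely when $\alpha \neq 0$, and in that case $\det M_x \to \alpha\det A$ as $x\to 0$; hence for $|x|$ small enough $\det M_x$ has the same sign as $\alpha\det A$. (If $\alpha = 0$ the statement is vacuous, as $\alpha\det A = 0$ has no sign; one may simply assume $\alpha\neq 0$, which is the only case used.) The banded hypothesis $A_{ij}=0$ for $|i-j|>1$ is not actually needed for this argument in the stated generality — it merely reflects the tridiagonal shape of the intersection matrices $A^l_i$ that arise in applications — so I would either keep it for context or remark that only nonsingularity of $A$ is used.

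The only mild subtlety is bookkeeping the sign in the cofactor expansion of the bordered matrix $M_x$: one must verify that the cross terms linear in $x$ coming from the off-diagonal $x$'s in the last row and column combine with the $(\alpha+x)\det A$ term to give exactly $x\det A$, and that the quadratic term $-x^2 C$ carries the correct sign. This is a routine Laplace-expansion computation, and I do not expect any genuine obstacle; the result then follows by continuity of $x\mapsto \det M_x$ at $x=0$.
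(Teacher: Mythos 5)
Your proposal is correct and follows exactly the paper's own argument: the paper's proof is precisely the identity $\det M_x = -x^2\det B + (\alpha+x)\det A$ with $B$ the leading principal minor of $A$ of size $n-1$, which is your formula with $C=\det B$, followed by the same continuity observation. Your side remarks — that the banded hypothesis on $A$ is not actually used and that the statement is only meaningful for $\alpha\neq 0$ (the only case invoked in the proof of the subsequent lemma) — are both accurate.
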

\begin{proof}
    The lemma follows from the fact that
    \[
        \det M_x = -x^2 \det B + (\alpha+x)\det A,
    \]
    where $B$ is the principal minor of $A$ of size $n-1$.
\end{proof}

Now we are ready to prove the following topological lemma.

\begin{lem} \label{lem: S1-eqv spin 4-orbifold exists}
    There exists a compact oriented spin 4-orbifold $(W,\tilde{\mathfrak{s}}_W)$ (with boundary), together with a smooth $S^1$-action, such that the following conditions are satisfied.
    \begin{itemize}
        \item $(\partial W,\tilde{\mathfrak{s}}_W)$ is $S^1$-equivariantly diffeomorphic to $(Y,\tilde{\mathfrak{s}})$;
        \item $W$ has only cyclic singularities;
        \item $b_1(W)=0$ and $b^+(W)=b^-(W)=\left\lceil \tfrac{n-1}{2} \right\rceil$.
    \end{itemize}
\end{lem}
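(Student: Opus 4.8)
The plan is to build $W$ by plumbing according to the star-shaped graph $\Gamma$ of $Y$, but with the sign of the central weight altered so as to make the intersection form indefinite of the desired signature, and to control the singular structure by using disk-orbifold bundles over $S^2$ for the central vertex. Recall from \Cref{subsec: eqv plumbing action} that $W_\Gamma$ is assembled by gluing disk bundles $D_v$ with linear $S^1$-actions, with the central bundle $D_{v_c}$ carrying the fiberwise rotation; the induced action on $\partial W_\Gamma = Y$ is exactly the Seifert action. The first step is to replace $D_{v_c}$ by the disk orbifold bundle $\mathcal{O}(e_0')$ over the orbifold $S^2$ with $n$ cone points of orders $p_1,\dots,p_n$, where $e_0'$ is chosen to have the opposite sign (or more precisely chosen so that the resulting plumbed form has the required signature), equipped with the fiberwise $S^1$-rotation. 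Gluing on the (unchanged) arm disk bundles $D_{v^l_i}$ with their linear actions produces a compact oriented $4$-orbifold $W$ with only cyclic (in fact, isolated cyclic) singularities, and by construction $\partial W$ is $S^1$-equivariantly diffeomorphic to $Y$ since the arms and the Seifert data at the singular orbits are untouched; the boundary identification of $\mathrm{Spin}^c$/spin structures goes through as in \Cref{subsec: eqv spin c comp seq}, and since $Y$ has a unique (even) spin structure the lift $\tilde{\mathfrak{s}}$ is matched automatically.

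\textbf{Homological computation.} The second step is the computation of $b_1$, $b^+$, $b^-$. Since $W$ deformation retracts onto its underlying $2$-complex (a wedge of $2$-spheres indexed by the vertices of $\Gamma$), we have $H_1(W;\mathbb{Q})=0$, so $b_1(W)=0$, and $H_2(W;\mathbb{Q})$ has rank $|V(\Gamma)|$ with intersection form the modified plumbing form $Q_\Gamma'$, i.e.\ $Q_\Gamma$ with its $(v_c,v_c)$-entry $e_0$ replaced by the new rational central weight. The arms are negative definite (the continued-fraction condition gives each $A^l_1$ negative definite, as in \Cref{lem: det of Ali matrix}), so the whole form $Q_\Gamma'$ differs from the negative definite $Q_\Gamma$ only in the central diagonal entry. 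Applying \Cref{lem: linear algebra} — with $A$ the negative definite submatrix obtained by deleting the central row and column, and $\alpha$ the appropriately shifted central weight — shows that as we vary the central weight continuously the determinant changes sign exactly once, so exactly one eigenvalue of the form crosses zero; more carefully, by a Sylvester/eigenvalue-interlacing argument for rank-one diagonal perturbations, increasing the central weight past the threshold flips exactly one negative eigenvalue to a positive one. Choosing the central weight so that this happens $\lceil (n-1)/2 \rceil$ times — which one arranges by iterating the plumbing at the central vertex, i.e.\ blowing up the central sphere the appropriate number of times while keeping the $S^1$-action linear — yields $b^+(W) = b^-(W) = \lceil (n-1)/2 \rceil$. (One must also check the form stays nondegenerate, i.e.\ the central weight is not exactly at a threshold; this is a generic condition one can always satisfy by the freedom in choosing the iterated plumbing, and it forces the count of positive and negative eigenvalues to sum to $|V(\Gamma)|$ with the singular lens-space boundary contributing the remaining defect — the arithmetic here just has to be bookkept so the two numbers come out equal.)

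\textbf{The spin structure and the main obstacle.} The third step is to produce the spin structure $\tilde{\mathfrak{s}}_W$ on $W$ restricting to $\tilde{\mathfrak{s}}$, together with its $S^1$-equivariant refinement. On each disk bundle $D_v$ and on the central disk-orbifold bundle, $\mathrm{Spin}^c$-structures are classified as in \Cref{subsec: eqv spin c structure classification}, and one checks that the canonical class $K$ of the modified graph still lies in $\mathbb{Z}V(\Gamma)$ (equivalently $m_v(K)\in\mathbb{Z}$ for all $v$), using \Cref{lem: central coeff of canonical class} and the self-conjugacy hypothesis built into this section; this produces the honest spin structure $\tilde{\mathfrak{s}}_W$ on $W$. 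The $S^1$-equivariant lift is then supplied by the relative lifting lemmas \Cref{lem: unique relative eqv lifting,lem: general relative unique extension,lem: unique relative lifting for plumbing}, exactly as in the construction of equivariant $\mathrm{Spin}^c$ computation sequences. The main obstacle I expect is precisely the interplay between the cone-point/orbifold structure and the spin condition: one must verify that the modified central (rational) weight is compatible with the existence of an honest (not merely orbifold-$\mathrm{Spin}^c$) spin structure and that the cone angles $2\pi/p_i$ do not obstruct the spin lift, which amounts to a parity condition on the $p_i$ and on the continued-fraction data — this is where the hypothesis that $Y$ has a $\mathbb{Z}_2$-equivariant \emph{even} spin structure (so all $p_i$ are odd) gets used, and checking it carefully for the indefinite central piece is the delicate point. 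A secondary technical issue is bookkeeping the signature arithmetic so that $b^+$ and $b^-$ come out \emph{equal} to $\lceil (n-1)/2\rceil$ rather than merely close; this is handled by the freedom in how many times one iterates the central plumbing and is routine once the eigenvalue-crossing count from \Cref{lem: linear algebra} is in hand.
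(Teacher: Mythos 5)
There is a genuine gap, and the approach is fundamentally different from the paper's. Your proposal starts from the smooth negative-definite plumbing $W_\Gamma$ and tries to reach $b^+=b^-=\lceil(n-1)/2\rceil$ by modifying (and then iterating) the central weight. This cannot work: $W_\Gamma$ has $b_2 = |V(\Gamma)| = 1 + \sum_l s_l$, which is typically far larger than $2\lceil(n-1)/2\rceil$, and a rank-one diagonal perturbation of a negative definite form can only move the signature by at most $2$, not to a balanced $(+\ell_n,-\ell_n)$ with $b_2$ simultaneously dropping to $2\ell_n$. "Blowing up the central sphere" only decreases the signature further. The paper instead uses the Fukumoto--Furuta orbifold filling (\cite{fukumoto2001w}, Sections 4--5) after first inflating the number of singular fibers to $4\ell_n+2$ by adding trivial $(1,1)$-orbits; that construction produces a $4$-orbifold with $b_2 = 2\ell_n$ \emph{by design}, and then the signature and spin questions are settled by tuning the free integers $q_i, q'_i$ in the explicit tridiagonal rational form $Q_W$. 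There is no route from $W_\Gamma$ to this target by diagonal perturbation.

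Two subsidiary problems. First, your proposed hybrid object — the disk orbifold bundle over the orbifold $S^2$ with cone points \emph{glued} to the arm disk bundles $D_{v^l_i}$ — is not well-defined: the orbifold disk bundle over $(S^2; p_1,\dots,p_n)$ already has the full Seifert boundary encoded in its cone data, so it has $b_2=1$ and no free lens-space boundary components along which the arms could be attached; one uses either the orbifold bundle \emph{or} the smooth resolution by arms, not both. Second, \Cref{lem: linear algebra} is not an eigenvalue-crossing statement for rank-one perturbations; in the paper it controls the signs of the nested principal-minor determinants of the tridiagonal matrix $Q_W$, by making the off-diagonal entries small relative to the diagonal via large choices of $q'_i$ — a Sylvester-criterion argument, not an interlacing one. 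Finally, your claim that the spin lift "is where the hypothesis that all $p_i$ are odd gets used" is not how the paper proceeds: the parity argument there works in both cases (some $p_i$ even, or all $p_i$ odd), with different choices of which $q_i$ to adjust, and does not invoke the evenness hypothesis on $\tilde{\mathfrak{s}}$.
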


\begin{proof}
    For simplicity, we write $\left\lceil \tfrac{n-1}{2} \right\rceil$ as $\ell_n$. By adding singular orbits of type $(1,1)$, we may assume that $Y$ has exactly $4\ell_n+2$ singular fibers of the following type:
    \[
    (p_1,q_1),\dots,(p_{2\ell_n+1},q_{2\ell_n+1}),(1,1),\dots,(1,1).
    \]
    Then, by following the discussions of \cite[Section 4 and 5]{fukumoto2001w}, we can construct a compact oriented smooth 4-manifold $W$, together with a smooth $S^1$-action, such that the following conditions are satisfied.
    \begin{itemize}
        \item $\partial W$ is $S^1$-equivariantly diffeomorphic to $Y$;
        \item $W$ has only isolated cyclic singularities;
        \item $b_1(W)=0$ and $b_2(W)=2\ell_n$.
    \end{itemize}
    The rational intersection form $Q_W$ of $W$, which is a square matrix of size $\ell_n$, is given as
    \[
    (Q_W)_{ij} = \begin{cases}
        \dfrac{p_i}{q'_i p_i+q_i} + \dfrac{p_{i+1}}{q'_{i+1}p_{i+1}+q_{i+1}} &\text{if } i=j, \\[1ex]
        \dfrac{p_{k+1}}{q'_{k+1}p_{k+1}+q_{k+1}} &\text{if } (i,j)=(k,k+1)\text{ or }(k+1,k), \\[1ex]
        0 &\text{otherwise},
    \end{cases}
    \]
    and its determinant is $\pm \dfrac{1}{(p_1+q_1)\cdots(p_n+q_n)}$. Note that, if $W$ is spin, then it follows from \Cref{lem: eqv spin str equals self-conj eqv spin c} that every $\Z_2$-equivariant spin structure of $Y$ extends to $W$. Hence, in order to prove the lemma, it suffices to show that, after changing the numbers $q_i$ and $q'_j$ with $N_i p_i + q_i$ and $N'_j + q'_j$, where $N_i$ and $N'_j$ are integers satisfying
    \[
    N_1+\cdots+N_{\ell_n+1}+N'_1+\cdots+N'_{\ell_n+1}=0,
    \]
    one can arrange that the inverse matrix $Q_W^{-1}$ (which is an integer matrix) is even, i.e. has even diagonal entries, and $Q_W$ has signature 0.

    We first arrange $Q_W^{-1}$ to be even. For this we take all $q'_i$ to be $1$. Under this condition, observe that for any index $k$, we may write the $k$th diagonal entry of $Q_W^{-1}$ mod 2 as
    \[
    (Q_W^{-1})_{kk} = \sum_{1 \le i < j \le 2\ell_n+1} \lambda_{ijk} Q_i Q_j \cdot \left( \prod_{k\in \{1,\dots,2\ell_n+1\} \smallsetminus \{i,j\}} p_k \right) \pmod 2
    \]
    for some choices of $\lambda_{ijk}\in \mathbb{Z}_2$. If not all of $p_1,\dots,p_{2\ell_n+1}$ are odd, then we may assume that $p_{2\ell_n+1}$ is even, in which case we can change $q_1,\dots,q_{2\ell_n+1}$ via
    \[
    q_i \longmapsto q_i + n_i p_i \quad (i \le 2\ell_n),
    \qquad
    q_{2\ell_n+1} \longmapsto q_{2\ell_n+1} + \Bigl(\sum_{i=1}^{2\ell_n} n_i\Bigr) p_{2\ell_n+1}
    \]
    to ensure that either $p_i$ or $p_i+q_i$ is even for all $i=1,\dots,2\ell_n+1$. This implies that all diagonal entries of $Q_W^{-1}$ are even. On the other hand, if all $p_i$ are odd, then by performing a similar operation, we can ensure that $p_i+q_i$ is even for all $i=1,\dots,2\ell_n$ (i.e. except $i=2\ell_n+1$), which also implies that all diagonal entries of $Q_W^{-1}$ are even.

    It remains to arrange $Q_W$ to have signature zero, while preserving the parity of $Q_W^{-1}$; we will do this by changing the numbers $q'_i$ by even integers that add up to zero. For $k=1,\dots,2\ell_n+1$, denote the $k$th minor of $Q_W$ by $M_k$. Signatures of symmetric real matrices can be read off directly from the determinants of their principal minors; in our case, in order to make $Q_W$ have signature zero, it suffices to arrange that the signatures of determinants of $M_1,\dots,M_{2\ell_n}$ are given by $(-,-,+,+,\cdots)$, i.e.
    \[
    (-1)^{\left\lceil \tfrac{i}{2} \right\rceil} \det M_i >0 \quad \text{for all } i=1,\dots,2\ell_n.
    \]
    It follows from \Cref{lem: linear algebra} that these inequalities are satisfied under the following conditions:
    \begin{itemize}
        \item $q'_ip_i + q_i$ is negative if $i$ is odd and positive if $i$ is even;
        \item $\left|\tfrac{p_i}{q'_i p_i+q_i} \right| > \left| \tfrac{p_{i+1}}{q'_{i+1}p_{i+1}+q_{i+1}} \right|$ for all $i=1,\dots,2\ell_n$;
        \item $\left| \tfrac{p_i}{q'_ip_i+q_i} \right|$ is sufficiently small for all $i=2,\dots,2\ell_n+1$.
    \end{itemize}
    It is clear that these conditions can be satisfied by changing the numbers $q'_i$ by even integers that add up to zero. The lemma follows.
\end{proof}

\begin{lem} \label{lem: local maps of positive level}
    There exist $\mathrm{Pin}(2) \times \Z_2$-equivariant local maps of level $\left\lceil \frac{n-1}{2} \right\rceil$ having the following forms: 
    \begin{align*}\label{ofdlocalmap}
      & f\colon  (\C^{ m })^+ \longmapsto \left( \R^{\left\lceil \frac{n-1}{2} \right\rceil} \right)^+ \wedge SWF_{\mathrm{Pin}(2)\times \Z_2}(Y,\tilde{\mathfrak{s}}), \\ 
      & g\colon  SWF_{\mathrm{Pin}(2)\times \Z_2}(Y,\tilde{\mathfrak{s}}) \wedge  (\C^{ m })^+ \longmapsto \left( \R^{\left\lceil \frac{n-1}{2} \right\rceil} \right)^+,  
    \end{align*}
    where the $\Z_2$-action on $\R^{\left\lceil \frac{n-1}{2} \right\rceil}$ is the trivial action and $m$ is a rational number obtained as the topological part of the index of the Dirac operator on some spin 4-orbifold with APS boundary condition.
    Moreover, if we suppose $n=3,4$, we have 
    \[
    m = - \bar{\mu}(Y). 
    \]
\end{lem}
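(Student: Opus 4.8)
The plan is to extract the maps $f$ and $g$ directly from the $\mathrm{Pin}(2)\times\mathbb{Z}_2$-equivariant Bauer--Furuta invariant of the spin $4$-orbifold constructed in \Cref{lem: S1-eqv spin 4-orbifold exists}, exactly as in the proof of \Cref{prop: homotopy coherent BF for Pin(2)xZ2} but applied to a single (non-parametrized) orbifold rather than a family. First I would take the orbifold $(W,\tilde{\mathfrak{s}}_W)$ with $\partial W\cong(Y,\tilde{\mathfrak{s}})$, $b_1(W)=0$ and $b^+(W)=b^-(W)=\lceil\frac{n-1}{2}\rceil$, equipped with its smooth $S^1$-action. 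Since the Seifert $\mathbb{Z}_2$-action on $Y$ extends to $W$ (as the restriction of the $S^1$-action) and $\tilde{\mathfrak{s}}_W$ is an even spin structure, one gets a $\mathrm{Pin}(2)\times\mathbb{Z}_2$-equivariant relative Bauer--Furuta map, of the form described in \Cref{prop:eqBF} together with its spin/$\mathrm{Pin}(2)$-refinement,
\[
BF_{\mathrm{Pin}(2)\times\mathbb{Z}_2}(W,\tilde{\mathfrak{s}}_W)\colon (\mathbb{C}^{m})^+\longrightarrow \bigl(H^+(W;\mathbb{R})\bigr)^+\wedge SWF_{\mathrm{Pin}(2)\times\mathbb{Z}_2}(Y,\tilde{\mathfrak{s}}),
\]
where $m=\mathrm{ind}^t\dirac_{W,\tilde{\mathfrak{s}}_W}$ is the topological part of the APS index of the spin Dirac operator on $W$, and $H^+(W;\mathbb{R})$ carries the trivial $\mathbb{Z}_2$-action (since the $S^1$-action, being connected, acts trivially on $H^2(W;\mathbb{R})$, as in the proof of \Cref{prop: homotopy coherent BF for Pin(2)xZ2}). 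Because $W$ need not be definite, $H^+(W;\mathbb{R})=\mathbb{R}^{b^+(W)}=\mathbb{R}^{\lceil(n-1)/2\rceil}$, and the usual argument identifying the $S^1$-fixed-point map of $BF$ with the inclusion of codimension $b^+(W)$ (so it is induced by a fiberwise linear injection of that codimension) shows that $BF_{\mathrm{Pin}(2)\times\mathbb{Z}_2}(W,\tilde{\mathfrak{s}}_W)$ is a local map of level $b^+(W)=\lceil\frac{n-1}{2}\rceil$. This gives $f$.

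Next I would produce $g$ by running the same construction on the orientation reversal $-W$, whose boundary is $-Y$, and composing with the $\mathrm{Pin}(2)\times\mathbb{Z}_2$-equivariant duality pairing
\[
\eta\colon SWF_{\mathrm{Pin}(2)\times\mathbb{Z}_2}(Y,\tilde{\mathfrak{s}})\wedge SWF_{\mathrm{Pin}(2)\times\mathbb{Z}_2}(-Y,\tilde{\mathfrak{s}})\longrightarrow S^0
\]
(the $\mathrm{Pin}(2)\times\mathbb{Z}_2$-analogue of \Cref{Vduality}, obtained by the same argument as \cite[Proposition~6.23]{montague2022seiberg}). Since $b^\pm(-W)=b^\mp(W)$ and the two are equal here, $-W$ again contributes level $\lceil\frac{n-1}{2}\rceil$, and dualizing $BF_{\mathrm{Pin}(2)\times\mathbb{Z}_2}(-W,\tilde{\mathfrak{s}}_{-W})$ against $\eta$ yields a map
\[
g\colon SWF_{\mathrm{Pin}(2)\times\mathbb{Z}_2}(Y,\tilde{\mathfrak{s}})\wedge(\mathbb{C}^{m})^+\longrightarrow\bigl(\mathbb{R}^{\lceil(n-1)/2\rceil}\bigr)^+
\]
which is local of level $\lceil\frac{n-1}{2}\rceil$ by the same fixed-point-map argument (here using that $m=\mathrm{ind}^t\dirac_{-W}= -\mathrm{ind}^t\dirac_W$ up to the duality shift, or, more directly, that the index contribution from $-W$ has the same rational value after accounting for the reflection formula of \Cref{Vduality}). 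The number $m$ in both cases is visibly a topological Dirac index on a spin $4$-orbifold with APS boundary conditions, as required.

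Finally, for the identification $m=-\bar\mu(Y)$ when $n=3,4$: here $m=\mathrm{ind}^t\dirac_{W,\tilde{\mathfrak{s}}_W}=\frac{1}{8}(c_1(\tilde{\mathfrak{s}}_W)^2-\sigma(W))$ in the orbifold sense, and since $W$ is spin this reduces (by the orbifold index theorem of \Cref{appendix A}, together with the $\rho$-invariant / eta-invariant contributions of the cyclic singular points) to $-\frac{\sigma(W)}{8}$ plus correction terms which, by the defining property of the Neumann--Siebenmann $\bar\mu$-invariant, assemble to $-\bar\mu(Y)$. Concretely I would invoke the Fukumoto--Furuta--Ue formula expressing $-\frac18(\sigma(W)+\text{defect terms})$ as $-\bar\mu(Y)$ for $w$-invariant-type spin orbifold fillings of Seifert homology spheres (this is exactly the situation of the orbifold $W$ built in \Cref{lem: S1-eqv spin 4-orbifold exists}, since when $n=3,4$ one has $\lceil\frac{n-1}{2}\rceil=1$ and $b_2(W)=2$); the point is that for $n\le 4$ the relevant index is small enough that the correction terms are pinned down by $\bar\mu$. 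The main obstacle I anticipate is bookkeeping the orbifold/eta-invariant correction terms in the APS index precisely enough to match the sign conventions of $\bar\mu(Y)$ (and of the correction term $\mathbf{n}(Y,\tilde{\mathfrak{s}})$ entering the definition of $\mathrm{ind}^t$); the equivariance and locality statements for $f$ and $g$ are essentially a transcription of the single-manifold case of \Cref{prop: homotopy coherent BF for Pin(2)xZ2}, so the real work is the rational index computation for $n=3,4$.
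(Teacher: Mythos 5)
Your proposal is correct and follows essentially the same route as the paper: take the spin $4$-orbifold $W$ of \Cref{lem: S1-eqv spin 4-orbifold exists}, obtain $f$ from its $\mathrm{Pin}(2)\times\Z_2$-equivariant orbifold relative Bauer--Furuta invariant (with $H^+(W;\R)$ carrying the trivial $\Z_2$-action via the connected $S^1$-action), obtain $g$ from $-W$, and cite the Fukumoto--Furuta identification $\mathrm{ind}^t\dirac_W=-\bar\mu(Y)$ for small numbers of singular fibers. The only slip is arithmetic and harmless: for $n=4$ one has $\lceil\frac{n-1}{2}\rceil=2$ and $b_2(W)=4$ (not $1$ and $2$), but the relevant condition $\lceil\frac{n-1}{2}\rceil\le 2$ from \cite{fukumoto2001w} still holds.
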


\begin{proof}
  Let $W$ be the spin 4-orbifold obtained in \Cref{lem: S1-eqv spin 4-orbifold exists} with boundary $Y$. We have an even spin $\Z_2$-action on $Y$ which extends to $W$ as an even spin action, i.e. a lift of the involution rotating the $S^1$-direction of $W$ is of order $2$. 
  Then we can consider $\mathrm{Pin}(2)\times \Z_2$-equivariant orbifold Bauer--Furuta invariants of the form 
  \[
  (\C^{ \ind^t \dirac_W })^+ \longmapsto (\R^{ b^+(W)})^+ \wedge SWF_{\mathrm{Pin}(2)\times \Z_2}(Y,\tilde{\mathfrak{s}}), 
  \]
  where $\ind^t \dirac_W$ is the topological part of the Atiyah--Patodi--Singer index in the orbifold sense and $b^+(W)$ is the dimension of a positive definite subspace of the rational intersection form of $W$ in the orbifold sense. For these virtual vector spaces, we are forgetting $\Z_2$-actions. On $H^+(W; \R)$ (in the orbifold sense), the $\Z_2$-action is trivial since it is isotopic to the identity from the extended $S^1$-action. 
  Note that $b^+(W)=\left\lceil \frac{n-1}{2} \right\rceil$. This gives the existence of the first map. For the second map, we apply the same argument to $-W$. 
  Finally, it is ensured in \cite{fukumoto2001w} that $\ind^t \dirac_W = -\bar{\mu}(Y)$ under the condition
  \[
  \left\lceil \frac{n-1}{2} \right\rceil \leq 2.
  \]
  This completes the proof.   
\end{proof}

Note that, in the proof of \Cref{lem: local maps of positive level}, we omitted the definition of the $\mathrm{Pin}(2)\times \Z_2$-equivariant orbifold Bauer--Furuta invariants, since it is just the equivariant and orbifold analogue of the Bauer--Furuta invariants, with no essentially new part. See \cite{fukumoto1997homology} for the $\mathrm{Pin}(2)$-equivariant Bauer--Furuta invariants in the spin orbifold setting. There is, in fact, an alternative description: such a $\mathrm{Pin}(2)$-equivariant Bauer--Furuta invariant can also be obtained by removing small open neighborhoods of the orbifold singularities and applying the $\mathrm{Pin}(2)\times \Z_2$-equivariant \emph{relative} Bauer--Furuta invariants to the resulting $4$-manifold, whose new boundary components are several lens spaces equipped with certain even involutions. One checks that the non-equivariant Dirac index in this situation equals $-\bar{\mu}(Y)$. Therefore, we may use the ordinary $\mathrm{Pin}(2)\times \Z_2$-equivariant relative Bauer--Furuta invariant to obtain the desired map.

\begin{rem}
We do not know what the $\Z_2$-representation $\left(\C^{-\bar\mu(Y)}\right)^+$ is exactly. We expect that it can be computed from a $\Z_2$-equivariant index theorem for spin $4$-orbifolds.
\end{rem}

\bibliographystyle{alpha}
\bibliography{tex}

\end{document}